\def\whp{for $\epsilon < c'$ with probability at least $1 - \sC(\epsilon)e^{-\sc(\epsilon)p}$}
\def\MM{\mathbb{M}}
\def\PP{\mathbb{P}}
\def\SS{\mathbb{S}}
\def\ZZ{\mathbb{Z}} 
\def\cB{\mathcal{B}} 
\def\cG{\mathcal{D}} 
\def\cE{\mathcal{E}} 
\def\cG{\mathcal{G}} 
\def\cI{\mathcal{I}}
\def\cS{\mathcal{S}} 
\def\cT{\mathcal{T}}
\def\bA{\boldsymbol{A}} 
\def\bB{\boldsymbol{B}} 
\def\bD{\boldsymbol{D}} 
\def\bE{\boldsymbol{E}} 
\def\bG{\boldsymbol{G}} 
\def\bh{\boldsymbol{H}}
\def\bK{\boldsymbol{K}} 
\def\bM{\boldsymbol{M}} 
\def\bN{\boldsymbol{N}}
\def\bQ{\boldsymbol{Q}} 
\def\bR{\boldsymbol{R}} 
\def\bS{\boldsymbol{S}} 
\def\bT{\boldsymbol{T}} 
\def\bU{\boldsymbol{U}} 
\def\bV{\boldsymbol{V}} 
\def\bX{\boldsymbol{X}} 
\def\bY{\boldsymbol{Y}} 
\def\bZ{\boldsymbol{Z}} 
\def\ba{\boldsymbol{a}} 
\def\bb{\boldsymbol{b}} 
\def\bd{\boldsymbol{d}} 
\def\be{\boldsymbol{e}} 
\def\bmf{\boldsymbol{f}} 
\def\bg{\boldsymbol{g}} 
\def\bh{\boldsymbol{h}}
\def\br{\boldsymbol{r}} 
\def\bs{\boldsymbol{s}} 
\def\bt{\boldsymbol{t}} 
\def\bu{\boldsymbol{u}} 
\def\bv{\boldsymbol{v}} 
\def\bw{\boldsymbol{w}} 
\def\bx{\boldsymbol{x}} 
\def\by{\boldsymbol{y}} 
\def\bz{\boldsymbol{z}} 
\def\bA{\boldsymbol{A}} 
\def\bB{\boldsymbol{B}} 
\def\bD{\boldsymbol{D}}
\def\bE{\boldsymbol{E}}
\def\bG{\boldsymbol{G}}
\def\bH{\boldsymbol{H}}
\def\bK{\boldsymbol{K}}
\def\bM{\boldsymbol{M}}
\def\bN{\boldsymbol{N}}
\def\bQ{\boldsymbol{Q}}
\def\bR{\boldsymbol{R}}
\def\bS{\boldsymbol{S}}
\def\bT{\boldsymbol{T}}
\def\bU{\boldsymbol{U}}
\def\bV{\boldsymbol{V}}
\def\bX{\boldsymbol{X}}
\def\bY{\boldsymbol{Y}}
\def\bZ{\boldsymbol{Z}}
\def\bxi{\boldsymbol{\xi}}
\def\bpi{\boldsymbol{\pi}}
\def\bDf{\boldsymbol{D\hspace{-.1cm}f}}
\def\sc{\mathsf{c}}
\def\sx{\mathsf{x}}
\def\sC{\mathsf{C}}
\def\sD{\mathsf{D}}
\def\sF{\mathsf{F}}
\def\sM{\mathsf{M}}
\def\sN{\mathsf{N}}
\def\sP{\mathsf{P}}
\def\sR{\mathsf{R}}
\def\sT{\mathsf{T}}
\def\btheta{\boldsymbol{\theta}}
\def\bSigma{\boldsymbol{\Sigma}}
\def\bxi{\boldsymbol{\xi}}
\def\bgamma{\boldsymbol{\gamma}}
\def\bdelta{\boldsymbol{\delta}}
\def\bzero{\boldsymbol{0}}
\def\bTheta{\boldsymbol{\Theta}}
\def\bphi{\boldsymbol{\phi}}
\def\op{\overline{p}}
\def\cad{\mathrm{cad}}
\newcommand\proj{\mathsf{P}}
\newcommand\indic[1]{\mathbf{1}\{#1\}}
\renewcommand{\P}{\mathbb{P}}
\newcommand{\E}{\mathbb{E}}
\newcommand{\eps}{\varepsilon}
\DeclareMathOperator*{\argmax}{arg\,max}
\DeclareMathOperator*{\argmin}{arg\,min}
\newcommand{\Cov}{\operatorname{Cov}}
\newcommand{\sign}{\operatorname{sign}}
\newcommand{\diag}{\operatorname{diag}}
\newcommand{\Tr}{\operatorname{Tr}}
\newcommand{\normal}{\mathsf{N}}
\newcommand{\reals}{\mathbb{R}}
\newcommand{\hbeta}{\widehat \beta}
\newcommand{\de}{\mathrm{d}}
\renewcommand{\div}{\operatorname{div}}
\def\cPmodel{\mathcal{P}_{\mathrm{model}}}
\def\cPregr{\mathcal{P}_{\mathrm{regr}}}
\def\bias{{\sf bias}}
\def\std{{\sf std}}
\def\KS{{\sf KS}}
\def\naive{{\mathrm{nv}}}
\def\btheta{{\boldsymbol \theta}}
\def\bgamma{{\boldsymbol \gamma}}
\def\hbtheta{\hat{\boldsymbol \theta}}
\def\lb{\ell_{\textrm{b}}}
\def\bSigma{{\boldsymbol \Sigma}}
\def\bx{{\boldsymbol x}}
\def\reals{{\mathbb R}}
\def\orac{\mbox{\tiny\rm orac}}
\def\MM{\mbox{\tiny\rm MM}}
\def\by{{\boldsymbol y}}
\def\bX{{\boldsymbol X}}
\def\hlambda{\hat{\lambda}}
\def\<{\langle}
\def\>{\rangle}
\newcommand{\id}{\mathbf{I}}
\newcommand\df{\mathsf{df}}
\newtheoremstyle{myremark} 
    {\topsep}                    
    {\topsep}                    
    {\rm}                        
    {}                           
    {\bf}                        
    {.}                          
    {.5em}                       
    {}  
\newtheorem{theorem}{Theorem}[section]
\newtheorem{lemma}[theorem]{Lemma}
\newtheorem{corollary}[theorem]{Corollary}
\newtheorem{definition}{Definition}[section]
\theoremstyle{myremark}
\newtheorem{remark}[theorem]{Remark}
\title{CAD: Debiasing the Lasso with inaccurate covariate model}
\author{Michael Celentano\,\, and\,\, Andrea Montanari}
\begin{document}

\maketitle

\newcommand{\barF}{\bar F}
\newcommand{\barG}{\bar G}
\newcommand{\barH}{\bar H}
\newcommand{\barpsi}{\bar \psi}
\newcommand{\sfP}{\mathsf{P}}

\newcommand{\loo}{\mathrm{loo}}

\begin{abstract}
  We consider the problem of estimating a low-dimensional parameter in high-dimensional linear regression. Constructing an approximately unbiased
  estimate of the parameter of interest is a crucial step towards performing statistical inference.
  Several authors suggest to orthogonalize both the variable of interest and the outcome with respect to the nuisance variables,
  and then regress the residual outcome with respect to the residual variable.
  This is possible if the covariance structure of the regressors is perfectly known, or is sufficiently  structured that it
  can be estimated accurately from data (e.g., the precision matrix is sufficiently sparse).

  Here we consider a regime in which the covariate model can only be estimated inaccurately, and hence existing debiasing approaches
  are not guaranteed to work. When errors in estimating the 
  covariate model are correlated with errors in estimating the linear model parameter, an incomplete elimination of the bias occurs.
  We propose the \emph{Correlation Adjusted Debiased Lasso (CAD)}, which nearly eliminates this bias in some cases, including cases in which the estimation errors are neither negligible nor orthogonal.

  We consider a setting in which some unlabeled samples
  might be available to the statistician alongside labeled ones (semi-supervised learning), and our
  guarantees hold under the assumption of jointly Gaussian covariates.
  The new debiased estimator is guaranteed to cancel the bias in two cases: $(1)$ when the total number of samples (labeled and unlabeled)
  is larger than the number of parameters, or $(2)$ when the covariance of
  the nuisance (but not the effect of the nuisance on the variable of interest) is known. Neither of these cases
  is treated by state-of-the-art methods.
\end{abstract}
\tableofcontents

\section{Introduction}

An important task in high-dimensional and semi-parametric statistics is to estimate and perform inference on a low-dimensional parameter in the presence of a high-dimensional nuisance.
We study this problem in the context of a random-design linear model
\begin{equation}\label{eq:lin-model}
\begin{gathered}
    \by = \bw\beta + \bX \btheta + \sigma \bz,
\end{gathered}
\end{equation}
where $\bX \in \reals^{n \times p}$ has rows $\bx_i \stackrel{\mathrm{iid}}\sim \normal(0,\bSigma)$,
$\bw = \bX \bgamma + \kappa \bw^\perp$,
and $\bw^\perp,\bz \stackrel{\mathrm{iid}}\sim \normal(0,\id_n)$ independent of $\bX$.
The target of estimation and inference is $\beta$.
We consider a setting in which $n$ is comparable to and possibly smaller than $p$,
but a larger unlabelled data set might be available for estimating $\bgamma$.

Our ability to estimate and perform inference on $\beta$ is substantially improved by knowledge of the distribution of the features $(\bw,\bX)$, parameterized by $\kappa,\bgamma,\bSigma$.
Consider, for example, the debiased Lasso.
The debiased Lasso is a one-step correction to the Lasso estimate defined by
\begin{equation}
\begin{gathered}
    (\hbeta,\hat \btheta) 
        := 
        \argmin_{(b,\bt)} 
        \Big\{ 
            \frac1{2n} \| \by - \bw b - \bX \bt \|_2^2 + \frac{\lambda}{\sqrt{n}}(|b| + \| \bt \|_1)
        \Big\},
    \\
    \hbeta^\de 
        := 
        \hbeta 
            + \frac{1}{n\hat \tau^2 }
            (\bw - \bX \hat \bgamma)^\top(\by - \bw \hbeta - \bX \hat \btheta)\,,
\end{gathered}\label{eq:FirstDebiased}
\end{equation}
where $\hat \bgamma$ is an estimate of $\bgamma$ and $\hat \tau^2$ is a normalization coefficient.
Both of these are estimated from data or on the basis of some knowledge of the data distributiomn.
A substantial line of research develops debiasing procedure which differ in how $\hat \bgamma,\hat \tau$ are defined, see
\cite{zhang2014,vanDeGeer2014,javanmardMontanari2014,javanmard2013nearly,javanmard2014,javanmard2018,bellec2020debiasing}
for a few entrypoints in this literature.

Under appropriate conditions and up to constant and logarithmic factors, 
the Lasso achieves estimation error in $\ell_1$ and $\ell_2$ comparable to what is achievable when the support of $\btheta$ is known \cite{bickelEtAl2009,buhlmann2011statistics}.
These guarantees apply only to estimation of the high-dimensional parameter $(\beta,\btheta)$ as a whole.
For any given coordinate (in particular, for the parameter of interest $\beta$) the Lasso error may be substantially
larger than that achievable were the support of $\btheta$ known.\footnote{For example, an explicit calculation with design-matrix given in Example 7.1 of \cite{buhlmann2011statistics} shows that the error in a single coordinate can be on the order $\Theta(\sigma \sqrt{s_\theta \log(p/s_\theta)/n})$ if we choose $\lambda = \Theta(\sigma\sqrt{\log(p/s_\theta)})$ in Eq.~\eqref{eq:FirstDebiased}.}

The debiased Lasso \eqref{eq:FirstDebiased} targets estimation of a single coordinate and can improve error guarantees for it.
Moreover, in certain regimes $\hbeta^\de$ is approximately normal,
permitting the construction of $p$-values and confidence intervals.

As we have already alluded to, 
the debiased Lasso benefits substantially from knowledge of $\bgamma$.
When $\bgamma$ is unknown,
the best bounds establish $\hbeta^\de - \beta = O_p\big( (\sigma/\kappa)\big(1/\sqrt{n} +  (s_\theta \wedge s_\gamma) \log(p) / n \big) \big)$
\cite{javanmard2018,bellec2020debiasing}, where $s_{\theta}:=\|\btheta\|_0$ and $s_{\gamma}:=\|\bgamma\|_0$
are the numbers of nonzero coefficients (these bounds can be improved to cover the case of approximately sparse vectors).
The $1/\sqrt{n}$ term is a parametric rate and would be present even if $\bgamma,\btheta$ were known.
The term $(s_\theta \wedge s_\gamma)\log(p)/n$ captures the cost of not knowing the nuisance parameters $\bgamma$ and $\btheta$.
It dominates the estimation error unless $s_\theta \wedge s_\gamma = O(\sqrt{n}/\log(p))$, 
a much stronger requirement than that required for consistency of the Lasso for either $\btheta$ or $\bgamma$.
In contrast, if $\bgamma$ is known,
then by setting $\hat \bgamma = \bgamma$ and appropriately choosing $\hat \tau,\lambda$,
the debiased Lasso achieves the parametric rate even if $\bgamma$ is fully dense and $s_\theta \log p / n = \Theta(1)$
\cite{javanmardMontanari2014,javanmard2013nearly,miolane2018distribution,bellec2020second,celentano2020lasso}.
A related procedure achieves paramteric rates provided the bound $\| \btheta \|_0 \leq s_\theta$ is replaced by  the bound $\btheta^\top \bSigma \btheta = O(\sigma^2)$ \cite{zhuBradic2018a,zhuBraidc2018b,Bradic2019TestabilityOH}.

A key motivation of our work is that approaches to dealing with unknown $\bgamma$
\cite{zhang2014,vanDeGeer2014,javanmard2014,javanmard2018,bellec2020debiasing}
operate essentially by reduction to the case of known $\bgamma$.
Roughly speaking, these papers assume either that $\bgamma$ is sparse enough that it can be accurately estimated from the data,
or that $\btheta$ is sparse enough that the uncertainty on $\bgamma$ has negligible effect.
Under the condition  $s_\theta \wedge s_\gamma = O(\sqrt{n}/\log(p))$, the uncertainty on $\bgamma$ is negligible and
does not need to be accounted for in the resulting inference procedures.

The current paper considers a regime in which $\bgamma$ cannot be estimated accurately, and ---for this reason---
standard debiasing procedures are unsuccessful.
We show that ---in some cases--- these procedures can be corrected to cancel the residual bias,
and propose the \emph{Correlation Adjusted Debiased Lasso (CAD)} to implement the correction. 
Under the assumption of
(correlated) Gaussian covariates, we prove that CAD yields nearly unbiased estimates.
An illustration is provided Figure \ref{fig:SweepLamb} (see Sections \ref{sec:Debiasing} and \ref{sec:Simulations} for further explanation).
Although the Lasso is the primary motivation for the current work, our theory will also establish the success of correlation adjusted debiasing for ridge regression.

We work in a semisupervised model in which the statistician has access to $N$ samples,
some of which are labeled and some of which are unlabeled.
A subset $\cI_\theta \subset [N]$ of units, all of which are labelled, will be used for the estimation of $ \btheta$, which we call the
\emph{outcome regression}.
A subset $\cI_\gamma \subset[N]$ of units, which can contain both labelled and unlabelled units,
will be used for the estimation of $\bgamma$, which we call the \emph{precision regression}
(see Section \ref{sec:Debiasing} for a complete description).
In other words $\{ (y_i,w_i,\bx_i)\}_{i \in \cI_\theta}$ are used for the outcome regression
and $\{ (w_i,\bx_i)\}_{i \in \cI_\gamma}$  are used for precision regression. The two are then combined
to construct the CAD estimate of $\beta$.
We assume $\cI_\theta \cup \cI_\gamma = [N]$ (so that all units are used in either the outcome or precision regression),
and we allow for $\cI_\theta$ and $\cI_\gamma$ to be overlapping.
In particular, we may use both labeled and unlabeled data to estimate $\bgamma$.

Part of earlier work  assumes no unlabeled samples and hence $\bgamma$ is estimated from the same labeled samples as $\btheta$,
which is a special case of our setting.
Other earlier results assume $\bgamma$ to be perfectly known.
The latter can be justified by assuming  access to a very large number of unlabeled data  (see, e.g., \cite{javanmard2018}).
By considering a semisupervised  model, our analysis covers these two extremes as special cases,
and allows us to quantify how many additional samples are needed to
estimate  $\bgamma$ accurately enough.

We prove that CAD is nearly unbiased in at least two cases:
$(1)$~When sufficientlly many samples are available for the precision regression to be underparametrized, namely
$n_{\gamma}>(1+\eps)p$, with $\eps$ bounded away from $0$;
$(2)$~When the covariance structure of the nuisance $\bSigma$ is known.
We emphasize that none of these cases can be treated using existing methodology.

\subsection{Related literature}

Our results cover the proportional asymptotics in which $n,p$ are both large with $n\asymp p$. This is the most challenging regime,
since the effect of the uncertainty on $\btheta$ and $\bgamma$ is comparable to or larger than the intrinsic parametric error $(\sigma/\kappa)/\sqrt{n}$.
However, we emphasize that: $(i)$~Our analysis is fully non-asymptotic and we establish concentration bounds
on the newly developed estimator that are valid at finite $p,n$; $(ii)$~These concentration bounds do not degrade as $n/p\to\infty$,
and hence cover a broad range of high-dimensional regimes.

Over the last few years, several mathematical techniques have been developed to characterize high-dimensional statistical problems
in the proportional regime \cite{bayati2011lasso,amelunxen2013,donoho2016high,el2018impact,reeves2016replica, barbier2019optimal}.
Gordon's Gaussian comparison inequality \cite{gordon1985some,Gordon1988} is particularly useful in the study of regularized
 convex M-estimators.  
 As first pointed out in  \cite{stojnic2013framework,thrampoulidis2015regularized},
 such estimators can be written as saddle points of convex-concave cost functions, and Gordon's inequality can be used to
 sharply characterize their asymptotic properties, see e.g.
 \cite{thrampoulidis2015regularized,thrampoulidis2018,Deng2019AMO,Montanari2019,Liang2020APH}.
 
 The most closely related pieces of work are \cite{miolane2018distribution}, which provides a non-asymptotic characterization of the Lasso
 in the proportional regime, for uncorrelated Gaussian designs, and \cite{celentano2020lasso} which generalizes it to correlated designs.
 The paper \cite{celentano2020lasso}  also develops a procedure for statistical inference on low-dimensional parameters
 under the assumption that effect of the nuisance variables (encoded by $\bgamma$) is known.
 
 Our focus is on the case in which $\bgamma$ needs to be
 estimated from data. This leads to a simultaneous regression problem, and a direct generalization of the
 Gaussian comparison approach of \cite{gordon1985some,Gordon1988,stojnic2013framework,thrampoulidis2015regularized}
 simply fails. Namely, the `correct' dominating Gaussian process (the one that yields the right characterization) does not satisfy the
 conditions of Gordon's inequality.

 We overcome this problem by introducing a new proof techniques, based on a two-step argument.
 Given two simultaneous regression problems, we analyze the first one using the Gaussian comparison approach.
 We then show that we can condition on the result of the first regression and develop a `conditional Gordon inequality'
 to characterize the conditional behavior of the second. Putting everything together, we obtain a joint characterization of the
 two regressions. The only earlier example of a joint characterization of two estimators in the proportional asymptotics is the one of \cite{mondelli2020},
 which is limited however to the special case of linear regression and spectral estimation.
 
In recent years it has become popular to base
inference in possibly non-linear regression models on complete knowledge of the distribution of covariates. 
This approach is often referred to as \emph{model-X inference.}
Prominent examples include knockoffs and conditional randomization testing \cite{candesFan2018,katsevich2020theoretical,liu2020fast}.
Although model-X assumptions are sometimes based on scientific domain knowledge (see, e.g., \cite{Bates24117,sesia2018}),
they are often also justifed by informal appeals to estimates coming from large unlabelled data sets \cite{katsevich2020theoretical}.
In the context of linear models, our work provides rigorous justification for these claims, and quantifies the amount of
additional data needed to estimate the covariates model.

In particular, we clarify that  the success of the procedure relies on more than just the size of the error in the model for the features; 
it also relies on how these models were estimated and how these estimates were incorporated into our analysis of the labeled data.
Although random-design linear models are outside of the model-X paradigm---we posit both a model for the covariates and the outcome variable---we have no reason to expect model misspecification in model-X procedures to be any more benign.
In fact, recent work has shown close relationships between debiasing estimators and certain  conditional randomization tests \cite{celentano2020lasso}.

The scope of debiasing procedures was generalized in \cite{chernozhukov2018double} and follow-up work by considering a broader set of
(not necessarily linear) models for the nuisance. As in the original debiasing literature
\cite{zhang2014,vanDeGeer2014,javanmard2014}, these authors assume conditions under which the error in estimating the nuisance model can be safely neglected.
Further, they deal with nonlinear nuisance models by using sample splitting or crossfitting, which avoids mathematical difficulties but may result in inefficiencies in a proportional regime.

As discussed below, one of the elements of the CAD estimator
$\hbeta_{\cad}$ is given  by the so-called \emph{degrees of freedom adjustement}, which was introduced in \cite{javanmardMontanari2014} and 
analyzed for general random designs in \cite{bellec2020debiasing,bellec2020second}. As further discussed in the next section, the 
degrees of freedom adjustement is not sufficient in the case of unknown precision model. In Section \ref{sec:MainResult} we further compare our results
with the ones of \cite{bellec2020debiasing,bellec2020second}.

\section{Correlation adjusted debiasing}
\label{sec:Debiasing}

\subsection{Description of correlation adjusted debiasing}
\begin{figure}[!t]
\centering
\includegraphics[width = 0.99\linewidth]{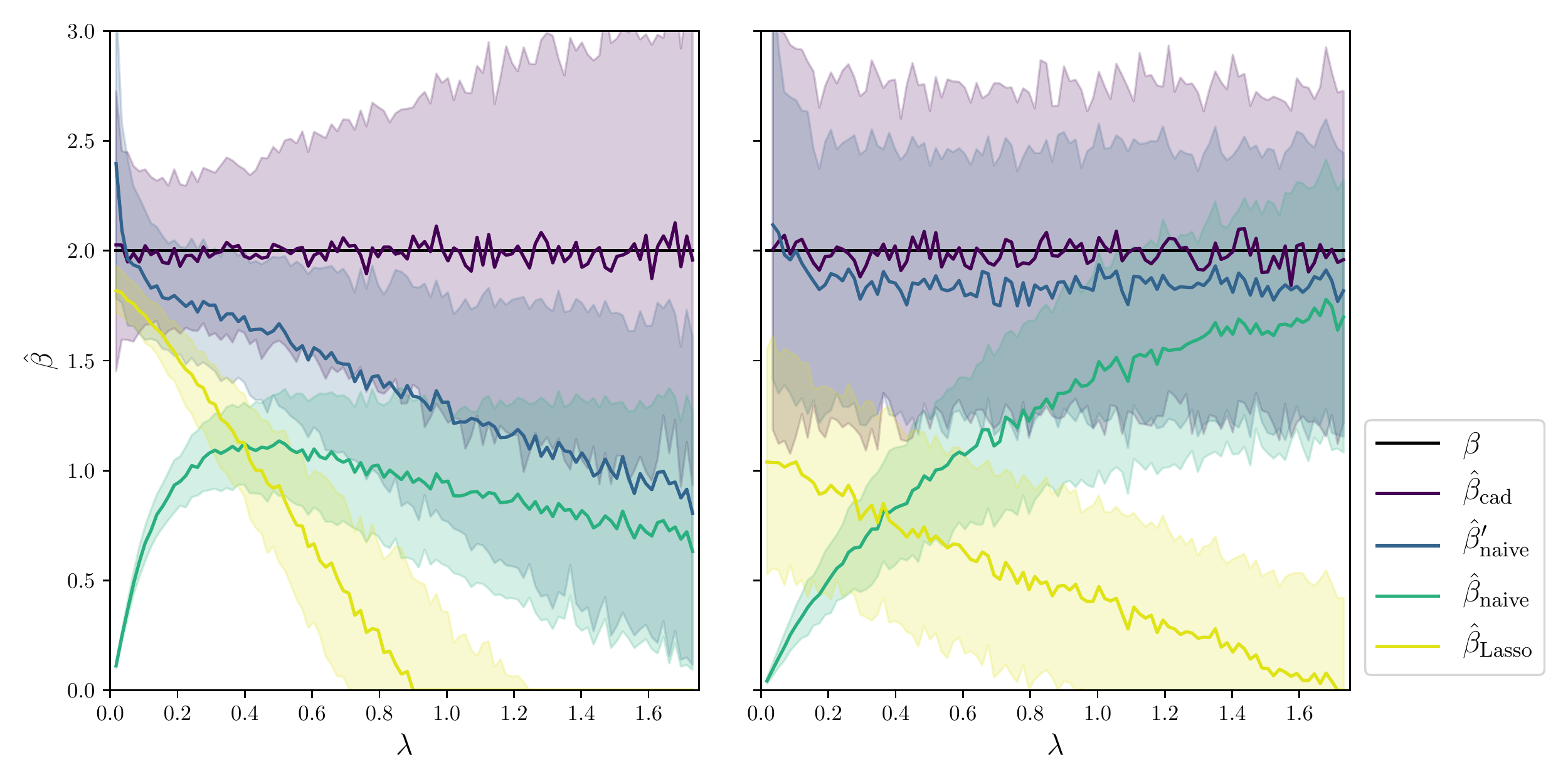}
\caption{Comparing different debiasing methods, for different data distributions (see Section \ref{sec:Simulations} for definitions).
Left: block model  with $\lb=10$, $q=1$, $\mu=2$, $\sigma=1$.
  Right: circulant model with $\mu=2$, $\sigma=1$ and inverse covariance specified in Eq.~\eqref{eq:CircSpecial}.
  In both cases, $n_{\theta} = 200$, $n_{\gamma} = 600$, $p = 400$.
  Bands correspond to $20\%$ and $80\%$ percentiles of various debiased estimates over $500$ repetitions. }
\label{fig:SweepLamb}
\end{figure}
In order to motivate the correlation adjusted debiasing (CAD) procedure,
it is worth recalling some important steps in standard analyses of the debiased Lasso.
As mentioned above, various debiasing procedures differ in the way the estimate of the nuisance parameters $\hat \bgamma$
and normalization coefficients $\hat \tau$ appearing in Eq.~\eqref{eq:FirstDebiased}  are defined.
For our discussion,  we assume  $\hat \tau^2 = \< \bw - \bX \hat \bgamma, \bw \>/n$.
This choice captures many of the important intuitions, and avoids some inessential complications.
With this definition, some straightforward algebra gives
\begin{equation}\label{eq:db-cor-form}
    \hbeta^\de 
        - \beta 
        = 
        \frac{\< \bw - \bX \hat \bgamma , \by - \bX \hat \btheta \>}{n \hat \tau^2} - \beta 
        =
        \frac{\< \bw - \bX \hat \bgamma , \sigma \bz - \bX(\hat \btheta - \btheta) \>}{n \hat \tau^2}.
\end{equation}
Writing the debiased Lasso in this form reveals that the debiased estimate is a rescaled correlation of the residuals of two regressions: 
the feature of interest $\bw$ on the remaining covariates $\bX$ and the outcome $\by$ on the remaining covariates.
This is not surprising.
In the population, $\beta$ is identified in precisely this way.
Indeed, 
we may write the linear model \eqref{eq:lin-model}
in the form 
\begin{equation}
    \by = \bX \bar \btheta + \sigma \bz + \kappa \beta \bw^\perp,
\end{equation}
where $\bar \btheta := \btheta + \beta \bgamma$.
The parameter $\bar \btheta$ results from regressing $\by$ onto $\bX$ in the population.
Then,
\begin{equation}\label{eq:pop-cor}
    \beta = \frac{\E[(\bw - \bX \bgamma)^\top (\by - \bX \bar \btheta)]}{n\kappa^2}.
\end{equation}
Equation~\eqref{eq:db-cor-form} reveals that the debiased Lasso---at least in the case that $\hat \tau^2 = \< \bw - \bX \hat \bgamma, \bw \>/n$---is an empirical form of this identifying equation.

If $\hat \bgamma = \bgamma$ and $\hat \btheta = \btheta$, 
then it is easy to see that $\hat \tau^2 = \kappa^2 + O_p(n^{-1/2})$,
whence $\sqrt{n}(\hbeta^\de - \beta) = \< \kappa \bw^\perp , \sigma \bz \> / (\sqrt{n} \hat \tau^2) \approx \normal(0,\sigma^2/\kappa^2) $.
This is exactly what we would get in the low dimensional setting in which we regress $\beta \kappa \bw^\perp + \sigma \bz$ on $\kappa \bw^\perp$.
In the high-dimensional setting,
performance is degraded relative to this benchmark due to uncertainty in $\hat \bgamma$ and $\hat \btheta$.
In fact,
we may decompose
\begin{equation}
    \sqrt{n}
        (\hbeta^\de 
        - 
        \beta)
        = 
        \frac{\< \kappa \bw^\perp , \sigma \bz \>}{\sqrt{n}\hat \tau^2}
        -
        \frac{\< \kappa \bw^\perp , \bX(\hat \btheta - \btheta) \>}{\sqrt{n}\hat \tau^2}
        -
        \frac{\< \bX(\hat \bgamma - \bgamma) , \sigma \bz \>}{\sqrt{n}\hat \tau^2}
        + 
        \frac{\< \bX(\hat \bgamma - \bgamma) , \bX(\hat \btheta - \btheta) \>}{\sqrt{n} \hat \tau^2}.
\end{equation}
The final three terms are errors due to uncertainty in $\bgamma,\btheta$,
and are the source of the non-parametric rate $(s_\theta \wedge s_\gamma) \log p / n$.

Rather than construct an estimate of $\beta$ as a one-step correction to the Lasso estimate $(\hbeta,\hat \btheta)$,
we use the identifying equation \eqref{eq:pop-cor} as our starting point.
Recall that the statistician uses the samples indexed by $\cI_\theta \subset [N]$ (all labeled) to estimate $\bar\btheta$,
and the ones indexed by $\cI_\gamma \subset[N]$ (which can be either labeled or unlabeled) to estimate $\bgamma$.
These two sets have sizes $n_\theta := |\cI_\theta|$, $n_\gamma := |\cI_\gamma|$ and may overlap or even coincide..

A natural approach to estimating $\beta$ is to use an empirical version of the identifying equation Eq.~\eqref{eq:pop-cor} with plug-in estimates for $\hat \btheta,\hat \bgamma$: 
\begin{equation}\label{eq:DefNaive}
    \hbeta_{\mathrm{naive}} = \frac1{\hat \kappa^2} \frac1{ n_\theta } \sum_{i \in \cI_\theta } (y_i - \bx_i^\top \hat \btheta )(w_i - \bx_i^\top \hat \bgamma).
\end{equation}
Here $\hat \kappa$ is a consistent estimator of $\kappa$ (we will leave this unspecified for the moment),
and $\hat \btheta$, $\hat \bgamma$ are estimates of $\bar \btheta$, $\bgamma$ coming from samples $\cI_\theta$, $\cI_\gamma$, respectively.
Although our discussion up to this point has focused on the Lasso estimator,
which has received the most focus in the debiasing literature,
we will also develop theory for ridge regression.
In particular, we consider using estimators
\begin{equation}\label{eq:regressions}
\begin{gathered}
    \hat \btheta 
        :=
        \argmin_{\bpi}
            \Big\{
                \frac1{2n_\theta}\| \by_{ \cI_\theta } - \bX_{ \cI_\theta } \bpi \|_2^2 + \Omega_\theta(\bpi)
            \Big\},
    \\
    \hat \bgamma
        :=
        \argmin_{\bpi}
            \Big\{
                \frac1{2 n_\gamma }\| \by_{ \cI_\gamma } - \bX_{ \cI_\gamma } \bpi \|_2^2 
                + 
                \Omega_\gamma(\bpi)
            \Big\},
\end{gathered}
\end{equation}
where the penalty is, for $\sx\in\{\theta,\gamma\}$,
\begin{equation}\label{eq:penalty}
    \Omega_{\sx}(\bpi)
        :=
        \begin{cases}
            \sqrt{\frac{p}{n_\sx}} \frac{\lambda_{\sx}}{2}  \| \bpi \|_2^2 \quad & \text{for ridge regression},\\
            \frac{\lambda_{\sx}}{\sqrt{n_\sx}} \| \bpi \|_1 \quad & \text{for Lasso},
        \end{cases}
\end{equation}
and $\by_{\cI} \in \reals^{|\cI|}$ is the vector of outcomes corresponding to those units in a set $\cI$,
and $\bX_{\cI} \in \reals^{|\cI| \times p}$ is a matrix with rows corresponding to those units in $\cI$.
When $n_\gamma > p$,
we may set $\lambda_\gamma = 0$, which gives the least-squares estimator for $\bgamma$.

The estimator $\hbeta_{\mathrm{naive}}$ can be biased due to errors in estimating $\bar \btheta$ and $\bgamma$.
For instance,  when both $n_{\theta} ,n_{\gamma}>p$, and $\bar\btheta,\bgamma$ are estimated using least squares,
elementary linear regression theory implies that $\hbeta_{\mathrm{naive}}$ has a bias that is of order
$p/n_{\theta}$.  In the case of the Lasso, the bias is of order $\tilde{O}\big((s_\theta/n_\theta) \vee (s_\gamma/n_\gamma)\big)$
(neglecting logarithmic factors). 
Figure \ref{fig:SweepLamb} illustrates this phenomenon on two synthetic data distributions (see Section \ref{sec:Simulations} for definitions
and more extensive simulations).
In this case $\cI_{\theta}\subseteq \cI_{\gamma}$: we use all the available samples for the precision regression, and all the labeled samples
for the outcome regression. Further, $n_{\theta}<p$ while $n_{\gamma}>p$, and we use the Lasso  for the outcome regression and least
squares for the precision regression.  We notice that both the Lasso estimate
$\hbeta_{\mathrm{Lasso}}$ and the naive debiased Lasso $\hbeta_{\mathrm{naive}}$ are strongly biased. Indeed the bias is
of the same order as the true coefficient size.
In Section \ref{sec:why-bias}, we dicuss the sources of bias for the naive estimator, and its relation to the correlation in the errors in estimating $\bar \btheta$ and $\bgamma$.

It convenient to focus on the proportional regime in which $n_{\theta} ,n_{\gamma}\asymp p$, and (in the case of sparse vectors)
$s_{\theta} ,s_{\gamma}\asymp p$ as well. In this case, the naive debiased estimator $\hbeta_{\mathrm{naive}}$  has a
bias that does not vanish asymptotically.

The CAD estimator is a correction to the na\"ive estimator which removes the major sources of bias of $\hbeta_{\mathrm{naive}}$.
The correction to the na\"ive estimator involves the \emph{degrees of freedom} of each regression method, 
defined for $\sx \in \{\theta,\gamma\}$
\begin{equation}
    \Tr\Big(\bX_{ \cI_\sx }\frac{\de \hat \btheta(\by_{ \cI_\sx })}{\de \by_{ \cI_\sx }}\Big).
\end{equation}
For the regression methods above, this takes the explicit form
\begin{equation}
\label{eq:hat-df}
    \hat \df_{\sx} 
        :=
        \begin{cases}
            p \quad & \text{for least squares,}\\
            \Tr\big(\big(\frac1{n_{\sx}}\bX_{\cI_\sx}^\top\bX_{\cI_\sx} + \sqrt{\frac{p}{n_\sx}}\,\lambda_{\sx} \id_p \big)^{-1}\frac1{n_{\sx}}\bX_{\cI_\sx}^\top \bX_{\cI_\sx}\big) \quad & \text{for ridge regression,}\\
            \| \hat \btheta \|_0 \quad &\text{for Lasso.}
        \end{cases}
\end{equation}
The na\"ive estimator $\hbeta_{\mathrm{naive}}$ involves the empirical correlation between unadjusted regression residuals.
It is intuitively clear that these residuals are smaller than in the population when $i\in\cI_{\theta}$ (for the outcome regression) or
$i\in\cI_{\gamma}$ (for the precision regression). 
This effect can be compensated by defining the following degrees of freedom adjusted residuals.
\begin{equation}\label{eq:dof-adjusted-residual}
    r^\theta_i 
        =
        \begin{cases}
            y_i - \bx_i^\top \hat \btheta \quad & \text{if } i \not \in  \cI_\theta ,\\
            \frac{y_i - \bx_i^\top \hat \btheta}{1 - \hat \df_\theta/n_\theta} \quad & \text{if } i \in  \cI_\theta,
        \end{cases}
    \;\;\;\;\;\;\;\;
    r^\gamma_i 
        =
        \begin{cases}
            w_i - \bx_i^\top \hat \bgamma \quad & \text{if } i \not \in  \cI_\gamma ,\\
            \frac{y_i - \bx_i^\top \hat \bgamma}{1 - \hat \df_\gamma/ n_\gamma } \quad & \text{if } i \in  \cI_\gamma .
        \end{cases}
      \end{equation}
      Define
\begin{equation}\label{eq:DefNaive2}
    \hbeta_{\mathrm{naive}}'
        = 
        \frac1{\hat \kappa^2} \frac1{ n_\theta }
        \sum_{i\in \cI_\theta} r_i^\theta r_i^\gamma.
      \end{equation}
      The suggestion of replacing $y_i - \bx_i^\top \hat \btheta$ by  $r_i^\theta$ has already appeared in the literature
      \cite{javanmardMontanari2014,bellec2020debiasing,celentano2020lasso}, under the name of `degrees of freedom adjustement'.
      However, earlier work assumes $\bgamma$ known, and adjusted only the outcome residuals.
      If only the outcome residuals were adjusted in the definition of $\hbeta_{\mathrm{naive}}'$,
      we would arrive at certain earlier forms of the debiased Lasso with degrees-of-freedom adjustment, as indicated by Eq.~\eqref{eq:db-cor-form}.
        As anticipated, we instead treat
      the uncertainty in the outcome and precision model on the same footing, applying the degrees of freedom adjustment to the residuals from both.

The estimate $\hbeta_{\mathrm{naive}}'$ corrects some---but not all---sources of bias in $\hbeta_{\mathrm{naive}}$.
This is confirmed by Figure \ref{fig:SweepLamb}: $\hbeta_{\mathrm{naive}}'$ has a much smaller bias than
$\hbeta_{\mathrm{naive}}$ or the Lasso estimate $\hbeta$ in both data generating models.
However, the bias is non-vanishing in both cases.
The debiased estimator $\hbeta_{\cad}$, which is the main focus on this paper, will correct the sources of bias in $\hbeta_{\mathrm{naive}}'$ as well.

In order to correct for the bias of $\hbeta_{\mathrm{naive}}'$, it is useful to introduce the debiased outcome model
\begin{equation}
\label{eq:db-def}
    \hat \btheta^\de
        :=
        \hat \btheta + \frac{\bSigma^{-1}\bX_{ \cI_\theta }^\top(\by_{ \cI_\theta } - \bX_{ \cI_\theta } \hat \btheta)}{n_\theta - \hat \df_\theta},
\end{equation}
and similarly define $\hat \bgamma^\de$. Note that, if $\hat \btheta$ is the least squares estimator, then
  $\hat \btheta^\de=  \hat \btheta$ (and similarly for $\hat \bgamma$).
  Finally, we can provide the correlation adjusted debiased (CAD) estimator, which gives a nearly-unbiased estimate of $\beta$:
\begin{equation}\label{eq:cad-estimate}
    \hbeta_{\cad} 
        =
        \hbeta_{\mathrm{naive}}'
        +
        \frac{ n_{\theta\gamma} /p}{(n_\theta/p)( n_\gamma /p)}
        \Big(
            1 - \frac{\hat \df_\gamma}{p} - \frac{\hat \df_\theta}{p}
        \Big) \hbeta_{\mathrm{naive}}' 
        - 
        \frac{\< \hat \btheta^\de - \hat \btheta , \hat \bgamma^\de - \hat \bgamma \>_{\bSigma}}{\hat \kappa^2 },
      \end{equation}
      where $n_{\theta\gamma} = |\cI_\theta \cap \cI_\gamma|$,
      and we use the noise estimate (which we will see is consistent)
    \begin{equation}\label{eq:kappa-est}
        \hat \kappa^2 
            := 
            \Big(1 + \frac{p}{n_\gamma} - 2\frac{\hat \df_\gamma}{n_\gamma}\Big)\frac{\| \by_{\cI_\gamma} - \bX_{\cI_\gamma} \hat \bgamma \|_2^2}{n_\gamma(1 - \hat \df_\gamma/n_\gamma)^2}
            -
            \| \hat \bgamma^{\de} - \hat \bgamma \|_{\bSigma}^2.
    \end{equation}
      The second and third terms in Eq.~\eqref{eq:cad-estimate} do not appear in earlier work and are entirely due to the inaccurate estimation of $\bgamma$:
      indeed, they vanish if we let $n_{\gamma}\to\infty$, in which case $\hat\bgamma, \hat \bgamma^\de \to\bgamma$.
  Importantly, these terms do not only depend on the size of the error in estimating $\bgamma$ but also on its correlation with the error in estimating $\btheta$.
    This depends in a non-trivial way on the estimation method used and the extent to which $\cI_\theta$ and $\cI_\gamma$ overlap.
    Because these terms correct for the bias induced by the correlated errors in estimating $\bgamma$ and $\bar \btheta$, we refer to them as \emph{correlation adjustments}. (See Section \ref{sec:why-bias} for more discussion on this point).

      Although any consistent estimate $\hat \kappa$ of $\kappa$ can be used in Eqs.~\eqref{eq:DefNaive2} and \eqref{eq:db-def} without impacting the consistency of $\hbeta_{\cad}$,
      we will use the estimate $\hat \kappa$ provided in Eq.~\eqref{eq:kappa-est} throughout our analysis.
      It is a generalization of the noise estimate provided in \cite{bayatiErdogdu2013} (see discussion in Section \ref{sec:gen-thm}).
      In the case that $\hat \bgamma$ is the least-squares estimator,
      this estimate takes a simple form.
      One can then check that $\hat \bgamma^\de = \hat \bgamma$.
        Recalling that in this case $\hat \df_\gamma = p$,
        we see that $\hat \kappa^2 = \| \by_{\cI_\gamma} - \bX_{\cI_\gamma} \hat \bgamma\|_2^2 / (n_\gamma(1-p/n_\gamma)^2)$, which is the standard noise variance estimate for least-squares.
        In this case, computing $\hat \kappa^2$ does not require knowledge of $\bSigma$.

      Note also that the first two terms in Eq.~\eqref{eq:cad-estimate} and the first term in Eq.~\eqref{eq:kappa-est} can be evaluated from data, while for the third term in Eq.~\eqref{eq:cad-estimate} and second term in Eq.~\eqref{eq:kappa-est} this is not always the case,
      since they require knowledge of the correlation of the nuisance $\bSigma$. 
      Nevertheless, there are at least two important
      cases in which   $\hbeta_{\cad}$ is a practical statistical estimate: $(1)$ When least squares is used for the precision model in which case $\hat \bgamma^\de = \hat \bgamma$; $(2)$ When $\bSigma$
      is known or can be estimated, in which case the third term in Eq.~\eqref{eq:cad-estimate} and second term in Eq.~\eqref{eq:kappa-est} can be estimated as well. 
      While the second scenario might appear to
      bring us back to a similar assumption as the one we want to avoid (knowledge of $\bgamma$), it is not quite the same,
      as discussed below.
      Also, we expect that an inaccurate estimate of $\bSigma$  in Eq.~\eqref{eq:cad-estimate} will have a smaller effect than an inaccurate estimate of $\bgamma$, but leave this point to future investigation.

      \subsection{Main result and discussion}
      \label{sec:MainResult}

We postpone a complete statement of the assumptions in our main result to Section \ref{sec:assumptions}.
Our main result is the following.
\begin{theorem}\label{thm:debiased-estimate}
    Under assumptions \textsf{A1} and \textsf{A2},
    there exist constants $c_0,C_0>0$ depending uniquely on the constants in those assumptions (and, in particular, independent of
    $\beta,\btheta,\bgamma$), such that
    \begin{equation}
    \begin{gathered}
      \P\Big(| \hbeta_{\cad} - \beta| > \sqrt{\frac{p^{1-c_0}}{ n_\theta  \wedge n_\gamma }}\Big) \leq \frac{C_0}{p^2}\, .
    \end{gathered}
  \end{equation}
  \end{theorem}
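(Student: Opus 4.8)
The plan is to reduce the claim to a sharp \emph{joint} characterization of the two regressions in \eqref{eq:regressions} and then to verify, by an algebraic computation, that the correlation-adjustment terms in \eqref{eq:cad-estimate} cancel the non-vanishing bias of $\hbeta_{\mathrm{naive}}'$ up to fluctuations of the stated order. Since $\cI_\theta$ and $\cI_\gamma$ overlap, the errors $\hat\btheta-\bar\btheta$ and $\hat\bgamma-\bgamma$ are correlated through the shared rows of $\bX$, so---as flagged in the introduction---a single application of Gordon's comparison inequality does not suffice. Instead I would proceed in two stages: first characterize the precision regression, which depends only on $(\bX_{\cI_\gamma},\bw^\perp_{\cI_\gamma})$, using the Gaussian-comparison machinery of \cite{celentano2020lasso}; then, conditionally on the output of that regression, characterize the outcome regression via a conditional version of Gordon's inequality. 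This two-stage analysis should produce concentration of every scalar functional entering $\hbeta_{\cad}$: the error geometries $\|\hat\btheta^\de-\bar\btheta\|_\bSigma^2$, $\|\hat\bgamma^\de-\bgamma\|_\bSigma^2$ and, crucially, the cross term $\<\hat\btheta^\de-\hat\btheta,\hat\bgamma^\de-\hat\bgamma\>_\bSigma$; the normalized degrees of freedom $\hat\df_\theta/p$, $\hat\df_\gamma/p$; the relevant residual norms; and the consistency of $\hat\kappa^2$ for $\kappa^2$.

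\textbf{Step 1 (algebraic decomposition).} Using $y_i=\bx_i^\top\bar\btheta+\sigma z_i+\kappa\beta w_i^\perp$ and $w_i=\bx_i^\top\bgamma+\kappa w_i^\perp$, substitute into the degrees-of-freedom adjusted residuals \eqref{eq:dof-adjusted-residual} to get, on the respective fit samples,
\[
  r_i^\theta=\frac{\sigma z_i+\kappa\beta w_i^\perp-\bx_i^\top(\hat\btheta-\bar\btheta)}{1-\hat\df_\theta/n_\theta},\qquad r_i^\gamma=\frac{\kappa w_i^\perp-\bx_i^\top(\hat\bgamma-\bgamma)}{1-\hat\df_\gamma/n_\gamma},
\]
with the denominators set to $1$ off the fit samples. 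Expanding $r_i^\theta r_i^\gamma$ and averaging over $i\in\cI_\theta$ separates into: a main term proportional to $\frac1{n_\theta}\sum_i(w_i^\perp)^2\approx1$ which, after division by $\hat\kappa^2\approx\kappa^2$, produces $\beta$; linear-in-error cross terms such as $\frac1{n_\theta}\sum_i z_i\bx_i^\top(\hat\bgamma-\bgamma)$ and $\frac1{n_\theta}\sum_i w_i^\perp\bx_i^\top(\hat\btheta-\bar\btheta)$, which by the definitions \eqref{eq:db-def} are proportional to inner products of $\bSigma(\hat\bgamma^\de-\hat\bgamma)$ and $\bSigma(\hat\btheta^\de-\hat\btheta)$ with design-correlated noise; and the bilinear term $\frac1{n_\theta}\sum_{i\in\cI_\theta}(\bx_i^\top(\hat\btheta-\bar\btheta))(\bx_i^\top(\hat\bgamma-\bgamma))$, whose conditional mean deviates from $\<\hat\btheta-\bar\btheta,\hat\bgamma-\bgamma\>_\bSigma$ by terms involving the degrees of freedom (from each regression's dependence on its own design) and, on the overlap, extra terms scaling with $n_{\theta\gamma}$. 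Collecting the non-vanishing pieces, I expect $\hbeta_{\mathrm{naive}}'-\beta$ to equal a deterministic bias expressible through $n_{\theta\gamma}/p$, $\hat\df_\theta/p$, $\hat\df_\gamma/p$, $\hbeta_{\mathrm{naive}}'$ and $\<\hat\btheta^\de-\hat\btheta,\hat\bgamma^\de-\hat\bgamma\>_\bSigma/\hat\kappa^2$, plus a mean-zero fluctuation of order $\sqrt{p^{1-c_0}/(n_\theta\wedge n_\gamma)}$.

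\textbf{Step 2 (matching the correction).} With the joint characterization in hand, I would show that the deterministic bias identified in Step 1 is exactly the negative of the second plus third terms of \eqref{eq:cad-estimate}, so that $\hbeta_{\cad}-\beta$ retains only the fluctuation. The main computations are: (i) from the identity $\hat\btheta^\de-\bar\btheta=(\hat\btheta-\bar\btheta)+\bSigma^{-1}\bX_{\cI_\theta}^\top(\sigma\bz_{\cI_\theta}+\kappa\beta\bw^\perp_{\cI_\theta}-\bX_{\cI_\theta}(\hat\btheta-\bar\btheta))/(n_\theta-\hat\df_\theta)$, the Gordon analysis represents the debiasing corrections, in the relevant inner products, through explicit Gaussian vectors ($\propto\bSigma^{-1/2}\bg$) whose joint law reflects the overlap of $\cI_\theta$ and $\cI_\gamma$, so that $\<\hat\btheta^\de-\hat\btheta,\hat\bgamma^\de-\hat\bgamma\>_\bSigma$ has a computable deterministic limit; and (ii) $\hat\kappa^2$ in \eqref{eq:kappa-est}, which is the $\bgamma$-regression analogue of the Bayati--Erdogdu noise estimator corrected by $-\|\hat\bgamma^\de-\hat\bgamma\|_\bSigma^2$, concentrates at $\kappa^2$. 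Both follow from the same two-stage characterization, and the specific coefficients $\frac{n_{\theta\gamma}/p}{(n_\theta/p)(n_\gamma/p)}$ and $1-\hat\df_\gamma/p-\hat\df_\theta/p$ appear precisely because the bias involves the overlap size and the degrees of freedom in exactly this combination.

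\textbf{Step 3 (concentration; the hard part).} To conclude, each scalar functional concentrates either by Lipschitz--Gaussian concentration applied to the auxiliary (Gordon) optimization problems, or---for the bilinear design-correlated fluctuations, conditional on the regressions---by sub-exponential tail bounds; a union bound over the $O(1)$ functionals gives the $C_0/p^2$ failure probability, and the polynomial factor $p^{1-c_0}$ in the rate (rather than the parametric $O(1)$ numerator) is slack that comfortably absorbs all error terms, which need not be estimated sharply. The principal obstacle is the conditional Gordon step underlying Steps 1--2: the outcome regression must be characterized conditionally on $\bgamma$-regression quantities while sharing the rows of $\bX$ indexed by $\cI_\theta\cap\cI_\gamma$, so the correct dominating Gaussian process is not the naive one and must be built to respect the overlap-induced correlation---which is exactly the correlation the adjustment terms in \eqref{eq:cad-estimate} are calibrated against. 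Verifying that this conditional comparison yields matching upper and lower bounds on the conditional optimal value (needed to pin down limits, not merely bound them), and propagating the non-asymptotic error terms through both stages uniformly in $\beta,\btheta,\bgamma$, is the technical core of the argument.
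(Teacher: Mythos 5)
Your plan is sound and it rests on exactly the technical core the paper uses: a two-stage characterization in which the precision regression is analyzed by the standard Gaussian-comparison argument and the outcome regression is analyzed conditionally on it via a conditional Gordon inequality, yielding concentration of the cross term $\<\hat \btheta^\de-\hat\btheta,\hat\bgamma^\de-\hat\bgamma\>_{\bSigma}$, of $\hat\df_\theta/p,\hat\df_\gamma/p$, of the adjusted-residual norms, and of $\hat\kappa^2$. Where you diverge is in the final reduction. You propose to expand $\hbeta_{\mathrm{naive}}'-\beta$ term by term, identify its deterministic bias, and then verify that the second and third terms of Eq.~\eqref{eq:cad-estimate} cancel it; this would work, but it forces you to re-derive and match several fixed-point quantities. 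The paper instead rewrites the two regressions symmetrically as $\by_k=\bX\btheta_k+\be_k$, observes that $\beta=S_{e,12}/S_{e,11}$ is a ratio of entries of the noise covariance $\bS_e$, and checks the exact algebraic identity $\hbeta_{\cad}=\widehat S_{e,12}/\widehat S_{e,11}$ for a noise-covariance estimator $\widehat\bS_e$ constructed from the fixed-point relations (Table~\ref{tab:fix-pt}); Theorem~\ref{thm:debiased-estimate} then follows from a single concentration statement, $\|\widehat\bS_e-\bS_e\|_{\sF}\lesssim\sqrt{p/(n_\theta\wedge n_\gamma)}\,\epsilon$ with probability $1-\sC(\epsilon)e^{-\sc(\epsilon)p}$ (Theorem~\ref{thm:noise-est}), plus the $\delta$-method, with no bias-matching computation needed. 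One small correction to your Step 3: the $p^{-c_0}$ slack in the rate is not merely generous padding that absorbs unestimated errors; it is what converts the exponential bounds into the stated $C_0/p^2$ form, because the exponent $\sc(\epsilon)$ only satisfies $\sc(\epsilon)\gtrsim\epsilon^{\nu}$ for some $\nu>2$, so one must take $\epsilon=p^{-c_0/2}$ with $c_0$ small enough that $\epsilon^{\nu}p\gg\log p$, and the deviation $\sqrt{p/(n_\theta\wedge n_\gamma)}\,\epsilon$ then becomes $\sqrt{p^{1-c_0}/(n_\theta\wedge n_\gamma)}$. With that bookkeeping made explicit, your route would deliver the theorem, at the cost of a longer verification than the paper's ratio-of-covariances shortcut.
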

  \begin{remark}
    This theorem implies in particular that $\hbeta_{\cad}$  is a consistent estimate of $\beta$ in the proportional asymptotics,
    despite $\bgamma$ being unknown.
    Namely, for any sequence of problems with $p\to\infty$, and both $n_\theta,s_\theta \ge \delta_0 p$ and $n_\gamma \ge \delta_0 p$ (for some constant $\delta_0>0$), we have
  \begin{align}
    | \hbeta_{\cad} - \beta|\stackrel{\text{a.s.}}{\longrightarrow} 0\, .
  \end{align}
  This occurs even when $s_\theta = \Omega(n_\theta)$ and $s_\gamma = \Omega(n_\gamma)$ as well.
  As discussed above,  the same is not achieved by earlier debiasing procedures.
\end{remark}

\begin{remark}
  We do not expect the upper bound on the size of $| \hbeta_{\cad} - \beta|$ in Theorem \ref{thm:debiased-estimate}
  to be optimal: namely we expect $\hbeta_{\cad} - \beta$ to be significantly smaller than  $O(\sqrt{p^{1-c_0}/n})$.
  As a consequence, we do not propose to use Theorem \ref{thm:debiased-estimate} as a basis to construct confidence intervals, which would be overly conservative.

  Nevertheless, in Section \ref{sec:Simulations} we provide simulation evidence that the CAD estimator is approximately normal and centered on the true parameter. This suggests that developing inference procedures based on $\hbeta_{\cad}$
  is a promising avenue for future research.
\end{remark}

\begin{remark}
It is instructive to compare Theorem \ref{thm:debiased-estimate} with earlier results for the Lasso or debiased Lasso.
 Tight  coordinatewise bounds for the Lasso estimator $\hbeta$ were proved in \cite{Lounici2008}. These imply 
$|\hbeta-\beta| = O(\sqrt{\log p/n})$ but assume an incoherence condition that can only hold in the very sparse regime $s_\theta\ll \sqrt{n}$. 
The best bounds for the debiased Lasso (with degrees of freedom adjustement) yield \cite{bellec2020debiasing},
\begin{align}
|\hbeta^{\de}-\beta| \lesssim \frac{(s_\theta \wedge s_\gamma)\log p}{n}\wedge \|\bgamma\|_1\sqrt{\frac{\log p}{n}}+\frac{1}{\sqrt{n}}\, .
\end{align}
Considering for instance the case in which $\bgamma$ has $s_{\gamma}$ non-zero entries of magnitude $1/\sqrt{s_{\gamma}}$, the second term dominates.
The bound then reduces to $|\hbeta^{\de}-\beta| \lesssim \sqrt{s_{\gamma}\frac{\log p}{n}}$ which is larger than one if $s_{\gamma}\asymp p\asymp n$ (even if $n>p$).
In contrast, our approach yields  $|\hbeta_{\cad} - \beta|=o_p(1)$ with an estimator that can be computed from data as soon as $n_{\gamma}>p$.
\end{remark}

We finally compare our upper bound with lower bounds available in the literature, for the case of unknown precision matrix.
As proven in \cite{caiGuo2017,javanmard2018}, 
in the fully supervised setting $\cI_{\gamma}=\cI_{\theta}$ ($n_{\theta}=n_{\gamma}=:n$), and if
$s_\theta \lesssim  p^\eta \wedge (n/\log p)$ for some $\eta < 1/2$,
no estimator of $\beta$ achieves worst-case error $o_p\big( (\sigma/\kappa)\big(1/\sqrt{n} +  (s_\theta \wedge s_\gamma) \log p / n \big) \big)$ over the parameter class (see \cite[Proposition 4.2]{javanmard2018})
\begin{equation}
    \Big\{
        (\kappa,\sigma,\beta,\bgamma,\btheta,\bSigma)
        \Bigm|
        \kappa \geq c_1,\, 
        \sigma \leq c_2,\,
        \| \bgamma\|_2 \leq c_3,\,
        \|\bgamma\|_0 \leq s_\gamma,\,
        \|\btheta\|_0 \leq s_\theta,\,
        c_3\id \preceq \bSigma \preceq c_4\id
    \Big\}.
  \end{equation}
  Theorem  \ref{thm:debiased-estimate} guarantees that $\hbeta_{\cad}$ achieves an error that is significantly smaller than this
  lower bound, provided
  $ (s_\theta \wedge s_\gamma)^2  \gg n p^{1-c'_0}$, which is for instance the case in the proportional asymptotics.

  One might think that the improvement of our estimator relative to this minimax lower bound results from our use of
  $n_{\gamma}>p$ unlabeled samples for the precision model (if we use least squares for the latter), or from our assumption that
  $\bSigma$ is known (if Lasso or ridge are used for the precision model).
However, this is not the case.
Indeed, inspecting the  proofs of \cite{caiGuo2017} and \cite[Proposition 4.2]{javanmard2018}  reveals that the minimax lower bound
applies also to the parameter space with restriction $\bSigma = \id$.

The reason why our upper bound does not contradict the lower bounds of \cite{caiGuo2017,javanmard2018}  is
that the latter only apply to $s_\theta = o(p^{1/2})$. 
As far as we know, proving sharp lower bounds for $s_\theta \gtrsim p^{1/2}$ remains an open problem.

\section{Why are the na\"ive estimators biased?}
\label{sec:why-bias}

To build intuition about reason the na\"ive estimators $\hbeta_{\mathrm{naive}}$ and $\hbeta_{\mathrm{naive}}'$ 
are biased, it is useful to consider a simpler estimator in which the correlation between residuals is computed on different units
from the ones used to estimate $\hat \btheta$ and $\hat \bgamma$.
Namely, consider $\cI_\theta,\cI_\gamma \subset [N]$ and $\cI\subseteq [N]\setminus (\cI_\theta\cup\cI_\gamma)$ (where, unlike above, $\cI_\theta \cup \cI_\gamma$ is a strict subset of $[N]$).
Define
\begin{equation}
    \hbeta_{\mathrm{split}}
        := 
        \frac1{\kappa^2}\frac1{|\cI|}
        \sum_{i \in \cI}(y_i - \bx_i^\top \hat \btheta)(w_i - \bx_i^\top \hat \bgamma).
      \end{equation}
For simplicity (and because we are only interested here in building intuition), we set $\hat \kappa^2 = \kappa^2$.
We compute
\begin{equation}
\begin{aligned}
    \E[\hbeta_{\mathrm{split}}\bigm| \hat \btheta,\hat \bgamma]
        &=
        \frac1{\kappa^2}\E[(\sigma z_i + \kappa \beta w_i^\perp)\kappa w_i^\perp\bigm| \hat \btheta,\hat \bgamma]
        +
        \frac1{\kappa^2}\E[(\hat \btheta - \bar \btheta)^\top \bx_i \bx_i^\top(\hat \bgamma - \bgamma)\bigm| \hat \btheta,\hat \bgamma]
    \\
        &=
        \beta 
        +
        \< \hat \btheta - \bar \btheta , \hat \bgamma - \bgamma \>_{\bSigma}/\kappa^2.
\end{aligned}
\end{equation}
The sample splitting estimator is biased when the error in estimating $\bar \btheta$ is aligned (in the sense of having a non-vanishing inner product)
with the error in estimating $\bgamma$.
There are two reasons these errors might be aligned.
The first reason is that the estimators for $\bar \btheta$ and $\bgamma$ are themselves biased---as is the case when using the Lasso or ridge regression---and these biases are non-orthogonal. 
For example, 
if the signs of $\bar \theta_i$ and $\gamma_i$ are the same and both are estimated using the Lasso,
we expect the errors $\hat \theta_i - \bar \theta_i$ and $\hat \gamma_i - \gamma_i$ to also be aligned,
even if $\hat \btheta$ and $\hat \bgamma$ are computed on disjoint samples of data (i.e., $\cI_\theta \cap \cI_\gamma = \emptyset$).
We call this source of bias \emph{correlated shrinkage bias}.
The second reason the error in estimating $\bar \btheta$ may be aligned with the error in estimating $\bgamma$ is that the estimates may come from the same or overlapping samples. 
Indeed, 
in the case $n_\gamma=n_\theta > p$ and both $\hat \btheta$ and $\hat \bgamma$ are computed with least-squares on the same units,
we do not expect the errors $\hat \btheta - \bar \btheta $ and $\hat \bgamma - \bgamma$ to be uncorrelated. 
We call this source of bias \emph{overlapping samples bias}.

Characterizing the correlation $\< \hat \btheta - \bar \btheta , \hat \bgamma - \bgamma \>_{\bSigma}$ theoretically
is a key technical contribution of the present work.
As mentioned above, this requires the joint characterization of two regularized regressions,
a type of problem that goes beyond the scope of previous work \cite{karoui2013,dobriban2018,bayati2011lasso,thrampoulidis2015,miolane2018distribution,celentano2020lasso, mondelli2020}.
More important than characterizing the correlated shrinkage bias and overlapping samples bias theoretically is correcting them empirically.
One proposal would be to use sample splitting again, computing $\hat \btheta$ and $\hat \bgamma$ with different samples.
That is,
we could take $\cI_\theta,\cI_\gamma,\cI$ be mutually disjoint.
This approach would eliminate overlapping samples bias but not correlated shrinkage bias.
Thus, sample splitting does not correct for bias in the estimation of $\beta$. 
Moreover, it comes at the cost of statistical efficiency.

One attempt to improve the estimator $\hbeta_{\mathrm{split}}$ is to instead use a leave-one-out estimator:
\begin{equation}
    \hbeta_{\mathrm{loo}}
        :=
        \frac1{\kappa^2}\frac1{n_\theta}
        \sum_{i \in \cI_\theta}(y_i - \bx_i^\top \hat \btheta^{(-i)})(w_i - \bx_i^\top \hat \bgamma^{(-i)}),
\end{equation}
where for each $i$ the estimate $\hat \btheta^{(-i)}$ is computed using all labeled units except unit $i$ (and similarly for $\hat\bgamma$, except that unlabeled units may also be used).
For a single term in this average, we have $\E[(y_i - \bx_i^\top \hat \btheta^{(-i)})(w_i - \bx_i^\top \hat \bgamma^{(-i)})/\kappa^2\bigm| \by_{-i},\bX_{-i}] = \beta + \< \hat \btheta^{(-i)} - \bar \btheta , \hat \bgamma^{(-i)} - \bgamma \>_{\bSigma}/\kappa^2$,
where $\by_{-i},\bX_{-i}$ contain the data from all units except unit $i$.
Thus, the leave-one-out estimator is also subject to correlated shrinkage bias and overlapping samples bias.
The primary motivation to use it in place of the sample-splitting estimator $\hbeta_{\mathrm{split}}$ is to improve statistical efficiency.
Indeed, the leave-one-out estimator uses more samples to estimate the nuisance parameters $\bar\btheta,\bgamma$, and it averages over more samples to compute the estimate $\hbeta_{\mathrm{loo}}$.

One drawback of the leave-one-out procedure is the computational cost of computing it:
in principle, it requires computing $2n_{\theta}$ regression estimates, which may be prohibitive if $n_\theta$ and $p$ are large.
The estimator $\hbeta_{\mathrm{naive}}'$, introduced in Eq.~\eqref{eq:DefNaive2},
attempts to approximate the leave-one-out estimator while only requiring that we compute two high-dimensional regression estimates.
It is based on the following remarkable approximation which holds for the in-sample residuals (that is, those $i$ which are used to compute $\hat \btheta$):
\begin{equation}
\label{eq:dof-loo-approx}
    y_i - \bx_i^\top \hat \btheta
        \approx
        (1 - \hat \df_\theta/n_\theta)(y_i - \bx_i^\top \hat \btheta^{(-i)}).
\end{equation}
The analogous approximation holds for the precision regression residuals.
We do not quantify the quality of this approximation in the present work, 
though similar approximations have appeared elsewhere \cite{karoui2013,bellec2020debiasing,yadlowsky2021sloe}.
Based on this approximation, the degrees-of-freedom adjustment in Eq.~\eqref{eq:dof-adjusted-residual}
is an attempt to approximate the leave-one-out residuals without computing them directly.
Without being rigorous, we expect $ \hbeta_{\mathrm{naive}}' \approx \hbeta_{\mathrm{loo}}$, where $ \hbeta_{\mathrm{naive}}'$ is much easier to compute.
As we have already mentioned,
if we were to apply the degrees-of-freedom adjustment only to the outcome residuals in the definition of $\hbeta_{\mathrm{naive}}'$,
we would arrive at certain earlier forms of the debiased Lasso with degrees-of-freedom adjustment.
Such methods can be thought of as attempts to approximate a leave-one-out estimator that uses oracle knowledge of (or a very good estimate of) $\bgamma$.

The discussion in the previous paragraph motivates why we use the degrees-of-freedom adjusted  estimator $\hbeta_{\mathrm{naive}}'$ in place of the unadjusted  estimator $\hbeta_{\mathrm{naive}}$ defined in Eq.~\eqref{eq:DefNaive}.
In particular, $\hbeta_{\mathrm{naive}}'$ corrects a bias present in the unadjusted estimator $\hbeta_{\mathrm{naive}}$, which
we call \emph{overfitting bias}.
It results from the fact that in-sample residuals---i.e., the residual $y_i - \bx_i^\top \hat \btheta$ when $(y_i,\bx_i)$ is used in the fitting of $\hat \btheta$---tend to be smaller than the residual $y_i - \bx_i^\top \hat \btheta$ when $(y_i,\bx_i)$ is not used in the fitting of $\hat \btheta$.
This phenomenon will be unsurprising to many statisticians, and is consistent with Eq.~\eqref{eq:dof-loo-approx}.
Although $\hbeta_{\mathrm{naive}}'$ corrects the overfitting bias present in the unadjusted estimator $\hbeta_{\mathrm{naive}}$,
it is still subject to correlated shrinkage bias and overlapping samples bias.
Indeed, it approximates the leave-one-out estimator $\hbeta_{\mathrm{loo}}$, which is subject to these biases as well. 

The primary innovation of the correlation adjusted debiasing estimate ---and the reason for its name--- is to correct for the bias in $\hbeta_{\mathrm{naive}}'$ due to the correlation $\< \hat \btheta - \bar \btheta , \hat \bgamma - \bgamma \>_{\bSigma}$.
Importantly, CAD is a fully empirical procedure that does not require any prior knowledge of either $\bgamma$ or $\bar \btheta$.
In the case that least-squares is used for the precision regression (provided $n_\gamma > p$),
it also does not require any prior knowledge of $\bSigma$. 
We emphasize that even when least-squares is used for the precision regression,
the overlapping samples bias need not be 0,
so that correcting for it in this case is still non-trivial.
Because we do not resort to sample splitting, we correct the bias without sacrificing statistical efficiency.
Although the discussion in this section has been non-rigorous, Theorem \ref{thm:debiased-estimate} shows that the degrees-of-freedom and correlation adjustments to the naive estimate $\hbeta_{\mathrm{naive}}$ together achieve the desired goal of
removing all non-vanishing bias.

We summarize the adjustments which enter the construction of the CAD estimator and the biases they correct for:
\begin{enumerate}

    \item 
    \textbf{Degrees-of-freedom adjustment.} 
    The degrees-of-freedom adjustment scales in-sample residuals by $(1 - \hat \df_{\sx}/n_{\sx})^{-1}$ (see Eq.~\eqref{eq:dof-adjusted-residual}).
    Its purpose is to correct for overfitting bias.
    Overfitting bias occurs because in-sample residuals behave differently than out-of-sample residuals; in particular, they are shrunk towards zero. 
    There is a nearly deterministic relationship between in- and out-of-sample residuals,
    with the former being $(1 - \hat \df_{\sx}/n_{\sx})$ times the latter.
    The degrees-of-freedom adjustment applied to the outcome model has appeared in previous work \cite{javanmardMontanari2014,bellec2020debiasing,bellec2020second}.

    \item 
    \textbf{Correlation adjustment.}
    The correlation adjustment refers to the second and third terms in Eq.~\eqref{eq:cad-estimate},
    and it corrects for bias due to the correlation of the estimation errors $\< \hat \btheta - \bar \btheta , \hat \bgamma - \bgamma \>_{\bSigma}$.
    This correlation results from correlated shrinkage bias and overlapping samples bias.
    Correcting for it is the primary innovation of the current work.

    Correlated shrinkage bias occurs when the bias of $\hat \btheta$ and $\hat \bgamma$ are both non-zero and aligned with each other. 
    Because least-squares regression is unbiased, 
    correlated shrinkage bias only occurs when both $\hat \btheta$ and $\hat \bgamma$ are fit using a penalized procedure.
	Note that when either $\hat \btheta$ or $\hat \bgamma$ is fit with least-squares, the third term in Eq.~\eqref{eq:cad-estimate} disappears.
   This indicates that the final term in Eq.~\eqref{eq:cad-estimate} is the only term involved in correcting correlated shrinkage bias.

    Overlapping samples bias occurs because the same or overlapping samples are used to estimate $\bar \btheta$ and $\bgamma$.
    When there is no overlap between the units used to estimate the outcome model and those used to estimate the precision model, 
    the second term in Eq.~\eqref{eq:cad-estimate} disappears because $n_{\theta\gamma} = |\cI_\theta \cap \cI_\gamma| = 0$.
    This indicates that the second term is primarily involved in correcting overlapping samples bias.

\end{enumerate}

\section{Assumptions}
\label{sec:assumptions}

We state the formal assumptions for our main result, Theorem \ref{thm:debiased-estimate}. In what follows, we denote by
$\mu_1(\bA)\ge \mu_2(\bA)\ge \dots\ge\mu_n(\bA)$ the ordered eigenvalues of the symmetric matrix $\bA\in\reals^{n\times n}$.

The assumptions we will use when studying the Lasso estimator are exactly the assumptions used in \cite{celentano2020lasso}, which we repeat here. 
A vector $\bt \in \reals^p$ is said to $(\bx,M)$-\emph{approximately sparse} for $\bx \in \{-1,0,1\}^p$ and $M > 0$ if there exists $\bar \bt \in \reals^p$ with $\| \bar \bt - \bt \|_1 / \sqrt{p} \leq M$ and $\bx \in \sign(\bar \bt)$, where the sign is taken in an entry-wise manner: $\sign(x) = 1$ for $x > 0$, $\sign(x) = -1$ for $x < 0$, and $\sign(x) = \{-1,0,1\}$ for $x = 0$.

The structural assumption we will make depends on a certain functional notion of Gaussian width provided in \cite{celentano2020lasso}.
We consider the probability space $(\reals^p,\cB,\gamma_p)$ where $\cB$ is the Borel $\sigma$-algebra and $\gamma_p$ is the standard Gaussian measure in $p$ dimensions. 
Let $L^2$ be the space of functions $\bmf : \reals^p \rightarrow \reals^p$ which are square integrable in $(\reals^p,\cB,\gamma_p)$, and denote by $\< \bmf_1,\bmf_2\>_{L^2} = \E[\< \bmf_1(\bxi),\bmf_2(\bxi)\>_{L^2}]$ the inner product on this space, where $\bxi \sim \normal(0,\id_p)$. Let $\| \bmf \|_{L^2}$ be the norm induced by this inner product. 
For $\bx \in \{-1,0,1\}^p$ and $\bSigma \in \SS_+^p$, define
\begin{equation}
    F(\bv;\bx,\bSigma)
        :=
        \< \bx ,\bSigma^{-1/2} \bv\> + \| ( \bSigma^{-1/2} \bv)_{S^c} \|_1
        \quad \text{for }S := \mathsf{supp}(\bx),
\end{equation}
and define the \emph{functional Gaussian width}
\begin{equation}
    \cG(\bx,\bSigma) 
        := 
        \sup
        \Big\{
            \frac1{\sqrt{p}}\<\bv,\bxi\>_{L^2}:
            \;\;
            \bv \in L^2,
            \;\;
            \| \bv\|_{L^2} \leq 1,
            \;\;
            \E[F(\bv;\bx,\bSigma)]\leq 0
        \Big\},
\end{equation}
where $\bxi$ denotes the identity function of $L^2$.
Finally, we can provide the notion of approximate sparsity used in our assumptions for the Lasso.
\begin{definition}
    We say $\bt$ is $(s,W,M)$-approximately sparse for $W > 0$ and $s \in \ZZ_{>0}$ if there exists $\bx \in \{-1,0,1\}$ wuch that $\bt$ is $(\bx,M)$-approximately sparse, $\| \bx \|_0 = s$, and $\cG(\bx,\bSigma)\leq W$.
\end{definition}
\noindent We refer the reader to \cite{celentano2020lasso} for a more detailed discussion of the functional Gaussian width, its relation to other notions of Gaussian width, and references to the literature.

\begin{description}

    \item{\textsf{A1.}} The population covariance has eigenvalues bounded by $0 < \mu_{\min} \leq \mu_j(\bSigma) \leq \mu_{\max} < \infty$, 
    the noise variances are bounded by $ 0 < \kappa_{\min}^2 \leq \kappa^2 \leq \kappa_{\max}^2 < \infty$, $0 < \sigma_{\min}^2 \leq \sigma^2 \leq \sigma_{\max}^2 < \infty$,
    and the parameter of interest is bounded by $|\beta| \leq \beta_{\max} < \infty$.

    \item{\textsf{A2.}} 
    Depending on the regression method used, we make the following assumptions (for either $\sx \in \{\theta,\gamma\}$):
    \begin{itemize}

        \item 
        For least-squares regression, we assume $ n_{\sx} /p \geq \delta_{\mathrm{min}} > 1$.

        \item 
        For ridge regression, we assume the parameter has bounded $\ell_2$-norm $\| \bar \btheta \|_2^2 \leq r_{\max}^2$ or $\| \bgamma \|_2^2 \leq r_{\max}^2$,
        regularization parameter
        $0 \leq \lambda_{\min} \leq \lambda_{\sx} \leq \lambda_{\max} < \infty$, and $ n_{\sx} /p \geq \delta_{\mathrm{min}} > 0$.
        We assume $(\delta_{\mathrm{min}}-1) \vee \lambda_{\min} > 0$.

        \item 
          For the Lasso, we assume the parameter $\bar \btheta$ (resp.\ $\bgamma$) is $(s,\sqrt{ n_{\sx} /p}(1-\Delta_{\min}),M)$-approximately sparse for some $s/p > \nu_{\min} $, $\Delta_{\min} \in (0,1)$, and $ n_{\sx} /p \in [\delta_{\min},\delta_{\max}]$ (for $\sx\in\{\theta,\gamma\}$) for
          $0 < \delta_{\min} \leq \delta_{\max} < \infty$,
        and $0 < \lambda_{\min} < \lambda_{\sx} < \lambda_{\max} < \infty$.

    \end{itemize}
    (We emphasize that each bullet point applies only to the model which uses that regression method. For example, if we use least squares for the precision regression and the Lasso for the outcome regression, we assume $n_\gamma/p \geq \delta_{\min}$ and $n_\theta/p \in [\delta_{\min},\delta_{\max}]$.)
\end{description}
We denote the collection of model parameters appearing in assumptions \textsf{A1} and \textsf{A2} by $\cPmodel$ and $\cPregr$, respectively.
That is, 
\begin{equation}
    \cPmodel = (\mu_{\min},\mu_{\max},\kappa_{\min},\kappa_{\max},\sigma_{\min},\sigma_{\max},\beta_{\max})
    \text{ and } \cPregr = (\delta_{\min},\delta_{\max}, \lambda_{\min},\lambda_{\max}, \Delta_{\min}, M,\nu_{\min}).
\end{equation}

\section{Numerical simulations}
\label{sec:Simulations}

We carried out a numerical comparison of  various debiasing methods for synthetic data under a few data distribution models. We
focus on the case $\cI_{\theta}\subseteq \cI_{\gamma}$, with $n_{\gamma}>p$, $n_{\theta}<p$, so that we can evaluate
$  \hbeta_{\cad}$ using least squares for precision regression.
In this secion, we denote the Lasso regularization parameter in the outcome model as $\lambda := \lambda_\theta$.
Below, we define the data distributions used in simulations in terms of the joint covariance $\bSigma_+\in\reals^{\op\times \op}$,
$\op=p+1$, of $(w_i,\bx_i)$:

\vspace{0.1cm}

\noindent {\bf Circulant}. The precision matrix $\bSigma_+^{-1}$ is a circulant matrix, namely
$(\bSigma_+^{-1})_{ij}= s(i-j)$ where it is understood that $s(-k) = s(k)$ and $s(\op+k) = s(k)$ for all $k$.
(Note that, as a consequence, $\bSigma_+$ is circulant as well.) As a special example, we consider the following  choice:
\begin{align}
  s(0) =1\, ,\;\;\; s(1)=0.4\, ,\;\;\; s(2)=0.3\, ,\;\;\; s(3)=0.2\,\;\;\; s(k)=0\,\;\forall k\in\{4,\dots, \op-1\}\, ,\label{eq:CircSpecial}
\end{align}
together with the following choice for the parameters vector (with $s_0$ even):
\begin{align}
  &  \beta= \mu\,,\;\;\;\; \theta_1=\dots=\theta_{s_0/2-1}=\mu\,,\;\;\;\; \theta_{s_0/2}=\dots=\theta_{s_0-1}=-\mu\, , \;\;\;\; 
  \theta_{i}=0 , \;\;\; \forall i\ge s_0\, .
\end{align}

\vspace{0.1cm}

\noindent {\bf Block model}. The covariance has a block structure with block comprising $\lb$ equicorrelated features.
Namely, we denote by $b(i) = \lfloor i/\lb\rfloor$ the block index for the $i$-th covariate, and set
\begin{align}
  (\bSigma_+)_{ij}=\begin{cases}
    1& \mbox{ if $i=j$,}\\
    q/\lb & \mbox{ if $b(i)=b(j)$ but $i\neq j$,}\\
    0& \mbox{ otherwise.}
  \end{cases}\label{eq:BlockCovariance}
\end{align}
Futher, for $\mu\in \reals$ and $s\in\{+1,-1\}$, we consider the following model for the coefficients vector
\begin{align}
  &  \beta= \mu\,,\;\;\;\; \theta_1=\dots=\theta_{s_0-1}=-\mu\, , \;\;\;\; 
  \theta_{i}=0 , \;\;\; \forall i\ge s_0\, .\label{eq:TruthBlock}
\end{align}
This data distribution is parametrized by the tuple $(q,\lb,\mu,s_0)$.

\vspace{0.1cm}

For each of these data distributions, we compute various estimators of the parameter of interest $\beta$.
Figure \ref{fig:SweepLamb} illustrates the dependence on the regularization parameter $\lambda$. The emerging scenario
is fairly robust with respect to the choice of $\lambda$: the CAD estimator   $\hbeta_{\cad}$ is nearly unbiased over a broad range of values of $\lambda$,
while all other estimators are biased. The dispersion of these estimators on the other hand depends on the choice of $\lambda$.

We then  focus on two specific procedures to choose $\lambda$, described below. These correspond to a semi-realistic scenario in which $\lambda$
is not selected following theoretical prescriptions (e.g.~the `universal' value   $\lambda =\lambda_{\MM}:=\sigma\sqrt{2\log p}$),
and is instead selected to optimize reconstruction on an hold-out set.

Let us emphasize that these procedures are not entirely realistic, in that they makes use of a large hold-out dataset and have access to
the unknown parameter $\btheta$, to select the optimal $\lambda$. As such, these should not be regarded as practical methods for selecting $\lambda$,
but rather as `oracle' choices that we use for comparing different debiasing methods in simulations.

\vspace{0.1cm}

\noindent{\bf Model-size based selection.} We generate $m_{\orac}$ datasets $\by^{(i)},\bX^{(i)}$, $i\le m_{\orac}$ from the
same distribution as the original data. In particular, $\by^{(i)}\in\reals^{N}$, $\bX^{(i)}\in\reals^{N\times p}$. For each of
these datasets,
we fit the Lasso for a grid of values of the regularization parameter $\lambda\in\Lambda:=\{\lambda_k=k\lambda_{\MM}/10:\, k\le 100\}$,
where $\lambda_{\MM}$ is the `universal' regularization value.
We let $\hbtheta^{(i)}(\lambda)$ be the Lasso parameter estimate for regularization $\lambda$, 
\begin{align}
  \hlambda^{(i)} := \min\big\{\lambda\in\Lambda:\;\;\; \|\hbtheta^{(i)}(\lambda)\|_0\le\|\btheta\|_0\big\}\, .
\end{align}
We then set $\lambda$ equal to the (empirical) $90\%$ percentile of the set of values $(\hlambda^{(i)})_{i\le m_{\orac}}$.

This choice of $\lambda$ mimics the behavior of a parsimonious statistician who chooses a value
of $\lambda$ that produces an approximately correct model size.

\vspace{0.1cm}

\noindent{\bf Estimation error-based selection.}
We proceed as above, except that, for each data set $\by^{(i)},\bX^{(i)}$, the corresponding optimal 
value of $\lambda$ is selected via 
\begin{align}
  \hlambda^{(i)} := \max\big\{\lambda\in\Lambda:\;\;\; \|\hbtheta(\lambda)-\btheta\|_2\le
 1.1\min_{\lambda'\in\Lambda} \|\hbtheta(\lambda')-\btheta\|_2\big\}\, .
\end{align}
As above, in our simulations we set $\lambda$ equal to the (empirical) $90\%$ percentile of the set of values
$(\hlambda^{(i)})_{i\le m_{\orac}}$.
This choice  mimicks the behavior of a statistician that tries to optimize the estimation error.

\vspace{0.1cm}

For experiments in which the precision model $\bgamma$ is also fitted using the Lasso, the same procedure is
repeated to select the corresponding $\lambda$ parameter. 

\begin{figure}[!t]
\centering
\includegraphics[width = 0.99\linewidth]{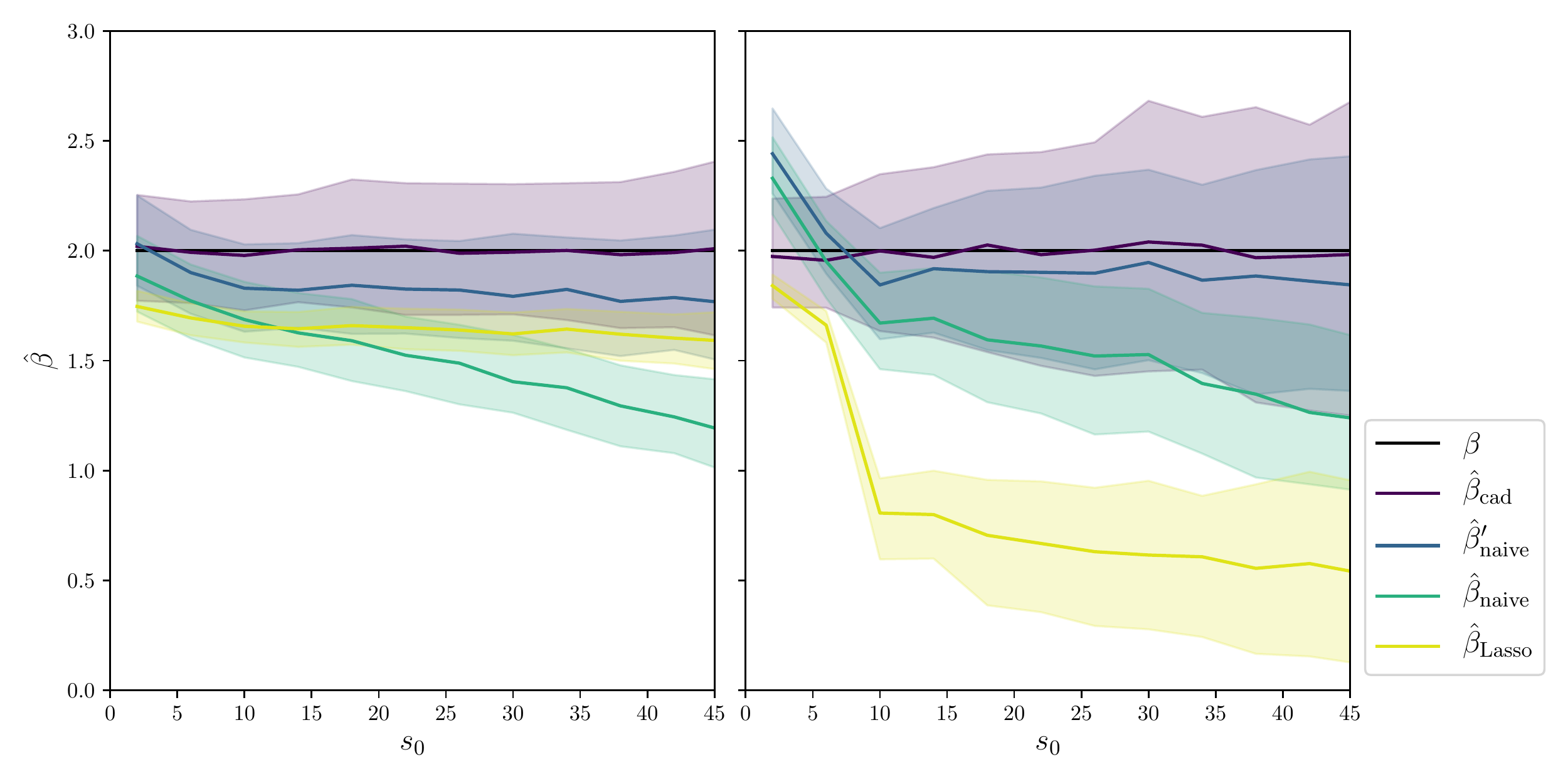}
\caption{Comparing different debiasing methods, for different data distributions. Left: block model
  with $\lb=10$, $q=1$, $\mu=2$, $\sigma=1$.
  Right: circulant model with $\mu=2$, $\sigma=1$ and inverse covariance specified in Eq.~\eqref{eq:CircSpecial}.
  In both cases, $n_{\theta} = 200$, $n_{\gamma} = 600$, $p = 400$,  and we use the  model size based selection method for $\lambda$.
  Bands correspond to $20\%$ and $80\%$ percentiles of various debiased estimates over $500$ repetitions. }
\label{fig:SweepS0_ltype0}
\end{figure}

\begin{figure}[!t]
\centering
\includegraphics[width = 0.99\linewidth]{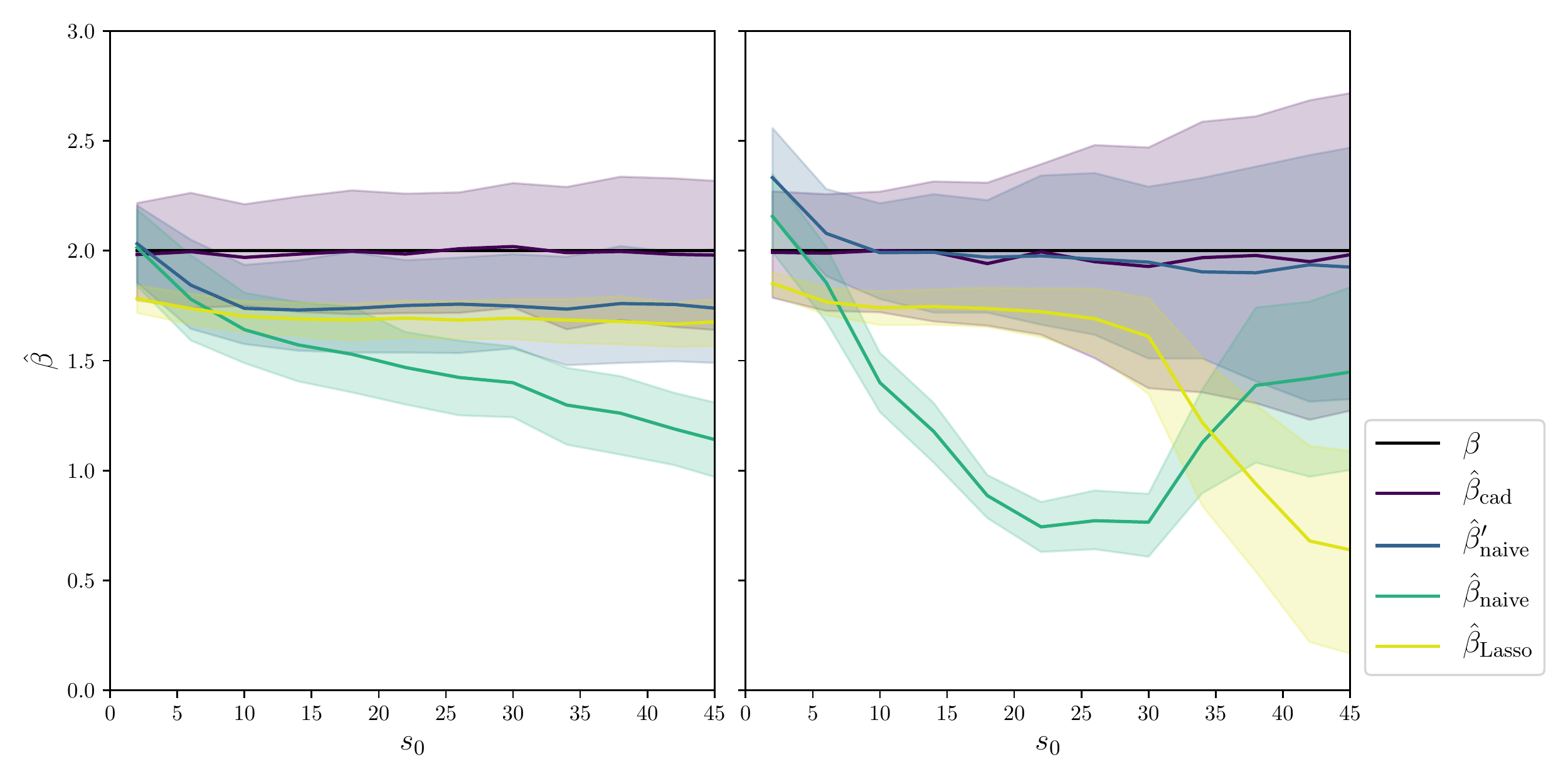}
\caption{Same as in Figure \ref{fig:SweepS0_ltype0}, except that we use the estimation error-based selection
  method for $\lambda$.}
\label{fig:SweepS0_ltype1}
\end{figure}

\begin{table}
{\small
   \begin{tabular}{|c|c|c|c|c|c|c|c|c|c|c|c|c|c|c|c|c|c|}
    \hline
     $n$ & $n_{\gamma}$ & $p$ & $s_0$ & $\bias\, /\, \std (\hbeta)$ & $\bias\, /\, \std (\hbeta_{\naive})$ & $\bias\, /\,\std(\hbeta'_{\naive})$ &
                                                                                                                                                   $\bias\, /\,\std(\hbeta_{\cad})$ 
  & $\KS(\hbeta_{\cad})$ & $\|\hbtheta\|_0$\\
    \hline
     \hline
    $   600 $ & $  1500 $ & $   450 $ & $    18 $ &$ -0.26 \, /\, 0.06$ &$ -0.96\, /\, 0.15$ &$ 0.05\, /\, 0.22$ &$ -0.002\, /\, 0.216$ &$ 0.33 $ & $ 27.4 $\\ 
$   800 $ & $  2000 $ & $   600 $ & $    24 $ &$ -0.23 \, /\, 0.05$ &$ -0.96\, /\, 0.13$ &$ 0.06\, /\, 0.19$ &$ -0.000\, /\, 0.186$ &$ 0.96 $ & $ 37.0 $\\ 
$   100 $ & $  1000 $ & $   200 $ & $    20 $ &$ -2.20 \, /\, 0.63$ &$ -1.24\, /\, 0.55$ &$ 0.12\, /\, 1.02$ &$ 0.024\, /\, 0.985$ &$ 0.10 $ & $ 25.7 $\\ 
$   100 $ & $   300 $ & $   200 $ & $    20 $ &$ -2.24 \, /\, 0.62$ &$ -2.20\, /\, 0.33$ &$ 0.33\, /\, 1.41$ &$ 0.006\, /\, 1.262$ &$ 0.93 $ & $ 23.5 $\\ 
$   100 $ & $   400 $ & $   200 $ & $    20 $ &$ -2.20 \, /\, 0.62$ &$ -1.85\, /\, 0.41$ &$ 0.25\, /\, 1.19$ &$ 0.013\, /\, 1.096$ &$ 0.23 $ & $ 26.4 $\\ 
$   200 $ & $  2000 $ & $   400 $ & $    40 $ &$ -2.11 \, /\, 0.62$ &$ -1.26\, /\, 0.52$ &$ 0.09\, /\, 0.95$ &$ -0.004\, /\, 0.922$ &$ 0.10 $ & $ 53.7 $\\ 
$   200 $ & $   500 $ & $   150 $ & $     6 $ &$ -0.30 \, /\, 0.08$ &$ -0.96\, /\, 0.20$ &$ 0.06\, /\, 0.31$ &$ -0.002\, /\, 0.302$ &$ 0.19 $ & $ 8.2 $\\ 
$   200 $ & $   600 $ & $   400 $ & $    40 $ &$ -2.10 \, /\, 0.63$ &$ -2.20\, /\, 0.32$ &$ 0.33\, /\, 1.32$ &$ 0.020\, /\, 1.192$ &$ 0.42 $ & $ 54.6 $\\ 
$   200 $ & $   800 $ & $   400 $ & $    40 $ &$ -2.12 \, /\, 0.64$ &$ -1.84\, /\, 0.40$ &$ 0.25\, /\, 1.14$ &$ 0.016\, /\, 1.051$ &$ 0.22 $ & $ 52.3 $\\ 
$   300 $ & $  1200 $ & $   600 $ & $    60 $ &$ -2.05 \, /\, 0.64$ &$ -1.84\, /\, 0.39$ &$ 0.23\, /\, 1.10$ &$ 0.003\, /\, 1.019$ &$ 0.97 $ & $ 84.9 $\\ 
$   300 $ & $   900 $ & $   600 $ & $    60 $ &$ -2.07 \, /\, 0.63$ &$ -2.21\, /\, 0.31$ &$ 0.30\, /\, 1.30$ &$ -0.011\, /\, 1.177$ &$ 0.13 $ & $ 85.1 $\\ 
$   400 $ & $  1000 $ & $   300 $ & $    12 $ &$ -0.31 \, /\, 0.07$ &$ -0.96\, /\, 0.16$ &$ 0.06\, /\, 0.24$ &$ 0.002\, /\, 0.239$ &$ 0.32 $ & $ 17.9 $\\ 
$   400 $ & $  1200 $ & $   800 $ & $    80 $ &$ -2.07 \, /\, 0.64$ &$ -2.21\, /\, 0.30$ &$ 0.31\, /\, 1.29$ &$ -0.011\, /\, 1.163$ &$ 0.19 $ & $ 110.4 $\\ 
$   400 $ & $  1600 $ & $   800 $ & $    80 $ &$ -2.05 \, /\, 0.63$ &$ -1.84\, /\, 0.39$ &$ 0.22\, /\, 1.10$ &$ -0.004\, /\, 1.015$ &$ 0.81 $ & $ 112.2 $\\ 
$   400 $ & $  4000 $ & $   800 $ & $    80 $ &$ -2.04 \, /\, 0.64$ &$ -1.24\, /\, 0.51$ &$ 0.09\, /\, 0.91$ &$ -0.002\, /\, 0.886$ &$ 0.32 $ & $ 115.5 $\\ 
$   500 $ & $  1500 $ & $  1000 $ & $   100 $ &$ -2.03 \, /\, 0.64$ &$ -2.21\, /\, 0.31$ &$ 0.33\, /\, 1.29$ &$ 0.006\, /\, 1.167$ &$ 0.91 $ & $ 142.2 $\\ 
$   500 $ & $  2000 $ & $  1000 $ & $   100 $ &$ -2.07 \, /\, 0.63$ &$ -1.84\, /\, 0.39$ &$ 0.21\, /\, 1.09$ &$ -0.017\, /\, 1.011$ &$ 0.07 $ & $ 140.2 $\\ 
$   500 $ & $  5000 $ & $  1000 $ & $   100 $ &$ -2.06 \, /\, 0.63$ &$ -1.23\, /\, 0.51$ &$ 0.09\, /\, 0.90$ &$ -0.001\, /\, 0.873$ &$ 0.89 $ & $ 138.1 $\\ 
     \hline
   \end{tabular}
 }
 \caption{Circulant model with $\mu=3$, $\sigma=1$ and inverse covariance specified in Eq.~\eqref{eq:CircSpecial}.
   We  use the  model size based selection method for $\lambda$  and report statistics computed over $n_{\text{rep}}=10,000$
   repetitions.}\label{table:Circulant0}
 \end{table}

 \begin{table}
   {\small
   \begin{tabular}{|c|c|c|c|c|c|c|c|c|c|c|c|c|c|c|c|c|c|}
    \hline
     $n$ & $n_{\gamma}$ & $p$ & $s_0$ & $\bias\, /\, \std (\hbeta)$ & $\bias\, /\, \std (\hbeta_{\naive})$ & $\bias\, /\,\std(\hbeta'_{\naive})$ &
                                                                                                                                                   $\bias\, /\,\std(\hbeta_{\cad})$ 
  & $\KS(\hbeta_{\cad})$ & $\|\hbtheta\|_0$\\
    \hline
     \hline
     $    50 $ & $   150 $ & $   100 $ & $    10 $ &$ -1.48 \, /\, 0.74$ &$ -2.06\, /\, 0.47$ &$ 0.13\, /\, 1.56$ &$ 0.010\, /\, 1.497$ &$ 0.43 $ & $ 31.0 $\\ 
$    50 $ & $   200 $ & $   100 $ & $    10 $ &$ -1.63 \, /\, 0.73$ &$ -1.59\, /\, 0.60$ &$ 0.10\, /\, 1.34$ &$ 0.024\, /\, 1.301$ &$ 0.44 $ & $ 29.1 $\\ 
$    50 $ & $   500 $ & $   100 $ & $    10 $ &$ -1.83 \, /\, 0.72$ &$ -0.84\, /\, 0.80$ &$ 0.06\, /\, 1.17$ &$ 0.026\, /\, 1.155$ &$ 0.12 $ & $ 26.1 $\\ 
$   100 $ & $  1000 $ & $   200 $ & $    20 $ &$ -1.95 \, /\, 0.66$ &$ -0.81\, /\, 0.72$ &$ 0.04\, /\, 1.02$ &$ 0.011\, /\, 1.008$ &$ 0.03 $ & $ 38.8 $\\ 
$   100 $ & $   300 $ & $   200 $ & $    20 $ &$ -1.81 \, /\, 0.68$ &$ -2.11\, /\, 0.38$ &$ 0.16\, /\, 1.38$ &$ -0.014\, /\, 1.293$ &$ 0.15 $ & $ 46.9 $\\ 
$   100 $ & $   400 $ & $   200 $ & $    20 $ &$ -1.68 \, /\, 0.70$ &$ -1.63\, /\, 0.51$ &$ 0.08\, /\, 1.17$ &$ -0.011\, /\, 1.131$ &$ 0.11 $ & $ 55.8 $\\ 
$   200 $ & $  2000 $ & $   400 $ & $    40 $ &$ -1.61 \, /\, 0.70$ &$ -1.17\, /\, 0.54$ &$ 0.10\, /\, 0.94$ &$ 0.015\, /\, 0.911$ &$ 0.26 $ & $ 118.2 $\\ 
$   200 $ & $   600 $ & $   400 $ & $    40 $ &$ -1.76 \, /\, 0.69$ &$ -2.25\, /\, 0.30$ &$ 0.39\, /\, 1.36$ &$ 0.006\, /\, 1.200$ &$ 0.71 $ & $ 98.2 $\\ 
$   200 $ & $   800 $ & $   400 $ & $    40 $ &$ -1.84 \, /\, 0.67$ &$ -1.90\, /\, 0.38$ &$ 0.29\, /\, 1.15$ &$ 0.013\, /\, 1.047$ &$ 0.73 $ & $ 86.9 $\\ 
$   300 $ & $  1200 $ & $   600 $ & $    60 $ &$ -1.51 \, /\, 0.70$ &$ -2.06\, /\, 0.32$ &$ 0.41\, /\, 1.17$ &$ 0.024\, /\, 1.031$ &$ 0.11 $ & $ 197.1 $\\ 
$   300 $ & $   900 $ & $   600 $ & $    60 $ &$ -1.57 \, /\, 0.70$ &$ -2.35\, /\, 0.25$ &$ 0.52\, /\, 1.40$ &$ -0.008\, /\, 1.186$ &$ 0.43 $ & $ 185.4 $\\ 
$   400 $ & $  1200 $ & $   800 $ & $    80 $ &$ -1.52 \, /\, 0.70$ &$ -2.32\, /\, 0.26$ &$ 0.48\, /\, 1.36$ &$ -0.007\, /\, 1.163$ &$ 0.52 $ & $ 261.8 $\\ 
$   400 $ & $  1600 $ & $   800 $ & $    80 $ &$ -1.66 \, /\, 0.69$ &$ -2.06\, /\, 0.32$ &$ 0.37\, /\, 1.15$ &$ -0.000\, /\, 1.019$ &$ 0.60 $ & $ 221.8 $\\ 
$   400 $ & $  4000 $ & $   800 $ & $    80 $ &$ -1.64 \, /\, 0.70$ &$ -1.84\, /\, 0.34$ &$ 0.18\, /\, 0.95$ &$ 0.004\, /\, 0.898$ &$ 0.72 $ & $ 225.8 $\\ 
$   500 $ & $  1500 $ & $  1000 $ & $   100 $ &$ -1.64 \, /\, 0.70$ &$ -2.37\, /\, 0.24$ &$ 0.56\, /\, 1.38$ &$ -0.000\, /\, 1.163$ &$ 0.69 $ & $ 283.5 $\\ 
$   500 $ & $  2000 $ & $  1000 $ & $   100 $ &$ -1.56 \, /\, 0.70$ &$ -2.16\, /\, 0.29$ &$ 0.43\, /\, 1.19$ &$ -0.006\, /\, 1.034$ &$ 0.71 $ & $ 312.0 $\\ 
$   500 $ & $  5000 $ & $  1000 $ & $   100 $ &$ -1.56 \, /\, 0.70$ &$ -1.74\, /\, 0.37$ &$ 0.17\, /\, 0.94$ &$ 0.013\, /\, 0.891$ &$ 0.66 $ & $ 310.7 $\\ 
     \hline
\end{tabular}
}
\caption{Circulant model: same experiments as in Table \ref{table:Circulant0}
  except that we use the estimation error based selection method for $\lambda$.}\label{table:Circulant1}
 \end{table}

 \begin{table}
   {\small
   \begin{tabular}{|c|c|c|c|c|c|c|c|c|c|c|c|c|c|c|c|c|c|}
    \hline
     $n$ & $n_{\gamma}$ & $p$ & $s_0$ & $\bias\, /\, \std (\hbeta)$ & $\bias\, /\, \std (\hbeta_{\naive})$ & $\bias\, /\,\std(\hbeta'_{\naive})$ &
                                                                                                                                                   $\bias\, /\,\std(\hbeta_{\cad})$ 
  & $\KS(\hbeta_{\cad})$ & $\|\hbtheta\|_0$\\
    \hline
     \hline
     $    50 $ & $   150 $ & $   100 $ & $    10 $ &$ -1.78 \, /\, 0.61$ &$ -2.15\, /\, 0.34$ &$ 0.24\, /\, 1.31$ &$ -0.004\, /\, 1.204$ &$ 0.63 $ & $ 12.6 $\\ 
$    50 $ & $   200 $ & $   100 $ & $    10 $ &$ -1.33 \, /\, 0.52$ &$ -1.76\, /\, 0.44$ &$ 0.19\, /\, 1.15$ &$ 0.008\, /\, 1.077$ &$ 0.72 $ & $ 13.4 $\\ 
$    50 $ & $   500 $ & $   100 $ & $    10 $ &$ -1.43 \, /\, 0.55$ &$ -1.13\, /\, 0.57$ &$ 0.08\, /\, 0.97$ &$ 0.009\, /\, 0.942$ &$ 0.01 $ & $ 13.2 $\\ 
$   100 $ & $  1000 $ & $   200 $ & $    20 $ &$ -0.71 \, /\, 0.25$ &$ -1.19\, /\, 0.39$ &$ 0.07\, /\, 0.68$ &$ -0.007\, /\, 0.659$ &$ 0.01 $ & $ 26.6 $\\ 
$   100 $ & $   300 $ & $   200 $ & $    20 $ &$ -0.62 \, /\, 0.23$ &$ -2.18\, /\, 0.23$ &$ 0.30\, /\, 0.91$ &$ 0.015\, /\, 0.829$ &$ 0.46 $ & $ 28.1 $\\ 
$   100 $ & $   400 $ & $   200 $ & $    20 $ &$ -0.62 \, /\, 0.22$ &$ -1.80\, /\, 0.29$ &$ 0.21\, /\, 0.79$ &$ 0.004\, /\, 0.737$ &$ 0.26 $ & $ 28.1 $\\ 
$   200 $ & $  2000 $ & $   400 $ & $    40 $ &$ -0.38 \, /\, 0.13$ &$ -1.18\, /\, 0.27$ &$ 0.10\, /\, 0.48$ &$ 0.013\, /\, 0.462$ &$ 0.10 $ & $ 57.2 $\\ 
$   200 $ & $   600 $ & $   400 $ & $    40 $ &$ -0.39 \, /\, 0.13$ &$ -2.19\, /\, 0.16$ &$ 0.30\, /\, 0.65$ &$ 0.003\, /\, 0.588$ &$ 0.90 $ & $ 57.3 $\\ 
$   200 $ & $   800 $ & $   400 $ & $    40 $ &$ -0.39 \, /\, 0.14$ &$ -1.84\, /\, 0.19$ &$ 0.22\, /\, 0.55$ &$ -0.004\, /\, 0.505$ &$ 0.09 $ & $ 57.2 $\\ 
$   300 $ & $  1200 $ & $   600 $ & $    60 $ &$ -0.30 \, /\, 0.10$ &$ -1.83\, /\, 0.16$ &$ 0.23\, /\, 0.45$ &$ 0.006\, /\, 0.416$ &$ 0.31 $ & $ 88.0 $\\ 
$   300 $ & $   900 $ & $   600 $ & $    60 $ &$ -0.29 \, /\, 0.10$ &$ -2.20\, /\, 0.12$ &$ 0.32\, /\, 0.52$ &$ 0.008\, /\, 0.466$ &$ 0.28 $ & $ 88.0 $\\ 
$   400 $ & $  1200 $ & $   800 $ & $    80 $ &$ -0.24 \, /\, 0.08$ &$ -2.21\, /\, 0.11$ &$ 0.30\, /\, 0.45$ &$ -0.005\, /\, 0.404$ &$ 0.24 $ & $ 122.2 $\\ 
$   400 $ & $  1600 $ & $   800 $ & $    80 $ &$ -0.24 \, /\, 0.08$ &$ -1.83\, /\, 0.14$ &$ 0.22\, /\, 0.39$ &$ -0.000\, /\, 0.364$ &$ 0.88 $ & $ 123.1 $\\ 
$   400 $ & $  4000 $ & $   800 $ & $    80 $ &$ -0.24 \, /\, 0.09$ &$ -1.22\, /\, 0.19$ &$ 0.09\, /\, 0.34$ &$ 0.001\, /\, 0.326$ &$ 1.00 $ & $ 122.1 $\\ 
$   500 $ & $  1500 $ & $  1000 $ & $   100 $ &$ -0.21 \, /\, 0.08$ &$ -2.21\, /\, 0.09$ &$ 0.31\, /\, 0.40$ &$ -0.005\, /\, 0.359$ &$ 0.28 $ & $ 151.5 $\\ 
$   500 $ & $  2000 $ & $  1000 $ & $   100 $ &$ -0.22 \, /\, 0.08$ &$ -1.85\, /\, 0.12$ &$ 0.22\, /\, 0.34$ &$ -0.006\, /\, 0.318$ &$ 0.01 $ & $ 151.9 $\\ 
   $   500 $ & $  5000 $ & $  1000 $ & $   100 $ &$ -0.21 \, /\, 0.08$ &$ -1.23\, /\, 0.17$ &$ 0.09\, /\, 0.30$ &$ 0.002\, /\, 0.291$ &$ 0.33 $ & $ 151.7 $\\ 
         \hline
\end{tabular}
}
\caption{Block model, with covariance given in Eq.~\eqref{eq:BlockCovariance} with parameters $\lb=10$, $q=1$,
  and true coefficients specified in Eq.~\eqref{eq:TruthBlock} with $\mu=3$, $\sigma=1$.
   We  use the  model size based selection method for $\lambda$  and report statistics computed over $n_{\text{rep}}=10,000$
   repetitions.}\label{table:Block0}
\end{table}

\begin{table}
   {\small
   \begin{tabular}{|c|c|c|c|c|c|c|c|c|c|c|c|c|c|c|c|c|c|}
    \hline
     $n$ & $n_{\gamma}$ & $p$ & $s_0$ & $\bias\, /\, \std (\hbeta)$ & $\bias\, /\, \std (\hbeta_{\naive})$ & $\bias\, /\,\std(\hbeta'_{\naive})$ &
                                                                                                                                                   $\bias\, /\,\std(\hbeta_{\cad})$ 
  & $\KS(\hbeta_{\cad})$ & $\|\hbtheta\|_0$\\
    \hline
     \hline
   $    50 $ & $   150 $ & $   100 $ & $    10 $ &$ -0.70 \, /\, 0.34$ &$ -2.17\, /\, 0.31$ &$ 0.29\, /\, 1.24$ &$ 0.008\, /\, 1.131$ &$ 0.93 $ & $ 17.9 $\\ 
$    50 $ & $   200 $ & $   100 $ & $    10 $ &$ -0.76 \, /\, 0.36$ &$ -1.79\, /\, 0.40$ &$ 0.21\, /\, 1.09$ &$ 0.006\, /\, 1.013$ &$ 0.08 $ & $ 16.9 $\\ 
$    50 $ & $   500 $ & $   100 $ & $    10 $ &$ -0.70 \, /\, 0.34$ &$ -1.14\, /\, 0.55$ &$ 0.09\, /\, 0.95$ &$ 0.017\, /\, 0.923$ &$ 0.21 $ & $ 17.9 $\\ 
$   100 $ & $  1000 $ & $   200 $ & $    20 $ &$ -0.46 \, /\, 0.19$ &$ -1.20\, /\, 0.38$ &$ 0.10\, /\, 0.68$ &$ 0.014\, /\, 0.655$ &$ 0.11 $ & $ 34.5 $\\ 
$   100 $ & $   300 $ & $   200 $ & $    20 $ &$ -0.45 \, /\, 0.19$ &$ -2.19\, /\, 0.21$ &$ 0.31\, /\, 0.88$ &$ 0.008\, /\, 0.795$ &$ 0.46 $ & $ 34.6 $\\ 
$   100 $ & $   400 $ & $   200 $ & $    20 $ &$ -0.49 \, /\, 0.20$ &$ -1.84\, /\, 0.27$ &$ 0.23\, /\, 0.76$ &$ 0.003\, /\, 0.701$ &$ 0.60 $ & $ 32.4 $\\ 
$   200 $ & $  2000 $ & $   400 $ & $    40 $ &$ -0.33 \, /\, 0.12$ &$ -1.24\, /\, 0.26$ &$ 0.08\, /\, 0.48$ &$ -0.005\, /\, 0.461$ &$ 0.00 $ & $ 64.8 $\\ 
$   200 $ & $   600 $ & $   400 $ & $    40 $ &$ -0.33 \, /\, 0.12$ &$ -2.21\, /\, 0.14$ &$ 0.33\, /\, 0.62$ &$ 0.007\, /\, 0.552$ &$ 0.51 $ & $ 64.7 $\\ 
$   200 $ & $   800 $ & $   400 $ & $    40 $ &$ -0.33 \, /\, 0.12$ &$ -1.84\, /\, 0.19$ &$ 0.24\, /\, 0.54$ &$ 0.006\, /\, 0.501$ &$ 0.47 $ & $ 64.7 $\\ 
$   300 $ & $  1200 $ & $   600 $ & $    60 $ &$ -0.27 \, /\, 0.10$ &$ -1.86\, /\, 0.15$ &$ 0.24\, /\, 0.44$ &$ -0.001\, /\, 0.405$ &$ 0.53 $ & $ 94.2 $\\ 
$   300 $ & $   900 $ & $   600 $ & $    60 $ &$ -0.27 \, /\, 0.10$ &$ -2.22\, /\, 0.12$ &$ 0.32\, /\, 0.51$ &$ -0.005\, /\, 0.453$ &$ 0.13 $ & $ 94.1 $\\ 
$   400 $ & $  1200 $ & $   800 $ & $    80 $ &$ -0.24 \, /\, 0.08$ &$ -2.22\, /\, 0.10$ &$ 0.33\, /\, 0.43$ &$ -0.003\, /\, 0.385$ &$ 0.42 $ & $ 123.0 $\\ 
$   400 $ & $  1600 $ & $   800 $ & $    80 $ &$ -0.22 \, /\, 0.08$ &$ -1.86\, /\, 0.13$ &$ 0.24\, /\, 0.38$ &$ 0.000\, /\, 0.351$ &$ 0.97 $ & $ 133.6 $\\ 
$   400 $ & $  4000 $ & $   800 $ & $    80 $ &$ -0.22 \, /\, 0.08$ &$ -1.27\, /\, 0.17$ &$ 0.09\, /\, 0.33$ &$ -0.001\, /\, 0.317$ &$ 0.10 $ & $ 133.7 $\\ 
$   500 $ & $  1500 $ & $  1000 $ & $   100 $ &$ -0.20 \, /\, 0.07$ &$ -2.23\, /\, 0.09$ &$ 0.33\, /\, 0.39$ &$ -0.006\, /\, 0.347$ &$ 0.27 $ & $ 163.1 $\\ 
$   500 $ & $  2000 $ & $  1000 $ & $   100 $ &$ -0.21 \, /\, 0.08$ &$ -1.87\, /\, 0.12$ &$ 0.24\, /\, 0.34$ &$ -0.005\, /\, 0.314$ &$ 0.01 $ & $ 151.7 $\\ 
$   500 $ & $  5000 $ & $  1000 $ & $   100 $ &$ -0.21 \, /\, 0.08$ &$ -1.28\, /\, 0.15$ &$ 0.09\, /\, 0.29$ &$ -0.000\, /\, 0.281$ &$ 0.36 $ & $ 151.7 $\\ 
     \hline
\end{tabular}
}
\caption{Block model: same experiments as in Table \ref{table:Block0}
  except that we use the estimation error-based selection method for $\lambda$.}\label{table:Block1}
\end{table}

The results of our simulations are summarized in Figures \ref{fig:SweepS0_ltype0}, \ref{fig:SweepS0_ltype1} and in Tables \ref{table:Circulant0},
\ref{table:Circulant1}, \ref{table:Block0}, \ref{table:Block1}.
The CAD estimator $\hbeta_{\cad}$ is compared with the
simple Lasso estimator $\hbeta$ (see Eq.~\eqref{eq:FirstDebiased}), and
the naive debiased estimators $\hbeta_{\naive}=\hbeta_{\mathrm{naive}}$ and $\hbeta'_{\naive}=\hbeta'_{\mathrm{naive}}$
(see Eqs.~\eqref{eq:DefNaive}, \eqref{eq:DefNaive2}). 

For each of
these estimators, we report the empirical mean and standard deviation over $n_{\text{rep}} = 10,000$ data realizations. The CAD estimator $\hbeta_{\cad}$  is the only one that has consistently negligible bias. We also report the Kolmogorov-Smirnov 
p-value for testing normality of  $(\hbeta_{\cad}-\beta)/\std(\hbeta_{\cad})$. We observe that $\KS(\hbeta_{\cad})$ is
most of the time large, indicating that the distribution of $\hbeta_{\cad}$ typically cannot be  distiguished from normal using $10,000$
samples.

\section{General theorems on simultaneous regression}
\label{sec:gen-thm}

The consistency of the CAD estimate (Theorem \ref{thm:debiased-estimate}) is a corollary of general theorems on the joint distribution of regression estimators fit using the same (or overlapping) data from two linear models.
These theorems are more powerful than Theorem \ref{thm:debiased-estimate}
and may be of independent interest.
In this section, we present these general results.

\subsection{A symmetric rewriting of the regression models}

Mathematically,
there is no reason to think of $\by$ as an outcome variable and $\bw$ as a covariate,
and not the other way around.
We introduce new notation which makes this symmetry explicit.
We will henceforth denote the outcome variables by $\by_k$, the regression parameters by $\btheta_k$, and the noise variables by $\be_k$,
with $k = 1$ for the precision model and $k = 2$ for the outcome model.
Explicitly, set
    $\by_1 := \bw$, $\by_2 := \by$, $\btheta_1 := \bgamma$, $ \btheta_2 := \bar \btheta$, $ \be_1 := \kappa \bw^\perp$,
    $\be_2 := \kappa \beta \bw^\perp + \sigma \bz$.
The precision and outcome models can be written as 
\begin{equation}
	\by_k = \bX \btheta_k + \be_k,
	\quad
	k = 1,2,
\end{equation}
where $(\be_1,\be_2) \sim \normal\big(\bzero,\bS_e \otimes \id_N \big)$,
with 
$
    \bS_e
        :=
        \begin{pmatrix}
            \tau_{e_1}^2 & \tau_{e_1}\tau_{e_2} \rho_e \\
            \tau_{e_1}\tau_{e_2} \rho_e & \tau_{e_2}^2,
        \end{pmatrix}
$
and $\tau_{e_1}^2 = \kappa^2$, $\tau_{e_2}^2 = \kappa^2 \beta^2 + \sigma^2$, and $\rho_e = \kappa\beta/(\kappa^2 \beta^2 + \sigma^2)^{1/2}$.
For reasons which will become clear, we will refer to the model in Eq.~\eqref{eq:random-design} as the \emph{random-design model}.
Succinctly,
we can write it as
\begin{equation}\label{eq:random-design}
\begin{gathered}
    \textbf{Random-design model}\\
    \bY = \bX \bTheta + \bE,
\end{gathered}
\end{equation}
where $\bY,\bE \in \reals^{N \times 2}$, $\bTheta \in \reals^{p\times 2}$, 
and these matrices have columns $\by_k$, $\btheta_k$, $\be_k$, $k=1,2$.
We will also denote the quantities $\hat \df_\gamma,\hat \df_\theta$ appearing in the definition of $r_i^\gamma,r_i^\theta$ 
and $\hat \bgamma^\de$, $\hat \btheta^\de$ (see Eqs.~\eqref{eq:dof-adjusted-residual} and \eqref{eq:db-def}) as $\hat \df_k$ for $k = 1,2$.
We will see that $\hat \df_k$ concentrates on a deterministic quantity $\df_k$, 
whose definition we postpone Eqs.~\eqref{eq:R-df}, \eqref{eq:fixed-pt-eqns}, and the discussion after Eq.~\eqref{eq:def-R-rhoh}.
The quantity $\df_k$ which we define is non-empirical: although she can estimate it, she cannot compute its exact value using information she know.
Nevertheless, it is most convenient to develop theory for the debiased regression estiamtes and degrees of freedom adjusted residuals using this deterministic quantity in place of $\hat \df_k$, and later show that we may replace $\df_k$ with the estimate $\hat \df_k$ without impacting our results.
Thus, we will study regression estimators in the model \eqref{eq:random-design} after replacing $\hat \df_k$ by $\df_k$.
We write these as
\begin{equation}\label{eq:param-est}
	\hat \btheta_k
		= 
		\argmin_{\bpi}
		\Big\{
			\frac1{2n_k} \| \by_{k,\cI_k} - \bX_{\cI_k}\bpi \|_2^2 + \Omega_k(\bpi)
		\Big\},
    \qquad 
    \hat \btheta_k^{\de}
        =
        \hat \btheta_k
            +
            \frac{\bSigma^{-1}\bX_{\cI_k}^\top(\by_{\cI_k}-\bX_{\cI_k}\hat \btheta_k)}{n_k - \df_k},
\end{equation}
where $n_k,\cI_k,\Omega_k$ are defined in the obvious way.
We also define $n_{12}:=|\cI_1\cap\cI_2|$
and define the matrix $\widehat{\bTheta} \in \reals^{p\times 2}$ to have columns $\hat \btheta_k$,
and $\widehat{\bTheta}^{\de}$ to have columns $\hat \btheta_k^{\de}$.
Note that there is some abuse in terminology in calling $\hat \btheta_k^\de$ an ``estimator'' because it depends on the non-empirical quantity $\df_k$.
We will show that all of our results hold also with $\hat \df_k$ in place of $\df_k$ (see Theorem \ref{thm:joint-characterization}(iii)).

Although it is typical to think of the task in the regression model \eqref{eq:random-design} as parameter estimation,
it is enlightening to think of the task as also including noise estimation. 
The tasks are complementary: the better we estimate the unknown parameter, the better we can estimate the noise, and vice versa.
In the regression procedure \eqref{eq:param-est},
the fitted residuals are natural estimates of the noise,
and
the degrees-of-freedom adjusted residuals can be understood as their debiased counterparts.
In particular,
for noise estimation,
we write
\begin{equation}\label{eq:noise-est}
    \hat \be_{k,\cI} 
        =
        \begin{cases}
            \by_{k,\cI_k} - \bX_{\cI_k} \hat \btheta_k \quad &\text{if } \cI = \cI_k,\\
            \bzero \quad & \text{if } \cI = \cI_k^c,
        \end{cases}
    \qquad 
    \hat \be_k^{\de}
        =
        \frac{\df_k}{n_k - \df_k} \hat \be_k + (\by_k - \bX \hat \btheta_k).
\end{equation}
Although it may seem unnatural to set the out-of-sample noise estimate $\hat \be_{k,\cI_k^c} = \bzero$,
the proofs of our main theorems will make clear why this is---for our purposes---the correct choice.
We define the matrix $\widehat{\bE} \in \reals^{N\times 2}$ to have columns $\hat \be_k$,
and $\widehat{\bE}^{\de}$ to have columns $\hat \be_k^{\de}$.
As with the debiased estimate of the parameter, 
there is some abuse in terminology in calling $\hat \be_k^\de$ an ``estimator'' because it depends on the non-empirical quantity $\hat \df_k$.
As we have already stated, we will show that all of our results hold also with $\hat \df_k$ in place of $\df_k$ (see Theorem \ref{thm:joint-characterization}(iii)).

\subsection{Joint characterization for simultaneous regression}

The proof of Theorem \ref{thm:debiased-estimate} is an application of a more general theorem characterterizing the joint behavior of the regression estimators $\{ \hat \be_k \}_k$, $\{\hat \be_k^{\de}\}_k$, $\{ \hat \btheta_k \}_k$, and $\{ \hat \btheta_k^{\de}\}_k$. 

To state the result, 
we must first introduce two new statistical models---one for parameter estimation and one for noise estimation---which behave, in a certain sense, like the random design model \eqref{eq:random-design}.
The two models are
\begin{equation}\label{eq:fixed-des}
    \begin{gathered}
	\textbf{Fixed-design model}\\
    \textbf{for parameter estimation}\\
    \bY^f
		:= 
		\bSigma^{1/2} \bTheta + \bG^f
        \;\;
        \text{(FD-P)}
    \end{gathered}
    \qquad\qquad \qquad 
    \begin{gathered}
    \textbf{Fixed-design model}\\
    \textbf{for noise estimation}\\
    \bR^f = \bE + \bH^f
    \;\;
    \text{(FD-N)}
    \end{gathered}
\end{equation}
where $\bG^f \in \reals^{p \times 2}$ has columns $(\bg_1^f,\bg_2^f) \sim \normal\big(\bzero,\bS_g \otimes \id_p\big)$,
and $\bH^f \in \reals^{N \times 2}$ has columns $(\bh_1^f,\bh_2^f) \sim \normal\big(\bzero,\bS_h \otimes \id_N\big)$,
for some matrices $\bS_g,\bS_h \in \mathbb{S}_+^2$ to be chosen.
Like model~\eqref{eq:random-design},
the fixed-design model for parameter estimation involves 
linear measurements of the unknown parameters $\btheta_1,\btheta_2$ with correlated Gaussian noise. 
The models differ in that the random design matrix $\bX$ in model \eqref{eq:random-design} is replaced by the fixed design matrix $\bSigma^{1/2}$ in (FD-P). (A consequence is that the dimensionality of the observations is $p$ instead of $N$).
The fixed-design model for noise estimation also has similarities with model \eqref{eq:random-design}.
Indeed, $\bX\bTheta \sim \normal(\bzero, (\bTheta^\top \bSigma\bTheta) \otimes \id_N)$,
so that model \eqref{eq:random-design} can be viewed as specifying correlated Gaussian observations of the noise variables.
In this sense, the model \eqref{eq:random-design} takes the same form as (FD-N).
The models differ in that in model \eqref{eq:random-design} the matrix $\bX$ is observed, 
providing information about the Gaussian corruption which is not available in (FD-N).

Our general characterizaton result establishes a precise form of the following claim:
\begin{center}
    \textit{Parameter estimation in model \eqref{eq:random-design} behaves like parameter estimation in model \emph{(FD-P)}.}
    \\[3pt]
    \textit{Noise estimation in model \eqref{eq:random-design} behaves like noise estimation in model \emph{(FD-N)}.}
\end{center}
The correctness of these statements relies on specifying the correct estimators and covariance structures $\bS_g,\bS_h$  in the fixed-design models.

In analogy  with Eq.~\eqref{eq:param-est},
the parameter estimates in the fixed-design model (FD-P) are
\begin{equation}\label{eq:fixed-design-est}
\begin{gathered}
	\hat \btheta_k^f
		:=
        \eta_k(\by_k^f;\zeta_k)
        :=
		\argmin_{\bpi \in \reals^p}
		\Big\{
			\frac12 \| \by_k^f - \bSigma^{1/2}\bpi\|_2^2 + \frac1{\zeta_k} \Omega_k(\bpi)
		\Big\},
    \\
    \hat \btheta_k^{f,\de}
        :=
        \hat \btheta_k^f
        +
        \bSigma^{-1/2}(\by_k^f - \bSigma^{1/2}\hat \btheta_k^f)
        =
        \bSigma^{-1/2}\by_k^f.
\end{gathered}
\end{equation}
For noise estimation, the analogy to Eq.~\eqref{eq:noise-est} is less clear.
We set the noise estimates in the fixed-design model (FD-N) to be
\begin{equation}\label{eq:fixed-design-err-est}
    \hat \be^f_{k,\cI} 
        =
        \begin{cases}
            \zeta_k\br^f_{\cI_k} \quad &\text{if } \cI = \cI_k,\\
            \bzero \quad & \text{if } \cI = \cI_k^c,
        \end{cases}
    \qquad 
    \hat \be_k^{f,\de}
        =
        \frac{1-\zeta_k}{\zeta_k} \hat \be_k^f  
        +
        \Big(
            \br_k^f - \frac{1-\zeta_k}{\zeta_k} \hat \be_k^f
        \Big)
        =
        \br_k^f.
\end{equation}
The proofs of our main theorems will make clear why this is---for our purposes---the correct choice of noise estimates.

The covariance structures $\bS_g,\bS_h$ and parameters $(\zeta_1,\zeta_2)$ are determined by
a system of equations which we call the \emph{fixed-point equations}.
The system of equations involves functions describing the behavior of parameter estimation in the fixed-design model.
The functions are 
\begin{equation}\label{eq:R-df}
    \begin{gathered}
	\textbf{Second moments of prediction error}
    \\
		\sR(\bS_g,\{\zeta_k\}) = \E[( \widehat{\boldsymbol{\Theta}}^f - \boldsymbol{\Theta} )^\top \bSigma ( \widehat{\boldsymbol{\Theta}}^f - \boldsymbol{\Theta} ) ],
    \\
    \phantom{k=1,2}
	\end{gathered}
    \qquad
    \begin{gathered}
	\textbf{Degrees-of-freedom}
	\\
		\df_k(\bS_g,\{\zeta_k\}) = \E[\div \bSigma^{1/2} \eta_k(\by_k^f ; \zeta_k ) ],
    \\
        \qquad\qquad\qquad\qquad\qquad\qquad\qquad k = 1,2.
    \end{gathered}
\end{equation}
The right-hand sides depend implicitly on $\bS_g$ via $\widehat{\bTheta}^f,\bY^f$. 
We define
$\bS_g,\{\zeta_k\}$ 
as solutions to the system of equations
\begin{equation}\label{eq:fixed-pt-eqns}
\begin{gathered}
    \bS_g = \bN^- \odot \big(\bS_e + \sR(\bS_g,\{\zeta_k\})\big),\\
    \zeta_k = 1 - \frac{\df_k(\bS_g,\{\zeta_k\})}{n_k},\quad k=1,2,
\end{gathered}
\end{equation}   
where $\bN^- := \begin{pmatrix} 1/n_1 & n_{12}/(n_1n_2) \\ n_{12}/(n_1n_2) & 1/n_2\end{pmatrix}$ and $\odot$ denotes coordinate-wise multiplication.
These equations have a solution.
\begin{lemma}[Existence and uniqueness of fixed-point parameters]\label{lem:fixed-pt-soln}
    Assume $\bSigma$ is invertible, $\bS_e \succeq 0$,
    and for each $k$, either $n_k > p$ or $\lambda_k > 0$, where $\lambda_k$ is the regularization parameter of the penalty (see Eq.~\eqref{eq:penalty}).
    Then equations \eqref{eq:fixed-pt-eqns} have a unique solution $\bS_g,\{\zeta_k\}$.
\end{lemma}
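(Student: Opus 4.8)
The plan is to exploit a decoupling present in the system \eqref{eq:fixed-pt-eqns}. In the fixed-design model for parameter estimation (FD-P), the estimate $\hat\btheta_k^f = \eta_k(\by_k^f;\zeta_k)$ depends only on $\bg_k^f$, whose marginal law is $\normal(\bzero,(\bS_g)_{kk}\id_p)$. Hence the diagonal entries $\sR_{kk}$ of $\sR$ and the scalars $\df_k$ in \eqref{eq:R-df} depend on $\bS_g$ only through $g_k := (\bS_g)_{kk}$, and since the $(1,1)$ and $(2,2)$ entries of $\bN^-$ are $1/n_1$ and $1/n_2$, the equations \eqref{eq:fixed-pt-eqns} split into two self-contained scalar systems
\[
    g_k = \frac1{n_k}\Big(\tau_{e_k}^2 + \sR_{kk}(g_k,\zeta_k)\Big),
    \qquad
    \zeta_k = 1 - \frac{\df_k(g_k,\zeta_k)}{n_k},
    \qquad k = 1,2,
\]
together with a single scalar equation for the off-diagonal $g_{12} := (\bS_g)_{12}$ once the two diagonal pairs are determined. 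I would solve the diagonal systems first. Each is the familiar single-regression fixed-point system. For least squares ($n_k > p$, $\lambda_k = 0$) it is explicit: $\sR_{kk} = p\,g_k$ and $\df_k = p$, so $g_k = \tau_{e_k}^2/(n_k - p)$ and $\zeta_k = 1 - p/n_k \in (0,1)$. For ridge regression, $\eta_k$ is linear and $\df_k$ and $\sR_{kk}$ are explicit functions of the spectrum of $\bSigma$; eliminating $g_k$ leaves $\zeta_k = 1 - n_k^{-1}\df_k(\zeta_k)$ with $\df_k(\cdot)$ continuous and strictly increasing on $(0,1]$, and the assumption $\lambda_k > 0$ or $n_k > p$ forces the unique solution into $(0,1]$. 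For the Lasso, this is precisely the scalar fixed-point system studied in \cite{celentano2020lasso}, whose unique solvability is established there. In all cases one obtains a unique pair $(g_k^\star,\zeta_k^\star)$ with $g_k^\star \in [0,\infty)$, $\zeta_k^\star\in(0,1]$, and in particular $\df_k^\star := n_k(1-\zeta_k^\star) < n_k$.

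It then remains to solve, with $g_k = g_k^\star$ and $\zeta_k = \zeta_k^\star$ held fixed,
\[
    g_{12} \;=\; T(g_{12}) \;:=\; \frac{n_{12}}{n_1 n_2}\Big((\bS_e)_{12} + \sR_{12}(g_{12})\Big),
\]
where $\sR_{12}(g_{12}) := \E[(\hat\btheta_1^f-\btheta_1)^\top\bSigma(\hat\btheta_2^f-\btheta_2)]$ is computed with $(\bg_1^f,\bg_2^f)$ jointly Gaussian, mean zero, with $\Cov(\bg_1^f,\bg_2^f)$ having diagonal blocks $g_1^\star\id_p$, $g_2^\star\id_p$ and off-diagonal block $g_{12}\id_p$. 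This is well-defined exactly for $g_{12}$ in the compact interval $D := [-\sqrt{g_1^\star g_2^\star},\,\sqrt{g_1^\star g_2^\star}\,]$ (outside of which $\bS_g$ is not PSD). Changing variables to $\bv = \bSigma^{1/2}\bpi$ shows that $\bSigma^{1/2}\hat\btheta_k^f$ equals the proximal operator of the convex function $\bv\mapsto \Omega_k(\bSigma^{-1/2}\bv)/\zeta_k$ evaluated at $\by_k^f$; hence its Jacobian $\bP_k$ with respect to $\by_k^f$ is symmetric with $\bzero\preceq\bP_k\preceq\id$, and $\df_k^\star = \E[\Tr\bP_k]$. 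Gaussian integration by parts (Price's theorem) in the single parameter $g_{12}$ then gives $\frac{\de}{\de g_{12}}\sR_{12}(g_{12}) = \E[\Tr(\bP_1\bP_2)]$, and $\bzero\preceq\bP_k\preceq\id$ yields $0\le\Tr(\bP_1\bP_2)\le\sqrt{\Tr\bP_1^2\,\Tr\bP_2^2}\le\sqrt{\Tr\bP_1\,\Tr\bP_2}$, so by Cauchy--Schwarz
\[
    0 \;\le\; \frac{\de}{\de g_{12}}\sR_{12}(g_{12}) \;\le\; \sqrt{\df_1^\star\,\df_2^\star}.
\]
Therefore $T$ is Lipschitz on $D$ with constant $\tfrac{n_{12}}{n_1 n_2}\sqrt{\df_1^\star\df_2^\star} < \tfrac{n_1\wedge n_2}{\sqrt{n_1 n_2}}\le 1$, using $\df_k^\star < n_k$ and $n_{12}\le n_1\wedge n_2$. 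Moreover $T$ maps $D$ into itself: from $|(\bS_e)_{12}|\le\tau_{e_1}\tau_{e_2}$ (as $\bS_e\succeq0$), $|\sR_{12}|\le\sqrt{\sR_{11}\sR_{22}}$ (Cauchy--Schwarz), and the elementary bound $ab+cd\le\sqrt{(a^2+c^2)(b^2+d^2)}$, one gets $|T(g_{12})|\le\frac{n_{12}}{n_1 n_2}\sqrt{(\tau_{e_1}^2+\sR_{11})(\tau_{e_2}^2+\sR_{22})} = \frac{n_{12}}{\sqrt{n_1 n_2}}\sqrt{g_1^\star g_2^\star}\le\sqrt{g_1^\star g_2^\star}$. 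The Banach fixed-point theorem then gives a unique fixed point $g_{12}^\star\in D$. Since any solution of \eqref{eq:fixed-pt-eqns} has $\bS_g\succeq0$ (so that (FD-P) is defined), it must have $(g_1,g_2,\zeta_1,\zeta_2)=(g_1^\star,g_2^\star,\zeta_1^\star,\zeta_2^\star)$ and $g_{12}\in D$, hence it is the solution just constructed; conversely $\bS_g = \bN^-\odot(\bS_e+\sR)\succeq0$ holds automatically by the Schur product theorem, since $\bN^-\succeq0$ (its determinant is $(n_1 n_2 - n_{12}^2)/(n_1 n_2)^2\ge0$) and $\bS_e,\sR\succeq0$. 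This proves existence and uniqueness.

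The step I expect to demand the most care is the Price's theorem identity $\frac{\de}{\de g_{12}}\sR_{12}(g_{12}) = \E[\Tr(\bP_1\bP_2)]$. For the Lasso, $\eta_k$ is only piecewise affine, so interchanging differentiation and expectation requires first smoothing the penalties $\Omega_k$ (e.g.\ replacing $\|\cdot\|_1$ by a differentiable surrogate), proving the identity in the smoothed model, and then passing to the limit via dominated convergence together with almost-everywhere differentiability of proximal maps of convex functions. The other substantive ingredient is the single-regression fixed-point analysis for the Lasso, which is imported from \cite{celentano2020lasso}; for ridge and least squares the diagonal step is elementary, as sketched above.
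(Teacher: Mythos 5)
Your proposal is correct and follows essentially the same route as the paper: decouple the two diagonal (marginal) fixed-point systems and solve them case by case (explicitly for least squares, via monotonicity of $\df_k$ in $\zeta_k$ for ridge, and by importing the single-regression result of \cite{celentano2020lasso} for the Lasso), then control the off-diagonal equation through a Gaussian integration-by-parts derivative bound of the form $\big|\partial\,\sR_{12}\big|\leq\sqrt{\df_1\df_2}<\sqrt{n_1n_2}$, exactly the paper's Dirichlet-form computation in Eq.~\eqref{eq:cor-deriv}. The only differences are cosmetic: you parametrize by $(\bS_g)_{12}$ and close with a Banach contraction plus a self-map/PSD check, whereas the paper parametrizes by $\rho_g$ and closes with the intermediate value theorem for existence and strict monotonicity (derivative of the right-hand side strictly below $\tau_{g_1}\tau_{g_2}$) for uniqueness.
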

\noindent Lemma \ref{lem:fixed-pt-soln} is proved in Section \ref{sec:existence-fix-pt}.

Given a solution $\bS_g,\{\zeta_k\}$ to Eqs.~\eqref{eq:fixed-pt-eqns},
define the parameters
\begin{equation}\label{eq:def-R-rhoh}
	\bS_h
		:= 
		\sR(\bS_g,\{\zeta_k\}),
    \qquad 
    \bDf
        :=
        \begin{pmatrix}
            \df_1(\bS_g,\{\zeta_k\}) & 0 \\
            0 & \df_2(\bS_g,\{\zeta_k\})
        \end{pmatrix}.
\end{equation}
For simplicity, we will often denote $\df_k(\bS_g,\{\zeta_k\})$ by $\df_k$, 
where it is understood that this is evaluated at solutions to the fixed point equations \eqref{eq:fixed-pt-eqns}.

We are ready to state our general characterization result.
\begin{theorem}[Joint characterization]\label{thm:joint-characterization}
  Assume \textsf{A1} and \textsf{A2} hold. Let $\bS_g,\{\zeta_k\}$ be the unique solutions of  the
  fixed-point equations \eqref{eq:fixed-pt-eqns} and $\bS_h$ be defined by Eq.~\eqref{eq:def-R-rhoh}.
  Define the fixed design model and estimators according to Eqs.~\ref{eq:fixed-des}, \ref{eq:fixed-design-est}, \ref{eq:fixed-design-err-est}.

  There exist constants $\nu,c' > 0$ and functions $\sC,\sc: \reals_{>0} \rightarrow \reals_{>0}$ which depend uniquely on
  $\cPmodel$, $\cPregr$ and the regression method such that $\sc(\epsilon)\ge c'(\epsilon^\nu\wedge 1)$ and the
  following occurs.

    \begin{enumerate}[(i)]

        \item 
        \emph{(Noise estimation)}
        Let $\phi_e\Big(\Big\{\frac{\be_{k,\cI_2}}{\sqrt{n_2}}\Big\},\Big\{\frac{\hat \be_{k,\cI_2}}{\sqrt{n_2}}\Big\},\Big\{\frac{\hat \be_{k,\cI_2}^{\de}}{\sqrt{n_2}}\Big\}\Big)$ be a real-valued function which is $M_k$-Lipschitz in its arguments $\be_{k,\cI_2}/\sqrt{n_2},\hat \be_{k,\cI_2}/\sqrt{n_2},\hat \be_{k,\cI_2}^\de/\sqrt{n_2}$ for $k=1,2$.
        Then
        for $\epsilon < c'$ 
        with probability at least $1 - \sC(\epsilon) e^{-\sc(\epsilon) p}$ 
        \begin{equation}
        \begin{gathered}
            \Big|
                \phi_e\Big(\Big\{\frac{\be_{k,\cI_2}}{\sqrt{n_2}}\Big\},\Big\{\frac{\hat \be_{k,\cI_2}}{\sqrt{n_2}}\Big\},\Big\{\frac{\hat \be_{k,\cI_2}^{\de}}{\sqrt{n_2}}\Big\}\Big)
                -
                \E\Big[
                    \phi_e\Big(\Big\{\frac{\be_{k,\cI_2}}{\sqrt{n_2}}\Big\},\Big\{\frac{\hat \be_{k,\cI_2}^f}{\sqrt{n_2}}\Big\},\Big\{\frac{\hat \be_{k,\cI_2}^{\de,f}}{\sqrt{n_2}}\Big\}\Big)
                \Big]
            \Big| <  (M_1\sqrt{p/n_2} + M_2)\, \epsilon.
        \end{gathered}
        \end{equation}
        The result is also true if we replace $\cI_2$ with $\cI_1$, $n_2$ with $n_1$,
        and $M_1\sqrt{p/n_2} + M_2$ with $M_1 + M_2\sqrt{p/n_1}$.

        \item 
        \emph{(Parameter estimation)}
        Let $\phi_\theta : (\reals^p)^4 \rightarrow \reals$ be $M_1$-Lipschitz in its first two arguments and $M_2$-Lipschitz in its second two arguments.
        Then
        for $\epsilon < c'$ with probability at least $1 - \sC(\epsilon) e^{-\sc(\epsilon) p}$ 
        \begin{equation}
        \begin{gathered}
            \big| 
                \phi_\theta\big( \hat \btheta_1 , \hat \btheta_1^{\de} , \hat \btheta_2 , \hat \btheta_2^{\de} \big) 
                    - 
                    \E\big[\phi_\theta\big( \hat \btheta_1^f , \hat \btheta_1^{f,\de} , \hat \btheta_2^f , \hat \btheta_2^{f,\de}  \big)\big] 
            \big| 
                < 
                \min\big\{ M_1 \sqrt{p/n_1} + M_2 , M_1 + M_2 \sqrt{p/n_2} \big\}\,\epsilon.
        \end{gathered}
        \end{equation}

        \item 
        Parts (i) holds with $\hat \df_k$ in place of $\df_k$ in the definition of $\hat \be_k^\de$ (see Eq.~\eqref{eq:noise-est})
        if the right-hand side of the bound is changed to $(M_1(\sqrt{p/n_2} + p/n_1) + M_2)\epsilon$,
        (or if we replace $\cI_2$ with $\cI_1$, the upper bound becomes $(M_1 + M_2(\sqrt{p/n_1} + p/n_2))\epsilon$).

        Parts (ii) holds (with the same bound on the right-hand side) with $\hat \df_k$ in place of $\df_k$ in the definition of $\hat \btheta_k^\de$ (see Eq.~\eqref{eq:param-est}).

    \end{enumerate}
\end{theorem}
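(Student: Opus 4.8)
The plan is a two-stage application of the Convex Gaussian Min--Max Theorem (CGMT), following the single-regression program of \cite{thrampoulidis2015regularized,miolane2018distribution,celentano2020lasso}, but with a new conditioning step that handles the coupling of the two regressions through the shared design $\bX$. First I would reformulate the pair of optimizations in \eqref{eq:param-est} as a single convex--concave saddle-point problem: substituting $\bv_k=\bpi-\btheta_k$ and dualizing each quadratic loss, one is left with
\[
\min_{\bv_1,\bv_2}\ \max_{\bu_1,\bu_2}\ \sum_{k=1}^2\Big\{\tfrac1{n_k}\bu_k^\top\bX_{\cI_k}\bv_k-\tfrac1{n_k}\bu_k^\top\be_{k,\cI_k}-\tfrac1{2n_k}\|\bu_k\|_2^2+\Omega_k(\btheta_k+\bv_k)\Big\},
\]
in which $\bX$ enters only through the two bilinear forms $\bu_k^\top\bX_{\cI_k}\bv_k$, and in which the dual optimizer $\hat\bu_k$ equals the fitted residual $\hat\be_{k,\cI_k}$. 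Thus the primal characterization will produce the parameter estimates and the model (FD-P), and the dual characterization will produce the residuals and (FD-N); the debiased quantities $\hat\btheta_k^\de,\hat\be_k^\de$ are explicit Lipschitz functions of $\hat\btheta_k$ and the residuals once the operator norm of $\bX$ is controlled (which holds with the required probability under \textsf{A1}).

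\textbf{First regression via CGMT.} Because $\bX_{\cI_1}$ is a genuine Gaussian matrix with i.i.d.\ $\normal(\bzero,\bSigma)$ rows, the standard single-regression CGMT argument applies verbatim to the $k=1$ problem after whitening by $\bSigma^{1/2}$ and controlling the spectrum of $\frac1{n_1}\bX_{\cI_1}^\top\bX_{\cI_1}$ (here \textsf{A2} enters through $n_1>p$ or $\lambda_1>0$, and for the Lasso through the functional Gaussian width, which gives restricted strong convexity on the relevant sublevel set). This yields concentration of all Lipschitz functionals of $(\hat\btheta_1,\hat\btheta_1^\de)$ and of $(\be_{1,\cI_1},\hat\be_{1,\cI_1},\hat\be_{1,\cI_1}^\de)$ around the predictions of the fixed-design models with the $k=1$ marginals of $\bS_g,\bS_h,\zeta_1$, with rate $e^{-\sc(\epsilon)p}$. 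Concentration is first obtained for the optimal value of the Gordon surrogate and then transferred to the optimizer by the usual perturbation trick: add $\epsilon\,\phi(\cdot)$ to the objective, show that the perturbed and unperturbed surrogate values concentrate around points differing by $O(\epsilon)$, and invoke strong convexity on the relevant set to conclude the minimizers are $O(\epsilon)$-close.

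\textbf{Conditioning and a conditional Gordon inequality.} This is the crux, and the main obstacle. Having characterized the first regression, I would condition on it. Since the $k=1$ optimization depends on $\bX_{\cI_1}$ only through a low-dimensional sufficient statistic --- essentially the direction of $\hat\bv_1$ and the residual $\hat\be_{1,\cI_1}$, i.e.\ a few linear functionals of each row $\bx_i$, $i\in\cI_1$ --- the conditional law of the overlap block $\bX_{\cI_1\cap\cI_2}$ is Gaussian with a known rank-few mean shift and covariance equal to $\bSigma$ compressed to a subspace of codimension a few. I would then run a \emph{conditional} Gordon comparison on the $k=2$ problem: the rows of $\bX_{\cI_2}$ indexed by $\cI_2\setminus\cI_1$ are fresh independent Gaussians to which the usual comparison applies, while the rows indexed by $\cI_1\cap\cI_2$ contribute a deterministic (under the conditioning) bilinear term plus a reduced-rank residual Gaussian. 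Pushing this through gives the conditional characterization of $(\hat\btheta_2,\hat\btheta_2^\de,\hat\be_{2,\cI_2},\hat\be_{2,\cI_2}^\de)$ via a fixed-design model whose effective noise is correlated with that of the $k=1$ model precisely through the overlap, with cross-covariance proportional to $n_{12}/(n_1n_2)$ --- this is the origin of the off-diagonal entry of $\bN^-$ in \eqref{eq:fixed-pt-eqns}. The delicate point, flagged in the introduction, is that the naive one-shot dominating process fails Gordon's hypotheses; the conditioning is exactly what restores them, and making the conditional distribution, the conditional comparison, and the accompanying anti-concentration (feasibility) arguments rigorous is the technically demanding part.

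\textbf{Closing the fixed point and finishing.} The self-consistency between the effective noise covariances $\bS_g$ (parameters), $\bS_h=\sR(\bS_g,\{\zeta_k\})$ (residuals), and the scalars $\zeta_k=1-\df_k/n_k$ produced by the two stages is precisely the system \eqref{eq:fixed-pt-eqns}, whose solvability is Lemma \ref{lem:fixed-pt-soln}. Combining the $k=1$ marginal characterization with the conditional $k=2$ characterization and integrating out the conditioning gives the joint concentration statements (i) and (ii); the asymmetric pair of bounds ($M_1\sqrt{p/n_2}+M_2$ versus $M_1+M_2\sqrt{p/n_1}$) simply reflects the freedom to choose which regression plays the role of the ``first'' stage. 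Finally, for part (iii), $\hat\df_k=p=\df_k$ exactly for least squares; for ridge $\hat\df_k$ is a trace functional of $\frac1{n_k}\bX_{\cI_k}^\top\bX_{\cI_k}$ that is Lipschitz in the Gaussian entries once the spectrum is controlled and hence concentrates on $\df_k$; and for the Lasso $\hat\df_k=\|\hat\btheta_k\|_0$ concentrates on $\df_k$ by the single-regression characterization above (via a smoothing of the active-set indicator). Since $\hat\btheta_k^\de,\hat\be_k^\de$ depend on $\df_k$ only through $1/(n_k-\df_k)$, which is Lipschitz on the admissible range (bounded away from $n_k$ by \textsf{A2}), replacing $\df_k$ by $\hat\df_k$ perturbs each debiased quantity by $O(|\hat\df_k-\df_k|/n_k)$ times the norm of the relevant residual or parameter vector, which is absorbed into the enlarged bounds $(M_1(\sqrt{p/n_2}+p/n_1)+M_2)\epsilon$ and its symmetric counterpart.
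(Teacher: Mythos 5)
Your proposal follows the same two-stage architecture as the paper: a marginal CGMT characterization of the first regression, then a conditional Gordon comparison for the second regression given the first, closing the loop with the fixed-point system \eqref{eq:fixed-pt-eqns} and handling $\hat\df_k$ by a Lipschitz perturbation argument. At that level of description the routes coincide.

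However, the justification you give for the crucial conditioning step has a genuine gap. You claim that the $k=1$ optimization "depends on $\bX_{\cI_1}$ only through a low-dimensional sufficient statistic," and from this you infer that the conditional law of the design given the first-stage output is Gaussian with a rank-few mean shift and compressed covariance. As stated this is false: $\hat\btheta_1$ is a nonlinear functional of the entire matrix $\bX_{\cI_1}$, so conditioning on its value does not, by any sufficiency argument, leave a Gaussian law. What rescues the picture in the paper is the KKT equivalence (Lemma \ref{lem:KKT-equiv} and the proof of Lemma \ref{lem:conditional-gordon}): for fixed admissible values $(\bu_1',\bv_1',\bs_1',\bt_1')$, the event that $\mathsf{Cond}_1$ takes the corresponding value is \emph{identical} to the event that the Gaussian matrix $\bA=-\bX\bSigma^{-1/2}$ satisfies the two linear constraints $\bA^\top\bu_1'+\bs_1'=0$ and $\bA\bv_1'-\bt_1'=0$, and conditioning a Gaussian matrix on such constraints does yield the projected-Gaussian-plus-deterministic decomposition you want. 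Moreover, the resulting conditional auxiliary process is not Gaussian: it involves the data-dependent directions $\hat\bxi_g,\hat\bxi_h$ defined by \eqref{eq:hat-xi}--\eqref{eq:constraint+}, and a separate argument (Lemma \ref{lem:hat-g-hat-h-db-err} together with the marginal characterization and Corollaries \ref{cor:ext-to-hat-gh}, \ref{cor:ext-to-I2}) is needed to show these vectors behave like the fixed-design noises $\bg_1^f,\bh_1^f$, so that the conditional surrogate can be matched to the conditional fixed-design model (CFD-P)/(CFD-N) and the unconditional expectation recovered. A related inaccuracy: the rows in $\cI_2\setminus\cI_1$ are not entirely "fresh," since the functionals in part (i) involve $\hat\be_{1,\cI_2}^{\de}$, which depends on $\bX_{\cI_2\setminus\cI_1}(\hat\btheta_1-\btheta_1)$; this is why the paper includes $\bX(\hat\btheta_1-\btheta_1)$ on all of $[N]$ in $\mathsf{Cond}_1$. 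Without these ingredients the conditional comparison step, as you have written it, does not go through.
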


\begin{remark}\label{rmk:joint-char-rates}
    We conjecture that Theorem \ref{thm:joint-characterization} holds for $\sC(\epsilon) = C$ and $\sc(\epsilon) = c\epsilon^2$.
    Our proof techniques allow us to establish the result for $\sc(\epsilon) = c\epsilon^\nu$ for some $\nu > 2$. 
    Because the exponent $\nu$ we can provide is not the correct one and tracking it complicates our proofs, 
    we do not track it.
    We further conjecture that the upper bound on the right-hand sides can be replaced by $(M_1+M_2)\sqrt{p/n_2}\,\epsilon$ in the case of noise estimation,
    and by $(M_1 \sqrt{p/n_1} + M_2 \sqrt{p/n_2})\epsilon$ in the case of parameter estimation.
    See the remarks following Lemma \ref{lem:marginal-characterization} for further interpretation of these rates.
\end{remark}

\subsection{Interpretation of the joint characterization}

As anticipated, Theorem \ref{thm:joint-characterization} states that parameter estimation in model \eqref{eq:random-design} behaves like parameter estimation in model (FD-P),
and noise estimation in model \eqref{eq:random-design} behaves like noise estimation in model (FD-N),
where ``behaves like'' refers to the concentration properties of Lipschitz functions.
Because $\{\hat \be_k\},\{\hat \be_k^{f,\de}\}, \{ \hat \btheta_k\}, \{ \hat \btheta_k^{\de}\}$ are $C$-Lipschitz functions of the Gaussian noise $\bG^f,\bH^f$,
in the fixed design models the functions $\phi_e,\phi_\theta$ will concentrate on their expectations.
Theorem \ref{thm:joint-characterization} states that $\phi_e,\phi_\theta$ also concentrate in the random-design model,
and they concentrate on the same values.

Theorem \ref{thm:joint-characterization} gives substantive meaning to the fixed point parameters $\bS_g$ and $\bS_h$:
they are the values on which
the random-design quantities 
\begin{equation}\label{eq:db-regr-err}
    \frac1p(\widehat \bTheta^{\de} - \bTheta)^\top \bSigma (\widehat \bTheta^{\de} - \bTheta)
    \quad \text{and} \quad 
    \frac1{n_k}(\widehat \bE_{\cI_k}^{\de} - \bE_{\cI_k})^\top (\widehat \bE_{\cI_k}^{\de} - \bE_{\cI_k})
\end{equation}
concentrate.
Although the quantities in the previous display are not Lipschitz functions of $\widehat \bTheta^{\de}$ and $\widehat \bE^{\de}_{\cI_k}$,
their concentration is straightforward to establish using a Lipschitz approximation argument,
which we will carry out formally in our proofs when necessary (see Section \ref{sec:marg-char-support}).
Theorem \ref{thm:joint-characterization} can also be used to establish the concentration of several other quantities in the regression models.
An important example is the random design quantity $(\widehat \bTheta - \bTheta)^\top \bSigma (\widehat \bTheta - \bTheta)$, 
which concentrates on $\sR(\bS_g,\{ \zeta_k \}) = \bS_h$. 
The quantity $\sR_{kk}(\bS_g,\{ \zeta_k \}) = \bS_{h,kk}$ can then be interpreted as the typical prediction risk of the estimate from regression $k$, 
and $\sR_{12}(\bS_g,\{ \zeta_k \})$ as a measure of the typical alignment between the errors in the predictions from the two regressions,
which the reader will recall gives the bias of the estimator $\hbeta_{\mathrm{split}}$ (see Section \ref{sec:why-bias}).

An implication of Theorem \ref{thm:joint-characterization} is that the random-design quantities which concentrate on $\bS_g$, $\bS_h$, and $\sR(\bS_g,\{ \zeta_k \})$ will, with high-probability, approximately satisfy Eq.~\eqref{eq:fixed-pt-eqns}. 
This has important statistical consequences.
Indeed, $\bS_g$ and $\bS_h$ are not known a priori to the statistician,
and they describe the behavior of the errors \eqref{eq:db-regr-err} and $(\widehat \bTheta - \bTheta)^\top \bSigma (\widehat \bTheta - \bTheta)$ which the statistician does not observe but may wish to estimate.
As we will see,
the fixed point equations \eqref{lem:fixed-pt-soln} will allow us to write $\bS_g$, $\bS_h$, and other unknown quantities in terms of parameters we can estimate from data.
For example, Theorem \ref{thm:joint-characterization} can be used to show that $\widehat \bE_{\cI_k}^{\de\top}\widehat \bE_{\cI_k}^{\de}/n_k$ concentrates on $\E[\bR_{\cI_k}^{f\top}\bR_{\cI_k}^f]/n_k = \bS_e + \bS_h = \bS_e + \sR(\bS_g,\{\zeta_k\})$.
The fixed point equations \eqref{eq:fixed-pt-eqns} then suggest that
\begin{equation}\label{eq:Sg-hat}
    \widehat \bS_g^{(k)} 
        :=
        \bN^- \odot \Big(\frac1{n_k}\widehat \bE_{\cI_k}^{\de\top}\widehat \bE_{\cI_k}^{\de}\Big),
\end{equation}
is a consistent estimator of $\bS_g$,
and thus of the error structure of the debiased estimates.
We will see that this claim is correct.
Pushing this strategy further, we will develop a consistent estimator of the noise covariance $\bS_e$,
which
will give us Theorem \ref{thm:debiased-estimate}.

For future reference, we have collected in Table \ref{tab:fix-pt} several random design quantities and the ``typical values'' on which they concentrate.
For some of these quantities,
we have provided multiple equivalent ways of writing the typical values,
either using the fixed-point equations or the definitions of the fixed point parameters.
We have also introduced the parameter $\bS_{\hat e^{d}} := \E[\widehat \bE_{\cI_k}^{f,\de\top}\widehat \bE_{\cI_k}^{f,\de}] / n_k$ and $\bS_v := \E[(\widehat \bTheta^f - \bTheta)^\top \bSigma (\widehat \bTheta^f - \bTheta)] $. 
This is consistent with a convention which we adopt by which $\bS$ always denotes the fixed-design second-moment matrix of the quantity in its subscript (possibly with some renormalization).
We will later define $\bV = \bSigma^{1/2}(\widehat \bTheta - \bTheta)$, justifying the notation $\bS_v$.
We do not claim to have yet proved any of the concentration properties or relationships implied by Table \ref{tab:fix-pt} (although they are straightforward to show using Theorem \ref{thm:joint-characterization}).
These will be provided in our proofs as needed.
Table \ref{tab:fix-pt} is not intended as a replacement for a proof, but rather as a useful reference for the reader.

\begin{table}
\begin{center}
\begin{tabular}{@{}llll@{}} \toprule
\multicolumn{2}{c}{Noise estimation} & \multicolumn{2}{c}{Parameter estimation} \\ \cmidrule(r){1-2}  \cmidrule(r){3-4}
Random-design quantity & Typical value & Random-design quantity & Typical value \\ \midrule
$\bE_{\cI_k}^\top \bE_{\cI_k}/n_k$ & $\bS_e$ & $(\widehat \bTheta - \bTheta)^\top \bSigma (\widehat \bTheta - \bTheta)$  & $\bS_v$, $\bS_h$, $\mathsf{R}(\bS_g,\{\zeta_k\})$ \\\addlinespace[.1cm]
$(\widehat \bE_{\cI_k}^{\de} - \bE_{\cI_k})^\top (\widehat \bE_{\cI_k}^{\de} - \bE_{\cI_k})/n_k$ & $\bS_h$ & $(\widehat \bTheta^{\de} - \bTheta)^\top \bSigma (\widehat \bTheta^{\de} - \bTheta)/p$  & $\bS_g$, $\bN^- \odot \bS_{\hat e^{\de}}$ \\\addlinespace[.1cm]
$\widehat \bE_{\cI_k}^{\de\top}\widehat \bE_{\cI_k}^{\de} / n_k$ & $\bS_{\hat e^{\de}}$, $\bS_e + \bS_h$ & $(\widehat \bTheta^{\de} - \bTheta)^\top \bSigma (\widehat \bTheta - \bTheta)$ & $\bS_g \bDf $ \\
\bottomrule
\end{tabular}
\end{center}
\caption{Some random design quantities and the values on which they concentrate.}\label{tab:fix-pt}
\end{table}

\subsection{Estimating noise covariance and the proof of Theorem \ref{thm:debiased-estimate}}

In this section,
we construct a consistent estimate of the noise covariance $\bS_e$,
and then show that the consistency of the CAD estimate (Theorem \ref{thm:debiased-estimate}) follows from this construction.
Our estimate of $\bS_e$ is a generalization of the estimate of the Lasso noise 
level provided by \cite{bayatiErdogdu2013} (see Eq.~(1.4) and the following display in that paper).
However, the setting here is considerably more challenging since we study two coupled
linear models instead of a single one, and correlated Gaussian designs instead of iid designs
($\bSigma \neq \id_p$).

In order to derive our estimate of $\bS_g$,
we follow the strategy described in the previous section:
we use the characterizations listed in Table \ref{tab:fix-pt} 
and the fixed point equations \eqref{eq:fixed-pt-eqns} to find a quantity which concentrates on $\bS_e$.
We will first derive the estimate heuristically, and then will prove its consistency.

Writing $\widehat \bTheta^{\de} - \widehat \bTheta = (\widehat \bTheta^{\de} - \bTheta) - (\widehat \bTheta - \bTheta)$ and using Table \ref{tab:fix-pt},
we expect that $(\widehat \bTheta^{\de} - \widehat \bTheta)^\top \bSigma (\widehat \bTheta^{\de} - \widehat \bTheta)$ will concentrate on
\begin{equation}
    p\bS_g - \bS_g \bDf - \bDf \bS_g + \bS_h 
        = 
        p\bS_g - \bS_g \bDf - \bDf \bS_g + \bS_{\hat e^{\de}} - \bS_e.
\end{equation}
We proposed in Eq.~\eqref{eq:Sg-hat} an estimate $\widehat \bS_g^{(k)}$ of $\bS_g$. 
A natural estimate of $\bS_{\hat e^{\de}}$ is 
\begin{equation}
    \widehat \bS_{\hat e^{\de}}^{(k)}
        :=
        \frac1{n_k}\widehat \bE_{\cI_k}^{\de\top}\widehat \bE_{\cI_k}^{\de}.
\end{equation}
Combining these estimates and identities,
we propose the estimate 
\begin{equation}
    \widehat \bS_e^{(k)}
    :=
    \widehat \bS_{\hat e^{\de}}^{(k)} + p\widehat \bS_g^{(k)} - \widehat \bS_g^{(k)} \widehat \bDf - \widehat \bDf \widehat \bS_g^{(k)} - (\widehat \bTheta^{\de} - \widehat \bTheta)^\top \bSigma (\widehat \bTheta^{\de} - \widehat \bTheta),
\end{equation}
where $\widehat \bDf = \diag(\hat \df_1,\hat \df_2)$.
Our main theorem is as follows:
\begin{theorem}\label{thm:noise-est}
    Consider that $\hat \btheta_k^\de$ and $\hat \be_k^\de$ are computed with $\hat \df_k$ in place of $\df_k$ in Eqs.~\eqref{eq:param-est} and \eqref{eq:noise-est}.
    Under assumptions \textsf{A1} and \textsf{A2},
    there exist constants $\nu, c' > 0$ and $\sC,\sc : \reals_{>0} \rightarrow \reals_{>0}$ such that
    $\sc(\eps)>c'\eps^{\nu}$ and,  for $\epsilon \leq c'$,
    \begin{equation}
    \begin{gathered}
        \P\Big(\big\| \widehat \bS_e^{(k)} - \bS_e\big\|_{\sF} > C' \sqrt{\frac{p}{ n_1 \wedge  n_2 }}\,\epsilon\Big) \leq \sC(\epsilon) e^{-\sc(\epsilon)p}.
    \end{gathered}
    \end{equation}
\end{theorem}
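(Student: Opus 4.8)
By construction $\widehat\bS_e^{(k)}$ is an explicit linear-algebraic combination of the random-design quantities $\widehat\bS_{\hat e^\de}^{(k)}=\frac1{n_k}\widehat\bE_{\cI_k}^{\de\top}\widehat\bE_{\cI_k}^{\de}$, $\widehat\bS_g^{(k)}=\bN^-\odot\widehat\bS_{\hat e^\de}^{(k)}$, $\widehat\bDf=\diag(\hat\df_1,\hat\df_2)$, and $(\widehat\bTheta^\de-\widehat\bTheta)^\top\bSigma(\widehat\bTheta^\de-\widehat\bTheta)$. The plan is (a) to use Theorem~\ref{thm:joint-characterization} to show that each of these concentrates on its ``typical value'' in the sense of Table~\ref{tab:fix-pt}, and (b) to use the fixed-point equations \eqref{eq:fixed-pt-eqns} to check that the corresponding combination of typical values collapses to $\bS_e$. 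For (a) I would apply part (i) of Theorem~\ref{thm:joint-characterization} (and part (iii) to pass to the versions built from $\hat\df_k$) to a Lipschitz truncation of the bilinear form $(\bu,\bu')\mapsto\langle\bu,\bu'\rangle$, as in Section~\ref{sec:marg-char-support}, to get that $\widehat\bS_{\hat e^\de}^{(k)}$ concentrates on $\bS_{\hat e^\de}:=\E[\widehat\bE_{\cI_k}^{f,\de\top}\widehat\bE_{\cI_k}^{f,\de}]/n_k$; since $\widehat\bE_{\cI_k}^{f,\de}=\bR^f_{\cI_k}=\bE_{\cI_k}+\bH^f_{\cI_k}$ with $\bH^f$ independent of $\bE$ and having per-row covariance $\bS_h=\sR(\bS_g,\{\zeta_k\})$, this equals $\bS_e+\bS_h$, and hence $\widehat\bS_g^{(k)}=\bN^-\odot\widehat\bS_{\hat e^\de}^{(k)}$ concentrates on $\bN^-\odot(\bS_e+\sR(\bS_g,\{\zeta_k\}))$, which by the first line of \eqref{eq:fixed-pt-eqns} is exactly $\bS_g$. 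Part (ii) (again with part (iii)) applied to a truncated quadratic test function in $(\widehat\bTheta,\widehat\bTheta^\de)$, after expanding $\widehat\bTheta^\de-\widehat\bTheta=(\widehat\bTheta^\de-\bTheta)-(\widehat\bTheta-\bTheta)$ and reading off the three typical values from Table~\ref{tab:fix-pt}, shows that $(\widehat\bTheta^\de-\widehat\bTheta)^\top\bSigma(\widehat\bTheta^\de-\widehat\bTheta)$ concentrates on $p\bS_g-\bS_g\bDf-\bDf\bS_g+\bS_h$; finally $\hat\df_k$ concentrates on $\df_k$ (from the marginal characterization of each single regression), so $\widehat\bDf$ concentrates on $\bDf$.

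\textbf{Assembling the bound.} Substituting these typical values into the definition of $\widehat\bS_e^{(k)}$ gives $\bS_{\hat e^\de}+p\bS_g-\bS_g\bDf-\bDf\bS_g-(p\bS_g-\bS_g\bDf-\bDf\bS_g+\bS_h)=\bS_{\hat e^\de}-\bS_h=\bS_e$, using $\bS_{\hat e^\de}=\bS_e+\bS_h$; note that the order-$p$ block $p\bS_g-\bS_g\bDf-\bDf\bS_g$ cancels identically. Combining this exact identity with the concentration statements via the triangle inequality, a union bound over the finitely many scalar test functions involved, and the Lipschitz-approximation error estimates, and then renaming the constants $c',C'$ and the functions $\sC,\sc$, yields $\|\widehat\bS_e^{(k)}-\bS_e\|_{\sF}\le C'\sqrt{p/(n_1\wedge n_2)}\,\epsilon$ on an event of probability at least $1-\sC(\epsilon)e^{-\sc(\epsilon)p}$.

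\textbf{Main obstacle.} The delicate point is the rate. The quantities $p\widehat\bS_g^{(k)}-\widehat\bS_g^{(k)}\widehat\bDf-\widehat\bDf\widehat\bS_g^{(k)}$ and $(\widehat\bTheta^\de-\widehat\bTheta)^\top\bSigma(\widehat\bTheta^\de-\widehat\bTheta)$ are each of order $p$, and one cannot certify either of them, term by term, to additive accuracy $O(\sqrt{p/(n_1\wedge n_2)}\,\epsilon)$ directly from Theorem~\ref{thm:joint-characterization}: a quadratic form of a vector of $\ell_2$-norm $\Theta(\sqrt p)$ is only $\Theta(1/\sqrt p)$-Lipschitz, so its $\Theta(1)$-per-entry typical value is certified with error $\Theta(\epsilon/\sqrt{n_1\wedge n_2})$, which, multiplied back by $p$, is off by a factor $\sqrt p$; moreover the explicit $\widehat\bDf$ enters at scale $p$ while for the Lasso $\hat\df_k=\|\hat\btheta_k\|_0$ concentrates on $\df_k$ only at additive scale $o(p)$, not $O(\sqrt p\,\epsilon)$. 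The remedy is to analyze the combination $p\widehat\bS_g^{(k)}-\widehat\bS_g^{(k)}\widehat\bDf-\widehat\bDf\widehat\bS_g^{(k)}-(\widehat\bTheta^\de-\widehat\bTheta)^\top\bSigma(\widehat\bTheta^\de-\widehat\bTheta)$ \emph{as a single object}: using the identity $\bSigma(\hat\btheta_k^\de-\hat\btheta_k)=\bX_{\cI_k}^\top\hat\be_{k,\cI_k}/(n_k-\hat\df_k)$ together with the definition of $\widehat\bS_g^{(k)}$ in terms of the same degrees-of-freedom-adjusted residuals, the order-$p$ contributions and the dependence on $\hat\df_k$ cancel exactly at the level of the random matrices (not merely of their expectations), leaving an $O(1)$-scale bilinear form in the fitted residuals $\hat\be_{k,\cI_k}$ mediated by $\bX_{\cI_1}\bSigma^{-1}\bX_{\cI_2}^\top$ (the overlap $n_{12}$ entering here exactly matching the off-diagonal of $\bN^-$), whose concentration on $-\bS_h$ at rate $\sqrt{p/(n_1\wedge n_2)}\,\epsilon$ follows from part (i) of Theorem~\ref{thm:joint-characterization} applied to a suitable Lipschitz function of the noise-estimation variables. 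Carrying out this cancellation cleanly, and checking that the residual object is genuinely a Lipschitz function of the quantities controlled by Theorem~\ref{thm:joint-characterization}, is the crux; the remaining steps are routine.
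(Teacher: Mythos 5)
Your first two paragraphs are essentially the paper's strategy (concentrate each of $\widehat\bS_{\hat e^\de}^{(k)}$, $\widehat\bS_g^{(k)}$, $\widehat\bDf$, $(\widehat\bTheta^\de-\widehat\bTheta)^\top\bSigma(\widehat\bTheta^\de-\widehat\bTheta)$ on its typical value and let the fixed-point equations \eqref{eq:fixed-pt-eqns} collapse the combination to $\bS_e$), and the assembly identity you state is correct. The problem is your third paragraph: the ``obstacle'' you describe rests on a mis-scaling, and the ``remedy'' you substitute for the straightforward route is unjustified. By Lemma \ref{lem:bound-on-fixed-pt}, $\tau_{g_k}^2=\Theta(1/n_k)$ and $\tau_{h_k}^2=\Theta(p/n_k)$, so $p\widehat\bS_g^{(k)}$, $\bS_g\bDf$, and the quadratic form in $\widehat\bTheta^\de-\widehat\bTheta$ are all $O(p/n_k)=O(1)$ objects, and $\|\bSigma^{1/2}(\hat\btheta_k^\de-\hat\btheta_k)\|_2=\Theta(\sqrt{p/n_k})$, not $\Theta(\sqrt p)$. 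Consequently the term-by-term route you dismiss does deliver the stated rate: rescaling the vectors by $\sqrt{n_k/p}$ and invoking Lemma \ref{lem:T-conc} with $M=C/\sqrt p$ together with the $M_1,M_2$-dependent bounds of Theorem \ref{thm:joint-characterization} yields, e.g., concentration of $\<\hat\btheta_1^\de-\hat\btheta_1,\hat\btheta_2^\de-\hat\btheta_2\>_\bSigma$ to accuracy $\sqrt{p/(n_1\vee n_2)}\,\epsilon$; and $\hat\df_k$ only ever multiplies $O(1)$ quantities through $\hat\df_k/n_k$, so the accuracy $|\hat\df_k/p-\df_k/p|<\epsilon$ of Lemma \ref{lem:emp-df-conc} suffices (for the Lasso $p\asymp n_k$ by \textsf{A2}, so there is no lost $\sqrt p$ factor). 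This is exactly how the paper's proof in Section \ref{sec:proof-noise-est} proceeds, splitting diagonal entries (marginal characterization plus Corollary \ref{cor:ext-to-I2}) from off-diagonal entries (a four-term decomposition of $\widehat S_{\hat e^\de,12}^{(k)}$, with the two genuinely joint terms handled by Theorem \ref{thm:joint-characterization}).

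The gap is in the replacement step. The claimed cancellation ``at the level of the random matrices'' is false: writing $\bSigma^{1/2}(\hat\btheta_k^\de-\hat\btheta_k)=\bSigma^{-1/2}\bX_{\cI_k}^\top\hat\be_{k,\cI_k}/(n_k-\hat\df_k)$ and $\widehat\bS_g^{(k)}=\bN^-\odot\big(\widehat\bE_{\cI_k}^{\de\top}\widehat\bE_{\cI_k}^{\de}/n_k\big)$, the $\hat\df$-dependence and the $O(p/n)$ pieces do not cancel identically; they cancel only in expectation, precisely because of the fixed-point equations and the Gaussian-integration-by-parts identity $\E[\<\bg_l^f,\bv_k^f\>]=\Cov(\bg_k^f,\bg_l^f)\,\df_k$ — which is the content of the theorem you are trying to prove, so the argument becomes circular. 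Moreover the leftover object you propose to control, a bilinear form in the fitted residuals mediated by $\bX_{\cI_1}\bSigma^{-1}\bX_{\cI_2}^\top$, is not a Lipschitz function of the noise-estimation variables $\{\be_{k,\cI_2},\hat\be_{k,\cI_2},\hat\be_{k,\cI_2}^\de\}$ covered by Theorem \ref{thm:joint-characterization}(i): it involves the design matrix explicitly, which is outside the scope of the characterization. The paper sidesteps this by routing that quantity through the parameter-estimation side, since $\<\hat\btheta_1^\de-\hat\btheta_1,\hat\btheta_2^\de-\hat\btheta_2\>_\bSigma$ is a function of $(\hat\btheta_k,\hat\btheta_k^\de)$ and hence falls under part (ii). So as written your crux step is both unnecessary and unsupported; reverting to your own first two paragraphs, carried out with the $\sqrt{n_k/p}$ normalizations and Lemma \ref{lem:emp-df-conc}, is the proof.
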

\noindent We prove Theorem \ref{thm:noise-est} in Section \ref{sec:proof-noise-est}.

As a byproduct, Theorem \ref{thm:noise-est} provides an estimate of the noise level
$\sigma$, which generalizes the estimate in \cite{bayatiErdogdu2013}, which was limited
to uncorrelated designs. 
Recall that $S_{e,kk} = \tau_{e_k}^2$ is the noise level in the $k^\text{th}$ linear model.
The estimate for $\tau_{e_k}^2$ provided by $\widehat \bS_e^{(k)}$ is
\begin{equation}
    \hat \tau_{e_k}^2
        := 
        \Big(1 + \frac{p}{n_k} - 2\frac{\hat \df_k}{n_k}\Big)\frac{\|  \hat\be^{ \de}_{k,\cI_k}\|_2^2}{n_k}
        -
        \| \hat \btheta_k^{\de} - \hat \btheta_k \|_{\bSigma}^2.
\end{equation}
In the case of the Lasso,
we estimate $\df_k$ by $\hat \df_k = \| \hat \btheta_k \|_0$.
Further, when $\bSigma = \id_p$, 
we have $\hat \btheta_k^{\de} - \hat \btheta_k = \bX_{\cI_k}^\top(\by_{\cI_k} - \bX_{\cI_k}\hat \btheta_k)/(n_k - \| \hat \btheta_k \|_0)$.
Finally, the degrees of freedom adjusted residuals $\hat \be_{k,\cI_k}^{\de}$
are given by $(\by_{k,\cI_k} - \bX_{\cI_k} \hat \btheta_k)/(1 - \| \hat \btheta_k \|_0/n_k)$ (see Eq.~\eqref{eq:noise-est}).
Thus, in the special case of the Lasso with $\bSigma = \id_p$, we may rewrite the estimate above as 
\begin{equation}
    \hat \tau_{e_k}^2
        = 
        \Big(1 + \frac{p}{n_k} - 2\frac{\| \hat \btheta_k \|_0}{n_k}\Big)\frac{\| \by_{k,\cI_k} - \bX_{\cI_k} \hat \btheta_k \|_2^2}{n_k(1 - \| \hat \btheta_k \|_0/n_k)^2}
        -
        \frac{\|\bX_{\cI_k}^\top(\by_{\cI_k} - \bX_{\cI_k}\hat \btheta_k)\|_2^2}{(n_k - \|\hat \btheta_k\|_0)^2},
\end{equation}
which agrees, under our normalization, with the estimate provided in \cite{bayatiErdogdu2013}.

As a sanity check, it is useful to write the noise estimate $\hat \tau_{e_k}^2$ in the case of OLS when $n_k > p$,
a case to which Theorem \ref{thm:noise-est} also applies.
In this case, $\df_k = p$ and $\bX_{\cI_k}^\top(\by_{\cI_k} - \bX_{\cI_k}\hat \btheta) = 0$.
The noise estimate simplifies to
\begin{equation}
    \hat \tau_{e_k}^2
        = 
        \frac{\| \by_{k,\cI_k} - \bX_{\cI_k} \hat \btheta_k \|_2^2}{n_k - p}
        \quad \text{for estimation by least squares.}
\end{equation}
This is the standard unbiased estimate of noise variance with least squares.

We can now prove Theorem \ref{thm:debiased-estimate}.
\begin{proof}[Proof of Theorem \ref{thm:debiased-estimate}]
    Using that $\tau_{e_1}^2 = \kappa^2$, $\tau_{e_2}^2 = \kappa^2 \beta^2 + \sigma^2$, and $\rho_e = \kappa \beta/(\kappa^2 \beta^2 + \sigma^2)^{1/2}$,
    we see that $\beta = \tau_{e_1}\tau_{e_2}\rho_e / \tau_{e_1}^2 = S_{e,12}/S_{e,11}$.
    Using that $r_i^\gamma = e_{1,i}^\de$, $r_i^\theta = e_{2,i}^\de$, and $\hat \kappa^2 = \widehat S_{e,11}$,
    we see that  $\hbeta_{\cad} = \widehat S_{e,12} / \widehat S_{e,11}$.
    Theorem \ref{thm:debiased-estimate} follows by apply Theorem \ref{thm:noise-est} and the $\delta$-method, using that $S_{e,12}$ is bounded above and $S_{e,11}$ is bounded below and above by $\cPmodel$-dependent constants.
\end{proof}

\section*{Acknowledgements}

This work was partially supported by NSF grants CCF-2006489, IIS-1741162 and the ONR grant N00014-18-1-
2729. M.C. was supported by the National Science Foundation Graduate Research Fellowship
under grant DGE-1656518.

\bibliographystyle{alpha}
\bibliography{debiasing}

\newpage
\appendix

\section{Convention on using constants $C,c,\sC(\epsilon),\sc(\epsilon)$}

Throughout the appendices, the constants $C,C',c,c' > 0$ and 
$\sC(\epsilon),\sc(\epsilon): \reals_{>0} \rightarrow \reals_{>0}$ (or these quantities 
with subscripts) are always assumed be positive, finite, and depend only on $\cPregr,\cPmodel$, 
and the regression methods used. Unless otherwise stated, these may change at each appearance.
Further, the exponent $\sc(\epsilon)$ is understood to satisfy $\sc(\epsilon)>c'(\eps^{\nu}\wedge 1)$
for some constant $\nu<\infty$ that also can change from line to line.
These conventions will usually not be repeated throughout the text. 

We use these conventions to make several statements more concise. 
For example, the statement ``we conclude $\| \hat \btheta_k - \btheta_k \|_{\bSigma} > c$'' should be interpreted to mean ``we conclude $\| \hat \btheta_k - \btheta_k \|_{\bSigma}$ is bounded below by a positive constant depending only on $\cPregr,\cPmodel$, and the regression methods used,'' although it will usually not be stated in this more unwieldy way.

\section{The $\alpha$-smoothed Lasso}

For technical reasons, it is difficult to prove Theorem \ref{thm:joint-characterization} directly for the Lasso estimator.
In particular, 
the non-differentiability of the $\ell_1$-norm leads to technical challenges in characterizing the behavior of the debiased Lasso $\hat \btheta_k^{\de}$.
To address these challenges, we use a smoothing technique, which was also used in \cite{celentano2020lasso}.

We introduce a regression estimator which uses a smoothed version of the $\ell_1$-norm. 
The $\alpha$-smoothed Lasso with smoothing parameter $\alpha_k > 0$ and regularization $\lambda_k$ uses the penalty $\Omega_k(\bpi) := \lambda_k \sM_{\alpha_k/\sqrt{n_k}}(\bpi)/\sqrt{n_k}$, where
\begin{equation}
    \sM_t(\bpi) 
        := 
        \inf_{\bv \in \reals^p}\Big\{\frac1{2t}\| \bpi - \bv \|_2^2 + \| \bv \|_1\Big\}.
\end{equation}
For $t = 0$, we define $\sM_0(\bpi) = \|\bpi\|_1$.
It is straightforward to check that for all $\alpha_k > 0$, $\nabla \sM_{\alpha_k/\sqrt{n_k}}(\bpi)/\sqrt{n_k}$ exists and is 
$1/\alpha_k$-Lipschitz. 
We add to assumption $\textsf{A2}$ the following assumptions for the case that the $\alpha$-smoothed Lasso is used:
\begin{description}

    \item{$\textsf{A2}$ (continued)} 
    If we use the $\alpha$-smoothed Lasso, we make the following assumption:
    \begin{itemize}

        \item 
        We assume the parameter $\bar \btheta$ (resp.\ $\bgamma$) is $(s,\sqrt{ n_k /p}(1-\Delta_{\min}),M)$-approximately sparse for some $s/p > \nu_{\min} $, $\Delta_{\min} \in (0,1)$, and $ n_k /p \in [\delta_{\min},\delta_{\max}]$ for $0 < \delta_{\min} \leq \delta_{\max} < \infty$,
        and $0 < \lambda_{\min} < \lambda_k < \lambda_{\max} < \infty$.
        We assume $\alpha_k < \alpha_{\max}$.

    \end{itemize}

\end{description}
We include in $\cPregr$ the constant $\alpha_{\max}$ which appears in our assumption about the $\alpha$-smoothed Lasso.
We will prove the joint-characterization under assumption $\textsf{A1}$ and $\textsf{A2}$ even when the $\alpha$-smoothed Lasso is used for one or both of the regressions.
In the remainder of the paper, we will develop results for the $\alpha$-smoothed Lasso alongside our development of results for OLS, ridge-regression, and the Lasso.

\section{The fixed-point parameters}

Our proofs will require certain bounds on and identities satisfied by the fixed point parameters $\bS_g$ and $\bS_h$.
They will sometimes benefit from new notation for references entries of these matrices.
The purpose of this section is to provide these results and new notation.

\subsection{Non-matricial form of fixed-point parameters}
\label{sec:non-matrix-fix-pt}

For any matrix $\bS$, possibly with subscripts,
we will denote the variances and correlations implied by the covariance structure $\bS$ by $\tau^2$ and $\rho$, respectively, with the corresponding subscripts.
For example,
we will denote
\begin{equation}
    \bS_g =:
        \begin{pmatrix}
            \tau_{g_1}^2 & \tau_{g_1}\tau_{g_2}\rho_g \\
            \tau_{g_1}\tau_{g_2}\rho_g & \tau_{g_2}^2
        \end{pmatrix},
\end{equation}
and will similarly define the parameters $\tau_{h_k}^2,\rho_h,\tau_{\hat e_k^{\de}}^2,\rho_{\hat e^{\de}}$.
We will also denote $\rho^\perp = \sqrt{1-\rho^2}$, with appropriate subscripts.
For example, $\rho_e^\perp = \sqrt{1 - \rho_e^2}$, and so on.

It will frequently be useful to reference individual entries of the fixed-point equations \eqref{eq:fixed-pt-eqns}.
Our new notation facilitates this. 
The diagonal equations in Eq.~\eqref{eq:fixed-pt-eqns} are 
\begin{equation}\label{eq:marg-fix-pt-eq}
    \begin{gathered}
        \tau_{g_k}^2 = \frac1{n_k}\big(\tau_{e_k}^2 + \sR_k(\tau_{g_k}^2,\zeta_k)\big),
        \\
        \zeta_k = 1 - \frac{\df_k(\tau_k^2,\zeta_k)}{n_k},
    \end{gathered}
\end{equation}
where  $\sR_k(\tau_{g_k}^2,\zeta_k) := \E\big[\| \hat \btheta_k^f - \btheta_k\|_{\bSigma}^2\big]$.
The first equation correpsonds to the $k^\text{th}$ diagonal entry of the first (matricial) fixed point equation \eqref{eq:fixed-pt-eqns}.
The right-hand side depends on $\tau_{g_k}^2$ via $\E\big[\| \hat \btheta_k^f - \btheta_k\|_{\bSigma}^2\big]$.
There is some abuse of notation in writing $\df_k$ as a function of $\tau_k^2,\zeta_k$ rather than $\bS_g,\{ \zeta_l \}$.
This is because $\df_k$ depends on $\bS_g,\{ \zeta_l \}$ only via $\tau_k^2,\zeta_k$.

We also have equation
\begin{equation}\label{eq:simul-fix-pt}
    \tau_{g_1}\tau_{g_2} \rho_g
        = 
        \frac{n_{12}}{n_1n_2}\big( \tau_{e_1}\tau_{e_2}\rho_e + \sR_{12}(\bS_g,\{\zeta_k\}) \big),
\end{equation}
where 
\begin{equation}
    \sR_{12}(\bS_g,\{\zeta_k\})
        :=
        \E[\< \hat \btheta_1^{f,\de} - \btheta_1^f, \hat \btheta_2^{f,\de} - \btheta_2^f\>_{\bSigma}].
\end{equation}
This equation corresponds to the off diagonal entry of the first (matricial) fixed point equation \eqref{eq:fixed-pt-eqns}. 
The equations \eqref{eq:marg-fix-pt-eq} and \eqref{eq:simul-fix-pt} are a rewriting of Eq.~\eqref{eq:fixed-pt-eqns}.

\subsection{Identities satisfied by the fixed-point parameters}

The fixed-point parameters satisfy several identities which will be useful in the proofs.
These follow by straightforward algebra from the fixed point equations \eqref{eq:fixed-pt-eqns} (or, equivalently, Eqs.~\eqref{eq:marg-fix-pt-eq} and \eqref{eq:simul-fix-pt}) and the structure of the fixed design models (FD-P) and (FD-N).
For future reference, we collect these identities here.

    The variance of the noise in the random-design model satisfies
    \begin{equation}\label{tau-e-ref}
    \begin{gathered}
        \tau_{e_k}^2 = \E[ \|\be_{k,\cI}\|_2^2] / |\cI|,\\
        \tau_{e_1}\tau_{e_2}\rho_e = \E[ \<\be_{1,\cI},\be_{2,\cI}\>^2] / |\cI|,
    \end{gathered}
    \end{equation}
    for any subset of indices $\cI \neq \emptyset$.

    The variance of the noise in the fixed-design model for noise estimation satisfies
    \begin{equation}\label{eq:tau-h-ref}
    \begin{gathered}
        \tau_{h_k}^2 
            = \E[\| \bh_{k,\cI}^f \|_2^2] / |\cI|
            = \sR_k(\tau_{g_k}^2,\zeta_k) 
            = \E[\|\hat \btheta_k^f - \btheta_k\|_{\bSigma}^2],
        \\
        \tau_{h_1}\tau_{h_2}\rho_h
            = \E[\< \bh_{1,\cI}^f,\bh_{2,\cI}^f\>] / |\cI|, 
            = \sR_{12}(\bS_g,\{\zeta_k\})
            = \E[\<\hat \btheta_1^f - \btheta_1,\hat \btheta_2^f - \btheta_2\>_{\bSigma}]
    \end{gathered}
    \end{equation}
    for any subset of indices $\cI \neq \emptyset$.

    The variance of the noise in the fixed-design model for parameter estimation satisfies
    \begin{equation}\label{eq:tau-g-ref}
    \begin{gathered}
        \tau_{g_k}^2
            = 
            \E[\|\bg_k\|_2^2]/p
            =
            \frac1{n_k}\big(\tau_{e_k}^2 + \sR_k(\tau_{g_k}^2,\zeta_k)\big)
            =
            \frac1{n_k}\big(\tau_{e_k}^2 + \tau_{h_k}^2\big)
            =
            \frac1{n_k}\tau_{\hat e_k^{\de}}^2,
        \\
        \tau_{g_1}\tau_{g_2}\rho_g
            = 
            \frac{\E[\<\bg_1,\bg_2\>]}{p}
            =
            \frac{n_{12}}{n_1n_2}\big(\tau_{e_1}\tau_{e_2}\rho_e + \sR_{12}(\bS_g,\{\zeta_k\})\big)
            =
            \frac{n_{12}}{n_1n_2}\big(\tau_{e_1}\tau_{e_2}\rho_e + \tau_{h_1}\tau_{h_2}\rho_h\big)
            =
            \frac{n_{12}}{n_1n_2} \tau_{\hat e_1^{\de}} \tau_{\hat e_2^{\de}} \rho_{\hat e^{\de}},
        \\
        \rho_g = \frac{n_{12}}{\sqrt{n_1n_2}}\rho_{\hat e^{\de}}.
    \end{gathered}
    \end{equation}

    The variance of the debiased estimates in the fixed-design model for noise estimation satisfies
    \begin{equation}\label{eq:tau-ed-ref}
    \begin{gathered}
        \tau_{\hat e_k^{\de}}^2
            =
            \E[\|  \hat \be_{k,\cI}^{f,\de}\|_2^2]/|\cI|
            = 
            \tau_{e_k}^2 + \sR_k(\tau_{g_k}^2,\zeta_k)
            =
            \tau_{e_k}^2 + \tau_{h_k}^2,
        \\
        \tau_{\hat e^{\de}_1}\tau_{\hat e^{\de}_2}\rho_{\hat e^{\de}}
            =
            \E[\< \hat \be_{1,\cI}^{f,\de} , \hat \be_{2,\cI}^{f,\de} \>]/|\cI| 
            =
            \tau_{e_1}\tau_{e_2}\rho_e + \sR_{12}(\bS_g,\{\zeta_k\})
            =
            \tau_{e_1}\tau_{e_2}\rho_e + \tau_{h_1}\tau_{h_2}\rho_h.
    \end{gathered}
    \end{equation}

    The residuals in the fixed design model are written as $\bu_k = \hat \be_k^f$.
    They satisfy $\bu_{k,\cI_k^c} = \hat \be_{k,\cI_k^c}^f = 0$ and also
    \begin{equation}\label{eq:res-ref}
    \begin{gathered}
        \frac1{n_k} \E[\|\bu_k^f\|_2^2] 
            = 
            \zeta_k^2( \tau_{e_k}^2 + \tau_{h_k}^2)
            =
            \zeta_k^2 \tau_{\hat e^{\de}_k}^2 
            =
            n_k \zeta_k^2 \tau_{g_k}^2,
        \\
        \E[\<\bu_1^f, \bu_2^f\>] 
            = 
            n_{12}\zeta_1\zeta_2(\tau_{e_1}\tau_{e_2}\rho_e + \tau_{h_1}\tau_{h_2}\rho_h)
            =
            n_{12}\zeta_1\zeta_2 \tau_{\hat e_1^{\de}}\tau_{\hat e_2^{\de}} \rho_{\hat e^{\de}}
            =
            n_1n_2\tau_{g_1}\tau_{g_2}\zeta_1\zeta_2\rho_g,
        \\
        \E[\|\bu_{1,\cI_2}^f\|_2^2] 
            = 
            n_{12}\zeta_1^2( \tau_{e_1}^2 + \tau_{h_1}^2)
            =
            n_{12}\zeta_1^2 \tau_{\hat e_1^{\de}}^2 
            =
            n_{12}n_1 \zeta_1^2 \tau_{g_1}^2.
    \end{gathered}
    \end{equation}

\subsection{Bounds on the fixed-point parameters}

We will require the following lower and upper bounds on the fixed point parameters.

\begin{lemma}[Bounds on fixed-point parameters]\label{lem:bound-on-fixed-pt}
    For $k = 1,2$,
    there exist $\cPmodel$, $\cPregr$, and regression method dependent constants $C,c > 0$ (which may change at each appearance) such that
    such that 
    $ c / n_k < \tau_{g_k}^2 < C/n_k $, 
    $c <  \zeta_k \leq 1$, 
    $c p / n_k < \tau_{h_k}^2 < Cp/n_k$,
    $c < \tau_{\hat e_k^\de}^2 < C$,
    and $|\rho_g|,|\rho_h|,|\rho_e|,|\rho_{\hat e^{\de}}| < 1 - c$.
    Further, $\df_k / p \leq 1$.
\end{lemma}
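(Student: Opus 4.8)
The plan is to establish the bounds in order of increasing difficulty, exploiting that the \emph{diagonal} part of the fixed-point system \eqref{eq:fixed-pt-eqns}, namely Eq.~\eqref{eq:marg-fix-pt-eq}, decouples into two independent single-regression fixed points: thus $\tau_{g_k}^2,\zeta_k,\tau_{h_k}^2,\tau_{\hat e_k^\de}^2$ are purely single-regression quantities, and only $\rho_g,\rho_h,\rho_{\hat e^\de}$ genuinely couple the two regressions (through Eq.~\eqref{eq:simul-fix-pt}). Several bounds are immediate. Since $\be_1=\kappa\bw^\perp$ and $\be_2=\kappa\beta\bw^\perp+\sigma\bz$ we have $\tau_{e_1}^2=\kappa^2$, $\tau_{e_2}^2=\kappa^2\beta^2+\sigma^2$, $\rho_e^2=1-\sigma^2/(\kappa^2\beta^2+\sigma^2)$, so \textsf{A1} gives $c<\tau_{e_k}^2<C$ and $|\rho_e|\le 1-c$; Eq.~\eqref{eq:tau-g-ref} gives $\tau_{g_k}^2=(\tau_{e_k}^2+\tau_{h_k}^2)/n_k\ge\tau_{e_k}^2/n_k\ge c/n_k$; and $\df_k/p\le 1$ (hence $\zeta_k\le 1$, since $\df_k\ge 0$) because $\by^f\mapsto\bSigma^{1/2}\eta_k(\by^f;\zeta_k)$ is the proximal operator of the convex function $\bu\mapsto\Omega_k(\bSigma^{-1/2}\bu)/\zeta_k$, hence a monotone map whose Jacobian has eigenvalues in $[0,1]$, so its divergence lies in $[0,p]$.

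For the remaining single-regression bounds --- $\zeta_k\ge c$, $\tau_{g_k}^2\le C/n_k$, $cp/n_k\le\tau_{h_k}^2\le Cp/n_k$, $c<\tau_{\hat e_k^\de}^2<C$ --- I would analyze the scalar fixed point \eqref{eq:marg-fix-pt-eq}. For least squares $\df_k=p$ and $n_k/p\ge\delta_{\min}>1$ give $\zeta_k=1-p/n_k\ge 1-1/\delta_{\min}$, and $\hat\btheta_k^f=\bSigma^{-1/2}\by_k^f$ makes \eqref{eq:tau-g-ref} solve explicitly to $\tau_{g_k}^2=\tau_{e_k}^2/(n_k-p)\in[c/n_k,C/n_k]$, $\tau_{h_k}^2=p\tau_{g_k}^2$. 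For ridge, $\hat\btheta_k^f=(\bSigma+a_k\id)^{-1}\bSigma^{1/2}\by_k^f$ with $a_k=\sqrt{p/n_k}\,\lambda_k/\zeta_k$; from $\df_k=\sum_j\mu_j(\bSigma)/(\mu_j(\bSigma)+a_k)\le p\mu_{\max}/a_k$ one gets $\df_k/n_k\le(\mu_{\max}/\lambda_k)\sqrt{p/n_k}\,\zeta_k\le(\mu_{\max}/(\lambda_{\min}\sqrt{\delta_{\min}}))\,\zeta_k$, which forces $\zeta_k\ge c$ and hence $a_k$ bounded above and below; substituting the closed form into \eqref{eq:marg-fix-pt-eq} and using $\mu_{\min}\le\mu_j(\bSigma)\le\mu_{\max}$, $\|\btheta_k\|_2\le r_{\max}$ reduces to solving a scalar fixed-point relation for $\tau_{g_k}^2$, giving the stated bounds. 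For the Lasso and $\alpha$-smoothed Lasso these are exactly the single-regression estimates analyzed in \cite{celentano2020lasso,miolane2018distribution}: the functional-Gaussian-width hypothesis $\cG(\bx,\bSigma)\le\sqrt{n_k/p}\,(1-\Delta_{\min})$ is precisely what keeps $\zeta_k$ bounded away from $0$ and $\tau_{g_k}^2$ of order $1/n_k$, and I would invoke that analysis rather than reprove it. Given the bounds on $\tau_{h_k}^2$, $\tau_{e_k}^2$, the bound $c<\tau_{\hat e_k^\de}^2<C$ follows from $\tau_{\hat e_k^\de}^2=\tau_{e_k}^2+\tau_{h_k}^2$ (Eq.~\eqref{eq:tau-ed-ref}) and $n_k/p\ge\delta_{\min}$.

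It remains to bound the correlations. From Eq.~\eqref{eq:tau-ed-ref} and Cauchy--Schwarz applied to $(\tau_{e_1},\tau_{h_1})$, $(\tau_{e_2},\tau_{h_2})$,
\[
    |\rho_{\hat e^\de}|
        =\frac{|\tau_{e_1}\tau_{e_2}\rho_e+\tau_{h_1}\tau_{h_2}\rho_h|}{\tau_{\hat e_1^\de}\tau_{\hat e_2^\de}}
        \le\frac{(1-c)\,\tau_{e_1}\tau_{e_2}+\tau_{h_1}\tau_{h_2}}{\sqrt{(\tau_{e_1}^2+\tau_{h_1}^2)(\tau_{e_2}^2+\tau_{h_2}^2)}}
        \le 1-\frac{c\,\tau_{e_1}\tau_{e_2}}{\tau_{e_1}\tau_{e_2}+\tau_{h_1}\tau_{h_2}}\le 1-c',
\]
the last step using $|\rho_h|\le 1$ together with $\tau_{e_1}\tau_{e_2}$ bounded below and $\tau_{h_1}\tau_{h_2}$ above; then Eq.~\eqref{eq:tau-g-ref} and $n_{12}\le\min(n_1,n_2)\le\sqrt{n_1n_2}$ give $|\rho_g|=(n_{12}/\sqrt{n_1n_2})|\rho_{\hat e^\de}|\le 1-c'$. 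The hard part is $|\rho_h|<1-c$: the identities only yield the tautology $|\rho_h|\le 1$, and one must argue directly that the $\bSigma$-errors $\hat\btheta_k^f-\btheta_k$ of the two coupled fixed-design estimators are not perfectly correlated. Conditioning on $\bg_1^f$ and writing $\bg_2^f$ as a multiple of $\bg_1^f$ plus an independent Gaussian of variance $(\rho_g^\perp)^2\tau_{g_2}^2\gtrsim\tau_{g_2}^2$ (using $|\rho_g|\le 1-c'$ just proved), the law of total variance gives $|\rho_h|^2\le 1-\E[\Var_{\bSigma}(\hat\btheta_2^f\mid\bg_1^f)]/\tau_{h_2}^2$, so it suffices to lower-bound $\E[\Var_{\bSigma}(\hat\btheta_2^f\mid\bg_1^f)]$ by $c\,\tau_{h_2}^2$ --- equivalently, to show $\eta_2$ responds non-degenerately to its fresh Gaussian component. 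This is explicit for least squares and ridge (the per-coordinate response factors are bounded below) and, for the Lasso, would follow from the active-set analysis of \cite{celentano2020lasso} (a constant fraction of coordinates are active and respond to the noise). This step, together with the Lasso cases of the single-regression bounds, is where the argument goes beyond elementary manipulation.
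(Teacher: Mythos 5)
Your overall route is essentially the paper's: explicit treatment of least squares and ridge, deferral to \cite{celentano2020lasso} for the Lasso single-regression bounds, the fixed-point identities plus Cauchy--Schwarz for $\rho_e,\rho_g,\rho_{\hat e^\de}$, and a resampling-type argument for $\rho_h$ (your law-of-total-variance formulation is equivalent to the paper's construction of an independent copy $\tilde{\hat\btheta}_2^f$ sharing the $\bg_1^f$-component, since $\E\|\hat\btheta_2^f-\tilde{\hat\btheta}_2^f\|_{\bSigma}^2 = 2\,\E[\Var_{\bSigma}(\hat\btheta_2^f\mid\bg_1^f)]$). However, the two Lasso-specific steps you defer are exactly where the technical content lies, and neither is covered by the items you quote. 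First, the lower bound $\tau_{h_k}^2\ge c\,p/n_k$ does not follow from ``$\zeta_k$ bounded away from $0$ and $\tau_{g_k}^2\asymp 1/n_k$'': by the fixed-point equation $\tau_{h_k}^2=n_k\tau_{g_k}^2-\tau_{e_k}^2$ is a difference of two order-one quantities and could a priori be tiny. The paper proves it (Lemma \ref{lem:prox-err-bound}) by a per-coordinate argument: the KKT condition gives $\big|[\bSigma(\hat\btheta_k^f-\btheta_k)]_j\big|\ge\big|[\bSigma^{1/2}\bg_k^f]_j\big|-\lambda/(\sqrt{n_k}\,\zeta_k)$, and Gaussian anti-concentration then yields a per-coordinate second moment of order $1/n_k$. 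Second, your claim $\E[\Var_{\bSigma}(\hat\btheta_2^f\mid\bg_1^f)]\ge c\,\tau_{h_2}^2$ for the Lasso is asserted via ``active-set analysis,'' but a constant fraction of active coordinates does not by itself lower-bound the variance with respect to the \emph{fresh} noise component. The paper's argument is again per-coordinate and self-contained: with probability bounded below, one copy of the fresh component puts $[\bSigma^{1/2}\bg_2^f]_j$ above $2\lambda/(\sqrt{n_2}\,\zeta_2)$ while the independent resampled copy puts it below $-2\lambda/(\sqrt{n_2}\,\zeta_2)$ (this uses $\rho_g^\perp\ge c$, which you correctly establish first), and the subgradient bound $|[\partial\Omega_2]_j|\le\lambda/\sqrt{n_2}$ forces the two estimates to differ in that coordinate by at least $2\lambda/(\sqrt{n_2}\,\zeta_2)$; summing over $j$ gives $\E\|\hat\btheta_2^f-\tilde{\hat\btheta}_2^f\|_{\bSigma}^2\ge cp/n_2$. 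You need some version of this quantitative argument, not just the active-set count.

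Two smaller points. Your ridge bound $\df_k/n_k\le(\mu_{\max}/(\lambda_{\min}\sqrt{\delta_{\min}}))\,\zeta_k$ divides by $\lambda_{\min}$, but assumption \textsf{A2} only requires $(\delta_{\min}-1)\vee\lambda_{\min}>0$, so ridge with $\lambda_{\min}=0$ and $\delta_{\min}>1$ is allowed; the paper splits into two cases and uses $\df_k\le p$ when $n_k/p\ge\delta_{\min}>1$, a one-line patch you should add. Also, the middle step of your display for $\rho_{\hat e^\de}$ is backwards: Cauchy--Schwarz gives $\tau_{e_1}\tau_{e_2}+\tau_{h_1}\tau_{h_2}\le\sqrt{(\tau_{e_1}^2+\tau_{h_1}^2)(\tau_{e_2}^2+\tau_{h_2}^2)}$, so you cannot replace the square-root denominator by $\tau_{e_1}\tau_{e_2}+\tau_{h_1}\tau_{h_2}$ in that direction; the conclusion $\le 1-c'$ still holds because the square-root denominator is itself bounded above (as $\tau_{e_k}^2+\tau_{h_k}^2\le C$), so only the intermediate inequality needs rewriting.
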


\noindent The proofs of Lemma \ref{lem:bound-on-fixed-pt} is provided in Section \ref{sec:existence-fix-pt}.

\begin{remark}
    In addition to providing important technical results required for our proofs,
    Lemma \ref{lem:bound-on-fixed-pt} is interesting in its own right because it shows how the fixed point parameters scale with $p$ and $n_k$.
    Most existing exact characterization results either consider an asymptotic limit in $n_k/p \rightarrow \delta$ or assume that $n_k/p$ is upper or lower bounded by a constant. 
    In such results, $n_k/p$ can be replaced by a constant in upper and lower bounds.
    Because we do not assume $n_k/p$ is bounded above in the case of ridge regression or OLS,
    such replacement cannot in general be made, and tracking the dependence of the fixed point parameters on $n_k$ and $p$ will be necessary.
    For example, it establishes that $\tau_{g_k}^2 = \Theta(1/n_k)$, 
    indicating that $\hat \btheta_k^{\de}$ provides---on average across coordinates---an observation of each coordinate at the parametric rate.
    It also establishes that $\tau_{h_k}^2 = \Theta(p/n_k)$, indicating (recall Table \ref{tab:fix-pt}) that the prediction error for each regression is, with high probability, $\| \hat \btheta_k - \btheta_k \|_{\bSigma}^2 = \Theta(p/n_k)$.
    In the proportional regime, this is what we expect.
\end{remark}

\section{Proof of joint characterization (Theorem \ref{thm:joint-characterization}(i) and (ii))}

In this section, we prove Theorem \ref{thm:joint-characterization}(i) and (ii),
deferring several technical details to later appendices.
We prove prove the extension to empirical $\hat \df_k$ (i.e., Theorem \ref{thm:joint-characterization}(iii)) in Section \ref{sec:emp-df}.

We prove Theorem \ref{thm:joint-characterization}(i) and (ii) by
bounding the differences to an intermediate quantity that depends on only one of the regression models at a time. 
The intermediate quantity is a function of the vectors\footnote{Later in the proof, we will analyze the behavior of the regression problem conditional on $\mathsf{Cond}_k$, justifying our choice of notation.}
\begin{equation}\label{eq:Cond-k}
    \mathsf{Cond}_k
        :=
        (\hat \btheta_k, \bX^\top \hat \be_k , \hat \be_k, \bX (\hat \btheta_k - \btheta_k) ,\be_1,\be_2) \in (\reals^p)^2 \times (\reals^N)^4.
\end{equation}
In fact, it will depend on this vectors only via two random vectors
$\hat \bg_k := \hat \bg_k(\mathsf{Cond}_k) \in \reals^p$ and $\hat \bh_k = \hat \bh_k(\mathsf{Cond}_k )\in \reals^N$
which are deterministic functions of $\mathsf{Cond}_k$.
The definition of these functions is complicated and non-intuitive,
and is carefully chosen to make our proofs work.
We postpone providing their definitions to Section \ref{sec:cond-char} (see Eq.~\eqref{eq:hat-g1-hat-h1}).
For now, 
the reader should think of these quantities as approximating the errors of the debiased estimates in the first regression model.
Indeed:
\begin{lemma}\label{lem:hat-g-hat-h-db-err}
    Assume \textsf{A1} and \textsf{A2}.

    There exist $\cPmodel$, $\cPregr$ and regression method-dependent $C',c' > 0$ and $\sC,\sc: \reals_{>0} \rightarrow \reals_{>0}$ such that for $\epsilon < c'$,
    with probability at least $1 - \sC(\epsilon) e^{-\sc(\epsilon) p}$ 
    \begin{equation}
        \big\|\hat \bg_k - \bSigma^{1/2}(\hat \btheta_k^{\de} - \btheta_k)\big\|_2 \leq \sqrt{\frac{p}{n_k}}\,\epsilon,
        \qquad 
        \frac1{\sqrt{n_k}}\big\|\hat \bh_{k,\cI_k} - (\hat \be_{k,\cI_k}^{\de} - \be_{k,\cI_k})\big\|_2 \leq \sqrt{\frac{p}{n_k}}\,\epsilon.
    \end{equation}
\end{lemma}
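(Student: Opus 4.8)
The plan is to reduce both bounds, once the definitions in Eq.~\eqref{eq:hat-g1-hat-h1} are unpacked, to the concentration of a handful of scalar and vector functionals of the \emph{single} regression model $\by_k = \bX\btheta_k + \be_k$, and then to invoke the marginal (one-regression) characterization. First I would record that the debiased errors are themselves explicit functions of $\mathsf{Cond}_k$: since $\hat\be_k$ is supported on $\cI_k$ we have $\bX^\top\hat\be_k = \bX_{\cI_k}^\top\hat\be_{k,\cI_k}$, and a direct computation from Eqs.~\eqref{eq:param-est} and \eqref{eq:noise-est} gives
\begin{equation}
\begin{gathered}
\bSigma^{1/2}(\hat\btheta_k^{\de} - \btheta_k) = \bSigma^{1/2}(\hat\btheta_k - \btheta_k) + \frac{\bSigma^{-1/2}\bX^\top\hat\be_k}{n_k - \df_k},\\
\hat\be_{k,\cI_k}^{\de} - \be_{k,\cI_k} = \frac{\df_k}{n_k-\df_k}\,\hat\be_{k,\cI_k} - \bX_{\cI_k}(\hat\btheta_k - \btheta_k).
\end{gathered}
\end{equation}
(We state the case $k=1$; the case $k=2$ follows by the $k$-relabeling symmetry of the model.) Both right-hand sides are deterministic functions of $\mathsf{Cond}_k$ and the deterministic parameter $\df_k$, so the content of the lemma is that the vectors $\hat\bg_k,\hat\bh_k$ of Eq.~\eqref{eq:hat-g1-hat-h1} — obtained by substituting the components of $\mathsf{Cond}_k$ into expressions mirroring the fixed-design debiasing relations \eqref{eq:fixed-design-est} and \eqref{eq:fixed-design-err-est}, and engineered to carry the distributional structure needed downstream — agree with these explicit formulas up to a small correction.

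Next I would compare the two term by term. After cancelling the pieces that match identically, each remaining term is of the form (scalar discrepancy)$\times$(vector of controlled $\ell_2$-norm), where the scalar discrepancies are differences between empirical and fixed-point quantities such as $\|\hat\btheta_k - \btheta_k\|_{\bSigma}^2 - \tau_{h_k}^2$, $\hat\df_k/n_k - \df_k/n_k$, $\|\hat\be_{k,\cI_k}\|_2^2/n_k - \zeta_k^2\tau_{\hat e_k^{\de}}^2$, and $\langle \hat\be_{k,\cI_k}, \bX_{\cI_k}(\hat\btheta_k-\btheta_k)\rangle/n_k$ against its fixed-point value (cf.\ Eqs.~\eqref{eq:tau-h-ref}, \eqref{eq:tau-ed-ref}, \eqref{eq:res-ref}). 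Each such discrepancy is $O(\epsilon)$ with probability $1-\sC(\epsilon)e^{-\sc(\epsilon)p}$ by the marginal characterization Lemma~\ref{lem:marginal-characterization} applied to regression $k$, with the non-Lipschitz (quadratic) functionals handled by the Lipschitz-truncation argument of Section~\ref{sec:marg-char-support}; this is precisely what shows that $\hat\btheta_k$, $\hat\be_{k,\cI_k}$ and $\bX^\top\hat\be_k$ concentrate around their fixed-design counterparts, whose moments are pinned down by the fixed-point equations \eqref{eq:fixed-pt-eqns}. The companion-vector norm bounds and the $\Theta(\sqrt{p/n_k})$ normalization in the final estimate come from Lemma~\ref{lem:bound-on-fixed-pt}: $\tau_{g_k}^2 = \Theta(1/n_k)$ forces $\|\bSigma^{1/2}(\hat\btheta_k^{\de}-\btheta_k)\|_2 = \Theta(\sqrt{p/n_k})$, while $\tau_{h_k}^2 = \Theta(p/n_k)$ forces $\|\hat\be_{k,\cI_k}^{\de} - \be_{k,\cI_k}\|_2 = \Theta(\sqrt{p})$, which is $\Theta(\sqrt{p/n_k})$ after dividing by $\sqrt{n_k}$. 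Combining the term-by-term bounds and absorbing constants yields the claim.

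The main obstacle is not this bookkeeping but the construction behind Eq.~\eqref{eq:hat-g1-hat-h1} together with the marginal characterization that feeds the second step. The vectors $\hat\bg_k,\hat\bh_k$ must be simultaneously (i) measurable with respect to $\mathsf{Cond}_k$, (ii) close to the debiased errors in the sense of this lemma, and (iii) of exactly the correlated-Gaussian form required so that, after conditioning on $\mathsf{Cond}_k$, the second regression can be analyzed by a conditional Gordon argument in Section~\ref{sec:cond-char}; reconciling (i)--(iii) is what makes the definitions ``complicated and non-intuitive,'' and verifying that the reconciliation costs only $O(\sqrt{p/n_k}\,\epsilon)$ is the delicate point. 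Establishing Lemma~\ref{lem:marginal-characterization} is itself substantial — it rests on Gordon's comparison inequality for each regression and, for the Lasso, on passing through the $\alpha$-smoothed surrogate so that the debiased estimate depends smoothly enough on the data for the comparison to transfer — so I would develop that first and treat the present lemma as a corollary of it together with the fixed-point bounds.
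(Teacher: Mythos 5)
Your plan matches the paper's proof: the paper solves Eqs.~\eqref{eq:hat-xi}--\eqref{eq:constraint+} explicitly to write $\hat\bxi_g,\hat\bxi_h$ as functions of $\mathsf{Cond}_k$ (Eq.~\eqref{eq:def-ggam-hgam}), then bounds each difference term by term as (scalar discrepancy)$\times$(vector of controlled norm), with the scalar concentrations supplied by the marginal characterization via Lemma~\ref{lem:marg-T-conc-helper} (i.e.\ Lemma~\ref{lem:T-conc} applied to Lemma~\ref{lem:marginal-characterization}) and the scales $\tau_{g_k}^2=\Theta(1/n_k)$, $\tau_{h_k}^2=\Theta(p/n_k)$ from Lemma~\ref{lem:bound-on-fixed-pt} — exactly your reduction. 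The only immaterial slip is listing $\hat\df_k/n_k-\df_k/n_k$ among the discrepancies: the lemma concerns the debiased quantities built with the deterministic $\df_k$, so that term never arises (the empirical-$\hat\df_k$ extension is handled separately in Theorem~\ref{thm:joint-characterization}(iii)).
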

\noindent We prove Lemma \ref{lem:hat-g-hat-h-db-err} in Section \ref{sec:char-support}.

Let $\phi_e,\phi_\theta$ and $M_1,M_2$ be as in Theorem \ref{thm:joint-characterization}.
The intermediate quantities are
\begin{equation}\label{eq:def-phi|1}
\begin{gathered}
    \phi_{\theta|1}(\mathsf{Cond}_1)
        := 
        \E\big[\phi_\theta(\hat \btheta_1,\hat \btheta_1^{\de},\hat \btheta_2^f,\hat \btheta_2^{f,\de})\bigm|\bg_1^f = \hat \bg_1\big],
    \\
    \phi_{e|1}(\mathsf{Cond}_1)
        := 
        \E\Big[\phi_e\Big(\frac{\be_{1,\cI_2}}{\sqrt{n_2}},\frac{\be_{2,\cI_2}}{\sqrt{n_2}},\frac{\hat \be_{1,\cI_2}}{\sqrt{n_2}},\frac{\hat \be_{2,\cI_2}^f}{\sqrt{n_2}},\frac{\hat \be_{1,\cI_2}^{\de}}{\sqrt{n_2}},\frac{\hat \be_{2,\cI_2}^{f,\de}}{\sqrt{n_2}}\Big)\Bigm|\bh_1^f = \hat \bh_1,\, \be_1, \be_2\Big].
\end{gathered}
\end{equation}
In this display, the expectations are taken in the fixed-design model, and $\hat \btheta_1,\hat \btheta_1^{\de},\hat \be_{1,\cI_2},\hat \be_{1,\cI_2}^\de$, which are functions of $\mathsf{Cond}_1$, are considered fixed.
For clarity and with some abuse of notation, we have written each argument passed to $\phi_e$ explicitly rather than use that set-notation as in Theorem \ref{thm:joint-characterization}.
This is because some of the arguments come from the fixed-design model, and some (which are functions of $\mathsf{Cond}_1$), come from the random design model.
We bound
\begin{equation}\label{eq:joint-bound-1}
\begin{aligned}
    &\P\Big(
        \Big|
            \phi_e\Big(\Big\{\frac{\be_{k,\cI_2}}{\sqrt{n_2}}\Big\},\Big\{\frac{\hat \be_{k,\cI_2}}{\sqrt{n_2}}\Big\},\Big\{\frac{\hat \be_{k,\cI_2}^{\de}}{\sqrt{n_2}}\Big\}\Big)
            -
            \E\Big[
                \phi_e\Big(\Big\{\frac{\be_{k,\cI_2}}{\sqrt{n_2}}\Big\},\Big\{\frac{\hat \be_{k,\cI_2}^f}{\sqrt{n_2}}\Big\},\Big\{\frac{\hat \be_{k,\cI_2}^{\de,f}}{\sqrt{n_2}}\Big\}\Big)
            \Big]
        \Big|
        > (M_1 \sqrt{p/n_2} + M_2)\epsilon
    \Big)
    \\
        &\qquad\qquad\qquad \leq 
        \P\Big(
            \Big|
                \phi_e\Big(\Big\{\frac{\hat \be_{k,\cI_2}}{\sqrt{n_2}}\Big\},\Big\{\frac{\hat \be_{k,\cI_2}^{\de}}{\sqrt{n_2}}\Big\}\Big)
                -
                \phi_{e|1}(\mathsf{Cond}_1)
            \Big|
            > \frac{M_2}{2}\,\epsilon
        \Big)
    \\
    &\qquad\qquad\qquad\qquad+
        \P\Big(
            \Big|
                \phi_{e|1}(\mathsf{Cond}_1)
                -
                \E\Big[
                    \phi_e\Big(\Big\{\frac{\be_{k,\cI_2}}{\sqrt{n_2}}\Big\},\Big\{\frac{\hat \be_{k,\cI_2}^f}{\sqrt{n_2}}\Big\},\Big\{\frac{\hat \be_{k,\cI_2}^{\de,f}}{\sqrt{n_2}}\Big\}\Big)
                \Big]
            \Big|
            > \Big(M_1 \sqrt{\frac{p}{n_2}} + \frac{M_2}2\Big)\,\epsilon
        \Big),
\end{aligned}
\end{equation}
and 
\begin{equation}
\begin{aligned}
    &\P\Big(
        \Big|
            \phi_\theta\big(\{\hat \btheta_k\},\{\hat \btheta_k^{\de}\}\big)
            -
            \E\big[
                \phi_\theta\big(\{\hat \btheta_k^f\},\{\hat \btheta_k^{f,\de}\}\big)
            \big]
        \Big|
        > (M_1 \sqrt{p/n_1} + M_2)\epsilon
    \Big)
    \\
        & \leq 
        \P\Big(
            \Big|
                \phi_\theta\big(\{\hat \btheta_k\},\{\hat \btheta_k^{\de}\}\big)
                -
                \phi_{\theta|1}(\mathsf{Cond}_1)
            \Big|
            > \frac12\Big(M_1 \sqrt{\frac{p}{n_1}} + M_2\Big)\,\epsilon
        \Big)
    \\
    &\qquad\qquad\qquad\qquad\qquad+
        \P\Big(
            \Big|
                \phi_{\theta|1}(\mathsf{Cond}_1)
                -
                \E\big[
                    \phi_\theta\big(\{\hat \btheta_k^f\},\{\hat \btheta_k^{f,\de}\}\big)
                \big]
            \Big|
            > \frac12 \Big(M_1 \sqrt{\frac{p}{n_1}} + M_2\Big)\,\epsilon
        \Big).
\end{aligned}
\end{equation}
The next two lemmas bound the terms in this decomposition.
\begin{lemma}\label{lem:phi-|1-conc}
    Assume \textsf{A1} and \textsf{A2}.

    There exist $\cPmodel$, $\cPregr$ and regression method-dependent $c' > 0$ and $\sC,\sc: \reals_{>0} \rightarrow \reals_{>0}$ such that
    for $\epsilon < c'$
    \begin{equation}
    \begin{gathered}
        \P\Big(
            \Big|
                \phi_{\theta|1}(\mathsf{Cond}_1)
                -
                \E\big[
                    \phi_\theta\big(\{\hat \btheta_k^f\},\{\hat \btheta_k^{f,\de}\}\big)
                \big]
            \Big|
            > \Big(M_1 \sqrt{\frac{p}{n_1}} + M_2 \sqrt{\frac{p}{n_2}}\;\Big)\,\epsilon
        \Big)
        \leq \sC(\epsilon)e^{-\sc(\epsilon)p},
        \\
        \P\Big(
            \Big|
                \phi_{e|1}(\mathsf{Cond}_1)
                -
                \E\Big[
                    \phi_e\Big(\Big\{\frac{\be_{k,\cI_2}}{\sqrt{n_2}}\Big\},\Big\{\frac{\hat \be_{k,\cI_2}^f}{\sqrt{n_2}}\Big\},\Big\{\frac{\hat \be_{k,\cI_2}^{\de,f}}{\sqrt{n_2}}\Big\}\Big)
                \Big]
            \Big|
            > \big(M_1 +M_2\big)\sqrt{\frac{p}{n_2}}\,\epsilon
        \Big)
        \leq \sC(\epsilon)e^{-\sc(\epsilon)p}.
    \end{gathered}
    \end{equation}
\end{lemma}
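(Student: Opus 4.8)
### Proof proposal for Lemma \ref{lem:phi-|1-conc}

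The plan is to prove both statements simultaneously by recognizing that $\phi_{\theta|1}(\mathsf{Cond}_1)$ and $\phi_{e|1}(\mathsf{Cond}_1)$ are, by their definitions in Eq.~\eqref{eq:def-phi|1}, conditional expectations over the second regression model in its fixed-design form, with the conditioning variable ($\bg_1^f$ or $\bh_1^f$) set equal to a random-design quantity ($\hat \bg_1$ or $\hat \bh_1$). The key observation is that these are Lipschitz functions of $\hat \bg_1$ (respectively $\hat \bh_1, \be_1, \be_2$), so concentration of $\phi_{\theta|1}$ and $\phi_{e|1}$ reduces to the concentration of $\hat \bg_1$ and $\hat \bh_1$ around their typical values. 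First I would establish the Lipschitz property: fix the value $\hat \btheta_1, \hat \btheta_1^\de$ (functions of $\mathsf{Cond}_1$) and view $\bv \mapsto \E[\phi_\theta(\hat \btheta_1, \hat \btheta_1^\de, \hat \btheta_2^f, \hat \btheta_2^{f,\de}) \mid \bg_1^f = \bv]$. Since $\hat \btheta_2^f, \hat \btheta_2^{f,\de}$ are Lipschitz functions of $\by_2^f = \bSigma^{1/2}\btheta_2 + \bg_2^f$ (the proximal operator $\eta_2$ is $1$-Lipschitz in the relevant smoothed sense, and the debiased version is $\bSigma^{-1/2}\by_2^f$), and since the conditional law of $\bg_2^f$ given $\bg_1^f = \bv$ is Gaussian with mean a linear (hence Lipschitz) function of $\bv$, the map $\bv \mapsto \E[\phi_\theta \mid \bg_1^f = \bv]$ is $C M_2$-Lipschitz for a constant $C$ depending on $\cPmodel, \cPregr$, using the bounds $\rho_g^\perp > c$, $\tau_{g_2}^2 = \Theta(1/n_2)$ from Lemma \ref{lem:bound-on-fixed-pt}.

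Next I would invoke the marginal characterization of the first regression model to control $\hat \bg_1$ and $\hat \bh_1$. These vectors are constructed (in Section \ref{sec:cond-char}, Eq.~\eqref{eq:hat-g1-hat-h1}) precisely so that they concentrate on Gaussian vectors with the fixed-point covariance: roughly, $\hat \bg_1 \approx \bSigma^{1/2}(\hat \btheta_1^\de - \btheta_1)$ by Lemma \ref{lem:hat-g-hat-h-db-err}, and by the single-regression Gaussian-comparison analysis (the marginal version of Theorem \ref{thm:joint-characterization} applied to regression $1$ alone — i.e., Lemma \ref{lem:marginal-characterization} referenced in Remark \ref{rmk:joint-char-rates}), Lipschitz functions of $\hat \bg_1$ concentrate on their expectations under the law $\bg_1^f \sim \normal(0, \tau_{g_1}^2 \id_p)$. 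Applying this with the Lipschitz function $\bv \mapsto \E[\phi_\theta \mid \bg_1^f = \bv]$ gives that $\phi_{\theta|1}(\mathsf{Cond}_1)$ concentrates on $\E_{\bg_1^f}[\E[\phi_\theta \mid \bg_1^f]] = \E[\phi_\theta(\hat \btheta_1^f, \hat \btheta_1^{f,\de}, \hat \btheta_2^f, \hat \btheta_2^{f,\de})]$, with deviation scale $C M_2 \cdot \sqrt{p/n_2}\,\epsilon$ coming from the $\Theta(1/n_2)$-scale of $\bg_2^f$ and an additional $M_1\sqrt{p/n_1}\,\epsilon$ term absorbing the error in the approximation $\hat \bg_1 \approx \bSigma^{1/2}(\hat\btheta_1^\de - \btheta_1)$ and any direct $\hat\btheta_1$-dependence of $\phi_\theta$. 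The noise-estimation statement is handled identically, conditioning also on $\be_1, \be_2$ (which are already measurable w.r.t.\ $\mathsf{Cond}_1$) and using that $\hat \bh_1$ concentrates on a Gaussian with covariance $\bS_h$ restricted appropriately, with the $\sqrt{p/n_2}$ scaling now arising from $\tau_{h_2}^2 = \Theta(p/n_2)$ (Lemma \ref{lem:bound-on-fixed-pt}) divided by the $\sqrt{n_2}$ normalization in the arguments of $\phi_e$.

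The main obstacle is establishing the marginal concentration of Lipschitz functions of $\hat \bg_1$ and $\hat \bh_1$ with the correct (non-degrading in $n_1/p$) rate — this is the single-regression Gaussian comparison result that underlies everything, and it requires the careful construction of $\hat \bg_1, \hat \bh_1$ from $\mathsf{Cond}_1$ deferred to Section \ref{sec:cond-char}. In particular, one must verify that $\hat \bg_1$ is genuinely close in distribution to $\normal(0, \tau_{g_1}^2\id_p)$ and not merely close in second moments; this is where Gordon's inequality (in the form adapted in \cite{celentano2020lasso, thrampoulidis2015regularized}) enters, applied to the first regression alone, and where the approximate-sparsity/functional-Gaussian-width assumptions in \textsf{A2} are used to get exponential concentration. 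A secondary technical point is bookkeeping the two different error scales ($M_1$-terms at scale $\sqrt{p/n_1}$, $M_2$-terms at scale $\sqrt{p/n_2}$) through the conditional expectation, which requires tracking which arguments of $\phi_\theta$ (resp.\ $\phi_e$) are held fixed versus averaged, but this is routine once the Lipschitz constants are correctly identified via Lemma \ref{lem:bound-on-fixed-pt}.
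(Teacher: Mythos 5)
Your overall route is the paper's own: you view $\phi_{\theta|1}$ and $\phi_{e|1}$ as Lipschitz functions of the conditioning vectors, bound the Lipschitz constants through the conditional Gaussian representation $\bg_2^f = (\tau_{g_2}\rho_g/\tau_{g_1})\bg_1^f + \tau_{g_2}\rho_g^\perp\bxi_g$ (and its $\bh$-analogue) together with Lemma \ref{lem:bound-on-fixed-pt}, and then transfer concentration from the first regression via Lemma \ref{lem:hat-g-hat-h-db-err} plus the marginal characterization (Lemma \ref{lem:marginal-characterization}) -- exactly what the paper packages as Corollary \ref{cor:ext-to-hat-gh}. For the parameter half your bookkeeping lands on the paper's bound $(M_1 + C M_2\sqrt{n_1/n_2})\sqrt{p/n_1}\,\epsilon = C(M_1\sqrt{p/n_1}+M_2\sqrt{p/n_2})\,\epsilon$, up to the minor imprecision that, with $\hat\btheta_1,\hat\btheta_1^\de$ held fixed, the correct Lipschitz constant of $\phi_{\theta|1}$ in $\hat\bg_1$ is $M_2\,\tau_{g_2}\rho_g/\tau_{g_1}\le C M_2\sqrt{n_1/n_2}$ rather than $CM_2$; you recover the right rate only because you re-derive the scaling informally afterwards.

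The genuine gap is in the noise half, which you declare ``handled identically.'' It is not: Lemma \ref{lem:marginal-characterization} controls Lipschitz functionals of the noise-side vectors restricted to the index set $\cI_1$ (and Lemma \ref{lem:hat-g-hat-h-db-err} likewise compares $\hat\bh_{1,\cI_1}$ with $\hat\be^{\de}_{1,\cI_1}-\be_{1,\cI_1}$), whereas $\phi_{e|1}$ is a function of $\be_{1,\cI_2},\be_{2,\cI_2},\hat\be_{1,\cI_2},\hat\bh_{1,\cI_2}$, and in general $\cI_2\not\subseteq\cI_1$ (the two sets may overlap partially or not at all). On $\cI_2\setminus\cI_1$ the coordinates of $\hat\bh_1$ come from fresh rows of $\bX$, so $\hat\bh_{1,\cI_1^c}$ is conditionally $\normal(0,\tau_{h_1}^2\id)$ and independent of the first-regression data, while on $\cI_1\cap\cI_2$ they are in-sample residual-type quantities; turning this into a concentration statement for Lipschitz functions of the $\cI_2$-indexed vectors is exactly the content of Corollary \ref{cor:ext-to-I2}, whose proof requires a separate argument: rotational invariance of the Gaussian design restricted to $\cI_1$, concentration on the special orthogonal group to handle which $n_{12}$ of the $n_1$ in-sample coordinates fall in $\cI_2$, and only then the $\cI_1$-level marginal result. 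Without this extension your appeal to the marginal characterization does not apply to the objects appearing in the second display of the lemma, so the noise bound as sketched is incomplete -- though the missing piece is precisely the one the paper supplies, and your architecture otherwise matches the paper's proof.
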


\begin{lemma}[Conditional characterization]\label{lem:conditional-characterization}
    Assume \textsf{A1} and $\mathsf{A2}$ are satisfied.

    There exist $\cPmodel,\cPregr$ and regression method-dependent $C',c' > 0$ and $\sC,\sc:\reals_{>0} \rightarrow \reals_{>0}$ such that for $\epsilon < c'$
    \begin{equation}
    \begin{gathered}
        \P\Big(
                \Big|
                    \phi_\theta\big(\{\hat \btheta_k\},\{\hat \btheta_k^{\de}\}\big)
                    -
                    \phi_{\theta|1}(\mathsf{Cond}_1)
                \Big|
                \geq  \Big(M_1 \sqrt{\frac{p}{n_1}} + M_2 \;\Big)\,\epsilon
            \Big)
            \leq 
            \sC(\epsilon)e^{-\sc(\epsilon)p},
        \\
        \P\Big(
                \Big|
                    \phi_e\Big(\Big\{\frac{\be_{k,\cI_2}}{\sqrt{n_2}}\Big\},\Big\{\frac{\hat \be_{k,\cI_2}}{\sqrt{n_2}}\Big\},\Big\{\frac{\hat \be_{k,\cI_2}^{\de}}{\sqrt{n_2}}\Big\}\Big)
                    -
                    \phi_{e|1}(\mathsf{Cond}_1)
                \Big|
                \geq  M_2\epsilon
            \Big)
            \leq 
            \sC(\epsilon)e^{-\sc(\epsilon)p}.
    \end{gathered}
    \end{equation}    
\end{lemma}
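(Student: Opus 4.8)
The plan is to condition on $\mathsf{Cond}_1$ and prove a \emph{conditional} Gaussian comparison inequality governing the second regression. Since $\hat\btheta_1,\hat\btheta_1^\de$ and $\hat\be_{1,\cI_2},\hat\be_{1,\cI_2}^\de$ are measurable with respect to $\mathsf{Cond}_1$, while $\phi_\theta$ (resp.\ $\phi_e$) is $M_2$-Lipschitz in its arguments coming from the second model, it suffices to show: for all realizations of $\mathsf{Cond}_1$ outside a set of probability $\le \sC(\epsilon)e^{-\sc(\epsilon)p}$, and every $M_2$-Lipschitz $\psi$, one has $|\psi(\hat\btheta_2,\hat\btheta_2^\de) - \E[\psi(\hat\btheta_2^f,\hat\btheta_2^{f,\de}) \mid \bg_1^f = \hat\bg_1]| \le M_2\epsilon$ off an event of conditional probability $\le \sC(\epsilon)e^{-\sc(\epsilon)p}$, and similarly for the second-model noise estimates with $\bh_1^f = \hat\bh_1$ and conditioning additionally on $(\be_1,\be_2)$. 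Taking $\psi(\cdot,\cdot) := \phi_\theta(\hat\btheta_1,\hat\btheta_1^\de,\cdot,\cdot)$ recovers the $\phi_\theta$ display and the analogous choice recovers the $\phi_e$ display; the extra $M_1\sqrt{p/n_1}\,\epsilon$ in the former absorbs the $M_1$-Lipschitz cost, via Lemma~\ref{lem:hat-g-hat-h-db-err}, of replacing $\hat\btheta_1^\de$ by $\btheta_1 + \bSigma^{-1/2}\hat\bg_1$ at the matching step below (no such replacement is needed for $\phi_e$ since $\hat\be_{1,\cI_2},\hat\be_{1,\cI_2}^\de$ enter directly from $\mathsf{Cond}_1$, hence the cleaner $M_2\epsilon$).

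\textbf{The conditional Gordon inequality.} Write the second regression as a saddle point, $\hat\btheta_2 = \argmin_{\bpi}\max_{\boldsymbol{\mu}}\{\tfrac1{n_2}\langle\boldsymbol{\mu},\, \by_{2,\cI_2} - \bX_{\cI_2}\bpi\rangle - \tfrac1{2n_2}\|\boldsymbol{\mu}\|_2^2 + \Omega_2(\bpi)\}$, so the design enters only through the bilinear form $\langle\boldsymbol{\mu},\bX_{\cI_2}\bpi\rangle$. The obstruction to a direct Gordon argument is that $\bX_{\cI_2}$ is not independent of $\mathsf{Cond}_1$: rows indexed by $\cI_1\cap\cI_2$ also drive the first regression, and $\mathsf{Cond}_1$ records the linear functionals $\bX^\top\hat\be_1$ and $\bX(\hat\btheta_1-\btheta_1)$. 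The key computation is therefore the conditional law of $\bX_{\cI_2}$ given $\mathsf{Cond}_1$: conditionally on the first-regression KKT data (which fixes $\hat\btheta_1,\hat\be_1$) it is Gaussian, with mean an explicit linear functional of $\mathsf{Cond}_1$ and covariance $\bI\otimes\bSigma$ restricted orthogonally to $\hat\be_1$ over the rows in $\cI_1\cap\cI_2$ and to $\hat\btheta_1-\btheta_1$ over the columns. Writing $\bX_{\cI_2} = \bM(\mathsf{Cond}_1) + \widetilde\bX_{\cI_2}$ with $\widetilde\bX_{\cI_2}$ a fresh Gaussian supported on those complementary subspaces and independent of $\mathsf{Cond}_1$, the conditional saddle-point problem now has the product-Gaussian structure needed for Gordon's comparison applied to $\widetilde\bX_{\cI_2}$. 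Running Gordon in both directions and promoting the one-sided bound on the optimal value to concentration of the minimizer's relevant functionals (by the standard convexity argument, using assumptions \textsf{A2} — e.g.\ the functional Gaussian width bound for the Lasso — and the bounds of Lemma~\ref{lem:bound-on-fixed-pt} to keep $\zeta_2$ away from $0$ and the penalty nondegenerate) scalarizes the conditional problem to an Auxiliary Optimization.

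\textbf{Matching to (FD-P) / (FD-N).} One then verifies that this Auxiliary Optimization coincides with the one governing the second regression in (FD-P) conditioned on $\bg_1^f = \hat\bg_1$: the mean-shift $\bM(\mathsf{Cond}_1)$ and the reduced covariance of $\widetilde\bX_{\cI_2}$ reproduce exactly the conditional mean and covariance of $\bg_2^f$ given $\bg_1^f = \hat\bg_1$ under the fixed-point covariance $\bS_g$ (Lemma~\ref{lem:fixed-pt-soln}). This identification is precisely what forces the definition of $\hat\bg_1,\hat\bh_1$ as functions of $\mathsf{Cond}_1$ (Section~\ref{sec:cond-char}), and it is consistent with Lemma~\ref{lem:hat-g-hat-h-db-err}, which identifies $\hat\bg_1$ with $\bSigma^{1/2}(\hat\btheta_1^\de-\btheta_1)$ and $\hat\bh_{1,\cI_1}$ with $\hat\be_{1,\cI_1}^\de-\be_{1,\cI_1}$ up to $O(\sqrt{p/n_1}\,\epsilon)$. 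The debiased quantities $\hat\btheta_2^\de,\hat\be_{2,\cI_2}^\de$ (see Eqs.~\eqref{eq:param-est} and \eqref{eq:noise-est}) are affine in $\bX_{\cI_2}^\top(\by_{2,\cI_2}-\bX_{\cI_2}\hat\btheta_2)$ and hence Lipschitz functions of the optimizer (after smoothing the $\ell_1$ penalty so that $\hat\btheta_2$ is itself Lipschitz in the data), so their conditional concentration follows from that of $\hat\btheta_2$; and the convention $\hat\be_{2,\cI_2^c}^\de = \bzero = \hat\be_{2,\cI_2^c}^{f,\de}$ makes the noise-estimation comparison exact off $\cI_2$. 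The $\phi_e$ statement then follows verbatim with (FD-N) in place of (FD-P), conditioning additionally on $(\be_1,\be_2)$, so that $\hat\bh_1$ rather than $\hat\bg_1$ is the relevant auxiliary vector.

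\textbf{Main obstacle.} The hard part is the conditional Gordon inequality itself: verifying that the residual randomness $\widetilde\bX_{\cI_2}$ genuinely carries the independent-Gaussian-on-a-subspace structure Gordon's theorem requires — this is the whole point of the two-step scheme, since a ``single-shot'' dominating process for the two regressions jointly fails Gordon's hypotheses, as noted in the Introduction — and then carrying the geometry of these restricted subspaces through the scalarization and the optimizer-concentration step while tracking every $p$- and $n_k$-dependent factor non-asymptotically (no proportional limit is taken). A secondary subtlety is ensuring that $\hat\bg_1,\hat\bh_1$ are bona fide measurable functions of $\mathsf{Cond}_1$, fixed before the second regression is analyzed, so that the conditioning is on a genuine $\sigma$-algebra and the conditional comparison is legitimate.
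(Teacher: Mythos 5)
Your proposal follows essentially the same route as the paper: the paper's conditional Gordon inequality (Lemmas \ref{lem:gen-conditional-gordon} and \ref{lem:conditional-gordon}) is obtained precisely by observing that conditioning on $\mathsf{Cond}_1$ is equivalent to conditioning the Gaussian design on the first-regression KKT constraints, decomposing $\bA$ into a part determined by $\mathsf{Cond}_1$ plus an independent Gaussian supported on the orthocomplements of $\bu_1$ and $\bv_1$, applying the marginal Gordon inequality to that residual, and matching the resulting auxiliary optimization to the conditional fixed-design model built from $\hat\bg_1,\hat\bh_1$ and the fixed-point covariance $\bS_g$ (then localizing the saddle point and treating the Lasso by $\alpha$-smoothing). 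The only cosmetic differences are bookkeeping ones (e.g., you route the $M_1\sqrt{p/n_1}\,\epsilon$ slack through Lemma \ref{lem:hat-g-hat-h-db-err}, whereas in the paper it arises mainly in the smoothing step, and the column-space restriction should be phrased in the $\bSigma$-whitened coordinates, i.e., orthogonal to $\bSigma^{1/2}(\hat\btheta_1-\btheta_1)$), neither of which affects the validity of the approach.
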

\noindent Lemma \ref{lem:phi-|1-conc} involves only the distribution of objects from the first regression.
To bound it, 
we will use a characterization result analogous to Theorem \ref{thm:joint-characterization} which applies to one regression at a time.
We call this the \emph{marginal characterization}. 
It is stated in Section \ref{sec:marg-char} and proved in Section \ref{sec:marg-char-proof}.
To prove Lemma \ref{lem:conditional-characterization},
we study the distribution of the estimators from the second regression conditional on $\mathsf{Cond}_1$. 
We will use a characterization result analogous to Theorem \ref{thm:joint-characterization}
which applies to one of the regressions conditionally on the other.
We call this the \emph{conditional characterization}. 
It is stated in Section \ref{sec:cond-char} and proved in Section \ref{sec:cond-char-proof}.

By assumption \textsf{A2}, $p/n_2 \leq C$ for some $\cPmodel,\cPregr$, and regression-method dependent $C$, whence, after adjusting constants, Lemma \ref{lem:phi-|1-conc} allows us to bound the second terms in the decompositions above by $\sC(\epsilon)e^{-\sc(\epsilon)p}$.
Lemma \ref{lem:conditional-characterization} allows us to bound the first terms in the decompositions above by $\sC(\epsilon)e^{-\sc(\epsilon)p}$.
Combining these bounds, we conclude Theorem \ref{thm:joint-characterization}(i) and (ii).
 
\section{Marginal characterization}
\label{sec:marg-char}

In this section we state and prove the marginal characterization, which describes the behavior of one regression at a time. 
Our primary application of the marginal characterization is to prove Lemma \ref{lem:phi-|1-conc}.
We postpone this proof of Lemma \ref{lem:phi-|1-conc} to Section \ref{sec:phi|1-conc-proof}.

\subsection{Statement of marginal characterization}

The next lemma states the marginal characterization.
\begin{lemma}\label{lem:marginal-characterization}
    Assume \textsf{A1} and \textsf{A2}.

    \begin{enumerate}[(i)]
        
        \item 
        There exist $\cPmodel$, $\cPregr$ and regression method-dependent $c' > 0$ and $\sC,\sc: \reals_{>0} \rightarrow \reals_{>0}$ such that
        for $\epsilon < c'$ and 1-Lipschitz $\phi_\theta: (\reals^p)^2 \rightarrow \reals$, $\phi_e: (\reals^{n_k})^4 \rightarrow \reals$ with probability at least $1 - \sC(\epsilon) e^{-\sc(\epsilon) p}$ 
        \begin{equation}
        \begin{gathered}
            \big| 
                \phi_\theta\big( \hat \btheta_k , \hat \btheta_k^{\de} \big) 
                    - 
                    \E\big[\phi_\theta\big( \hat \btheta_k^f , \hat \btheta_k^{f,\de}  \big)\big] 
            \big| 
                < 
                \sqrt{\frac{p}{ n_k }}\,\epsilon,
            \\
            \Big| 
                \phi_e\Big(
                    \frac{ \be_{1,\cI_k}}{\sqrt{n_k}},
                    \frac{ \be_{2,\cI_k}}{\sqrt{n_k}},
                    \frac{\hat \be_{k,\cI_k}}{\sqrt{n_k}},
                    \frac{\hat \be_{k,\cI_k}^{\de}}{\sqrt{n_k}}
                \Big) 
                - 
                \E\Big[
                    \phi_e\Big(
                        \frac{ \be_{1,\cI_k}}{\sqrt{n_k}},
                        \frac{ \be_{2,\cI_k}}{\sqrt{n_k}},
                        \frac{\hat \be_{k,\cI_k}^f}{\sqrt{n_k}},
                        \frac{\hat \be_{k,\cI_k}^{f,\de}}{\sqrt{n_k}}
                    \Big)\Big] \Big| < \sqrt{\frac{p}{ n_k }} \, \epsilon,
        \end{gathered}
        \end{equation}
        where the fixed design model and estimators are defined with $\bS_g,\{\zeta_k\}$ satisfying the fixed-point equations \eqref{eq:fixed-pt-eqns} and $\bS_h$ is defined by Eq.~\eqref{eq:def-R-rhoh}.

        \item 
        The same holds if we replace $\df_k$ by $\hat \df_k$ in the definitions of $\hat \btheta_k^\de$, $\hat \be_{k,\cI_k}^\de$ in Eqs.~\eqref{eq:param-est} and \eqref{eq:noise-est},
        where $\hat \df_k$ is defined by Eq.~\eqref{eq:hat-df}.

    \end{enumerate}

\end{lemma}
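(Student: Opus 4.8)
The plan is to prove Lemma~\ref{lem:marginal-characterization} by a Convex Gaussian Min-max Theorem (CGMT / Gordon comparison) argument in the spirit of \cite{thrampoulidis2015regularized,miolane2018distribution,celentano2020lasso}, carried out non-asymptotically and uniformly over the regression methods of Assumption~\textsf{A2}. Write $\bX_{\cI_k} = \bZ\bSigma^{1/2}$ with $\bZ \in \reals^{n_k \times p}$ having i.i.d.\ $\normal(0,1)$ entries, and change variables to the whitened prediction-error vector $\bv := \bSigma^{1/2}(\bpi - \btheta_k)$, so that $\bX_{\cI_k}\bpi - \by_{k,\cI_k} = \bZ\bv - \be_{k,\cI_k}$. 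Introducing a dual vector $\bu \in \reals^{n_k}$ for the quadratic loss, the regression \eqref{eq:param-est} becomes the saddle point of $\frac1{n_k}\bu^\top(\bZ\bv - \be_{k,\cI_k}) - \frac1{2n_k}\|\bu\|_2^2 + \Omega_k(\btheta_k + \bSigma^{-1/2}\bv)$, whose only Gaussian-bilinear term is $\bu^\top\bZ\bv$. Gordon's inequality replaces it by $\|\bv\|_2\,\bmf^\top\bu + \|\bu\|_2\,\bmf'^\top\bv$ with fresh Gaussians $\bmf \in \reals^{n_k}$, $\bmf' \in \reals^p$, yielding an auxiliary optimization (AO). Optimizing out $\bu$ (first its direction, then its norm) scalarizes the AO into a problem over $\|\bv\|_2$ coupled with a low-dimensional convex proximal problem for the direction of $\bv$ governed by $\Omega_k$ and $\bSigma$; I would verify that the stationarity conditions of this scalarized AO are exactly the marginal fixed-point equations \eqref{eq:marg-fix-pt-eq}, and that (using Lemma~\ref{lem:fixed-pt-soln} and the bounds of Lemma~\ref{lem:bound-on-fixed-pt}) its minimizer is unique, well-separated, and stable at the $\sqrt{p/n_k}$ scale. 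Gaussian concentration of the AO value plus the CGMT deviation inequality then transfers concentration of any Lipschitz function of $\hat\bv := \bSigma^{1/2}(\hat\btheta_k - \btheta_k)$ --- and, crucially, of the optimal dual variable $\bu^*$, which by stationarity is the fitted residual $\be_{k,\cI_k} - \bZ\hat\bv$ --- from the random-design problem to the AO, and hence to the fixed-design model, where the AO minimizer matches $\bSigma^{1/2}(\hat\btheta_k^f - \btheta_k)$ with $\tau_{g_k}^2,\zeta_k$ the fixed-point values. This gives the first inequality of part~(i).

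For the debiased estimate, I would use the identity $\bSigma^{1/2}(\hat\btheta_k^\de - \btheta_k) = \hat\bv + \bZ^\top\bu^*/(n_k - \df_k)$. In the fixed-design model $\bSigma^{1/2}(\hat\btheta_k^{f,\de} - \btheta_k) = \bg_k^f$ is an exactly Gaussian sequence with variance $\tau_{g_k}^2$, and the point is that the random-design object behaves the same way; this is precisely Lemma~\ref{lem:hat-g-hat-h-db-err}, whose deterministic proxy $\hat\bg_k$ is constructed from $\mathsf{Cond}_k$ so that, conditionally on the AO randomness, it has the law of $\bg_k^f$. Plugging $\hat\btheta_k^\de - \btheta_k \approx \bSigma^{-1/2}\hat\bg_k$ and $\hat\be_{k,\cI_k}^\de - \be_{k,\cI_k} \approx \hat\bh_{k,\cI_k}$ into the Lipschitz $\phi_\theta,\phi_e$ (using that $\bSigma$ has bounded inverse by \textsf{A1}) and invoking the concentration above yields part~(i). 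The same machinery handles the noise-estimation functional: the residual $\hat\be_{k,\cI_k} = -\bu^*$ concentrates, via the AO, like $\zeta_k(\be_{k,\cI_k} + \bh_k^f)= \hat\be_{k,\cI_k}^f$ with $\bh_k^f \sim \normal(0,\tau_{h_k}^2\id)$, $\tau_{h_k}^2 = \sR_k$, using the residual identities \eqref{eq:res-ref}; the extra argument $\be_{\bar k,\cI_k}$ passed to $\phi_e$ is handled by decomposing $\be_{\bar k} = \rho_e(\tau_{e_{\bar k}}/\tau_{e_k})\be_k + (\text{independent})$ and conditioning on $\be_k$, so the independent piece only enters the fixed-design expectation. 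Because the $\ell_1$ subgradient/dual variable is not a Lipschitz function of the data, I would --- as in \cite{celentano2020lasso} --- first prove everything for the $\alpha$-smoothed Lasso of Appendix~B (where $\nabla\sM_{\alpha_k/\sqrt{n_k}}$ is Lipschitz), and then pass to the limit $\alpha_k \downarrow 0$, using that the smoothed and unsmoothed estimators, residuals, debiased quantities and fixed-point parameters differ by $o_\alpha(1)$ uniformly in the dimensions.

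Part~(ii) --- replacing $\df_k$ by the empirical $\hat\df_k$ --- follows from two facts. First, $\hat\df_k$ concentrates on $\df_k$: for ridge and OLS, $\hat\df_k/p = \Tr[(\tfrac1{n_k}\bX_{\cI_k}^\top\bX_{\cI_k} + \sqrt{p/n_k}\,\lambda_k\id)^{-1}\tfrac1{n_k}\bX_{\cI_k}^\top\bX_{\cI_k}]/p$ is a Lipschitz function of the sample-covariance spectrum (which is bounded away from $0$ and $\infty$ by \textsf{A1} and Lemma~\ref{lem:bound-on-fixed-pt}), so Gaussian matrix concentration applies; for the Lasso, $\hat\df_k = \|\hat\btheta_k\|_0$ is the number of active coordinates, which the AO characterizes as concentrating on $p$ times the fixed-design probability that the corresponding proximal output is nonzero, i.e.\ on $\df_k$. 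Second, $\hat\btheta_k^\de$ and $\hat\be_{k,\cI_k}^\de$ depend on $\df_k$ only through the scalar $1/(n_k - \df_k)$, which --- since $1 - \df_k/n_k = \zeta_k \ge c$ by Lemma~\ref{lem:bound-on-fixed-pt} --- is Lipschitz in $\df_k$ on the relevant range; combined with the a-priori bound $\|\bX_{\cI_k}^\top(\by_{k,\cI_k} - \bX_{\cI_k}\hat\btheta_k)\|_2 = O(\sqrt{p})$ coming out of the same characterization, a perturbation argument shows that substituting $\hat\df_k$ for $\df_k$ changes $\hat\btheta_k^\de$ and $\hat\be_{k,\cI_k}^\de$ by at most $\sqrt{p/n_k}\,\epsilon$ on the relevant event, preserving the conclusion of part~(i).

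The main obstacle is the debiased-estimate analysis: establishing that $\bSigma^{1/2}(\hat\btheta_k^\de - \btheta_k)$ is genuinely Gaussian-like (Lemma~\ref{lem:hat-g-hat-h-db-err}) rather than merely having the correct second moments requires tracking the joint law of the primal solution $\hat\bv$ and the dual variable / subgradient through the CGMT reduction, and the non-differentiability of the Lasso forces the detour through the $\alpha$-smoothed problem together with a delicate $\alpha\downarrow 0$ limit whose rates must be uniform in $p,n_k$. Obtaining the $\sqrt{p/n_k}$ scaling throughout --- in particular in the stability of the scalarized AO minimizer and in the perturbation estimates for part~(ii) --- rather than a cruder $O(1)$ rate, is the other place requiring care.
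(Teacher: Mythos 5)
Your overall CGMT scaffolding (min--max reformulation in the whitened variable $\bv=\bSigma^{1/2}(\bpi-\btheta_k)$, Gordon comparison, scalarized auxiliary problem whose stationarity conditions are Eqs.~\eqref{eq:marg-fix-pt-eq}, transfer of concentration to $\bv_k$ and to the residual/dual $\bu_k$, and the $\alpha$-smoothing detour for the Lasso) matches the paper's strategy. The genuine gap is in how you handle the debiased estimate $\hat\btheta_k^{\de}$. You propose to control $\bSigma^{1/2}(\hat\btheta_k^{\de}-\btheta_k)=\bv_k+\bZ^\top\bu^*/(n_k-\df_k)$ by invoking Lemma~\ref{lem:hat-g-hat-h-db-err}, i.e.\ by a Gaussian-like proxy $\hat\bg_k$ built from $\mathsf{Cond}_k$, and you yourself flag that this requires "tracking the joint law of the primal solution and the dual variable through the CGMT reduction" without resolving it. In the paper this would be circular: Lemma~\ref{lem:hat-g-hat-h-db-err} is proved (via Lemma~\ref{lem:marg-T-conc-helper}) \emph{from} the marginal characterization, and $\hat\bg_k$ does not ``have the law of $\bg_k^f$''; only Lipschitz functions of it concentrate on the corresponding fixed-design expectations, which is exactly what is being proved. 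The missing idea that makes the marginal lemma self-contained is elementary: for OLS, ridge, and the $\alpha$-smoothed Lasso the KKT conditions give $\bX_{\cI_k}^\top(\by_{k,\cI_k}-\bX_{\cI_k}\hat\btheta_k)/n_k=\nabla\Omega_k(\hat\btheta_k)=\bSigma^{1/2}\nabla\bar\Omega_k(\bv_k)$, so that
\begin{equation}
\hat\btheta_k^{\de}=\btheta_k+\bSigma^{-1/2}\Big(\bv_k+\tfrac{1}{\zeta_k}\nabla\bar\Omega_k(\bv_k)\Big),
\end{equation}
a $C/\alpha$-Lipschitz function of $\bv_k$ alone (no joint primal--dual characterization is needed); the matching fixed-design identity holds for $\hat\btheta_k^{f,\de}$, so $\phi_\theta(\hat\btheta_k,\hat\btheta_k^{\de})$ is covered by the concentration of Lipschitz functions of $\bv_k$, and the Lasso follows by the smoothing comparison with the choice $\alpha=\epsilon^{2/3}$ balancing the $\sqrt{\alpha}$ approximation error against the $\epsilon/\alpha$ Lipschitz blow-up. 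Without this (or an independent proof of the proxy lemma), your argument for the first display of part~(i) is incomplete.

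Two secondary points. First, to obtain the $\sqrt{p/n_k}$ rate for ridge/OLS when $n_k/p$ is unbounded, you need the auxiliary objective's value to be localized at resolution $O((p/n_k)\epsilon^2)$; unconditionally on $\be_k$ the optimal value fluctuates at the larger scale $\sqrt{p/n_k}$, so the paper centers the min--max value at an $\be_k$-\emph{dependent} quantity $\ell_k^*$ (conditioning on the noise), and your plan should say how you achieve this, since ``stability at the $\sqrt{p/n_k}$ scale'' does not come for free from the standard CGMT localization. Second, in part~(ii) for ridge regression, Gaussian/Lipschitz concentration only shows $\hat\df_k$ concentrates around $\E[\hat\df_k]$; identifying that expectation with the fixed-point quantity $\df_k$ requires a deterministic-equivalent statement for the resolvent of the sample covariance (the paper uses an anisotropic local law), which your sketch omits. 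The rest of part~(ii) (Lipschitz dependence on $1/(n_k-\df_k)$ with $\zeta_k\ge c$, plus an a-priori bound on $\bX_{\cI_k}^\top\hat\be_k$) is the same perturbation argument as the paper's.
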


\noindent We provide several remarks on Lemma \ref{lem:marginal-characterization}. They are not essential to our exposition and may be skipped.
\begin{remark}[Relation to existing literature]\label{rmk:rel-lit}
    Marginal characterization results are now standard in the literature \cite{bayati2011lasso,stojnic2013framework,thrampoulidis2015,Sur14516,dobriban2018,miolane2018distribution,gerbelot2020,celentano2020lasso}.
    Lemma \ref{lem:marginal-characterization} differs from previous results in a couple respects.
    Most results are typically proved assuming an upper bound on $n_k/p$.
    In the case of OLS and ridge regression,
    we assume no such upper bound on $n_k/p$,
    and the characterization provided by Lemma \ref{lem:marginal-characterization} provides better concentration as $n_k/p$ gets large.
    Establishing the correct dependence on $n_k/p$ for $n_k/p$ large requires a more refined argument than existing results in the literature.
    We expect the same dependence on $n_k/p$ to hold for the Lasso and $\alpha$-smoothed Lasso, but do not pursue this in the present paper. 
    (Recall that for the Lasso, assumption \textsf{A2} imposes an upper bound on $n_k/p$).
    
    Further, as far as we are aware,
    in the case of ridge regression, a characterization result along the lines of Lemma \ref{lem:marginal-characterization} has not yet appeared in the literature even for $n_k/p$ bounded.
    This is perhaps surprising because the ridge regression estimator is simpler to study than the Lasso or other estimators which have been studied.
    The marginal characterization result we prove fills this gap.
\end{remark}

\begin{remark}[Rates of concentration]\label{rmk:marg-rates-conc}
    To assess the rates of concentration provided by Lemma \ref{lem:marginal-characterization}, 
    one should compare them to those which occur for the analogous fixed-design quantities.
    Because proximal operators are 1-Lipschitz, 
    $\bSigma^{1/2}\hat \btheta_k^f$ and $\bSigma^{1/2} \hat \btheta_k^{f,\de}$ are 1-Lipschitz functions of $\bg_k^f$.
    Because the variance $\tau_{g_k}^2 \leq C/n_k$ (see Lemma \ref{lem:bound-on-fixed-pt}), 
    Gaussian concentration of Lipschitz functions gives
    \begin{equation}
        \P
        \Big(
            \big| 
                \phi_\theta\big( \hat \btheta_k^f , \hat \btheta_k^{f,\de} \big) 
                    - 
                    \E\big[\phi_\theta\big( \hat \btheta_k^f , \hat \btheta_k^{f,\de}  \big)\big] 
            \big| 
                < 
                \sqrt{\frac{p}{ n_k }}\,\epsilon
        \Big)
            \geq 
            1 - Ce^{-cp\epsilon^2}.
    \end{equation}
    Similarly, $\hat \be_k^{f,\de}$ is Gaussian with variance $(\tau_{e_k}^2 + \tau_{h_k}^2)\id_N$, and $\hat \be_k^f$ is a 1-Lipschitz function of $\hat \be_k^f$.
    Because $\tau_{e_k}^2 \leq C$ (by assumption $\mathsf{A1}$) and $\tau_{h_k}^2 \leq Cp/n_k \leq C$ (by assumption $\mathsf{A2}$ and Lemma \ref{lem:bound-on-fixed-pt}), 
    Gaussian concentration of Lipshitz functions gives 
    \begin{equation}
        \P
        \Big(
            \Big| 
                \phi_e\Big(
                    \frac{\be_{1,\cI_k}}{\sqrt{n_k}},
                    \frac{\be_{2,\cI_k}}{\sqrt{n_k}},
                    \frac{\hat \be_{k,\cI_k}^f}{\sqrt{n_k}},
                    \frac{\hat \be_{k,\cI_k}^{f,\de}}{\sqrt{n_k}}
                \Big) 
                - 
                \E\Big[
                    \phi_e\Big(
                        \frac{\be_{1,\cI_k}}{\sqrt{n_k}},
                        \frac{\be_{2,\cI_k}}{\sqrt{n_k}},
                        \frac{\hat \be_{k,\cI_k}^f}{\sqrt{n_k}},
                        \frac{\hat \be_{k,\cI_k}^{f,\de}}{\sqrt{n_k}}
                    \Big)\Big] \Big| < \sqrt{\frac{p}{ n_k }} \, \epsilon
        \Big)
        \geq 1 - Ce^{-cp\epsilon^2}.
    \end{equation}
    That is, in the fixed design model, those fluctuations which have probability $e^{-\Theta(p)}$ have size  $O(\sqrt{p/n_k})$.
    Lemma \ref{lem:marginal-characterization} establishes the same for the random-design model.
\end{remark}

\begin{remark}[High probability in $p$ rather than $n_k$]
    The reader may wonder why the probability bounds we provide are of the form $\sC(\epsilon) e^{-\sc(\epsilon) p}$ rather than $\sC(\epsilon) e^{-\sc(\epsilon) n_k}$.
    One regime in which the distinction is manifest is a fixed-$p$, $n_k \rightarrow \infty$ asymptotics.
    In this limit, $\sC(\epsilon) e^{-\sc(\epsilon) n_k} \rightarrow 0$ whereas $\sC(\epsilon) e^{-\sc(\epsilon) p}$ remains constant.
    Our proof techniques are unable to establish vanishing probability bounds in this limit.
    Thus, we are required to use probability bounds of the form $\sC(\epsilon) e^{-\sc(\epsilon) p}$ rather than $\sC(\epsilon) e^{-\sc(\epsilon) n_k}$. 
    We suspect, though are not certain, that this is fundamental in some cases.

    Previous works (see \cite{miolane2018distribution,celentano2020lasso})
    typically write probability bounds in the form $\sC(\epsilon) e^{-\sc(\epsilon) n_k}$, which decays exponentially fast in $n_k$.
    Because these works assume an upper and lower bound on $n_k/p$, the two forms are in fact equivalent after adjusting constants.
    The distinction becomes relevant in the current paper because---at least in the case of ridge-regression---we do not assume an upper bound on $n_k/p$.
\end{remark}

\subsection{Proof of marginal characterization (Lemma \ref{lem:marginal-characterization}(i))}
\label{sec:marg-char-proof}

Here we prove Lemma \ref{lem:marginal-characterization}(i).
We postpone the proof of Lemma \ref{lem:marginal-characterization}(ii) to Section \ref{sec:emp-df}.

We prove Lemma \ref{lem:marginal-characterization} using Gordon's convex Gaussian min-max theorem (stated below as Lemma \ref{lem:gordon-marginal}) using a now standard argument \cite{thrampoulidis2015,miolane2018distribution,celentano2020lasso}.
Because 
assumption \textsf{A2} does not require $n_k/p$ be bounded above in the case of ridge-regression,
we are more explicit in tracking dependence on this quantity than current literature.

Recall Gordon's convex min-max theorem.
\begin{lemma}[Marginal Gordon]\label{lem:gordon-marginal}
    Consider any continuous function $\psi: \reals^N \times \reals^p \rightarrow \reals$ which is concave in its first argument and convex in its second argument.
    Let $\bA \in \reals^{N\times p}$ have entries $A_{ij} \stackrel{\mathrm{iid}}\sim \normal(0,1)$,
    $\bxi_g \stackrel{\mathrm{iid}}\sim \normal(0,\id_p)$, and $\bxi_h \sim \normal(0,\id_N)$, all independent,
    and define
    \begin{equation}
        \bg_{\mathrm{mg}}(\bu)
            :=
            \|\bu\|_2 \bxi_g,
        \qquad 
        \bh_{\mathrm{mg}}(\bv)
            :=
            \|\bv\|_2 \bxi_h,
    \end{equation}
    (where the subscript stands for ``marginal Gordon'').
    \begin{enumerate}[(i)]

        \item If $E_u \in \reals^N$, $E_v \in \reals^p$ are compact sets,
        then for any $t\in \reals$
        \begin{equation}
            \P\Big( \min_{\bv \in E_v} \max_{\bu \in E_u} \bu^\top \bA \bv + \psi(\bu,\bv) \leq t \Big)
                \leq 
                2 \P\Big( \min_{\bv \in E_v} \max_{\bu \in E_u} - \<\bg_{\mathrm{mg}}(\bu), \bv\> + \<\bh_{\mathrm{mg}}(\bv), \bu\> + \psi(\bu,\bv)  \leq t \Big).
        \end{equation}

        \item If $E_u \in \reals^N$, $E_v \in \reals^p$ are compact, convex sets,
        then for any $t\in \reals$
        \begin{equation}
            \P\Big( \min_{\bv \in E_v} \max_{\bu \in E_u} \bu^\top \bA \bv + \psi(\bu,\bv) \geq t \Big)
                \leq 
                2 \P\Big( \min_{\bv \in E_v} \max_{\bu \in E_u} - \<\bg_{\mathrm{mg}}(\bu), \bv\> + \<\bh_{\mathrm{mg}}(\bv), \bu\>  + \psi(\bu,\bv)  \geq t \Big).
        \end{equation}

    \end{enumerate}
\end{lemma}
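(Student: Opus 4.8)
The plan is to derive this from Gordon's Gaussian comparison inequality --- the min--max strengthening of the Slepian--Fernique lemma --- as in the now-standard treatments of the convex Gaussian min--max theorem. First I would introduce the two centered Gaussian processes indexed by $(\bu,\bv)\in E_u\times E_v$: the \emph{primary} process $X_{\bu,\bv}:=\bu^\top\bA\bv$ and the \emph{auxiliary} process $Y_{\bu,\bv}:=-\<\bg_{\mathrm{mg}}(\bu),\bv\>+\<\bh_{\mathrm{mg}}(\bv),\bu\>$. Since $E_u,E_v$ are compact, both processes are a.s.\ continuous and bounded, so the whole argument may be run along an increasing sequence of finite nets and then passed to the limit using continuity of $\psi$ and of the relevant quadratic forms; I would dispatch this reduction first and work with finite index sets thereafter, so that Gordon's inequality applies in its classical form.

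The heart of the matter is the covariance comparison. To equalize variances I would introduce an independent $g\sim\normal(0,1)$ and replace $X_{\bu,\bv}$ by $\tilde X_{\bu,\bv}:=\bu^\top\bA\bv+\|\bu\|_2\|\bv\|_2\,g$, so that $\tilde X$ and $Y$ both have variance $2\|\bu\|_2^2\|\bv\|_2^2$ at each index. A one-line computation then yields, for any two indices,
\begin{equation}
\E[\tilde X_{\bu,\bv}\tilde X_{\bu',\bv'}]-\E[Y_{\bu,\bv}Y_{\bu',\bv'}]=\big(\bu^\top\bu'-\|\bu\|_2\|\bu'\|_2\big)\big(\bv^\top\bv'-\|\bv\|_2\|\bv'\|_2\big),
\end{equation}
which by Cauchy--Schwarz is $\geq 0$ always and is $=0$ whenever $\bv=\bv'$. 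These are exactly the hypotheses needed to apply Gordon's inequality with the maximum over the $\bu$-index and the minimum over the $\bv$-index; applying it to the threshold events $\bigcap_{\bv}\bigcup_{\bu}\{\tilde X_{\bu,\bv}+\psi(\bu,\bv)\geq t\}$ (the extrema are attained by compactness) gives $\P(\min_{\bv}\max_{\bu}[\tilde X_{\bu,\bv}+\psi]\leq t)\leq\P(\min_{\bv}\max_{\bu}[Y_{\bu,\bv}+\psi]\leq t)$ for every $t$. Finally I would remove the balancing Gaussian: on $\{g\leq 0\}$ one has $\tilde X_{\bu,\bv}\leq X_{\bu,\bv}$ pointwise, hence $\{\min_{\bv}\max_{\bu}[X+\psi]\leq t\}\cap\{g\leq 0\}\subseteq\{\min_{\bv}\max_{\bu}[\tilde X+\psi]\leq t\}$, while the event $\{\min_{\bv}\max_{\bu}[X+\psi]\leq t\}$ depends only on $\bA$ and is therefore independent of $g$, so intersecting with $\{g\leq 0\}$ costs exactly a factor $\tfrac12$. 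Combining these gives $\tfrac12\,\P(\min_{\bv}\max_{\bu}[X+\psi]\leq t)\leq\P(\min_{\bv}\max_{\bu}[Y+\psi]\leq t)$, which is part (i).

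For part (ii) I would bring in convexity. Since $\bu^\top\bA\bv$ is bilinear, $\psi$ is concave in $\bu$ and convex in $\bv$, and $E_u,E_v$ are compact and convex, Sion's minimax theorem allows the exchange $\min_{\bv}\max_{\bu}[\bu^\top\bA\bv+\psi]=\max_{\bu}\min_{\bv}[\bu^\top\bA\bv+\psi]$; negating, relabelling $\bu\leftrightarrow\bv$ and $\bA\mapsto-\bA$ (which has the same law) turns the event $\{\min_{\bv}\max_{\bu}[\,\cdot\,]\geq t\}$ into an event of exactly the type treated in part (i) for the ``transposed'' problem, whose Gaussian and convex--concave structure is preserved, and an analogous swap puts its auxiliary objective into the matching min--max form. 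Invoking part (i) for that problem then produces $\P(\min_{\bv}\max_{\bu}[X+\psi]\geq t)\leq 2\,\P(\min_{\bv}\max_{\bu}[Y+\psi]\geq t)$. I expect the main obstacle to be precisely this last step: after the min--max swap the auxiliary objective is not jointly convex--concave in the naive sense because of the $\|\bu\|_2$ and $\|\bv\|_2$ factors, and one must argue conditionally on the sign of the balancing Gaussian --- on each half-line the objective is convex--concave, so Sion applies there --- and then average. Alternatively, parts (i)--(ii) are available off the shelf as the convex Gaussian min--max theorem of \cite{gordon1985some,Gordon1988,thrampoulidis2015regularized}, and one may simply cite them.
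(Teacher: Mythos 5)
The paper does not actually prove this lemma: it is the classical convex Gaussian min--max theorem, used as a black box with the proof deferred to \cite{gordon1985some,Gordon1988,stojnic2013framework,thrampoulidis2015regularized}, so your closing option of simply citing it is exactly what the paper does. Your reconstruction of part (i) is the standard argument and is correct: the covariance computation for $\tilde X$ versus $Y$ is right, the sign pattern ($\geq 0$ in general, $=0$ whenever $\bv=\bv'$) is precisely Gordon's hypothesis with the min over the $\bv$-index and the max over the $\bu$-index, and the factor-$\tfrac12$ symmetrization over the balancing Gaussian $g$ is the usual trick (modulo routine care with strict versus non-strict inequalities and the finite-net reduction, which you flag and handle sensibly).

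Part (ii), however, has a genuine gap in the very step you single out, and your proposed repair would not work. After Sion on the primary problem and negation/relabelling, what remains on the auxiliary side is to pass from $\max_{\bu}\min_{\bv}$ of (a process equal in law to) the auxiliary objective to $\min_{\bv}\max_{\bu}$ of it. Conditioning on the sign of the balancing Gaussian and applying Sion ``on each half-line'' cannot achieve this: the balancing Gaussian $g$ does not appear in the auxiliary objective at all (it was introduced only to equalize variances of the primary process and is discarded after part (i)), and the failure of convex--concavity of $-\|\bu\|_2\,\bxi_g^\top\bv+\|\bv\|_2\,\bxi_h^\top\bu+\psi(\bu,\bv)$ is governed by the signs of $\bxi_h^\top\bu$ and $\bxi_g^\top\bv$, which vary with $(\bu,\bv)$ over $E_u\times E_v$, so no conditioning on a single sign restores a minimax equality. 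Fortunately none is needed: applying part (i) to the transposed problem yields (after negating and using that the negated transposed auxiliary has the same law as the original auxiliary) the event $\{\max_{\bu}\min_{\bv}[\mathrm{aux}+\psi]\geq t\}$, and weak duality $\max_{\bu}\min_{\bv}\leq\min_{\bv}\max_{\bu}$, valid with no convexity whatsoever, shows this event is contained in $\{\min_{\bv}\max_{\bu}[\mathrm{aux}+\psi]\geq t\}$, which is exactly the right-hand side of (ii). With that one-line replacement your argument becomes the standard proof; convexity of $E_u,E_v$ and convex--concavity of $\psi$ are needed only to invoke Sion on the primary problem.
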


To apply Gordon's inequality to study the regression estimators marginally,
we rewrite the optimization \eqref{eq:param-est} as a min-max problem
\begin{equation}
    \min_{\bpi \in \reals^p}
    \max_{\substack{\bu_{\cI_k} \in \reals^{n_k}\\\bu_{\cI_k^c}=0}}
        \Big\{
            \frac1{n_k}\bu^\top\big(\be_k + \bX(\btheta_k - \bpi)\big) - \frac1{2n_k}\|\bu\|_2^2 + \Omega_k(\bpi)
        \Big\}.
\end{equation}
Define the matrix $\bA := -\bX \bSigma^{-1/2}$, which is distributed as in Lemma \ref{lem:gordon-marginal}.
If we define $\bv = \bSigma^{1/2}(\bpi - \btheta_k)$, 
the min-max problem can be written as
\begin{equation}\label{eq:regr-min-max}
    \min_{\bv \in \reals^p}
        \max_{\substack{\bu_{\cI_k} \in \reals^{n_k}\\\bu_{\cI_k^c}=0}}
        L_k(\bu,\bv)
        :=
    \min_{\bv \in \reals^p}
    \max_{\substack{\bu_{\cI_k} \in \reals^{n_k}\\\bu_{\cI_k^c}=0}}
        \Big\{
            \frac1{n_k}\bu^\top \bA \bv + \frac1{n_k}\bu^\top\be_k - \frac1{2n_k}\|\bu\|_2^2 + \bar \Omega_k(\bv)
        \Big\},
\end{equation}
where $\bar \Omega_k(\bv) := \Omega_k(\btheta_k + \bSigma^{-1/2}\bv)$.
This problem is convex-concave, and with probability 1 it has unique saddle point
\begin{equation}\label{eq:vk-uk}
    \bv_k 
        :=
        \bSigma^{1/2}(\hat \btheta_k - \btheta_k)
        =
        \argmin_{\bv \in \reals^p}
        \max_{\substack{\bu_{\cI_k} \in \reals^{n_k}\\\bu_{\cI_k^c}=0}}
        L_k(\bu,\bv),
    \qquad 
    \bu_k := \hat \be_k
        =
        \argmax_{\substack{\bu_{\cI_k} \in \reals^{n_k}\\\bu_{\cI_k^c}=0}}\;
        \min_{\bv \in \reals^p}
        L_k(\bu,\bv).
\end{equation}
By \cite[Corollary 37.3.2]{rockafellar-1970a}, the minimization and maximization can be exchanged.

We first characterize the behavior of $\bv_k$ and $\bu_k$.
They are characterized by comparison to the fixed-design model quantities 
\begin{equation}\label{eq:vkf}
    \bv_k^f := \bSigma^{1/2}(\hat \btheta_k^f - \btheta_k)\quad \text{and} \quad \bu_k^f := \hat \be_k^f.
\end{equation}
For furture reference, we point out that by Eq.~\eqref{eq:fixed-design-est}
\begin{equation}\label{eq:vf-prox}
    \bv_k^f = \argmin_{\bpi \in \reals^p}
        \Big\{
            \frac12\| \bg_k^f - \bpi \|_2^2 + \frac1{\zeta_k} \bar \Omega_k(\bpi)
        \Big\}.
\end{equation}
Consider $1$-Lipschitz functions $\phi_v:\reals^p \rightarrow \reals$ 
and $\phi_u:(\reals^{n_k})^3 \rightarrow \reals$.
We will show that with high probability 
\begin{equation}\label{eq:marg-E}
\begin{gathered}
    \bv_k \in 
    E_v(\epsilon)
        :=
        \Big\{
            \bv \in \reals^p : 
            \big|
                \phi_v\big( \bv \big)
                -
                \E\big[\phi_v( \bv_k^f   )\big] 
            \big| 
            < \sqrt{\frac{p}{ n_k }} \, \epsilon
        \Big\},
    \\
    \bu_k \in 
    E_u(\epsilon)
        :=
        \Big\{
            \bu \in \reals^{n_k}
            :
            \Big|
                \phi_u\Big( 
                    \frac{\bu}{ \sqrt{n_k} } ,
                    \frac{\be_{1,\cI_k}}{ \sqrt{n_k} } ,
                    \frac{\be_{2,\cI_k}}{ \sqrt{n_k} } 
                \Big) 
                - 
                \E\Big[
                \phi_u\Big(
                    \frac{ \bu_{k,\cI_k}^f}{\sqrt{n_k}} ,
                    \frac{\be_{1,\cI_k}}{ \sqrt{n_k} } ,
                    \frac{\be_{2,\cI_k}}{ \sqrt{n_k} } 
                \Big)
                \Bigm| \{ \be_l \}_l
                \Big]
            \Big| 
            < \sqrt{\frac{p}{ n_k }} \, \epsilon
        \Big\}.
\end{gathered}
\end{equation}
Note that the set $E_u(\epsilon)$ depends implicitly on $\be_1,\be_2$, though for compactness we supress this in the notation.
To establish the previous display, we show that the objectives in Eq.~\eqref{eq:vk-uk} are sub-optimal when these constraints are violated.
Precisely:
\begin{lemma}[Marginal control of primary objective]\label{lem:marg-sub-opt}
    There exists $\ell_k^* = \ell_k^*(\be_k,n_k,p,\btheta_k,\sigma_k,\bSigma,\Omega_k)$ and $\cPmodel$, $\cPregr$ and regression method-dependent $C',c' > 0$ and $\sC,\sc: \reals_{>0} \rightarrow \reals_{>0}$ such that
    for $\epsilon < c'$ with probability at least $1 - \sC(\epsilon) e^{-\sc(\epsilon) p}$ 
    \begin{equation}\label{eq:marg-sub-opt}
    \begin{gathered}
        \Big|
            \max_{\substack{\bu \in \reals^N \\ \bu_{\cI_k^c} = 0} }
                    \min_{\bv \in \reals^p}
                L_k(\bu,\bv)
            - 
            \ell_k^*
        \Big|
        =
        \Big|
            \min_{ \bv \in \reals^p }\
            \max_{\substack{\bu \in \reals^N \\ \bu_{\cI_k^c} = 0} }
                L_k(\bu,\bv)
            - 
            \ell_k^*
        \Big|
        \leq 
             C'\frac{p}{n_k}\frac{\epsilon^2}2
    \\
        \max_{\substack{\bu_{\cI_k} \in E_u^c(\epsilon)\\ \bu_{\cI_k^c}= 0}}\;
              \min_{\bv \in \reals^p}
                L_k(\bu,\bv)
            \leq \ell_k^* - C'\frac{p}{n_k}\epsilon^2,
    \qquad
            \min_{\bv \in E_v^c(\epsilon)}\;
            \max_{\substack{\bu \in \reals^N \\ \bu_{\cI_k^c} = 0} }
                    L_k(\bu,\bv)
                \geq \ell_k^* + C'\frac{p}{n_k}\epsilon^2.
    \end{gathered}
    \end{equation}
\end{lemma}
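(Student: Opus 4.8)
The plan is to run the Gordon/CGMT program on the min-max representation \eqref{eq:regr-min-max}, keeping careful track of the fact that $n_k/p$ is not assumed bounded above: the natural scales are $\|\bv\|_2 = \Theta(\sqrt{p/n_k})$ and $\|\bu\|_2 = \Theta(\sqrt{n_k})$ (cf. Lemma \ref{lem:bound-on-fixed-pt}), so the ``energy gap'' separating near-optimal from suboptimal configurations is $\Theta(p/n_k)$ rather than $\Theta(1)$. First I would establish crude a priori bounds: a preliminary one-sided application of Gordon's inequality (Lemma \ref{lem:gordon-marginal}), together with the fixed-point bounds of Lemma \ref{lem:bound-on-fixed-pt}, shows that with probability $1 - \sC(\epsilon)e^{-\sc(\epsilon)p}$ the saddle point $(\bv_k,\bu_k)$ (see \eqref{eq:vk-uk}) lies in the compact convex set $\{\|\bv\|_2 \le R_v\sqrt{p/n_k}\}\times\{\|\bu\|_2 \le R_u\sqrt{n_k},\ \bu_{\cI_k^c}=0\}$ for $\cPmodel,\cPregr$-dependent $R_v,R_u$; on this event we may intersect the feasible sets with these balls without changing the value or the saddle point, and all subsequent statements are made on it.

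Next, apply Lemma \ref{lem:gordon-marginal} to the restricted problem, replacing $\bu^\top\bA\bv$ by $-\|\bu\|_2\<\bxi_g,\bv\> + \|\bv\|_2\<\bxi_{h,\cI_k},\bu_{\cI_k}\>$, and scalarize the resulting auxiliary optimization (AO): for fixed $\bv$ the maximization over $\bu$ is explicit (optimal direction $\bu_{\cI_k}\propto \|\bv\|_2\bxi_{h,\cI_k} + \be_{k,\cI_k}$, then optimize $\|\bu_{\cI_k}\|_2$), giving the AO objective $\tfrac1{2n_k}\big(\big\|\,\|\bv\|_2\,\bxi_{h,\cI_k} + \be_{k,\cI_k}\big\|_2 - \<\bxi_g,\bv\>\big)_+^2 + \bar\Omega_k(\bv)$; minimizing over the direction of $\bv$ at fixed $\|\bv\|_2$ then collapses the AO to a scalar convex program whose integrand is built from the Moreau envelope of $\bar\Omega_k$ --- precisely the proximal representation \eqref{eq:vf-prox} of $\bv_k^f$. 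I would \emph{define} $\ell_k^*$ as the value of this deterministic scalar program, obtained from the scalarized AO by replacing $\big\|\,\|\bv\|_2\,\bxi_{h,\cI_k}+\be_{k,\cI_k}\big\|_2$ and the Gaussian linear/quadratic terms by their conditional expectations given $\be_k$. After the $\sqrt{p/n_k}$ rescaling the scalarized AO min-max value is a $1$-Lipschitz function of $(\bxi_g,\bxi_h)$ (norms, linear functionals, and Moreau envelopes all being Lipschitz), so Gaussian concentration yields $\big|\,[\text{AO value}] - \ell_k^*\,\big| \le C'\tfrac p{n_k}\tfrac{\epsilon^2}{2}$ with probability $1 - \sC(\epsilon)e^{-\sc(\epsilon)p}$; combined with both directions of Gordon applied to the convex--concave compact restricted problem, this gives the first display of \eqref{eq:marg-sub-opt}.

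For the localization statements I would argue quantitatively. The scalar program has a unique minimizer with $\Theta(1)$ curvature in the rescaled variable, so the AO objective restricted to the region where $\|\bv\|_2$ is bounded away from its optimal value already exceeds $\ell_k^* + c\tfrac p{n_k}\epsilon^2$; conditionally on $\|\bv\|_2$ at its optimal value, the AO is, up to vanishing error, the deterministic minimization defining $\bv_k^f$ through \eqref{eq:vf-prox}, which is \emph{strongly convex in the $\bSigma$-metric} with a $\cPmodel,\cPregr$-dependent curvature lower bound (immediate for OLS/ridge from the quadratic penalty; for the Lasso and $\alpha$-smoothed Lasso it follows from the approximate-sparsity/Gaussian-width hypotheses in \textsf{A2}, which supply restricted strong convexity along the relevant descent directions). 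Hence any $\bv$ whose AO objective is within $c\tfrac p{n_k}\epsilon^2$ of $\ell_k^*$ satisfies $\|\bv - \bv_\star\|_{\bSigma}^2 \le C'\tfrac p{n_k}\epsilon^2$ for the AO minimizer $\bv_\star$, and since $\phi_v$ is $1$-Lipschitz and $\phi_v(\bv_\star)$ concentrates on $\E[\phi_v(\bv_k^f)]$, such $\bv$ lies in $E_v(\epsilon)$ after adjusting constants; equivalently $\bv\in E_v^c(\epsilon)$ forces AO objective $\ge \ell_k^* + C'\tfrac p{n_k}\epsilon^2$. The same reasoning with $\bu$ in place of $\bv$ (using that $-\bA$ is equidistributed with $\bA$ and the exact strong concavity of $L_k$ in $\bu$, whose $\bu$-dependence is a pure negative quadratic) controls $E_u^c(\epsilon)$. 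Finally I transfer the two AO localization bounds back: $E_v^c(\epsilon)$ and $E_u^c(\epsilon)$ (intersected with the a priori balls) are closed, so the one-sided form of Gordon (Lemma \ref{lem:gordon-marginal}(i), applied to $L_k$ and, in the $\bu$-case, to $-L_k$ with the roles of $\bu,\bv$ exchanged) upgrades the AO estimates to the second line of \eqref{eq:marg-sub-opt}, completing the proof.

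\textbf{Main obstacle.} The delicate point is the quantitative strong-convexity/uniqueness step: one must show the AO objective is strongly convex in the $\bSigma$-weighted norm with a \emph{dimension-free} curvature constant and translate this into the $\Theta(p\epsilon^2/n_k)$ gap, uniformly over OLS, ridge, Lasso, and $\alpha$-smoothed Lasso. For OLS/ridge this is routine, but for the (smoothed) Lasso it requires marrying the restricted-strong-convexity content of the Gaussian-width condition in \textsf{A2} with control of the proximal map of $\bar\Omega_k$, and care is needed so the curvature does not degrade as $n_k/p$ grows --- exactly where this argument must be sharper than the $n_k\asymp p$ literature.
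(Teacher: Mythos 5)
Your overall strategy is the same as the paper's: restrict to compact sets at the scales $\|\bv\|_2=O(\sqrt{p/n_k})$, apply Gordon's inequality in both directions, analyze the auxiliary objective \emph{conditionally on} $\be_k$ with an $\be_k$-dependent $\ell_k^*$ (the paper takes $\ell_k^*=\tfrac12 e_k^{*2}+\omega_k$ with $e_k^*=\zeta_k(\tau_{e_k}^{*2}+\tau_{h_k}^2)^{1/2}$ and $\tau_{e_k}^*=\|\be_{k,\cI_k}\|_2/\sqrt{n_k}$), convert an energy gap of order $(p/n_k)\epsilon^2$ into localization at scale $\sqrt{p/n_k}\,\epsilon$ via strong convexity/concavity, and remove the compactness constraints at the end. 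You correctly identify the conditioning-on-$\be_k$ device as the source of the improved $p/n_k$ resolution, which is indeed the paper's key departure from the standard CGMT literature.

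However, there is a genuine gap at the quantitative heart of the argument, and it is precisely the step you flag as unresolved. The dimension-free strong convexity of the reduced auxiliary objective does \emph{not} come from the penalty, and the mechanism you propose for the (smoothed) Lasso is wrong: the functional Gaussian-width/approximate-sparsity condition in \textsf{A2} is not a restricted strong convexity condition and plays no such role here (in this argument it enters only through the bounds on the fixed-point parameters, Lemma \ref{lem:bound-on-fixed-pt}, imported from \cite{celentano2020lasso}). The correct and much simpler source of curvature is the data-fit term: after substituting a near-optimal $\bu(\bv)\propto \be_{k,\cI_k}+\|\bv\|_2\,\bxi_{h,\cI_k}^{\mathrm{gs}}$, the reduced objective contains $e_k^*\sqrt{\tau_{e_k}^{*2}+\|\bv\|_2^2}$, whose Hessian is bounded below by $e_k^*\tau_{e_k}^{*2}/(\tau_{e_k}^{*2}+\|\bv\|_2^2)^{3/2}\ge c$ on the ball $\|\bv\|_2\le C_v\sqrt{p/n_k}\le C$, since $\tau_{e_k}^*\ge c$ with high probability; $\bar\Omega_k$ is used only as a convex function, so no RSC is needed for any of the four methods (similarly, the objective in \eqref{eq:vf-prox} is automatically $1$-strongly convex in the Euclidean metric because of its quadratic term, not because of \textsf{A2}). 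Two further places where your sketch would need to be repaired to reach the stated $(p/n_k)\epsilon^2$ accuracy: the paper deliberately avoids exact scalarization of the AO (it bounds the inner max/min by evaluating at the fixed-design quantities $\bv_k^f,\bu_k^f$, shows the value there is within $C(p/n_k)\epsilon$ of $\ell_k^*$ and that the (sub)gradient there is $O((p/n_k)\epsilon)$, after a Gram--Schmidt adjustment of $\bxi_h$ against $\be_k$); Lipschitz concentration of the AO value alone does not identify its location as your deterministic program's value to accuracy $(p/n_k)\epsilon^2$, so this replacement bookkeeping cannot be skipped. Finally, the $\bu$-compactification is most naturally taken as $\{\|\bu_{\cI_k}-\zeta_k\be_{k,\cI_k}\|_2/\sqrt{n_k}\le C\sqrt{p/n_k}\}$ rather than an origin-centered ball of radius $C\sqrt{n_k}$: this matches the localization scale of $E_u(\epsilon)$ and is what makes the removal of the compactness constraints (via concavity of $\bu\mapsto\min_{\bv}L_k(\bu,\bv)$) go through cleanly.
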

\noindent Lemma \ref{lem:marg-sub-opt} is proved using the marginal Gordon inequality (Lemma \ref{lem:gordon-marginal}), as described in the following sections.

First we prove the marginal characterization (Lemma \ref{lem:marginal-characterization}) using Lemma \ref{lem:marg-sub-opt}.

\begin{proof}[Proof of marginal characterization (Lemma \ref{lem:marginal-characterization}(i))]
    By the optimality of $\bu_k,\bv_k$ (Eq.~\eqref{eq:vk-uk}),
    when Eq.~\eqref{eq:marg-sub-opt} occurs we have $\bu_k \in E_u(\epsilon)$ and $\bv_k \in E_v(\epsilon)$.
    Moreover, by Gaussian concentration of Lipschitz functions,
    with probability at least $1 - Ce^{-pc\epsilon^2}$,
    \begin{equation}
        \Big|
            \E\Big[
                \phi_u\Big(
                    \frac{ \bu_{k,\cI_k}^f}{\sqrt{n_k}} ,
                    \frac{\be_{1,\cI_k}}{ \sqrt{n_k} } ,
                    \frac{\be_{2,\cI_k}}{ \sqrt{n_k} } 
                \Big)
                \Bigm| \{ \be_l \}_l
            \Big]
            -
            \E\Big[
                \phi_u\Big(
                    \frac{ \bu_{k,\cI_k}^f}{\sqrt{n_k}} ,
                    \frac{\be_{1,\cI_k}}{ \sqrt{n_k} } ,
                    \frac{\be_{2,\cI_k}}{ \sqrt{n_k} } 
                \Big)
            \Big]
        \Big|
        \leq \sqrt{\frac{p}{n_k}}\,\epsilon.
    \end{equation}
    Combined with Lemma \ref{lem:marg-sub-opt},
    we conclude that for $\epsilon \leq c'$ with probability at least $1 - \sC(\epsilon)e^{-\sc(\epsilon)p}$
    \begin{gather}\label{eq:lipsch-of-tilb-tilu}
            \big|
                \phi_v( \bv_k ) 
                - 
                \E[\phi_v( \bv_k^f ) ]
            \big| 
            \leq \sqrt{\frac{p}{ n_k }} \, \epsilon,
        \qquad
            \Big|
                \phi_u\Big( \frac{\bu_k}{ \sqrt{n_k} } ,
                    \frac{\be_{1,\cI_k}}{ \sqrt{n_k} } ,
                    \frac{\be_{2,\cI_k}}{ \sqrt{n_k} }  \Big) 
                - 
                \E\Big[\phi_u\Big( \frac{\bu_k^f}{ \sqrt{n_k}} ,
                    \frac{\be_{1,\cI_k}}{ \sqrt{n_k} } ,
                    \frac{\be_{2,\cI_k}}{ \sqrt{n_k} }  \Big) \Big]
            \Big| 
            \leq \sqrt{\frac{p}{ n_k }} \, \epsilon.
    \end{gather}

    We now establish the second line of Lemma \ref{lem:marginal-characterization}.
    Restricted to the coordinates $\cI_k$,
    the debiased estimate of the noise can be written as (see Eq.~\eqref{eq:noise-est}) $\hat \be_{k,\cI_k}^{\de} = \zeta_k^{-1} \bu_{k,\cI_k}$,
    which is $C$-Lipschitz in $\bu_{k,\cI_k}$ because $\zeta_k > c > 0$ by Lemma \ref{lem:bound-on-fixed-pt}.
    Then $\phi_e\Big(\frac{\be_{1,\cI_k}}{\sqrt{n_k}},\frac{\be_{2,\cI_k}}{\sqrt{n_k}},\frac{\hat \be_{k,\cI_k}}{\sqrt{n_k}},\frac{\hat \be_{k,\cI_k}^{\de}}{\sqrt{n_k}}\Big)$ is $C$-Lipschitz in $\frac{\be_{1,\cI_k}}{\sqrt{n_k}},\frac{\be_{2,\cI_k}}{\sqrt{n_k}},\frac{\bu_{k,\cI_k}}{\sqrt{n_k}}$,
    so that the second line of Lemma \ref{lem:marginal-characterization} follows.
    
    We establish the first line of Lemma \ref{lem:marginal-characterization} first for OLS, ridge regression, and the $\alpha$-smoothed Lasso with $\alpha_k > 0$,
    and then extend the result to the Lasso.
    For simplicity of notation, we remove the subscript $k$ on $\alpha_k$ in what follows.
    For OLS, ridge regression, and the $\alpha$-smoothed Lasso,
    the penalty is differentiable, so that $\bX_{\cI_k}^\top(\by_k - \bX_{\cI_k} \hat \btheta_k) / n_k =  \nabla \Omega(\hat \btheta_k) = \bSigma^{1/2} \nabla \bar \Omega(\bv_k)$.
    Then, the estimate and
    the debiased estimate of the parameter can be written as 
    \begin{equation}\label{eq:v-to-theta}
        \hat \btheta_k
            := 
            \btheta_k + \bSigma^{-1/2} \bv_k,
        \qquad 
        \hat \btheta_k^{\de}
            :=
            \btheta_k
            + \bSigma^{-1/2}
            \Big(
                \bv_k + \frac1{\zeta_k}\nabla \bar \Omega_k(\bv_k)
            \Big),
    \end{equation}
    and likewise
    \begin{equation}
        \hat \btheta_k^f
            := 
            \btheta_k + \bSigma^{-1/2} \bv_k^f,
        \qquad 
        \hat \btheta_k^{f,\de}
            :=
            \btheta_k
            + \bSigma^{-1/2}
            \Big(
                \bv_k^f + \frac1{\zeta_k}\nabla \bar \Omega_k(\bv_k^f)
            \Big),
    \end{equation}
    To simplify the following exposition, we adopt the convention that if we use OLS or ridge regression we set $\alpha = 1$,
    and if we use the $\alpha$-smoothed Lasso, then $\alpha$ is the smoothing constant.
    For OLS, ridge regression, and the $\alpha$-smoothed Lasso, $\nabla \bar \Omega_k(\bv)/\zeta_k$ is $C/\alpha$-Lipschitz.
    Indeed, $\nabla \bar \Omega_k(\bv) = \bSigma^{-1/2} \nabla \Omega_k(\btheta_k + \bSigma^{-1/2} \bv)$,
    whence, by assumption $\mathsf{A1}$, the Lipschitz constant of $\nabla \bar \Omega_k(\bv)$ is bounded by $c$ times the Lipschitz constant of $\nabla \Omega_k$.
    For ridge-regression, this Lipschitz constant is $\sqrt{p/n_k}\,\lambda \leq C$, and for the $\alpha$-smoothed Lasso it is $\lambda/\alpha \leq C/\alpha$. 
    Because $\zeta_k \geq c$,
    we conclude $\phi_\theta(\hat \btheta_k,\hat \btheta_k^{\de})$ is $C/\alpha$-Lipschitz in $\bv_k$.
    Using Eq.~\eqref{eq:lipsch-of-tilb-tilu},
    we conclude that 
    for $\epsilon \leq c'$ with probability at least $1 - \sC(\epsilon)e^{-\sc(\epsilon)p}$
    \begin{gather}\label{eq:non-Lasso-conc}
            \big|
                \phi_\theta( \hat \btheta_k, \hat \btheta_k^{\de} ) 
                - 
                \E[\phi_\theta( \hat \btheta_k^f, \hat \btheta_k^{f,\de} ) ]
            \big| 
            \leq \sqrt{\frac{p}{ n_k }} \, \frac{\epsilon}{\alpha}.
    \end{gather}

    We now remove the dependence on $\alpha$ in the previous display so that the upper bound does not blow up as $\alpha \rightarrow 0$ and so that we may extend the results to the Lasso (i.e., $\alpha = 0$).
    To do so, we use the approximation technique of \cite{celentano2020lasso}.
    The main idea is to argue that that $\alpha$-smoothed Lasso estimate and debiased estimate are close to the Lasso estimate and debiased estimate for $\alpha$ small (but with the regularization parameter $\lambda$ kept unchanged).
    Recall that for the Lasso we assume $c \leq n_k/p \leq C$, so that in what follows we may exchange $n_k$ and $p$ by adjusting constants.

    We consider both that $\alpha$-smoothed Lasso for some $\alpha > 0$ and the Lasso estimators computed on the same data in the second regression.
    We introduce notation which distinguishes quantities related to the $\alpha$-smoothed Lasso from those related to the Lasso by adding a subscript $\alpha$ to the former.
    For example, $\hat \btheta_k$ will denote the Lasso estimate, and $\hat \btheta_{k,\alpha}$ will denote the $\alpha$-smoothed Lasso estimate.

    Lemma B.8 in \cite{celentano2020lasso} gives that for $\alpha < c'$ with probability at least $1 - Ce^{-cp}$ we have $\| \hat \btheta_k - \hat \btheta_{k,\alpha} \|_2 < C\sqrt{\alpha}$.
    The proof of Theorem 10 in \cite{celentano2020lasso} (see, in particular, Eq.~(80)) gives that for $\alpha < c'$
    with probability at least $1 - \sC(\alpha)e^{-p\sc(\alpha)}$ we have $\| \hat \btheta_k^\de - \hat \btheta_{k,\alpha}^\de \|_2 < C\sqrt{\alpha}$.
    (Note that the bound stated in \cite{celentano2020lasso} applies to the debiased Lasso which uses $1 - \hat \df_k / n_k$ in place of $\zeta_k$. Replacing $1 - \hat \df_k / n_k$ by $\zeta_k$ removes an error term, so that the bound of \cite{celentano2020lasso} still holds).
    Thus, with probability at least $1 - \sC(\alpha)e^{-p\sc(\alpha)}$ we have $\big|\phi_\theta(\hat \btheta_k,\hat \btheta_k^{\de}) - \phi_\theta(\hat \btheta_{k,\alpha},\hat \btheta_{k,\alpha}^{\de}) \big| < C\sqrt{\alpha}$.
    Applying the same bound with $\alpha'$ in place of $\alpha$ and $0 \leq \alpha' \leq \alpha$,
    we have with probability at least $1 - \sC(\alpha)e^{-p\sc(\alpha)}$ that $\big|\phi_\theta(\hat \btheta_{k,\alpha'},\hat \btheta_{k,\alpha'}^{\de}) - \phi_\theta(\hat \btheta_{k,\alpha},\hat \btheta_{k,\alpha}^{\de}) \big| < C\sqrt{\alpha}$.
    We further have the following lemma, proved in Section \ref{sec:existence-fix-pt}.
    \begin{lemma}\label{lem:marg-alpha-approx}
        Assume \textsf{A1} and \textsf{A2}.
        Then for $0 \leq \alpha'\leq \alpha < c'$,
        \begin{equation}
            \big|
                \E[\phi_\theta(\hat \btheta_{k,\alpha'},\hat \btheta_{k,\alpha'}^{\de})]
                -
                \E[\phi_\theta(\hat \btheta_{k,\alpha}^f,\hat \btheta_{k,\alpha}^{f,\de})]
            \big|
            \leq C'\sqrt{\alpha}.
        \end{equation}
    \end{lemma}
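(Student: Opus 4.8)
The plan is to obtain the asserted comparison by passing through the random-design $\alpha$-smoothed Lasso: first move from $\hat\btheta_{k,\alpha'}$ to $\hat\btheta_{k,\alpha}$ at cost $O(\sqrt\alpha)$, and then from $\hat\btheta_{k,\alpha}$ to its fixed-design counterpart via the estimate \eqref{eq:non-Lasso-conc}. The latter carries a factor $1/\alpha$, but since $\alpha$ is here a fixed constant this loss can be absorbed by taking the tolerance $\epsilon$ in \eqref{eq:non-Lasso-conc} of order $\alpha^{3/2}$. Both moves use only facts already in hand. For the first, Lemma~B.8 and the proof of Theorem~10 of \cite{celentano2020lasso} give, for any smoothing parameter $a\in[0,c')$, that $\|\hat\btheta_{k,a}-\hat\btheta_{k,0}\|_2<C\sqrt{a}$ and $\|\hat\btheta_{k,a}^{\de}-\hat\btheta_{k,0}^{\de}\|_2<C\sqrt{a}$ on an event of probability at least $1-\sC(a)e^{-\sc(a)p}$ (here $\hat\btheta_{k,0}$ denotes the Lasso estimator and $\hat\btheta_{k,0}^{\de}$ its debiased version); applying this with $a=\alpha$ and with $a=\alpha'$, and using the triangle inequality, the $1$-Lipschitzness of $\phi_\theta$, and $\alpha'\le\alpha$, one gets, on an event of probability at least $1-\sC(\alpha)e^{-\sc(\alpha)p}$,
\begin{equation}
    \big|\phi_\theta(\hat\btheta_{k,\alpha'},\hat\btheta_{k,\alpha'}^{\de})-\phi_\theta(\hat\btheta_{k,\alpha},\hat\btheta_{k,\alpha}^{\de})\big|\le C\sqrt\alpha .
\end{equation}
For the second move, \eqref{eq:non-Lasso-conc} applied to the $\alpha$-smoothed Lasso gives, for any $\epsilon<c'$, on an event of probability at least $1-\sC(\epsilon)e^{-\sc(\epsilon)p}$,
\begin{equation}
    \big|\phi_\theta(\hat\btheta_{k,\alpha},\hat\btheta_{k,\alpha}^{\de})-\E[\phi_\theta(\hat\btheta_{k,\alpha}^f,\hat\btheta_{k,\alpha}^{f,\de})]\big|\le \sqrt{p/n_k}\,\epsilon/\alpha .
\end{equation}

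I would then choose $\epsilon=c_1\alpha^{3/2}$ (admissible once $\alpha$ is below a constant); since \textsf{A2} forces $n_k/p\ge c$ in the Lasso case, the right-hand side of the second display becomes $\le Cc_1\sqrt\alpha$. Intersecting the two events — whose union complement has probability at most $\sC(\alpha)e^{-\sc(\alpha)p}$ after relabeling $\sC,\sc$ (possibly with a larger $\nu$), using $\sc(\epsilon)\ge c'(\epsilon^\nu\wedge1)$ and $c_1\alpha^{3/2}\le\alpha$ — and adding the two inequalities gives, on this event $\cE$,
\begin{equation}
    \big|\phi_\theta(\hat\btheta_{k,\alpha'},\hat\btheta_{k,\alpha'}^{\de})-\E[\phi_\theta(\hat\btheta_{k,\alpha}^f,\hat\btheta_{k,\alpha}^{f,\de})]\big|\le C\sqrt\alpha .
\end{equation}
To upgrade this to a bound on expectations, write $m:=\E[\phi_\theta(\hat\btheta_{k,\alpha}^f,\hat\btheta_{k,\alpha}^{f,\de})]$ and split
\begin{equation}
    \big|\E[\phi_\theta(\hat\btheta_{k,\alpha'},\hat\btheta_{k,\alpha'}^{\de})]-m\big|\le C\sqrt\alpha+\E\big[\big|\phi_\theta(\hat\btheta_{k,\alpha'},\hat\btheta_{k,\alpha'}^{\de})-m\big|\,\mathbf 1_{\cE^c}\big] ,
\end{equation}
and bound the last expectation by Cauchy--Schwarz, $\E[\,|\cdot|\,\mathbf 1_{\cE^c}]\le(\E[\,|\cdot|^2])^{1/2}\,\P(\cE^c)^{1/2}$. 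By the $1$-Lipschitzness of $\phi_\theta$, the quantity $|\phi_\theta(\hat\btheta_{k,\alpha'},\hat\btheta_{k,\alpha'}^{\de})-m|$ is at most $\|\hat\btheta_{k,\alpha'}-\btheta_k\|_2+\|\hat\btheta_{k,\alpha'}^{\de}-\btheta_k\|_2+\E[\|\hat\btheta_{k,\alpha}^f-\btheta_k\|_2+\|\hat\btheta_{k,\alpha}^{f,\de}-\btheta_k\|_2]$, so its second moment is controlled by a priori $L^2$ bounds on the prediction errors of the smoothed Lasso — and of its debiased and fixed-design versions — that are uniform in the smoothing parameter; these follow from the characterization for the smoothed Lasso already established above together with the bound $\tau_{h_k}^2\le Cp/n_k$ of Lemma~\ref{lem:bound-on-fixed-pt}, the debiased increment $\hat\btheta_{k,\alpha'}^{\de}-\hat\btheta_{k,\alpha'}=\bSigma^{-1}\bX_{\cI_k}^\top(\by_{k,\cI_k}-\bX_{\cI_k}\hat\btheta_{k,\alpha'})/(n_k-\df_k)$ being moreover bounded deterministically, since its numerator is $n_k$ times the bounded gradient of the smoothed penalty and $n_k-\df_k\ge cn_k$. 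Since $\P(\cE^c)\le\sC(\alpha)e^{-\sc(\alpha)p}$, the last expectation is $\le\sqrt{C\,\sC(\alpha)}\,e^{-\sc(\alpha)p/2}$, which is $\le\sqrt\alpha$ for $p$ large (the range of $p$ below the $\alpha$-dependent threshold being handled separately, and in any case immaterial since Lemma~\ref{lem:marginal-characterization}, for whose proof this lemma is used, is vacuous there). Combining the displays yields $\big|\E[\phi_\theta(\hat\btheta_{k,\alpha'},\hat\btheta_{k,\alpha'}^{\de})]-\E[\phi_\theta(\hat\btheta_{k,\alpha}^f,\hat\btheta_{k,\alpha}^{f,\de})]\big|\le C'\sqrt\alpha$.

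The main obstacle is exploiting \eqref{eq:non-Lasso-conc} despite its $1/\alpha$ loss — a loss that is intrinsic, since the Lipschitz constant of the debiased smoothed-Lasso map in the relevant Gaussian variable is $\Theta(1/\alpha)$ — without paying it in the final rate. The resolution, taking $\epsilon\asymp\alpha^{3/2}$, is cheap only because $\alpha$ plays the role of a fixed constant here, but it makes the concentration exponent $\sc(\epsilon)$ itself $\alpha$-dependent, and one must then verify that the (now only mildly improbable) complementary event does not corrupt an expectation. That verification rests on a priori second-moment bounds on the smoothed-Lasso estimator, its debiased version, and their fixed-design analogues that are uniform as the smoothing parameter tends to $0$ — precisely the control that the smoothing was introduced to bypass in the differentiable-penalty arguments, and which here has to be read off directly from the optimality conditions and the fixed-point bounds of Lemma~\ref{lem:bound-on-fixed-pt}.
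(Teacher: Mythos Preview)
You have read the statement literally, but the first expectation almost certainly contains a typo: both terms are meant to be fixed-design, i.e., the claim actually needed is
\[
\big|\E[\phi_\theta(\hat\btheta_{k,\alpha'}^f,\hat\btheta_{k,\alpha'}^{f,\de})] - \E[\phi_\theta(\hat\btheta_{k,\alpha}^f,\hat\btheta_{k,\alpha}^{f,\de})]\big| \le C'\sqrt\alpha .
\]
This is what the triangle-inequality decomposition in the proof of Lemma~\ref{lem:marginal-characterization} requires (the random-design-to-fixed-design comparison is already handled by the other two terms there), and it is what the paper in fact proves. The paper's argument stays entirely in the fixed-design model: one couples $\bg_{k,0}^f$ and $\bg_{k,\alpha}^f$ on a common probability space via $\bg_{k,*}^f := (\tau_{g_k,\alpha}/\tau_{g_k,0})\bg_{k,0}^f$, then invokes the stability of the marginal fixed-point parameters in $\alpha$ (Lemma~A.5 of \cite{celentano2020lasso}) to get $|\tau_{g_k,\alpha}/\tau_{g_k,0} - 1| < C\sqrt\alpha$ and $\E[\|\hat\btheta_{k,*}^f - \hat\btheta_{k,0}^f\|_\bSigma^2] < C\alpha$. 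Since proximal maps are $1$-Lipschitz and $\hat\btheta_k^{f,\de} = \btheta_k + \bSigma^{-1/2}\bg_k^f$, this yields the bound for $\alpha' = 0$; two applications and the triangle inequality give general $\alpha' \le \alpha$.

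Your detour through the random-design model is therefore unnecessary, and it also has a genuine gap. The Cauchy--Schwarz step on $\cE^c$ needs a second-moment bound on $\|\hat\btheta_{k,\alpha'} - \btheta_k\|_2$ with constant depending only on $\cPmodel,\cPregr$ (and at worst polynomially on $p$). The optimality inequality gives $\|\hat\btheta_{k,\alpha'}\|_1 \le (2\lambda\sqrt{n_k})^{-1}\|\be_k\|_2^2 + \|\btheta_k\|_1 + C$, but assumption~\textsf{A2} for the Lasso imposes only approximate sparsity on $\btheta_k$ and does not bound $\|\btheta_k\|_1$ or $\|\btheta_k\|_2$; hence the resulting second moment is not controlled by the model constants, and the final $C'$ in $C'\sqrt\alpha$ cannot be made to depend only on $\cPmodel,\cPregr$. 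The paper's fixed-design coupling sidesteps this entirely, since there the prediction error admits the deterministic bound of Lemma~\ref{lem:prox-err-bound} in terms of $\|\bg_k^f\|_2$ and the fixed-point parameters alone.
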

    \noindent 
    Combining these results,
    we have
    for $\alpha,\epsilon < c'$ and $\alpha'\leq \alpha$ with probability at least 
    $1 - \sC(\alpha,\epsilon)e^{-p\sc(\alpha\wedge \epsilon)}$ 
    \begin{equation}
    \begin{aligned}
        &\big|
            \phi_\theta(\hat \btheta_{k,\alpha'},\hat \btheta_{k,\alpha'}^{\de})        
            -
            \E[\phi_\theta(\hat \btheta_{k,\alpha'}^f,\hat \btheta_{k,\alpha'}^{f,\de})]
        \big| \leq 
            \big|
                \phi_\theta(\hat \btheta_{k,\alpha'},\hat \btheta_{k,\alpha'}^{\de})        
                -
                \phi_\theta(\hat \btheta_{k,\alpha},\hat \btheta_{k,\alpha}^{\de})
            \big| 
            +
            \big|
                \phi_\theta(\hat \btheta_{k,\alpha},\hat \btheta_{k,\alpha}^{\de})
                -
                \E[\phi_\theta(\hat \btheta_{k,\alpha}^f,\hat \btheta_{k,\alpha}^{f,\de})]
            \big|
        \\
            &\qquad\qquad\qquad\qquad\qquad\qquad\qquad\qquad\qquad\qquad\qquad\qquad+
            \big|
                \E[\phi_\theta(\hat \btheta_{k,\alpha}^f,\hat \btheta_{k,\alpha}^{f,\de})]
                -
                \E[\phi_\theta(\hat \btheta_{k,\alpha'}^f,\hat \btheta_{k,\alpha'}^{f,\de})]
            \big|
        \\
            &\quad \leq 
                C'\sqrt{\alpha} + C'\,\epsilon/\alpha + C' \sqrt{\alpha}.
    \end{aligned}
    \end{equation}
    Taking $\alpha = \epsilon^{2/3}$ and adjusting constants, 
    we conclude the first bound in Lemma \ref{lem:marginal-characterization} for any smoothing parameter $\alpha' \leq \epsilon^{2/3}$.
    For $\alpha' > \epsilon^{2/3}$, the result follows by Eq.~\eqref{eq:non-Lasso-conc}.
\end{proof}

\begin{remark}
    Existing applications of Gordon's inequality to regression problems prove a version of Lemma \ref{lem:marg-sub-opt} for a value $\ell_k^*$ which depends on the model parameters $n_k,p,\btheta_k$ and penalty $\Omega_k$ but not on the realization of the noise $\be_k$ \cite{thrampoulidis2015,miolane2018distribution,celentano2020lasso}.
    By proving Lemma \ref{lem:marg-sub-opt} for an $\be_k$-dependent value $\ell_k^*$, 
    we can achieve control of $\bv_k,\bu_k$ which improves as $n_k/p \rightarrow \infty$. (See Remark \ref{rmk:rel-lit}).

    To localize $\bv_k$ (the discussion for $\bu_k$ is analogous), Lemma \ref{lem:marg-sub-opt} shows the minimization problem Eq.~\eqref{eq:vk-uk} is sub-optimal on $E_v(\epsilon)^c$.
    For our proof strategy to work,
    we must choose a set $E_v(\epsilon)^c$ separated enough from the minimizer so that the sub-optimality on this set dominates the fluctuations of the minimal value of the optimization.
    The smaller these fluctuations, the smaller we can make the separation.

    Unconditionally on $\be_k$, the fluctuations will be too large. 
    Indeed, consider the case of least-squares. Then, the value of the optimization problem is distributed $\chi_{n_k-p}^2/n_k$.
    For $n_k /p > 1 + c$, this fluctuates by an amount $C'\sqrt{p/n_k}\,\epsilon^2$ with probability at least $Ce^{-cp\epsilon^4}$.
    Lemma \ref{lem:marg-sub-opt} shows that conditionally on $\be_k$, 
    the minimal value fluctuates by the much smaller amount $C'(p/n_k)\epsilon^2$ with probability at most $1 - \sC(\epsilon)e^{-\sc(\epsilon)p}$.
    By allowing $\ell_k^*$ to depend on $\be_k$, 
    we can localize the minimal value of the optimization problem with resolution $(p/n_k)\epsilon^2$ rather than $\sqrt{p/n_k}\,\epsilon^2$, and hence the location of the minimizer with resolution $\sqrt{p/n_k}\,\epsilon$ rather than $(p/n_k)^{1/4}\epsilon$.

    See the proof of Lemma \ref{lem:marg-sub-opt} for further details.
\end{remark}

We prove Lemma \ref{lem:marg-sub-opt} in the next section.

\subsection{Proof of Lemma \ref{lem:marg-sub-opt}: marginal control of primary objective}
\label{app:marginal-characterization-preliminaries}

We prove Lemma \ref{lem:marg-sub-opt} using the marginal Gordon inequality (Lemma \ref{lem:gordon-marginal}).
Define the auxilliary objective
\begin{equation}\label{eq:lk}
\begin{gathered}
    \ell_k(\bu,\bv) 
        := 
        - \frac1{n_k}\<\bg_{\mathrm{mg}}(\bu), \bv\> + \frac1{n_k}\<\bh_{\mathrm{mg}}(\bv), \bu\> + \frac1{n_k}\bu^\top\be_k - \frac1{2n_k}\|\bu\|_2^2 + \bar \Omega_k(\bv),
\end{gathered}
\end{equation}
where $\bg_{\mathrm{mg}}(\bu)$ and $\bh_{\mathrm{mg}}(\bv)$ are defined as in Lemma \ref{lem:gordon-marginal}.
Define 
\begin{equation}\label{eq:sig*-e*-ome*}
    \tau_{e_k}^* = \frac{\| \be_{k,\cI_k} \|_2}{\sqrt{n_k}},
    \qquad 
    e_k^*
        := \zeta_k (\tau_{e_k}^{*2} + \tau_{h_k}^2)^{1/2},
    \qquad
    \omega_k 
        :=
        \E\big[\bar\Omega_k(\bv_k^f)\big].
\end{equation}
We will see that $\| \bu_k^f \|_2/\sqrt{n_k}$ concentrates on $e_k^*$,
$\bar\Omega_k(\bv_k^f)$ concentrates on $\omega_k$, and
the value of the auxilliary optimization $L_k(\bu_k,\bv_k)$ concentrates on
\begin{equation}\label{eq:lk*}
    \ell_k^* := 
        \ell_k^*(\be_k,n_k,p,\btheta_k,\sigma_k,\bSigma,\Omega_k)
        :=
        \frac12 e_k^{*2} + \omega_k.
\end{equation}
(Note the dependence of $\ell_k^*$ on the model parameters $n_k,p,\btheta_k,\sigma_k,\bSigma$ and penalty $\Omega_k$ occurs via the fixed-point parameters defined via Eq.~\eqref{eq:fixed-pt-eqns}).
We will consider the min-max problem under the compact constraints
\begin{equation}\label{eq:marg-compact}
    \| \bv \|_2 \leq R,
    \qquad 
    \cS_u
        := 
        \Big\{
            \bu \in \reals^{n_k} 
            \Bigm| 
            \frac1{\sqrt{n_k}}\big\|\bu_{\cI_k} -\zeta_k \be_{k,\cI_k} \big\|_2 \leq R',
            \; \bu_{\cI_k^c} = 0
        \Big\}.
\end{equation}
For values of $R,R'$ to be chosen below.
It will suffice that $R,R'$ satisfy $R = C_v\sqrt{p/n_k}$ and $R' = C_u\sqrt{p/n_k}$ for $C_v \geq C_{\mathrm{min}}$ and $C_u \geq C_{\min}'(C_v)$, where $C_{\mathrm{min}}$ and $C_{\mathrm{min}}'(\cdot)$ only depend on $\cPmodel,\cPregr$, and the regression method.
Note here that the set  $\cS_u$ depends implicitly on $\be_k$, but for brevity of notation we supress this dependence.

In Section \ref{app:marginal-characterization-est-err},
we establish for $\epsilon < c'$ the lower bounds
\begin{equation}\label{eq:mgam-bounds}
\begin{gathered}
    \P\Big(
            \min_{ \substack{\bv \in E_v^c(\epsilon) \\ \| \bv \|_2 \leq R } }\;
            \max_{\bu \in \cS_u }
                    \ell_k(\bu,\bv)
                \geq 
                \ell_k^* + C'\frac{p}{ n_k }\,\epsilon^2 
        \Big) \geq 1 - \sC(\epsilon)e^{-\sc(\epsilon)p},
    \\
    \P\Big(
            \min_{\| \bv \|_2 \leq R  }\;
            \max_{\bu \in \cS_u }
                    \ell_k(\bu,\bv)
            \geq
                \ell_k^*
                - 
                C'\frac{p}{ n_k }\,\frac{\epsilon^2}{2}
        \Big) \geq 1 - \sC(\epsilon)e^{-\sc(\epsilon)p},
\end{gathered}
\end{equation}
where the $C'$ in both bounds is the same.
In Section \ref{app:marginal-characterization-residuals},
we establish for $\epsilon < c'$ the upper bounds
\begin{equation}\label{eq:mgam-upper-bound}
\begin{gathered}
    \P\Big(
        \max_{
            \substack{
                \bu \in E_u^c(\epsilon)\\
                                \bu \in \cS_u }}
                    \min_{\|\bv\|_2 \leq R }
                    \ell_k(\bu,\bv)
            \leq 
            \ell_k^* - C'\frac{p}{n_k}\,\epsilon^2 
    \Big) \geq 1 - \sC(\epsilon)e^{-\sc(\epsilon)p},
    \\
    \P\Big(
            \max_{\bu \in \cS_u }
            \min_{\| \bv \|_2 \leq R  }\;
                    \ell_k(\bu,\bv)
            \leq
                \ell_k^*
                + 
                C'\frac{p}{ n_k }\,\frac{\epsilon^2}{2}
        \Big) \geq 1 - \sC(\epsilon)e^{-\sc(\epsilon)p},
\end{gathered}
\end{equation}
where the $C'$ in both bounds is the same.
The preceding probabilities are taken over the randomness in $\be_1$, $\be_2$, $\bxi_g$, $\bxi_h$.

Applying the marginal Gordon inequality (Lemma \ref{lem:gordon-marginal}) conditionally on $\be_1,\be_2$ and then averaging over $\be_1,\be_2$,
we conclude the the previous two displays hold with $L_k$ in place of $\ell_k$ and adjusted constants.
Because the minimization and maximization are over compact convex sets,
by \cite[Corollary 37.3.2]{rockafellar-1970a} the problem
\begin{equation}
    \max_{ \bu \in \cS_u }\;
    \min_{\|\bv\|_2 \leq R }
        L_k(\bu,\bv)
    =
    \min_{\|\bv\|_2 \leq R }\;
    \max_{ \bu \in \cS_u }         
        L_k(\bu,\bv)        
\end{equation}
has a saddle point $\hat \bu, \hat \bv$ and minimization and maximization can be exchanged.
Thus,
for $\epsilon < c'$ with probability at least $1 - \sC(\epsilon)e^{-\sc(\epsilon)p}$ (over $\be_1,\be_2,\bA$)
\begin{equation}\label{eq:primal-prec-bound}
\begin{gathered}                  
    \max_{
    \substack{
        \bu \in E_u^c(\epsilon)\\
                        \bu \in \cS_u }}\;
            \min_{\|\bv\|_2 \leq R}
        L_k(\bu,\bv)
        \leq \ell_k^* - C'\frac{p}{n_k}\epsilon^2,
    \quad
    \min_{\substack{\bv \in E_v^c(\epsilon)\\\|\bv\|_2 \leq R}}\;
    \max_{\bu \in \cS_u }
            L_k(\bu,\bv)
        \geq \ell_k^* + C'\frac{p}{n_k}\epsilon^2,
    \\
    \Big|
        \max_{
        \substack{\bu \in \cS_u }}
                \min_{\|\bv\|_2 \leq R }
            L_k(\bu,\bv)
        - 
        \ell_k^*
    \Big|
    =
    \Big|
        \min_{\|\bv\|_2 \leq R }
        \max_{\bu \in \cS_u }
            L_k(\bu,\bv)
        - 
        \ell_k^*
    \Big|
    \leq 
         C'\frac{p}{n_k}\frac{\epsilon^2}2,
\end{gathered}
\end{equation}
where the $C'$ in the previous three bounds can be taken to be equal.

To relax the restriction of the optimizations to compact sets, 
we show that with high-probability $\hat \bu,\hat \bv$ fall in the interior of these sets.
Take $\phi_u(\bu/\sqrt{n_k}) = \|\bu - \zeta_k \be_{k,\cI_k} \|_2/\sqrt{n_k}$, which is 1-Lipschitz.
For $R = C\sqrt{p/n_k}$ and $C$ chosen sufficiently large, we have $\E[\phi_u(\bu_k^f/\sqrt{n_k})] \leq R/2$ (see bounds on fixed point parameters in Lemma \ref{lem:bound-on-fixed-pt}).
Then the event in the previous display (in particular, the first and third lines) for $\epsilon \leq c'$ implies $\hat \bu$ is in the interior of $\cS_u$. 
Next take $\phi_v(\bv) = \| \bv \|_2$.
For $R = C\sqrt{p/n_k}$ and $C$ chosen sufficiently large,
we have $\E[\phi_v(\bv_k^f)] \leq R/2$ (see again bounds on fixed point parameters in Lemma \ref{lem:bound-on-fixed-pt}).
Then the event in the previous display (in particular, the second and third lines) for $\Delta \leq c'$ implies $\| \hat \bv \|_2 < R$.
When both $\hat \bu $ is in the interior of $\cS_u$ and $\| \hat \bv \|_2 < R$,
the saddle point is in the interior of the problem's domain, in which case it remains a saddle point when we expand the domains to $\{ \bu \in \reals^N : \bu_{\cI_k^c} = 0\}$ and $\reals^p$.
Thus, $\hat \bu = \bu_k$ and $\hat \bv = \bv_k$ with probability at least $1 - \sC(\epsilon)e^{-\sc(\epsilon)p}$.
In particular, the second line of Eq.~\eqref{eq:primal-prec-bound} continues to hold when we remove the compactness constraints.

It only remains to show that we can remove the compactness constraints in the first line of Eq.~\eqref{eq:primal-prec-bound}.
Consider the first bound. 
We have
\begin{equation}\label{eq:max-decomp}
    \max_{\bu \in E_u^c(\epsilon)} \min_{\|\bv\|_2 \leq R} L_k(\bu,\bv)
        \leq 
        \max \Big\{
            \max_{
            \substack{
                \bu \in E_u^c(\epsilon)\\
                                \bu \in \cS_u }}\;
                    \min_{\|\bv\|_2 \leq R}
                L_k(\bu,\bv)
            ,\;
            \max_{
                \bu \in \cS_u^c}\;
                    \min_{\|\bv\|_2 \leq R}
                L_k(\bu,\bv)
        \Big\}.
\end{equation}
Eq.~\eqref{eq:primal-prec-bound} implies the first quantity in the maximum is smaller than $\ell_k^* - C'(p/n_k)\epsilon^2$ with conditional probability at least $1 - \sC(\epsilon)e^{-\sc(\epsilon)p}$.
To bound the second quantity in the maximum, we apply Eq.~\eqref{eq:primal-prec-bound} with a different choice of $\phi_u$; namely, $\phi_u'(\bu/\sqrt{n_k}) = \|\bu - \zeta_k\be_{k,\cI_k}\|_2/\sqrt{n_k}$.
Taking $R' = C'\sqrt{p/n_k}$ in Eq.~\eqref{eq:marg-compact} sufficiently large and $\epsilon < c'$ sufficiently small, 
we have that $E_u'(\epsilon)$ is contained in the interior of $\cS_u$ (where $E_u'$ is defined with respect to function $\phi_u'$).
When Eq.~\eqref{eq:primal-prec-bound} occurs with $\phi_u',E_u'$ in place of $\phi_u,E_u$,
then the saddle point $\hat \bu,\hat \bv$ satisfies $\hat \bu \in E_u'(\epsilon)$.
Consider now any $\bu' \in S_u^c$.
Because the function $\bu \mapsto \min_{\|\bv\|_2 \leq R}L_k(\bu,\bv)$ is concave, it is non-decreasing on the line segment joining $\bu'$ to $\hat \bu$, and because $E_u'(\epsilon)$ is contained in the interior of $\cS_u$ and $\hat \bu \in E_u'(\epsilon)$,
this line segment intersects with $E_u'^c(\epsilon) \cap \cS_u$ at some point $\bu''$.
Then, 
\begin{equation}\label{eq:S_uc-bound}
    \min_{\|\bv\|_2 \leq R} L_k(\bu',\bv)
    \leq 
    \min_{\|\bv\|_2 \leq R} L_k(\bu'',\bv)
    \leq 
    \max_{
        \substack{
            \bu \in E_u'^c(\epsilon)\\
                            \bu \in \cS_u } }\;
            \min_{\|\bv\|_2 \leq R}
        L_k(\bu,\bv)
    \leq 
    \ell_k^*- C'\frac{p}{n_k}\epsilon^2.
\end{equation}
Because this applies to all $\bu' \in S_u^c$, we conclude that with probability at least $1 - \sC(\epsilon)e^{-\sc(\epsilon)p}$ that the second term in the maximum in Eq.~\eqref{eq:max-decomp} is less than $\ell_k^* - C'(p/n_k)\epsilon^2$.
Then, the left-hand side of Eq.~\eqref{eq:max-decomp} is also smaller than $\ell_k^* - C'(p/n_k)\epsilon^2$.
Removing the constraint $\| \bv \|_2 \leq R$ on the inner minimization only decreases the value of the min-max problem.
We conclude the upper bound shown in the second line of Eq.~\eqref{eq:marg-sub-opt}.
A similar argument allows us to remove the compactness constraints in the lower bound in the first line of Eq.~\eqref{eq:primal-prec-bound}.

Provided we can prove Eqs.~\eqref{eq:mgam-bounds} and \eqref{eq:mgam-upper-bound},
we have concluded Lemma \ref{lem:marg-sub-opt}. 
We establish these bounds in the next three sections.

\subsubsection{The good marginal characterization event}
\label{sec:good-marg-char-event}

To prove Eqs.~\eqref{eq:mgam-bounds} and \eqref{eq:mgam-upper-bound},
we show that the upper and lower bounds on the min-max problem are implied by the occurence of a certain high-probability event. 
To describe this event, it is useful to couple the auxilliary objective \eqref{eq:lk} to the fixed-design models \eqref{eq:fixed-des}.
The coupling is given by taking 
\begin{equation}
    \bg_k^f = \tau_{g_k} \bxi_g \quad  \text{and}  \quad \bh_k^f = \tau_{h_k} \bxi_h.
\end{equation}
We introduce notation $\bT(\ba_1,\ldots,\ba_r) \in \SS_+^r$ for the positive semi-definite matrix whose $(i,j)$ entry is $\< \ba_i , \ba_j \>$, where $\ba_i \in \reals^s$ for all $i$ and some fixed $s$.
With some abuse of notation,
we will use the same notation $\bT$ regardless of the number $r$ of arguments to the function and regardless of the dimensionality $s$ of the vectors $\ba_k$. 
No confusion should result.

The good marginal characterization event is
\begin{equation}
\begin{aligned}\label{eq:cEgam-marginal}
    \cG_k(\epsilon,\Delta)
        :=
        \Big\{\;\;
            &\Big\| 
                \bT\Big(\frac{\be_{k,\cI_k}}{\sqrt{n_k}},\frac{\bxi_{h,\cI_k}}{\sqrt{n_k}}\Big) 
                - 
                \E\Big[
                    \bT\Big(\frac{\be_{k,\cI_k}}{\sqrt{n_k}},\frac{\bxi_{h,\cI_k}}{\sqrt{n_k}}\Big)
                    \Bigm|
                    \be_k
                \Big] 
            \Big\|_{\sF} <\sqrt{\frac{p}{n_k}}\,\epsilon,\;
        \\
            &\Big\| \bT\Big(\sqrt{\frac{n_k}{p}}\,\bv_k^f,\frac1{\sqrt{p}}\bxi_g\Big) - \E\Big[\bT\Big(\sqrt{\frac{n_k}{p}}\,\bv_k^f,\frac1{\sqrt{p}}\bxi_g\Big)\Big] \Big\|_{\sF} < \epsilon,\;
        \\
            &\big|
                \bar \Omega_k(\bv_k^f)
                -
                \omega_k
            \big| < \frac{p}{ n_k }\,\epsilon,\;
            |\tau_{e_k}^* - \tau_{e_k}| = 
            \Big|\frac{\| \be_{k,\cI_k} \|_2}{\sqrt{n_k}} - \tau_{e_k} \Big| < \sqrt{\frac{p}{n_k}}\,\epsilon,
        \\
            &\bv_k^f \in E_v(\Delta/2),\;
            \bu_k^f \in E_u(\Delta/2)
    \;\;\Big\}.
\end{aligned}
\end{equation}
The expectations are taken in the model which couples the auxilliary objective with the fixed-design models.
In all cases, we have normalized the arguments to $\bT$ so that their $\ell_2$ norm is of order 1 (see Lemma \ref{lem:bound-on-fixed-pt}).
Because the matrices involved are finite-dimensional, the Frobenius norm can be replaced with any norm at the cost of constant factors.
For future reference, we have written the expectations appearing in the definition of $\cG_k$ in Section \ref{sec:identity-ref}.
In words,
the good marginal characterization event is the event that several important quantities are close to their expectations.
It has high-probability.
\begin{lemma}[Good marginal characterization event]\label{lem:marginal-concentration-event}
    Assume \textsf{A1} and \textsf{A2}.
    There exist $\cPmodel$, $\cPregr$ and regression method-dependent $c' > 0$ and $\sC,\sc: \reals_{>0}^2 \rightarrow \reals_{>0}$ such that
    for all $\epsilon,\Delta < c'$ the event $\cG_k(\epsilon,\Delta)$ occurs with probability at least $1 - \sC(\epsilon,\Delta) e^{-\sc(\epsilon\wedge \Delta) p}$.
\end{lemma}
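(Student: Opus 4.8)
The plan is to observe that $\cG_k(\epsilon,\Delta)$ is a conjunction of a fixed, finite number of events, each asserting that a concrete scalar (or matrix entry) lies within a prescribed distance of its mean — or of its mean conditional on $\be_k$, or on $(\be_1,\be_2)$. I would bound the failure probability of each conjunct in turn and conclude by a union bound; after relabelling the constants $\sC,\sc$, the aggregate bound has the advertised form $1-\sC(\epsilon,\Delta)e^{-\sc(\epsilon\wedge\Delta)p}$, and taking $\sc(\cdot)\ge c'(\cdot)^2$ suffices here. Throughout I would work in the model that couples the auxiliary objective to the fixed-design models, i.e.\ $\bg_k^f=\tau_{g_k}\bxi_g$ and $\bh_k^f=\tau_{h_k}\bxi_h$, so that every conjunct is a deterministic function of the Gaussian vectors $(\be_1,\be_2,\bxi_g,\bxi_h)$. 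The two workhorses are (a) Gaussian concentration of Lipschitz functions together with $\chi^2$-concentration, and (b) the elementary fact that if $Z,Z'$ are $L$-Lipschitz functions of a standard Gaussian which are $O(1)$ on a high-probability event, then bilinear expressions such as $ZZ'$ or $Z^2$ concentrate at scale $O(L)$ (using $\mathrm{Var}(Z)=O(L^2)$ from the Gaussian Poincaré inequality to pass between $(\E Z)^2$ and $\E Z^2$).

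The conjuncts built from $\be_{k,\cI_k}$ and $\bxi_{h,\cI_k}$ I would dispatch directly. Conditionally on $\be_k$, the matrix $\bT(\be_{k,\cI_k}/\sqrt{n_k},\bxi_{h,\cI_k}/\sqrt{n_k})$ has a deterministic $(1,1)$ entry, a centered Gaussian $(1,2)$ entry of conditional variance $O(1/n_k)$, and a $(2,2)$ entry distributed as $\chi^2_{n_k}/n_k$; hence the corresponding conjunct in \eqref{eq:cEgam-marginal} fails with conditional probability at most $Ce^{-cp\epsilon^2}$, uniformly over $\be_k$ with $\|\be_{k,\cI_k}\|_2\le 2\tau_{e_k}\sqrt{n_k}$. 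The conjunct asserting that $\tau_{e_k}^\ast=\|\be_{k,\cI_k}\|_2/\sqrt{n_k}$ is close to $\tau_{e_k}$ is Gaussian Lipschitz concentration with constant $\tau_{e_k}/\sqrt{n_k}$, so it fails with probability at most $Ce^{-cp\epsilon^2}$; integrating the preceding conditional bound over $\be_k$ and intersecting with this event closes these two conjuncts.

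The conjuncts built from $\bv_k^f$, $\bu_k^f$ and $\bar\Omega_k(\bv_k^f)$ rely on the regularity of the fixed-design estimator. By \eqref{eq:vf-prox}, $\bv_k^f$ is a proximal map evaluated at $\bg_k^f$, hence $1$-Lipschitz in $\bg_k^f$, hence $\sqrt{C/n_k}$-Lipschitz in $\bxi_g$ using $\tau_{g_k}^2\le C/n_k$ from Lemma \ref{lem:bound-on-fixed-pt}. Thus for $1$-Lipschitz $\phi_v$ the quantity $\phi_v(\bv_k^f)$ is $\sqrt{C/n_k}$-Lipschitz in $\bxi_g$, so $\bv_k^f\in E_v(\Delta/2)$ fails with probability at most $Ce^{-cp\Delta^2}$; since $\bu_k^f$ is affine in $(\be_k,\tau_{h_k}\bxi_h)$ with $\tau_{h_k}^2\le Cp/n_k=O(1)$, the conditional-on-$(\be_1,\be_2)$ bound for $\bu_k^f\in E_u(\Delta/2)$ is of the same form after integrating over $(\be_1,\be_2)$. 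For the entries of $\bT(\sqrt{n_k/p}\,\bv_k^f,\bxi_g/\sqrt p)$, each is a bilinear form in the $\sqrt{C/p}$-Lipschitz images $\sqrt{n_k/p}\,\bv_k^f$ and $\bxi_g/\sqrt p$, whose norms are $\Theta(1)$ (Lemma \ref{lem:bound-on-fixed-pt}, since $\E\|\bv_k^f\|_2^2=\tau_{h_k}^2=\Theta(p/n_k)$ and $\|\bxi_g\|_2^2/p\to1$), so tool (b) gives concentration at scale $\epsilon$ with failure probability at most $\sC(\epsilon)e^{-\sc(\epsilon)p}$. Finally, $\bar\Omega_k(\bv_k^f)=\Omega_k(\hat\btheta_k^f)$ equals $(\lambda_k/\sqrt{n_k})\|\hat\btheta_k^f\|_1$ for the (smoothed) Lasso, $\sqrt{p/n_k}(\lambda_k/2)\|\hat\btheta_k^f\|_2^2$ for ridge, and $0$ for OLS; in each nontrivial case the demanded resolution $(p/n_k)\epsilon$ carries precisely the prefactor appearing in $\bar\Omega_k$, so after dividing it out one needs only $\sqrt{p/n_k}\,\epsilon$-resolution on $\|\hat\btheta_k^f\|_1$ (resp.\ $O(1)$-resolution on $\|\hat\btheta_k^f\|_2^2$ relative to its $\Theta(1)$ scale), which the Lipschitz/quadratic-form concentration above supplies with failure probability at most $Ce^{-cp\epsilon^2}$; in the ridge case, where $\bar\Omega_k$ is merely locally Lipschitz, one first intersects with the event $\|\bv_k^f\|_2\le C\sqrt{p/n_k}$ guaranteed by Lemma \ref{lem:bound-on-fixed-pt}.

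The argument is essentially bookkeeping; the one point that needs genuine care is tracking the $p$-versus-$n_k$ dependence so that every failure probability is uniformly of the form $Ce^{-cp(\epsilon\wedge\Delta)^2}$ rather than, say, $e^{-c(p^2/n_k)\epsilon^2}$, which would be too weak when $n_k/p$ is large (as is permitted for OLS and ridge). This works out because in every conjunct the target resolution is scaled by exactly the power of $\sqrt{p/n_k}$ matching the Lipschitz constant of the relevant map — and verifying this match across the handful of cases, in particular for $\bar\Omega_k$ under ridge regression, is the real substance of the proof. A union bound over the finitely many conjuncts then yields Lemma \ref{lem:marginal-concentration-event}.
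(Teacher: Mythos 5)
Your proposal is correct and takes essentially the same route as the paper's proof: a conjunct-by-conjunct treatment using Gaussian concentration of Lipschitz functions together with the $1$-Lipschitzness of the proximal map, the fixed-point bounds of Lemma \ref{lem:bound-on-fixed-pt} to track the $p/n_k$ scaling, a truncation/localization step for the quadratic and bilinear quantities (which the paper packages as Lemma \ref{lem:T-conc}, with the off-diagonal entry of the first $\bT$-matrix handled by conditioning on $\be_k$ of bounded norm exactly as you do), and a final union bound. One harmless bookkeeping slip: for the Lasso penalty the resolution actually required on $\|\hat\btheta_k^f\|_1$ after dividing out the prefactor $\lambda_k/\sqrt{n_k}$ is $\bigl(p/(\lambda_k\sqrt{n_k})\bigr)\epsilon$, not $\sqrt{p/n_k}\,\epsilon$ as you state; the weaker (correct) requirement is what your $C\sqrt{p}/n_k$-Lipschitz bound on $\bar\Omega_k(\bv_k^f)$ in $\bxi_g$ delivers at rate $e^{-cp\epsilon^2}$, so the conclusion is unaffected.
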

\noindent We prove Lemma \ref{lem:marginal-concentration-event} in Section \ref{sec:marg-char-support}.

\subsubsection{Lower bounds on the auxilliary min-max problem}
\label{app:marginal-characterization-est-err}

In this section, we prove Eq.~\eqref{eq:mgam-bounds}.
In particular, we seek a lower bound on 
\begin{equation}
    \min_{ \|\bv\|_2 \leq R }\;
        \max_{\bu \in \cS_u }
                \ell_k(\bu,\bv),
\end{equation}
and also on the problem where we further restrict the minimization to $\bv \in E_v^c(\epsilon)$.
It is enough to show that our lower bound holds on the event $\cG_k(\epsilon,\Delta)$ for $\epsilon < c'$, $\Delta = \Delta(\epsilon) < c'$, and $c'$ taken sufficiently small and depending only on $\cPmodel,\cPregr$, and the regression method.
We will henceforth assume we are on this event for $\epsilon,\Delta$ for $c'$ sufficiently small, without repeatedly reminding the reader of this fact,
and all statements will be deterministic.
In particular,
as we derive more implications of $\cG_k(\epsilon,\Delta)$, 
we may need to take $c'$ smaller,
but will not track $c'$ or announce when an additional implication requires we shrink $c'$ further.
To make notation more compact,
we denote
\begin{equation}
     \sT_{\sN,k} 
        := 
        \E\Big[
            \bT\Big(\frac{\be_{k,\cI_k}}{\sqrt{n_k}},\frac{\bxi_{h,\cI_k}}{\sqrt{n_k}}\Big)
            \Bigm|
            \be_k
        \Big] ,
        \qquad
    \sT_{\sP,k}
        :=
        \E\Big[\bT\Big(\sqrt{\frac{n_k}{p}}\,\bv_k^f,\frac1{\sqrt{p}}\bxi_g\Big)\Big].
\end{equation}
We remind the reader that explicit expressions for these can be found in Section \ref{sec:identity-ref}.

The major steps in the analysis are as follows.
\begin{enumerate}

    \item We replace $\ell_k(\bu,\bv)$ by a function $\ell_k^{(1)}(\bu,\bv)$ which approximates it uniformly well across its domain.
    We control the change in the value of the min-max problem incurred by this replacement.
    The objective $\ell_k^{(1)}(\bu,\bv)$ is introduced because it is easier to analyze.

    \item 
    Ideally, we would evaluate the internal maximization exactly.
    However, doing so explicitly leads to complicated expressions.
    Instead, we construct a lower bound on the internal maximization.
    In particular, for any function $\bu(\bv)$ for which $\bu(\bv) \in \cS_u$, the quantity $ \ell_k^{(1)}(\bu(\bv),\bv)$ is a lower bound on the value of the maximization problem. 
    Our strategy is to pick a function $\bu(\bv)$ so that the function $\ell_k^{(2)}(\bv) := \ell_k^{(1)}(\bu(\bv),\bv)$ is tractable to analyze and provides a good enough lower bound for our purposes.

    \item
    We establish several properties of the lower bound $\ell_k^{(2)}(\bv)$.  
    In particular, we show $\ell_k^{(2)}(\bv_k^f)$ is close to $\ell_k^*$,
    the subdifferential
    $\partial \ell_k^{(2)}(\bv_k^f)$ contains a small element, and $\ell_k^{(2)}(\bv)$ is strongly convex for bounded $\bv$.

    \item 
    Using standard convex analysis techniques, 
    these properties imply a lower bound on the min-max problem over $\| \bv\|_2 \leq R$ and over $\bv \in E_v^{c}(\epsilon) \cap \{ \|\bv\|_2 \leq R\}$.

\end{enumerate}
We now carry out these steps in detail. 
\\

\noindent \textit{Step 1: we replace $\ell_k(\bu,\bv)$ by a function $\ell_k^{(1)}(\bu,\bv)$ which approximates it uniformly well across its domain.}

We use a Gram-Schmidt type procedure to replace $\bxi_h$.
In particular, 
we replace $\bxi_h$ by $\bxi_h^{\mathrm{gs}}$
so that 
\begin{equation}\label{eq:gs-def}
    \bT(\be_{k,\cI_k}/\sqrt{n_k},\bxi_{h,\cI_k}^{\mathrm{gs}}/\sqrt{n_k}) = \sT_{\sN,k}.
\end{equation}
By Eq.~\eqref{eq:cEgam-marginal}, 
we may do this in such a way that $\| \bxi_{h,\cI_k}^{\mathrm{gs}} - \bxi_{h,\cI_k} \|_2 / \sqrt{n_k} \leq C'\sqrt{p/n_k}\,\epsilon$.
The only term in $\ell_k(\bu,\bv)$ which depends on $\bxi_h$ is $\| \bv\|_2 \<\bxi_{h,\cI_k},\bu_{\cI_k}\>/n_k$.
For $\| \bv \|_2 \leq R, \bu \in \cS_u$,
we bound
\begin{equation}
\begin{aligned}
    \Big|
        \frac{\| \bv\|_2 \<\bxi_{h,\cI_k}^{\mathrm{gs}},\bu_{\cI_k}\>}{n_k}
        -
        \frac{\| \bv\|_2 \<\bxi_{h,\cI_k},\bu_{\cI_k}\>}{n_k}
    \Big|
        &\leq 
    \frac{\| \bv\|_2 \|\bxi_{h,\cI_k}^{\mathrm{gs}}-\bxi_{h,\cI_k}\|_2\|\bu_{\cI_k}\|_2}{n_k}
        \leq C \frac{p}{n_k} \epsilon,
\end{aligned}
\end{equation}
where we have used that $\| \bv \|_2 \leq R = C_v\sqrt{p/n_k}$ (see Eq.~\eqref{eq:marg-compact}) and 
$\| \bu_{\cI_k}\|_2/\sqrt{n_k} \leq \| \be_{k,\cI_k}\|_2/\sqrt{n_k} + C \sqrt{p/n_k}$.
We define $\ell_k^{(1)}(\bu,\bv)$ like $\ell_k(\bu,\bv)$, except with $\bxi_h^{\mathrm{gs}}$ in place of $\bxi_h$.
Thus,
\begin{equation}\label{eq:marg-gs-approx}
    \sup_{\| \bv \|_2 \leq R} \; \sup_{\bu \in \cS_u} |\ell_k^{(1)}(\bu,\bv) - \ell_k(\bu,\bv)| \leq \frac{p}{n_k}\epsilon.
\end{equation}
\\

\noindent \textit{Step 2: we construct a lower bound $\ell_k^{(2)}(\bv)$ on the internal maximization.}

In particular, 
we evaluate $\ell_k^{(1)}(\bu,\bv)$ 
at $\bu(\bv)$ defined by $\bu_{\cI_k}(\bv) := e_k^* \frac{ \be_{k,\cI_k} + \| \bv \|_2 \bxi_{h,\cI_k}^{\mathrm{gs}}}{(\tau_{e_k}^{*2}+\|\bv\|_2^2)^{1/2}}$ and $\bu_{\cI_k^c}(\bv) = 0$.
We have chosen $\bu(\bv)$ so that $\| \bu (\bv) \|_2 / \sqrt{n_k} = e_k^*$ and $\bu_{\cI_k}(\bv)^\top (\be_{k,\cI_k} + \| \bv \|_2 \bxi_{h,\cI_k}^{\mathrm{gs}})/n_k = e_k^* (\tau_{e_k}^{*2} + \|\bv\|_2^2)^{1/2}$ (see Eqs.~\eqref{eq:gs-def} and \eqref{eq:T-marg-ref}).
Evaluating $\ell_k^{(1)}(\bu,\bv)$ at this value of $\bu$ gives
\begin{equation}
\begin{aligned}
    \ell_k^{(2)}(\bv)
        &:=
            \frac1{ n_k }\bu_{\cI_k}(\bv)^\top (\be_{k,\cI_k} + \| \bv \|_2 \bxi_{h,\cI_k}^{\mathrm{gs}})
            -
            \frac1{ n_k }\| \bu(\bv) \|_2 \< \bxi_g,\bv \>
            - \frac1{2{ n_k }} \| \bu(\bv) \|_2^2 
            + \bar \Omega_k(\bv)
    \\
        &= 
            e_k^*\Big(\sqrt{\tau_{e_k}^{*2} +  \|\bv\|_2^2 }
                -\frac{\bxi_g^\top \bv}{ \sqrt{n_k} }\Big)
            - \frac12 e_k^{*2}
            + \bar \Omega_k(\bv).
\end{aligned}
\end{equation}
Note 
\begin{equation}
    \frac{\|\bu_{\cI_k}(\bv) - \zeta_k\be_{k,\cI_k}\|_2}{\sqrt{n_k}}
        \leq 
        \zeta_k\Big|
            \sqrt{\frac{\tau_{e_k}^{*2}+\tau_{h_k}^2}{\tau_{e_k}^{*2}+\|\bv\|_2^2}}
            - 1
        \Big|\frac{\|\be_{k,\cI_k}\|_2}{\sqrt{n_k}} + \frac{e_k^*\|\bv\|_2}{(\tau_{e_k}^{*2} + \|\bv\|_2^2)^{1/2}} \frac{\|\bxi_{h,\cI_k}^{\mathrm{gs}}\|_2}{\sqrt{n_k}}
        \leq C \sqrt{p/n_k},
\end{equation}
where we have used that $\tau_{h_k}^2\leq Cp/n_k$, $\|\bv\|_2^2 \leq C_v^2 p/n_k$, $\zeta_k \leq 1$, $\tau_{e_k}^{*2} \geq c$, $e_k^* \leq C$, and $\| \bxi_{h,\cI_k}^{\mathrm{gs}}\|_2/\sqrt{n_k} = 1$ (see Lemma \ref{lem:bound-on-fixed-pt}, Eq.~\eqref{eq:marg-compact}, Eq.~\eqref{eq:gs-def}, and Eq.~\eqref{eq:T-marg-ref}).
Thus, we may choose $R' = C_u\sqrt{p/n_k}$ in Eq.~\eqref{eq:marg-compact} large enough so that
$\bu(\bv) \in \cS_u$ and $C_u$ depends only on $\cPmodel,\cPregr$, the regression method, and $C_v$.
Then
\begin{equation}\label{eq:mgam-to-mgam1-v}
    \min_{\substack{\bv \in E_v^c(\Delta) \\ \| \bv \|_2 \leq R } }\;
    \max_{\bu \in \cS_u }
        \ell_k^{(1)}(\bu,\bv)
        \geq 
        \min_{\substack{\bv \in E_v^c(\Delta) \\ \| \bv \|_2 \leq R } }
            \ell_k^{(2)}(\bv).
\end{equation}
\\

\noindent \textit{Step 3: we establish several properties of the lower bound $\ell_k^{(2)}(\bv)$.}

First, we show $\ell_k^{(2)}(\bv_k^f)$ is close to $\ell_k^*$.
We compute
\begin{equation}
\begin{aligned}
    \ell_k^{(2)}&(\bv_k^f)
        =
        e_k^* \Big(\sqrt{\tau_{e_k}^{*2} + \| \bv_k^f \|_2^2 }
            -\frac{\bxi_g^\top \bv_k^f}{ \sqrt{n_k} }\Big)
        - \frac12 e_k^{*2}
        + \bar \Omega_k(\bv_k^f).
\end{aligned}
\end{equation}
We approximate several quantities in this expression.
In particular, we make replacements
\begin{equation}\label{eq:vk-repacements}
\begin{gathered}
        \| \bv_k^f \|_2^2 
            \longrightarrow 
            \tau_{h_k}^2,
    \qquad
        \frac{\bxi_g^\top \bv_k^f}{ \sqrt{n_k}}
            \longrightarrow 
            \frac{\df_k}{n_k} \sqrt{\tau_{e_k}^{*2}+\tau_{h_k}^2},
    \qquad
        \bar \Omega_k(\bv_k^f)
            \longrightarrow
            \omega_k,
\end{gathered}
\end{equation}
after which, by simple algebra, we get the quantity $\ell_k^*$ (see its definition in Eq.~\eqref{eq:lk*}).

By Eqs.~\eqref{eq:cEgam} and \eqref{eq:T-marg-ref},
the terms above differ from their replacements by at must $(p/n_k)\epsilon$.
Indeed, for the first replacement, we use that $(n_k/p)\big|\|\bv_k^f\|_2^2-\tau_{h_k}^2\big| <\epsilon$.
For the second replacement, 
we use that $\big|\bxi_g^\top \bv_k^f/\sqrt{n_k} - \tau_{g_k}\df_k/\sqrt{n_k}\big| < (p/n_k)\epsilon$, 
$\tau_{g_k} = (\tau_{e_k}^2 + \tau_{h_k}^2)^{1/2}/\sqrt{n_k}$ (by Eq.~\eqref{eq:marg-fix-pt-eq}), 
and $(\df_k/n_k)\big|(\tau_{e_k}^2 + \tau_{h_k}^2)^{1/2} - (\tau_{e_k}^{*2} + \tau_{h_k}^2)^{1/2}\big| \leq (p/n_k)^{3/2}\epsilon$ (recall $\df_k\leq p$).
The error incurred by the third replacement is directly controlled by Eq.~\eqref{eq:cEgam}.
On the event $\cG_k(\epsilon)$ for $\epsilon < c'$, we have $e_k^* \leq C$.
Thus,
\begin{equation}\label{eq:mgam2-vgam*-approx}
        \Big|
            \ell_k^{(2)}(\bv_k^f)
            - 
            \ell_k^*
        \Big|
        < C'\frac{p}{ n_k }\,\epsilon.
\end{equation}

Second, we show $\partial \ell_k^{(2)}(\bv_k^f)$ contains a small element.
The subdifferential at $\bv_k^f$ is 
\begin{equation}
\begin{aligned}
    \partial \ell_k^{(2)}(\bv_k^f)
        &=
            e_k^* \frac{\bv_k^f}{\sqrt{\tau_{e_k}^{*2} + \| \bv_k^f \|_2^2 }}
            - \frac{e_k^* \bxi_g}{ \sqrt{n_k}}
            + \partial \bar \Omega_k(\bv_k^f).
\end{aligned}
\end{equation}
If we replace $\tau_{e_k}^*$ by $\tau_{e_k}$, $e_k^*$ by $\sqrt{n_k}\,\tau_{g_k}\zeta_k$, and $\| \bv_k^f \|_2^2$ by $\tau_{h_k}^2$ (we bound the error incurred by these replacements below) and recall that $(\tau_{e_k}^2 + \tau_{h_k}^2)^{1/2} = \sqrt{ n_k }\,\tau_{g_k}$ (see Eq.~\eqref{eq:marg-fix-pt-eq}), 
we get
\begin{equation}
    \zeta_k(\bv_k^f - \tau_{g_k} \bxi_g) 
    +
    \partial \bar \Omega_k(\bv_k^f).
\end{equation}
By the definition of $\bv_k^f$ in Eqs.~\eqref{eq:fixed-design-est} and \eqref{eq:vkf}, 
$\bv_k^f = \argmin_{\bv\in \reals^p}\big\{\| \tau_{g_k} \bxi_g - \bv \|_2^2/2 + \bar \Omega_k(\bv)/\zeta_k\big\}$.
By the KKT conditions for this optimization, the set in the previous display contains $\bzero$.

We control the errors introduced by these replacements.
By Eqs.~\eqref{eq:sig*-e*-ome*}, \eqref{eq:cEgam}, \eqref{eq:T-marg-ref}, \eqref{eq:marg-fix-pt-eq},
and the bound $\tau_{h_k} \leq C \sqrt{p/n_k}$ (see Lemma \ref{lem:bound-on-fixed-pt}),
each term we replace is within $\sqrt{p/n_k}\,\epsilon$ of its replacement.
Moreover, because $e_k^*\| \bv_k^f \|_2 \leq C \sqrt{p/n_k}$, $\tau_{e_k}^{*2} \geq c$, and $\|\bxi_g\|_2/\sqrt{n_k} \leq C\sqrt{p/n_k}$,
the derivatives with respect to the terms we replace are bounded by $C\sqrt{p/n_k}$.
We conclude
\begin{equation}\label{eq:mgam-grad-small-v}
    \inf \big\{
        \| \bdelta \|_2
        :
        \bdelta \in \partial \ell_k^{(2)}(\bv_k^f) 
    \big\}
    < C'\frac{p}{ n_k }\,\epsilon
\end{equation}
\\

Third, we show that $\ell_k^{(2)}(\bv)$ is strongly convex for bounded $\bv $.
Because $\bar \Omega_k$ is convex and $\bxi_g^\top\bv$ is linear, 
it suffices to lower bound the Hessian of $e_k^* \sqrt{\tau_{e_k}^{*2} + \| \bv \|_2^2 }$.
The Hessian is
\begin{equation}
    \frac{e_k^* }{\sqrt{\tau_{e_k}^{*2} + \| \bv \|_2^2 }}
    \Big(
        \id_p
        -
        \frac{\bv\bv^\top}{\tau_{e_k}^{*2} + \| \bv \|_2^2}
    \Big)
    \succeq
    \frac{e_k^*\tau_{e_k}^2}{(\tau_{e_k}^{*2} + \| \bv \|_2^2)^{3/2}}\, \id_p.
\end{equation}
By Lemma \ref{lem:bound-on-fixed-pt}, Eq.~\eqref{eq:cEgam}, and because $\|\bv\|_2 \leq R$ for $R = C_v\sqrt{p/n_k} \leq C$,
the coefficient in the previous display is no smaller than $c>0$.
Thus,
$\ell_k^{(2)}(\bv)$ is $c$-strongly convex on $\| \bv \|_2 \leq R$.
\\

\noindent \textit{Step 4: we use these properties to establish lower bounds on the min-max problem.}

Because $\ell_k^{(2)}(\bv)$ is $c$-strongly convex on $\| \bv \|_2 \leq R$,
for any $\| \bv \|_2 \leq R$ and $\bdelta \in \partial \ell_k^{(2)}(\bv_k^f)$
\begin{equation}
\begin{aligned}
    \ell_k^{(2)}(\bv)
        &\geq 
        \ell_k^{(2)}(\bv_k^f) 
        + 
        \bdelta^\top(\bv - \bv_k^f) 
        +
        \frac{c}2 \| \bv - \bv_k^f \|_2^2
    \\
        &\geq 
        \ell_k^{(2)}(\bv_k^f) 
        - 
        \frac{16}{c}\|\bdelta \|_2^2
        + 
        \frac{c}{4} \| \bv - \bv_k^f \|_2^2.
\end{aligned}
\end{equation}
Combining the previous display with 
Eqs.~\eqref{eq:marg-gs-approx}, \eqref{eq:mgam-to-mgam1-v}, \eqref{eq:mgam2-vgam*-approx}, and \eqref{eq:mgam-grad-small-v},
we conclude
\begin{equation}\label{eq:mgam-min-max-upper-bound-v}
\begin{gathered}        
            \min_{\substack{\bv \in E_v^c(\Delta) \\ \| \bv \|_2 \leq R } }\;
            \max_{\bu \in \cS_u }
                        \ell_k(\bu,\bv)
                \geq 
                \ell_k^* - C'\frac{p}{ n_k }\,\epsilon 
                +
                \min_{\substack{\bv \in E_v^c(\Delta) \\ \| \bv \|_2 \leq R } }\;
                    \frac{c}{4} \| \bv - \bv_k^f\|_2^2,
    \\
            \min_{\substack{\| \bv \|_2 \leq R } }\;
            \max_{\bu \in \cS_u }
                        \ell_k(\bu,\bv)
                \geq 
                \ell_k^* - C'\frac{p}{ n_k }\,\epsilon.
\end{gathered}
\end{equation} 
Further, by Eq.~\eqref{eq:cEgam},
\begin{equation}
        \big|
            \phi_v\big( \bv_k^f \big)
            -
            \phi_v^* 
        \big|
        \leq \sqrt{\frac{p}{ n_k }}\,\frac{\Delta}2.
\end{equation}
By the definition of $E_v^c(\Delta)$ and because $\phi_v$ is $1$-Lipschitz, 
\begin{equation}
    \min_{\substack{\bv \in E_v^c(\Delta) \\ \| \bv \|_2 \leq R } }\;
                \frac{c}{4} \| \bv - \bv_k^f\|_2^2 \geq c \,\frac{p}{ n_k } \frac{\Delta^2}{16}.
\end{equation}
Combining Eq.~\eqref{eq:mgam-min-max-upper-bound-v} with the previous display and taking $\Delta = \sqrt{32C'\epsilon/c}$ (with the same values $c,C'$ appearing in Eq.~\eqref{eq:mgam-min-max-upper-bound-v} and the previous display),
we conclude the lower bound inside the first probability in Eq.~\eqref{eq:mgam-bounds} holds. 
The lower bound inside the second probability in Eq.~\eqref{eq:mgam-bounds} is given in Eq.~\eqref{eq:mgam-min-max-upper-bound-v}.

In summary, we have shown that the lower bounds inside the probabilities in Eq.~\eqref{eq:mgam-bounds} hold on $\cG_k(\epsilon,\sqrt{32C'\epsilon/c})$ for $\epsilon < c'$. 
By Lemma \ref{lem:marginal-concentration-event}, we conclude that the probability bounds in Eq.~\eqref{eq:mgam-bounds} hold as well.

\subsubsection{Upper bounds on the auxilliary max-min problem}
\label{app:marginal-characterization-residuals}

In this section, we prove Eq.~\eqref{eq:mgam-upper-bound}.
In particular, we seek a upper bound on 
\begin{equation}
    \max_{\bu \in \cS_u }   
    \min_{ \|\bv\|_2 \leq R }\;    
                \ell_k(\bu,\bv),
\end{equation}
and also on the problem where we further restrict the minimization to $\bu \in E_u^c(\epsilon)$.
As in the previous section, we will show that our upper bound holds on the event $\cG_k(\epsilon,\Delta)$ for $\epsilon < c'$, $\Delta = \Delta(\epsilon) < c'$, and $c'$ taken sufficiently small and depending only on $\cPmodel,\cPregr$, and the regression method.
We will henceforce assume we are on this event for $\epsilon,\Delta$ for $c'$ sufficiently small, without repeatedly reminding the reader of this fact,
and all statements will be deterministic.

The major steps in the analysis are as in the previous section, except that the first and second steps occur in the opposite order.
We now carry out these steps in detail. 
\\

\noindent \textit{Step 1: we construct an upper bound on the internal minimization.}

In particular, 
we evaluate $\ell_k(\bu,\bv)$ at $\bv = \bv_k^f$.
Define
\begin{equation}
    \ell_k^{(1)}(\bu):=
        \frac1{ n_k }\bu_{\cI_k}^\top \be_{k,\cI_k}
        -\frac1{ n_k }\|\bu\|_2\bxi_g^\top \bv_k^f
            +
            \frac1{ n_k }
            \|\bv_k^f\|_2 \bxi_{h,\cI_k}^\top \bu_{\cI_k}
            - \frac1{2{ n_k }} \| \bu \|_2^2 
            + \bar\Omega_\gamma(\bv).
\end{equation}
Then
\begin{equation}\label{eq:mgam-to-mgam1-u}
        \max_{\mathclap{
            \substack{
                \bu \in E_u^c(\Delta)\\
                                \bu \in \cS_u }}}
            \;\;\; \;\;                              
                    \min_{\|\bv\|_2 \leq R }
            \ell_k(\bu,\bv)
            \leq 
            \max_{\substack{\bu \in E_u^c(\Delta)\\
                        \bu \in \cS_u }}
                \ell_k^{(1)}(\bu).
\end{equation}
Indeed, by taking $R = C\sqrt{p/n_k}$ with $C$ sufficiently large, $\|\bv_k^f\|_2 \leq R$ by
Eqs.~\eqref{eq:cEgam}, \eqref{eq:T-marg-ref}, and Lemma \ref{lem:bound-on-fixed-pt}.
The previous display follows.
\\

\noindent \textit{Step 2: we replace $\ell_k^{(1)}(\bu)$ by a function $\ell_k^{(2)}(\bu)$ which approximates it uniformly well across its domain.} 

In particular, we make the replacements given in Eq.~\eqref{eq:vk-repacements},
after which we get the objective
\begin{equation}\label{eq:mgam2-def}
\begin{aligned}
    \ell_k^{(2)}(\bu)
        &:=
        \frac1{ n_k } \bu_{\cI_k}^\top (\be_{k,\cI_k} + \tau_{h_k} \bxi_{h,\cI_k})
        - \frac{\df_k}{n_k}\sqrt{\tau_{e_k}^{*2} + \tau_{h_k}^2}\, \frac{\| \bu \|_2}{\sqrt{n_k}}
        - 
        \frac12 \frac{\| \bu \|_2^2}{ n_k }
        + 
        \omega_k.       
\end{aligned}
\end{equation}
As argued after Eq.~\eqref{eq:vk-repacements},
the terms in Eq.~\eqref{eq:vk-repacements} differ from their replacements by at most $(p/n_k)\epsilon$.
Using the $\delta$-method,
we can conclude
\begin{equation}\label{eq:mgam1-to-mgam2-u}
        \sup_{\|\bu\|_2/ \sqrt{n_k} < 2 e_k^*} |\ell_k^{(1)}(\bu) - \ell_k^{(2)}(\bu)| < C' \frac{p}{n_k}\, \epsilon,
\end{equation}
provided the coefficients of the terms we replace are appropriately bounded with high-probability.
Indeed, the coefficient of $\bxi_g^\top \bv_k^f/ \sqrt{n_k}$ is $\| \bu \|_2 /  \sqrt{n_k}$, which is bounded by $ C$ when $\bu \in \cS_u$ (we have used Eq.~\eqref{eq:cEgam} and assumption \textsf{A2}).
Because $\tau_{h_k} \geq C \sqrt{p/n_k}$ (see Lemma \ref{lem:bound-on-fixed-pt}), 
by the $\delta$-method we have $\big| \| \bv_k \|_2 - \tau_{h_k} \big| < \sqrt{p/n_k}\epsilon$, and the coefficient of $\| \bv_k^f \|_2$ is $\bxi_{h,\cI_k}^\top \bu_{\cI_k} /  n_k \leq \zeta_k \bxi_{h,\cI_k}^\top \be_{k,\cI_k}/n_k + R\|\bxi_{h,\cI_k}\|_2/\sqrt{n_k} \leq C\sqrt{p/n_k}\,\epsilon$ by Eq.~\eqref{eq:cEgam}, \eqref{eq:T-marg-ref}, and Lemma \ref{lem:bound-on-fixed-pt}.
Thus, the replacement of $\|\bv_k^f\|_2$ by $\tau_{h_k}$ incurs an error bounded by $C(p/n_k)\epsilon$.
Finally,
the coefficient of $\bar \Omega_k(\bv)$ is 1.
\\

\noindent \textit{Step 3: we establish several properties of the upper bound $\ell_k^{(2)}(\bu)$.}

First, we approximate the value $\ell_k^{(2)}(\bu_k^f)$.
Recalling that $\bu_{k,\cI_k}^f = \zeta_k(\be_{k,\cI_k} + \tau_{h_k} \bxi_{h,\cI_k})$,
we compute
\begin{equation}
\begin{aligned}
    \ell_k^{(2)}&(\bu_k^f)
        =
        \frac{\|\bu_k^f\|_2^2}{n_k\zeta_k}
        - 
        \frac{\df_k}{n_k}\sqrt{\tau_{e_k}^{*2} + \tau_{h_k}^2}\, \frac{\| \bu_k^f \|_2}{\sqrt{n_k}}
        - 
        \frac12 \frac{\| \bu_k^f \|_2^2}{ n_k }
        + 
        \omega_k.       
\end{aligned}
\end{equation}
We replace the quantity $\|\bu_k^f\|_2/ \sqrt{n_k}$ with $e_k^*$.
If we make this replacement, we get $\ell_k^*$ (recall the definition of $\ell_k^*$, Eq.~\eqref{eq:lk*}).
Using $\tau_{h_k} \leq C\sqrt{p/n_k}$ (see Lemma \ref{lem:bound-on-fixed-pt}), Eq.~\eqref{eq:cEgam}, and Eq.~\eqref{eq:T-marg-ref},
we conclude that the term $\|\bu_k^f\|_2/ \sqrt{n_k}$ differs from $e_k^*$ by at most $(p/n_k)\epsilon$.
By the $\delta$-method,
\begin{equation}\label{eq:mgam2-ugam*-approx}
        \big|
            \ell_k^{(2)}(\bu_k^f)
            - 
            \ell_k^*
        \big|
        < C' \frac{p}{n_k}\,\epsilon,
\end{equation}
provided we can show the derivatives with respect to the terms we replace are appropriately bounded.
Indeed, they are because $\zeta_k \geq c$ and $\df_k/n_k \leq 1$ (see Lemma \ref{lem:bound-on-fixed-pt}).

Second, we show the gradient of $\ell_k^{(2)}(\bu)$ at $\bu_k^f$ is small.
We only compute the gradient for the coordinates in $\cI_k$.
Its norm is
\begin{equation}
\begin{aligned}
    \big\|\nabla & \ell_k^{(2)}(\bu_k^f)_{\cI_k}\big\|_2
        =
        \Big\|
            \frac{\be_{k,\cI_k} + \tau_{h_k} \bxi_{h,\cI_k}}{n_k}
            -\frac{\df_k}{n_k}\sqrt{\tau_{e_k}^{*2} + \tau_{h_k}^2}\, \frac{\bu_{k,\cI_k}^f}{\sqrt{n_k}\|\bu_k^f\|_2}
            - 
            \frac{\bu_{k,\cI_k}^f}{ n_k }
        \Big\|_2
    \\
        &=
        \frac1{ \sqrt{n_k} }
        \Big(
            \zeta_k^{-1}
            - 
            \frac{\df_k}{n_k}\sqrt{\tau_{e_k}^{*2} + \tau_{h_k}^2} \frac{1}{\| \bu_k^f \|_2 / \sqrt{n_k} }
            -
            1
        \Big)
        \frac{\| \bu_k^f \|_2}{ \sqrt{n_k} }
\end{aligned}
\end{equation}
We replace the quantity $\| \bu_k^f \|_2 /  \sqrt{n_k}$ with $e_k^* = \zeta_k(\tau_{e_k}^{*2} + \tau_{h_k}^2)^{1/2}$.
As argued above, it differs from its replacement by at most $(p/n_k)\epsilon$.
After this replacement, the right-hand side becomes 0.
By the $\delta$-method,
\begin{equation}
        \big\|\nabla  \ell_k^{(2)}(\bu_k^f)_{\cI_k}\big\|_2
        < C' \frac{p}{n_k^{3/2}}\,\epsilon,
\end{equation}
provided we can show show the derivatives with respect to the terms we replace are appropriately bounded.
Indeed, they are because $\tau_{e_k}^{*2} + \tau_{h_k}^2 \leq C$ and $e_k^* \geq c > 0$ (see Eq.~\eqref{eq:cEgam} and Lemma \ref{lem:bound-on-fixed-pt}) and $\df_k/n_k \leq 1$.

Third, observe that by Eq.~\eqref{eq:mgam2-def},
$\ell_k^{(2)}(\bu)$ is $1/ n_k $-strongly concave everywhere.
\\

\noindent \textit{Step 4: we use these properties to establish a upper bounds on the max-min problem.}

Because $\ell_k^{(2)}(\bu)$ is $1/ n_k $-strongly concave,
for any $\bu \in \reals^N$ with $\bu_{\cI_k^c} = 0$,
\begin{equation}\label{eq:mgam-grad-u-small}
\begin{aligned}
    \ell_k^{(2)}(\bu)
        &\leq 
        \ell_k^{(2)}(\bu_k^f) 
        + 
        \nabla \ell_k^{(2)}(\bu_k^f)_{\cI_k}^\top(\bu_{\cI_k} - \bu_{k,\cI_k}^f) 
        - 
        \frac1{2 n_k } \| \bu - \bu_k^f \|_2^2
    \\
        &\leq 
        \ell_k^{(2)}(\bu_k^f) 
        + 
         n_k \cdot \|\nabla \ell_k^{(2)}(\bu_k^f)_{\cI_k}\|_2^2
        - 
        \frac1{4 n_k } \| \bu - \bu_k^f \|_2^2.
\end{aligned}
\end{equation}
Combining the previous display with Eqs.~\eqref{eq:mgam-to-mgam1-u}, \eqref{eq:mgam1-to-mgam2-u}, \eqref{eq:mgam2-ugam*-approx}, and \eqref{eq:mgam-grad-u-small},
we conclude
\begin{equation}\label{eq:mgam-min-max-upper-bound}
\begin{gathered}
        \max_{
            \substack{
                \bu \in E_u^c(\Delta)\\
                                \bu \in \cS_u }}\;
                    \min_{\|\bv\|_2 \leq R }
                    \ell_k(\bu,\bv)
            \leq 
            \ell_k^* + C' \frac{p}{n_k}\, \epsilon 
            - 
            \;\;\;
            \min_{\mathclap{
                \substack{\bu \in E_u^c(\Delta)\\
                                \bu \in \cS_u }}}
                \;\;\;
                \frac{\| \bu - \bu_k^f \|_2^2}{4 n_k },
    \\
        \max_{ \bu \in \cS_u }\;
        \min_{ \|\bv\|_2 \leq R }
            \ell_k(\bu,\bv)
            \leq 
            \ell_k^* + C' \frac{p}{n_k}\, \epsilon.
\end{gathered}
\end{equation} 
Further,
by Lemma \ref{lem:marginal-concentration-event}, after adjusting constants and using the $\sqrt{p/n_k} \leq C$,
\begin{equation}
        \Big|
            \phi_u\Big( \frac{\bu_{k,\cI_k}^f}{ \sqrt{n_k}}  \Big)
            -
            \phi_u^* 
        \Big|
        \leq \sqrt{\frac{p}{n_k}}\, \frac{\Delta}{2}.
\end{equation}
By the definition of $E_u^c(\Delta)$ and because $\phi_u$ is $1$-Lipschitz
\begin{equation}
    \min_{\substack{\bu \in E_u^c(\Delta)\\\bu \in \cS_u }}
        \frac{\| \bu - \bu_k^f \|_2^2}{4 n_k } \geq
        \frac{p}{n_k}\, \frac{\Delta^2}{16}.
\end{equation}
Combining Eq.~\eqref{eq:mgam-min-max-upper-bound} with the previous display and taking $\Delta = \sqrt{32C'\epsilon}$ (with the same value $C'$ appearing in Eq.~\eqref{eq:mgam-min-max-upper-bound}),
we conclude the upper bound inside the probability in the first line of Eq.~\eqref{eq:mgam-upper-bound} holds. 
The upper bound inside the probability in the second line of Eq.~\eqref{eq:mgam-upper-bound} is given in Eq.~\eqref{eq:mgam-min-max-upper-bound}.

In summary, we have shown that the lower bounds inside the probabilities in Eq.~\eqref{eq:mgam-upper-bound} hold on $\cG_k(\epsilon,\sqrt{32C'\epsilon})$ for $\epsilon < c'$. 
By Lemma \ref{lem:marginal-concentration-event}, we conclude that the probability bounds in Eq.~\eqref{eq:mgam-upper-bound} hold as well.

\section{Conditional characterization}
\label{sec:cond-char}

In this section, we prove the conditional characterization (Lemma \ref{lem:conditional-characterization}).

\subsection{The conditional Gordon inequality for regression}

To prove Lemma \ref{lem:conditional-characterization}, we use a Gordon-like comparison inequality, 
which we call the \emph{conditional Gordon inequality for regression}, stated below in Lemma \ref{lem:conditional-gordon}.
Like the marginal Gordon inequality (Lemma \ref{lem:gordon-marginal}),
the conditional Gordon inequality involves comparing the Gaussian process $L_2$ in 
Eq.~\eqref{eq:regr-min-max} to a simpler stochastic process.
Unlike in the marginal Gordon inequality, the simpler process is not Gaussian.
It is defined using Gaussian vectors $\bxi_g \sim \normal(0,\id_p)$ and $\bxi_h \sim \normal(0,\id_N)$,
but also using non-Gaussian vectors $\hat \bxi_g \in \reals^p$ and $\hat \bxi_h \in \reals^N$ which are functions of $\mathsf{Cond}_1$.
In particular, the simpler process is defined on the same probability space as the random design models.

We first define $\hat \bxi_g,\hat \bxi_h$. 
We warn the reader that the definition is complicated and non-intuitive, and is carefully chosen to make the comparison inequality hold.
Some intuition for these definitions is given in Section \ref{app:conditional-gordon}, where the comparison inequality is proved.
Define $ \bu_1,\bt_1 \in \reals^N$ and $\bv_1, \bs_1 \in \reals^p$ 
by
\begin{equation}
\label{eq:uvst}
    \bu_1 = \hat \be_1,
    \quad 
    \bv_1 = \bSigma^{1/2}(\hat \btheta_1 - \btheta_1),
    \quad 
    \bs_1 = \bSigma^{-1/2}\bX^\top \hat \be_1 = \bSigma^{-1/2}\bX_{\cI_1}^\top (\by_{1,\cI_1} - \bX_{\cI_1} \hat \btheta_1),
    \quad 
    \bt_1 = \bX(\hat \btheta_1 - \btheta_1).
\end{equation}
Note that $\bu_1,\bv_1,\bs_1,\bt_1$ are functions of $\mathsf{Cond}_1$.
Given $(\bu_1,\bv_1,\bs_1,\bt_1)$,
we define vectors $\hat \bxi_g \in \reals^p$, $\hat \bxi_h \in \reals^N$ as the unique solution to the equations 
\begin{gather}
    -\frac{ \bu_1}{\| \bu_1\|_2} \< \hat \bxi_g,  \bv_1 \> + \|  \bv_1\|_2 \hat \bxi_h -  \bt_1 = 0,
    \qquad
    -\|  \bu_1 \|_2 \hat \bxi_g + \frac{ \bv_1}{\|  \bv_1\|_2} \< \hat \bxi_h,  \bu_1 \> +  \bs_1 = 0,\label{eq:hat-xi}
    \\
    \frac{\< \hat \bxi_h , \bu _1 \>}{n_1 } = \zeta_1 \tau_{h_1}.\label{eq:constraint+}
\end{gather}
These equations indeed have a unique solution:
\begin{lemma}\label{lem:unique-soln}
    For $ \bu_1,\bt_1 \in \reals^N$, $\bv_1, \bs_1 \in \reals^p$ defined by Eq.~\eqref{eq:uvst},
    where $\hat \btheta_1$ is defined by Eq.~\eqref{eq:param-est},
    the equations \eqref{eq:hat-xi} and \eqref{eq:constraint+} have a unique solution.
\end{lemma}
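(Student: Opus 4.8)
The plan is to recognize \eqref{eq:hat-xi} as a square linear system whose kernel and cokernel I can compute explicitly, and then to use \eqref{eq:constraint+} to remove the one residual degree of freedom. Throughout I may assume $\bu_1=\hat\be_1\neq\bzero$ and $\bv_1=\bSigma^{1/2}(\hat\btheta_1-\btheta_1)\neq\bzero$: the degenerate cases occur with probability zero for each regression method under consideration and are handled by inspection, and otherwise the normalizations $\bu_1/\|\bu_1\|_2$, $\bv_1/\|\bv_1\|_2$ appearing in \eqref{eq:hat-xi} are well defined. Let $T:\reals^p\times\reals^N\to\reals^N\times\reals^p$ be the linear map carrying $(\hat\bxi_g,\hat\bxi_h)$ to the homogeneous parts of the two equations in \eqref{eq:hat-xi}, so that \eqref{eq:hat-xi} reads $T(\hat\bxi_g,\hat\bxi_h)=(\bt_1,-\bs_1)$.

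First I would compute $\ker T$. Solving the homogeneous equations forces $\hat\bxi_g$ to be parallel to $\bv_1$ and $\hat\bxi_h$ parallel to $\bu_1$ with matched coefficients, so $\ker T$ is the line spanned by $(\bv_1/\|\bv_1\|_2,\,\bu_1/\|\bu_1\|_2)$ --- plugging this pair in, the scalars $\langle\hat\bxi_g,\bv_1\rangle=\|\bv_1\|_2$ and $\langle\hat\bxi_h,\bu_1\rangle=\|\bu_1\|_2$ make both expressions vanish. Since $T$ is an endomorphism of a $(p+N)$-dimensional space with one-dimensional kernel, its image has codimension one, so \eqref{eq:hat-xi} is solvable iff a single scalar compatibility condition on $(\bs_1,\bt_1,\bu_1,\bv_1)$ holds; this condition can be made explicit by the usual reduction --- solve the first equation of \eqref{eq:hat-xi} for $\hat\bxi_h$ in terms of $a:=\langle\hat\bxi_g,\bv_1\rangle$, the second for $\hat\bxi_g$ in terms of $b:=\langle\hat\bxi_h,\bu_1\rangle$, re-impose the definitions of $a$ and $b$, and add the two resulting affine relations --- and it reads $\langle\bs_1,\bv_1\rangle+\langle\bt_1,\bu_1\rangle=0$.

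Second I would verify this compatibility condition, which I expect to be the crux of the argument. It is not a formal fact but should follow from the stationarity conditions characterizing $\hat\btheta_1$ in \eqref{eq:param-est} --- the KKT relation $\bX_{\cI_1}^\top(\by_{1,\cI_1}-\bX_{\cI_1}\hat\btheta_1)/n_1\in\partial\Omega_1(\hat\btheta_1)$, which identifies $\bs_1$ with $\bSigma^{-1/2}$ times (a multiple of) the penalty subgradient at $\hat\btheta_1$, combined with the fact that $\bu_1=\hat\be_1$ is the fitted residual of that same optimization. Together these should cause $\langle\bs_1,\bv_1\rangle$ and $\langle\bt_1,\bu_1\rangle$ to combine in the way required, and tracking this structural input carefully is the only non-routine part; it is the same input that makes the conditional Gordon comparison of Section \ref{app:conditional-gordon} work. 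Granting solvability, the solution set of \eqref{eq:hat-xi} is a line, and moving along $\ker T$ by a parameter $\lambda$ changes $\langle\hat\bxi_h,\bu_1\rangle$ by $\lambda\|\bu_1\|_2\neq0$; hence the affine constraint \eqref{eq:constraint+} is met at exactly one $\lambda$, which pins down $(\hat\bxi_g,\hat\bxi_h)$ uniquely. Conversely this $(\hat\bxi_g,\hat\bxi_h)$ manifestly satisfies \eqref{eq:hat-xi}--\eqref{eq:constraint+}, giving existence; everything beyond the compatibility check is elementary linear algebra.
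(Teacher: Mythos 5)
Your skeleton is the same as the paper's: the paper also treats \eqref{eq:hat-xi} as a square $(p+N)$-dimensional linear system (its matrix $\bM$ in the proof of Lemma \ref{lem:KKT-equiv}), notes that it has rank $p+N-1$ with left null vector $(\bv_1^\top,\bu_1^\top)$ --- i.e.\ the same compatibility condition $\<\bs_1,\bv_1\>+\<\bt_1,\bu_1\>=0$ you derive --- and then uses \eqref{eq:constraint+} to remove the one-dimensional kernel, which is exactly the line you identify. Your kernel computation, your derivation of the compatibility condition, and your uniqueness argument (moving along the kernel changes $\<\hat\bxi_h,\bu_1\>$ by $\lambda\|\bu_1\|_2\neq 0$) are all correct.

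The gap is the one step you defer: verifying $\<\bs_1,\bv_1\>+\<\bt_1,\bu_1\>=0$, which is the entire existence half of the lemma. You assert it ``is not a formal fact'' and should come out of the stationarity conditions of \eqref{eq:param-est}; that is the wrong mechanism, and pursued literally it does not close. The identity is purely definitional: with $\bA=-\bX\bSigma^{-1/2}$ one has $\bs_1=-\bA^\top\bu_1$, and with the sign convention actually used in the conditional-Gordon section ($\bt_1=\bA\bv_1=-\bX(\hat\btheta_1-\btheta_1)$; Eq.~\eqref{eq:uvst} as printed carries a sign slip relative to this), the sum is $\<\bv_1,-\bA^\top\bu_1\>+\<\bu_1,\bA\bv_1\>=0$ for every matrix $\bA$ and every pair $\bu_1,\bv_1$, with no optimality of $\hat\btheta_1$ entering at all. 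This is how the paper argues: solvability of \eqref{eq:hat-xi} is equivalent to the existence of some $\bA$ with $\bA^\top\bu_1+\bs_1=0$ and $\bA\bv_1-\bt_1=0$, and the realized design matrix supplies such an $\bA$ by the very definitions of $\bs_1,\bt_1$. By contrast, if you take $\bt_1=+\bX(\hat\btheta_1-\btheta_1)$ at face value, then $\<\bs_1,\bv_1\>+\<\bt_1,\bu_1\>=2(\hat\btheta_1-\btheta_1)^\top\bX_{\cI_1}^\top\hat\be_{1,\cI_1}$, and the KKT relation you invoke turns this into $2n_1\<\hat\btheta_1-\btheta_1,\bd\>$ for a subgradient $\bd\in\partial\Omega_1(\hat\btheta_1)$, which is generically nonzero for ridge or the Lasso --- so the ``careful tracking'' you postpone is precisely where your argument would stall or force you to sort out the sign convention. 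Since you leave this unproved and point at the wrong source for it, the existence part of your proposal is incomplete, although it is closed in one line once the adjoint structure of $\bs_1,\bt_1$ is used.
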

\noindent Lemma \ref{lem:unique-soln} is proved in Section \ref{app:conditional-gordon}. 
We are ready to state the conditional Gordon inequality for regression.

\begin{lemma}[Conditional Gordon inequality for regression]\label{lem:conditional-gordon}
    Define $\hat \bxi_g,\hat \bxi_h$ to be the unique solution to Eqs.~\eqref{eq:hat-xi} and \eqref{eq:constraint+},
    where $\bu_1,\bv_1,\bs_1,\bt_1$ are defined by Eq.~\eqref{eq:uvst},
    and let $\bxi_g \sim \normal(0,\id_p)$ and $\bxi_h \sim \normal(0,\id_N)$ independent of each other and everything else.
    Define
    \begin{equation}
        \bg_{\mathrm{cg}}(\bu)
            :=
            \frac{\<  \bu_1,\bu \>}{\| \bu_1\|_2}\hat \bxi_g +  \| \sfP_{ \bu_1}^\perp \bu\|_2  \sfP_{ \bv_1}^\perp  \bxi_g,
        \qquad 
        \bh_{\mathrm{cg}}(\bv)
            :=
            \frac{\<  \bv_1 , \bv \> }{\|  \bv_1 \|_2} \hat \bxi_h + \| \sfP_{ \bv_1}^\perp \bv\|_2  \sfP_{ \bu_1}^\perp \bxi_h,
    \end{equation}
    (where the subscript stands for ``conditional Gordon''),
    and define the conditional auxilliary objective
    \begin{equation}\label{eq:m-theta-def}
    \begin{aligned}
        \ell_{2|1}(\bu,\bv)
            &:=
            -\frac1{n_2}\<\bg_{\mathrm{cg}}(\bu),\bv\> 
                + 
                \frac1{n_2}
                \<\bh_{\mathrm{cg}}(\bv),\bu\>
                + \frac1{n_2}\bu^\top \be_2
                - \frac1{2{n_2}} \| \bu_{ \cI_2 } \|_2^2 
                + \bar \Omega_2(\bv).
    \end{aligned}
    \end{equation}
    
    \begin{enumerate}[(i)]
        
        \item 
        If $E_u = E_u(\mathsf{Cond}_1) \in \reals^N$, $E_v = E_v(\mathsf{Cond}_1) \in \reals^p$ are (possibly $\mathsf{Cond}_1$-dependent) compact sets,
        then for any $t\in \reals$
        \begin{equation}
        \begin{aligned}
            \PP\left( \min_{\bv \in E_v} \max_{\bu \in E_u} L_2(\bu,\bv) \leq t \right) 
            \leq 
                2\PP\left( 
                    \min_{\bv \in E_v} \max_{\bu \in E_u} 
                    \ell_{2|1}(\bu,\bv) \leq t\right).
        \end{aligned}
        \end{equation}  

        \item 
        If $E_u = E_u(\mathsf{Cond}_1) \in \reals^N$, $E_v = E_v(\mathsf{Cond}_1) \in \reals^p$ are (possibly $\mathsf{Cond}_1$-dependent) compact, convex sets,
        then for any $t\in \reals$
        \begin{equation}
        \begin{aligned}
            \PP\left( \min_{\bv \in E_v} \max_{\bu \in E_u} L_2(\bu,\bv) \geq t \right) 
            \leq 
                2\PP\left( 
                    \min_{\bv \in E_v} \max_{\bu \in E_u} 
                    \ell_{2|1}(\bu,\bv) \geq t\right).
        \end{aligned}
        \end{equation}  

    \end{enumerate}  
\end{lemma}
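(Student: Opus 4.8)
The plan is to derive Lemma \ref{lem:conditional-gordon} from a \emph{conditional} application of the marginal Gordon inequality (Lemma \ref{lem:gordon-marginal}), after first disposing of the linear-algebraic input, Lemma \ref{lem:unique-soln}. For the latter, I would solve \eqref{eq:hat-xi} explicitly: each of its two vector equations expresses one of $\hat\bxi_g,\hat\bxi_h$ as an affine function of a single scalar — namely $a:=\<\hat\bxi_g,\bv_1\>$, respectively $b:=\<\hat\bxi_h,\bu_1\>$ — so substituting back reduces \eqref{eq:hat-xi} to a $2\times 2$ linear system in $(a,b)$. That system has rank one; its single compatibility condition is an identity automatically satisfied by the vectors of \eqref{eq:uvst}, since both sides are determined by the scalar $\bu_1^\top\bA\bv_1$ (with $\bA=-\bX\bSigma^{-1/2}$ one has $\bt_1=-\bA\bv_1$ and $\bs_1=-\bA^\top\bu_1$). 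Hence \eqref{eq:hat-xi} alone has a one-parameter family of solutions, and the scalar normalization \eqref{eq:constraint+} — which pins down $b=n_1\zeta_1\tau_{h_1}$ — singles out a unique member; here I use $\bu_1\neq\bzero$, $\bv_1\neq\bzero$ (true off a null set) and $\zeta_1>0$, $\tau_{h_1}>0$ from Lemma \ref{lem:bound-on-fixed-pt}.

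For the inequality itself I would condition on $\mathsf{Cond}_1$ of \eqref{eq:Cond-k} — equivalently, on $(\bu_1,\bv_1,\bs_1,\bt_1,\be_1,\be_2)$ — under which $\hat\bxi_g,\hat\bxi_h$ become deterministic. Writing $\sfP_{\bu_1},\sfP_{\bv_1}$ for the orthogonal projections onto $\mathrm{span}(\bu_1),\mathrm{span}(\bv_1)$, one splits the design term in $L_2$ of \eqref{eq:regr-min-max} as
\[
    \bu^\top\bA\bv \;=\; \bu^\top\bA\,\sfP_{\bv_1}\bv \;+\; \bu^\top\sfP_{\bu_1}\bA\,\sfP_{\bv_1}^\perp\bv \;+\; \bu^\top\bB\,\bv,
    \qquad \bB:=\sfP_{\bu_1}^\perp\bA\,\sfP_{\bv_1}^\perp.
\]
Because $\bA\bv_1=-\bt_1$ and $\bA^\top\bu_1=-\bs_1$, the first two summands are $\mathsf{Cond}_1$-measurable functions of $(\bu,\bv)$, bilinear and hence (jointly with the non-$\bA$ terms of $L_2$) convex–concave. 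Thus, conditionally on $\mathsf{Cond}_1$, $L_2(\bu,\bv)$ equals a deterministic convex–concave function of $(\bu,\bv)$ plus $(\sfP_{\bu_1}^\perp\bu)^\top\bB(\sfP_{\bv_1}^\perp\bv)$.

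The crux — and the step I expect to be the main obstacle — is to show that, conditionally on $\mathsf{Cond}_1$, the block $\bB$ is again Gaussian: $\bB\stackrel{d}{=}\sfP_{\bu_1}^\perp\bG\sfP_{\bv_1}^\perp$ with $\bG$ a standard Gaussian matrix independent of $\mathsf{Cond}_1$ (equivalently, in orthonormal bases of $\bu_1^\perp,\bv_1^\perp$, $\bB$ is an $(N-1)\times(p-1)$ standard Gaussian matrix independent of $\mathsf{Cond}_1$). One cannot simply invoke ``a Gaussian conditioned on linear functionals is Gaussian on the orthocomplement,'' because $\bu_1,\bv_1$ themselves depend on $\bA$. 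I would resolve this by a disintegration argument: using the KKT characterization of $\hat\btheta_1$ in \eqref{eq:param-est}, show that $\mathsf{Cond}_1$ is a measurable function of $(\bu_1,\bv_1,\bA\bv_1,\bA^\top\bu_1,\be_1,\be_2)$, then reveal the components of $\bA$ along the $\bu_1$-row and $\bv_1$-column directions together with $(\be_1,\be_2)$ — which determines $\mathsf{Cond}_1$ — and check that the complementary block $\bB$ retains the law of the corresponding free block of a Gaussian matrix. This is where properties of the regression estimator enter essentially, and it must be carried out carefully (the paper defers it to Section \ref{app:conditional-gordon}); the definitions of $\hat\bxi_g,\hat\bxi_h$ are tailored to make it succeed.

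Granting the conditional Gaussianity of $\bB$, I would finish by matching $\ell_{2|1}$ to the Gordon auxiliary process for $\bB$. One checks that $-\<\bg_{\mathrm{cg}}(\bu),\bv\>+\<\bh_{\mathrm{cg}}(\bv),\bu\>$ decomposes into (i) exactly the $\mathsf{Cond}_1$-measurable part of $\bu^\top\bA\bv$ found above — this is the role of \eqref{eq:hat-xi}, with \eqref{eq:constraint+} removing the spurious degree of freedom — plus (ii) the marginal Gordon auxiliary process $-\|\sfP_{\bu_1}^\perp\bu\|_2\<\bxi_g,\sfP_{\bv_1}^\perp\bv\>+\|\sfP_{\bv_1}^\perp\bv\|_2\<\bxi_h,\sfP_{\bu_1}^\perp\bu\>$ attached to the Gaussian block $\bB$ acting between $\bu_1^\perp$ and $\bv_1^\perp$. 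Hence $\ell_{2|1}(\bu,\bv)$ of \eqref{eq:m-theta-def} equals the deterministic part of $L_2(\bu,\bv)$ plus the marginal Gordon auxiliary process for $\bB$, so applying Lemma \ref{lem:gordon-marginal} conditionally on $\mathsf{Cond}_1$ — with the continuous, convex–concave, $\mathsf{Cond}_1$-measurable deterministic part absorbed into $\psi$, and with the (possibly $\mathsf{Cond}_1$-dependent) compact, and for part (ii) convex, sets $E_u,E_v$ — yields both inequalities conditionally on $\mathsf{Cond}_1$. Taking expectations over $\mathsf{Cond}_1$ completes the proof.
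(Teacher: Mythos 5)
Your overall architecture matches the paper's: split $\bu^\top\bA\bv$ into a $\mathsf{Cond}_1$-measurable bilinear part plus $\bB=\sfP_{\bu_1}^\perp\bA\sfP_{\bv_1}^\perp$ acting between $\bu_1^\perp$ and $\bv_1^\perp$, argue that conditionally on $\mathsf{Cond}_1$ this block is Gaussian and independent, apply the marginal Gordon inequality (Lemma \ref{lem:gordon-marginal}) conditionally with the deterministic part absorbed into $\psi$, identify that deterministic part with the $\hat\bxi_g,\hat\bxi_h$ terms of $\ell_{2|1}$ via Eq.~\eqref{eq:hat-xi}, and integrate over $\mathsf{Cond}_1$. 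Your treatment of Lemma \ref{lem:unique-soln} and the final algebraic matching (which you assert; the paper verifies it directly) are fine. The problem is the step you yourself call the crux: the conditional Gaussianity of $\bB$ given $\mathsf{Cond}_1$ is left unproved, and the sketch you give does not contain the idea that makes it true. Measurability of $\mathsf{Cond}_1$ with respect to $(\bu_1,\bv_1,\bA\bv_1,\bA^\top\bu_1,\be_1,\be_2)$ is the trivial direction and says nothing about the conditional law of $\bA$; and ``revealing the components of $\bA$ along the $\bu_1$-row and $\bv_1$-column directions'' is exactly the naive conditioning on data-dependent directions that you correctly warned against at the start of the same paragraph (conditioning a Gaussian matrix on, say, its leading singular pair does not leave the complementary block Gaussian).

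What is actually needed --- and what the paper supplies through Lemma \ref{lem:KKT-equiv} and the fixed-vector comparison Lemma \ref{lem:gen-conditional-gordon} --- is an event identity resting on the \emph{sufficiency} of the KKT conditions together with almost-sure uniqueness of the first-stage saddle point: for every fixed admissible value $(\bu_1',\bv_1',\bs_1',\bt_1',\be_1',\be_2')$, i.e.\ with $\bu'_{1,\cI_1^c}=\bzero$, $\bt'_{1,\cI_1}=\bu'_{1,\cI_1}-\be'_{1,\cI_1}$ and $\bs_1'/n_1\in\partial\bar\Omega_1(\bv_1')$, the event $\{\mathsf{Cond}_1=\text{that value}\}$ coincides with the intersection of the linear-constraint event $\cE_{2|1}(\bu_1',\bv_1',\bs_1',\bt_1')$ of Eq.~\eqref{eq:A-lin-constraint} --- whose constraint vectors are now \emph{deterministic} --- with the independent event $\{\be_1=\be_1',\,\be_2=\be_2'\}$. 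Only after this identity does the standard Gaussian-conditioning fact apply, giving $\sfP_{\bu_1'}^\perp\bA\sfP_{\bv_1'}^\perp\stackrel{\mathrm{d}}{=}\sfP_{\bu_1'}^\perp\tilde\bA\sfP_{\bv_1'}^\perp$ conditionally, after which the rest of your plan (conditional application of Lemma \ref{lem:gordon-marginal}, the identification of the deterministic part with $-\frac{\<\bu_1,\bu\>}{\|\bu_1\|_2}\<\hat\bxi_g,\bv\>+\frac{\<\bv_1,\bv\>}{\|\bv_1\|_2}\<\hat\bxi_h,\bu\>$ using Eq.~\eqref{eq:hat-xi}, and averaging over $\mathsf{Cond}_1$) goes through as you describe. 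You invoke the KKT characterization, but only in the (trivial) measurability direction; the missing ingredient is the converse implication --- any $\bA$ satisfying those fixed affine constraints, together with that noise realization, produces exactly that saddle point --- which is what converts conditioning on the random, $\bA$-dependent object $\mathsf{Cond}_1$ into conditioning on affine constraints with fixed coefficients. Without it, the conditional Gaussianity of $\bB$, and hence the whole comparison, is unjustified.
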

\noindent Lemma \ref{lem:conditional-gordon} is a consequence of a more general comparison inequality which may be of independent interest.
Both Lemma \ref{lem:conditional-gordon} and the more general comparison inequality are proved in Section \ref{app:conditional-gordon}.

\begin{remark}
\label{rmk:cond-obj-interpretation}
    The conditional Gordon inequality for regression takes the same form as the marginal Gordon inequality in Section \ref{sec:marg-char-proof},
    except that $\bg_{\mathrm{mg}}(\bu)$, $\bh_{\mathrm{mg}}(\bv)$ are replaced by $\bg_{\mathrm{cg}}(\bu)$, $\bh_{\mathrm{cg}}(\bv)$.
    The quantity $\bg_{\mathrm{mg}}(\bu)$ has a Gaussian distribution with variance $\| \bu\|_2^2 \id_p$, and the quantity $\bh_{\mathrm{mg}}(\bv)$ has a Gaussian distribution with variance $\| \bv \|_2^2 \id_N$.
    This is not true of the quantities $\bg_{\mathrm{cg}}(\bu)$ and $\bh_{\mathrm{cg}}(\bv)$,
    but we encourage the reader to adopt the intuition that it is, in a certain sense, approximately true. 
    Indeed, 
    we will show using the marginal characterization (Lemma \ref{lem:marginal-characterization}) that $\hat \bxi_g,\hat \bxi_h$ behave like standard Gaussian vectors (where ``behave like'' will be in the sense of concentration of Lipschitz functions, as in the marginal characterization).
    Motivated by this intuition,
    if we replace $\hat \bxi_g$ with a standard Gaussian vector and replace $\sfP_{ \bv_1}^\perp  \bxi_g$ with $\bxi_g$ (which, we will see, negligibly impacts the min-max problem),
    then $\bg_{\mathrm{cg}}(\bu)$ has a Gaussian distribution with variance $\| \bu \|_2^2 \id_p$ and correlation $\< \bu_1, \bu \> / (\| \bu_1 \|_2 \| \bu \|_2)$ with $\hat \bxi_g$.
    Similarly, $\bh_{\mathrm{mg}}(\bv)$ behaves like a Gaussian distribution with variance $\| \bv \|_2^2 \id_p$ and correlation $\< \bv_1, \bv \> / (\| \bv_1 \|_2 \| \bv \|_2)$ with $\hat \bxi_h$.
    Thus, we should think of the conditional Gordon inequality as having the same structure as the marginal Gordon inequality, except that the Gaussian vectors are correlated with the quantities $\hat \bxi_g$ and $\hat \bxi_h$ coming from the first regression, with the correlation is determined implicitly by $\bu$ and $\bv$, respectively.
\end{remark}

\subsection{Proof of the conditional characterization (Lemma \ref{lem:conditional-characterization})}

We now define $\hat \bg_1(\mathsf{Cond}_1)$, $\hat \bh_1(\mathsf{Cond}_1)$ which appear 
in the statement of Lemma \ref{lem:conditional-characterization}:
\begin{equation}\label{eq:hat-g1-hat-h1}
    \hat \bg_1 :=
    \hat \bg_1(\mathsf{Cond}_1)
        := 
        \tau_{g_1} \hat \bxi_g,
    \quad 
    \hat \bh_1 :=
    \hat \bh_1(\mathsf{Cond}_1)
        := 
        \tau_{h_1} \hat \bxi_h,
    \quad 
    \text{for $\hat \bxi_g,\hat \bxi_h$ satisfying Eqs.~\eqref{eq:hat-xi} and \eqref{eq:constraint+}.}
\end{equation}
As in the proof of the marginal characterization (Lemma \ref{lem:marginal-characterization}),
we first characterize the behavior of $\bv_2,\bu_2$ by  comparison to the fixed-design model quantities $\bv_2^f$ and $\bu_2^f$.
Consider functions $\phi_v(\bv;\mathsf{Cond}_1)$ which is 1-Lipschitz in its first argument $\bv \in\reals^p$ and $\phi_u(\bu;\mathsf{Cond}_1)$ which is 1-Lipschitz in its first argument $\bu \in \reals^{n_2}$.
For simplicity of notation, we will often write $\phi_v(\bv)$ and $\phi_u(\bu)$, but the dependence of $\phi_v$ and $\phi_u$ on $\mathsf{Cond}_1$ will be understood.
We show that with high probability 
\begin{equation}\label{eq:cond-E}
\begin{gathered}
    \bv_2 \in 
    E_{v|1}(\epsilon)
        :=
        \Big\{
            \bv \in \reals^p : 
            \Big|
                \phi_v\big( \bv \big)
                -
                \E\big[\phi_v( \bv_2^f ) \bigm| \bg_1^f = \hat \bg_1 \big] 
            \Big| 
            < \epsilon
        \Big\},
    \\
    \bu_2 \in 
    E_{u|1}(\epsilon)
        :=
        \Big\{
            \bu \in \reals^{n_k}
            :
            \Big|
                \phi_u\Big( \frac{\bu}{ \sqrt{n_2}} \Big) 
                - 
                \E\Big[
                \phi_u\Big(
                    \frac{ \bu_{2,\cI_2}^f}{\sqrt{n_2}}
                \Big)
                \Bigm| \bh_1^f = \hat \bh_1,\, \be_1,\be_2
                \Big]
            \Big| 
            < \epsilon
        \Big\}.
\end{gathered}
\end{equation}
Analogous to our proof of the marginal characterization (Lemma \ref{lem:marginal-characterization}),
we will show this by proving the objectives in Eq.~\eqref{eq:vk-uk} are sub-optimal when these constraints are violated.
Precisely:
\begin{lemma}[Conditional control of primary objective]\label{lem:cond-sub-opt}
    There exists a constant $\ell_{2|1}^* = \ell_{2|1}^*(\tau_{e_2},n_2,p,\btheta_2,\Omega_2)$ and $\cPmodel$, $\cPregr$ and regression method-dependent $C',c' > 0$ and $\sC,\sc: \reals_{>0} \rightarrow \reals_{>0}$ such that
    with probability at least $1 - \sC(\epsilon) e^{-\sc(\epsilon) p}$
    \begin{equation}\label{eq:cond-sub-opt}
    \begin{gathered}
        \Big|
            \max_{\substack{\bu \in \reals^N \\ \bu_{\cI_2^c} = 0} }
                    \min_{\bv \in \reals^p}
                L_2(\bu,\bv)
            - 
            \ell_{2|1}^*
        \Big|
        =
        \Big|
            \min_{ \bv \in \reals^p }\
            \max_{\substack{\bu \in \reals^N \\ \bu_{\cI_2^c} = 0} }
                L_2(\bu,\bv)
            - 
            \ell_{2|1}^*
        \Big|
        \leq 
             C'\frac{\epsilon^2}2,
    \\
        \max_{\substack{\bu_{\cI_2} \in E_{u|1}^c(\epsilon)\\ \bu_{\cI_2^c}= 0}}\;
              \min_{\bv \in \reals^p}
                L_2(\bu,\bv)
            \leq \ell_{2|1}^* - C'\epsilon^2,
    \qquad
            \min_{\bv \in E_{v|1}^c(\epsilon)}\;
            \max_{\substack{\bu \in \reals^N \\ \bu_{\cI_2^c} = 0} }
                    L_2(\bu,\bv)
                \geq \ell_{2|1}^* + C'\epsilon^2.
    \end{gathered}
    \end{equation}
\end{lemma}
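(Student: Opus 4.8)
The plan is to mimic the proof of Lemma~\ref{lem:marg-sub-opt}, replacing the marginal Gordon inequality (Lemma~\ref{lem:gordon-marginal}) by the conditional Gordon inequality for regression (Lemma~\ref{lem:conditional-gordon}), and using the marginal characterization (Lemma~\ref{lem:marginal-characterization}) applied to the \emph{first} regression to control the non-Gaussian vectors $\hat\bxi_g,\hat\bxi_h$ entering the conditional auxilliary objective. First I would write the second regression problem \eqref{eq:param-est} in the min--max form \eqref{eq:regr-min-max} with $k=2$, so that its saddle point is $(\bv_2,\bu_2)=(\bSigma^{1/2}(\hat\btheta_2-\btheta_2),\hat\be_2)$, and I would work throughout conditionally on $\mathsf{Cond}_1$. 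Given $\mathsf{Cond}_1$ the vectors $\bu_1,\bv_1,\bs_1,\bt_1$ of Eq.~\eqref{eq:uvst}, and hence $\hat\bxi_g,\hat\bxi_h$ of Eq.~\eqref{eq:hat-g1-hat-h1}, are fixed, so the only remaining randomness in the conditional auxilliary objective $\ell_{2|1}$ of Eq.~\eqref{eq:m-theta-def} is carried by the independent standard Gaussians $\bxi_g\sim\normal(\bzero,\id_p)$, $\bxi_h\sim\normal(\bzero,\id_N)$. I would take $\ell_{2|1}^*$ to be the conditional analogue of $\ell_k^*$ from Eq.~\eqref{eq:lk*}, but using the population noise level $\tau_{e_2}$ in place of the noisy $\tau_{e_2}^*$; this coarser choice is affordable because the target resolution here is $C'\epsilon^2$ rather than $C'(p/n_2)\epsilon^2$, and $|\tau_{e_2}^*-\tau_{e_2}|\le C\sqrt{p/n_2}\le C'$ with high probability over $\be_2$. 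The min--max would first be restricted to the compact (and possibly $\mathsf{Cond}_1$-dependent) sets $\|\bv\|_2\le R$, $\bu\in\cS_u$ as in Eq.~\eqref{eq:marg-compact}, which Lemma~\ref{lem:conditional-gordon} permits.

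The heart of the argument is a \emph{good conditional characterization event}, the conditional analogue of $\cG_k(\epsilon,\Delta)$ in Eq.~\eqref{eq:cEgam-marginal}. I would define it as the event on which (a) the fresh Gaussians $\bxi_g,\bxi_h$ and the noise $\be_2$ produce the expected Gram matrices and norms---this part follows from Gaussian concentration of Lipschitz functions---and (b) the first-regression quantities $\hat\bxi_g,\hat\bxi_h,\bu_1=\hat\be_1,\bv_1=\bSigma^{1/2}(\hat\btheta_1-\btheta_1)$ are jointly typical, in the sense that every Lipschitz functional of them appearing in the analysis of $\ell_{2|1}$ concentrates on the value it would take if $\hat\bxi_g,\hat\bxi_h$ were genuine standard Gaussians, as in the fixed-design model for the first regression. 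Part~(b) is the rigorous form of the heuristic in Remark~\ref{rmk:cond-obj-interpretation}: I would use Lemma~\ref{lem:hat-g-hat-h-db-err} to identify $\hat\bg_1=\tau_{g_1}\hat\bxi_g$ with $\bSigma^{1/2}(\hat\btheta_1^\de-\btheta_1)$, and $\hat\bh_{1,\cI_1}=\tau_{h_1}\hat\bxi_{h,\cI_1}$ with $\hat\be_{1,\cI_1}^\de-\be_{1,\cI_1}$, up to $\ell_2$ error $\sqrt{p/n_1}\,\epsilon$, and then invoke the marginal characterization (Lemma~\ref{lem:marginal-characterization}) for the first regression---in whose fixed-design model the vector $\bSigma^{1/2}(\hat\btheta_1^{f,\de}-\btheta_1)/\tau_{g_1}$ is exactly $\normal(\bzero,\id_p)$. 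This shows the good conditional event holds with probability at least $1-\sC(\epsilon)e^{-\sc(\epsilon)p}$.

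On the good conditional event I would then reproduce the four-step argument of Sections~\ref{app:marginal-characterization-est-err} and~\ref{app:marginal-characterization-residuals} almost verbatim: (1)~replace $\ell_{2|1}$ by an objective $\ell_{2|1}^{(1)}$ obtained from a Gram--Schmidt correction of the fresh Gaussians in $\bh_{\mathrm{cg}}$ and $\bg_{\mathrm{cg}}$, controlling the resulting uniform error on the compact domain; (2)~bound the internal optimization from above and below by plugging in an explicit near-optimal $\bu(\bv)$, resp.\ the value $\bv=\bv_2^f$, producing reduced one-variable objectives $\ell_{2|1}^{(2)}$; (3)~show via the replacements of Eq.~\eqref{eq:vk-repacements} and the $\delta$-method that $\ell_{2|1}^{(2)}$ takes a value within $C'\epsilon^2$ of $\ell_{2|1}^*$ at the fixed-design point, has a subgradient there of norm $\le C'\epsilon^2$, and is $c$-strongly convex (resp.\ $1/n_2$-strongly concave) on the compact domain; (4)~combine these to get quadratic lower and upper bounds on $\ell_{2|1}$ over $E_{v|1}^c(\epsilon)$ and $E_{u|1}^c(\epsilon)$. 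Applying Lemma~\ref{lem:conditional-gordon}(i)--(ii) conditionally on $\mathsf{Cond}_1$ and integrating over $\mathsf{Cond}_1$ transfers these bounds from $\ell_{2|1}$ to $L_2$; Rockafellar's minimax theorem \cite[Corollary~37.3.2]{rockafellar-1970a} exchanges $\min$ and $\max$ over the compact convex sets; and the interior argument of Section~\ref{app:marginal-characterization-preliminaries} removes the compactness constraints, showing the constrained saddle point coincides with $(\bv_2,\bu_2)$ with high probability. Optimality of $(\bv_2,\bu_2)$ then forces $\bv_2\in E_{v|1}(\epsilon)$ and $\bu_2\in E_{u|1}(\epsilon)$, which is exactly Eq.~\eqref{eq:cond-sub-opt}.

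I expect the main obstacle to be part~(b) of the good conditional event: identifying the finite list of Lipschitz functionals of $(\hat\bxi_g,\hat\bxi_h,\bu_1,\bv_1,\be_1,\be_2)$ that must be controlled so that the analysis of $\ell_{2|1}^{(2)}$ goes through, and verifying that every one of them is covered by the marginal characterization for the first regression rather than by some quantity outside the scope of Lemma~\ref{lem:marginal-characterization}. A secondary difficulty is the covariance bookkeeping: matching the conditional law $\bg_2^f\mid\bg_1^f=\hat\bg_1\sim\normal\big(\tau_{g_2}\rho_g\hat\bxi_g,\,\tau_{g_2}^2(\rho_g^\perp)^2\id_p\big)$ (and the analogous law of $\bh_2^f\mid\bh_1^f=\hat\bh_1$) to the vectors $\bg_{\mathrm{cg}}(\bu)$, $\bh_{\mathrm{cg}}(\bv)$ of Lemma~\ref{lem:conditional-gordon}, so that the reduced objective is genuinely driven by the correct fixed-design quantities $\bv_2^f,\bu_2^f$ appearing in $E_{v|1},E_{u|1}$.
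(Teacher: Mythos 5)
Your overall route is the same as the paper's: condition on $\mathsf{Cond}_1$, invoke the conditional Gordon inequality (Lemma~\ref{lem:conditional-gordon}), take the deterministic value $\ell_{2|1}^*=\tfrac12 n_2\tau_{g_2}^2\zeta_2^2+\omega_2$ (population $\tau_{e_2}$, affordable at resolution $\epsilon^2$), define a good conditional characterization event, rerun the four-step analysis of the auxiliary objective on compact sets, transfer to $L_2$, exchange min and max by Rockafellar, and remove the compactness constraints. The paper does exactly this, with the good event $\cG_{2|1}(\epsilon,\Delta)$ of Eq.~\eqref{eq:cEgam} and the conditional fixed-design model \eqref{eq:cond-fixed-des} playing the roles you describe informally as "covariance bookkeeping": the plug-in points are $\bv_2^{cf},\bu_2^{cf}$ (not $\bv_2^f,\bu_2^f$), and in the lower-bound step the test point $\bu(\bv)$ is chosen with a prescribed correlation $\sqrt{n_{12}}\,\zeta_2\tau_{g_2}\rho_{\hat e^{\de}}$ with $\bu_1$ precisely so that $\bg_{\mathrm{cg}}(\bu(\bv))=\zeta_2\bg_2^{cf}$ and the reduced objective is driven by the conditional fixed-design quantities; the strong convexity step then needs a genuinely new ingredient relative to the marginal case (the norm $\|\bb(\bv)\|_{\id-\ba\ba^\top}$ and Lemma~\ref{lem:cos-bound}), which "almost verbatim" glosses over but which goes through.

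The one genuine hole is the obstacle you flagged yourself in part (b) of your good event: several of the functionals you must control live on the index set $\cI_2$ jointly with $\be_2$ (e.g.\ $\<\bu_{1,\cI_2},\be_{2,\cI_2}\>$, $\|\hat\bxi_{h,\cI_2}\|_2$, $\<\hat\bxi_{h,\cI_2},\be_{2,\cI_2}\>$, $\|\bu_{1,\cI_2^c}\|_2$), and these are \emph{not} within the scope of Lemma~\ref{lem:marginal-characterization}, which only characterizes Lipschitz functions of the first regression restricted to $\cI_1$; Lemma~\ref{lem:hat-g-hat-h-db-err} likewise only relates $\hat\bh_{1,\cI_1}$ to $\hat\be^{\de}_{1,\cI_1}-\be_{1,\cI_1}$. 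The paper fills this with Corollary~\ref{cor:ext-to-hat-gh} (extension to $\hat\bg_1,\hat\bh_1$) and, more substantially, Corollary~\ref{cor:ext-to-I2}, whose proof is a separate argument using the rotational invariance of $(\be_{1,\cI_1},\be_{2,\cI_1},\bX_{\cI_1})$, concentration on $SO(n_1)$, and the Gaussianity and independence of $\hat\bxi_{h,\cI_1^c}$; without some such $\cI_1\to\cI_2$ extension your event cannot be shown to have probability $1-\sC(\epsilon)e^{-\sc(\epsilon)p}$. A smaller point: to drop the compactness constraints the paper does not reuse the marginal-style interior argument conditionally; it localizes the unconstrained saddle point $(\bv_2,\bu_2)$ directly via Lemma~\ref{lem:marg-sub-opt} applied to the \emph{second} regression, and handles the sub-optimality bounds outside $\cS_{u|1}$, $\{\|\bv\|_2\le C_v\}$ using Lemma~\ref{lem:phi-|1-conc} together with the concavity line-segment argument; your sketch would need one of these to verify that $E_{u|1}(\epsilon)$, whose center is a conditional expectation, sits inside your compact set.
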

\noindent Lemma \ref{lem:cond-sub-opt} is proved using the conditional Gordon inequality (Lemma \ref{lem:conditional-gordon}), as described in the following sections.

First we prove the conditional characterization (Lemma \ref{lem:conditional-characterization}) using Lemma \ref{lem:cond-sub-opt}.

\begin{proof}[Proof of conditional characterization (Lemma \ref{lem:conditional-characterization})]
    By the optimality of $\bu_2,\bv_2$ (Eq.~\eqref{eq:vk-uk}),
    when Eq.~\eqref{eq:cond-sub-opt} occurs we have $\bu_2 \in E_{u|1}(\epsilon)$ and $\bv_2 \in E_{v|1}(\epsilon)$.
    We conclude that for $\epsilon \leq c'$ with probability at least $1 - \sC(\epsilon)e^{-\sc(\epsilon)p}$ we have $\bu_2 \in E_{u|1}(\epsilon)$ and $\bv_2 \in E_{v|1}(\epsilon)$.
 
    We now establish the second line of Lemma \ref{lem:conditional-characterization}.
    Restricted to the coordinates $\cI_2$,
    the debiased estimate of the noise can be written as (see Eq.~\eqref{eq:noise-est}) $\hat \be_{2,\cI_2}^{\de} = \zeta_2^{-1} \bu_{2,\cI_2}$,
    which is $C$-Lipschitz in $\bu_{2,\cI_2}$ by Lemma \ref{lem:bound-on-fixed-pt}.
    Further, the quantities $\hat \be_{1,\cI_2},\hat \be_{1,\cI_2}^{\de}$ are functions of $\mathsf{Cond}_1$ (see Eqs.~\eqref{eq:noise-est} and \eqref{eq:Cond-k}).
    Thus, $\phi_e\Big(\Big\{\frac{\be_{k,\cI_2}}{\sqrt{n_2}}\Big\},\Big\{\frac{\hat \be_{k,\cI_k}}{\sqrt{n_k}}\Big\},\Big\{\frac{\hat \be_{k,\cI_k}^{\de}}{\sqrt{n_k}}\Big\}\Big)$ is a function of $\hat \be_{2,\cI_2}/\sqrt{n_2}$, $\hat \be_{2,\cI_2}^{\de}/\sqrt{n_2}$ and $\mathsf{Cond}_1$ and is $M_2$-Lipschitz in the first two.
    It is thus a function of $\bu_{2,\cI_2}/\sqrt{n_2}$ and $\mathsf{Cond}_1$ and is $CM_2$-Lipschitz in $\bu_{2,\cI_2}/\sqrt{n_2}$.
    Taking $\phi_u$ to be $1/M_2$ times this function and adjusting the $\cPmodel,\cPregr$, and regression-method dependent constants,
    the event in the second line of Lemma \ref{lem:conditional-characterization} is equivalent to $\bu_2 \not \in E_{u|1}(\epsilon)$. 
    The probability bound in the second line of Lemma \ref{lem:conditional-characterization} follows by the above discussion.

    We now establish the first line of Lemma \ref{lem:conditional-characterization} for OLS, ridge regression, and the $\alpha$-smoothed Lasso with $\alpha_k > 0$, and then extend the result to the Lasso.
    To simplify notation, we remove the subscript $2$ from $\alpha_2$.
    As in the proof of Lemma \ref{lem:marginal-characterization}, we adopt the convention that if we use OLS or ridge regression we set $\alpha = 1$, and if we use the $\alpha$-smoothed Lasso, then $\alpha$ is the smoothing constant.
    For OLS, ridge regression, and the $\alpha$-smoothed Lasso,
    we recall the representation of the estimate and debiased estimate from Eq.~\eqref{eq:v-to-theta} in the proof of Lemma \ref{lem:marginal-characterization}.
    As argued there,
    $\hat \btheta_2,\hat \btheta_2^{\de}$ is $C/\alpha$-Lipschitz in $\bv_2$.
    Then $\phi_\theta(\{\hat \btheta_k^f\},\{\hat \btheta_k^{f,\de}\})$ is a function of $\hat \btheta_2$, $\hat \btheta_2^{\de}$, and $\mathsf{Cond}_1$,
    and is $M_2/\alpha$-Lipschitz in the first two.
    It it is thus $CM_2/\alpha$-Lipschitz in $\bv_2$.
    Thus, for $\epsilon < c'$, with probability at least $1 - \sC(\epsilon)e^{-\sc(\epsilon)p}$, 
    \begin{equation}
        \big|\phi_\theta(\{\hat \btheta_k^f\},\{\hat \btheta_k^{f,\de}\}) - \phi_{\theta|1}(\mathsf{Cond}_1)\big|   
            \leq M_2\epsilon/\alpha.
    \end{equation}

    We now remove the dependence on $\alpha$ in the previous display so that the upper bound does not blow up as $\alpha \rightarrow 0$ and so that we may extend the results to the Lasso (i.e., $\alpha = 0$).
    To do so, we use the approximation technique of \cite{celentano2020lasso} and in the proof of Lemma \ref{lem:marginal-characterization}.
    As we did there, we consider using the Lasso and the $\alpha$-smoothed Lasso in the second regression.
    We introduce notation which distinguishes quantities related to the $\alpha$-smoothed Lasso from those related to the Lasso by adding a subscript $\alpha$ to the former.
    For example, $\hat \btheta_2$ will denote the Lasso estimate, and $\hat \btheta_{2,\alpha}$ will denote the $\alpha$-smoothed Lasso estimate.    
    Each choice of regression method for the second regression leads to a distinct fixed design model. 
    We define $\phi_{\theta|1}^{(\alpha)}$ to be the function defined by Eq.~\eqref{eq:def-phi|1} in the fixed design model corresponding to the use of the $\alpha$-smoothed Lasso in the second regression.

    As argued in the proof of Lemma \ref{lem:marginal-characterization},
    for $0 \leq \alpha' < \alpha < c$,
    we have with probability at least $1 - \sC(\alpha)e^{-\sc(\alpha)p}$ that $\| \hat \btheta_{2,\alpha'} - \hat \btheta_{2,\alpha}\|_2 < C'\sqrt{\alpha}$ and $\| \hat \btheta_{2,\alpha'}^{\de} - \hat \btheta_{2,\alpha}^{\de}\|_2 < C'\sqrt{\alpha}$.
    Moreover,
    we have the following lemma,
    proved in Section \ref{sec:existence-fix-pt}.
    \begin{lemma}\label{lem:cond-alpha-approx}
        Under \textsf{A1} and $\textsf{A2}$,
        we have for $0 \leq \alpha' \leq \alpha < c'$ with probability at least $1 - \sC(\alpha)e^{-\sc(\alpha)p}$ that
        \begin{equation}
            \big|
                \phi_{\theta|1}^{(\alpha')}(\hat \bg_1)
                -
                \phi_{\theta|1}^{(\alpha)}(\hat \bg_1)
            \big|
            < C\Big(M_1 \sqrt{\frac{p}{n_1}} + M_2 \sqrt{\frac{p}{n_2}}\;\Big)\sqrt{\alpha}.
        \end{equation}
    \end{lemma}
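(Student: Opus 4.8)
The plan is to put the two conditional expectations on one probability space and bound their difference pathwise on a high-probability event. First observe that $\phi_{\theta|1}^{(\alpha)}(\hat\bg_1)=\E\big[\phi_\theta(\hat\btheta_1,\hat\btheta_1^{\de},\hat\btheta_2^f,\hat\btheta_2^{f,\de})\,\big|\,\bg_1^f=\hat\bg_1\big]$, where the first two arguments are the random-design first-regression estimates (functions of $\mathsf{Cond}_1$, held fixed in the expectation) and only the last two depend on $\alpha=\alpha_2$. Moreover the $(1,1)$ block of the fixed-point system \eqref{eq:fixed-pt-eqns} decouples from the second regression, so $\tau_{g_1},\tau_{h_1},\zeta_1$, and hence $\hat\bg_1=\tau_{g_1}\hat\bxi_g$, do not depend on $\alpha$. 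Thus all $\alpha$-dependence enters through (a) the conditional law of $\bg_2^f$ given $\bg_1^f=\hat\bg_1$, namely $\normal\big(\tfrac{\tau_{g_2}^{(\alpha)}\rho_g^{(\alpha)}}{\tau_{g_1}}\hat\bg_1,\,(\tau_{g_2}^{(\alpha)})^2(1-(\rho_g^{(\alpha)})^2)\id_p\big)$, and (b) the maps $\by_2^f\mapsto\hat\btheta_2^f=\bSigma^{-1/2}\mathrm{prox}_{\zeta_2^{-1}\bar\Omega_{2,\alpha}}(\by_2^f)$ and $\by_2^f\mapsto\hat\btheta_2^{f,\de}=\bSigma^{-1/2}\by_2^f$ (the latter free of $\alpha$ and $\zeta_2$ by \eqref{eq:fixed-design-est}). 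I will draw a single $\bz\sim\normal(0,\id_p)$ and, for each $\alpha$, set $\bg_2^{f,(\alpha)}:=\tfrac{\tau_{g_2}^{(\alpha)}\rho_g^{(\alpha)}}{\tau_{g_1}}\hat\bg_1+\tau_{g_2}^{(\alpha)}\sqrt{1-(\rho_g^{(\alpha)})^2}\,\bz$, which realizes the correct conditional law simultaneously for $\alpha$ and $\alpha'$.

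The proof then rests on three ingredients. First, the fixed-point parameters $\tau_{g_2}^2,\zeta_2,\rho_g$ are Hölder-$1/2$ continuous in $\alpha$: since the $\alpha$-smoothed $\ell_1$ proximal operator converges to soft-thresholding at rate $\sqrt\alpha$, the functionals $\sR_2,\df_2,\sR_{12}$ are Hölder-$1/2$ in $\alpha$ uniformly on the relevant compact range of their remaining arguments, and the stability analysis of \eqref{eq:fixed-pt-eqns} (the same used to prove Lemma~\ref{lem:marg-alpha-approx}) propagates this through the $(2,2)$ subsystem to $\tau_{g_2}^2,\zeta_2$ and then, via the off-diagonal equation with $\tau_{g_1}$ fixed, to $\rho_g$; together with Lemma~\ref{lem:bound-on-fixed-pt} and $n_k\asymp p$ for the Lasso, this yields $\big|\tfrac{\tau_{g_2}^{(\alpha)}\rho_g^{(\alpha)}}{\tau_{g_1}}-\tfrac{\tau_{g_2}^{(\alpha')}\rho_g^{(\alpha')}}{\tau_{g_1}}\big|\le C\sqrt\alpha$ and $\big|\tau_{g_2}^{(\alpha)}\sqrt{1-(\rho_g^{(\alpha)})^2}-\tau_{g_2}^{(\alpha')}\sqrt{1-(\rho_g^{(\alpha')})^2}\big|\le C\sqrt\alpha/\sqrt p$. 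Second, on the event $\{\|\hat\bg_1\|_2\le C\}$ — which holds with probability at least $1-Ce^{-cp}$ by the marginal characterization (Lemma~\ref{lem:marginal-characterization}) applied to $\mathsf{Cond}_1$, using that $\hat\bxi_g$ behaves like a standard Gaussian and $\tau_{g_1}^2\le C/n_1$, so $\|\hat\bg_1\|_2\le C\sqrt{p/n_1}\le C'$ — and the event $\{\|\bz\|_2\le C\sqrt p\}$, the displayed estimates give $\|\bg_2^{f,(\alpha)}-\bg_2^{f,(\alpha')}\|_2\le C\sqrt\alpha$, hence $\|\by_2^{f,(\alpha)}-\by_2^{f,(\alpha')}\|_2\le C\sqrt\alpha$ and $\|\hat\btheta_{2,\alpha}^{f,\de}-\hat\btheta_{2,\alpha'}^{f,\de}\|_2\le\|\bSigma^{-1/2}\|\cdot C\sqrt\alpha\le C\sqrt\alpha$. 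Third, for the non-debiased estimate I split the change of $\hat\btheta_2^f$ into the contribution of the data change ($\le C\sqrt\alpha$ by $1$-Lipschitzness of the prox), of the change $|\zeta_2^{-1,(\alpha)}-\zeta_2^{-1,(\alpha')}|\le C\sqrt\alpha$ in the regularization multiplier (controlled by the $\mu_{\min}$-strong convexity of the fixed-design objective in $\bpi$ together with $\|\nabla\Omega_{2,\alpha}\|_2\le C$ for the Lasso), and of the change in the smoothing parameter of the penalty ($\le C\sqrt\alpha$ by a prox-comparison argument analogous to the one in the proof of Lemma~\ref{lem:marg-alpha-approx}), giving $\|\hat\btheta_{2,\alpha}^f-\hat\btheta_{2,\alpha'}^f\|_2\le C\sqrt\alpha$ on the same event.

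On that event, $M_2$-Lipschitzness of $\phi_\theta$ in its last two arguments gives $\big|\phi_\theta(\hat\btheta_1,\hat\btheta_1^{\de},\hat\btheta_{2,\alpha}^f,\hat\btheta_{2,\alpha}^{f,\de})-\phi_\theta(\hat\btheta_1,\hat\btheta_1^{\de},\hat\btheta_{2,\alpha'}^f,\hat\btheta_{2,\alpha'}^{f,\de})\big|\le CM_2\sqrt\alpha$; taking expectation over $\bz$ and absorbing the complementary (exponentially small in $p$) event by the standard truncation argument — replacing $\phi_\theta$ by its restriction to a ball of radius $\mathrm{poly}(p)$, which changes nothing with overwhelming probability because the quantities involved have Gaussian tails — yields $\big|\phi_{\theta|1}^{(\alpha')}(\hat\bg_1)-\phi_{\theta|1}^{(\alpha)}(\hat\bg_1)\big|\le CM_2\sqrt\alpha\le C\big(M_1\sqrt{p/n_1}+M_2\sqrt{p/n_2}\big)\sqrt\alpha$, the last step using $p/n_2\ge c$ for the Lasso. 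I expect the main obstacle to be the first ingredient: establishing the quantitative Hölder-$1/2$ stability of the \emph{coupled} fixed-point parameters under a perturbation of the penalty that itself only converges at rate $\sqrt\alpha$; once that is in hand (it should already be available from, or parallel, the proof of Lemma~\ref{lem:marg-alpha-approx}), the remaining steps are routine Lipschitz and strong-convexity bookkeeping plus the already-proved marginal characterization.
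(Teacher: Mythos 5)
Your overall architecture matches the paper's: both proofs couple the $\alpha$-smoothed and unsmoothed fixed-design models for the second regression through a shared Gaussian, both rest on $\sqrt{\alpha}$-stability of the fixed-point parameters (including $\rho_g$, which you correctly propose to get by perturbing the off-diagonal equation \eqref{eq:simul-fix-pt} with $\tau_{g_1},\zeta_1$ fixed and using that $|\partial_{\rho_g}\sR_{12}|\cdot n_{12}/(n_1n_2)$ is strictly below $\tau_{g_1}\tau_{g_2}$ --- exactly the paper's argument via Eq.~\eqref{eq:cor-deriv}), and both finish with $M_2$-Lipschitzness of $\phi_\theta$ in its last two arguments. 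Where you genuinely diverge is the final transfer: you couple the \emph{conditional} laws of $\bg_2^f$ given $\bg_1^f=\hat\bg_1$ and aim for a pathwise bound $CM_2\sqrt{\alpha}$ on the event $\{\|\hat\bg_1\|_2\le C\}$, whereas the paper first proves the unconditional $L^1$ bound $\E\big[|\phi_{\theta|1}^{(\alpha)}(\bg_1^f)-\phi_{\theta|1}^{(0)}(\bg_1^f)|\big]\le CM_2\sqrt{\alpha}$ in the fixed-design model and then transfers to $\hat\bg_1$ by noting that $|\phi_{\theta|1}^{(\alpha)}-\phi_{\theta|1}^{(0)}|$ is $C(M_1+M_2\sqrt{n_1/n_2})$-Lipschitz and applying Corollary \ref{cor:ext-to-hat-gh} with $\epsilon=\sqrt{\alpha}$; that concentration step is the source of the $(M_1\sqrt{p/n_1}+M_2\sqrt{p/n_2})$ prefactor, which your route would not even need. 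If your pathwise ingredients hold, your bound is in fact stronger, and the conditional-coupling viewpoint is cleaner.

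Two caveats. First, the heaviest ingredient you defer --- Hölder-$1/2$ stability in $\alpha$ of $\tau_{g_2}$ and $\zeta_2$ --- is \emph{not} actually proved inside this paper's Lemma \ref{lem:marg-alpha-approx}: there it is imported from Lemma A.5 of \cite{celentano2020lasso}, and the paper's coupling is deliberately built around the intermediate estimator $\hat \btheta_{2,*}^f$ (the $\alpha$-smoothed prox with the $\alpha$-parameters applied to a rescaled copy of $\bg_{2,0}^f$) precisely so that the $\zeta_2$-change and the smoothing-change never have to be separated; your plan instead needs an explicit $|\zeta_{2,\alpha}-\zeta_{2,\alpha'}|\le C\sqrt{\alpha}$ plus a direct prox-vs-smoothed-prox comparison at fixed multiplier, i.e.\ a genuine $(2,2)$-subsystem stability analysis that you would have to supply (or likewise cite). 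Second, a bookkeeping slip: the lemma is only needed when the \emph{second} regression is the Lasso, so \textsf{A2} gives $n_2\asymp p$ but not $n_1\asymp p$ (the first regression may be OLS/ridge with $n_1/p$ unbounded); then your claimed $|\tau_{g_2}^{(\alpha)}\rho_g^{(\alpha)}/\tau_{g_1}-\tau_{g_2}^{(\alpha')}\rho_g^{(\alpha')}/\tau_{g_1}|\le C\sqrt{\alpha}$ degrades to $C\sqrt{n_1/n_2}\,\sqrt{\alpha}$, and you must compensate by using the sharper event $\|\hat\bg_1\|_2\le C\sqrt{p/n_1}$ (which the marginal characterization does give), after which the product is again $C\sqrt{p/n_2}\,\sqrt{\alpha}\le C\sqrt{\alpha}$. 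Neither issue breaks the argument, but both need to be addressed for the proof to close.
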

    \noindent 
    Combining these bounds, 
    we have for $\alpha,\epsilon < c'$ and $0\leq \alpha' \leq \alpha$ with probability at least $1 - \sC(\alpha,\epsilon)e^{-\sc(\alpha,\epsilon)p}$ that 
    \begin{equation}
    \begin{aligned}
        \big|
            \phi_\theta(\hat \btheta_1,\hat \btheta_1^\de,\hat \btheta_{2,\alpha'},\hat \btheta_{2,\alpha'}^\de) 
            &- 
            \phi_{\theta|1}^{(\alpha')}(\hat \bg_1)
        \big|   
            \leq
            \big|
                \phi_\theta(\hat \btheta_1,\hat \btheta_1^\de,\hat \btheta_{2,\alpha'},\hat \btheta_{2,\alpha'}^\de) 
                - 
                \phi_\theta(\hat \btheta_1,\hat \btheta_1^\de,\hat \btheta_{2,\alpha},\hat \btheta_{2,\alpha}^\de) 
            \big|
        \\
            &\qquad \qquad +
            \big| 
                \phi_\theta(\hat \btheta_1,\hat \btheta_1^\de,\hat \btheta_{2,\alpha},\hat \btheta_{2,\alpha}^\de) 
                - 
                \phi_{\theta|1}^{(\alpha)}(\hat \bg_1)
            \big|  
            +
            \big| 
                \phi_{\theta|1}^{(\alpha)}(\hat \bg_1)
                -
                \phi_{\theta|1}^{(\alpha')}(\hat \bg_1)
            \big|
        \\
            &\leq 
            C'M_2\sqrt{\alpha} + M_2\epsilon / \alpha + C\Big(M_1 \sqrt{\frac{p}{n_1}} + M_2 \sqrt{\frac{p}{n_2}}\;\Big)\sqrt{\alpha}.
    \end{aligned}  
    \end{equation}
    If we take $\alpha = \epsilon^{2/3}$
    and 
    adjust $\sc(\epsilon),\sC(\epsilon)$,
    we get, in the case of the $\alpha$-smoothed Lasso with smoothing parameter $\alpha' \leq \epsilon^{2/3}$, 
    the upper bound provided by the first line in Lemma \ref{lem:conditional-characterization}.
    For $\alpha' > \epsilon^{2/3}$, we may directly apply Eq.~\eqref{eq:non-Lasso-conc} to get the first line in Lemma \ref{lem:conditional-characterization}.
\end{proof}

\begin{remark}[Rates of concentration]
    The rate of concentration provided by Lemma \ref{lem:conditional-characterization} is slower than that provided by the marginal characterization (Lemma \ref{lem:marginal-characterization}),
    and is slower than the corresponding rate of concentration in the fixed design models (see Remark \ref{rmk:marg-rates-conc}).
    In the fixed design models, $\bg_2^f$ and $\bh_2^f$ have a Gaussian distribution conditional on $\bg_1^f = \hat \bg_1(\mathsf{Cond}_1),\bh_1^f = \hat \bh_1(\mathsf{Cond}_1)$,
    with variance $\tau_{g_2}^2 \rho_g^{\perp^2}$ and $\tau_{h_2}^2 \rho_h^{\perp2}$, respectively.
    By Lemma \ref{lem:bound-on-fixed-pt},
    these variances are of order $1/n_2$ and $p/n_2$.
    Thus, in the fixed design models, with conditional probability at least $1 - Ce^{-cp\epsilon^2}$
    the quantities $\phi_\theta(\{\hat \btheta_k^f\},\{\hat \btheta_k^{f,\de}\})$ and $\phi_e\Big(\Big\{\frac{\hat \be_{k,\cI_2}}{\sqrt{n_2}}\Big\},\Big\{\frac{\hat \be_{k,\cI_2}^{\de}}{\sqrt{n_2}}\Big\}\Big)$ fluctuate by no more than $C'M_2\sqrt{p/n_2}\,\epsilon$ from their conditional mean. 
    The bound provided by Lemma \ref{lem:conditional-characterization} is worse by a factor $\sqrt{n_2/p}$, which can be large under assumption $\mathsf{A2}$.
    We do not know whether this discrepancy is fundamental or an artefact of our proofs.
\end{remark}

We prove Lemma \ref{lem:cond-sub-opt} in the next section.

\subsection{Proof of Lemma \ref{lem:cond-sub-opt}: conditional control of the primary objective}
\label{sec:cond-char-proof}

It is useful to introduce a new fixed-design model which we call \emph{conditional fixed-design model}.
The conditional fixed-design model is defined on the same probability space as the conditional auxilliary objective \eqref{eq:m-theta-def}.
It is
\begin{equation}\label{eq:cond-fixed-des}
    \begin{gathered}
    \textbf{Conditional fixed-design model}\\
    \textbf{for parameter estimation}\\
    \bY^{cf}
        := 
        \bSigma^{1/2} \bTheta + \bG^{cf}
        \;\;
        \text{(CFD-P)}
    \end{gathered}
    \qquad\qquad 
    \begin{gathered}
    \textbf{Conditional Fixed-design model}\\
    \textbf{for noise estimation}\\
    \bR^{cf} = \bE + \bH^{cf}
    \;\;
    \text{(CFD-N)}
    \end{gathered}
\end{equation}
where
\begin{equation}\label{eq:cf-quantities}
    \bg_1^{cf} = \tau_{g_1}\hat \bxi_g = \hat \bg_1,
    \quad 
    \bg_2^{cf} = \tau_{g_2}(\rho_g \hat \bxi_g + \rho_g^\perp \bxi_g),
    \quad 
    \bh_1^{cf} = \tau_{h_1} \hat \bxi_h = \hat \bh_1,
    \quad 
    \bh_2^{cf} = \tau_{h_2}(\rho_h \hat \bxi_h + \rho_h^\perp \bxi_h).
\end{equation}
We define parameter and noise estimates in the conditional fixed-design model just as we did in the (unconditional) fixed-design model.
In particular,
\begin{equation}\label{eq:cond-fixed-design-est}
\begin{gathered}
    \hat \btheta_k^{cf}
        :=
        \eta_k(\by_k^{cf};\zeta_k)
        :=
        \argmin_{\bpi \in \reals^p}
        \Big\{
            \frac12 \| \by_k^{cf} - \bSigma^{1/2}\bpi\|_2^2 + \frac1{\zeta_k} \Omega_k(\bpi)
        \Big\},
    \\
    \bv_k^{cf}
        :=
        \argmin_{\bpi \in \reals^p}
        \Big\{
            \frac{\zeta_k}2 \| \bg_k^{cf} - \bpi\|_2^2 + \bar  \Omega_k(\bpi)
        \Big\},
    \\
    \hat \btheta_k^{{cf},\de}
        :=
        \hat \btheta_k^{cf}
        +
        \bSigma^{-1/2}(\by_k^{cf} - \bSigma^{1/2}\hat \btheta_k^{cf})
        =
        \bSigma^{-1/2}\by_k^{cf}.
\end{gathered}
\end{equation}
and
\begin{equation}
    \bu_{k,\cI}^{cf} = \hat \be^{cf}_{k,\cI} 
        =
        \begin{cases}
            \zeta_k\br^{cf}_{\cI_k} \quad &\text{if } \cI = \cI_k,\\
            \bzero \quad & \text{if } \cI = \cI_k^c,
        \end{cases}
    \qquad 
    \hat \be_k^{cf,\de}
        =
        \frac{1-\zeta_k}{\zeta_k} \hat \be_k^{cf}  
        +
        \Big(
            \br_k^{cf} - \frac{1-\zeta_k}{\zeta_k} \hat \be_k^{cf}
        \Big)
        =
        \br_k^{cf}.
\end{equation}

The value of the primary min-max problem concentrates on
\begin{equation}\label{eq:cthe*-wthe*-def}
\begin{gathered}
        \ell_{2|1}^*
        := 
        \frac12 n_2 \tau_{g_2}^2\zeta_2^2 + \omega_2,
\end{gathered}
\end{equation}
where $\omega_2$ is defined in Eq.~\eqref{eq:sig*-e*-ome*}.
We will consider the min-max problem under the compact constraints
\begin{equation}
    \| \bv \|_2 \leq C_v,
    \qquad 
    \bu \in \cS_{u|1}
        := 
        \Big\{
            \bu \in \reals^N
            \Bigm| 
            \frac1{\sqrt{n_2}}\big\|\bu_{\cI_2}\big\|_2 \leq C_u,
            \; \bu_{\cI_2^c} = 0
        \Big\}.
\end{equation}
For values of $C_v,C_u$ to be chosen below.
It will suffice that $C_v,C_u \geq C_{\mathrm{min}}$, where $C_{\mathrm{min}}$ only depends on $\cPmodel,\cPregr$, and the regression method.

In Section \ref{app:conditional-characterization-est-err},
we establish for $\epsilon < c'$ the lower bounds
\begin{equation}\label{eq:m0-lower-bound}
\begin{gathered}
    \P\Big(
            \min_{\substack{\bv \in E_{v|1}^c(\epsilon) \\ \| \bv \|_2 \leq C_v } }\;
            \max_{\bu \in \cS_{u|1}  }
                    \ell_{2|1}(\bu,\bv)
                \geq 
                \ell_{2|1}^* + C'\epsilon^2 
        \Big) \geq 1 - \sC(\epsilon)e^{-\sc(\epsilon)p},
    \\
    \P\Big(
        \min_{ \| \bv \|_2 \leq C_v  }\;
        \max_{\bu \in \cS_{u|1}  }
                \ell_{2|1}(\bu,\bv)
        \geq
        \ell_{2|1}^* 
        - C'\frac{\epsilon^2}2
    \Big) \geq 1 - \sC(\epsilon)e^{-\sc(\epsilon)p},
\end{gathered}
\end{equation}
where the $C'$ in both bounds is the same.
In Section \ref{app:conditional-characterization-residuals},
we establish for $\epsilon < c'$ the upper bounds
\begin{equation}\label{eq:mthe-bound}
\begin{gathered}
    \P\Big(
            \max_{
                \substack{
                    \bu_{ \cI_2 } \in E_{u|1}^c(\epsilon)\\
                                    \bu \in \cS_{u|1}  }}
                        \min_{\|\bv\|_2 \leq C_v }
                        \ell_{2|1}(\bu,\bv)
                \leq 
                \ell_{2|1}^* - C'\epsilon^2 
        \Big) \geq 1 - \sC(\epsilon)e^{-\sc(\epsilon)p},
    \\
    \P\Big(
        \min_{\|\bv\|_2 \leq C_v }
        \max_{\bu \in \cS_{u|1} }
                \ell_{2|1}(\bu,\bv)
                \leq 
                \ell_{2|1}^* + C'\frac{\epsilon^2}{2}
        \Big) \geq 1 - \sC(\epsilon)e^{-\sc(\epsilon)p},
\end{gathered}
\end{equation} 
where the $C'$ in both bounds is the same.
The preceding probabilities are taken over the randomness in $\be_1,\be_2,\hat \bxi_g,\hat \bxi_h,\bxi_g,\bxi_h$.

Applying the conditional Gordon inequality for regression (Lemma \ref{lem:conditional-gordon}),
we conclude that the previous two displays hold with $L_2$ in place of $\ell_{2|1}$ and adjusted constants.
Because the minimization and maximization are over compact convex sets,
by \cite[Corollary 37.3.2]{rockafellar-1970a} the problem
\begin{equation}
    \max_{ \bu \in \cS_{u|1} }\;
    \min_{\|\bv\|_2 \leq C_v }
        L_2(\bu,\bv)
    =
    \min_{\|\bv\|_2 \leq C_v }\;
    \max_{ \bu \in \cS_{u|1} }         
        L_2(\bu,\bv)        
\end{equation}
has a saddle-point $\hat \bu, \hat \bv$ and minimization and maximization can be exchanged.
Thus,
for $\epsilon < c'$ with probability at least $1 - \sC(\epsilon)e^{-\sc(\epsilon)p}$ 
\begin{equation}\label{eq:cond-obj-bound}
\begin{gathered}                  
    \max_{
    \substack{
        \bu_{\cI_2} \in E_{u|1}^c(\epsilon)\\
                        \bu \in \cS_{u|1} }}\;
            \min_{\|\bv\|_2 \leq C_v}
        L_2(\bu,\bv)
        \leq \ell_{2|1}^* - C'\epsilon^2,
    \quad
    \min_{\substack{\bv \in E_{v|1}^c(\epsilon)\\\|\bv\|_2 \leq C_v}}\;
    \max_{\bu \in \cS_{u|1} }
            L_2(\bu,\bv)
        \geq \ell_{2|1}^* + C'\epsilon^2,
    \\
    \Big|
        \max_{
        \substack{\bu \in \cS_{u|1} }}
                \min_{\|\bv\|_2 \leq C_v }
            L_2(\bu,\bv)
        - 
        \ell_{2|1}^*
    \Big|
    =
    \Big|
        \min_{\|\bv\|_2 \leq C_v }
        \max_{\bu \in \cS_{u|1} }
            L_2(\bu,\bv)
        - 
        \ell_{2|1}^*
    \Big|
    \leq 
         C'\frac{\epsilon^2}2,
\end{gathered}
\end{equation}
where the $C'$ in the previous three bounds can be taken to be equal.

To relax the restriction of the optimizations to compact sets, 
we use Lemma \ref{lem:marg-sub-opt}.
In particular, applying Lemma \ref{lem:marg-sub-opt} for the functions $\phi_v(\bv) = \| \bv \|_2$ and $\phi_u(\bu) = \| \bu - \zeta_2 \be_{2,\cI_2}\|_2 / \sqrt{n_2}$, 
we conclude that the saddle point $\bv_2,\bu_2$ for the unconstrained problem satisfies with probability at least $1 - \sC(\epsilon)e^{-\sc(\epsilon)p}$ that $\| \bv_2 \|_2 < C \sqrt{p/n_2}$ and $\| \bu_{2,\cI_2} - \zeta_2 \be_{2,\cI_2}\|_2 / \sqrt{n_2} < C \sqrt{p/n_2}$ for sufficiently large $C$.
Further, with probability at least $1 - \sC(\epsilon)e^{-\sc(\epsilon)p}$, we have $\| \zeta_2 \be_{2,\cI_2} \|_2 / \sqrt{n_2} < C$, whence $\| \bu_{2,\cI_2} \|_2 / \sqrt{n_2} < C$.
Thus, taking $C_v,C_u$ sufficiently large, we have that with probability at least $1 - \sC(\epsilon)e^{-\sc(\epsilon)p}$ we have $\| \bv_2 \|_2 < C_v$ and $ \| \bu_{2,\cI_2} \|_2 / \sqrt{n_2} < C_u$.
On this event, $\bv_2,\bu_2$ is also the saddle point for the min-max problem on the restricted domains $\| \bv_2 \|_2 < C_v$ and $\bu \in \cS_{u|1}$.
That is, $\hat \bv = \bv_2$ and $\hat \bu = \bu_2$. Then, the value of the min-max problems in the second line of Eq.~\eqref{eq:cond-obj-bound} is unaffected by removing the compactness constraints.

To remove the compactness constraints in the first line of Eq.~\eqref{eq:extend-bound},
it suffices to show that with probability at least $1 - \sC(\epsilon)e^{-\sc(\epsilon)p}$
\begin{equation}\label{eq:extend-bound}
   \sup_{\bu \in \cS_{u|1}^c }\;
        \min_{\bv \in \reals^p}
        L_2(\bu,\bv)
        \leq \ell_{2|1}^* - C'\epsilon^2,
    \qquad
    \inf_{\|\bv\|_2 > C_v}\;
    \max_{ \bu_{\cI_2^c} = 0 }
            L_2(\bu,\bv)
        \geq \ell_{2|1}^* + C'\epsilon^2.
\end{equation}
Indeed, then 
\begin{equation}
\begin{aligned}
    \max_{
        \bu_{\cI_2} \in E_{u|1}^c(\epsilon) }\;
            \min_{\bv \in \reals^p}
        L_2(\bu,\bv)
        &\leq 
        \max\Big\{
            \max_{
                \substack{\bu_{\cI_2} \in E_{u|1}^c(\epsilon)\\ \bu \in \cS_{u|1} }}\;
                    \min_{\bv \in \reals^p}
                L_2(\bu,\bv)
            ,\;
            \sup_{\bu \in \cS_{u|1}^c }\;
            \min_{\bv \in \reals^p}
            L_2(\bu,\bv)
            \leq \ell_{2|1}^* - C'\epsilon^2
        \Big\}
    \\
        &\leq 
        \max\Big\{
            \max_{
                \substack{\bu_{\cI_2} \in E_{u|1}^c(\epsilon)\\ \bu \in \cS_{u|1} }}\;
                    \min_{ \| \bv \|_2 \leq C_v}
                L_2(\bu,\bv)
            ,\;
            \sup_{\bu \in \cS_{u|1}^c }\;
            \min_{\bv \in \reals^p}
            L_2(\bu,\bv)
            \leq \ell_{2|1}^* - C'\epsilon^2
        \Big\},
\end{aligned}
\end{equation}
and we can bound the right-hand side using Eqs.~\eqref{eq:cond-obj-bound} and \eqref{eq:extend-bound}.
We can combine Eqs.~\eqref{eq:cond-obj-bound} and \eqref{eq:extend-bound} to lower bound the min-max problem in the second line of Eq.~\eqref{eq:cond-sub-opt} similarly.

We now show Eq.~\eqref{eq:extend-bound} using Lemma \ref{lem:marg-sub-opt}.
First, consider $\phi_u(\bu) = \| \bu \|_2 / \sqrt{n_2}$.
By Lemma \ref{lem:phi-|1-conc} and because $\E[\|\bu_{2,\cI_2}^f\|_2 ]/ \sqrt{n_2} \leq \zeta_2(\tau_{e_2}^2 + \tau_{h_2}^2)^{1/2} \leq C'$ (see Eq.~\eqref{eq:res-ref} and Lemma \ref{lem:bound-on-fixed-pt}),
we have with probability at least $1 - Ce^{-cp}$ 
that $\E\big[\phi_u(\bu_{2,\cI_2}^f/\sqrt{n_2}) \bigm| \bh_1^f = \hat \bh_1\big] \leq C' $.
Recalling the definition of $E_{u|1}(\epsilon)$ (see Eq.~\eqref{eq:cond-E}), we have for $\epsilon < c'$ sufficiently small and $C_u$ sufficiently large with probability at least $1 - \sC(\epsilon)e^{-\sc(\epsilon)p}$ that $E_{u|1}(\epsilon)$ is contained in the interior of $\cS_{u|1}$.
The first bound in Eq.~\eqref{eq:extend-bound} now follows by a similar argument used to remove the compactness constraint in Eq.~\eqref{eq:primal-prec-bound} in the proof of Lemma \ref{lem:marg-sub-opt}.
For completeness, we repeat the details here.
Consider the event that $\hat \bu \in E_{u|1}(\epsilon)$ and $E_{u|1}(\epsilon)$ is contained in the interior of $\cS_{u|1}$.
Consider any $\bu' \in \cS_{u|1}^c$.
Because the function $\bu \mapsto \min_{\|\bv\|_2 \leq C_v} L_2(\bu,\bv)$ is concave, it is non-decreasing on the line segment joining $\bu'$ to $\hat \bu$, and because $E_{u|1}(\epsilon)$ is contained in the interior of $\cS_{u|1}$ and $\hat \bu \in E_{u|1}(\epsilon)$,
this line segment intersects with $E_{u|1}^c(\epsilon) \cap \cS_{u|1}$ at some point $\bu''$.
Then, 
\begin{equation}
    \min_{\|\bv\|_2 \leq C_v} L_2(\bu',\bv)
    \leq 
    \min_{\|\bv\|_2 \leq C_v} L_2(\bu'',\bv)
    \leq 
    \max_{
        \substack{
            \bu \in E_{u|1}^c(\epsilon)\\
                            \bu \in \cS_{u|1} } }\;
            \min_{\|\bv\|_2 \leq C_v}
        L_2(\bu,\bv)
    \leq 
    \ell_{2|1}^*- C'\epsilon^2.
\end{equation}
Because this applies to all $\bu' \in S_u^c$, we conclude that with probability at least $1 - \sC(\epsilon)e^{-\sc(\epsilon)p}$ 
the first bound in Eq.~\eqref{eq:extend-bound}.
The second bound in Eq.~\eqref{eq:extend-bound} occurs by a similar argument, using $\phi_v(\bv) = \| \bv \|_2$.

Provided we can prove Eqs.~\eqref{eq:m0-lower-bound} and \eqref{eq:mthe-bound},
we have concluded Lemma \ref{lem:cond-sub-opt}.
We establish these bounds in the next three sections.

\subsubsection{The good conditional characterization event}
\label{app:conditional-characterization-preliminaries}

To prove Eqs.~\eqref{eq:m0-lower-bound} and \eqref{eq:mthe-bound},
we show that the upper and lower bounds on the min-max problem are implied by the occurrence of a certain high-probability event.
Recall the functions $\bT(\ba_1,\ldots,\ba_r) \in \SS_+^r$ introduced in Section \ref{sec:good-marg-char-event}.
The good conditional characterization event is
\begin{equation}
\begin{aligned}\label{eq:cEgam}
    &\cG_{2|1}(\epsilon,\Delta)
        :=
    \\
        \Big\{\;
            &\Big\| 
                \bT\Big(
                    \frac{\be_{2,\cI_2}}{\sqrt{n_2}},
                    \frac{\bu_{1,\cI_2}}{\sqrt{n_1}},
                    \frac{\hat \bxi_{h,\cI_2}}{\sqrt{n_2}},
                    \frac{\bxi_{h,\cI_2}}{\sqrt{n_2}}
                \Big) 
                - 
                \E\Big[
                    \bT\Big(
                        \frac{\be_{2,\cI_2}}{\sqrt{n_2}},
                        \frac{\bu_{1,\cI_2}^f}{\sqrt{n_1}},
                        \frac{\bh_{1,\cI_2}^f}{\sqrt{n_2}\, \tau_{h_1}} ,
                        \frac{\bh_{2,\cI_2}^f - (\tau_{h_2}\rho_h/\tau_{h_1})\bh_{1,\cI_2}^f}{\sqrt{n_2}\, \tau_{h_2}\rho_h^\perp }
                    \Big) 
                \Big] 
            \Big\|_{\sF} < \epsilon,\;
        \\
            &\Big\| 
                \bT\Big(
                    \sqrt{\frac{n_1}{p}}\,\bv_1,
                    \sqrt{\frac{n_2}{p}}\,\bv_2^{cf},
                    \frac{\hat \bxi_g}{\sqrt{p}},
                    \frac{\bxi_g}{\sqrt{p}}
                \Big) 
                - 
                \E\Big[
                    \bT\Big(
                        \sqrt{\frac{n_1}{p}}\,\bv_1^f,
                        \sqrt{\frac{n_2}{p}}\,\bv_2^f,
                        \frac{\bg_1^f}{\sqrt{p}\,\tau_{g_1}},
                        \frac{\bg_2^f - (\tau_{g_2}\rho_g/\tau_{g_1})\bg_1^f}{\sqrt{p}\,\tau_{g_2}\rho_g^\perp}
                    \Big) 
                \Big] 
            \Big\|_{\sF} < \epsilon,\;
        \\
            &\big|
                \bar \Omega_k(\bv_k^{cf})
                -
                \omega_k
            \big| < \epsilon,\;
            \Big| \frac{\|\bu_{1,\cI_2}\|_2}{\sqrt{n_1}} - \sqrt{n_{12}}\tau_{g_1}\zeta_1 \Big| < \epsilon,\;
            \Big| \frac{\|\bu_{1,\cI_2^c}\|_2}{\sqrt{n_1}} - \sqrt{n_1-n_{12}}\tau_{g_1}\zeta_1 \Big| < \epsilon,
        \\
            &\Big| \frac{\| \be_{2,\cI_2} \|_2}{\sqrt{n_2}} - \tau_{e_2} \Big| < \epsilon,\;
            \frac{\| \proj_{\bv_1} \bxi_g \|_2}{\sqrt{n_2}} < \epsilon,\;
            \frac{\| \proj_{\bu_2} \bxi_h \|_2}{\sqrt{n_2}} < \epsilon,\;
            \bv_2^{cf} \in E_{v|1}(\Delta/2),\;
            \bu_2^{cf} \in E_{u|1}(\Delta/2)
    \;\Big\}.
\end{aligned}
\end{equation}
Note the quantity $\| \bu_{1,\cI_2}^f \|_2 / \sqrt{n_1}$ is at most of order 1, though it can be substantially smaller if $\cI_1 \cap \cI_2$ is small.
In all other cases, we have normalized the arguments to $\bT$ so that their $\ell_2$ norm is of order 1 (see Lemma \ref{lem:bound-on-fixed-pt}).
For future reference, we have written the expectations appearing in the definition of $\cG_{2|1}$ in Section \ref{sec:identity-ref}.
In words,
the good conditional characterization event is the event that several important quantities are close to their expectations.
It has high probability.
\begin{lemma}[Gordon's good conditional characterization event]\label{lem:conditional-concentration-event}
    Assume \textsf{A1} and \textsf{A2}.
    There exist $\cPmodel$, $\cPregr$ and regression method-dependent $c' > 0$ and $\sC,\sc: \reals_{>0}^2 \rightarrow \reals_{>0}$ such that
    for $\epsilon,\Delta < c'$
    with probability at least $1 - \sC(\epsilon,\Delta)e^{-p\sc(\epsilon,\Delta)}$, 
    the event $\cG_{2|1}(\epsilon,\Delta)$ occurs.
\end{lemma}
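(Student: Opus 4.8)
The plan is to treat $\cG_{2|1}(\epsilon,\Delta)$ as a finite conjunction of concentration statements and, after a union bound costing only $\cPmodel,\cPregr$, and regression-method dependent constant factors, to verify each conjunct separately. Every quantity entering the definition of $\cG_{2|1}$ is built from the vectors $\hat\bxi_g,\hat\bxi_h$ (deterministic functions of $\mathsf{Cond}_1$ by Lemma \ref{lem:unique-soln}), the fresh Gaussians $\bxi_g\sim\normal(0,\id_p)$ and $\bxi_h\sim\normal(0,\id_N)$ independent of $\mathsf{Cond}_1$, and the noise $\be_1,\be_2$. Using that the conditional fixed-design objects $\bv_k^{cf},\bu_k^{cf}$ are proximal images of the affine combinations \eqref{eq:cf-quantities}, that proximal operators are $1$-Lipschitz, and that $\tau_{g_k},\tau_{h_k},\zeta_k$ are bounded above and below (Lemma \ref{lem:bound-on-fixed-pt}), each conjunct can be rewritten as $|F(\hat\bxi_g,\hat\bxi_h,\bxi_g,\bxi_h,\be_1,\be_2)-(\text{target})|<\epsilon$ (or $<\Delta/2$) for a $C$-Lipschitz $F$ whose arguments have been normalized to be of order $1$ (the norm bounds needed for these normalizations again come from Lemma \ref{lem:bound-on-fixed-pt} and, for $\|\bv_1\|_2,\|\bu_1\|_2$, from the marginal characterization).

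Next I would carry out a two-stage concentration for each such $F$, mirroring the ``condition on the first regression'' strategy of the paper. First, conditionally on $\mathsf{Cond}_1$ — which fixes $\hat\bxi_g,\hat\bxi_h,\be_1,\be_2$ — Gaussian concentration of Lipschitz functions in $(\bxi_g,\bxi_h)$ shows $F$ is within $C\epsilon$ of $\E[F\mid\mathsf{Cond}_1]$ with probability $1-Ce^{-cp\epsilon^2}$. The key point is that $\E[F\mid\mathsf{Cond}_1]$ is then a $C$-Lipschitz function of $\hat\bxi_g,\hat\bxi_h,\be_1,\be_2$ alone, hence a function of $\mathsf{Cond}_1$; in the second stage I invoke the marginal characterization (Lemma \ref{lem:marginal-characterization}) for the first regression. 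Writing $\hat\bxi_g,\hat\bxi_h$ explicitly via Eqs.~\eqref{eq:hat-xi}--\eqref{eq:constraint+} in terms of $\bv_1=\bSigma^{1/2}(\hat\btheta_1-\btheta_1)$, $\bu_1=\hat\be_1$, $\bs_1=\bSigma^{-1/2}\bX^\top\hat\be_1$ and $\bt_1=\bX(\hat\btheta_1-\btheta_1)$, one expresses the low-dimensional summaries appearing in $\cG_{2|1}$ as Lipschitz functions of $\hat\btheta_1,\hat\btheta_1^\de,\hat\be_{1,\cI_1},\hat\be_{1,\cI_1}^\de,\be_1$, so Lemma \ref{lem:marginal-characterization} pins them to their marginal fixed-design expectations. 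The last step is to verify that these expectations coincide with the targets written in $\cG_{2|1}$; this is algebra using the fixed-point equations \eqref{eq:fixed-pt-eqns}, the identities of Section \ref{sec:identity-ref}, and the coupling \eqref{eq:cf-quantities} (e.g.\ $\|\bu_{1,\cI_2}^f\|_2^2/n_1\to n_{12}\zeta_1^2\tau_{g_1}^2$ by \eqref{eq:res-ref}, and the $\rho_g,\rho_h$-weighted cross terms come out correctly precisely because $\bS_g,\bS_h$ solve \eqref{eq:fixed-pt-eqns}). The conjuncts $\bv_2^{cf}\in E_{v|1}(\Delta/2)$ and $\bu_2^{cf}\in E_{u|1}(\Delta/2)$ are special and in fact easier: by joint Gaussianity of $(\bg_1^f,\bg_2^f)$ (resp.\ $(\bh_1^f,\bh_2^f)$) in the fixed-design model, the conditional expectations defining $E_{v|1},E_{u|1}$ are exactly the $\bxi_g$- (resp.\ $\bxi_h$-) conditional means of $\phi_v(\bv_2^{cf})$ given $\hat\bxi_g$ (resp.\ $\phi_u(\bu_2^{cf})$ given $\hat\bxi_h,\be_1,\be_2$), so these memberships reduce directly to the first stage.

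I expect the main obstacle to be the transfer of the marginal characterization through the implicitly defined $\hat\bxi_g,\hat\bxi_h$, i.e.\ making quantitative the claim of Remark \ref{rmk:cond-obj-interpretation} that they ``behave like standard Gaussian vectors.'' This needs (i) Lipschitz/stability estimates for the solution map of the nonlinear system \eqref{eq:hat-xi}--\eqref{eq:constraint+} as a function of $(\bu_1,\bv_1,\bs_1,\bt_1)$, valid on the high-probability event where $\|\bv_1\|_2,\|\bu_1\|_2$ are bounded below; and (ii) a careful separation of variance on $\cI_1^c$: the component of $\hat\bxi_h$ supported on $\cI_1^c$ equals $\bt_{1,\cI_1^c}/\|\bv_1\|_2=\bX_{\cI_1^c}(\hat\btheta_1-\btheta_1)/\|\bv_1\|_2$, which conditionally on $\hat\btheta_1$ is an \emph{exactly} standard Gaussian independent of the rest of $\mathsf{Cond}_1$ (since $\|\bv_1\|_2^2=\|\hat\btheta_1-\btheta_1\|_\bSigma^2$), whereas the component supported on $\cI_1$ is controlled only through the marginal characterization; the argument must juggle these two sources of randomness at once. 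A secondary, routine point is that — as in the proof of Lemma \ref{lem:marginal-characterization} — the Lasso penalty is not differentiable, so I would first establish the lemma for OLS, ridge, and the $\alpha$-smoothed Lasso, and then pass to the Lasso by the $\alpha\to 0$ approximation of \cite{celentano2020lasso}. Combining the $e^{-cp\epsilon^2}$ from the Gaussian stage with the $e^{-\sc(\epsilon)p}$ from Lemma \ref{lem:marginal-characterization} yields the stated $1-\sC(\epsilon,\Delta)e^{-\sc(\epsilon\wedge\Delta)p}$ bound.
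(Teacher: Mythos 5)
Your overall skeleton matches the paper's: a per-conjunct verification in which one conditions on $\mathsf{Cond}_1$, uses Gaussian concentration over the fresh randomness ($\bxi_g,\bxi_h$ and the $\cI_2\setminus\cI_1$ coordinates), controls the remaining $\mathsf{Cond}_1$-dependent quantities through the marginal characterization, and finishes with the fixed-point algebra of Section \ref{sec:identity-ref}; the treatment of $\bv_2^{cf}\in E_{v|1}(\Delta/2)$, $\bu_2^{cf}\in E_{u|1}(\Delta/2)$ is exactly the paper's. Your obstacle (i) is not really an obstacle: the quantitative ``Gaussianity'' of $\hat\bxi_g,\hat\bxi_h$ is precisely Lemma \ref{lem:hat-g-hat-h-db-err} (i.e.\ $\tau_{g_1}\hat\bxi_g\approx\bSigma^{1/2}(\hat\btheta_1^{\de}-\btheta_1)$ and $\tau_{h_1}\hat\bxi_{h,\cI_1}\approx\hat\be_{1,\cI_1}^{\de}-\be_{1,\cI_1}$), which together with Lemma \ref{lem:marginal-characterization} yields Corollary \ref{cor:ext-to-hat-gh}; you should invoke these rather than develop stability estimates for the solution map of \eqref{eq:hat-xi}--\eqref{eq:constraint+}. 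Also note that the Gram-matrix entries are not globally Lipschitz, so your reduction to ``a $C$-Lipschitz $F$'' requires the truncation device of Lemma \ref{lem:T-conc}; this too is available and should be cited.

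The genuine gap is in your point (ii), the passage from $\cI_1$ to $\cI_2$. As you describe it, the $\cI_1\cap\cI_2$ block of $\hat\bxi_h$ (and of $\bu_1$) is ``controlled only through the marginal characterization,'' but the conjuncts of $\cG_{2|1}$ are normalized by $\sqrt{n_2}$ at the $\hat\bxi$ scale: for instance the $(3,3)$ entry of the first Gram matrix is $\|\hat\bxi_{h,\cI_2}\|_2^2/n_2$, and a $1$-Lipschitz function of $\hat\bxi_{h,\cI_2}/\sqrt{n_2}$ is only $C\sqrt{n_1/p}$-Lipschitz in $\hat\bh_{1,\cI_2}/\sqrt{n_2}$ (since $\tau_{h_1}\asymp\sqrt{p/n_1}$), hence, after re-expressing the restriction to $\cI_1\cap\cI_2$ as a Lipschitz function of the full $\cI_1$-block normalized by $\sqrt{n_1}$, only $C\sqrt{n_1/p}\,\sqrt{n_1/n_2}$-Lipschitz there. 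Feeding this into the marginal rate $\sqrt{p/n_1}\,\epsilon$ produces an error of order $\sqrt{n_1/n_2}\,\epsilon$, which is not $O(\epsilon)$: under \textsf{A2} the first regression may be OLS or ridge with $n_1/p$ (hence $n_1/n_2$) unbounded while the second is the Lasso with $n_2\asymp p$, and the lemma must hold with constants depending only on $\cPmodel,\cPregr$. The paper closes exactly this hole with Corollary \ref{cor:ext-to-I2}, whose proof exploits the rotational invariance of the first-regression data and concentration on the orthogonal group to show that the conditional expectation over the fresh coordinates, viewed as a function of the $\cI_1$-block, has Lipschitz constant reduced by a factor $\sqrt{n_{12}/n_1}$; this gives the rate $M_h\sqrt{p(p+n_{12})/(n_1n_2)}$, which multiplied by $M_h=C\sqrt{n_1/p}$ is $O(1)$ uniformly. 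Your proposal neither invokes this corollary nor supplies an equivalent exchangeability argument, so as written the second stage fails in the $n_1\gg n_2$ regime; apart from this, your plan is sound.
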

\noindent We prove Lemma \ref{lem:conditional-concentration-event} in Section \ref{sec:char-support}.

\begin{remark}
    We point out an asymmetry in the definition of $\cG_{2|1}(\epsilon,\Delta)$.
    From the first regression, the random-design quantity $\bv_1$ is passed as an argument to $\bT$, whereas from the second regression, the conditional fixed-design quantity $\bv_2^{cf}$ is passed as an argument to $\bT$. 
    Note that in general, $\bv_1 \neq \bv_1^{cf}$.
    We point out this asymmetry to assure the reader that it is not a typo.
    Our choice is driven by the requirements of analyzing the conditional auxilliary objective. 
    Note that the conditional auxilliary objective is defined using $\bv_1$ (via the definition of $\bh_{cg}(\bv)$), not $\bv_1^{cf}$.
    On the other hand, we will approximate the minimizer of the conditional auxilliary objective by $\bv_2^{cf}$. 
    The quantity $\bv_2$ does not appear in the definition of $\cG_{2|1}(\epsilon,\Delta)$. 
    Indeed, it is not approximated well by quantities appearing in the conditional auxilliary objective.
    See Sections \ref{app:conditional-characterization-est-err} and \ref{app:conditional-characterization-residuals} for complete details.
\end{remark}

\subsubsection{Lower bounds on the auxilliary min-max problem}
\label{app:conditional-characterization-est-err}

In this section, we prove Eq.~\eqref{eq:m0-lower-bound}.
In particular, we seek a lower bound on 
\begin{equation}
    \min_{ \|\bv\|_2 \leq C_v }\;
        \max_{\bu \in \cS_u }
                \ell_{2|1}(\bu,\bv),
\end{equation}
and also on the problem where we further restrict the minimization to $\bv \in E_{v|1}^c(\epsilon)$.
It is enough to show that our lower bound holds on the event $\cG_{2|1}(\epsilon,\Delta)$ for $\epsilon < c'$, $\Delta = \Delta(\epsilon) < c'$, and $c'$ taken sufficiently small and depending only on $\cPmodel,\cPregr$, and the regression method.
We will henceforce assume we are on this event for $\epsilon,\Delta$ for $c'$ sufficiently small, without repeatedly reminding the reader of this fact,
and all statements will be deterministic.
In particular,
as we derive more implications of $\cG_{2|1}(\epsilon,\Delta)$, 
we may need to take $c'$ smaller,
but will not track $c'$ or announce when an additional implication requires we shrink $c'$ further.
To make notation more compact,
we denote
\begin{equation}
\begin{gathered}
     \sT_{\sN,2|1} 
        := 
        \E\Big[
            \bT\Big(
                \frac{\be_{2,\cI_2}}{\sqrt{n_2}},
                \frac{\bu_{1,\cI_2}^f}{\sqrt{n_1}},
                \sqrt{\frac{n_1}{p}}\bh_{1,\cI_2}^f,
                \sqrt{\frac{n_2}{p}}\bh_{2,\cI_2}^f,
                \frac{\bh_{1,\cI_2}^f}{\sqrt{n_2}\, \tau_{h_1}} ,
                \frac{\bh_{2,\cI_2}^f}{\sqrt{n_2}\, \tau_{h_2}}
            \Big) 
        \Big] ,
    \\
    \sT_{\sP,2|1}
        :=
        \E\Big[
            \bT\Big(
                \sqrt{\frac{n_1}{p}}\,\bv_1^f,
                \sqrt{\frac{n_2}{p}}\,\bv_2^f,
                \sqrt{\frac{n_1}{p}} \bg_1^f,
                \sqrt{\frac{n_2}{p}} \bg_2^f
            \Big) 
        \Big].
\end{gathered}
\end{equation}
We remind the reader that explicit expressions for these can be found in Section \ref{sec:identity-ref}.

The major steps in the analysis are similar to those in the proof of Eq.~\eqref{eq:mgam-bounds} from the marginal characterization.
We repeat those steps in the current context here:
\begin{enumerate}

    \item We replace $\ell_{2|1}(\bu,\bv)$ by a function $\ell_{2|1}^{(1)}(\bu,\bv)$ which approximates it uniformly well across its domain.
    We control the change in the value of the min-max problem incurred by this replacement.
    The objective $\ell_{2|1}^{(1)}(\bu,\bv)$ is introduced because it is easier to analyze.

    \item 
    Ideally, we would evaluate the internal maximization exactly.
    However, doing so explicitly leads to complicated expressions.
    Instead, we construct a lower bound on the internal maximization.
    In particular, for any function $\bu(\bv)$ for which $\bu(\bv) \in \cS_u$, the quantity $ \ell_{2|1}^{(1)}(\bu(\bv),\bv)$ is a lower bound on the value of the maximization problem. 
    Our strategy is to pick a function $\bu(\bv)$ so that the function $\ell_{2|1}^{(2)}(\bv) := \ell_{2|1}^{(1)}(\bu(\bv),\bv)$ is tractable to analyze and provides a good enough lower bound for our purposes.

    \item
    We establish several properties of the lower bound $\ell_{2|1}^{(2)}(\bv)$.  
    In particular, we show $\ell_{2|1}^{(2)}(\bv_2^{cf})$ is close to $\ell_{2|1}^*$,
    the subdifferential
    $\partial \ell_{2|1}^{(2)}(\bv_2^{cf})$ contains a small element, and $\ell_{2|1}^{(2)}(\bv)$ is strongly convex for bounded $\bv$.

    \item 
    Using standard convex analysis techniques, 
    these properties imply a lower bound on the min-max problem over $\| \bv\|_2 \leq C_v$ and over $\bv \in E_{v|1}^{c}(\epsilon) \cap \{ \|\bv\|_2 \leq C_v\}$.

\end{enumerate}
We now carry out these steps in detail. 
\\

\noindent \textit{Step 1: we replace $\ell_{2|1}(\bu,\bv)$ by a function $\ell_{2|1}^{(1)}(\bu,\bv)$ which approximates it uniformly well across its domain.}

We may use a Gram-Schmidt construction to replace $\be_{2,\cI_2}$, $\hat \bxi_{h,\cI_2}$, and $\bxi_{h,\cI_2}$ by $\be_{2,\cI_2}^{\mathrm{gs}}$, $\hat \bxi_{h,\cI_2}^{\mathrm{gs}}$, and $\bxi_{h,\cI_2}^{\mathrm{gs}}$ so that
\begin{equation}
    \bT\Big(
        \frac{\be_{2,\cI_2}^{\mathrm{gs}}}{\sqrt{n_2}},
        \frac{\hat \bxi_{h,\cI_2}^{\mathrm{gs}}}{\sqrt{n_2}},
        \frac{\bxi_{h,\cI_2}^{\mathrm{gs}}}{\sqrt{n_2}}
    \Big) 
    =
    \diag(\tau_{e_2}^2,1,1),
\end{equation}
and
\begin{equation}\label{eq:gs-diff}
    \max\Big\{
        \frac{\|\be_{2,\cI_2}^{\mathrm{gs}} - \be_{2,\cI_2}\|_2}{\sqrt{n_2}},
        \frac{\|\hat \bxi_{h,\cI_2}^{\mathrm{gs}} - \hat \bxi_{h,\cI_2}\|_2}{\sqrt{n_2}},
        \frac{\| \bxi_{h,\cI_2}^{\mathrm{gs}} -  \bxi_{h,\cI_2}\|_2}{\sqrt{n_2}}
    \Big\}
    < C' \epsilon.
\end{equation}
This follows by Eqs.~\eqref{eq:cEgam} and \eqref{eq:T-cond-ref} and Lemma \ref{lem:gs-approx}.
Define
\begin{equation}
    \bZ
        := 
        \frac1{\sqrt{n_2}}
        \begin{pmatrix}
            \vert & \vert & \vert \\[4pt]
            \be_{2, \cI_2 }^{\mathrm{gs}}/\tau_{e_2} & \hat \bxi_{h,\cI_2}^{\mathrm{gs}} &  \bxi_{h, \cI_2 }^{\mathrm{gs}} \\[4pt]
            \vert & \vert & \vert
        \end{pmatrix}.
\end{equation}
By the Gram-Schmidt construction, $\bZ^\top \bZ = \id_3$.

We next apply a Gram-Schmidt step to $\bu_1$. 
In particular, 
define
\begin{equation}\label{eq:gs-u1}
    \frac{\bu_{1,\cI_2}^{\mathrm{gs}}}{\sqrt{n_1}} 
        =
        \bZ \begin{pmatrix}
                \frac{n_{12}}{\sqrt{n_1n_2}} \zeta_1 \tau_{e_1} \rho_e \\[4pt]
                \frac{n_{12}}{\sqrt{n_1n_2}} \zeta_1 \tau_{h_1} \\[4pt]
                0
            \end{pmatrix}
        + 
        \alpha_\perp \frac{\proj_{\bZ}^\perp \bu_{1,\cI_2}}{\|\proj_{\bZ}^\perp \bu_{1,\cI_2}\|_2},
\end{equation}
where we define
\begin{equation}
    \alpha^2 := \frac{n_{12}}{n_1}\zeta_1^2(\tau_{e_1}^2 + \tau_{h_1}^2) = n_{12} \zeta_1^2 \tau_{g_1}^2,
    \qquad 
    \alpha_\|^2 := \frac{n_{12}^2}{n_1n_2} \zeta_1^2 ( \tau_{e_1}^2 \rho_e^2 + \tau_{h_1}^2 ),
    \qquad 
    \alpha_\perp^2 := \alpha^2 - \alpha_\|^2.
\end{equation}
If $\proj_{\bZ}^\perp \bu_{1,\cI_2}=0$, we may replace $\frac{\proj_{\bZ}^\perp \bu_{1,\cI_2}}{\|\proj_{\bZ}^\perp \bu_{1,\cI_2}\|_2}$ with any unit vector orthogonal to the column space of $\bZ$, and all computations to come will remain valid.
Note that $\| \bu_{1,\cI_2}^{\mathrm{gs}} \|_2 / \sqrt{n_1} = \alpha$, $\| \proj_{\bZ} \bu_{1,\cI_2}^{\mathrm{gs}} \|_2 / \sqrt{n_1} = \alpha_\|$, and $\| \proj_{\bZ}^\perp \bu_{1,\cI_2}^{\mathrm{gs}}\|_2/\sqrt{n_1} = \alpha_\perp$. 
Our goal is to show 
\begin{equation}\label{eq:gs-u-diff}
    \frac{\| \bu_{1,\cI_2}^{\mathrm{gs}} - \bu_{1,\cI_2}\|_2}{\sqrt{n_1}} < C'\epsilon.
\end{equation}
Because there is no $\cPmodel,\cPregr$, and regression method dependent constant $c > 0$ such that $\alpha > c$, we cannot apply Lemma \ref{lem:gs-approx} to bound this quantity. (Indeed, under assumption \textsf{A1} and $\textsf{A2}$ we can even have $n_12 = 0$, in which case $\alpha = 0$). 
Thus, to bound this difference, we require a more careful argument.

First observe\footnote{This part of the argument is essentially the same as in the proof of Lemma \ref{lem:gs-approx}.}
\begin{equation}
\begin{aligned}
    &\frac{\| \proj_{\bZ} \bu_{1,\cI_2}^{\mathrm{gs}} - \proj_{\bZ} \bu_{1,\cI_2} \|_2}{\sqrt{n_1}}
        = 
        \left\|
            \begin{pmatrix}
                \frac{\zeta_1 \tau_{e_1} \rho_e n_{12}}{\sqrt{n_1n_2}} \\[4pt]
                \frac{\zeta_1 n_{12}}{\sqrt{n_1n_2}} \\[4pt]
                0
            \end{pmatrix}
            -
            \frac{\bZ^\top \bu_{1,\cI_2}}{\sqrt{n_1}} 
        \right\|_2
    \\
        &\qquad\qquad\leq 
        \left\|
            \begin{pmatrix}
                \frac{\zeta_1 \tau_{e_1} \rho_e n_{12}}{\sqrt{n_1n_2}} \\[4pt]
                \frac{\zeta_1 n_{12}}{\sqrt{n_1n_2}} \\[4pt]
                0
            \end{pmatrix}
            -
            \begin{pmatrix}
                \frac{\< \be_{2,\cI_2} , \bu_{1,\cI_2} \>}{\sqrt{n_1n_2}\,\tau_{e_2}} \\[4pt]
                \frac{\< \hat \bxi_{h,\cI_2} , \bu_{1,\cI_2} \>}{\sqrt{n_1n_2}} \\[4pt]
                \frac{\< \bxi_{h,\cI_2} , \bu_{1,\cI_2} \>}{\sqrt{n_1n_2}}
            \end{pmatrix}
        \right\|_2
        +
        \left\|
            \begin{pmatrix}
                \frac{\< \be_{2,\cI_2} , \bu_{1,\cI_2} \>}{\sqrt{n_1n_2}\,\tau_{e_2}} \\[4pt]
                \frac{\< \hat \bxi_{h,\cI_2} , \bu_{1,\cI_2} \>}{\sqrt{n_1n_2}} \\[4pt]
                \frac{\< \bxi_{h,\cI_2} , \bu_{1,\cI_2} \>}{\sqrt{n_1n_2}}
            \end{pmatrix}
            -
            \begin{pmatrix}
                \frac{\< \be_{2,\cI_2}^{\mathrm{gs}} , \bu_{1,\cI_2} \>}{\sqrt{n_1n_2}\,\tau_{e_2}} \\[4pt]
                \frac{\< \hat \bxi_{h,\cI_2}^{\mathrm{gs}} , \bu_{1,\cI_2} \>}{\sqrt{n_1n_2}} \\[4pt]
                \frac{\< \bxi_{h,\cI_2}^{\mathrm{gs}} , \bu_{1,\cI_2} \>}{\sqrt{n_1n_2}}
            \end{pmatrix}
        \right\|_2
        < C'\epsilon,
\end{aligned}
\end{equation}
where we bound the first term on the right-hand side by Eqs.~\eqref{eq:cEgam} and \eqref{eq:T-cond-ref},
and we bound the second term by Eq.~\eqref{eq:gs-diff}.

Thus, to show $\| \bu_{1,\cI_2}^{\mathrm{gs}} - \bu_{1,\cI_2}\|_2 / \sqrt{n_1} < C'\epsilon$, it now suffices to show $\big| \alpha_\perp - \| \proj_{\bZ}^\perp \bu_{1,\cI_2}\|_2/\sqrt{n_1} \big| < C'\epsilon $.
The previous display implies that $\big| \alpha_\| - \| \proj_{\bZ} \bu_{1,\cI_2}\|_2/\sqrt{n_1} \big| < C'\epsilon $.
Because, by Eq.~\eqref{eq:cEgam}, we also have
\begin{equation}
    \left|
        \left\|
            \begin{pmatrix}
                \alpha_\| \\[4pt] \alpha_\perp
            \end{pmatrix}
        \right\|_2
        -
        \left\|
            \begin{pmatrix}
                \| \proj_{\bZ} \bu_{1,\cI_2} \|_2 / \sqrt{n_1} \\[4pt] 
                \| \proj_{\bZ}^\perp \bu_{1,\cI_2} \|_2 / \sqrt{n_1}
            \end{pmatrix}
        \right\|_2
    \right|
        =
        \left|
            \alpha - \frac{\|\bu_{1,\cI_2}\|_2}{\sqrt{n_1}}
        \right|
        \leq 
        C'\epsilon,
\end{equation}
we conclude
\begin{equation}
    \left|
        \left\|
            \begin{pmatrix}
                \alpha_\| \\[4pt] \alpha_\perp
            \end{pmatrix}
        \right\|_2
        -
        \left\|
            \begin{pmatrix}
                \alpha_\| \\[4pt] 
                \| \proj_{\bZ}^\perp \bu_{1,\cI_2} \|_2 / \sqrt{n_1}
            \end{pmatrix}
        \right\|_2
    \right| < C'\epsilon.
\end{equation}
Thus, 
\begin{equation}
    \left|
        \sqrt{\alpha_\|^2 + \Big(\alpha_\perp + \frac{\| \proj_{\bZ}^\perp \bu_{1,\cI_2} \|_2}{\sqrt{n_1}} - \alpha_\perp\Big)^2}
        -
        \sqrt{\alpha_\|^2 + \alpha_\perp^2}
    \right|
        \leq 
        C'\epsilon.
\end{equation}
It is straightforward to check that for $\alpha_\perp/\alpha_\| > c'$,
the function $f(x) \mapsto \sqrt{\alpha_\|^2 + (\alpha_\perp + x)^2}$ satisfies $|f(x) - f(0)| > c'' |x|$ for $x \in [-\alpha_\perp,\infty)$, where $c'' > 0$ depends only on $c'$.
By Lemma \ref{lem:bound-on-fixed-pt},
\begin{equation}
    \frac{\alpha_\|}{\alpha} \leq \sqrt{\frac{n_{12}}{n_2}} \sqrt{\frac{\tau_{e_1}^2 \rho_e^2 + \tau_{h_1}^2}{\tau_{e_1}^2 + \tau_{h_1}^2}} < 1 - c
    \quad \text{whence} \quad 
    \frac{\alpha_\perp}{\alpha_\|} \geq c'.
\end{equation}
Thus, we conclude that $c'' \big| \alpha_\perp - \| \proj_{\bZ}^\perp \bu_{1,\cI_2}\|_2/\sqrt{n_1} \big| < C'\epsilon$, as desired.
Eq.~\eqref{eq:gs-u-diff} follows.

Finally, by Eq.~\eqref{eq:cEgam},
we can replace $\bu_{1,\cI_2^c}$ by $\bu_{1,\cI_2^c}^{\mathrm{gs}}$ so that $\| \bu_{1,\cI_2^c}^{\mathrm{gs}} \|_2 / \sqrt{n_1} = \sqrt{n_1-n_{12}}\,\tau_{g_1}\zeta_1$ and $\| \bu_{1,\cI_2^c} - \bu_{1,\cI_2^c}^{\mathrm{gs}} \|_2 / \sqrt{n_1} < \epsilon $.
We define $\bu_1^{\mathrm{gs}}$ to be $\bu_{1,\cI_2}^{\mathrm{gs}}$ on $\cI_2$ and $\bu_{1,\cI_2^c}^{\mathrm{gs}}$ on $\cI_2^c$.
The above discussion implies that $\| \bu_1^{\mathrm{gs}} - \bu_1 \|_2 / \sqrt{n_1} < C'\epsilon$. 
Because $\| \bu_1^{\mathrm{gs}}\|_2  / \sqrt{n_1} = \sqrt{n_1} \zeta_1 \tau_{g_1} > c$,
we conclude that also $\| \bu_1^{\mathrm{gs}} / \| \bu_1^{\mathrm{gs}} \|_2 - \bu_1 / \| \bu_1 \|_2 \|_2 < C'\epsilon$. 
Our Gram-Schmidt constructions are now complete.

Define
\begin{equation}\label{eq:m-theta-1-def}
\begin{aligned}
    \ell_{2|1}^{(1)}(\bu,\bv)
        &:=
        -\frac1{n_2}\<\bg_{\mathrm{cg}}^{(1)}(\bu),\bv\> 
            + 
            \frac1{n_2}
            \<\bh_{\mathrm{cg},\cI_2}^{(1)}(\bv),\bu_{\cI_2}\>
            + \frac1{n_2}\bu_{\cI_2}^\top \be_{2,\cI_2}^{\mathrm{gs}}
            - \frac1{2{n_2}} \| \bu_{ \cI_2 } \|_2^2 
            + \bar \Omega_2(\bv),
\end{aligned}
\end{equation}
where
\begin{equation}\label{eq:gtheta-u-htheta-v}
\begin{gathered}
        \bg_{\mathrm{cg}}^{(1)}(\bu)
            :=
            \frac{\<  \bu_1^{\mathrm{gs}},\bu \>}{\| \bu_1^{\mathrm{gs}}\|_2}\hat \bxi_g +  \| \sfP_{ \bu_1^{\mathrm{gs}}}^\perp \bu\|_2 \bxi_g,
        \qquad 
        \bh_{\mathrm{cg},\cI_2}^{(1)}(\bv)
            :=
            \frac{\<  \bv_1 , \bv \> }{\|  \bv_1 \|_2} \hat \bxi_{h,\cI_2}^{\mathrm{gs}} + \| \sfP_{ \bv_1}^\perp \bv\|_2 \bxi_{h,\cI_2}^{\mathrm{gs}}.
\end{gathered}
\end{equation}
Observe
\begin{equation}
\begin{aligned}
    \frac1{n_2}(\bg_{\mathrm{cg}}^{(1)}(\bu) - \bg_{\mathrm{cg}}(\bu))
        =
        \frac{\hat \bxi_g}{\sqrt{n_2}}
        \Big\<
            \frac{\bu_1^{\mathrm{gs}}}{\|\bu_1^{\mathrm{gs}}\|_2}
            -
            \frac{\bu_1}{\|\bu_1\|_2}
            ,
            \frac{\bu}{\sqrt{n_2}}
        \Big\>
        + 
        \frac{\bxi_g}{\sqrt{n_2}}
        \Big(
            \frac{\| \sfP_{ \bu_1^{\mathrm{gs}}}^\perp \bu\|_2}{\sqrt{n_2}}
            -
            \frac{\| \sfP_{ \bu_1}^\perp \bu\|_2}{\sqrt{n_2}}
        \Big)
        +
        \frac{\proj_{\bv_1}\bxi_g}{\sqrt{n_2}}\frac{\| \sfP_{ \bu_1}^\perp \bu\|_2}{\sqrt{n_2}}.
\end{aligned}
\end{equation}
By Eqs.~\eqref{eq:cEgam} and \eqref{eq:T-cond-ref},
we have $\| \hat \bxi_g \|_2 / \sqrt{n_2} \leq C \sqrt{p/n_2} \leq C$, $\| \bxi_g \|_2 \leq C\sqrt{p/n_2} \leq C$, and $\|\proj_{\bv_1}\bxi_g\|/\sqrt{n_2} \leq C'\epsilon$.
By the construction of $\bu_1^{\mathrm{gs}}$ and the discussion above, 
we have
$\| \proj_{\bu_1}^\perp - \proj_{\bu_1^{\mathrm{gs}}}^\perp\|_{\mathrm{op}} \leq \sqrt{2}\big\|\bu_1 / \| \bu_1\|_2 - \bu_1^{\mathrm{gs}} / \| \bu_1^{\mathrm{gs}} \|_2 \big\|_2 \leq C\epsilon$,
so that for $\| \bu \|_2 / \sqrt{n_2} \leq C_u$ we have $\| \bg_{\mathrm{cg}}^{(1)}(\bu) - \bg_{\mathrm{cg}}(\bu) \|_2 / \sqrt{n_2} < C' \,\epsilon$.

Similarly,
\begin{equation}
\begin{aligned}
    \frac1{\sqrt{n_2}}(\bh_{\mathrm{cg}}^{(1)}(\bu) - \bh_{\mathrm{cg}}(\bu))
        =
        \frac{\<\bv_1,\bv\>}{\|\bv_1\|_2}
        \cdot
        \frac{\hat \bxi_h^{\mathrm{gs}}-\hat \bxi_h}{\sqrt{n_2}}
        + 
        \| \proj_{\bv_1}^\perp \bv \|_2
        \cdot 
        \frac{\bxi_h^{\mathrm{gs}}-\bxi_h}{\sqrt{n_2}}
        +
        \| \proj_{\bv_1}^\perp \bv \|_2
        \frac{\proj_{\bu_1}\bxi_h}{\sqrt{n_2}}.
\end{aligned}
\end{equation}
By Eqs.~\eqref{eq:cEgam} and \eqref{eq:T-cond-ref},
we have $\| \hat \bxi_h^{\mathrm{gs}} - \hat \bxi_h \|_2 / \sqrt{n_2} \leq C$, $\| \bxi_h^{\mathrm{gs}} -  \bxi_h \|_2 / \sqrt{n_2} \leq C$, and $\| \proj_{\bu_1} \bxi_h \|_2/\sqrt{n_2} \leq C'\epsilon$.
Thus, for $\| \bv \|_2 \leq C \sqrt{p/n_2}$ we have $\| \bh_{\mathrm{cg}}^{(1)}(\bu) - \bh_{\mathrm{cg}}(\bu) \|_2 / \sqrt{n_2} < C'\epsilon$.
Finally, by Eq.~\eqref{eq:gs-diff},
$\big|\bu_{\cI_2}^\top \be_{2,\cI_2}^{\mathrm{gs}} - \bu_{\cI_2}^\top \be_{2,\cI_2}\big|/n_2 \leq C'\epsilon$ for $\| \bu \|_2/\sqrt{n_2} < C_u$.
Combining the preceding results,
we conclude
\begin{equation}\label{eq:mO1-approx-mO}
    \sup_{
        \substack{\bu \in \cS_{u|1}  }}\;
        \sup_{\|\bv\|_2 \leq C_v }
    |\ell_{2|1}^{(1)}(\bu,\bv) - \ell_{2|1}(\bu,\bv)| < \epsilon.    
\end{equation}

Define
\begin{equation}
    \bb(\bv)
        := 
        \begin{pmatrix}
            \tau_{e_2} \\[3pt] 
            \< \bv_1 , \bv \> / \| \bv_1 \|_2 \\[3pt]
            \| \proj_{\bv_1}^\perp \bv \|_2 
        \end{pmatrix}.
\end{equation}
We may rewrite
\begin{equation}
    \ell_{2|1}^{(1)}(\bu,\bv)
        := 
        \frac1{\sqrt{n_2}}\bu_{ \cI_2 }^\top \bZ\bb(\bv)
        -\frac1{n_2} \<\bg_{\mathrm{cg}}^{(1)}(\bu), \bv\>
        - \frac1{2{n_2}} \| \bu \|_2^2 
        + \bar \Omega_2(\bv).
\end{equation}
\\

\noindent \textit{Step 2: we construct a lower bound $\ell_{2|1}^{(2)}(\bv)$ on the internal maximization.}

To do so, we define $\ell_{2|1}^{(2)}(\bv) = \ell_{2|1}^{(1)}(\bu(\bv),\bv)$ for some $\bu(\bv) \in \cS_u$.
To motivate our choice of $\bu(\bv)$, 
recall that in the fixed-design model (see Eq.~\eqref{eq:res-ref})
\begin{equation}
\begin{gathered}
    \frac{1}{n_2}\E[\| \bu_2^f \|_2^2] 
        = n_2 \zeta_2^2 \tau_{g_2}^2,
    \\
    \frac{\E[\< \bu_{1,\cI_2}^f,\bu_{2,\cI_2}^f\>]}{\E[\|\bu_{1,\cI_2}^f\|_2^2]^{1/2} \sqrt{n_2}}
        = 
        \frac{n_{12}\zeta_1\zeta_2\tau_{\hat e_1^{\de}}\tau_{\hat e_2^{\de}} \rho_{\hat e^{\de}}}{\sqrt{n_{12}n_2}\zeta_1\tau_{\hat e_1^{\de}}}
        =
        \sqrt{\frac{n_{12}}{n_2}} \zeta_2 \tau_{\hat e_2^{\de}} \rho_{\hat e^{\de}}
        =
        \sqrt{n_{12}} \zeta_2 \tau_{g_2}\rho_{\hat e^{\de}},
\end{gathered}
\end{equation} 
where we use that $\tau_{\hat e_2^{\de}} = \sqrt{n_2} \tau_{g_2}$ (see Eq.~\eqref{eq:tau-g-ref}).
We choose $\bu(\bv)$ to be maximally aligned with $\bZ\bb(\bv)$ subject to the empirical counterpart of these equations holding with equality and $\bu(\bv)$ having support restricted to $\cI_2$.
Namely,
we maximize $\bu_{\cI_2}(\bv)^\top \bZ \bb(\bv)$ subject to
$\| \bu(\bv)\|_2^2/n_2 = n_2 \zeta_2^2  \tau_{g_2}^2$,
$\< \bu_{1,\cI_2}^{\mathrm{gs}}/\| \bu_{1,\cI_2}^{\mathrm{gs}} \|_2, \bu_{\cI_2}(\bv)/\sqrt{n_2} \> = \sqrt{n_{12}}\,\zeta_2\tau_{g_2}\rho_{\hat e^{\de}}$, and $ \bu_{\cI_2^c}(\bv) = 0$.
This gives
\begin{equation}
    \frac{\bu_{\cI_2}(\bv)}{\sqrt{n_2}}
        :=
        \sqrt{n_{12}}\,\zeta_2 \tau_{g_2}\rho_{\hat e^{\de}} \frac{\bu_{1,\cI_2}^{\mathrm{gs}}}{\| \bu_{1,\cI_2}^{\mathrm{gs}} \|_2}
        +
        \sqrt{n_2 \zeta_2^2 \tau_{g_2}^2 - n_{12}\zeta_2^2 \tau_{g_2}^2\rho_{\hat e^{\de}}^2} \, \frac{\proj_{\bu_{1,\cI_2}^{\mathrm{gs}}}^\perp \bZ\bb(\bv)}{\|\proj_{\bu_{1,\cI_2}^{\mathrm{gs}}}^\perp \bZ\bb(\bv)\|_2},
\end{equation} 
where we adopt the convention that $\bzero / \| \bzero \|_2 = \bzero$.
The projection $\proj_{\bu_{1,\cI_2}^{\mathrm{gs}}}^\perp$ is a projection in $\reals^{n_2}$ not in $\reals^N$.

We define $\ell_{2|1}^{(2)}(\bv) = \ell_{2|1}^{(1)}(\bu(\bv),\bv)$.
By design, $\| \bu(\bv)\|_2/\sqrt{n_2} = \sqrt{n_2}\,\zeta_2 \tau_{g_2} < C$ by Lemma \ref{lem:bound-on-fixed-pt}.
We immediately get that $\ell_{2|1}^{(2)}(\bv)$ is a lower-bound on the inner maximization:
\begin{equation}\label{eq:mO1-to-mO2-v}
    \min_{\substack{\bv \in E_{v|1}^c(\Delta) \\ \| \bv \|_2 \leq R } }\;
    \max_{\bu \in \cS_{u|1}  }
        \ell_{2|1}^{(1)}(\bu,\bv)
        \geq 
        \min_{\substack{\bv \in E(\Delta) \\ \| \bv \|_2 \leq R } }
            \ell_{2|1}^{(2)}(\bv).
\end{equation}

We now write $\ell_{2|1}^{(2)}(\bv)$ explicitly.
One simplification arising from our choice of $\bu(\bv)$ (and a primary reason for our choice) is that $\bg_{\mathrm{cg}}^{(1)}(\bu(\bv))$ and $\|\bu(\bv)\|_2^2/n_2$ do not depend on $\bv$, so that the term $\< \bg_{\mathrm{cg}}^{(1)}(\bu(\bv)) , \bv\>$ has a straightforward linear dependence on $\bv$.
In particular, 
due the Gram-Schmidt construction, we have $\|\bu_{1,\cI_2}^{\mathrm{gs}}\|_2 / \|\bu_1^{\mathrm{gs}}\|_2 =  \sqrt{n_{12}/n_1}$, whence
\begin{equation}
    \frac1{n_2} \frac{\< \bu_1^{\mathrm{gs}} , \bu(\bv) \>}{\| \bu_1^{\mathrm{gs}} \|_2}
    = 
    \frac1{\sqrt{n_2}} \frac{\< \bu_{1,\cI_2}^{\mathrm{gs}} , \bu_{\cI_2}(\bv) \> }{ \| \bu_{1,\cI_2}^\mathrm{gs} \|_2 \sqrt{n_2} } \frac{\| \bu_{1,\cI_2}^{\mathrm{gs}}\|_2 }{ \| \bu_1^{\mathrm{gs}} \|_2 }
    = \frac1{\sqrt{n_2}} \sqrt{n_{12}} \zeta_2 \tau_{g_2} \rho_{\hat e^{\de}} \sqrt{\frac{n_{12}}{n_1}}
    = \zeta_2 \tau_{g_2} \rho_g,
\end{equation}
where we have used that $\rho_g = (n_{12}/\sqrt{n_1n_2})\rho_{\hat e^{\de}}$ (see Eq.~\eqref{eq:tau-g-ref}).
Recalling the definition of the conditional fixed design model \eqref{eq:cf-quantities}, we have
\begin{equation}
\begin{gathered}
    \frac1{n_2^2}\| \bu(\bv)\|_2^2 = \zeta_2^2 \tau_{g_2}^2,
    \quad 
    \frac1{n_2} \| \proj_{\bu_1^{\mathrm{gs}}}^\perp \bu(\bv) \|_2
    = 
    \sqrt{ \zeta_2^2 \tau_{g_2}^2 - \| \proj_{\bu_1^{\mathrm{gs}}} \bu(\bv) \|_2^2/n_2^2 }
    =
    \sqrt{ \zeta_2^2 \tau_{g_2}^2 - \zeta_2^2 \tau_{g_2}^2 \rho_g^2 }
    =\zeta_2 \tau_{g_2}\rho_g^\perp,
    \\
    \frac1{n_2}
    \bg_{\mathrm{cg}}^{(1)}(\bu(\bv))
        = 
        \tau_{g_2}\zeta_2(\rho_g \hat \bxi_g + \rho_g^\perp \bxi_g)
        =
        \zeta_2 \bg_2^{cf}.
\end{gathered}
\end{equation}
Thus,
\begin{equation}
\begin{aligned}
    \ell_{2|1}^{(2)}(\bv)
        =
        \zeta_2 \sqrt{n_{12}}\,\tau_{g_2} \rho_{\hat e^{\de}}\frac{\<\bZ^\top\bu_{1,\cI_2}^{\mathrm{gs}},\bb(\bv)\>}{\|\bu_{1,\cI_2}^{\mathrm{gs}}\|_2}
        +
        \zeta_2\sqrt{n_2 \tau_{g_2}^2  - n_{12} \tau_{g_2}^2\rho_{\hat e^{\de}}^2} \,
        &\Big(
            \| \bZ \bb(\bv)\|_2^2 - \frac{\<\bZ^\top \bu_{1,\cI_2}^{\mathrm{gs}},\bb(\bv)\>^2}{\|\bu_{1,\cI_2}^{\mathrm{gs}}\|_2^2}
        \Big)^{1/2}
    \\
        &-\zeta_2 \bg_2^{cf\top}\bv
        - \frac{n_2 \tau_{g_2}^2\zeta_2^2}{2}
        + \bar \Omega_2(\bv).
\end{aligned}
\end{equation}
By the Gram-Schmidt construction (see Eq.~\eqref{eq:gs-u1} and recall $\| \bu_{1,\cI_2}^{\mathrm{gs}}\|_2/\sqrt{n_1} = \sqrt{n_{12}}\,\zeta_1 \tau_{g_1}$),
we have
\begin{equation}
    \ba 
        :=
        \frac{\bZ^\top \bu_{1,\cI_2}^{\mathrm{gs}}}{\|\bu_{1,\cI_2}^{\mathrm{gs}}\|_2}
        =
        \begin{pmatrix}
            \sqrt{\frac{n_{12}}{n_2}}\, \frac{\tau_{e_1}\rho_e}{\sqrt{n_1}\,\tau_{g_1}}\\[4pt]
            \sqrt{\frac{ n_{12} }{n_2}} \frac{\tau_{h_1}}{\sqrt{n_1}\,\tau_{g_1}}\\[4pt]
           0            
        \end{pmatrix}.
\end{equation}
Plugging this into the previous display and using that $\bZ^\top \bZ = \id_3$, 
we conclude that 
\begin{equation}
\begin{aligned}
    \ell_{2|1}^{(2)}(\bv)
        &= 
        \zeta_2\sqrt{n_{12}}\,\tau_{g_2}\rho_{\hat e^{\de}}\ba^\top \bb(\bv)
        +
        \zeta_2\sqrt{n_2 \tau_{g_2}^2  - n_{12} \tau_{g_2}^2\rho_{\hat e^{\de}}^2}\,
        \|\bb(\bv) \|_{\id - \ba\ba^\top}
        - 
        \zeta_2 \bg_2^{cf\top} \bv
        - \frac{n_2 \tau_{g_2}^2\zeta_2^2}{2}
        + \bar \Omega_2(\bv).
\end{aligned}
\end{equation}
Note that by Eq.~\eqref{eq:tau-g-ref}, $\| \ba \|_2^2 = (n_{12}/(n_1n_2))(\tau_{e_1}^2 \rho_e^2 + \tau_{h_1}^2)/\tau_{g_1}^2 \leq (1/n_1))(\tau_{e_1}^2 + \tau_{h_1}^2)/\tau_{g_1}^2  = 1$, so that $\| \cdot \|_{\id - \ba\ba^\top} $ is indeed a norm.
\\

\noindent \textit{Step 3: we establish several properties of the lower bound $\ell_{2|1}^{(2)}(\bv)$.}

First, we show $\ell_{2|1}^{(2)}(\bv_2^{cf})$ is close to $\ell_{2|1}^*$.
To do so, we will replace several quantities with those on which they concentrate.
In particular, we make replacements
\begin{equation}
\begin{gathered}
    \bg_2^{cf\top} \bv_2^{cf}
            \longrightarrow 
            \tau_{g_2}^2 \df_2,
    \qquad 
    \bar \Omega_2(\bv_2^{cf})
            \longrightarrow
            \omega_2.
\end{gathered}
\end{equation}
By Eqs.~\eqref{eq:cEgam} and \eqref{eq:T-cond-ref},
the term 
$\bg_2^{cf\top} \bv_2^{cf}$ differs from its replacement by at most $C'(p/n_2)\epsilon < C'\epsilon$.
The term $\bar \Omega_2(\bv_2^{cf})$ differs from its replacement by at most $C'(p/n_2)\epsilon < C'\epsilon$.
The coefficient of $\bg_2^{cf\top}\bv_2^{cf}$ is $\zeta_2 \leq 1$.
The coefficient of $\bar \Omega_2(\bv_2^{cf})$ is 1.
Thus, the errors incurred by making these replacements in the expression for $\ell_{2|1}^{(2)}(\bv_2^{cf})$ is at most $C'\epsilon$.

The quantity $\bb(\bv)$ involves the quantities $\< \bv_1,\bv_2^{cf}\>/\| \bv_1 \|_2$ and $\| \proj_{\bv_1}^\perp \bv_2^{cf}\|_2$.
We replace these by
\begin{equation}
\begin{gathered}
    \frac{\< \bv_1 , \bv_2^{cf} \>}{\| \bv_1 \|_2}
            \longrightarrow
            \tau_{h_2} \rho_h,
    \qquad
        \big\| \proj_{\bv_1}^\perp \bv_2^{cf} \big\|_2 
            \longrightarrow 
            \tau_{h_2}\rho_h^\perp.
\end{gathered}
\end{equation}
Recalling the definitions of $\ba$ and $\bb(\bv)$,
these replacements induce the replacements
\begin{equation}
\begin{gathered}
    \ba^\top \bb(\bv_2^{cf}) 
        \longrightarrow \sqrt{\frac{n_{12}}{n_2}} \frac{\tau_{e_1}\tau_{e_2}\rho_e + \tau_{h_1}\tau_{h_2}\rho_h}{\sqrt{n_1}\tau_{g_1}}
        =
        \frac{\sqrt{n_{12}}\, (\tau_{\hat e_1^\de}/\sqrt{n_1}) (\tau_{\hat e_2^\de}/\sqrt{n_2}) \rho_{\hat e^\de}}{\tau_{g_1}}
        = 
        \sqrt{n_{12}}\, \tau_{g_2} \rho_{\hat e^\de},
    \\
    \| \bb(\bv_2^{cf} )\|_2^2 
        \longrightarrow 
        \tau_{e_2}^2 + \tau_{h_2}^2 
        = n_2 \tau_{g_2}^2,
\end{gathered}
\end{equation}
where in the first line we have applied Eqs.~\eqref{eq:tau-g-ref} and \eqref{eq:tau-ed-ref}, and in the second line we have applied Eq.~\eqref{eq:tau-h-ref}.
By Eqs.~\eqref{eq:cEgam} and Eq.~\eqref{eq:T-cond-ref},
the term $\<\bv_1,\bv_2^{cf}\> / \| \bv_1 \|_2 = \sqrt{p/n_2}\, \< \sqrt{n_1/p}\,\bv_1,\sqrt{n_2/p}\,\bv_2^{cf}\>/\| \sqrt{n_1/p}\,\bv_1\|_2$ differs from its replacement by at most $C'\sqrt{p/n_2}\,\epsilon < C'\epsilon$,
where we use that the denominator $\| \sqrt{n_1/p}\,\bv_1\|_2$ concentrates on $\sqrt{n_1/p}\,\tau_{h_1} > c >0$.
The term $\big\| \proj_{\bv_1}^\perp \bv_2^{cf} \big\|_2 = (\| \bv_2^{cf} \|_2^2 - \<\bv_1,\bv_2^{cf}\>^2 / \| \bv_1 \|_2^2)^{1/2}$ differs from it replacement by at most $C'\sqrt{p/n_k}\,\epsilon < C'\epsilon$, where we use that $\rho_h^\perp > c$ to bound the derivative of the square-root (See Lemma \ref{lem:bound-on-fixed-pt}).
Using Lemma \ref{lem:bound-on-fixed-pt}, we can check that the etnries of $\ba$ are bounded above by $C$,
whence $\ba^\top \bb(\bv_2^{cf}) $ differs from its replacement by at most $C'\epsilon$.
Using Lemma \ref{lem:bound-on-fixed-pt} again, we have $c < n_2 \tau_{g_2}^2 < C$, whence $\| \bb(\bv_2^{cf} )\|_2^2 $ differs from its replacement by at most $C'\epsilon$.
These replacements induce the replacements
\begin{equation}
\begin{gathered}
    \zeta_2\sqrt{n_{12}}\,\tau_{g_2}\rho_{\hat e^{\de}}\ba^\top \bb(\bv_2^{cf})
        \longrightarrow
        \zeta_2n_{12}\tau_{g_2}^2 \rho_{\hat e^{\de}}^2,
    \\
    \| \bb(\bv_2^{cf}) \|_{\id - \ba\ba^\top}
        =
        \sqrt{\| \bb(\bv_2^{cf})\|_2^2 - (\ba^\top \bb(\bv_2^{cf}))^2}
        \longrightarrow \sqrt{n_2 \tau_{g_2}^2  - n_{12} \tau_{g_2}^2\rho_{\hat e^{\de}}^2}.
\end{gathered}
\end{equation}
By Lemma \ref{lem:bound-on-fixed-pt}, $\zeta_2\sqrt{n_{12}}\,\tau_{g_2}\rho_{\hat e^{\de}} \leq C$, whence the first of these replacements incurs an error of at most $C'\epsilon$.
By Lemma \ref{lem:bound-on-fixed-pt},
$n_2\tau_{g_2}^2 - n_{12} \tau_{g_2}^2 \rho_{\hat e^\de}^2 \geq c n_2 \tau_{g_2}^2 \geq c$,
whence the second of these repalcements incurs an error of at most $C'\epsilon$.

Applying the definition of $\ell_{2|1}^*$ (Eq.~\eqref{eq:cthe*-wthe*-def}) and $\zeta_2$ (Eq.~\eqref{eq:simul-fix-pt}) and performing some algebra,
we see that the replacements we have made for $\bg_2^{cf\top} \bv_2^{cf}$, $\bar \Omega_2(\bv_2^{cf})$, $\zeta_2\sqrt{n_{12}}\,\tau_{g_2}\rho_{\hat e^{\de}}\ba^\top \bb(\bv_2^{cf})$, and $\| \bb(\bv_2^{cf}) \|_{\id - \ba\ba^\top}$ induce the replacement of $\ell_{2|1}^{(2)}(\bv_2^{cf})$ by the quantity $\ell_{2|1}^*$. 
By the above discussion,
\begin{equation}\label{eq:mO3-vO*-approx}
        \big|
            \ell_{2|1}^{(2)}(\bv_2^{cf})
            - 
            \ell_{2|1}^*
        \big|
        < C'\,\epsilon.
\end{equation}

Second, we show $\partial \ell_{2|1}^{(2)}(\bv_2^{cf})$ contains a small element.
We compute
\begin{equation}\label{eq:mO3-grad-v}
\begin{aligned}
    \partial \ell_{2|1}^{(2)}(\bv_2^{cf})
        &=
        \sD \bb(\bv_2^{cf})^\top 
        \Big(
            \zeta_2\sqrt{n_{12}}\,\tau_{g_2}\rho_{\hat e^{\de}} \ba 
            +
            \zeta_2\sqrt{n_2 \tau_{g_2}^2  - n_{12} \tau_{g_2}^2\rho_{\hat e^{\de}}^2}
            \frac{(\id - \ba\ba^\top)\bb(\bv_2^{cf})}{\| \bb(\bv_2^{cf}) \|_{\id-\ba\ba^\top}}
        \Big)
        - \bg_2^{cf}
        + \partial \bar \Omega_2(\bv_2^{cf})
    \\
        &= 
        \sD \bb(\bv_2^{cf})^\top \ba
        \Big(
            \zeta_2\sqrt{n_{12}}\,\tau_{g_2}\rho_{\hat e^{\de}}
            -
            \zeta_2\sqrt{n_2 \tau_{g_2}^2  - n_{12} \tau_{g_2}^2\rho_{\hat e^{\de}}^2}
            \frac{\<\ba,\bb(\bv_2^{cf})\>}{\| \bb(\bv_2^{cf}) \|_{ \id - \ba\ba^\top }}
        \Big)
    \\
        &\qquad\qquad \qquad\qquad \qquad\qquad \qquad\qquad 
        +
        \frac{ \zeta_2\sqrt{n_2 \tau_{g_2}^2  - n_{12} \tau_{g_2}^2\rho_{\hat e^{\de}}^2} }{\| \bb(\bv_2^{cf}) \|_{ \id - \ba \ba^\top }} \bv_2^{cf}
        - \zeta_2 \bg_2^{cf}
        + \partial \bar \Omega_2(\bv_2^{cf}), 
\end{aligned}
\end{equation}
where $\sD\bb(\bv)$ is the Jacobian of $\bb(\bv)$,
given by
\begin{equation}
    \sD\bb(\bv)^\top
        =
        \begin{pmatrix}
            \bzero & \frac{\bv_1}{\|\bv_1\|_2} & \frac{\proj_{\bv_1}^\perp \bv}{\| \proj_{\bv_1}^\perp \bv \|_2}
        \end{pmatrix},
\end{equation}
and in the second equality we have used $\sD\bb(\bv_2^{cf})^\top \bb(\bv_2^{cf}) = \bv_2^{cf}$.
We make replacements
\begin{equation}
    \<\ba, \bb(\bv_2^{cf})\> \longrightarrow \sqrt{n_{12}}\,\tau_{g_2}\rho_{\hat e^{\de}},
    \qquad 
    \| \bb(\bv_2^{cf}) \|_{\id - \ba \ba^\top}
        \longrightarrow 
        \sqrt{n_2 \tau_{g_2}^2  - n_{12} \tau_{g_2}^2\rho_{\hat e^{\de}}^2},
\end{equation}
after which the first term becomes 0 and the subdifferential expression becomes
\begin{equation}
    \partial \ell_{2|1}^{(2)}(\bv_2^{cf})
        \longrightarrow 
        \zeta_2(\bv_2^{cf} - \bg_2^{cf}) 
        +
        \partial \bar \Omega_2(\bv_2^{cf}).
\end{equation}
The previous display is the subdifferential of the objective in Eq.~\eqref{eq:cond-fixed-design-est} evaluated at its minimizer $\bv_2^{cf}$. Thus, it contains $\bzero$.
As we have already shown, 
$\<\ba, \bb(\bv_2^{cf})\>$ and $ \| \bb(\bv_2^{cf}) \|_{\id - \ba \ba^\top}$ differ from their replacements by at most $C'\,\epsilon$. 
Our goal is to show that the subdifferential is also perturbed by at most $C'\epsilon$.
Indeed,
$\sD \bb(\bv_2^{cf})^\top \ba \leq \| \ba \|_2 = (n_{12}/n_2)(\tau_{e_1}^2 \rho_e + \tau_{h_1}^2) / (n_1 \tau_{g_1}^2) \leq C$ by Lemma \ref{lem:bound-on-fixed-pt}.
Also by Lemma \ref{lem:bound-on-fixed-pt}, 
$\zeta_2\sqrt{n_{12}}\,\tau_{g_2}\rho_{\hat e^{\de}} \leq \zeta_2 \sqrt{n_2}\tau_{g_2} \leq C$ and $c < (n_2 \tau_{g_2}^2  - n_{12} \tau_{g_2}^2\rho_{\hat e^{\de}}^2)^{1/2} < C$.
By Eqs.~\eqref{eq:cEgam} and \eqref{eq:T-cond-ref} and Lemma \ref{lem:bound-on-fixed-pt},
$\| \bv_2^{cf} \|_2 \leq C \sqrt{p/n_2} \leq C$.
Thus, because $\<\ba, \bb(\bv_2^{cf})\>$ concentrates on something bounded above by $C$, $\| \bb(\bv_2^{cf}) \|_{\id - \ba \ba^\top}$ concentrates on something bound below by $c$, and all relevant coefficients are bounded above by $C$,
the sub-differential set is perturbed in $\ell_2$ by the replacements above by at most $C'\epsilon$.
We conclude
\begin{equation}\label{eq:grad-small-v}
        \inf \big\{
            \| \bdelta \|_2
            :
            \bdelta \in \partial \ell_{2|1}^{(2)}(\bv_2^{cf}) 
        \big\}
        < C'\,\epsilon.
\end{equation}

Third, we show that $\ell_{2|1}^{(2)}(\bv)$ is $c$-strongly convex in a neighborhood of $\bv_2^{cf}$.
Because $\bar \Omega_2$ is convex, $\ba^\top \bb(\bv)$ is linear (because $a_3 = 0$), and $\bg_2^{cf\top}\bv$ is linear, 
it suffices to show $ \zeta_2(n_2 \tau_{g_2}^2  - n_{12} \tau_{g_2}^2\rho_{\hat e^{\de}}^2)^{1/2} \| \bb(\bv) \|_{\id - \ba \ba^\top}$ is strongly convex for $\bv$ bounded.
Because, as we have already justified, $ \zeta_2(n_2 \tau_{g_2}^2  - n_{12} \tau_{g_2}^2\rho_{\hat e^{\de}}^2)^{1/2} > c$, it suffices to show that $\big\| \bb(\bv) \big\|_{\id - \ba\ba^\top}$ is $c$-strongly convex in $\bv$.
For this
it is convenient to consider an alternative representation of the function $\| \bb(\bv) \|_{\id-\ba\ba^\top}$.
Let $\bU \in \reals^{p\times p}$ be an orthonormal matrix whose first column is $\bv_1 / \| \bv_1 \|_2$,
and let $\bv' = \bU^\top \bv$.
Thus, $\bv'$ can be interpreted as $\bv$ after an orthogonal change of basis,
and $\bb(\bv) = (\tau_{e_2} , b'_1 , \| \bv'_{-1} \|_2)^\top$.
Define $\breve\ba = (\ba^\top,\bzero_{p-2}^\top)^\top$, $\breve\bv = (\tau_{e_2},\bv'^\top)^\top$, and $\bK =  \id_{p+1} - \breve\ba\breve\ba^\top$.
It is straightforward to show that $\| \bb(\bv) \|_{\id - \ba\ba^\top} = \big\| \breve \bv \big\|_{\bK}$.

The Hessian of $ \big\| \breve\bv \big\|_{\bK}$ is 
\begin{equation}\label{eq:Hessian-in-b}
    \bH :=
    \frac{  1 }{\| \breve\bv \|_{\bK}}
    \bK^{1/2}
    \Big(
        \id_{p+1} - \frac{\bK^{1/2}\breve\bv\breve\bv^\top \bK^{1/2}}{\|\breve\bv\|_{\bK}^2}
    \Big)
    \bK^{1/2}.
\end{equation}
We want to lower bound $\bdelta'^\top \bH \bdelta'$ for all $\bdelta ' = (0,\bdelta^\top)^\top$ with $\bdelta \in \reals^p$, $\| \bdelta'\|_2 = 1$.
Because $n_{12}/n_2 \leq 1$ and
$n_1 \tau_{g_1}^2 = \tau_{e_1}^2 + \tau_{h_k}^2$ (see Eq.~\eqref{eq:tau-g-ref}),
\begin{equation}
    \| \breve \ba\|_2^2 
        = \|\ba\|_2^2 
        =
        \frac{n_{12}}{n_1n_2\tau_{g_1}^2}(\tau_{e_1}^2 \rho_e^2 + \tau_{h_1}^2)
        \leq \frac{\tau_{e_1}^2 \rho_e^2 + \tau_{h_1}^2}{ \tau_{e_1}^2 + \tau_{h_1}^2 }
        = 1 - \frac{\tau_{e_1}^2(1-\rho_e^{2})}{\tau_{e_1}^2 + \tau_{h_1}^2},
\end{equation}
whence
\begin{equation}
        \bK 
        \succeq
        \frac{\tau_{e_1}^2(1-\rho_e^{2})}{\tau_{e_1}^2 + \tau_{h_1}^2}\id_{p+1}
        \succeq
        c' \id_{p+1},
\end{equation}
where in the last step we use $c < \tau_{e_1}^2 < C$, $\tau_{h_1}^2 < C$, and $\rho_e^2 < 1-c$ by Lemma \ref{lem:bound-on-fixed-pt}.
Next define $\cos_{\bK}(\breve\bv,\bdelta') = \breve\bv^\top \bK \bdelta' / (\| \breve\bv \|_{\bK}\|\bdelta'\|_{\bK})$.
Straightforward algebra gives for $\| \bv \|_2 \leq C_v$ that
\begin{equation}
    \bdelta'^\top \bH \bdelta'
        = 
        \frac{ \|\bdelta'\|_{\bK}^2 }{\| \breve\bv \|_{\bK}}
            \Big(1 - \cos_{\bK}^2(\breve\bv,\bdelta')\Big)
        \geq 
        \frac{4c'\| \bdelta'\|_{\bK}^2}{\|\breve \bv\|_{\bK}}
        \Big( 
            1 - \frac{\< \bdelta' , \breve \bv \>^2}{\| \bdelta' \|_2^2 \| \breve \bv\|_2^2 }
        \Big)
        \geq c\Big( 
            1 - \frac{\< \bdelta' , \breve \bv \>^2}{\| \bdelta' \|_2^2 \| \breve \bv\|_2^2 }
        \Big),
\end{equation}
where the first inequality uses Lemma \ref{lem:cos-bound} and that
the condition number of $\bK$ is bounded by $1/c'$,
and the second inequality uses that $\| \bdelta' \|_2 = 1$, $\sigma_{\min}(\bK) \geq c$, $\sigma_{\max}(\bK) \leq 1$, and $\|\breve \bv\|_2 = \tau_{e_2}^2 + \| \bv'\|_2^2 = \tau_{e_2}^2 + \| \bv \|_2^2 \leq C$.
Further, recalling the definition of $\bdelta',\breve \bv$,
\begin{equation}
    \frac{\< \bdelta' , \breve \bv \>^2}{\| \bdelta' \|_2^2 \| \breve \bv\|_2^2 }
        =
        \frac{\< \bdelta , \bv' \>^2 }{ \tau_{e_2}^2 + \| \bv ' \|_2^2} 
        \leq 
        \frac{ \|\bv'\|_2^2 }{ \tau_{e_2}^2 + \| \bv ' \|_2^2} 
        =
        \frac{ \|\bv\|_2^2 }{ \tau_{e_2}^2 + \| \bv \|_2^2}.
\end{equation}
Using that $\tau_{e_2}^2 > c$, 
we have for $\| \bv \|_2 \leq C_v$ that $\< \bdelta' , \breve \bv \>^2/(\| \bdelta' \|_2^2 \| \breve \bv\|_2^2 ) \geq 1 - c$. 
Combining with the above bounds, 
we conclude that 
\begin{equation}
    \bdelta'^\top \bH \bdelta'
        \geq 
        c.
\end{equation}
Thus, we conclude that $\ell_{2|1}^{(2)}(\bv)$ is $c$-strongly convex for $\| \bv \|_2 \leq C_v$.
\\

\noindent \textit{Step 4: we use these properties to establish lower bounds on the min-max problem.}

Because $\ell_{2|1}^{(2)}(\bv)$ is $c$-strongly convex on $\| \bv \|_2 \leq C_v$,
we have for any $\| \bv \|_2 \leq C_v$ and $\bdelta \in \partial \ell_{2|1}^{(2)}(\bv_2^{cf})$ that
\begin{equation}
\begin{aligned}
    \ell_{2|1}^{(2)}(\bv)
        &\geq 
        \ell_{2|1}^{(2)}(\bv_2^{cf}) 
        + 
        \bdelta^\top(\bv - \bv_2^{cf}) 
        +
        \frac{c}2 \| \bv - \bv_2^{cf} \|_2^2
    \\
        &\geq 
        \ell_{2|1}^{(2)}(\bv_2^{cf}) 
        - 
        \frac{16}{c}\|\bdelta\|_2^2
        + 
        \frac{c}{4} \| \bv - \bv_2^{cf} \|_2^2.
\end{aligned}
\end{equation}
Combining the previous display with Eqs.~\eqref{eq:mO1-approx-mO}, \eqref{eq:mO1-to-mO2-v}, \eqref{eq:mO3-vO*-approx}, and \eqref{eq:grad-small-v},
we conclude that
\begin{equation}\label{eq:mO-min-max-upper-bound-v}
\begin{gathered}   
        \min_{\substack{\bv \in E_{v|1}^c(\Delta) \\ \| \bv \|_2 \leq C_v } }\;
        \max_{\bu \in \cS_{u|1}  }
                    \ell_{2|1}(\bu,\bv)
            \geq 
            \ell_{2|1}^* - C'\epsilon 
            +
            \min_{\substack{\bv \in E_{v|1}^c(\Delta) \\ \| \bv \|_2 \leq C_v } }\;
                \frac{c}{4} \| \bv - \bv_2^{cf}\|_2^2,
        \\
        \min_{\| \bv \|_2 \leq C_v } \;
        \max_{\bu \in \cS_{u|1}  }
                    \ell_{2|1}(\bu,\bv)
            \geq 
            \ell_{2|1}^* - C'\epsilon.
\end{gathered}
\end{equation} 
Further, by Eq.~\eqref{eq:cEgam},
\begin{equation}
        \big|
            \phi_v\big( \bv \big)
            -
            \E[\phi_v\big( \bv_2^{cf} \big)]
        \big|
        \leq 
        \frac{\Delta}{2}.
\end{equation}
By the definition of $E_{v|1}^c(\Delta)$ and because $\phi_v$ is $1$-Lipschitz, 
\begin{equation}
    \min_{\substack{\bv \in E_{v|1}^c(\Delta) \\ \| \bv \|_2 \leq C_v } }\;
                \frac{c}{4} \| \bv - \bv_2^{cf}\|_2^2 \geq c \frac{\Delta^2}{16}.
\end{equation}
Combining Eq.~\eqref{eq:mO-min-max-upper-bound-v} with the previous display and taking $\epsilon = c\Delta^2 / (32 C')$ (with the same values $c,C'$ appearing in Eq.~\eqref{eq:mO-min-max-upper-bound-v}),
we conclude the lower bound inside the first probability in Eq.~\eqref{eq:m0-lower-bound} holds.
The lower bound inside the second probability in Eq.~\eqref{eq:m0-lower-bound} is given in Eq.~\eqref{eq:mO-min-max-upper-bound-v}.

In summary, we have shown that the lower bounds inside the probabilities in Eq.~\eqref{eq:mO-min-max-upper-bound-v} hold on $\cG_{2|1}(\epsilon,\sqrt{32C'\epsilon/c})$ for $\epsilon < c'$. 
By Lemma \ref{lem:conditional-concentration-event}, we conclude that the probability bounds in Eq.~\eqref{eq:mO-min-max-upper-bound-v} hold as well.

\subsubsection{Upper bounds on the auxilliary max-min problem}
\label{app:conditional-characterization-residuals}

In this section, we prove Eq.~\eqref{eq:mthe-bound}.
As in the previous section, we will show that our upper bound holds on the event $\cG_{2|1}(\epsilon,\Delta)$ for $\epsilon < c'$, $\Delta = \Delta(\epsilon) < c'$, and $c'$ taken sufficiently small and depending only on $\cPmodel,\cPregr$, and the regression method.
We will henceforce assume we are on this event for $\epsilon,\Delta$ for $c'$ sufficiently small, without repeatedly reminding the reader of this fact,
and all statements will be deterministic.

The major steps in the analysis are as in the previous section, except that the first and second steps occur in the opposite order.
We now carry out these steps in detail. 
\\

\noindent \textit{Step 1: we construct an upper bound on the internal minimization.}

In particular, we evaluate $\ell_{2|1}(\bu,\bv)$ at $\bv = \bv_2^{cf}$.
Define
\begin{equation}
    \ell_{2|1}^{(1)}(\bu):=
        -\frac1{n_2}\<\bg_{\mathrm{cg}}(\bu),\bv_2^{cf}\>
            +
            \frac1{n_2}\<\bh_{\mathrm{cg}, \cI_2 }(\bv_2^{cf}),\bu_{ \cI_2 }\>
            +
            \frac1{n_2}\bu^\top \be_2
            - 
            \frac1{2{n_2}} \| \bu_{ \cI_2 } \|_2^2 
            + \bar \Omega_2(\bv_2^{cf}).
\end{equation}
Indeed, by Eqs.~\eqref{eq:cEgam} and \eqref{eq:T-cond-ref} and Lemma \ref{lem:bound-on-fixed-pt},
we have $\| \bv_2^{cf}\|_2 \leq C \sqrt{p/n_k} \leq C$, whence, for $C_v$ chosen sufficiently large,
\begin{equation}\label{eq:mthe1-to-mthe2-max-min}
        \min_{\|\bv\|_2 \leq C_v }
            \ell_{2|1}^{(1)}(\bu,\bv)
            \leq 
            \ell_{2|1}^{(1)}(\bu).
\end{equation}
\\

\noindent \textit{Step 2: we replace $\ell_{2|1}^{(1)}(\bu)$ by a function $\ell_{2|1}^{(2)}(\bu)$ which approximates it uniformly well across its domain.}

First, we remove the projections of $\bxi_g$ and $\bxi_h$.
That is, we replace the terms $\frac1{n_2}\| \proj_{\bu_1}^\perp \bu\|_2 \< \proj_{\bv_1}^\perp \bxi_g,\bv_2^{cf}\>$ and $\frac1{n_2} \| \proj_{\bv_1}^\perp \bv \|_2 \< (\proj_{\bu_1}^\perp \bxi_h)_{\cI_2} , \bu_{\cI_2}\> $ in the objective $\ell_{2|1}^{(1)}(\bu)$ by $\frac1{n_2}\| \proj_{\bu_1}^\perp \bu\|_2 \<  \bxi_g,\bv_2^{cf}\>$ and $\frac1{n_2} \| \proj_{\bv_1}^\perp \bv \|_2 \< \bxi_{h,\cI_2} , \bu_{\cI_2}\> $.
These replacements induce the replacements
\begin{equation}
\begin{gathered}
    \frac1{n_2}\<\bg_{\mathrm{cg}}(\bu),\bv_2^{cf}\>
        \longrightarrow 
        \frac{\< \bu_1,\bu\>}{\|\bu_1\|_2\sqrt{n_2}} \frac{\< \hat \bxi_g, \bv_2^{cf} \>}{\sqrt{n_2}}
        + 
        \frac{\| \proj_{\bu_1}^\perp\bu\|_2}{\sqrt{n_2}} \frac{\< \bxi_g, \bv_2^{cf} \>}{\sqrt{n_2}},
    \\
    \frac1{n_2}\<\bh_{\mathrm{cg},\cI_2}(\bu),\bu_{\cI_2}\>
        \longrightarrow 
        \frac{\< \bv_1,\bv\>}{\|\bv_1\|_2} \frac{\< \hat \bxi_{h,\cI_2}, \bu_{\cI_2} \>}{n_2}
        + 
        \| \proj_{\bv_1}^\perp\bv\|_2 \frac{\< \bxi_{h,\cI_2}, \bu_{\cI_2} \>}{n_2}.
\end{gathered}
\end{equation}
By Eqs.~\eqref{eq:cEgam} and \eqref{eq:T-cond-ref} and Lemma \ref{lem:bound-on-fixed-pt},
we have $\| \proj_{\bv_1}^\perp \bxi_g \|_2 / \sqrt{n_2} \leq C'\epsilon$, $\| \proj_{\bu_1}^\perp \bxi_h \|_2 / \sqrt{n_2} \leq C'\epsilon$, and $\|\bv_2^{cf}\|_2 \leq C'\epsilon$.
Thus, the value of the objective is perturbed by at most $C'\epsilon$ uniformly over
$\| \bu \|_2 /\sqrt{n_2} \leq C_u$ and $\| \bv \|_2 \leq C_v$ by this replacement.

Next, we make replacements
\begin{equation}
\begin{gathered}
    \frac{\hat \bxi_g^\top\bv_2^{cf}}{\sqrt{n_2}} 
        \longrightarrow 
        \frac{\tau_{g_2} \df_2}{\sqrt{n_2}} \rho_g,
    \qquad
    \frac{\bxi_g^\top\bv_2^{cf}}{\sqrt{n_2}} 
        \longrightarrow 
        \frac{\tau_{g_2} \df_2}{\sqrt{n_2}} \rho_g^\perp,
    \qquad   
    \frac{\<\bv_1, \bv_2^{cf} \>}{\|\bv_1\|_2}
        \longrightarrow 
        \tau_{h_2} \rho_h,
    \qquad
    \| \proj_{\bv_1}^\perp \bv_2^{cf} \|_2 
        \longrightarrow 
        \tau_{h_2} \rho_h^\perp,
\end{gathered}
\end{equation}
which induce the replacements (recall the definition of $\bh_2^{cf}$ in Eq.~\eqref{eq:cf-quantities})
\begin{equation}
\begin{gathered}
    \frac{\< \bu_1,\bu\>}{\|\bu_1\|_2\sqrt{n_2}} \frac{\< \hat \bxi_g, \bv_2^{cf} \>}{\sqrt{n_2}}
        + 
        \frac{\| \proj_{\bu_1}^\perp\bu\|_2}{\sqrt{n_2}} \frac{\< \bxi_g, \bv_2^{cf} \>}{\sqrt{n_2}}
        \longrightarrow 
        \frac{\tau_{g_2}\df_2}{n_2}        
        \Bigg( 
            \rho_g \, \frac{\<\bu_1 , \bu \>}{\|\bu_1 \|_2 } 
            + 
            \rho_g^\perp \| \proj_{\bu_1}^\perp \bu \|_2 
        \Bigg) 
    \\
    \frac{\< \bv_1,\bv\>}{\|\bv_1\|_2} \frac{\< \hat \bxi_{h,\cI_2}, \bu_{\cI_2} \>}{n_2}
        + 
        \| \proj_{\bv_1}^\perp\bv\|_2 \frac{\< \bxi_{h,\cI_2}, \bu_{\cI_2} \>}{n_2}
        \longrightarrow 
        \frac{\< \bh_{2,\cI_2}^{cf} , \bu_{\cI_2} \>}{n_2}.
\end{gathered}
\end{equation}
By Eqs.~\eqref{eq:cEgam} and \eqref{eq:T-cond-ref},
$\hat \bxi_g^\top \bv_2^{cf} / \sqrt{n_2}$ and $\bxi_g^\top \bv_2^{cf}/\sqrt{n_2}$ differ from their replacements by at most $C'(p/n_2)\epsilon\leq C'\epsilon$;
and the term $\<\bv_1,\bv_2^{cf}\>/\|\bv_1\|_2 = \sqrt{p/n_2}\< \sqrt{n_1/p}\,\bv_1,\sqrt{n_2/p}\,\bv_2^{cf}\>/\|\sqrt{n_1/p}\,\bv_1\|_2$ differs from its replacement by at most $C'\sqrt{p/n_2}\,\epsilon \leq C'\epsilon$,
where we use that the denominator $\|\sqrt{n_1/p}\,\bv_1\|_2$ concentrates on $\sqrt{n_1/p}\,\tau_{h_1} > c$ (see Lemma \ref{lem:bound-on-fixed-pt}).
Then $\| \proj_{\bv_1}^\perp \bv_2^{cf} \|_2  = \sqrt{p/n_2}((n_2/p)\| \bv_2^{cf} \|_2^2 - (n_2/p)\<\bv_1,\bv_2^{cf}\>^2/\|\bv_1\|_2^2 )^{1/2}$ differs from its replacement by at most $C'\sqrt{p/n_2}\,\epsilon$,
where we use that the quantity inside the square-root concentrates on $(n_2/p)\tau_{h_2}^2 \rho_h^{\perp2} \geq c$ (see Lemma \ref{lem:bound-on-fixed-pt}).
Further, the coefficients of these terms are bounded by $C$.
Indeed,
the coefficients of $\hat \bxi_g^\top \bv_2^{cf}/\sqrt{n_2}$ and $\bxi_g^\top \bv_2^{cf}/\sqrt{n_2}$ are $\<\bu_1,\bu\>/(\|\bu_1\|_2\sqrt{n_2})$ and  $\| \proj_{\bu_1}^\perp \bu\|_2 / \sqrt{n_2}$, which are bounded by $C_u$ when $\|\bu\|_2 / \sqrt{n_2} \leq C_u$. 
The coefficients of $\< \bv_1 , \bv_2^{cf} \> / \| \bv_1 \|_2$ and $\| \proj_{\bv_1}^\perp \bv_2^{cf} \|_2$ are $\hat \bxi_{h,\cI_2}^\top \bu_{ \cI_2 } / n_2$ and $\bxi_{h, \cI_2 }^\top \bu_{ \cI_2 }/ n_2$,
which are bounded by $C$ when $\|\bu\|_2 / \sqrt{n_2}\leq C_u$ because $\| \hat \bxi_{h,\cI_2} \|_2 / \sqrt{n_2} \leq 2$ and $\| \bxi_h \|_2 / \sqrt{n_2} \leq 2$ by Eqs.~\eqref{eq:cEgam} and \eqref{eq:T-cond-ref}.
Thus, the value of the objective is perturbed by at most $C'\epsilon$ uniformly over
$\| \bu \|_2 /\sqrt{n_2} \leq C_u$ and $\| \bv \|_2 \leq C_v$ by these replacements.

Finally, we make the replacement $\bar \Omega_2(\bv_2^{cf})\longrightarrow \omega_2$.
This combined with all previous replacements gives the objective
\begin{equation}\label{eq:mO3-def}
\begin{aligned}
    \ell_{2|1}^{(2)}(\bu)
        &:= 
        - 
        \frac{\tau_{g_2}\df_2}{n_2} 
        \Bigg( 
            \rho_g \, \frac{\<\bu_1 , \bu \>}{\|\bu_1 \|_2 } 
            + 
            \rho_g^\perp \| \proj_{\bu_1}^\perp \bu \|_2 
        \Bigg) 
        +
        \frac1{n_2} 
        \bu_{\cI_2}^\top 
        \big( 
            \be_{2,\cI_2} + \bh_{2,\cI_2}^{cf}
        \big)
        - 
        \frac12 \frac{\| \bu_{ \cI_2 } \|_2^2}{n_2} 
        + 
        \omega_2. 
\end{aligned}
\end{equation}
By Eq.~\eqref{eq:cEgam}, $\bar \Omega_2(\bv_2^{cf})$ differs from its replacement by at most $\epsilon$.
Together with the above discussion, we conclude
\begin{equation}\label{eq:mO3-approx-mO2}
        \sup_{ \substack{ \bu_{\cI_2} \in E_{u|1}^c(\Delta) \\ \bu \in \cS_{u|1}  }} |\ell_{2|1}^{(2)}(\bu) - \ell_{2|1}^{(1)}(\bu)| < C' \epsilon.
\end{equation}
\\

\noindent \textit{Step 3: we establish several properties of the upper bound $\ell_{2|1}^{(2)}(\bu)$.}

First, we show $\ell_{2|1}^{(2)}(\bu_2^{cf})$ is close to $\ell_{2|1}^*$.
Recalling that $\bu_{2,\cI_2}^{cf} = \zeta_2(\be_{2,\cI_2} + \bh_{2,\cI_2}^{cf})$,
we compute
\begin{equation}\label{eq:mO3-at-uO*}
\begin{aligned}
    \ell_{2|1}^{(2)}&(\bu_2^{cf})
        =
        -\frac{\tau_{g_2}\df_2}{n_2}
        \Bigg(
            \rho_g\,\frac{\< \bu_1, \bu_2^{cf} \>}{\|\bu_1 \|_2}  
            + 
            \rho_g^\perp \|\proj_{\bu_1}^\perp \bu_2^{cf}\|_2 
        \Bigg)
        +
        \frac{ \zeta_2 }{n_2}
        \big\| \be_{2,\cI_2} + \bh_{2,\cI_2}^{cf} \big\|_2^2
        - 
        \frac{\zeta_2^2}{2n_2} \big\| \be_{2,\cI_2} + \bh_{2,\cI_2}^{cf} \big\|_2^2
        + \omega_2.
\end{aligned}
\end{equation}
We make replacements
\begin{equation}
    \frac{\big\| \be_{2,\cI_2} + \bh_{2,\cI_2}^{cf} \big\|_2^2}{n_2}
        \longrightarrow 
        n_2\tau_{g_2}^2,
    \qquad 
    \frac{\< \bu_1 , \bu_2^{cf} \>}{\|\bu_1 \|_2 \sqrt{n_2}}
        \longrightarrow 
        \zeta_2\sqrt{n_2}\,\tau_{g_2}\rho_g,
    \qquad 
    \frac{\| \proj_{\bu_1}^\perp \bu_2^{cf} \|_2}{\sqrt{n_2}}
        \longrightarrow 
        \zeta_2 \sqrt{n_2}\,\tau_{g_2}\rho_g^\perp.
\end{equation} 
If we make these replacements, after some algebra the objective becomes $-\zeta_2\tau_{g_2}^2 \df_2 + n_2\zeta_2\tau_{g_2}^2 - \zeta_2^2 n_2 \tau_{g_2}^2/2 + \omega_2 = n_2 \tau_{g_2}^2 \zeta_2^2 + \omega_2 = \ell_{2|1}^*$, where we have used that $n_2 - \df_2 = n_2 \zeta_2$ by the fixed point equation \eqref{eq:marg-fix-pt-eq}.

We bound the errors incurred by these replacements.
By Eq.~\eqref{eq:cf-quantities}, we may write the term $\| \be_{2,\cI_2} + \bh_{2,\cI_2}^{cf} \|_2^2/n_2$ as $\| \be_{2,\cI_2} + \tau_{h_2}\rho_h \hat \bxi_{h,\cI_2} + \tau_{h_2} \rho_{h,\cI_2}^\perp \bxi_h \|_2^2/n_2$.
Expanding the square and using that $\tau_{h_2} \leq C \sqrt{p/n_2} \leq C$ by Lemma \ref{lem:bound-on-fixed-pt},
we have by Eqs.~\eqref{eq:cEgam} and \eqref{eq:T-cond-ref} that $\| \be_{2,\cI_2} + \tau_{h_2}\rho_h \hat \bxi_{h,\cI_2} + \tau_{h_2} \rho_{h,\cI_2}^\perp \bxi_h \|_2^2/n_2$ differs from $\tau_{e_2}^2 + \tau_{h_2}^2$ by at most $C'\epsilon$.
Recall that by Eq.~\eqref{eq:tau-g-ref}, $\tau_{e_2}^2 + \tau_{h_2}^2 = n_2 \tau_{g_2}^2$, whence $\| \be_{2,\cI_2} + \bh_{2,\cI_2}^{cf} \|_2^2/n_2$ differs from its replacement by at most $C'\epsilon$.
Because the coefficients of this term are $\zeta_2$ and $\zeta_2^2/2$, both of which are bound by $C$, 
the objective is perturbed by at most $C'\epsilon$ by this replacement.

We now turn to the replacements of $\< \bu_1 , \bu_2^{cf} \>/(\|\bu_1 \|_2 \sqrt{n_2})$ and $\| \proj_{\bu_1}^\perp \bu_2^{cf} \|_2/\sqrt{n_2}$.
Using the representation $\bu_{2,\cI_2}^{cf} = \zeta_2(\be_{2,\cI_2} + \tau_{h_2}\rho_h \hat \bxi_{h,\cI_2} + \tau_{h_2} \rho_h^\perp \bxi_{h,\cI_2})$,
we write $\< \bu_1,\bu_2^{cf}\>/\sqrt{n_1n_2} = \zeta_2\< \bu_{1,\cI_2}, \be_{2,\cI_2}\>/\sqrt{n_1n_2} + \zeta_2\tau_{h_2}\rho_h\< \bu_{1,\cI_2} , \hat \bxi_{h,\cI_2}\>/\sqrt{n_1n_2} + \zeta_2\tau_{h_2}\rho_h^\perp\< \bu_{1,\cI_2} , \bxi_{h,\cI_2}\>/\sqrt{n_1n_2}$.
By Eqs.~\eqref{eq:cEgam} and \eqref{eq:T-cond-ref} and using that $\tau_{h_2} \leq C\sqrt{p/n_2}$ (see Lemma \ref{lem:bound-on-fixed-pt}),
we conclude that $\< \bu_1,\bu_2^{cf}\>/\sqrt{n_1n_2}$ differs from $\zeta_1\zeta_2(n_{12}/\sqrt{n_1n_2})(\tau_{e_1}\tau_{e_2}\rho_e + \tau_{h_1}\tau_{h_2}\rho_h) $ by at most $C'\epsilon$.
Also by Eq.~\eqref{eq:cEgam}, $\| \bu_1 \|_2 / \sqrt{n_1}$ differs from $\zeta_1 \sqrt{n_1}\,\tau_{g_1}$ by at most $C'\epsilon$.
By Lemma \ref{lem:bound-on-fixed-pt}, $\zeta_1 \sqrt{n_1}\,\tau_{g_1} > c$ and $\zeta_1\zeta_2(n_{12}/\sqrt{n_1n_2})(\tau_{e_1}\tau_{e_2}\rho_e + \tau_{h_1}\tau_{h_2}\rho_h) \leq C$,
so that we may combine the previous bounds to conclude that $\< \bu_1, \bu_2^{cf} \> / (\| \bu_1 \|_2 \sqrt{n_2})$ differs $\zeta_1\zeta_2(n_{12}/\sqrt{n_1n_2})(\tau_{e_1}\tau_{e_2}\rho_e + \tau_{h_1}\tau_{h_2}\rho_h)/(\zeta_1 \sqrt{n_1}\,\tau_{g_1})$ by at most $C'\epsilon$.
Using Eq.~\eqref{eq:tau-g-ref},
we see that $\zeta_1\zeta_2(n_{12}/\sqrt{n_1n_2})(\tau_{e_1}\tau_{e_2}\rho_e + \tau_{h_1}\tau_{h_2}\rho_h)/(\zeta_1 \sqrt{n_1}\,\tau_{g_1}) = \zeta_2 \sqrt{n_2}\,\tau_{g_2}\rho_g$,
whence $\< \bu_1, \bu_2^{cf} \> / (\| \bu_1 \|_2 \sqrt{n_2})$ differs from its replacement by at most $C'\epsilon$.
Its coefficient is $\tau_{g_2} \df_2 / \sqrt{n_2} = \sqrt{n_2}\,\tau_{g_2} (1-\zeta_2) \leq C$, where the equality uses the fixed point equation \eqref{eq:marg-fix-pt-eq} and the inequality uses Lemma \ref{lem:bound-on-fixed-pt}. 
Thus, the objective is perturbed by at most $C'\epsilon$ by this replacement.

We write
$\| \proj_{\bu_1}^\perp \bu_2^{cf} \|_2/\sqrt{n_2} = (\| \bu_2^{cf}\|_2^2 / n_2 - \< \bu_1, \bu_2^{cf} \>^2 / (\| \bu_1 \|_2^2n_2) )^{1/2}$.
Because $\bu_{2,\cI_2}^{cf} = \zeta_2(\be_{2,\cI_2} + \bh_{2,\cI_2}^{cf})$ and $\bu_{2,\cI_2^c} = 0$,
from the above discussion we have that $\| \bu_2^{cf}\|_2^2 / n_2$ differs from $\zeta_2^2 n_2 \tau_{g_2}^2$ by at most $C'\epsilon$.
From the above bounds, we conclude that $\| \proj_{\bu_1}^\perp \bu_2^{cf} \|_2/\sqrt{n_2}$ differs from  its replacement by at most $C'\epsilon$,
where we use that the term inside the square-root is $\zeta_2^2 n_2 \tau_{g_2}^2 \rho_g^{\perp2} > c$ by Lemma \ref{lem:bound-on-fixed-pt}.

Combining all replacements, we conclude that 
\begin{equation}\label{eq:mO3-uO*-approx}
        \Big|
            \ell_{2|1}^{(2)}(\bu_2^{cf})
            - 
            \ell_{2|1}^*
        \Big|
        < C'\epsilon.
\end{equation}

Second, we show the gradient of $\ell_{2|1}^{(2)}(\bu)$ at $\bu_2^{cf}$ is small.
Because the coordinates $\bu_{ \cI_2 ^c}$ are constrained to be 0, we only compute the gradient for coordinates in $ \cI_2 $.
The gradient is
\begin{equation}\label{eq:mO3-grad}
\begin{aligned}
    \nabla & \ell_{2|1}^{(2)}(\bu_2^{cf})_{ \cI_2 }
        =
        -
        \frac{\tau_{g_2}\df_2}{n_2}
        \Bigg(\rho_g\frac{\bu_1}{\| \bu_1 \|_2} 
        + 
        \rho_g^\perp 
        \frac{\proj_{\bu_1}^\perp \bu_2^{cf}}{\| \proj_{\bu_1}^\perp \bu_2^{cf} \|_2} \Bigg)_{ \cI_2 }
        +
        \frac1{n_2}
        \big(
            \be_{2,\cI_2} + \bh_{2,\cI_2}^{cf}
        \big)
        - 
        \frac{\bu_{2, \cI_2 }^{cf}}{n_2}
    \\
        &=
        \underbrace{
        -
        \frac{\tau_{g_2}\df_2}{n_2}
        \Big(
            \rho_g - \rho_g^\perp \frac{\< \bu_1 , \bu_2^{cf}\> /\|\bu_1 \|_2}{ \| \proj_{\bu_1}^\perp \bu_2^{cf} \|_2 }
        \Big)\frac{ \bu_{1,\cI_2} }{ \| \bu_1 \|_2 } 
        }_{=:\bA_1}
        + 
        \underbrace{
        \Big(
            - 
            \frac{\tau_{g_2}\df_2}{\sqrt{n_2}}\rho_g^\perp \frac{1}{\| \proj_{\bu_1}^\perp \bu_2^{cf}\|_2 / \sqrt{n_2} }
            +
            \frac1{\zeta_2}
            -
            1
        \Big)
            \frac{\bu_{2,\cI_2}^{cf}}{ n_2 }
        }_{=:\bA_2},
\end{aligned}
\end{equation}
where we use that $\bu_{2,\cI_2}^{cf} = \zeta_2(\be_{2,\cI_2} + \bh_{2,\cI_2}^{cf})$.
If $ \cI_1 \cap  \cI_2  = \emptyset$, 
then $\bu_{1,\cI_2} = 0$ and 0 appears in the denominator in the previous display.
In this case we interpret $\bu_1 / \| \bu_1 \|_2 = 0$, $\proj_{\bu_{1,\cI_2}}^\perp = \id_{n_2}$.
Under this interpretation, all calculations to come will be correct.
Note that because we have already argued that $\| \proj_{\bu_1}^\perp \bu_2^{cf} \|_2/\sqrt{n_2}$ 
differs from $\zeta_2 \sqrt{n_2}\,\tau_{g_2} \rho_g^\perp > c$ by at most $C'\epsilon$,
we have that $\| \proj_{\bu_1}^\perp \bu_2^{cf}\|_2\neq0 $ for $\epsilon < c'$, so that no other terms in the previous display will cause an issue.

We bound
\begin{equation}
\begin{aligned}
    \| \bA_1 \|_2
    \leq 
        \frac1{\sqrt{n_2}}
        \sqrt{n_2}\,\tau_{g_2}
        \frac{\df_2}{n_2}
        \Bigg|
            \rho_g - \rho_g^\perp \frac{\< \bu_1 , \bu_2^{cf} \> / \|\bu_1 \|_2 }{\| \proj_{\bu_1}^\perp \bu_2^{cf}\|_2}
        \Bigg|
    \leq 
        \frac{C}{\sqrt{n_2}}
        \Bigg| 
            \rho_g - \rho_g^\perp \frac{ \< \bu_1 , \bu_2^{cf} \> / \| \bu_1 \|_2 }{\| \proj_{\bu_1}^\perp \bu_2^{cf}\|_2} 
        \Bigg|, 
\end{aligned}
\end{equation}
where the inequality uses Lemma \ref{lem:bound-on-fixed-pt} and that $\df_2 / n_2 = 1 - \zeta_2$ from the fixed point equation \eqref{eq:marg-fix-pt-eq}.
We make replacements $\< \bu_1 , \bu_2^{cf} \>/(\|\bu_1\|_2\sqrt{n_2}) \rightarrow  \zeta_2 \sqrt{n_2}\tau_{g_2}\rho_g$ 
and $\| \proj_{\bu_1}^\perp \bu_2^{cf}\|_2/\sqrt{n_2} \rightarrow \zeta_2 \sqrt{n_2} \tau_{g_2} \rho_g^\perp$,
after which the right-hand side becomes 0.
As we have already argued,
these terms differ by at most $C'\epsilon$ from their replacements.
Moreover, $\zeta_2 \sqrt{n_2}\tau_{g_2}\rho_g < C$ and $\zeta_2 \sqrt{n_2} \tau_{g_2} \rho_g^\perp > c$ by Lemma \ref{lem:bound-on-fixed-pt}.
We conclude
\begin{equation}\label{eq:grad-first-term-small}
        \big\|
            \bA_1
        \big\|_2
        < \frac{C'\epsilon}{\sqrt{n_2}}.
\end{equation}

If we replace $\| \proj_{\bu_1}^\perp \bu_2^{cf}\|_2/\sqrt{n_2}$ 
with $ \sqrt{n_2} \tau_{g_2} \zeta_2 \rho_g^\perp$ in the expression for $\bA_2$, 
we get the quantity $\big( -\df_2/(\zeta_2 n_2) + 1/\zeta_2 - 1 \big)\bu_{2,\cI_2}^{cf}/\sqrt{n_2} = 0$.
As we have already shown, $\| \bu_{2,\cI_2}^{cf} \|_2 / \sqrt{n_2} \leq C$.
Observe $\tau_{g_2} \df_2 / \sqrt{n_2}  = \sqrt{n_2}\,\tau_{g_2} (1-\zeta_2) \leq C$, where the equality uses the fixed point equation \eqref{eq:marg-fix-pt-eq} and the inequality uses Lemma \ref{lem:bound-on-fixed-pt}. 
Further, $\sqrt{n_2} \tau_{g_2} \zeta_2 \rho_g^\perp > c$.
Because $\| \proj_{\bu_1}^\perp \bu_2^{cf}\|_2/\sqrt{n_2}$ 
differs from $ \sqrt{n_2} \tau_{g_2} \zeta_2 \rho_g^\perp$ by at most $C'\epsilon$,
we conclude
\begin{equation}
        \big\|
            \bA_2
        \big\|_2 
        < \frac{C'\epsilon}{\sqrt{n_2}}.
\end{equation}
Combining the bounds on $\bA_1$ and $\bA_2$, we conclude
\begin{equation}\label{eq:grad-small}
    \big\|
        \nabla \ell_{2|1}^{(2)}(\bu_2^{cf})_{ \cI_2 }
    \big\|_2
        < \frac{C'\epsilon}{\sqrt{n_2}}.
\end{equation}

Third, we observe that by Eq.~\eqref{eq:mO3-def},
$\ell_{2|1}^{(2)}(\bu)$ is $1/n_2$-strongly concave everywhere.
\\

\noindent \textit{Step 4: we use these properties to establish upper bounds on the max-min problem.}

Because
$\ell_{2|1}^{(2)}(\bu)$ is $1/n_2$-strongly concave,
for any $\bu \in \reals^n$ with $\bu_{ \cI_2 ^c}=0$,
\begin{equation}\label{eq:mthe-taylor-bound}
\begin{aligned}
    \ell_{2|1}^{(2)}(\bu)
        &\leq 
        \ell_{2|1}^{(2)}(\bu_2^{cf}) 
        + 
        \nabla \ell_{2|1}^{(2)}(\bu_2^{cf})_{ \cI_2 }^\top(\bu_{ \cI_2 } - \bu_{2, \cI_2 }^{cf}) 
        - 
        \frac1{2n_2} \| \bu_{ \cI_2 } - \bu_{2, \cI_2 }^{cf} \|_2^2
    \\
        &\leq 
        \ell_{2|1}^{(2)}(\bu_2^{cf}) 
        + 
        n_2\cdot \|\nabla \ell_{2|1}^{(2)}(\bu_2^{cf})_{ \cI_2 }\|_2^2
        - 
        \frac1{4n_2} \| \bu_{ \cI_2 } - \bu_{2, \cI_2 }^{cf} \|_2^2.
\end{aligned}
\end{equation}
Combining the previous display with Eqs.~\eqref{eq:mO1-approx-mO}, \eqref{eq:mthe1-to-mthe2-max-min}, \eqref{eq:mO3-approx-mO2}, \eqref{eq:mO3-uO*-approx}, and \eqref{eq:grad-small},
we conclude that
\begin{equation}\label{eq:mO-min-max-upper-bound}
\begin{gathered}
        \max_{\mathclap{
            \substack{
                \bu_{ \cI_2 } \in E_{u|1}^c(\Delta)\\
                                \bu \in \cS_{u|1}  }}}
            \;\;\; \;\;                              
                    \min_{\|\bv\|_2 \leq C_v }
                    \ell_{2|1}(\bu,\bv)
            \leq 
            \ell_{2|1}^* + C'\epsilon 
            - 
            \;\;\;\;
            \min_{\mathclap{
                \substack{\bu_{ \cI_2 } \in E_{u|1}^c(\Delta)\\
                                \bu \in \cS_{u|1}  }}}
                \;\;\;\;
                \frac{\| \bu - \bu_2^{cf} \|_2^2}{4n_2},
    \\
        \max_{\bu \in \cS_{u|1} }                           
        \min_{\|\bv\|_2 \leq C_v }    
            \ell_{2|1}(\bu,\bv)
            \leq 
            \ell_{2|1}^* + C'\epsilon.
\end{gathered}
\end{equation} 
By Eq.~\eqref{eq:cEgam},
\begin{equation}
        \Big|
            \phi_u\Big( \frac{\bu_{2, \cI_2 }}{\sqrt{n_2}}  \Big)
            -
            \E\Big[\phi_u\Big( \frac{\bu_{2, \cI_2 }^{cf}}{\sqrt{n_2}}  \Big)\Big]
        \Big|
        \leq\frac{\Delta}{2}.
\end{equation}
By the definition of $E_{u|1}^c(\Delta)$ and because $\phi_u$ is $1$-Lipschitz, on this event 
\begin{equation}
    \min_{\substack{\bu_{ \cI_2 } \in E_{u|1}^c(\Delta)\\\bu \in \cS_{u|1}  }}\frac{\| \bu - \bu_2^{cf} \|_2^2}{4n_2} \geq \frac{\Delta^2}{16}.
\end{equation}
Combining Eq.~\eqref{eq:mO-min-max-upper-bound} with the previous display and taking $\epsilon = \Delta^2 / (32 C')$ (with the same value $C'$ appearing in Eq.~\eqref{eq:mO-min-max-upper-bound}),
we conclude the upper bound inside the probability in the first line of Eq.~\eqref{eq:mthe-bound} holds. 
The upper bound insdie the probability in the second line of Eq.~\eqref{eq:mO-min-max-upper-bound} is given in Eq.~\eqref{eq:mO-min-max-upper-bound}.

In summary, we have shown that the upper bounds inside the probabilities in Eq.~\eqref{eq:mO-min-max-upper-bound} hold on $\cG_{2|1}(\epsilon,\sqrt{32C'\epsilon})$ for $\epsilon < c'$. 
By Lemma \ref{lem:conditional-concentration-event}, we conclude that the probability bounds in Eq.~\eqref{eq:mO-min-max-upper-bound} hold as well.

\section{Concentration of empirical second moments}
\label{sec:marg-char-support}

We will frequently need to use the concentration of second moment statistics of the form $\< \ba^{(1)},\ba^{(2)}\>$ for random vectors $\ba^{(1)},\ba^{(2)}$ (possibly the same). 
For example, this will be required in the proofs of Lemmas \ref{lem:marginal-concentration-event} and \ref{lem:conditional-concentration-event}, but also elsewhere.
Although $\< \ba^{(1)},\ba^{(2)}\>$ is not globally Lipschitz, 
its concentration will follow from Lipschitz concentration results along the lines of the marginal concentration (Lemma \ref{lem:marginal-characterization}) and conditional characterization (Lemma \ref{lem:conditional-characterization}) or standard Gaussian concentration of Lipschitz functions.
Some care is needed to track the correct dependence on $p$ and $n_k$ when extending these results to $\< \ba^{(1)},\ba^{(2)}\>$.
The following lemma provides the required extension.

\begin{lemma}\label{lem:T-conc}
    Consider two random vectors $\ba^{(1)},\ba^{(2)} \in \reals^n$, constants $C,c> 0$ and $\sC(\epsilon),\sc(\epsilon): \reals_{>0} \rightarrow \reals_{>0}$, $p \geq 1$, $M,K > 0$, and $\bS \in \SS_+^2$, which satisfy the following concentration guarantee:

    \begin{itemize}

        \item 
        There exist $M$-Lipschitz functions $\bphi^{(1)},\bphi^{(2)}: \reals^n \rightarrow \reals$
        such that for Gaussian vectors $\bxi^{(1)},\bxi^{(2)} \sim \normal(0,\bS \otimes \id_n) $ and any $1$-Lipschitz functions $\phi: \reals^n\rightarrow \reals$, $\phi': (\reals^n)^2\rightarrow \reals$,
        \begin{equation}\label{eq:generic-concentration}
        \begin{gathered}
            \P\Big(
                \Big|
                    \phi(\ba^{(k)})
                    -
                    \E\big[
                        \phi\big(
                            \bphi^{(k)}(\bxi^{(k)})
                        \big)
                    \big]
                \Big| 
                > 
                C \epsilon
            \Big)
            \leq \sC(\epsilon)e^{-\sc(\epsilon)p}
            \;\; \text{for $k = 1,2$},
            \\
            \P\Big(
                \Big|
                    \phi'(\ba^{(1)},\ba^{(2)})
                    -
                    \E\big[
                        \phi'\big(
                            \bphi^{(1)}(\bxi^{(1)}),
                            \bphi^{(2)}(\bxi^{(2)})
                        \big)
                    \big]
                \Big| 
                > 
                K \epsilon
            \Big)
            \leq \sC(\epsilon)e^{-\sc(\epsilon)p},
        \end{gathered}
        \end{equation}  
        and
        \begin{equation}\label{eq:E-bound}
            \E[\|\bphi^{(i)}(\bxi^{(i)})\|_2^2] \leq C,\quad i = 1,2.
        \end{equation}

        \item The parameters above satisfy $M \leq CK/\sqrt{p}$ and the singular values of $\bS_\xi$ are bounded by $C$.

    \end{itemize}
    Then there exist $C',c'> 0$ and $\sC'(\epsilon),\sc'(\epsilon): \reals_{>0} \rightarrow \reals_{>0}$ depending only on $C,c> 0$ and $\sC(\epsilon),\sc(\epsilon): \reals_{>0} \rightarrow \reals_{>0}$
    such that for any $\epsilon < c'$ we have
    \begin{equation}
        \P\Big(
            \Big|
                \<\ba^{(1)},\ba^{(2)}\>
                -
                \E\big[
                    \big\<
                        \bphi^{(1)}(\bxi^{(1)}),
                        \bphi^{(2)}(\bxi^{(2)})
                    \big\>
                \big]
            \Big| 
            > 
            K \epsilon
        \Big)
        \leq \sC'(\epsilon)e^{-\sc'(\epsilon)p}.
    \end{equation}  
\end{lemma}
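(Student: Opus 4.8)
\textbf{Proof plan for Lemma \ref{lem:T-conc}.}

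The plan is to decompose the inner product $\langle \ba^{(1)}, \ba^{(2)}\rangle$ as a sum of (i) a bounded, globally Lipschitz piece to which the hypotheses apply directly, and (ii) an error piece controlled by the fact that $\ba^{(1)},\ba^{(2)}$ concentrate in norm. The key device is truncation: for a radius $R$ to be chosen of order $1$, write $T_R(\bx) := \bx \cdot \min\{1, R/\|\bx\|_2\}$ for the projection onto the ball of radius $R$, and note $T_R$ is $1$-Lipschitz. Then the bilinear map $(\bx,\by) \mapsto \langle T_R(\bx), T_R(\by)\rangle$ is $R$-Lipschitz in each argument, hence $2R$-Lipschitz jointly (after rescaling by $R$ it is $1$-Lipschitz, so $\phi'(\bx,\by) := \tfrac{1}{2R}\langle T_R(\bx),T_R(\by)\rangle$ is $1$-Lipschitz). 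Applying the second display of \eqref{eq:generic-concentration} to this $\phi'$ gives that $\langle T_R(\ba^{(1)}), T_R(\ba^{(2)})\rangle$ concentrates around $\E[\langle T_R(\bphi^{(1)}(\bxi^{(1)})), T_R(\bphi^{(2)}(\bxi^{(2)}))\rangle]$ with fluctuations $O(RK\epsilon)$, i.e.\ $O(K\epsilon)$ after adjusting constants.

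Next I would control the two gaps introduced by truncation. First, on the Gaussian side: $\E[\|\bphi^{(i)}(\bxi^{(i)})\|_2^2] \le C$ by \eqref{eq:E-bound}, so by Markov $\|\bphi^{(i)}(\bxi^{(i)})\|_2 \le R$ except on an event of probability $\le C/R^2$, and on the complement the truncation does nothing; a Cauchy--Schwarz bound on the tail contribution (using the second moment bound again) shows
\begin{equation}
    \big| \E[\langle T_R(\bphi^{(1)}), T_R(\bphi^{(2)})\rangle] - \E[\langle \bphi^{(1)}, \bphi^{(2)}\rangle] \big| \le C'/R
\end{equation}
after using that $\langle \bphi^{(1)},\bphi^{(2)}\rangle$ itself has bounded expectation and the tail is integrable (one needs Lipschitz-Gaussian concentration of $\|\bphi^{(i)}(\bxi^{(i)})\|_2$ around its mean, which follows because $\bx\mapsto\|\bx\|_2$ is $1$-Lipschitz and $\bxi^{(i)}$ is Gaussian with covariance of bounded operator norm, to get exponential tails and hence control the truncated expectation to within $e^{-cR^2}$ rather than merely $1/R$ — either bound suffices). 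Second, on the random side: I apply the first display of \eqref{eq:generic-concentration} with the $1$-Lipschitz function $\bx \mapsto \tfrac1M\|\bx - \E[\bphi^{(k)}(\bxi^{(k)})]\|_2$ composed appropriately — more simply, with $\phi(\bx) = \|\bx\|_2$ — to deduce that $\|\ba^{(k)}\|_2$ concentrates around $\E[\|\bphi^{(k)}(\bxi^{(k)})\|_2] \le C$ with fluctuations $O(C\epsilon)$. Hence for $\epsilon < c'$ and $R$ a large enough constant, $\|\ba^{(k)}\|_2 \le R$ with probability $\ge 1 - \sC(\epsilon)e^{-\sc(\epsilon)p}$, so $T_R(\ba^{(k)}) = \ba^{(k)}$ on that event, and $\langle T_R(\ba^{(1)}),T_R(\ba^{(2)})\rangle = \langle \ba^{(1)},\ba^{(2)}\rangle$.

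Combining: on the intersection of these high-probability events,
\begin{equation}
    \big| \langle \ba^{(1)},\ba^{(2)}\rangle - \E[\langle \bphi^{(1)},\bphi^{(2)}\rangle] \big|
    \le O(K\epsilon) + C'/R,
\end{equation}
and choosing $R$ a fixed constant depending on $C,c$ — actually one must be a little careful and instead choose $R$ a \emph{large} constant so that $C'/R \le K\epsilon$ requires $R \gtrsim K/(K\epsilon) = 1/\epsilon$, which is not constant; so the cleaner route is to keep $R$ constant and absorb $C'/R$ into the $O(K\epsilon)$ bound only when $\epsilon \gtrsim 1/(RK)$, and otherwise note $K \ge \sqrt{p}\,M/C \gtrsim \sqrt p$ by the hypothesis $M \le CK/\sqrt p$ combined with a lower bound on $M$ — alternatively, and most robustly, use the exponential-tail version of the truncation error so that $C'/R$ is replaced by $e^{-cR^2}$, take $R = R(\epsilon)$ growing slowly (e.g. $R = \epsilon^{-1/4}$) so that $e^{-cR^2}$ is super-polynomially small and $R K \epsilon \le K\epsilon^{3/4}$, then reparametrize $\epsilon$. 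The main obstacle is precisely this bookkeeping: getting the truncation radius, the Lipschitz constant blowup (factor $R$), and the target accuracy $K\epsilon$ to cohere without losing the right power of $p$ in the exponent or the right dependence on $K$. Everything else is a routine application of the two hypothesized concentration inequalities together with Gaussian concentration of the norm. Once $R = R(\epsilon)$ is fixed, collecting the failure probabilities — each of the form $\sC(\cdot)e^{-\sc(\cdot)p}$ with arguments that are fixed functions of $\epsilon$ — and taking a union bound yields the claimed $\sC'(\epsilon)e^{-\sc'(\epsilon)p}$.
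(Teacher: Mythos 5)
Your overall skeleton is the same truncation strategy the paper uses: replace the inner product by a Lipschitz surrogate, feed it into the bivariate hypothesis in \eqref{eq:generic-concentration}, show the truncation is inactive on the random side via the univariate hypothesis applied to $\phi(\bx)=\|\bx\|_2$, and control the truncation bias on the Gaussian side by tail bounds. Those steps are fine. The genuine gap is exactly the step you flag and do not resolve: the comparison of the Gaussian-side truncation bias against the target accuracy $K\epsilon$. As written, your bias bound is a function of $\epsilon$ alone (either $C'/R$ with $R$ constant, or $e^{-cR^2}$ with $R=\epsilon^{-1/4}$), and such a quantity cannot be dominated by $K\epsilon$ uniformly in $K$, because the hypotheses impose no lower bound on $K$ (nor on $M$; your parenthetical fix ``combined with a lower bound on $M$'' appeals to an assumption that is not there). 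Moreover your argument never actually uses the hypothesis $M\leq CK/\sqrt{p}$, yet some relation between $M$ and $K$ is indispensable: the bias comes from the tail fluctuations of $\|\bphi^{(i)}(\bxi^{(i)})\|_2$, which are sub-Gaussian with parameter of order $M$ (not order $1$ --- the correct tail is $e^{-c(R-\sqrt{C})^2/M^2}$, since it is the composition $\bphi^{(i)}$, not $\bxi^{(i)}$ itself, whose norm you truncate), so the bias is intrinsically an $M$-sized quantity and must be beaten by $K\epsilon$ using $M\lesssim K$.

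The paper closes this with a cleaner device. It truncates at the fixed radius $2C$ (via a $2C$-Lipschitz extension of the inner product), so the main term already has the right $K\epsilon$ scale with no $R$-blowup, and the sub-Gaussian tail with parameter $M$ gives a bias bounded by $C'M$, not $C'/R$. Then the regime $K\epsilon\leq C'M$, where this bias is not absorbable, is handled not by shrinking the bias further but by making the probability bound trivially true there: one adds the term $e^{1-(K/M)^2\epsilon^2/C'^2}$, which by $M\leq CK/\sqrt{p}$ is at most $e\cdot e^{-p\epsilon^2/(CC')^2}$ (an admissible $\sC'(\epsilon)e^{-\sc'(\epsilon)p}$ with constants independent of $K,M,p$) and is $\geq 1$ precisely when $K\epsilon\leq C'M$. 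Your route can be salvaged, but only by doing what you left open: keep $R$ of constant order (or track the $M$-dependence $e^{-cR^2/M^2}$ if you let $R$ grow), bound the bias by $C'M$ using Gaussian concentration of the $M$-Lipschitz map $\bxi^{(i)}\mapsto\|\bphi^{(i)}(\bxi^{(i)})\|_2$ together with \eqref{eq:E-bound}, and then invoke $M\leq CK/\sqrt{p}$ either to absorb the bias into $K\epsilon$ or, in the complementary regime, to render the claimed probability bound vacuous as above. Without that step the proof does not go through.
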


\noindent In all applications of Lemma \ref{lem:T-conc}, we will take $C,c,\sC(\epsilon),\sc(\epsilon)$ to be $\cPregr,\cPmodel$, and regression-method dependent constants, 
and $K,M$ to be constants which possibly depend also on $p/n_k$.

\begin{proof}[Proof of Lemma \ref{lem:T-conc}]
    The function $\<\ba^{(1)},\ba^{(2)}\>$ is $2C$-Lipschitz on $\{\| \ba^{(1)}\|_2,\| \ba^{(2)}\|_2 \leq 2C \}$. 
    Define $\overline \bt(\ba^{(1)},\ba^{(2)})$ to be a $2C$-Lipschitz extension of $\<\ba^{(1)},\ba^{(2)}\>$ which agrees with $\<\ba^{(1)},\ba^{(2)}\>$ on $\{\| \ba^{(1)}\|_2,\| \ba^{(2)}\|_2 \leq 2C\}$.
    By the first line of Eq.~\eqref{eq:generic-concentration},
    we have with probability at least $1 - 2Ce^{-cp}$ that 
    $\max\{\| \ba^{(1)} \|_2,\| \ba^{(1)} \|_2\} \leq 2C$.
    Thus, by the second line of Eq.~\eqref{eq:generic-concentration}, with probability at least $1 - \sC(\epsilon)e^{-\sc(\epsilon)p} - 2Ce^{-cp}$
    \begin{equation}
        \| \<\ba^{(1)},\ba^{(2)}\> - \E[\overline \bt(\bphi^{(1)},\bphi^{(2)})] \|_{\sF}
            =
            \| \<\ba^{(1)},\ba^{(2)}\> - \E[\overline \bt(\bphi^{(1)},\bphi^{(2)})] \|_{\sF}
            \leq 
            2CK \epsilon.
    \end{equation}
    Moreover, becuase $\overline \bt(0,0) = 0$ and is $2C$-Lipschitz, 
    we have $\big|\overline \bt(\bphi^{(1)},\bphi^{(2)})\big| \leq 2C (\|\bphi^{(1)}\|_2 + \|\bphi^{(2)}\|_2) $.
    By Cauchy-Schwartz, we have $|\< \bphi^{(1)},\bphi^{(2)}\>| \leq \| \bphi^{(1)} \|_2 \| \bphi^{(2)}\|_2$.
    Thus
    \begin{equation}
        \big| \overline \bt(\bphi^{(1)},\bphi^{(2)}) - \<\bphi^{(1)},\bphi^{(2)}\> \big|
            \leq 
            \big(2C(\| \bphi^{(1)} \|_2 + \|\bphi^{(2)} \|_2) + \| \bphi^{(1)} \|_2 \|\bphi^{(2)} \|_2\big)
            \indic{\max\{\| \bphi^{(1)} \|_2, \|\bphi^{(2)} \|_2\} > 2C}.
    \end{equation}
    By sub-Gaussian concentration, using Eq.~\eqref{eq:E-bound} and that the singular values of $\bS$ are bounded above by $C$,
    we have
    $\P(\| \bphi^{(i)} \|_2 > 2C + t) \leq e^{-(C+t)^2/(2CM^2)}$,
    whence
    \begin{equation}
    \begin{aligned}
        \big| \E\big[\overline \bt(\bphi^{(1)},\bphi^{(2)})] - \E\big[\<\bphi^{(1)},\bphi^{(2)}\> \big] \big|
            &\leq \int_0^\infty (4C(C+t) + (C+t)^2) e^{-(C+t)^2/(CM^2)}\de t
            \leq C' M
        \\
            &\leq \int_0^\infty C'(1+(t/M)^2) e^{- \big(2(t/M) + (t/M)^2\big)/C }\de t
            \leq C' M .
    \end{aligned}
    \end{equation}
    where $C'$ depends only on $C$ and changes at each appearance, and in the second inequality we have used $M \leq C$ to conclude that $(4C(C+t) + (C+t)^2) e^{-C/M^2} \leq C'(1+(t/M)^2)$ and $-2t/M^2 \leq - 2 t/(CM)$.
    We conclude that 
    \begin{equation}
        \P\Big(
            \Big|
                \<\ba^{(1)},\ba^{(2)}\>
                -
                \E\big[
                    \big\<
                        \bphi^{(1)}(\bxi^{(1)}),
                        \bphi^{(2)}(\bxi^{(2)})
                    \big\>
                \big]
            \Big| 
            > 
            C'M + K\epsilon
        \Big)
        \leq 
        \sC(\epsilon)e^{-\sc(\epsilon)p} + 2Ce^{-cp},
    \end{equation}
    For $K\epsilon \leq C'M$, we have $e^{1-C^2p\epsilon^2/(C'(M/K)\sqrt{p})^2} > 1$,
    whence
    \begin{equation}
        \P\Big(
            \Big|
                \<\ba^{(1)},\ba^{(2)}\>
                -
                \E\big[
                    \big\<
                        \bphi^{(1)}(\bxi^{(1)}),
                        \bphi^{(2)}(\bxi^{(2)})
                    \big\>
                \big]
            \Big| 
            > 
            2K\epsilon
        \Big)
        \leq 
        \sC(\epsilon)e^{-\sc(\epsilon)p} + 2Ce^{-cp} + e^{1-(K/M)^2\epsilon^2/C'^2},
    \end{equation}
    because the bound becomes trivial for $K\epsilon \leq C'M$ and is implied by the previous display for $K\epsilon > C'M$.
    Because $(K/M)^2 \geq p/C^2$, 
    the result follows.
\end{proof}

\section{Empirical degrees-of-freedom: proof of Theorem \ref{thm:joint-characterization}(iii) and Lemma \ref{lem:marginal-characterization}(ii)}
\label{sec:emp-df}

We extend Theorem \ref{thm:joint-characterization}(i) and (ii) to the case that $\hat \btheta_k^\de$ and $\hat \be_k^\de$ are computed using $\hat \df_k$ in place of $\df_k$ (recall $\hat \df_k$ defined in Eq.~\eqref{eq:hat-df}),
and make the similar extension for Lemma \ref{lem:marginal-characterization}(i).
To distinguish between $\hat \btheta_k^\de,\hat \be_k^\de$ which are computed using $\hat \df_k$ from those computed using $\df_k$,
in this section we denote by $\hat \btheta_k^{\de,\mathrm{emp}},\hat \be_{k}^{\de,\mathrm{emp}}$ 
the fully empirical debiased estimates which use $\hat \df_k$.
As in the rest of the paper, the notation $\hat \btheta_k^\de,\hat \be_k^\de$ will denote the quantities computed using $\df_k$, as in Eqs.~\eqref{eq:param-est} and \eqref{eq:noise-est}.

Our main tool is the following.
\begin{lemma}
\label{lem:emp-df-conc}
    Assume \textsf{A1} and \textsf{A2}. 

    \begin{enumerate}[(i)]

        \item 
        There exist $\cPmodel$, $\cPregr$ and regression method-dependent $c' > 0$ and $\sC,\sc: \reals_{>0} \rightarrow \reals_{>0}$ such that
        \whp
        \begin{equation}
        \begin{gathered}
        \Big| 
            \frac{\hat \df_k}{p} - \frac{\df_k}{p}        
        \Big| < \epsilon.
        \end{gathered}
        \end{equation}

        \item 
        There exist $\cPmodel$, $\cPregr$ and regression method-dependent $c' > 0$ and $\sC,\sc: \reals_{>0} \rightarrow \reals_{>0}$ such that
        \whp
        \begin{equation}
        \begin{gathered}
            \| \hat \btheta_k^{\de,\mathrm{emp}} - \hat \btheta_k^\de \|_2 < \frac{p}{n_k}\,\epsilon,
           \qquad\qquad 
           \frac{\| \hat \be_{k,\cI_l}^{\de,\mathrm{emp}} - \hat \be_{k,\cI_l}^{\de,\mathrm{emp}} \|_2 }{\sqrt{n_l}}
           \leq \frac{p}{n_k}\,\epsilon.
        \end{gathered}
        \end{equation}

    \end{enumerate}
\end{lemma}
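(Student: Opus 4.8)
The plan is to first establish part (i) and then deduce part (ii) by elementary algebra. Part (i) is proved separately for each regression method. For least squares $\hat\df_k = p$ by Eq.~\eqref{eq:hat-df}, and one checks from the fixed-point equations and the structure of the fixed-design estimator that $\df_k = p$ as well (the ridge/OLS fixed-design map $\eta_k$ has divergence $\Tr(\bSigma(\bSigma + \mu_k\id)^{-1})$, which degenerates to $p$ as the regularization vanishes), so the two agree exactly and (i) is trivial. For ridge regression, $\hat\df_k = p - \mu_k\Tr((\hat\bSigma_k + \mu_k\id)^{-1})$ with $\mu_k = \sqrt{p/n_k}\,\lambda_k$ and $\hat\bSigma_k = \frac1{n_k}\bX_{\cI_k}^\top\bX_{\cI_k} = \bSigma^{1/2}\big(\frac1{n_k}\bZ^\top\bZ\big)\bSigma^{1/2}$ for $\bZ$ with i.i.d.\ standard Gaussian entries. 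On the high-probability event $\{\|\bZ\|_{\mathrm{op}} \leq C\sqrt{n_k}\}$ the map $\bZ \mapsto \hat\df_k$ is Lipschitz with constant $O(p/\sqrt{n_k})$, so Gaussian concentration of Lipschitz functions gives $|\hat\df_k/p - \E[\hat\df_k]/p| \leq \epsilon$ \whp, while $\E[\hat\df_k]/p$ converges to the deterministic Marchenko--Pastur equivalent, which coincides with $\df_k/p$ precisely because $\zeta_k = 1 - \df_k/n_k$ is the self-consistency equation matching the ``effective ridge'' $\mu_k/\zeta_k$ of the fixed-design model to the random-design one. (Alternatively, this identity can be read off from the marginal characterization, Lemma~\ref{lem:marginal-characterization}.)

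The Lasso (and $\alpha$-smoothed Lasso) case is the crux. Here $\hat\df_k = \|\hat\btheta_k\|_0$ is not a Lipschitz function of the estimators, so I would sandwich it between two Lipschitz functionals of $(\hat\btheta_k,\hat\btheta_k^\de)$ and invoke Lemma~\ref{lem:marginal-characterization}. For the lower bound, $\|\hat\btheta_k\|_0 \geq \sum_i \psi_\delta(\hat\theta_{k,i})$ with $\psi_\delta(x) := \min(1,|x|/\delta) \leq \indic{x\neq 0}$, a $(\delta\sqrt{p})^{-1}$-Lipschitz function of $\hat\btheta_k$. For the upper bound, let $\bg$ be the KKT subgradient, so that $\tfrac1{n_k}\bX_{\cI_k}^\top(\by_{k,\cI_k} - \bX_{\cI_k}\hat\btheta_k) = \tfrac{\lambda_k}{\sqrt{n_k}}\bg$ and hence $\bg = \tfrac{n_k - \df_k}{\lambda_k\sqrt{n_k}}\bSigma(\hat\btheta_k^\de - \hat\btheta_k)$ is a Lipschitz function of $(\hat\btheta_k,\hat\btheta_k^\de)$; generically the active set is $\{i : |g_i| = 1\}$, so $\|\hat\btheta_k\|_0 \leq \sum_i \chi_\delta(g_i)$ with $\chi_\delta(x) := \min(1,\max(0,(|x|-1+\delta)/\delta))$. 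Applying the marginal characterization to $\tfrac1p\sum_i\psi_\delta(\hat\theta_{k,i})$ and $\tfrac1p\sum_i\chi_\delta(g_i)$ shows each concentrates on its fixed-design expectation with error $O(\epsilon'/\delta)$; both expectations lie within $O(\delta)$ of $\tfrac1p\E[\|\hat\btheta_k^f\|_0]$ once one has the density bounds $\PP(0 < |\hat\theta_{k,i}^f| \leq \delta) \leq C\delta$ and $\PP(1-\delta \leq |g_i^f| < 1) \leq C\delta$ in the fixed-design model; and $\tfrac1p\E[\|\hat\btheta_k^f\|_0] = \df_k/p$ by the Tibshirani--Taylor divergence identity for the (generalized) Lasso prox. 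Choosing $\delta \asymp \epsilon$ and $\epsilon' \asymp \epsilon^2$ and reparametrizing $\sc,\sC$ yields part (i).

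Part (ii) follows from (i) by writing $\bq := \bSigma^{-1}\bX_{\cI_k}^\top(\by_{k,\cI_k} - \bX_{\cI_k}\hat\btheta_k)$, so that $\hat\btheta_k^\de - \hat\btheta_k = \bq/(n_k - \df_k)$ and $\hat\btheta_k^{\de,\mathrm{emp}} - \hat\btheta_k = \bq/(n_k - \hat\df_k)$, whence $\hat\btheta_k^{\de,\mathrm{emp}} - \hat\btheta_k^\de = (\hat\btheta_k^\de - \hat\btheta_k)\,\tfrac{\hat\df_k - \df_k}{n_k - \hat\df_k}$. By Table~\ref{tab:fix-pt} together with a Lipschitz-approximation argument (as in Section~\ref{sec:marg-char-support}), $\|\hat\btheta_k^\de - \hat\btheta_k\|_{\bSigma}^2$ concentrates on a quantity of order $O(p/n_k)$; by part (i), $|\hat\df_k - \df_k| \leq p\epsilon$ and $n_k - \hat\df_k \geq cn_k$ (using also $\df_k/n_k = 1 - \zeta_k \leq 1-c$ from Lemma~\ref{lem:bound-on-fixed-pt}). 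Combining these and using that $p/n_k$ is bounded above under \textsf{A2} gives $\|\hat\btheta_k^{\de,\mathrm{emp}} - \hat\btheta_k^\de\|_2 \lesssim (p/n_k)^{3/2}\epsilon \lesssim (p/n_k)\epsilon$. The noise-residual bound is analogous: $\hat\be_{k,\cI_l}^{\de,\mathrm{emp}} - \hat\be_{k,\cI_l}^\de$ is supported on $\cI_k\cap\cI_l$ and equals $\hat\be_{k,\cI_k\cap\cI_l}\cdot\tfrac{n_k(\hat\df_k - \df_k)}{(n_k-\hat\df_k)(n_k-\df_k)}$; bounding $\|\hat\be_{k,\cI_k}\|_2 \leq C\sqrt{n_k}$ (again via Table~\ref{tab:fix-pt}) and dividing by $\sqrt{n_l}$ yields the claim, where one uses the sharper bound $|\hat\df_k - \df_k| \lesssim p\sqrt{p/n_k}\,\epsilon$ and $n_l/p \geq \delta_{\min}$ when regression $k$ is ridge or OLS, and $n_k \asymp n_l \asymp p$ when it is the Lasso.

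The main obstacle is the Lasso step of part (i): establishing the density/non-degeneracy estimates $\PP(0 < |\hat\theta_{k,i}^f| \leq \delta) \leq C\delta$ and $\PP(1-\delta \leq |g_i^f| < 1) \leq C\delta$ uniformly over coordinates and over the parameter class, and in particular for non-diagonal $\bSigma$, where the fixed-design prox is not coordinate-separable. These hold because $(\hat\btheta_k^f,\bg^f)$ is the image of a Gaussian vector under a piecewise-linear polyhedral map, but making this rigorous --- and verifying $\df_k = \E[\|\hat\btheta_k^f\|_0]$ in this generality --- requires care, and is where I expect to rely on the techniques of \cite{celentano2020lasso}.
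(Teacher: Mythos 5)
Your part (ii) and the OLS/ridge portions of part (i) essentially track the paper's own argument: for part (ii) you use the same identity $\hat\btheta_k^{\de,\mathrm{emp}}-\hat\btheta_k^{\de}=(\hat\btheta_k^{\de}-\hat\btheta_k)\,\frac{\hat\df_k-\df_k}{n_k-\hat\df_k}$ (and its residual analogue, supported on $\cI_k\cap\cI_l$), combined with norm bounds from the characterization and $\zeta_k>c$, which is exactly the paper's route. The one wrinkle there is your treatment of the $\sqrt{n_k/n_l}$ factor: you patch it by invoking a sharper rate in part (i) for ridge ($|\hat\df_k-\df_k|\lesssim p\sqrt{p/n_k}\,\epsilon$) that your part (i), as stated and argued, does not deliver (for ridge it would require a quantitative bound on the bias $|\E[\hat\df_k]/p-\df_k/p|$ at that scale). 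The simpler fix is to bound the restricted norm directly: the difference vector has at most $|\cI_k\cap\cI_l|\le n_l$ nonzero coordinates, and $\|\hat\be_{k,\cI_k\cap\cI_l}\|_2/\sqrt{n_l}\le C$ with high probability (via the marginal characterization or Corollary \ref{cor:ext-to-I2}), so no extra factor appears.

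The genuine gap is the Lasso case of part (i). Your sandwich plan (soft counting $\psi_\delta$ from below, a boundary-counting functional of the KKT subgradient $\bg=\frac{n_k-\df_k}{\sqrt{n_k}\,\lambda_k}\bSigma(\hat\btheta_k^{\de}-\hat\btheta_k)$ from above, then Lemma \ref{lem:marginal-characterization}) is a reasonable blueprint, but it reduces the claim to the anti-concentration estimates $\P(0<|\hat\theta_{k,i}^f|\le\delta)\le C\delta$ and $\P(1-\delta\le|g_i^f|<1)\le C\delta$ (in suitably averaged form, uniformly over the parameter class and for non-separable $\bSigma$), together with $\df_k=\E[\|\hat\btheta_k^f\|_0]$. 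You explicitly leave these unproven and say you would "rely on the techniques of \cite{celentano2020lasso}"; that is not a proof, and these estimates are precisely the hard content of the statement. The paper disposes of this case with a one-line citation: under assumption \textsf{A2} (which forces $c<n_k/p<C$ for the Lasso), the concentration of $\|\hat\btheta_k\|_0/p$ on $\df_k/p$ is exactly Theorem 8 of \cite{celentano2020lasso}, so no new argument is needed. As written, your Lasso case is an open outline rather than a proof.

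A smaller issue: for ridge, your identification of the concentration center with $\df_k$ via "the Marchenko--Pastur deterministic equivalent and the self-consistency of $\zeta_k$" is the right idea but needs a concrete quantitative input; the paper supplies it by invoking the anisotropic local law (Theorem 3.7 of \cite{knowles2017anisotropic}), matching its self-consistent equation to the second equation in \eqref{eq:marg-fix-pt-eq}, and absorbing the residual $O(1/\sqrt{p})$ bias by adjusting $\sC,\sc$ so the bound is trivial for $\epsilon\lesssim 1/\sqrt{p}$. Your parenthetical alternative of reading this off Lemma \ref{lem:marginal-characterization} does not work: for ridge, $\hat\df_k$ is a function of the design matrix alone and is not a Lipschitz function of $(\hat\btheta_k,\hat\btheta_k^{\de})$ or of the residuals, so the marginal characterization says nothing about it directly.
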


\begin{proof}[Proof of Lemma \ref{lem:emp-df-conc}(i)]
    In the case of least-squares, $\hat \df_k = p = \df_k$, and there is nothing to prove.

    In the case of the Lasso, this follows from Theorem 8 of \cite{celentano2020lasso},
    where we recall that for the lasso we assume in \textsf{A2} that $c < n_k / p < C$, so that $n$ and $p$ may be interchanged at the cost of changing constants.

    For ridge-regression, the bound follows from Theorem 3.7 of \cite{knowles2017anisotropic}, as we now describe.
    By Eq.~\eqref{eq:hat-df},
    we have that $\hat \df_k/p = 1 - \sqrt{\frac{p}{n_k}}\,\lambda_k \Tr\big(\frac1p\big(\frac1{n_k}\bX_{\cI_k}^\top \bX_{\cI_k} + \sqrt{\frac{p}{n_k}}\,\lambda_k\id_p\big)^{-1}\big)$.
    The trace on the right-hand side is the resolvent of $\bX_{\cI_k}^\top \bX_{\cI_k}/n_k$ evaluated at $-\lambda_k \sqrt{p/n_k}$ and normlized by $1/p$.
    Theorem 3.7 of \cite{knowles2017anisotropic} implies that this resolvent has fluctations $O_p(1/\sqrt{n_k})$ from a deterministic quantity defined by Eqs.~(3.5) and (2.1) of that paper, where $O_p$ hides only $\cPregr$ and $\cPmodel$-dependent constants.
    We compare these equations to the second line of the fixed-point equations Eqs.~\eqref{eq:marg-fix-pt-eq}, which in the case of ridge regression do not depend on $\tau$. 
    Some algebra shows that the deterministic quantity given by \cite{knowles2017anisotropic} corresponds to $\zeta_k/(\sqrt{p/n_k}\,\lambda_k)$.
    Thus, we conclude that $\hat \df_k/p$ has fluctuations $O_p(\sqrt{p}/n_k) = O_p(1/\sqrt{p})$ around $\df_k$.

    We now show that the concentration occurs with exponentially (in $p$) high probability.
    By Weyl's inequality, the singular values of $\bX_{\cI_k}/\sqrt{n_k}$ are $1$-Lipschitz in $\bX_{\cI_k}/\sqrt{n_k}$ with respect to Frobenius norm. 
    Because $x \mapsto (x^2 + \lambda_k \sqrt{p/n_k})^{-1}$ is $C (n_k/p)^{3/4} $-Lipschitz,
    we see that $\hat \df_k/p$ is $C \sqrt{p/n_k} \lambda_k \,(n_k/p)^{3/4} \leq C(n_k/p)^{1/4} \leq C\sqrt{n_k/p} $-Lipschitz in the $\bX_{\cI_k}/\sqrt{n_k}$ with respect to Frobenius norm. 
    By Gaussian concentration of Lipschitz functions,
    we conclude that with probability at least $1 - Ce^{-cp\epsilon^2}$ we have $\big| \hat \df_k / p - \E[\hat \df_k/p]\big| < \epsilon$.
    Combined with the fluctuation bound in the previous paragraph, we conclude that $\big|\E[\hat \df_k/p] - \df_k/p\big| \leq C/\sqrt{p}$.
    By adjusting constants so that the bound becomes trivial for $\epsilon \leq C/\sqrt{p}$,
    we conclude Lemma \ref{lem:emp-df-conc} in the case of ridge regression.
\end{proof}

\begin{proof}[Proof of Lemma \ref{lem:emp-df-conc}(ii)]
    Note
    \begin{equation}
        \| \hat \btheta_k^{\de,\mathrm{emp}} - \hat \btheta_k^\de \|_2 
            \leq 
            \frac{\| \bX_{\cI_k} \bSigma^{-1} \|_{\mathrm{op}}}{\sqrt{n_k}}
            \frac{\| \hat \be_{k,\cI_k} \|_2}{\sqrt{n_k}}
            \Big|
                \frac1{1 - \hat \df_k/n_k} - \frac1{1 - \df_k/n_k}
            \Big|.
    \end{equation}
    By Eq.~\eqref{eq:noise-est} and because $\hat \be_{k,\cI_k^c} = 0$, 
    $\hat \be_k^{\de,\mathrm{emp}}$ and $\hat \be_k^{\de}$ differ only on the indices in $\cI_k$.
    Moreover, 
    one can write $\hat \be_{k,\cI_k}^\de = \hat \be_{k,\cI_k}/(1 - \df_k/n_k)$ 
    and $\hat \be_{k,\cI_k}^{\de,\mathrm{emp}} = \hat \be_{k,\cI_k}/(1 - \hat \df_k/n_k)$.
    Thus,
    \begin{equation}
        \frac{\| \hat \be_{k,\cI_2}^{\de,\mathrm{emp}} - \hat \be_{k,\cI_2}^{\de,\mathrm{emp}} \|_2 }{\sqrt{n_2}}
            =
            \frac{\| \hat \be_{k,\cI_k \cap \cI_2}^{\de,\mathrm{emp}} - \hat \be_{k,\cI_k \cap \cI_2}^{\de,\mathrm{emp}} \|_2 }{\sqrt{n_2}}
            \leq 
            \frac{\| \hat \be_{k, \cI_2}\|_2}{\sqrt{n_2}} 
            \Big|
                \frac1{1 - \hat \df_k/n_k} - \frac1{1 - \df_k/n_k}
            \Big|.
    \end{equation}
    where we have used in the inequality that $\| \hat \be_{k,\cI_k \cap \cI_2}\|_2 \leq \| \hat \be_{k,\cI_2}\|_2$.

    We have that \whp, 
    $\| \bX_{\cI_k} \bSigma^{-1} \|_{\mathrm{op}}/\sqrt{n_k} < C$ and $\| \hat \be_{k,\cI_k}\|_2 / \sqrt{n_k} < C$,
    where in the first bound we use
    \cite[Corollary 5.35]{vershynin2010} and that the singular values of $\bSigma$ are bouned below and above by $c$ and $C$, 
    and in the second bound we use the marginal characterization (Lemma \ref{lem:marginal-characterization}) and that $\E[\|\hat \be_{k,\cI_k}^f\|_2]/\sqrt{n_k} \leq \E[\|\hat \be_{k,\cI_k}^f\|_2^2]^{1/2}/\sqrt{n_k} \leq C$. 
    By Lemma \ref{lem:emp-df-conc} and using that $1 - \df_k/n_k = \zeta_k > c$ by Eq.~\eqref{eq:marg-fix-pt-eq} and Lemma \ref{lem:bound-on-fixed-pt},
    we conclude that \whp, $\big|(1 - \hat\df_k/n_k)^{-1} - (1 - \df_k/n_k)^{-1}\big| < (p/n_k)\epsilon$.
    We conclude Lemma \ref{lem:emp-df-conc}(ii) by combining these bounds with the previous two displays.        
\end{proof}

Using Lemma \ref{lem:emp-df-conc}, Theorem \ref{thm:joint-characterization}(iii) follows from Theorem \ref{thm:joint-characterization}(i) and (ii) and Lemma \ref{lem:marginal-characterization}(ii) follows from Lemma \ref{lem:marginal-characterization}(i).

\section{The good marginal characterization event: proof of Lemma \ref{lem:marginal-concentration-event}}
\label{sec:marg-event-proof}

\begin{proof}[Proof of Lemma \ref{lem:marginal-concentration-event} (Good marginal characterization event)]
    First, consider the concentration of $\bT\Big(\frac{\be_{k,\cI_k}}{\sqrt{n_k}},\frac{\bxi_{h,\cI_k}}{\sqrt{n_k}}\Big) $.
    For the first diagonal entry, there is nothing to show because it is a function of $\be_k$.
    The conditional expectation of the second diagonal entry is $1$ for all realization of $\be_k$, because $\bxi_h$ is independent of $\be_k$. 
    Then, 
    Because $\bxi_{h,\cI_k}$ is a Gaussian vector with variance $\id_{n_k}$,
    by Gaussian concentration of Lipschitz functions Eq.~\eqref{eq:generic-concentration} is satisfied with $\ba^{(1)} = \ba^{(2)} = \bxi_{h,\cI_k}/\sqrt{n_k}$, $\bxi^{(1)},\bxi^{(2)} = \bxi_{h,\cI_k}$, $\bphi^{(1)}(\ba) = \bphi^{(2)}(\ba) = \ba / \sqrt{n_k}$, and $K = \sqrt{p/n_k}$.
    We see that $M = 1/\sqrt{n_k} \leq C / \sqrt{p}$ and $\bS_\xi$ has bounded singular values.
    Moreover, $\E[\|\bxi_{h,\cI_k}\|_2^2]/n_k = 1$, so Eq.~\eqref{eq:E-bound} is satisfied.
    Thus, Lemma \ref{lem:T-conc} implies the desired concentration of the second diagonal entry.
    
    Now consider the off-diagonal entries.
    Because $\be_{k,\cI_k}$ is a Gaussian vector with variance $\tau_{e_k}^2\id_{n_k}$, and $\tau_{e_k}^2 \leq C$, 
    by Gaussian concentration Eq.~\eqref{eq:generic-concentration} is satisfied with $\ba^{(1)} = \ba^{(2)} = \be_{k,\cI_k}/\sqrt{n_k}$, $\bxi^{(1)},\bxi^{(2)} = \be_{k,\cI_k}$, $\bphi^{(1)}(\ba) = \bphi^{(2)}(\ba) = \ba / \sqrt{n_k}$, and $K = \sqrt{p/n_k}$.
    We see that $M = 1/\sqrt{n_k} \leq C / \sqrt{p}$ and $\bS_\xi$ has bounded singular values.
    Moreover, $\E[\|\be_{k,\cI_k}\|_2^2]/n_k = \tau_{e_k}^2 \leq C$, so Eq.~\eqref{eq:E-bound} is satisfied.
    Thus, we may apply Lemma \ref{lem:T-conc}.
    We conclude that \whp, $\big| \| \hat \be_{k,\cI_k} \|_2^2 / n_k - \tau_{e_k}^2\big| < \sqrt{p/n_k}\,\epsilon$.
    Thus, 
    with probability at least $1 - Ce^{-cp}$, $\| \be_{k,\cI_k}\|_2^2 / n_k \leq \tau_{e_k}^2 + \sqrt{p/n_k} \leq C$, where the second inequality uses \textsf{A1}, \textsf{A2}.
    For such $\be_{k,\cI_k}$, the distribution of $\< \be_{k,\cI_k} , \xi_{h,\cI_k} \> / n_k$ conditional on $\be_{k,\cI_k}$ is Gaussian with variance $C^2 / n_k$.
    Then, the desired concentration of the off diagonal entries occurs by Gaussian concentration and the fact that $\| \be_{k,\cI_k}\|_2^2 / n_k \leq C$ with high-probability.

    Second, consider the concentration of $\| \be_{k,\cI_k} \|_2 / \sqrt{n_k}$.
    The previous paragraphs shows that \whp, $\big| \| \hat \be_{k,\cI_k} \|_2^2 / n_k - \tau_{e_k}^2\big| < \sqrt{p/n_k}\,\epsilon$. Because by assumption \textsf{A1} $\tau_{e_k}^2 > c$,
    taking square roots gives the desired concentration of $\| \be_{k,\cI_k} \|_2 / \sqrt{n_k}$.

    Third, consider the concentration of $\bT\Big(\sqrt{\frac{n_k}{p}}\,\bv_k^f,\frac1{\sqrt{p}}\bxi_g\Big)$.
    By Eqs.~\eqref{eq:vkf}, \eqref{eq:vf-prox}, and because proximal operators are $1$-Lispchitz \cite{parikh2014}, 
    the vectors $\sqrt{n_k/p}\, \bv_k^f, \bxi_g^f / \sqrt{p}$ are $(\sqrt{n_k/p}\,\tau_{g_k} \vee 1/\sqrt{p})$-Lipschitz functions of $\bxi_g$,
    and by Lemma \ref{lem:bound-on-fixed-pt}, we have $\sqrt{n_k/p}\,\tau_{g_k} \leq C/\sqrt{p}$.
    Thus, we may take $M = C/\sqrt{p}$ in Lemma \ref{lem:T-conc}.
    By Gaussian concentration, we get Eq.~\eqref{eq:generic-concentration} for $K = 1$.
    Finally, by Eq.~\eqref{eq:T-marg-ref}, Eq.~\eqref{eq:E-bound} is satisfied,
    whence we may apply Lemma \ref{lem:T-conc} and get the desired concentration.

    Next, we establish the concentration of $\bar \Omega_k(\bv_k^f)$.
    In the case of least-squares, $\bar \Omega_k(\bv_k^f) = 0$ always, 
    and there is nothing to show.
    In the case of ridge-regression,
    we apply Lemma \ref{lem:T-conc}.
    Recall that $\bar \Omega_k(\bv_k^f) = \sqrt{p/n_k}\,(\lambda/2)\|\hat \btheta_k^f\|_2^2$.
    By Eqs.~\eqref{eq:vkf} and \eqref{eq:vf-prox}, because proximal operators are 1-Lipschitz \cite{parikh2014} and $\bSigma$ has singular values bounded below by $c$ and $\tau_{g_k} \leq C / \sqrt{n_k}$ by Lemma \ref{lem:bound-on-fixed-pt},
    we have that $\hat \btheta_k^f$ is $M= 1 / \sqrt{n_k}$-Lipschitz in $\bg_k^f / \tau_{g_k}$, which is standard Gaussian.
    Moreover, by the marginal characterization (Lemma \ref{lem:marginal-characterization}),
    Eq.~\eqref{eq:generic-concentration} holds for $K = \sqrt{p/n_k}$.
    By \textsf{A2} and Lemma \ref{lem:bound-on-fixed-pt}, we have $\E[\|\btheta_k + \bSigma^{-1/2}\bv_k^f\|_2^2] \leq 2\|\btheta_k\|_2^2 + 2 C\E[\|\bv_k^f\|_2^2] \leq C(2 + p/n_k) \leq C$.
    Thus, using that $\omega_k = \E[\bar \Omega_k(\bv_k^f)]$ (see Eq.~\eqref{eq:sig*-e*-ome*}),
    we my apply Lemma \ref{lem:T-conc} to get the required concentration of $\bar \Omega_k(\bv_k^f)$, where we use additionally that $\lambda < C$ by \textsf{A2}.
    In the case of the $\alpha$-smoothed Lasso, 
    we have that $\bar \Omega_k(\bv_k^f) = \lambda \| \btheta_k + \bSigma^{-1/2} \bv_k^f \|_1/\sqrt{n_k}$ is 
    $C \lambda \tau_{g_k} \sqrt{p/n_1} \leq C \sqrt{p}/n_k$-Lipschitz in $\bxi_g$.
    The required concentration of $\bar \Omega_k(\bv_k^f)$ follows by Gaussian concentration of Lipschitz functions.

    Because $\bv_k^f$ is $\tau_{g_k} \leq C/\sqrt{n_k}$-Lipschitz in $\bxi_g$, 
    Gaussian concentration of Lipschitz functions implies that $\bv_k^f \in E_v(\Delta/2)$ with the desired high-probability.
    Because $\bu_k^f$ is $\zeta_k \tau_{h_k} \leq  C\sqrt{p/n_k}$-Lipschitz function of in $\bxi_h$ conditional on $\be_1,\be_2$,
    Gaussian concentration of Lipschitz functions implies that $\bu_k^f \in E_u(\Delta/2)$ with the desired high-probability.

    This completes the proof of Lemma \ref{lem:marginal-concentration-event}.
\end{proof}

\section{Applications of the marginal characterization}

In this section, we prove several consequences of the marginal characterization (Lemma \ref{lem:marginal-characterization}).
In Section \ref{sec:phi|1-conc-proof}, we prove that $\phi_{\theta|1}(\mathsf{Cond}_1)$ and $\phi_{e|1}(\mathsf{Cond}_1)$ concentrate (Lemma \ref{lem:phi-|1-conc}).
In Section \ref{sec:cond-event-proof},
we prove that the good conditional characterization event occurs with high probability (Lemma \ref{lem:conditional-concentration-event}).
The proof of Lemmas \ref{lem:phi-|1-conc} and \ref{lem:conditional-concentration-event} will require some corollaries of the marginal characterization which are stated and proved in Section \ref{sec:char-support}.

\subsection{Corollaries of the marginal characterization}
\label{sec:char-support}

In this section, we prove Lemma \ref{lem:hat-g-hat-h-db-err} and state and prove two corollaries of the marginal characterization which will be useful in proving Lemmas \ref{lem:phi-|1-conc} and \ref{lem:conditional-concentration-event} and Theorem \ref{thm:noise-est}.
These proofs rely on the concentration of certain random-design quantities, which are easily established using Lemma \ref{lem:marginal-characterization}.
For convenient reference, we collect these concentration statements in the next lemma.
\begin{lemma}\label{lem:marg-T-conc-helper}
    Assume \textsf{A1} and \textsf{A2}.
    Then \whp,
    \begin{equation}\label{eq:param-T}
    \begin{gathered}
        \Big\|
            \bT\Big(
                \sqrt{\frac{n_k}{p}}\, \bv_k ,\,
                \sqrt{\frac{n_k}{p}}\, \bSigma^{1/2}(\hat \btheta_k^\de - \btheta_k)
            \Big)
            - 
            \E\Big[
                \bT\Big(
                    \sqrt{\frac{n_k}{p}}\, \bv_k^f ,\,
                    \sqrt{\frac{n_k}{p}}\,\bg_k^f 
                \Big)  
            \Big]
        \Big\|_{\sF}
        < C'\epsilon,
    \end{gathered}
    \end{equation}
    and 
    \begin{equation}\label{eq:noise-T}
    \begin{gathered}
        \Big\|
            \bT\Big(
                \frac{\be_{1,\cI_k}}{\sqrt{n_k}},\,
                \frac{\be_{2,\cI_k}}{\sqrt{n_k}},\,
                \frac{\bu_{k,\cI_k}}{\sqrt{n_k}}
            \Big)
            -
            \E\Big[
                \bT\Big(
                    \frac{\be_{1,\cI_k}}{\sqrt{n_k}},\,
                    \frac{\be_{2,\cI_k}}{\sqrt{n_k}},\,
                    \frac{\bu_{k,\cI_k}^f}{\sqrt{n_k}}
                \Big)
            \Big]
        \Big\|_{\sF}
        < C' \sqrt{\frac{p}{n_k}}\, \epsilon,
    \end{gathered}
    \end{equation}
    where
    \begin{equation}
    \begin{gathered}
        \E\Big[
                \bT\Big(
                    \sqrt{\frac{n_k}{p}}\, \bv_k^f ,\,
                    \sqrt{\frac{n_k}{p}}\,\bg_k^f 
                \Big)  
            \Big]
            = 
            \begin{pmatrix}
                (n_k/p)\tau_{h_k}^2 & (n_k/p) \tau_{g_k}^2 \df_k \\[4pt]
                (n_k/p) \tau_{g_k}^2 \df_k & n_k \tau_{g_k}^2
            \end{pmatrix},
        \\
        \E\Big[
                \bT\Big(
                    \frac{\be_{1,\cI_k}}{\sqrt{n_k}},\,
                    \frac{\be_{2,\cI_k}}{\sqrt{n_k}},\,
                    \frac{\bu_{k,\cI_k}^f}{\sqrt{n_k}}
                \Big)
            \Big]
            = 
            \begin{pmatrix}
                \tau_{e_1}^2 & \tau_{e_1}\tau_{e_2}\rho_e & \zeta_k \tau_{e_1} \tau_{e_k} \rho_e^{\indic{k=2}} \\[4pt]
                \tau_{e_1}\tau_{e_2}\rho_e & \tau_{e_2}^2 & \zeta_k \tau_{e_2} \tau_{e_k} \rho_e^{\indic{k=1}} \\[4pt]
                \zeta_k \tau_{e_1} \tau_{e_k} \rho_e^{\indic{k=2}} & \zeta_k \tau_{e_2} \tau_{e_k} \rho_e^{\indic{k=1}} & n_k \zeta_k^2 \tau_{g_k}^2
            \end{pmatrix},
    \end{gathered}
    \end{equation}
    where we interpret $0^0 = 1$ when $\rho_e = 0$,
    and the entries of both matrices are bounded by $C$.

    The same holds if we replace $\df_k$ by $\hat \df_k$ in the definition of $\hat \btheta_k^\de$ in Eq.~\eqref{eq:param-est},
    where $\hat \df_k$ is defined by Eq.~\eqref{eq:hat-df}.
\end{lemma}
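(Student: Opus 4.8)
\textbf{Proof plan for Lemma \ref{lem:marg-T-conc-helper}.}
The plan is to derive both displays as consequences of the marginal characterization (Lemma \ref{lem:marginal-characterization}) together with the extension of Lipschitz concentration to quadratic (inner-product) statistics provided by Lemma \ref{lem:T-conc}. Each entry of the matrices $\bT(\cdots)$ is an inner product $\<\ba^{(1)},\ba^{(2)}\>$ of two normalized random vectors, so the strategy is: (1) identify appropriate Lipschitz parametrizations $\bphi^{(k)}$ so that the random-design vectors have the same distributional behavior (in the sense of concentration of Lipschitz functions) as the corresponding fixed-design vectors; (2) verify the hypotheses of Lemma \ref{lem:T-conc}, namely the $M \le CK/\sqrt{p}$ relation between the Lipschitz constant $M$ and the scale $K$ of the target inner product, and the second-moment bound \eqref{eq:E-bound}; (3) compute the expectations on the right-hand sides in closed form using the identities collected in Appendix C (Eqs.~\eqref{eq:tau-e-ref}--\eqref{eq:res-ref}); and (4) check the claimed $\Theta(\cdot)$ bounds on the entries using Lemma \ref{lem:bound-on-fixed-pt}.

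For the parameter-estimation display \eqref{eq:param-T}: the relevant random-design vectors are $\sqrt{n_k/p}\,\bv_k = \sqrt{n_k/p}\,\bSigma^{1/2}(\hat\btheta_k - \btheta_k)$ and $\sqrt{n_k/p}\,\bSigma^{1/2}(\hat\btheta_k^\de - \btheta_k)$. By Lemma \ref{lem:marginal-characterization}(i), for any $1$-Lipschitz $\phi_\theta$ the quantity $\phi_\theta(\hat\btheta_k,\hat\btheta_k^\de)$ concentrates on $\E[\phi_\theta(\hat\btheta_k^f,\hat\btheta_k^{f,\de})]$ with resolution $\sqrt{p/n_k}\,\epsilon$; composing with $\bSigma^{1/2}$ (bounded singular values, assumption \textsf{A1}) and the scaling $\sqrt{n_k/p}$, this says precisely that the normalized random-design vectors satisfy \eqref{eq:generic-concentration} against the fixed-design analogues $\sqrt{n_k/p}\,\bv_k^f,\sqrt{n_k/p}\,\bg_k^f$ with $K = \sqrt{p/n_k}\cdot\sqrt{n_k/p} = 1$. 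The Lipschitz constant of these vectors in the underlying Gaussian $\bg_k^f/\tau_{g_k}$ is $M = C\sqrt{n_k/p}\cdot\tau_{g_k} \le C/\sqrt{p}$ by the bound $\tau_{g_k}^2 \le C/n_k$ from Lemma \ref{lem:bound-on-fixed-pt}, so $M \le CK/\sqrt{p}$ holds; the second-moment bound \eqref{eq:E-bound} follows from Eq.~\eqref{eq:T-marg-ref} (or directly: $\E[\|\sqrt{n_k/p}\,\bv_k^f\|_2^2] = (n_k/p)\tau_{h_k}^2 \le C$ by Lemma \ref{lem:bound-on-fixed-pt}, and $\E[\|\sqrt{n_k/p}\,\bg_k^f\|_2^2] = n_k\tau_{g_k}^2 \le C$). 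Applying Lemma \ref{lem:T-conc} entrywise gives \eqref{eq:param-T}. The explicit expectation is then computed from Eq.~\eqref{eq:tau-g-ref} for the $(2,2)$ entry ($n_k\tau_{g_k}^2$), Eq.~\eqref{eq:tau-h-ref} for the $(1,1)$ entry (recall $\bv_k^f = \bSigma^{1/2}(\hat\btheta_k^f - \btheta_k)$ so $\|\bv_k^f\|_2^2$ has mean $\tau_{h_k}^2$), and the definition $\df_k = \E[\div\bSigma^{1/2}\eta_k(\by_k^f;\zeta_k)]$ together with Stein's identity (Gaussian integration by parts, since $\hat\btheta_k^{f,\de} = \bSigma^{-1/2}\by_k^f = \btheta_k + \bg_k^f$ and $\bg_k^f$ enters linearly) for the cross term, yielding $\E[\<\sqrt{n_k/p}\,\bv_k^f,\sqrt{n_k/p}\,\bg_k^f\>] = (n_k/p)\tau_{g_k}^2\df_k$.

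For the noise-estimation display \eqref{eq:noise-T}: the random-design vectors of interest are $\be_{1,\cI_k}/\sqrt{n_k}$, $\be_{2,\cI_k}/\sqrt{n_k}$, and $\bu_{k,\cI_k}/\sqrt{n_k} = \hat\be_{k,\cI_k}/\sqrt{n_k}$. The $\be_k$-involving vectors are already Gaussian and deterministic functions of the noise, so the $\be_1,\be_2$ blocks are handled by plain Gaussian concentration (and the expectations come from Eq.~\eqref{eq:tau-e-ref}). For the $\hat\be_{k,\cI_k}$ block, Lemma \ref{lem:marginal-characterization}(i), applied to the noise-estimation function $\phi_e$ and recalling that $\hat\be_{k,\cI_k}^\de = \zeta_k^{-1}\bu_{k,\cI_k}$ is $C$-Lipschitz in $\bu_{k,\cI_k}$ (since $\zeta_k > c$, Lemma \ref{lem:bound-on-fixed-pt}), says that $\phi_e$ evaluated on the random-design residuals concentrates on its expectation over $\hat\be_{k,\cI_k}^f$ with resolution $\sqrt{p/n_k}\,\epsilon$; so we take $K = \sqrt{p/n_k}$ in Lemma \ref{lem:T-conc}. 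The Lipschitz constant here is $M = C\zeta_k\tau_{h_k}/\sqrt{n_k} \le C\sqrt{p}/n_k = C\sqrt{p/n_k}/\sqrt{n_k}$, and one checks $M \le CK/\sqrt{p}$, i.e.\ $C\sqrt{p/n_k}/\sqrt{n_k} \le C\sqrt{p/n_k}/\sqrt{p}$, which holds since $n_k \ge p/C$ is \emph{not} assumed---so I should be careful here and instead use the bound $M = C\zeta_k\tau_{h_k}/\sqrt{n_k}$ with $\tau_{h_k}^2 \le Cp/n_k$, giving $M \le C\sqrt{p}/n_k$, while $K/\sqrt{p} = 1/\sqrt{n_k}$, so $M \le CK/\sqrt{p}$ becomes $\sqrt{p}/n_k \le 1/\sqrt{n_k}$, i.e.\ $\sqrt{p/n_k} \le 1$, which holds by assumption \textsf{A2}. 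The second-moment bound \eqref{eq:E-bound} follows from Eq.~\eqref{eq:res-ref} ($\E[\|\bu_{k,\cI_k}^f\|_2^2]/n_k = n_k\zeta_k^2\tau_{g_k}^2 \le C$) and $\tau_{e_k}^2 \le C$. Applying Lemma \ref{lem:T-conc} entrywise gives \eqref{eq:noise-T}; the explicit expectation matrix is then read off from Eqs.~\eqref{eq:tau-e-ref} and \eqref{eq:res-ref}, where the cross terms $\E[\<\be_{l,\cI_k},\bu_{k,\cI_k}^f\>]/n_k$ are computed by writing $\bu_{k,\cI_k}^f = \zeta_k(\be_{k,\cI_k} + \tau_{h_k}\bxi_{h,\cI_k})$, using independence of $\bxi_h$ from $\be_1,\be_2$, and the identity $\tau_{e_1}\tau_{e_2}\rho_e = \E[\<\be_{1,\cI_k},\be_{2,\cI_k}\>]/n_k$, which produces the factor $\rho_e^{\indic{k\neq l}}$ (interpreting $0^0=1$). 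Finally, the entry bounds follow from Lemma \ref{lem:bound-on-fixed-pt} together with assumption \textsf{A2} ($p/n_k \le C$). The extension to $\hat\df_k$ in place of $\df_k$ follows by combining the above with Lemma \ref{lem:emp-df-conc}(ii), which bounds $\|\hat\btheta_k^{\de,\mathrm{emp}} - \hat\btheta_k^\de\|_2 < (p/n_k)\epsilon$, absorbing this perturbation into the resolution of the concentration after adjusting constants.

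The main obstacle I anticipate is not any single computation but rather the bookkeeping of scales: ensuring that for every entry the relation $M \le CK/\sqrt{p}$ between Lipschitz constant and inner-product scale is verified with the correct powers of $p/n_k$, since $n_k/p$ is not bounded above in the OLS and ridge cases. This forces one to carry the $p$- and $n_k$-dependence of the fixed-point parameters ($\tau_{g_k}^2 = \Theta(1/n_k)$, $\tau_{h_k}^2 = \Theta(p/n_k)$, $\zeta_k = \Theta(1)$) explicitly throughout, rather than replacing $n_k/p$ by a constant as is standard in the proportional-regime literature. Everything else is a routine application of Lemma \ref{lem:T-conc} and the fixed-point identities of Appendix C.
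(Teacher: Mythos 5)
Your proposal is correct and takes essentially the same route as the paper's proof: both displays are obtained by feeding the marginal characterization (Lemma \ref{lem:marginal-characterization}) into Lemma \ref{lem:T-conc}, with $M \leq C/\sqrt{p}$, $K = C$ for the parameter block and $M \leq C/\sqrt{n_k}$, $K = \sqrt{p/n_k}$ for the noise block, the expectations read off from the identities of Appendix C (Stein's identity for the $\df_k$ cross term), the entry bounds from Lemma \ref{lem:bound-on-fixed-pt}, and the $\hat\df_k$ extension absorbed as a small perturbation. One minor caution: your remark that ``$\sqrt{p/n_k}\leq 1$ holds by assumption \textsf{A2}'' is not literally true (\textsf{A2} permits $n_k < p$ in the ridge and Lasso cases); only $p/n_k \leq C$ holds, which is all that is needed since the hypothesis of Lemma \ref{lem:T-conc} is $M \leq CK/\sqrt{p}$ with a constant, and the paper's cleaner choice is to view $\bu_{k,\cI_k}^f/\sqrt{n_k}$ as a $\zeta_k/\sqrt{n_k}$-Lipschitz function of the Gaussian vector $\be_{k,\cI_k}+\bh_{k,\cI_k}^f$ (variance $\tau_{e_k}^2+\tau_{h_k}^2\leq C$) rather than of $\bxi_h$ conditionally on $\be_k$.
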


\begin{proof}[Proof of Lemma \ref{lem:marg-T-conc-helper}]
    Both Eq.~\eqref{eq:param-T} and \eqref{eq:noise-T} follows from Lemma \ref{lem:marginal-characterization} using Lemma \ref{lem:T-conc}.
    Indeed,
    by Eqs.~\eqref{eq:vkf}, \eqref{eq:vf-prox}, and because proximal operators are $1$-Lispchitz \cite{parikh2014}, 
    the vectors $\sqrt{n_k/p}\, \bv_k^f, \sqrt{n_k/p}\,\bg_k^f$ are $\sqrt{n_k/p}\,\tau_{g_k}$-Lipschitz functions of $\bg_k^f / \tau_{g_k}$,
    and by Lemma \ref{lem:bound-on-fixed-pt}, we have $\sqrt{n_k/p}\,\tau_{g_k} \leq C/\sqrt{p}$.
    Thus, we may take $M = C/\sqrt{p}$ in Lemma \ref{lem:T-conc}.
    Further, by Eq.~\eqref{eq:vk-uk},
    we have $\bv_k = \bSigma^{1/2}(\hat \btheta_k - \btheta_k)$, 
    whence the marginal characterization (Lemma \ref{lem:marginal-characterization}) gives us Eq.~\eqref{eq:generic-concentration} for the pair of vectors $\sqrt{n_k/p}\,\bv_k,\sqrt{n_k/p}\,\bSigma^{1/2}(\hat \btheta_k^{\de} - \btheta_k)$ with $K = C$.
    Finally, by Lemma \ref{lem:bound-on-fixed-pt}, $(n_k/p)\tau_{h_k}^2 \leq C$ and $n_k \tau_{g_k}^2 \leq C$, whence Eq.~\eqref{eq:E-bound} is satisfied.
    Thus, Lemma \ref{lem:T-conc} implies Eq.~\eqref{eq:param-T}.

    By Eqs.~\eqref{eq:vkf} and \eqref{eq:fixed-design-err-est} and because $\zeta_k \leq 1$, 
    the vectors $\be_{1,\cI_k}/\sqrt{n_k},\be_{2,\cI_k}/\sqrt{n_k}$, and $\bu_{k,\cI_k}^f/\sqrt{n_k}$ are $M=1/\sqrt{n_k}$-Lipschitz functions of Gaussian vectors with variance $\tau_{e_1}^2$, $\tau_{e_2}^2$, and $\tau_{e_k}^2 + \tau_{h_k}^2$, respectively.
    By Lemma \ref{lem:bound-on-fixed-pt} these variance are all bounded by $C$.
    Further, by Eq.~\eqref{eq:vk-uk},
    we have $\bu_{k,\cI_k} = \hat \be_{k,\cI_k}$, 
    whence the marginal characterization (Lemma \ref{lem:marginal-characterization}) gives us Eq.~\eqref{eq:generic-concentration} for the vectors $\be_{1,\cI_k}/\sqrt{n_k},\be_{2,\cI_k}/\sqrt{n_k},\bu_{k,\cI_k}^f/\sqrt{n_k}$ with $K = \sqrt{p/n_k}$.
    Finally, by Lemma \ref{lem:bound-on-fixed-pt}, 
    Eq.~\eqref{eq:E-bound} is satisfied,
    whence we may apply Lemma \ref{lem:T-conc} to conclude Eq.~\eqref{eq:noise-T}.
\end{proof}

\noindent We now prove Lemma \ref{lem:hat-g-hat-h-db-err}. 

\begin{proof}[Proof of Lemma \ref{lem:hat-g-hat-h-db-err}]

    Recall that $\hat \bg_k,\hat \bh_k$ are defined in Eq.~\eqref{eq:hat-g1-hat-h1} as $\hat \bg_k = \tau_{g_k}\hat \bxi_g$ and $\hat \bh_k = \tau_{h_k} \hat \bxi_h$, where $\hat \bxi_g, \hat \bxi_h$ solve the system of equations \eqref{eq:hat-xi} and \eqref{eq:constraint+}.
    (For notational simplicity, in most of the paper---including in Eqs.~equations \eqref{eq:hat-xi} and \eqref{eq:constraint+}---we considered conditioning on the first regression, $k = 1$. This was arbitrary, and in the current proof we consider general $k$.)
    Writing the solution to these equations explicitly gives
    \begin{equation}\label{eq:def-ggam-hgam}
    \begin{gathered}
        \hat \bxi_g 
            =
            \frac{n_k \zeta_k \tau_{h_k}}{\| \bu_k \|_2 \| \bv_k \|_2}  
            \bSigma^{1/2}\big(\hat \btheta_k - \btheta_k\big) 
            + 
            \frac{\bSigma^{-1/2}\bX_{ \cI_k }^\top(\by_{ k,\cI_k }-\bX_{ \cI_k }\hat \btheta_k)}{\| \bu_k \|_2},
        \\[4pt]
         \hat \bxi_h 
            := 
            \begin{pmatrix}
                - \frac{\bX_{ \cI_k }(\hat \btheta_k - \btheta_k)}{\|\bv_k\|_2} + \frac{\< \hat \bxi_g , \bv_k \> }{ \| \bu_k \|_2 \| \bv_k \|_2} (\by_{ k,\cI_k } - \bX_{ \cI_k }\hat \btheta_k)
                \\[3pt]
                -\frac{\bX_{ \cI_k ^c}(\hat \btheta_k - \btheta_k)}{\| \bv_k \|_2}
            \end{pmatrix}.
    \end{gathered}
    \end{equation}
    The first line follows from the second equation in Eq.~\eqref{eq:hat-xi} by plugging in $n_k \zeta_k \tau_{h_k}$ for $\< \hat \bxi_h,\bu_k\>$, recalling the definition of $\bs_1$ in Eq.~\eqref{eq:uvst}, and rearranging.
    The second line follows by rearranging the first equation in Eq.~\eqref{eq:hat-xi} and using the definition of $\bt_1$ in Eq.~\eqref{eq:uvst}.

    First consider $\big\| \hat \bg_k - \bSigma^{1/2}(\hat \btheta_k^{\de} - \btheta_k)\big\|_2$.
    Recalling the definition of $\hat \btheta_k^{\de}$ in Eq.~\eqref{eq:param-est} and that $n_k - \df_k = n_k\zeta_k $ by Eq.~\eqref{eq:marg-fix-pt-eq},
    the first line of Eq.~\eqref{eq:def-ggam-hgam} and the triangle inequality imply
    \begin{equation}
        \big\| \hat \bg_k - \bSigma^{1/2}(\hat \btheta_k^{\de} - \btheta_k)\big\|_2
            \leq 
            \Big|
                \frac{n_k \zeta_k \tau_{h_k}\tau_{g_k}}{\|\bu_k\|_2\|\bv_k\|_2}
                -
                1
            \Big|
            \cdot 
            \| \hat \btheta_k - \btheta_k \|_{\bSigma}
            +
            \Big|
                \frac{n_k\tau_{g_k}}{\|\bu_k\|_2} - \frac{1}{\zeta_k}
            \Big|
            \frac{\| \bSigma^{-1/2}\bX_{ \cI_k }^\top(\by_{ k,\cI_k }-\bX_{ \cI_k }\hat \btheta_k) \|_2}{n_k}.
    \end{equation}
    Because $\bv_k = \bSigma^{1/2}(\hat \btheta_k-\btheta_k)$ (see Eq.~\eqref{eq:vkf}),
    by Lemma \ref{lem:marg-T-conc-helper}
    with probability at least $1 - Ce^{-cp}$ we have $\| \hat \btheta_k - \btheta_k \|_{\bSigma} \leq C \sqrt{p/n_k}$.
    Further, by Lemma \ref{lem:marg-T-conc-helper}, 
    with probability at least $1 - \sC(\epsilon)e^{-\sc(\epsilon)p}$
    \begin{equation}
        \Big|
            \frac{n_k \zeta_k \tau_{h_k}\tau_{g_k}}{\|\bu_k\|_2\|\bv_k\|_2}
            -
            1
        \Big|
        = 
        \Big|
            \frac{ (\sqrt{n_k}\,\tau_{g_k}\zeta_k) (\sqrt{n_k/p}\,\tau_{h_k}) }{(\|\bu_k\|_2/\sqrt{n_k})(\sqrt{n_k/p}\,\|\bv_k\|_2)}
            -
            1
        \Big|
        <
        C'\epsilon,
    \end{equation}
    where we use that $\sqrt{n_k/p}\,\|\bv_k\|_2$ and $\|\bu_k\|_2 / \sqrt{n_k} = \| \bu_{k,\cI_k} \|_2 / \sqrt{n_k}$ concentrate on $\sqrt{n_k/p}\,\tau_{h_k}$ and $\sqrt{n_k}\,\tau_{g_k}\zeta_k$, which are bounded above and below by $C$ and $c$.
    Recalling that $\bu_{k,\cI_k} = \by_{k,\cI_k} - \bX_{\cI_k} \hat \btheta_k$ and applying \cite[Corollary 5.35]{vershynin2010} and Lemma \ref{lem:marg-T-conc-helper}, 
    we have with probability at least $1 - Ce^{-cp}$
    \begin{equation}
        \frac{\| \bSigma^{-1/2}\bX_{ \cI_k }^\top(\by_{ k,\cI_k }-\bX_{ \cI_k }\hat \btheta_k) \|_2}{n_k}
            \leq 
            \frac{\| \bSigma^{-1/2} \bX_{\cI_k}^\top\|_{\mathrm{op}}}{\sqrt{n_k}} 
            \frac{\| \by_{k,\cI_k}-\bX_{\cI_k}\hat \btheta_k\|_2}{\sqrt{n_k}}
            \leq 
            C',
    \end{equation}
    and with probability at least $1 - \sC(\epsilon)e^{-\sc(\epsilon)p}$,
    \begin{equation}
        \Big|
            \frac{n_k\tau_{g_k}}{\|\bu_k\|_2} - \frac{1}{\zeta_k}
        \Big|
        = 
        \Big|
            \frac{\sqrt{n_k}\,\tau_{g_k}}{\|\bu_k\|_2/\sqrt{n_k}} - \frac{1}{\zeta_k}
        \Big|
        < C'\sqrt{\frac{p}{n_k}}\,\epsilon,
    \end{equation}
    where we use that $\| \bu_k \|_2 / \sqrt{n_k}$ concentrates on $\sqrt{n_k}\,\tau_{g_k}$ which is bounded above and bewlo by $C$ and $c$.
    Combining the above bounds gives the high-probability bound on $\big\| \hat \bg_k - \bSigma^{1/2}(\hat \btheta_k^{\de} - \btheta_k)\big\|_2$ in Lemma \ref{lem:hat-g-hat-h-db-err}.

    Next, consider $\big\| \hat \bh_{k,\cI_k} - (\hat \be_{k,\cI_k}^\de - \be_{k,\cI_k}) \big\|_2 / \sqrt{n_k}$.
    Recalling the definition of $\hat \be_k^{\de}$ in Eq.~\eqref{eq:noise-est} and that $\df_k/(n_k - \df_k) = (1-\zeta_k)/\zeta_k $ by Eq.~\eqref{eq:marg-fix-pt-eq}, 
    the second line of Eq.~\eqref{eq:def-ggam-hgam} and the triangle inequality imply
    \begin{equation}
        \frac{\big\| \hat \bh_{k,\cI_k} - (\hat \be_{k,\cI_k}^\de - \be_{k,\cI_k}) \big\|_2}{\sqrt{n_k}}   
            \leq 
            \Big|
                \frac{\tau_{h_k}}{\| \bv_k \|_2} - 1
            \Big|
            \cdot 
            \frac{\| \bX_{\cI_k}(\hat \btheta_k - \btheta_k) \|_2}{\sqrt{n_k}}
            + 
            \Big|
                \frac{\tau_{h_k}\< \hat \bxi_g, \bv_k \>}{\| \bu_k \|_2 \| \bv_k \|_2}
                -
                \frac{1-\zeta_k}{\zeta_k}
            \Big|
            \cdot 
            \frac{\| \by_{k,\cI_k} - \bX_{\cI_k} \hat \btheta_k \|_2}{\sqrt{n_k}}.
    \end{equation}
    By \cite[Corollary 5.35]{vershynin2010} and Lemma \ref{lem:marg-T-conc-helper},
    with probability at least $1 - Ce^{-cp}$ we have $\| \bX_{\cI_k}(\hat \btheta_k - \btheta_k) \|_2/\sqrt{n_k} \leq \| \bX_{\cI_k} \bSigma^{-1/2} \|_2 \| \hat \btheta_k - \btheta_k \|_{\bSigma} \leq C' \sqrt{p/n_k}$
    and $\| \by_{k,\cI_k} - \bX_{\cI_k} \hat \btheta_k \|_2/\sqrt{n_k} = \| \bu_{k,\cI_k}\|_2 / \sqrt{n_k} \leq C'$.
    By Lemma \ref{lem:marg-T-conc-helper}, 
    with probability at least $1 - \sC(\epsilon)e^{-\sc(\epsilon)p}$ we have
    \begin{equation}
        \Big|
            \frac{\tau_{h_k}}{\| \bv_k \|_2} - 1
        \Big|
        =
        \Big|
            \frac{\sqrt{n_k/p}\,\tau_{h_k}}{\sqrt{n_k/p}\,\| \bv_k \|_2} - 1
        \Big|
        < C' \epsilon,
    \end{equation}
    where we have used that $\sqrt{n_k/p}\,\|\bv_k\|_2$ concentrates on $\sqrt{n_k/p}\,\tau_{h_k}$, 
    which is bounded above and below by $C$ and $c$.
    By Lemma \ref{lem:marg-T-conc-helper} and the bound on $\big\| \hat \bg_k - \bSigma^{1/2}(\hat \btheta_k^{\de} - \btheta_k)\big\|_2$ in Lemma \ref{lem:hat-g-hat-h-db-err} (which we have already proved), 
    with probability at least $1 - \sC(\epsilon)e^{-\sc(\epsilon)p}$ 
    \begin{equation}
    \begin{aligned}
        &\Big|
                \frac{\< \hat \bxi_g , \bv_k\> }{\sqrt{n_k}}
                -
                \sqrt{n_k}\,\tau_{g_k}(1-\zeta_k)
            \Big|
            =
            \frac{p}{n_k} \frac{1}{\sqrt{n_k}\,\tau_{g_k}}
            \Big|
                \Big\< \sqrt{\frac{n_k}{p}}\, \hat \bg_k , \sqrt{\frac{n_k}{p}}\,\bv_k\Big\>
                -
                \frac{n_k\,\tau_{g_k}^2\df_k}{p}
            \Big|
        \\
            &\leq 
            C' \| \hat \bg_k - \bSigma^{1/2}(\hat \btheta_k^\de - \btheta_k)\|_2 \| \bv_k\|_2 
            +
            C'\frac{p}{n_k}
            \Big|
                \Big\< \sqrt{\frac{n_k}{p}}\, \bSigma^{1/2}(\hat \btheta_k^\de - \btheta_k) , \sqrt{\frac{n_k}{p}}\,\bv_k\Big\>
                -
                \frac{n_k\,\tau_{g_k}^2\df_k}{p}
            \Big|
            \leq C' \frac{p}{n_k}\,\epsilon,
    \end{aligned}
    \end{equation}
    where in the first equality we have used that $n_k(1 - \zeta_k) =  \df_k$ by Eq.~\eqref{eq:marg-fix-pt-eq},
    and in the first inequality we have used that $\sqrt{n_k}\tau_{g_k} \geq c$ by Lemma \ref{lem:bound-on-fixed-pt} to replace it by a constant.
    Combining the above bounds, 
    we have with probability at least $1 - \sC(\epsilon)e^{-\sc(\epsilon)p}$ 
    \begin{equation}    
        \Big|
            \frac{\tau_{h_k}\< \hat \bxi_g, \bv_k \>}{\| \bu_k \|_2 \| \bv_k \|_2}
            -
            \frac{1-\zeta_k}{\zeta_k}
        \Big|
        =
        \Big|
            \frac{(\< \hat \bxi_g, \bv_k \>/\sqrt{n_k})(\sqrt{n_k/p}\,\tau_{h_k})}{(\| \bu_k \|_2/\sqrt{n_k}) (\sqrt{n_k/p}\,\| \bv_k \|_2)}
            -
            \frac{1-\zeta_k}{\zeta_k}
        \Big|
        < C' \sqrt{\frac{p}{n_k}}\,\epsilon,
    \end{equation}  
    where we use that $\sqrt{n_k/p}\,\|\bv_k\|_2$ concentrates on $\sqrt{n_k/p}\,\tau_{h_k}$,
    $\|\bu_k\|_2 / \sqrt{n_k}$ concentrates on $\sqrt{n_k}\,\tau_{g_k}\zeta_k$, which are bounded above and below by $C$ and $c$, 
    and that $\< \hat \bxi_g,\bv_k\> / \sqrt{n_k}$ concentrates on $n_k \tau_{g_k}^2 \df_k / p$, which is bounded above by $C$ by Lemma \ref{lem:bound-on-fixed-pt}.
    Combining the above bounds gives the high probability bound on $\| \hat \bh_{k,\cI_k} - (\hat \be_{k,\cI_k}^\de - \be_{k,\cI_k})\|_2 / \sqrt{n_k}$ in Lemma \ref{lem:hat-g-hat-h-db-err}.
\end{proof}

\noindent To prove Lemmas \ref{lem:phi-|1-conc} and \ref{lem:conditional-concentration-event} and Theorem \ref{thm:noise-est}, 
we must extend the marginal characterization.
In particular, we extend the characterization to the vectors $\hat \bg_1$ and $\hat \bh_1$, 
and extend the characterization from the index set $\cI_1$ to the index set $\cI_2$.
The next corollaries 
perform these extensions.
\begin{corollary}\label{cor:ext-to-hat-gh}
    Assume \textsf{A1} and \textsf{A2}. 

    There exist $\cPmodel$, $\cPregr$ and regression method-dependent $c' > 0$ and $\sC,\sc: \reals_{>0} \rightarrow \reals_{>0}$ such that
    for $\epsilon < c'$ and 1-Lipschitz $\phi_\theta:(\reals^p)^2 \rightarrow \reals$, $\phi_e: (\reals^{n_k})^4 \rightarrow \reals$, with probability at least $1 - \sC(\epsilon) e^{-\sc(\epsilon) p}$ 
    \begin{equation}
    \begin{gathered}
    \big| \phi_\theta(\hat \btheta_k, \hat \bg_k) - \E[\phi_\theta(\hat \btheta_k^f,\bg_k^f)]\big| < \sqrt{\frac{p}{n_k}}\,\epsilon,
    \\
    \Big| 
            \phi_e\Big(
                \frac{ \be_{1,\cI_k}}{\sqrt{n_k}},
                \frac{ \be_{2,\cI_k}}{\sqrt{n_k}},
                \frac{\hat \be_{k,\cI_k}}{\sqrt{n_k}},
                \frac{\hat \bh_{k,\cI_k}}{\sqrt{n_k}}
            \Big) 
            - 
            \E\Big[
                \phi_e\Big(
                    \frac{ \be_{1,\cI_k}}{\sqrt{n_k}},
                    \frac{ \be_{2,\cI_k}}{\sqrt{n_k}},
                    \frac{\hat \be_{k,\cI_k}^f}{\sqrt{n_k}},
                    \frac{\bh_k^f}{\sqrt{n_k}}
                \Big)\Big] \Big| < \sqrt{\frac{p}{ n_k }} \, \epsilon.
    \end{gathered}
    \end{equation}
\end{corollary}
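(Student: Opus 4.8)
The plan is to deduce Corollary~\ref{cor:ext-to-hat-gh} from the marginal characterization (Lemma~\ref{lem:marginal-characterization}) together with the near-equality of $\hat\bg_k,\hat\bh_{k,\cI_k}$ and the debiased errors $\bSigma^{1/2}(\hat\btheta_k^\de-\btheta_k)$, $\hat\be_{k,\cI_k}^\de-\be_{k,\cI_k}$ provided by Lemma~\ref{lem:hat-g-hat-h-db-err}. The key observation is that the debiased fixed-design quantities satisfy $\hat\btheta_k^{f,\de}-\btheta_k = \bSigma^{-1/2}\bg_k^f$ and $\hat\be_{k,\cI_k}^{f,\de}-\be_{k,\cI_k} = \bh_{k,\cI_k}^f$ (see Eqs.~\eqref{eq:fixed-design-est} and \eqref{eq:fixed-design-err-est} combined with \eqref{eq:fixed-des}), so that $\bg_k^f = \bSigma^{1/2}(\hat\btheta_k^{f,\de}-\btheta_k)$ and $\bh_k^f = \hat\be_k^{f,\de}-\be_k$ on $\cI_k$ (with $\bh_{k,\cI_k^c}^f$ irrelevant). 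Hence the target statement is, up to the substitutions $\hat\bg_k\leftrightarrow\bSigma^{1/2}(\hat\btheta_k^\de-\btheta_k)$ and $\hat\bh_{k,\cI_k}\leftrightarrow\hat\be_{k,\cI_k}^\de-\be_{k,\cI_k}$, exactly the content of Lemma~\ref{lem:marginal-characterization}(i) applied to suitable Lipschitz test functions.

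Concretely, first I would define, for a fixed $1$-Lipschitz $\phi_\theta:(\reals^p)^2\to\reals$, the function $\tilde\phi_\theta(\ba,\bb):=\phi_\theta\big(\btheta_k+\bSigma^{-1/2}\ba,\,\bSigma^{-1/2}\bb\big)$ (valid since $\bSigma^{-1/2}$ has bounded operator norm by \textsf{A1}, so $\tilde\phi_\theta$ is $C$-Lipschitz), and note that $\tilde\phi_\theta(\bv_k,\bg_k^f)$ evaluated in the fixed-design model equals $\phi_\theta(\hat\btheta_k^f,\hat\btheta_k^{f,\de})$, while $\tilde\phi_\theta(\bv_k,\bSigma^{1/2}(\hat\btheta_k^\de-\btheta_k))$ in the random-design model equals $\phi_\theta(\hat\btheta_k,\hat\btheta_k^\de)$. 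Lemma~\ref{lem:marginal-characterization}(i) then gives $\big|\phi_\theta(\hat\btheta_k,\hat\btheta_k^\de)-\E[\phi_\theta(\hat\btheta_k^f,\hat\btheta_k^{f,\de})]\big|<C\sqrt{p/n_k}\,\epsilon$ with the stated probability. Next, by Lemma~\ref{lem:hat-g-hat-h-db-err}, with probability at least $1-\sC(\epsilon)e^{-\sc(\epsilon)p}$ we have $\|\hat\bg_k-\bSigma^{1/2}(\hat\btheta_k^\de-\btheta_k)\|_2\le\sqrt{p/n_k}\,\epsilon$, so by the Lipschitz property of $\phi_\theta$ the quantity $\phi_\theta(\hat\btheta_k,\hat\bg_k)$ differs from $\phi_\theta(\hat\btheta_k,\hat\btheta_k^\de)$ by at most $\sqrt{p/n_k}\,\epsilon$; combining and adjusting the $\cPmodel,\cPregr$-dependent constants yields the first bound. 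Since $\bg_k^f=\bSigma^{1/2}(\hat\btheta_k^{f,\de}-\btheta_k)$ exactly in the fixed-design model, $\E[\phi_\theta(\hat\btheta_k^f,\hat\btheta_k^{f,\de})]=\E[\phi_\theta(\hat\btheta_k^f,\bg_k^f)]$ and there is no mismatch on the fixed-design side.

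The noise-estimation bound is handled identically: set $\tilde\phi_e(\ba_1,\ba_2,\ba_3,\ba_4):=\phi_e(\ba_1,\ba_2,\ba_3,\ba_4-\ba_2)$ restricted to coordinates in $\cI_k$ (so that the fourth slot receives $\hat\be_{k,\cI_k}^\de-\be_{k,\cI_k}$, which equals $\bh_{k,\cI_k}^f$ in the fixed-design model), apply Lemma~\ref{lem:marginal-characterization}(i) to $\tilde\phi_e$, and then use the second bound of Lemma~\ref{lem:hat-g-hat-h-db-err}, namely $\|\hat\bh_{k,\cI_k}-(\hat\be_{k,\cI_k}^\de-\be_{k,\cI_k})\|_2/\sqrt{n_k}\le\sqrt{p/n_k}\,\epsilon$, together with the Lipschitz property of $\phi_e$, to replace $\hat\be_{k,\cI_k}^\de-\be_{k,\cI_k}$ by $\hat\bh_{k,\cI_k}$ in the random-design expression. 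A union bound over the two high-probability events (the marginal characterization event and the event from Lemma~\ref{lem:hat-g-hat-h-db-err}), followed by constant adjustment so the probability takes the form $1-\sC(\epsilon)e^{-\sc(\epsilon)p}$, completes the proof.

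I do not anticipate a serious obstacle here: the corollary is essentially a bookkeeping consequence of two already-established results, and the only thing to be careful about is keeping track of the $\cPmodel,\cPregr$-dependent Lipschitz constants introduced by composing with $\bSigma^{-1/2}$ and by the affine reparametrizations of $\phi_\theta,\phi_e$, which are all bounded by \textsf{A1}. The mildly delicate point is making sure that the ``$\bh_k^f/\sqrt{n_k}$'' appearing in the fourth argument of the fixed-design expectation is correctly identified with $\hat\be_k^{f,\de}-\be_k$ on $\cI_k$ via Eq.~\eqref{eq:fixed-design-err-est}, and that the out-of-sample coordinates play no role because all test functions only see indices in $\cI_k$; once this identification is stated cleanly the rest is routine.
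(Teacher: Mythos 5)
Your overall route is the same as the paper's: combine Lemma \ref{lem:hat-g-hat-h-db-err} with the marginal characterization (Lemma \ref{lem:marginal-characterization}(i)) applied to a suitably composed Lipschitz test function, using the fixed-design identities $\bg_k^f=\bSigma^{1/2}(\hat\btheta_k^{f,\de}-\btheta_k)$ and $\bh_{k,\cI_k}^f=\hat\be_{k,\cI_k}^{f,\de}-\be_{k,\cI_k}$, and finish with a triangle inequality and constant adjustment. The noise-estimation half is essentially correct as you describe it, apart from an index slip: the vector subtracted in the fourth slot must be $\be_{k,\cI_k}/\sqrt{n_k}$, i.e.\ $\ba_4-\ba_k$, not always $\ba_4-\ba_2$ (your choice is right only for $k=2$).

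The parameter-estimation half, however, does not go through as written, because at two points you identify $\hat\btheta_k^{\de}$ (resp.\ $\hat\btheta_k^{f,\de}$) with $\hat\bg_k$ (resp.\ $\bg_k^f$), dropping the affine map that relates them. Concretely: (i) Lemma \ref{lem:hat-g-hat-h-db-err} controls $\|\hat\bg_k-\bSigma^{1/2}(\hat\btheta_k^\de-\btheta_k)\|_2$, so Lipschitzness of $\phi_\theta$ lets you trade $\hat\bg_k$ for $\bSigma^{1/2}(\hat\btheta_k^\de-\btheta_k)$, not for $\hat\btheta_k^\de$; the claim that $\phi_\theta(\hat\btheta_k,\hat\bg_k)$ and $\phi_\theta(\hat\btheta_k,\hat\btheta_k^\de)$ differ by at most $\sqrt{p/n_k}\,\epsilon$ is false, since $\hat\bg_k-\hat\btheta_k^\de$ is generically of order one (it contains $-\btheta_k$ and a change-of-metric term). (ii) The closing identity $\E[\phi_\theta(\hat\btheta_k^f,\hat\btheta_k^{f,\de})]=\E[\phi_\theta(\hat\btheta_k^f,\bg_k^f)]$ is likewise false for a generic $1$-Lipschitz $\phi_\theta$, because $\hat\btheta_k^{f,\de}=\btheta_k+\bSigma^{-1/2}\bg_k^f\neq\bg_k^f$; it even contradicts the correct relation you state in your first paragraph. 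The repair is exactly the device you announce but do not carry out: apply Lemma \ref{lem:marginal-characterization}(i) to $\psi(\ba,\bb):=\phi_\theta\big(\ba,\bSigma^{1/2}(\bb-\btheta_k)\big)$, which is $C$-Lipschitz by \textsf{A1}; then $\psi(\hat\btheta_k,\hat\btheta_k^\de)=\phi_\theta\big(\hat\btheta_k,\bSigma^{1/2}(\hat\btheta_k^\de-\btheta_k)\big)$ on the random-design side, $\E[\psi(\hat\btheta_k^f,\hat\btheta_k^{f,\de})]=\E[\phi_\theta(\hat\btheta_k^f,\bg_k^f)]$ exactly on the fixed-design side, and Lemma \ref{lem:hat-g-hat-h-db-err} converts the former to $\phi_\theta(\hat\btheta_k,\hat\bg_k)$ up to $\sqrt{p/n_k}\,\epsilon$; a triangle inequality and a rescaling of $\epsilon$ then give the first bound, which is the paper's argument. (Your auxiliary function $\tilde\phi_\theta$, built with $\bSigma^{-1/2}$ and without the shift by $\btheta_k$, composes in the wrong direction and in any case is not used in the steps that follow it.)
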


\begin{proof}[Proof of Corollary \ref{cor:ext-to-hat-gh}]
    By Lemma \ref{lem:hat-g-hat-h-db-err},
    \whp
    \begin{equation}
        \Big|
            \phi_e\Big(
                \frac{ \be_{1,\cI_k}}{\sqrt{n_k}},
                \frac{ \be_{2,\cI_k}}{\sqrt{n_k}},
                \frac{\hat \be_{k,\cI_k}}{\sqrt{n_k}},
                \frac{\hat \bh_{k,\cI_k} }{\sqrt{n_k}}
            \Big) 
            -
            \phi_e\Big(
                \frac{ \be_{1,\cI_k}}{\sqrt{n_k}},
                \frac{ \be_{2,\cI_k}}{\sqrt{n_k}},
                \frac{\hat \be_{k,\cI_k}}{\sqrt{n_k}},
                \frac{\hat \be_{k,\cI_k}^{\de} - \be_{k,\cI_k}}{\sqrt{n_k}}
            \Big) 
        \Big|
        < \sqrt{\frac{p}{n_k}}\,\epsilon.
    \end{equation}
    Using the marginal characterization (Lemma \ref{lem:marginal-characterization}) and recalling that $\hat \be_{k,\cI_k}^{f,\de} - \be_{k,\cI_k} = \bh_{k,\cI_k}^f$ (see Eqs.~\eqref{eq:fixed-des} and 
    \eqref{eq:fixed-design-err-est}),
    we have \whp
    \begin{equation}
        \Big|
            \phi_e\Big(
                \frac{ \be_{1,\cI_k}}{\sqrt{n_k}},
                \frac{ \be_{2,\cI_k}}{\sqrt{n_k}},
                \frac{\hat \be_{k,\cI_k}}{\sqrt{n_k}},
                \frac{\hat \be_{k,\cI_k}^{\de} - \be_{k,\cI_k}}{\sqrt{n_k}}
            \Big) 
            - 
            \E\Big[
                \phi_e\Big(
                    \frac{ \be_{1,\cI_k}}{\sqrt{n_k}},
                    \frac{ \be_{2,\cI_k}}{\sqrt{n_k}},
                    \frac{\hat \be_{k,\cI_k}^f}{\sqrt{n_k}},
                    \frac{\bh_k^f}{\sqrt{n_k}}
                \Big)
            \Big] 
        \Big|
        < \sqrt{\frac{p}{n_k}}\,\epsilon.
    \end{equation}
    The corollary follows by combining the above two bounds using the triangle inequality.
    The first line of the corollary follows similarly, using the bound on $\| \hat \bg_k - \bSigma^{1/2}(\hat \btheta_k^\de - \btheta_k)\|_2$ from Lemma \ref{lem:hat-g-hat-h-db-err} and recalling that in the fixed design model $\bSigma^{1/2}(\hat \btheta_k^{f,\de} - \hat \btheta_k) = \bg_k^f$.
\end{proof}

\begin{corollary}\label{cor:ext-to-I2}
    Assume \textsf{A1} and \textsf{A2}. 

    \begin{enumerate}[(i)]

        \item 
        Let $\phi_e: (\reals^{n_2})^4 \rightarrow \reals$ by $M_e$-Lipschitz in its first three arguments and $M_h$ Lipschitz in its final argument.
        There exist $\cPmodel$, $\cPregr$ and regression method-dependent $c' > 0$ and $\sC,\sc: \reals_{>0} \rightarrow \reals_{>0}$ such that
        \whp
        \begin{equation}\label{eq:I2-marg}
        \begin{gathered}
        \Big| 
                \phi_e\Big(
                    \frac{ \be_{1,\cI_2}}{\sqrt{n_2}},
                    \frac{ \be_{2,\cI_2}}{\sqrt{n_2}},
                    \frac{\hat \be_{1,\cI_2}}{\sqrt{n_2}},
                    \frac{\hat \bh_{1,\cI_2}}{\sqrt{n_2}}
                \Big) 
                - 
                \E\Big[
                    \phi_e\Big(
                        \frac{ \be_{1,\cI_2}}{\sqrt{n_2}},
                        \frac{ \be_{2,\cI_2}}{\sqrt{n_2}},
                        \frac{\hat \be_{1,\cI_2}^f}{\sqrt{n_2}},
                        \frac{\bh_{1,\cI_2}^f}{\sqrt{n_2}}
                    \Big)\Big] \Big| < \Big(
                        M_e \sqrt{\frac{p}{n_2}} + M_h \sqrt{\frac{p(p+n_{12})}{n_1n_2}}\;
                    \Big) \, \epsilon.
        \end{gathered}
        \end{equation}

        \item 
        Further, we have \whp
        \begin{equation}
            \frac{\| \hat \bh_{1,\cI_2} - (\hat \be_{1,\cI_2}^\de - \be_{1,\cI_2})\|_2}{\sqrt{n_2}} 
                \leq \sqrt{\frac{p}{n_1}}\,\epsilon.
        \end{equation}
        Thus, if we replace $\hat \bh_{1,\cI_2}$ by $\hat \be_{1,\cI_2}^{\de}-\be_{1,\cI_2}$ in Eq.~\eqref{eq:I2-marg},
        the same bound holds except that we must replace the term $ M_h \sqrt{p(p+n_{12})/(n_1n_2)} $ on the right-hand side by $M_h \sqrt{p/n_1}$.

        \item 
        Part (ii) still holds if we replace $\df_k$ by $\hat \df_k$ in the definition of $\hat \be_1^\de$ in Eq.~\eqref{eq:noise-est},
        where $\hat \df_k$ is defined by Eq.~\eqref{eq:hat-df}.

    \end{enumerate}

\end{corollary}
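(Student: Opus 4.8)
The plan is to split the index set $\cI_2$ into the block $\cI_1\cap\cI_2$ of units that were also used to fit the first regression and the block $\cI_2\setminus\cI_1$ of units that were not, to control the contribution of each block to the test function $\phi_e$ (and to the $\ell_2$ distance in part (ii)) separately, and then to recombine by the triangle inequality together with a union bound over the two (and the constantly many sub-)pieces. The pure-noise arguments $\be_{1,\cI_2},\be_{2,\cI_2}$ are harmless: they are sub-vectors of iid Gaussians, pinned exactly in the fixed-design analog, so Gaussian concentration handles them at rate $M_e\sqrt{p/n_2}\,\epsilon$. Throughout I would use that, by Lemma~\ref{lem:bound-on-fixed-pt}, $\tau_{h_1}^2=\Theta(p/n_1)$, $\zeta_1=\Theta(1)$, and $\|\bv_1\|_2$, $\|\bu_1\|_2/\sqrt{n_1}$ concentrate on $\tau_{h_1}$ and $\sqrt{n_1}\tau_{g_1}\zeta_1$ respectively.

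For the in-sample block $\cI_1\cap\cI_2\subseteq\cI_1$, the coordinates $\hat\be_{1,\cI_1\cap\cI_2}$ and $\hat\bh_{1,\cI_1\cap\cI_2}$ are restrictions of $\hat\be_{1,\cI_1}$ and $\hat\bh_{1,\cI_1}$, which are already controlled jointly with $\be_{1,\cI_1},\be_{2,\cI_1}$ by the marginal characterization (Lemma~\ref{lem:marginal-characterization}) and its extension to $\hat\bh_1$ (Corollary~\ref{cor:ext-to-hat-gh}). I would compose $\phi_e$ with the restriction to these coordinates, rescale from the $1/\sqrt{n_1}$ normalization used there to the $1/\sqrt{n_2}$ normalization used here, and invoke that $\hat\bh_{1,\cI_1}$ behaves (in the Lipschitz-concentration sense) like the Gaussian vector $\bh_{1,\cI_1}^f=\tau_{h_1}\bxi_{h}$, whose $\cI_1\cap\cI_2$-restriction after $1/\sqrt{n_2}$-rescaling has per-coordinate variance $\tau_{h_1}^2/n_2=\Theta(p/(n_1n_2))$; Gaussian concentration of Lipschitz functions then gives a contribution of order $M_h\sqrt{p^2/(n_1n_2)}\,\epsilon\le M_h\sqrt{p(p+n_{12})/(n_1n_2)}\,\epsilon$, and the conditional means in the random- and fixed-design models agree to the same order. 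For part~(ii) on this block I would simply restrict the estimate of Lemma~\ref{lem:hat-g-hat-h-db-err}, which gives $\|\hat\bh_{1,\cI_1}-(\hat\be_{1,\cI_1}^\de-\be_{1,\cI_1})\|_2\le\sqrt{p}\,\epsilon$ with high probability and hence the same for any sub-block.

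For the out-of-sample block $\cI_2\setminus\cI_1\subseteq\cI_1^c$, one has $\hat\be_{1,i}=0$, while the explicit solution of Eqs.~\eqref{eq:hat-xi} and \eqref{eq:constraint+} (as written out in the proof of Lemma~\ref{lem:hat-g-hat-h-db-err}) gives $\hat h_{1,i}=-\tau_{h_1}\,\bx_i^\top(\hat\btheta_1-\btheta_1)/\|\bv_1\|_2$, i.e.\ the negative out-of-sample prediction error up to the scalar factor $\tau_{h_1}/\|\bv_1\|_2$. The crucial observation is that the rows $\{\bx_i\}_{i\in\cI_2\setminus\cI_1}$ are independent of $\hat\btheta_1$, so conditionally on the first regression $\bX_{\cI_2\setminus\cI_1}(\hat\btheta_1-\btheta_1)$ is a Gaussian vector with per-coordinate variance $\|\hat\btheta_1-\btheta_1\|_{\bSigma}^2$, which concentrates on $\tau_{h_1}^2=\Theta(p/n_1)$ by Lemma~\ref{lem:marg-T-conc-helper}, while $\tau_{h_1}/\|\bv_1\|_2-1=O(\epsilon)$ with high probability by the marginal characterization. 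Conditional Gaussian concentration of Lipschitz functions applied to the $1/\sqrt{n_2}$-rescaled block (per-coordinate variance $\Theta(p/(n_1n_2))$) then produces fluctuations of order $M_h\sqrt{p^2/(n_1n_2)}\,\epsilon$, with conditional mean matching $\E[\phi_e(\cdots,\bh_{1,\cI_2\setminus\cI_1}^f/\sqrt{n_2})]$ up to the variance mismatch above. For part~(ii) on this block, $\hat h_{1,i}-(\hat e_{1,i}^\de-e_{1,i})=(\tau_{h_1}/\|\bv_1\|_2-1)\bx_i^\top(\hat\btheta_1-\btheta_1)$, whose $1/\sqrt{n_2}$-rescaled norm is at most $|\tau_{h_1}/\|\bv_1\|_2-1|$ times $\|\bX_{\cI_2\setminus\cI_1}(\hat\btheta_1-\btheta_1)\|_2/\sqrt{n_2}=O(\sqrt{p/n_1})$, giving the claimed $\sqrt{p/n_1}\,\epsilon$ bound. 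The ``thus'' clause in (ii) follows because replacing $\hat\bh_{1,\cI_2}$ by $\hat\be_{1,\cI_2}^\de-\be_{1,\cI_2}$ perturbs $\phi_e$ by at most $M_h$ times the $\sqrt{n_2}$-normalized $\ell_2$ difference, and part~(iii) is immediate from Lemma~\ref{lem:emp-df-conc}: $\hat\df_1$ and $\df_1$ are within $\epsilon$, so $\hat\be_{1,\cI_2}^{\de,\mathrm{emp}}$ and $\hat\be_{1,\cI_2}^\de$ differ by at most $(p/n_1)\epsilon$ in the $\sqrt{n_2}$-normalized norm, which one adds to the bound of (ii).

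I expect the main obstacle to be the rate bookkeeping on the in-sample block: the marginal characterization and Lemma~\ref{lem:hat-g-hat-h-db-err} are stated with the $\sqrt{n_1}$ normalization natural to the first regression, and a naive restriction-and-rescaling yields only a $\sqrt{p/n_2}$ rate for the $\hat\bh_1$ argument, which is not always dominated by the target $\sqrt{p(p+n_{12})/(n_1n_2)}$ (they differ by $\sqrt{n_1/(p+n_{12})}$, which is unbounded for OLS/ridge with $n_1\gg p$). Recovering the sharper rate requires exploiting that the debiased in-sample residuals are, in the precise sense supplied by the marginal characterization, Gaussian with the small variance $\Theta(p/(n_1n_2))$ after rescaling—so that dimension-free Gaussian Lipschitz concentration applies—rather than merely Lipschitz-concentrating; making this replacement rigorous at the correct rate, and verifying that the in- and out-of-sample conditional means in the random- and fixed-design models agree to the stated order, is the delicate part of the argument.
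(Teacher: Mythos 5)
Your decomposition into the in-sample block $\cI_1\cap\cI_2$ and the out-of-sample block $\cI_2\setminus\cI_1$, your treatment of the out-of-sample coordinates as conditionally Gaussian given the first regression, and your handling of parts (ii) and (iii) are all consistent with what the paper does. However, there is a genuine gap exactly at the step you yourself flag as delicate: the sharp rate $M_h\sqrt{p(p+n_{12})/(n_1n_2)}$ on the in-sample block. Your proposed fix is to ``invoke that $\hat\bh_{1,\cI_1}$ behaves like the Gaussian vector $\bh_{1,\cI_1}^f$'' and then apply dimension-free Gaussian concentration to its restriction. But the marginal characterization (and Corollary \ref{cor:ext-to-hat-gh}) only provide Lipschitz-concentration of functions of the \emph{random-design} vector at radius $\sqrt{p/n_1}\,\epsilon$ in the $\sqrt{n_1}$-normalization; $\hat\bh_{1,\cI_1}$ is not itself Gaussian, and restricting to a sub-block does not shrink the Lipschitz constant of the composed test function, so a restriction-and-rescale argument yields only $M_h\sqrt{p/n_2}\,\epsilon$, which (as you note) exceeds the target by the unbounded factor $\sqrt{n_1/(p+n_{12})}$ in the OLS/ridge regime $n_1\gg p$. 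You also have no mechanism for showing that the conditional mean of $\phi_e$ given the in-sample residuals matches the fixed-design expectation at the improved rate; asserting Gaussianity of a non-Gaussian vector does not close this.

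The paper's device, which your proposal is missing, is rotational invariance: the joint law of $(\be_{1,\cI_1},\be_{2,\cI_1},\hat\be_{1,\cI_1},\hat\bh_{1,\cI_1})$ is invariant under rotations of $\reals^{n_1}$ (and $\hat\bh_{1,\cI_1^c}$ is exactly Gaussian and independent of the $\cI_1$ data), so the quadruple restricted to $\cI_2$ can be represented as a uniformly random $\bQ\in SO(n_1)$ applied to the $\cI_1$-level vectors followed by projection onto $n_{12}$ coordinates, augmented by independent Gaussians on $\cI_2\setminus\cI_1$. Concentration of Lipschitz functions on $SO(n_1)$ controls the fluctuation over $\bQ$ at rate $\big(M_e\sqrt{p/n_2}+M_h\,p/\sqrt{n_1n_2}\big)\epsilon$, and—this is the crucial payoff—averaging over $\bQ$ produces a function $\tilde\phi_e$ of the $\sqrt{n_1}$-normalized $\cI_1$-level quantities whose Lipschitz constant is reduced by the factor $\sqrt{n_{12}/n_1}$ (via $\E_{\bQ}\|(\bQ\bdelta)_{\cI_1\cap\cI_2}\|_2^2=(n_{12}/n_1)\|\bdelta\|_2^2$), so that Corollary \ref{cor:ext-to-hat-gh} applied at the $\cI_1$ level contributes only $(M_e+M_h)\sqrt{n_{12}p/(n_1n_2)}\,\epsilon$. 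Combining the three layers gives the stated bound. Without this (or an equivalent exchangeability argument converting restriction into an effective Lipschitz-constant gain), your in-sample estimate stalls at $M_h\sqrt{p/n_2}\,\epsilon$ and the corollary as stated is not proved.
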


\begin{proof}[Proof of Corollary \ref{cor:ext-to-I2}]
    Observe that the data $\be_{1,\cI_1},\be_{2,\cI_1}\bX_{\cI_1}$ is rotationally invariant in the sense that for any orthogonal matrix $\bQ \in \reals^{n_1\times n_1}$ (i.e.,~$\bQ^\top\bQ = \id_{n_k}$), 
    we have
    $(\bQ \be_{1,\cI_1},\bQ \be_{2,\cI_1},\bQ \bX_{\cI_k}) \stackrel{\mathrm{d}}= (\be_{1,\cI_1},\be_{2,\cI_1},\bX_{\cI_1})$.
    By the rotational invariance of the least-squares loss, 
    under this replacement $(\be_{1,\cI_1},\be_{2,\cI_1},\hat \be_{1,\cI_1},\hat \bh_{1,\cI_2})$ 
    is replaced with $(\bQ\be_{1,\cI_1},\bQ\be_{2,\cI_1},\bQ\hat \be_{1,\cI_1},\bQ\hat \bh_{1,\cI_2})$ and $\bv_k$ is unaffected.
    We conclude that $(\be_{1,\cI_1},\be_{2,\cI_1},\hat \be_{1,\cI_1},\hat \bh_{1,\cI_2})$ has a rotationally invariant distribution.
    Further, from Eq.~\eqref{eq:def-ggam-hgam} and recalling that $\bv_k = \bSigma^{1/2}(\hat \btheta_k - \btheta_k)$ (see Eq.~\eqref{eq:vk-uk}) and $\bX_{\cI_1^c}$ is independent of $\bX_{\cI_1}$, 
    we have $\hat \bxi_{h,\cI_1^c} = - \bX_{\cI_1^c}(\hat \btheta_1-\btheta_1)/\| \hat \btheta_1 - \btheta_1 \|_{\bSigma} \sim \normal(0,\id_{N-n_1})$ independent of $\be_{1,\cI_1},\be_{2,\cI_1}\bX_{\cI_1}$.
    Therefore, $\hat \bh_{1,\cI_1^c} = \tau_{h_1}\hat \bxi_{h,\cI_1^c} \sim \normal(0,\tau_{h_1}^2 \id_{N-n_1})$ independent of $\be_{1,\cI_1},\be_{2,\cI_1}\bX_{\cI_1}$.
    Thus, we may represent
    \begin{equation}
    \begin{aligned}
        &\Big(
                \frac{\be_{1,\cI_2}}{\sqrt{n_2}},
                \frac{\be_{2,\cI_2}}{\sqrt{n_2}},
                \frac{\hat \be_{1,\cI_2}}{\sqrt{n_2}},
                \frac{\hat \bh_{1,\cI_2}}{\sqrt{n_2}}
            \Big)
        \\
            &\quad\stackrel{\mathrm{d}}=
            \frac1{\sqrt{n_2}}
            \left(
                \begin{pmatrix}
                    (\bQ \be_{1,\cI_1})_{\cI_1 \cap \cI_2} \\[3pt]
                    \tau_{e_1} \bxi^{(1)}
                \end{pmatrix},
                \begin{pmatrix}
                    (\bQ \be_{2,\cI_1})_{\cI_1 \cap \cI_2} \\[3pt]
                    \tau_{e_2} \bxi^{(2)}
                \end{pmatrix},
                \begin{pmatrix}
                    (\bQ \hat \be_{1,\cI_1})_{\cI_1 \cap \cI_2} \\[3pt]
                    \bzero
                \end{pmatrix},
                \begin{pmatrix}
                    (\bQ \hat \bh_{1,\cI_1})_{\cI_1 \cap \cI_2} \\[3pt]
                    \tau_{h_1} \bxi^{(3)}
                \end{pmatrix}
            \right)
            =: \bD,
    \end{aligned}
    \end{equation}
    where $\bxi^{(1)},\bxi^{(2)},\bxi^{(3)}\stackrel{\mathrm{iid}}\sim\normal(0,\id_{n_2 - n_{12}})$ and $\bQ\in \reals^{n_1 \times n_1}$ is drawn uniformly from the special orthogonal group $SO(n_1)$ and independently of everything else.
    We introduce the notation $\bD \in \reals^{n_2 \times 4}$ for notational compactness,
    and denote $\phi_e$ applied to its columns by $\phi_e(\bD)$.
    By the distributional equivalence above, 
    it is enough to establish the concentration result for $\phi_e(\bD)$ in place of 
    $\phi_e\Big(
        \frac{\be_{1,\cI_2}}{\sqrt{n_2}},
        \frac{\be_{2,\cI_2}}{\sqrt{n_2}},
        \frac{\hat \be_{1,\cI_2}}{\sqrt{n_2}},
        \frac{\hat \bh_{1,\cI_2}}{\sqrt{n_2}}
    \Big)$.
    
    Conditioning on $\bQ ,\be_{1,\cI_1},\be_{2,\cI_1},\hat \be_{1,\cI_1},\hat \bh_{1,\cI_1}$, the only randomness that remains is in $\bxi^{(1)},\bxi^{(2)},\bxi^{(3)}$.
    By Lemma \ref{lem:bound-on-fixed-pt} and assumption \textsf{A1}, we have $\tau_{e_1},\tau_{e_2} \leq C$ and $\tau_{h_1} \leq C\sqrt{p/n_1}$.
    Thus, by Gaussian concentration,
    with probability at least $1 - \sC(\epsilon)e^{-\sc(\epsilon)p}$
    \begin{equation}\label{eq:bound1}
        \big| \phi_e(\bD) - \E[\phi_e(\bD) \mid \bQ ,\be_{1,\cI_1},\be_{2,\cI_1},\hat \be_{1,\cI_1},\hat \bh_{1,\cI_1}]\big| < \Big(M_e\sqrt{\frac{p}{n_2}} + M_h \frac{p}{\sqrt{n_1n_2}}\;\Big)\epsilon.
    \end{equation}
    Now note that $\E[\phi_e(\bD) \mid \bQ ,\be_{1,\cI_1},\be_{2,\cI_1},\hat \be_{1,\cI_1},\hat \bh_{1,\cI_1}]$ viewed as a function of $\bQ$ with $\be_1,\be_2,\hat \be_1,\hat \bh_1,\bxi^{(1)},\bxi^{(2)},\bxi^{(3)}$ fixed is $M_e(\| \be_{1,\cI_1}\|_2 + \| \be_{2,\cI_1} \|_2 + \| \hat \be_{1,\cI_1}\|_2)/\sqrt{n_2} + M_h \| \hat \bh_{1,\cI_1} \|_2 / \sqrt{n_2}$-Lipschitz in $\bQ$ in Frobenius norm.
    Indeed, $\| \bQ \be_{1,\cI_1} - \bQ \be_{1,\cI_1}' \|_2/\sqrt{n_2} \leq \| \bQ - \bQ' \|_{\sF} \| \be_{1,\cI_1} \|_2/\sqrt{n_2}$, and likewise for the other arguments.
    By concentration on the special orthogonal group (see, e.g.,~\cite[Theorem 5.2.7]{vershynin2019}),
    with probability at least $1 - \sC(\epsilon)e^{-\sc(\epsilon)p}$ 
    \begin{equation}
    \begin{aligned}
        &\big| 
            \E[\phi_e(\bD)\mid \bQ ,\be_{1,\cI_1},\be_{2,\cI_1},\hat \be_{1,\cI_1},\hat \bh_{1,\cI_1}]
            -
            \E[\phi_e(\bD)\mid \be_{1,\cI_1},\be_{2,\cI_1},\hat \be_{1,\cI_1},\hat \bh_{1,\cI_1}]
        \big|
        \\
            &\qquad\qquad\qquad\qquad\qquad\qquad\qquad
            \leq \sqrt{\frac{p}{n_1}} \frac{M_e(\| \be_{1,\cI_1}\|_2 + \| \be_{2,\cI_1} \|_2 + \| \hat \be_{1,\cI_1}\|_2) + M_h \| \hat \bh_{1,\cI_1} \|_2}{ \sqrt{n_2} }\,\epsilon.
    \end{aligned}
    \end{equation}
    Using Lemma \ref{lem:marg-T-conc-helper} and recalling that $\hat \be_{1,\cI_1} = \bu_{1,\cI_1}$ (see Eq.~\eqref{eq:vk-uk}),
    with probability at least $1 - Ce^{-cp}$
    we have $\big(M_e(\| \be_{1,\cI_1}\|_2 + \| \be_{2,\cI_1} \|_2 + \| \hat \be_{1,\cI_1}\|_2) + M_h \| \hat \bh_{1,\cI_1} \|_2\big)/\sqrt{n_2} \leq C(M_e \sqrt{n_1/n_2} + M_h \sqrt{p/n_2})$.
    Thus,
    for $\epsilon < c'$ with probability at $1 - \sC(\epsilon)e^{-\sc(\epsilon)p}$
    \begin{equation}\label{eq:bound2}
        \big| 
            \E[\phi_e(\bD)|\bQ ,\be_{1,\cI_1},\be_{2,\cI_1},\hat \be_{1,\cI_1},\hat \bh_{1,\cI_1}]
            -
            \E[\phi_e(\bD)|\be_{1,\cI_1},\be_{2,\cI_1},\hat \be_{1,\cI_1},\hat \bh_{1,\cI_1}]
        \big| < \Big( M_e \sqrt{\frac{p}{n_2}} + M_h \frac{p}{\sqrt{n_1n_2}}\;\Big)\,\epsilon.
    \end{equation}
    Now denote $\tilde \phi_e(\be_{1,\cI_1}/\sqrt{n_1},\be_{2,\cI_1}/\sqrt{n_1},\hat \be_{1,\cI_1}/\sqrt{n_1},\hat \bh_{1,\cI_1}/\sqrt{n_1}) := \E[\phi_e(\bD)|\be_{1,\cI_1},\be_{2,\cI_1},\hat \be_{1,\cI_1},\hat \bh_{1,\cI_1}]$.
    Using the Lipschitz properties of $\phi_e$,
    we have
    \begin{equation}
    \begin{aligned}
        &\Big|
            \tilde \phi_e
            \Big(
                \frac{\be_{1,\cI_1}}{\sqrt{n_1}},
                \frac{\be_{2,\cI_1}}{\sqrt{n_1}},
                \frac{\hat \be_{1,\cI_1}}{\sqrt{n_1}},
                \frac{\hat \bh_{1,\cI_1}}{\sqrt{n_1}}
            \Big)
            -
            \tilde \phi_e
            \Big(
                \frac{\be_{1,\cI_1}'}{\sqrt{n_1}},
                \frac{\be_{2,\cI_1}}{\sqrt{n_1}},
                \frac{\hat \be_{1,\cI_1}}{\sqrt{n_1}},
                \frac{\hat \bh_{1,\cI_1}}{\sqrt{n_1}}
            \Big)
        \Big|
        \\
        &\quad \leq 
            M_e \sqrt{\frac{n_1}{n_2}}
                \frac{
                    \E_{\bQ}\big[\|(\bQ (\be_{1,\cI_1}-\be_{1,\cI_1}'))_{\cI_1 \cap \cI_2}\|_2\big]
                    }{\sqrt{n_1}}   
        \leq 
        M_e \sqrt{\frac{n_1}{n_2}}
                \frac{
                    \E_{\bQ}\big[\|(\bQ (\be_{1,\cI_1}-\be_{1,\cI_1}'))_{\cI_1 \cap \cI_2}\|_2^2\big]^{1/2}
                    }{\sqrt{n_1}}
        \\
        &\quad= M_e \sqrt{\frac{n_{12}}{n_2}} \frac{\|\be_{1,\cI_1}-\be_{1,\cI_1}'\|_2}{\sqrt{n_1}},
    \end{aligned}
    \end{equation}
    where we have used that $\E_{\bQ}\big[\|(\bQ (\be_{1,\cI_1}-\be_{1,\cI_1}'))_{\cI_1 \cap \cI_2}\|_2^2\big] = n_{12} \| \be_{1,\cI_1} - \be_{1,\cI_1}'\|_2^2 / n_1$ by symmetry and the rotationally invariant distribution of $\bQ$.
    Thus, $\tilde \phi_e$ is $M_e\sqrt{n_{12}/n_2}$-Lipschitz in $\be_{1,\cI_1}/\sqrt{n_1}$.
    Similarly, it is $M_e\sqrt{n_{12}/n_2}$-Lipschitz in $\be_{2,\cI_1}/\sqrt{n_1}$ and $\hat \be_{1,\cI_1}/\sqrt{n_1}$
    and $M_h\sqrt{n_{12}/n_2}$-Lipschitz in $\hat \bh_{1,\cI_1}/\sqrt{n_1}$.
    In particular, it is $(M_e+M_h)\sqrt{n_{12}/n_2}$-Lipschitz in all its arguments.
    Recalling the definition of $\tilde \phi_e$,
    Corollary \ref{cor:ext-to-hat-gh} implies that \whp
    \begin{equation}\label{eq:bound3}
        \big|
            \E[\phi_e(\bD)|\be_{1,\cI_1},\be_{2,\cI_1},\hat \be_{1,\cI_1},\hat \bh_{1,\cI_1}]
            -
            \E[\phi_e(\bD)]
        \big| < ( M_e + M_h) \sqrt{\frac{n_{12}p}{n_1n_2}}\,\epsilon.
    \end{equation}
    Combining Eqs.~\eqref{eq:bound1}, \eqref{eq:bound2}, and \eqref{eq:bound3} and using $n_{12} \leq n_1$ gives Eq.~\eqref{eq:I2-marg}, 
    and the proof of Corollary \ref{cor:ext-to-I2}(i) is complete.

    Now we consider replacing $\hat \bh_{1,\cI_2}$ by $\hat \be_{1,\cI_2}^\de - \be_{1,\cI_2}$.
    By Eq.~\eqref{eq:I2-marg}, with probability at least $1 - Ce^{-pc}$ we have $\| \hat \bh_{1,\cI_2}\|_2/\sqrt{n_2} \leq \E[\| \bh_{1,\cI_2}^f \|_2] / \sqrt{n_2} + C\sqrt{p/n_1} \leq \E[\| \bh_{1,\cI_2}^f \|_2^2]^{1/2} / \sqrt{n_2} + \sqrt{p(p+n_{12})/(n_1n_2)} \leq C\sqrt{p/n_1}$, 
    where in the second inequality we have used Jensen and that $p/n_2 \leq C$ and $n_{12} /n_2 \leq 1$, and in the third inequality have used that $\E[\| \bh_{1,\cI_2}^f \|_2^2]^{1/2} = \tau_{h_1} \leq C \sqrt{p/n_1}$ by Eq.~\eqref{eq:tau-h-ref} and Lemma \ref{lem:bound-on-fixed-pt}.
    By Eq.~\eqref{eq:noise-est} 
    we have $\hat \be_{1,\cI_2\setminus \cI_1}^\de - \be_{1,\cI_2\setminus \cI_1} = - \bX_{\cI_2 \setminus \cI_1}(\hat \btheta_1 - \btheta_1)$ and by 
    Eq.~\eqref{eq:def-ggam-hgam}
    we have $\hat \bh_{1,\cI_2 \setminus \cI_1} = - (\tau_{h_1}/\|\bv_1\|_2)\bX_{\cI_2 \setminus \cI_1}(\hat \btheta_1 - \btheta_1)$, whence \whp\
    \begin{equation}
        \frac{\|\hat \bh_{1,\cI_2 \setminus \cI_1} - (\hat \be_{1,\cI_2\setminus \cI_1}^\de - \be_{1,\cI_2\setminus \cI_1})\|_2}{\sqrt{n_2}}
        = 
        \Big|
            1
            - 
            \frac{\sqrt{n_1/p}\,\|\bv_1\|_2}{\sqrt{n_1/p}\,\tau_{h_1}}
        \Big|
        \cdot 
        \frac{\| \hat \bh_{1,\cI_2 \setminus \cI_1} \|_2}{\sqrt{n_2}}
        \leq 
        C' \sqrt{\frac{p}{n_1}}\,\epsilon,
    \end{equation}
    where the inequality uses Lemma \ref{lem:marg-T-conc-helper} to bound $\big|1 - (\sqrt{n_1/p}\,\|\bv_1\|_2)/(\sqrt{n_1/p}\,\tau_{h_1})\big| \leq C'\epsilon$.
    Further, by Eq.~\eqref{eq:noise-est} 
    we have $\hat \be_{1,\cI_2 \cap \cI_1}^\de - \be_{1,\cI_2 \cap \cI_1} = \hat \be_{1,\cI_2 \cap \cI_1}/\zeta_1 - \be_{1,\cI_2 \cap \cI_1} $, 
    whence by Eq.~\eqref{eq:I2-marg} we have \whp\
    \begin{equation}
        \frac{\|\hat \bh_{1,\cI_2 \cap \cI_1} - (\hat \be_{1,\cI_2\cap \cI_1}^\de - \be_{1,\cI_2\cap \cI_1})\|_2}{\sqrt{n_2}}
        = 
        \frac{\|\hat \bh_{1,\cI_2 \cap \cI_1} - (\hat \be_{1,\cI_2\cap \cI_1}/\zeta_1 - \be_{1,\cI_2\cap \cI_1})\|_2}{\sqrt{n_2}}
        \leq 
        C' \sqrt{\frac{p(p+n_{12})}{n_1n_2}}\,\epsilon,
    \end{equation}
    where we have used that in the fixed design model $\bh_{1,\cI_2 \cap \cI_1}^f = \hat \be_{1,\cI_2\cap \cI_1}^f/\zeta_1 - \be_{1,\cI_2\cap \cI_1}$ (see Eq.~\eqref{eq:fixed-design-err-est}).
    Combining the previous two displays and using $\sqrt{p(p + n_{12})/(n_1n_2)} \leq C \sqrt{p/n_1}$ because $n_{12} \leq n_2$ and $p \leq Cn_2$ by \textsf{A2}, Corollary \ref{cor:ext-to-I2}(ii) follows.

    Corollary \ref{cor:ext-to-I2}(iii) holds by applying Lemma \ref{lem:emp-df-conc}.
\end{proof}

\subsection{Concentration of $\phi_{\theta|1}$ and $\phi_{e|1}$: proof of Lemma \ref{lem:phi-|1-conc}}
\label{sec:phi|1-conc-proof}

\begin{proof}[Proof of Lemma \ref{lem:phi-|1-conc}]
    Note $\phi_{\theta|1}$ is a function of $\mathsf{Cond}_1$ only via $\hat \bg_1$. 
    With some abuse of notation, 
    we will write $\phi_{\theta|1}(\hat \bg_1)$ for this function.
    Similary, $\phi_{e|1}$ is a function of $\mathsf{Cond}_1$ only via $\hat \bh_1,\be_1,\be_2$. 
    In fact, because the coordinates of the noise are independent in the fixed-design model,
    $\phi_{e|1}$ is a function of $\mathsf{Cond}_1$ only via $\hat \bh_{1,\cI_2}, \be_{1,\cI_2},\be_{2,\cI_2}$. 
    With some abuse of notation, 
    we will write $\phi_{e|1}(\be_{1,\cI_2}/\sqrt{n_2},\be_{2,\cI_2}/\sqrt{n_2},\hat \bh_{1,\cI_2}/\sqrt{n_2})$ for this function.
    
    We claim that $\phi_{\theta|1}$ is a $M_1 + M_2(\tau_{g_2}\rho_g/\tau_{g_1}) \leq M_1 + C M_2 \sqrt{n_1/n_2}$-Lipschitz function of $\hat \bg_1$. 
    Indeed, by Eq.~\eqref{eq:vf-prox} and because proximal operators are 1-Lipschitz \cite{parikh2014},
    we see that $\bv_k^f$ is a $1$-Lipschitz function of $\bg_k^f$.
    Becuase $\hat \btheta_k^f = \btheta_k + \bSigma^{-1/2} \bv_k^f$ (see Eq.~\eqref{eq:vkf}) and the singular values of $\bSigma$ are bounded below,
    we have that $\hat \btheta_k^f$ is $C$-Lipschitz functions of $\bg_k^f$.
    Because $\hat \btheta_k^{f,\de} = \btheta_k + \bg_k^f$, we have $\hat \btheta_2^{f,\de}$ is a 1-Lipschitz function of $\bg_k^f$ as well.
    In the fixed design model, we may represent $\bg_2^f = (\tau_{g_2}\rho_g/\tau_{g_1})\bg_1^f + \tau_{g_2}\rho_g^\perp \bxi_g$, where $\bxi_g \sim \normal(0,\id_p)$ independent of $\bg_1^f$.
    Because the expectation conditional on $\bg_1^f = \hat \bg_1$ in Eq.~\eqref{eq:def-phi|1} can be represented as an expectation over $\bxi_g$,
    the above observations imply that
    $\phi_{\theta|1}(\hat \bg_1)$ is $M_1 + M_2(\tau_{g_2}\rho_g/\tau_{g_1}) \leq M_1 + C M_2 \sqrt{n_1/n_2}$-Lipschitz in $\hat \bg_1$, where we have used Lemma \ref{lem:bound-on-fixed-pt} to bound $\tau_{g_2}\rho_g/\tau_{g_1} \leq C\sqrt{n_1/n_2}$.
    By the definition of $\phi_{\theta|1}$ (see Eq.~\eqref{eq:def-phi|1}),
    we have $\E[\phi_{\theta|1}(\bg_1^f)] = \E\big[\phi_\theta\big( \hat \btheta_k^f , \hat \btheta_k^{f,\de}  \big)\big]$.
    Thus, by Corollary \ref{cor:ext-to-hat-gh},
    \whp\
    \begin{equation}
    \begin{aligned}
        \Big|
            \phi_{\theta|1}\big(\hat \bg_1\big)
            -
            \E\big[\phi_\theta\big( \hat \btheta_k^f , \hat \btheta_k^{f,\de}  \big)\big]
        \Big|
        \leq 
        \Big(M_1 + M_2\sqrt{\frac{n_1}{n_2}}\Big)\sqrt{\frac{p}{n_1}}\,\epsilon 
        = 
        \Big(M_1\sqrt{\frac{p}{n_1}}+ M_2 \sqrt{\frac{p}{n_2}}\Big)\,\epsilon,
    \end{aligned}
    \end{equation}
    so that the first bound in Lemma \ref{lem:phi-|1-conc} is established.

    Similarly, we claim that $\phi_{e|1}$ is a $C\sqrt{n_1/n_2}$-Lipschitz function of $\hat \bh_{1,\cI_2}/\sqrt{n_2}, \be_{1,\cI_2}/\sqrt{n_2}, \be_{2,\cI_2}/\sqrt{n_2}$. 
    Indeed,
    by Eq.~\eqref{eq:fixed-design-err-est},
    $\hat \be_k^f = (\be_k + \bh_k^f)/\zeta_k$ and $\be_k^{f,\de} = \be_k + \bh_k^f$ are $C$-Lipschitz functions of $\bh_k^f$ because $1 / \zeta_k \leq C$ by Lemma \ref{lem:bound-on-fixed-pt}.
    In the fixed design model, we may represent $\bh_2^f = (\tau_{h_2}\rho_h/\tau_{h_1})\bh_1^f + \tau_{h_2}\rho_h^\perp \bxi_h$, where $\bxi_h \sim \normal(0,\id_N)$ independent of $\bh_1^f$.
    Because the expectation conditional on $\bh_1^f = \hat \bh_1,\be_1,\be_2$ in Eq.~\eqref{eq:def-phi|1} can be represented as an expectation over $\bxi_h$,
    we see that $\phi_{e|1}(\be_{1,\cI_2}/\sqrt{n_2},\be_{2,\cI_2}/\sqrt{n_2},\hat \bh_{1,\cI_2}/\sqrt{n_2})$ 
    is 
    $M_1$-Lipschitz in $\be_{1,\cI_2}/\sqrt{n_2}$,
    $M_2$-Lipschitz in $\be_{2,\cI_2}/\sqrt{n_2}$,
    and 
    $M_1 + M_2(\tau_{h_2}\rho_h/\tau_{h_1}) \leq M_1 +  C M_2 \sqrt{n_1/n_2}$-Lipschitz in $\hat \bh_{1,\cI_2}/\sqrt{n_2}$, where we have used Lemma \ref{lem:bound-on-fixed-pt} to bound $\tau_{h_2}/\tau_{h_1} \leq C \sqrt{n_1/n_2}$.
    By the definition of $\phi_{e|1}$ (see Eq.~\eqref{eq:def-phi|1}),
    we have $\E[\phi_{e|1}(\bh_{1,\cI_2}^f/\sqrt{n_2})] =                 \E\Big[
                    \phi_e\Big(\Big\{\frac{\hat \be_{k,\cI_2}^f}{\sqrt{n_2}}\Big\},\Big\{\frac{\hat \be_{k,\cI_2}^{\de,f}}{\sqrt{n_2}}\Big\}\Big)
                \Big]$.
    By Corollary \ref{cor:ext-to-I2},
    \whp\
    \begin{equation}
    \begin{aligned}
        \Big|
                \phi_{e|1}\Big(
                    \frac{\hat \bh_{1,\cI_2}}{\sqrt{n_2}}
                \Big)
                - 
                \E\Big[
                    \phi_e\Big(\Big\{\frac{\hat \be_{k,\cI_2}^f}{\sqrt{n_2}}\Big\},\Big\{\frac{\hat \be_{k,\cI_2}^{\de,f}}{\sqrt{n_2}}\Big\}\Big)
                \Big]
            \Big|
            &< 
            \Bigg[
                (M_1 + M_2) \sqrt{\frac{p}{n_2}} + \Big(M_1 + M_2\sqrt{\frac{n_1}{n_2}}\Big)\sqrt{\frac{p(p+n_{12})}{n_1n_2}}\,
            \Bigg]\epsilon 
        \\
        &\leq C\big(M_1+M_2\big) \sqrt{\frac{p}{n_2}}\,\epsilon,
    \end{aligned}
    \end{equation}
    where we have used that $M_1 \sqrt{p(p+n_{12})/(n_1n_2)} \leq CM_1 \sqrt{p/n_2}$ because $n_{12} \leq n_2$ and $p \leq Cn_1$,
    and $M_2 \sqrt{n_1/n_2}\,\sqrt{p(p+n_{12})/(n_1n_2)} = M_2 \sqrt{p(p+n_{12})/n_2^2} \leq CM_2 \sqrt{p/n_2}$ because $n_{12} \leq n_2$ and $p \leq Cn_2$.
    This complete the proof.
\end{proof}

\subsection{The good conditional characterization event: proof of Lemma \ref{lem:conditional-concentration-event}}
\label{sec:cond-event-proof}

\begin{proof}[Proof of Lemma \ref{lem:conditional-concentration-event}]
    For many of the objects controlled on the event $\cG_{2|1}(\epsilon,\Delta)$, our proof in fact shows a bettr rate (i.e., have a prefactor on $\epsilon$ which is smaller in $p/n_k$). 
    We state the Lemma only with the rate that we need so as not to clutter notation.

    First, we consider the concentration of 
    $\bT\Big(
        \frac{\be_{2,\cI_2}}{\sqrt{n_2}},
        \frac{\bu_{1,\cI_2}}{\sqrt{n_1}},
        \frac{\hat \bxi_{h,\cI_2}}{\sqrt{n_2}},
        \frac{\bxi_{h,\cI_2}}{\sqrt{n_2}}
    \Big)$.
    The required concentration follows from applying Lemma \ref{lem:T-conc} to its arguments pairwise, as we now show.
    Recall the definitions of $\be_2,\bu_1^f,\bh_1^f,\bh_2^f$ in Eqs.~\eqref{eq:fixed-des} and \eqref{eq:vkf} and of $\tau_{h_1},\tau_{h_2},\rho_h$ in Section \ref{sec:non-matrix-fix-pt}.
    Using the bound $\zeta_1^2(\tau_{e_1}^2 + \tau_{h_1}^2) \leq C$ by Lemma \ref{lem:bound-on-fixed-pt},
    we see that $\bu_1^f$ is a $1$-Lipschitz function of a Gaussian vector with variance bounded by $C$, and by the definition of $\tau_{h_1},\tau_{h_2},\rho_h$, we see that $\bh_{1,\cI_2}^f/\tau_{h_1}$ and $\bh_{2,\cI_2}^f - (\tau_{h_2}\rho_h/\tau_{h_1})/(\tau_{h_2}\rho_h^\perp)$ are standard Gaussian.
    Thus, the arguments to $\bT$ inside the expectation in the first line of Eq.~\eqref{eq:cEgam} are $M = 1 / (\sqrt{n_1 \vee n_2}) \leq C/\sqrt{p}$-Lipschitz functions of a Gaussian vector with variance bounded by $C$.
    Recall that $\bu_{1,\cI_2} = \hat \be_{1,\cI_2}$ (see Eq.~\eqref{eq:vkf}) and $\hat \bxi_{h,\cI_2} = \hat \bh_{1,\cI_2} / \tau_{h_1}$ (see Eq.~\eqref{eq:hat-g1-hat-h1}), and $\bxi_h$ is standard Gaussian independent of everything else.
    Using that $\tau_{h_1} \geq c \sqrt{p/n_1}$ by Lemma \ref{lem:bound-on-fixed-pt},
    we conclude that any function which is $1$-Lipschitz in $\be_{2,\cI_2}/\sqrt{n_2},\bu_{1,\cI_2}/\sqrt{n_1},\hat \bxi_{h,\cI_2}/\sqrt{n_2},\bxi_{h,\cI_2}/\sqrt{n_2}$ is in fact $M_e = (1 \vee \sqrt{n_2/n_1})$-Lipschitz in $\be_{1,\cI_2}/\sqrt{n_2},\be_{2,\cI_2}/\sqrt{n_2},\hat \be_{1,\cI_2}/\sqrt{n_2},\bxi_{h,\cI_2}/\sqrt{n_2}$ and $M_h = C \sqrt{n_1/p}$-Lipschitz in $\hat \bh_{1,\cI_2}/\sqrt{n_2}$.
    Then, Corollary \ref{cor:ext-to-I2} and Gaussian concentration of Lipschitz functions gives us Eq.~\eqref{eq:generic-concentration} with $K = C(1 \vee \sqrt{n_1/n_2})\sqrt{p/n_2} + C \sqrt{n_1/p}\,\sqrt{p(p+n_{12})/(n_1n_2)} \leq C$, where we use that $p/n_1< C$, $p/n_2 < C$ and $n_{12} \leq n_2$.
    By Eq.~\eqref{eq:T-cond-ref}, Eq.~\eqref{eq:E-bound} is satisfied,
    whence we may apply Lemma \ref{lem:T-conc} with $M = C/\sqrt{p}$ and $K = C$ to get the desired concentration.

    Second, we consider the concentration of 
    $
    \bT\Big(
        \sqrt{\frac{n_1}{p}}\,\bv_1,
        \sqrt{\frac{n_2}{p}}\,\bv_2^{cf},
        \frac{\hat \bxi_g}{\sqrt{p}},
        \frac{\bxi_g}{\sqrt{p}}
    \Big).
    $
    As above, the required concentration follows from applying Lemma \ref{lem:T-conc} to its arguments pairwise, as we now show.
    Recall the definitions of $\bv_1^f,\bv_2^f,\bg_1^f,\bv_2^f$ in Eqs.~\eqref{eq:vkf}, \eqref{eq:vf-prox}
    and of $\tau_{g_1},\tau_{g_2},\rho_g$ in Section \ref{sec:non-matrix-fix-pt}.
    Using the bound $\tau_{g_k} \leq C/\sqrt{n_k}$ by Lemma \ref{lem:bound-on-fixed-pt},
    we see that $\sqrt{n_1/p}\,\bv_1^f$, $\sqrt{n_2/p}\,\bv_2^f$ is a $1$-Lipschitz function of a Gaussian vector with variance bounded by $C$, and by the definition of $\tau_{g_1},\tau_{g_2},\rho_g$, we see that $\bg_{1,\cI_2}^f/\tau_{g_1}$ and $\bg_{2,\cI_2}^f - (\tau_{g_2}\rho_g/\tau_{g_1})/(\tau_{g_2}\rho_g^\perp)$ are standard Gaussian.
    Thus, the arguments to $\bT$ inside the expectation in the second line of Eq.~\eqref{eq:cEgam} are $M = C/\sqrt{p}$-Lipschitz functions of a Gaussian vector with variance bounded by $C$.
    By Eqs.~\eqref{eq:hat-g1-hat-h1}, \eqref{eq:cf-quantities}, \eqref{eq:cond-fixed-design-est}, the bounds $\tau_{g_2}  \leq C / \sqrt{n_2}$ and $\tau_{g_1} \geq c / \sqrt{n_1}$,
    and because proximal operators are 1-Lipschitz \cite{parikh2014},
    the vectors $\sqrt{n_2/p}\,\bv_2^{cf},\hat \bxi_g/\sqrt{p},\bxi_g/\sqrt{p}$ are $C\sqrt{n_1/p}$-Lipschitz functions of $\hat \bg_1,\bxi_g/\sqrt{n_1}$;
    and by Eq.~\eqref{eq:v-to-theta} and because the singular values of $\bSigma$ are bounded below by $c$, 
    we have $\sqrt{n_1/p}\,\bv_1$ is a $C\sqrt{n_1/p}$-Lipschitz function of $\hat \btheta_1$.
    Then, Corollary \ref{cor:ext-to-hat-gh} and Gaussian concentration of Lipschitz functions give us Eq.~\eqref{eq:generic-concentration} with $K = C$.
    By Eq.~\eqref{eq:T-cond-ref}, Eq.~\eqref{eq:E-bound} is satisfied,
    whence we may apply Lemma \ref{lem:T-conc} with $M = C/\sqrt{p}$ and $K = C$ to get the desired concentration.
        
    Third, we consider the concentration of $\bar \Omega_2(\bv_2^{cf})$.
    In the case of least-squares, $\bar \Omega_2(\bv_2^{cf}) = 0$ always, 
    and there is nothing to show.
    In the case of ridge-regression,
    we apply Lemma \ref{lem:T-conc}.
    Recall that $\bar \Omega_2(\bv_2^f) = \sqrt{p/n_2}\,(\lambda/2)\|\hat \btheta_2^f\|_2^2$.
    By Eqs.~\eqref{eq:vkf} and \eqref{eq:vf-prox}, because proximal operators are 1-Lipschitz \cite{parikh2014} and $\bSigma$ has singular values bounded below by $c$ and $\tau_{g_2} \leq C / \sqrt{n_2}$ by Lemma \ref{lem:bound-on-fixed-pt},
    we have that $\hat \btheta_2^f$ is $M= 1 / \sqrt{n_2}$-Lipschitz in $\bg_2^f / \tau_{g_2}$, which is standard Gaussian.
    Moreover, by the previous paragraph and because the singular values of $\bSigma$ are bounded below by $c$, we have that $\hat \btheta_2^{cf} = \btheta_2 + \bSigma^{-1/2}\bv_2^{cf}$ 
    is a $C\sqrt{n_1/n_2}$-Lipschitz function of $\hat \bg_1,\bxi_g/\sqrt{n_1}$, 
    whence by Corollary \ref{cor:ext-to-hat-gh} and Gaussian concentration of Lipschitz functions,
    Eq.~\eqref{eq:generic-concentration} holds for $K = \sqrt{p/n_2}$.
    By \textsf{A2} and Lemma \ref{lem:bound-on-fixed-pt}, we have $\E[\|\btheta_2 + \bSigma^{-1/2}\bv_2^f\|_2^2] \leq 2\|\btheta_2\|_2^2 + 2 C\E[\|\bv_2^f\|_2^2] \leq C(2 + p/n_2) \leq C$.
    Thus, using that $\omega_2 = \E[\bar \Omega_2(\bv_2^f)]$ (see Eq.~\eqref{eq:sig*-e*-ome*}),
    we my apply Lemma \ref{lem:T-conc} to get the required concentration of $\bar \Omega_2(\bv_2^{cf})$, where we use additionally that $\lambda < C$ by \textsf{A2}.
    In the case of the $\alpha$-smoothed Lasso, 
    we have that $\bar \Omega_2(\bv_2^{cf}) = \lambda \| \btheta_2 + \bSigma^{-1/2} \bv_2^{cf} \|_1/\sqrt{n_2}$ is 
    $C \lambda \sqrt{pn_1}/n_2 $-Lipschitz in $\hat \bg_1,\bxi_g/\sqrt{n_1}$.
    The required concentration of $\bar \Omega_2(\bv_2^{cf})$ follows by Corollary \ref{cor:ext-to-hat-gh} and Gaussian concentration of Lipschitz functions.

    To get the required concentration of $\bu_1$, 
    recall that $\bu_1 = \hat \be_1$ (see Eq.~\eqref{eq:vk-uk}).
    Then we apply Lemma \ref{lem:marginal-characterization} to conclude that \whp\
    we have $\big| \| \bu_{1,\cI_2} \|_2 / \sqrt{n_1} - \E[\| \bu_{1,\cI_2}^f \|_2 ]/\sqrt{n_1} \big| < \sqrt{p/n_1}\,\epsilon$ and $\big| \| \bu_{1,\cI_2^c} \|_2 / \sqrt{n_1} - \E[\| \bu_{1,\cI_2^c}^f \|_2 ]/\sqrt{n_1} \big| < \sqrt{p/n_1}\,\epsilon$.
    Then, because $\bu_{1,\cI_1}$ is a Gaussian with variance bounded by $C$ (see Eq.~\eqref{eq:res-ref} and Lemma \ref{lem:bound-on-fixed-pt}) and $\bu_{1,\cI_1^c} = 0$,
    we can use Gaussian concentration of Lipschitz functions to conclude that $\big| \E[\| \bu_{1,\cI_2}^f \|_2]/\sqrt{n_1} - \E[\| \bu_{1,\cI_2}^f \|_2^2]^{1/2}/\sqrt{n_1} \big| < C/\sqrt{n_1}$ and $\big| \E[\| \bu_{1,\cI_2^c}^f \|_2 ]/\sqrt{n_1} - \E[\| \bu_{1,\cI_2^c}^f \|_2^2 ]^{1/2}/\sqrt{n_1}\big| < C $.
    Thus, by adjusting $\sC(\epsilon)$ and $\sc(\epsilon)$ so that the bound becomes trivial with $\sqrt{p/n_1}\epsilon < C / \sqrt{n_1}$,
    we conclude the required concentration of $\| \bu_{1,\cI_2} \|_2 / \sqrt{n_1}$ and $\| \bu_{1,\cI_2^c} \|_2 / \sqrt{n_1}$.

    The required concentration of $\|\be_{2,\cI_2}\|_2/\sqrt{n_2}$ follows by Lemma \ref{lem:marg-T-conc-helper}. 
    The required concentration of $\| \proj_{\bv_1}\bxi_g\|_2 / \sqrt{n_2}$ and $\| \proj_{\bu_2}\bxi_h\|_2/\sqrt{n_2}$ holds because $\| \proj_{\bv_1}\bxi_g\|_2$ and $\| \proj_{\bu_2}\bxi_h\|_2$ are nothing but the norm of univariate Gaussians.

    Finally, we show $\bv_2^{cf} \in E_{v|1}(\Delta/2)$ and $\bu_2^{cf} \in E_{u|1}(\Delta/2)$
    By the definition of the conditional fixed design model (see Eq.~\eqref{eq:cf-quantities}),
    we have $\E\big[ \phi_v(\bv_2^f) \bigm| \bg_1^f = \hat \bg_1 \big] = \E\big[ \phi_v(\bv_2^{cf}) \bigm| \hat \bg_1 \big]$.
    Moreover, the above discussion implies that conditional on $\hat \bg_1$, $\bv_2^{cf}$ is a $C/\sqrt{n_2}$-Lipschitz function of $\bxi_g$. Thus, the required high-probability bound on $\bv_2^{cf} \in E_{v|1}(\Delta/2)$ holds by Gaussian concentration of Lipschitz functions and because $p/n_2 \leq C$ by \textsf{A2}.
    Similarly, 
    we have $\E\big[ \phi_u(\bu_{2,\cI_2}^f/\sqrt{n_2}) \bigm| \bh_1^f = \hat \bh_1,\, \be_1,\be_2 \big] = \E\big[ \phi_u(\bu_{2,\cI_2}^{cf}/\sqrt{n_2}) \bigm| \hat \bh_1,\, \be_1,\be_2 \big]$.
    Further, conditional on $\hat \bh_1,\be_1,\be_2$, we have $\bu_{2,\cI_2}^{cf}/\sqrt{n_2}$ is a $\tau_{h_2}/\sqrt{n_2} \leq C \sqrt{p}\,/n_2$-Lipschitz function of $\bxi_h$, where the inequality follows from Lemma \ref{lem:bound-on-fixed-pt}.
    Thus, we also conclude the required high-probability bound on $\bu_2^{cf} \in E_{u|1}(\Delta/2)$ by Gaussian concentration of Lipschitz functions.

    The proof of Lemma \ref{lem:conditional-concentration-event} is complete. 
\end{proof}

\section{Proof of Theorem \ref{thm:noise-est}: estimating noise covariance}
\label{sec:proof-noise-est}

We treat the on-diagonal and off-diagonal entries of $\bS_e$ separately.
\\

\noindent \textbf{On-diagonal estimation: noise variance.}
In Theorem \ref{thm:noise-est}, $k$ indexes the set $\cI_k$ of samples used to perform the estimation,
but the theorem provides both an estimate of the noise for the first and second linear model. 
We use $l$ to index the linear model whose noise variance we are estimating.

Note that $\| \hat \btheta_l^\de - \hat \btheta_l \|_{\bSigma}^2 = \| \bSigma^{1/2}(\hat \btheta_l^\de - \btheta_l) - \bv_l \|_2^2$,
were recall $\bv_l = \bSigma^{1/2}(\hat \btheta_l - \btheta_l)$ by Eq.~\eqref{eq:vk-uk}.
Applying Lemma \ref{lem:marg-T-conc-helper},\footnote{\label{ftn:df-replace}Recall that Lemmas \ref{lem:marginal-characterization} and \ref{lem:marg-T-conc-helper}, Corollary \ref{cor:ext-to-I2}, and Theorem \ref{thm:joint-characterization} apply also if $\hat \btheta_k^\de$ is computed using $\hat \df_k$ in place of $\df_k$.}
\whp, $\big| \| \hat \btheta_l^\de - \hat \btheta_l \|_{\bSigma}^2 - (p\tau_{g_l}^2 - 2\tau_{g_l}^2\,\df_l + \tau_{h_l}^2)\big| < C (p/n_l)\epsilon$.
We study the concentration of $\| \hat \be_{l,\cI_k}^\de \|_2^2 / n_k$ using Lemma \ref{lem:T-conc} and either Lemma \ref{lem:marginal-characterization} (in the case the $l = k$) or Corollary \ref{cor:ext-to-I2} (in the case that $l \neq k$).\footnote{See footnote \ref{ftn:df-replace}.}
By Eq.~\eqref{eq:fixed-design-err-est} and because $\tau_{\hat e_l^\de}^2 < C$ by Lemma \ref{lem:bound-on-fixed-pt},
we have that $\hat \be_{l,\cI_k}^{f,\de}$ is Gaussian with variance bounded by $C$.
Thus, in Lemma \ref{lem:T-conc},
we may take $M = 1 / \sqrt{n_k}$. 
Further, by Lemma \ref{lem:marginal-characterization} (in the case that $\ell = k$) or Corollary \ref{cor:ext-to-I2} (in the case the $l \neq k$),
we have Eq.~\eqref{eq:generic-concentration} for $\hat \be_{l,\cI_k}^\de/\sqrt{n_k}$
with $K = \sqrt{p/(n_1 \wedge n_2)}$.\footnote{In fact, for $l = k$, we get the better bound $K = \sqrt{p/n_l}$.}
Because $\tau_{\hat e_l^\de}^2 < C$,
Eq.~\eqref{eq:E-bound} is satisfied.
Thus, we may apply Lemma \ref{lem:T-conc} to conclude 
\whp, 
$\big|\hat \tau_{\hat e_l^\de}^2 - \tau_{\hat e_l^{\de}}^2 \big| = \big|\| \hat \be_{l,\cI_k}^{\de} \|_2^2/n_k - \tau_{\hat e_l^{\de}}^2 \big| \leq   C \sqrt{p/(n_1 \wedge n_2)}\,\epsilon$.
Recall $\hat \tau_{g_l}^2 = \hat \tau_{\hat e_l^{\de}}^2 / n_l = \| \hat \be_{l,\cI_k}^{\de} \|_2^2 / (n_kn_l)$ (see Eq.~\eqref{eq:Sg-hat}),
so that $(p-2\,\hat \df_l)\hat \tau_{g_l}^2 = (p/n_l - 2\,\hat \df_l /n_l) \hat \tau_{\hat e_l^{\de}}^2 $.
Because $p/n_l \leq C$ by \textsf{A2} and $\df_l \leq n_l = 1 - \zeta_l \leq 1$,
the concentration of $\hat \tau_{\hat e_l^\de}^2$ and Lemma \ref{lem:emp-df-conc} 
imply that \whp\ we have $\big|(p-2\,\hat \df_l)\hat \tau_{g_l}^2 - (p-2\,\df_l)\tau_{g_l}^2 \big| \leq C \sqrt{p/(n_1 \wedge n_2)}\,\epsilon$.
Combining these results, 
and using that $\tau_{\hat e_l^\de}^2 = \tau_{e_l}^2 + \tau_{h_l}^2$ (see Eq.~\eqref{eq:tau-ed-ref}),
we have \whp\
\begin{equation}
    \Big|\hat \tau_{\hat e_l^{\de}}^2 + (p-2\,\hat \df_l) \hat \tau_{g_l}^2 - \| \hat \btheta_l^{\de} - \hat \btheta_l \|_{\bSigma}^2 - \tau_{e_l}^2\Big| \leq \sqrt{\frac{p}{n_1 \wedge n_2}}\,\epsilon.
\end{equation}
\\

\noindent \textbf{Off-diagonal estimation: noise covariance.}
We establish the concentration of $\< \hat \btheta_1^\de - \hat \btheta_1 , \hat \btheta_2^\de - \hat \btheta_2 \>_{\bSigma} = \< \bSigma^{1/2}(\hat \btheta_1^\de - \hat \btheta_1) , \bSigma^{1/2}(\hat \btheta_2^\de - \hat \btheta_2) \>$ using the joint characterization (Theorem \ref{thm:joint-characterization}) and Lemma \ref{lem:T-conc}.
Recall by Eq.~\eqref{eq:fixed-des} that $\bSigma^{1/2} \hat \btheta^{f,\de} = \btheta_k + \bg_k^f$.
Further, because proximal operators are 1-Lipschitz \cite{parikh2014} and $\tau_{g_k} < C /\sqrt{n_k}$ by Lemma \ref{lem:bound-on-fixed-pt},
we have that $\sqrt{n_1/p}\,\bSigma^{1/2}(\hat \btheta_1^{f,\de} - \hat \btheta_1^f)$ and $\sqrt{n_2/p}\,\bSigma^{1/2}(\hat \btheta_2^{f,\de} - \hat \btheta_2^f)$ are $C/\sqrt{p}$-Lipschitz in $\bg_1^f / \tau_{g_1}$ and $\bg_2^f / \tau_{g_2}^f$, which are Gaussian vectors with variance bounded by $C$.
Thus we will apply Lemma \ref{lem:T-conc} with $M = C/\sqrt{p}$.
Lemma \ref{lem:marginal-characterization} gives us the first line in Eq.~\eqref{eq:generic-concentration} for vectors $\ba^{(k)} = \sqrt{n_k/p}\,\bSigma^{1/2}(\hat \btheta_k^\de - \hat \btheta_k)$.
Further, taking $M_1 = \sqrt{n_1/p}$ and $M_2 = \sqrt{n_2/p}$ in Theorem \ref{thm:joint-characterization} and using that the singular values of $\bSigma$ are bounded above by $C$ gives us the second line of Eq.~\eqref{eq:generic-concentration} for the these vectors with $K = \min\big\{ \sqrt{n_1/p} \sqrt{p/n_1} + \sqrt{n_2/p} , \sqrt{n_1/p} + \sqrt{n_2/p}\sqrt{p/n_2} \big\} \leq C \sqrt{(n_1 \wedge n_2)/p} $.\footnote{See footnote \ref{ftn:df-replace}}
Further, as we computed above, $(n_k/p)\E[\| \hat \btheta_k^{f,\de} - \hat \btheta_k \|_{\bSigma}^2] = n_k\tau_{g_k}^2(1-2\,\df_k/p) + (n_k/p)\tau_{h_k}^2 < C$ by Lemma \ref{lem:bound-on-fixed-pt},
so that Eq.~\eqref{eq:E-bound} is satisfied.
Thus, we may apply Lemma \ref{lem:T-conc}.
First, using that $\bSigma^{1/2} (\hat \btheta^{f,\de} - \btheta_k) = \bg_k^f$, 
we compute the expectation
\begin{equation}
\begin{aligned}
    &\E[ \< \bSigma^{1/2}(\hat \btheta_1^{f,\de} - \hat \btheta_1^f), \bSigma^{1/2}(\hat \btheta_1^{f,\de} - \hat \btheta_1^f) \> ] 
    \\
        &= \E[ \< \bg_1^f , \bg_2^f \>  ]
          - \E[ \< \bg_2^f , \bSigma^{1/2}(\hat \btheta_1^f - \btheta_1) \> ]
          - \E[ \< \bg_1^f , \bSigma^{1/2}(\hat \btheta_2^f - \btheta_2) \> ]
          + \E[ \< \hat \btheta_1^f - \btheta_1 , \hat \btheta_2^f - \btheta_2 \>_{\bSigma}  ]
    \\
        &= \tau_{g_1}\tau_{g_2}\rho_g(p - \df_1 - \df_2) + \tau_{h_1}\tau_{h_2}\rho_h = S_{g,12}(p - \df_1 - \df_2) + S_{h,12},
\end{aligned}
\end{equation}
where in the final equality we have used \eqref{eq:tau-h-ref} and Gaussian integration by parts with the definition of $\df_k$ (see Eq.~\eqref{eq:R-df}).
Thus, Lemma \ref{lem:T-conc} implies that \whp, 
$\big| \< \hat \btheta_1^\de - \hat \btheta_1 , \hat \btheta_2^\de - \hat \btheta_2 \>_{\bSigma} - S_{g,12}(p - \df_1 - \df_2) - S_{h,12} \big| < \sqrt{(n_1 \wedge n_2)/p} \sqrt{p^2/(n_1n_2)}\,\epsilon = \sqrt{p/(n_1 \vee n_2)}\,\epsilon$.

We now study the concentration of $\widehat S_{\hat e^d,12}^{(k)}$.
We decompose
\begin{equation}
    \widehat S_{\hat e^{\de},12}^{(k)}
        = 
        \frac{\< \be_{1,\cI_k},\be_{2,\cI_k}\>}{n_k}
        +
        \frac{\< \hat \be_{1,\cI_k}^\de - \be_{1,\cI_k},\be_{2,\cI_k}\>}{n_k}
        + 
        \frac{\< \be_{1,\cI_k}, \hat \be_{2,\cI_k}^{\de} -  \be_{2,\cI_k}\>}{n_k}
        +
        \frac{\< \hat \be_{1,\cI_k}^\de - \be_{1,\cI_k}, \hat \be_{2,\cI_k}^{\de} -  \be_{2,\cI_k}\>}{n_k}.
\end{equation}
Because $\be_1,\be_2$ are jointly Gaussian,
for $\epsilon < c'$, with probability at least $1 - \sC(\epsilon)e^{-\sc(\epsilon)p}$, $\big| \< \be_{1,\cI_k},\be_{2,\cI_k}\>/n_k - S_{e,12}\big| \leq C\sqrt{p/n_k}\,\epsilon \leq C \sqrt{p/(n_1 \wedge n_2)}\,\epsilon$.
To establish the concentration of $\< \hat \be_{1,\cI_k}^\de - \be_{1,\cI_k},\be_{2,\cI_k}\>/n_k$,
we use Lemma \ref{lem:marginal-characterization} (in the case $k = 1$) or Corollary \ref{cor:ext-to-I2} (in the case $k = 2$) together with Lemma \ref{lem:T-conc}.
Recall by Eq.~\eqref{eq:fixed-design-err-est} that $\hat \be_1^{f,\de} - \be_1 = \bh_1^f$.
Because $\tau_{h_1} < C \sqrt{p/n_1}$ by Lemma \ref{lem:bound-on-fixed-pt} and $\tau_{e_1}^2 \leq C$ by \textsf{A1},
we have that $\sqrt{n_1/p}(\hat \be_{1,\cI_k}^{f,\de} - \be_1)/\sqrt{n_k}$ and $\be_{2,\cI_k}/\sqrt{n_k}$ 
are $C / \sqrt{n_k}$-Lipschitz functions of Gaussian vectors with variance bounded by $C$.
Thus we will apply Lemma \ref{lem:T-conc} with $M = C/\sqrt{n_k}$.
In the case $k=1$,
Lemma \ref{lem:marginal-characterization} gives us Eq.~\eqref{eq:generic-concentration} for vectors $\sqrt{n_1/p}\,(\hat \be_{1,\cI_k}^\de - \be_{1,\cI_k})/\sqrt{n_k}$ and $\be_{1,\cI_k}/\sqrt{n_k}$ with $K = C$.
In the case $k = 2$,
Corollary \ref{cor:ext-to-I2} with $M_e = 1$ and $M_h = \sqrt{n_1/p}$ gives us Eq.~\eqref{eq:generic-concentration} for these vectors with $K = \sqrt{p/n_2} + \sqrt{n_1/p}\sqrt{p/n_1} \leq  C$.
Further, we have $\E[\| \be_{2,\cI_k} \|_2^2]/n_k = \tau_{e_2}^2 \leq C$ and $(n_1/p)\E[\| \bh_{1,\cI_k}^f \|_2^2]/n_k = (n_1/p)\tau_{h_1}^2 \leq C$ by Eqs.~\eqref{tau-e-ref} and \eqref{eq:tau-h-ref} and Lemma \ref{lem:bound-on-fixed-pt},
whence Eq.~\eqref{eq:E-bound} is satisfied.
Thus, we may apply Lemma \ref{lem:T-conc}.
Because $\E[\< \bh_{1,\cI_k}^f , \be_{2,\cI_k} \>]/n_k = 0$, we conclude that \whp, 
$|\< \hat \be_{1,\cI_k}^\de - \be_{1,\cI_k},\be_{2,\cI_k}\>/n_k| \leq \sqrt{p/n_1}\,\epsilon \leq \sqrt{p/(n_1 \wedge n_2)}\,\epsilon$.
An equivalent argument, switching the roles of the indices $1$ and $2$, gives that \whp,
$|\<\be_{1,\cI_k},\hat \be_{2,\cI_k}^\de - \be_{2,\cI_k}\>/n_k| \leq \sqrt{p/n_2}\,\epsilon \leq \sqrt{p/(n_1 \wedge n_2)}\,\epsilon$.

Finally, we deal with the last term
using Theorem \ref{thm:joint-characterization} and Lemma \ref{lem:T-conc}.
As justified in the previous paragraph,
we have that $\sqrt{n_1/p}\,(\hat \be_{1,\cI_k}^{f,\de} - \be_1)/\sqrt{n_k}$ and $\sqrt{n_2/p}\,(\hat \be_{2,\cI_k}^{f,\de} - \be_2)/\sqrt{n_k}$
are $C / \sqrt{n_k}$-Lipschitz functions of Gaussian vectors with variance bounded by $C$.
Thus, we will apply Lemma \ref{lem:T-conc} with $M = C / \sqrt{n_k}$.
In the case $k = 1$, 
Lemma \ref{lem:marginal-characterization} (in the case $k = 1$) and Corollary \ref{cor:ext-to-I2} (in the case $k = 2$) give the first line of Eq.~\eqref{eq:generic-concentration} for the vector $\sqrt{n_1/p}\,(\hat \be_{1,\cI_k}^\de - \be_1)/\sqrt{n_k}$.
Likewise, Eq.~\eqref{eq:generic-concentration} is satisfied for the vector $\sqrt{n_2/p}\,(\hat \be_{2,\cI_k}^\de - \be_2)/\sqrt{n_k}$.
Theorem \ref{thm:joint-characterization} with $M_1 = \sqrt{n_1/p}$ and $M_2 = \sqrt{n_2/p}$
gives the second line of Eq.~\eqref{eq:generic-concentration} with $K = \sqrt{n_1/p}(\sqrt{p/n_2} + p/n_1) + \sqrt{n_2/p} \leq C \sqrt{(n_1 \vee n_2)/p}$ in the case $k = 2$ and $K = \sqrt{n_1/p} + \sqrt{n_2/p}(\sqrt{p/n_1} + p/n_2) \leq C\sqrt{(n_1 \vee n_2)/p}$ in the case $k = 1$.\footnote{Note that we have used here the rate given by Theorem \ref{thm:joint-characterization} because we want to accommodate the case that $\hat \be_k^\de$ is compute with $\hat \df_k$ in place of $\df_k$.}
Finally, as we checked above $(n_1/p)\E[\|\bh_{1,\cI_k}^f\|_2^2]/n_k \leq C$ and $(n_2/p)\E[\|\bh_{2,\cI_k}^f\|_2^2]/n_k \leq C$.
Thus, we may apply Lemma \ref{lem:T-conc}.
Because $\E[\< \bh_{1,\cI_k}^f, \bh_{2,\cI_k}^f\>]/n_k = S_{h,12} $ (see Eq.~\eqref{eq:tau-ed-ref}),
we conclude that \whp,
$\big|\< \hat \be_{1,\cI_k}^{\de} - \be_{1,\cI_k}, \hat \be_{2,\cI_k}^{\de} -  \be_{2,\cI_k}\>/n_k - S_{h,12}\big| \leq \sqrt{(n_1 \vee n_2)/p}\sqrt{p^2/(n_1n_2)}\,\epsilon = \sqrt{p/(n_1 \wedge n_2)}\,\epsilon$.

Combining the concentration bounds on each term in the decomposition, 
we have \whp, $\big| \widehat S_{\hat e^{\de},12}^{(k)} - S_{e,12} - S_{h,12}\big| = \big| \widehat S_{\hat e^{\de},12}^{(k)} - S_{\hat e^{\de},12}\big| < \sqrt{p/(n_1 \wedge n_2)}\,\epsilon$,
where we have used that $ S_{\hat e^{\de},12} = S_{e,12} + S_{h,12}$ by Eq.~\eqref{eq:tau-ed-ref}.
Recalling that $\widehat S_{g,12}^{(k)} = (n_{12}/(n_1n_2)) \widehat S_{\hat e^{\de},12}^{(k)} $,
we conclude that \whp, $(p - \df_1 - \df_2)\widehat S_{g,12}^{(k)} = (n_{12}/(n_1n_2))(p-\df_1-\df_2)\widehat S_{\hat e^{\de},12}^{(k)} $.
Using Lemma \ref{lem:bound-on-fixed-pt},
we have that $(n_{12}/(n_1n_2))(p-\df_1-\df_2) \leq C$.
Thus,
recalling that $(n_{12}/(n_1n_2))S_{\hat e^{\de},12} = S_{g,12}$ by Eq.~\eqref{eq:fixed-pt-eqns},
the concentration bound for $\widehat S_{\hat e^{\de},12}^{(k)}$ implies that
\whp, $\big| (p - \df_1-\df_2)\widehat S_{g,12}^{(k)} - (p - \df_1-\df_2) S_{g,12}\big| < \sqrt{p/(n_1 \wedge n_2)}\,\epsilon$.
Combining the above results and using that $S_{\hat e^\de,12} = S_{e,12} + S_{h,12}$, we conclude that 
\begin{equation}
    \Big| \widehat S_{\hat e^{\de},12}^{(k)} + (p - \df_1 - \df_2) \widehat S_{g,12}^{(k)} - \<\hat \btheta_1^{\de} - \hat \btheta_1,\hat \btheta_2^{\de} - \hat \btheta_2\>_{\bSigma} - S_{e,12} \Big|
    \leq 
    \sqrt{\frac{p}{n_1 \wedge n_2}}\,\epsilon.
\end{equation}

The proof of Theorem \ref{thm:noise-est} is complete. \hfill $\square$

\section{Proof of Lemma \ref{lem:conditional-gordon}: the conditional Gordon inequality}\label{app:conditional-gordon}

The conditional Gordon inequality for regression (Lemma \ref{lem:conditional-gordon}) is derived by applying the marginal Gordon inequality (Lemma \ref{lem:gordon-marginal}) conditionally on $\mathsf{Cond}_1$, and then marginalizing over $\mathsf{Cond}_1$.
The key observation will be that conditioning on
$\mathsf{Cond}_1$
is equivalent to conditioning on a set of linear constraints on $\bA$ (recall we define $\bA = - \bX \bSigma^{-1/2}$).
First, we develop a comparison inequality which holds when conditioning on a certain type of linear constraints on $\bA$.

For any vectors $\bu_1',\bt_1',\be_1',\be_2' \in \reals^N$, $\bv_1',\bs_1' \in \reals^p$ 
consider the event (over the randomness in $\bA$)
\begin{equation}\label{eq:A-lin-constraint}
    \cE_{2|1} := \cE_{2|1}(\bu_1' , \bv_1', \bs_1', \bt_1')
    :=
    \Big\{
        \bA^\top \bu_1' +  \bs_1' = 0,
        \;
        \bA  \bv_1' -  \bt_1' = 0
    \Big\}.
\end{equation}
Consider the system of equations, repeated from Eq.~\eqref{eq:hat-xi} except now written for arbitrary $\bu_1',\bv_1',\bs_1',\bt_1'$ (not necessarily those defined in Eq.~\eqref{eq:uvst}).
\begin{equation}\label{eq:hat-xi-rep}
\begin{gathered}
    -\frac{ \bu_1'}{\| \bu_1'\|_2} \< \hat \bxi_g,  \bv_1' \> + \|  \bv_1'\|_2 \hat \bxi_h -  \bt_1' = 0,
    \qquad
    -\|  \bu_1' \|_2 \hat \bxi_g + \frac{ \bv_1'}{\|  \bv_1'\|_2} \< \hat \bxi_h,  \bu_1' \> +  \bs_1' = 0.
\end{gathered}
\end{equation}
These equations have a solution $\hat \bxi_g,\hat \bxi_h$
if and only if
the equations~\eqref{eq:A-lin-constraint} have a solution $\bA$:
\begin{lemma}\label{lem:KKT-equiv}
    The following are equivalent properties of the quadruplet $(\bu_1',\bv_1',\bs_1',\bt_1')$.
    \begin{enumerate}

        \item 
        $\<\bv_1',\bs_1'\> + \< \bu_1',\bt_1'\> = 0$.

        \item 
        There exists $\bA$ such that the constraints \eqref{eq:A-lin-constraint} are satisfied.

        \item 
        There exist $\hat \bxi_g,\hat \bxi_h$ such that the constraints \eqref{eq:hat-xi} are satisfied.

    \end{enumerate}
    For any $\alpha \in \reals$, under the additional constraint
    \begin{equation}
        \frac{\< \hat \bxi_h ,\bu_1 \>}{n_1} = \alpha,
    \end{equation}
    the solution $\hat \bxi_g,\hat \bxi_h$ to Eq.~\eqref{eq:hat-xi} is unique.
\end{lemma}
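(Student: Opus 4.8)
The plan is to establish the chain of implications $(2)\Rightarrow(1)$, $(1)\Rightarrow(2)$, $(2)\Leftrightarrow(3)$, and finally the uniqueness statement under the extra normalization constraint. The key structural fact is that each of the systems \eqref{eq:A-lin-constraint} and \eqref{eq:hat-xi-rep} is a \emph{linear} system in the unknown ($\bA$ in one case, the pair $(\hat\bxi_g,\hat\bxi_h)$ in the other), so solvability is governed by a rank/compatibility condition, and that condition will turn out to be exactly identity $(1)$.

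For $(2)\Rightarrow(1)$: if $\bA$ satisfies $\bA^\top\bu_1' = -\bs_1'$ and $\bA\bv_1' = \bt_1'$, then $\<\bv_1',\bs_1'\> = -\<\bv_1',\bA^\top\bu_1'\> = -\<\bA\bv_1',\bu_1'\> = -\<\bt_1',\bu_1'\>$, which rearranges to $(1)$. For $(1)\Rightarrow(2)$: assuming WLOG $\bu_1',\bv_1'\neq 0$ (the degenerate cases are handled directly), one exhibits an explicit $\bA$ meeting both constraints, e.g.\ by taking a rank-$\le 2$ matrix of the form $\bA = \tfrac{\bt_1'\bv_1'^\top}{\|\bv_1'\|_2^2} - \tfrac{\bu_1'\bs_1'^\top}{\|\bu_1'\|_2^2} + (\text{correction})$ and checking, using $(1)$, that the cross terms cancel so that $\bA\bv_1' = \bt_1'$ and $\bA^\top\bu_1' = -\bs_1'$ simultaneously; the correction term is needed precisely to reconcile the two rank-one pieces and exists iff $(1)$ holds. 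The equivalence $(2)\Leftrightarrow(3)$ is the parallel statement for the second system: \eqref{eq:hat-xi-rep} is linear in $(\hat\bxi_g,\hat\bxi_h)$, and contracting the first equation of \eqref{eq:hat-xi-rep} against $\bu_1'$ (respectively the second against $\bv_1'$) produces the same scalar compatibility condition $\<\bv_1',\bs_1'\> + \<\bu_1',\bt_1'\> = 0$; conversely when $(1)$ holds one solves explicitly for $\hat\bxi_h$ from the first equation (projecting onto $\bv_1'$ and its orthogonal complement) and back-substitutes. Alternatively, one can observe that $(\hat\bxi_g,\hat\bxi_h)$ solves \eqref{eq:hat-xi-rep} iff $\bA := -\hat\bxi_g\bu_1'^\top/\|\bu_1'\|_2^2 - \cdots$ built from $(\hat\bxi_g,\hat\bxi_h)$ solves \eqref{eq:A-lin-constraint}, giving a direct dictionary between the two solution sets.

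For the uniqueness claim: fix $\bu_1'=\bu_1$, $\bv_1'=\bv_1$, $\bs_1'=\bs_1$, $\bt_1'=\bt_1$ as in Eq.~\eqref{eq:uvst}. The homogeneous version of \eqref{eq:hat-xi-rep} forces $\hat\bxi_h$ to lie in $\mathrm{span}(\bv_1)^\perp$-complement structure so that only a one-parameter family remains: indeed the homogeneous system leaves free exactly the component of $\hat\bxi_h$ along $\bu_1$ within its own constraint, i.e.\ the general solution of the homogeneous system is spanned by a single vector $(\hat\bxi_g^0,\hat\bxi_h^0)$ with $\<\hat\bxi_h^0,\bu_1\>\neq 0$. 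Imposing the affine constraint $\<\hat\bxi_h,\bu_1\>/n_1 = \alpha$ then pins down that one free parameter, giving uniqueness — provided $\<\hat\bxi_h^0,\bu_1\>\neq 0$, which is where the specific form \eqref{eq:uvst} of $(\bu_1,\bv_1,\bs_1,\bt_1)$ (coming from an actual regression solution $\hat\btheta_1$) must be invoked, exactly as in Lemma \ref{lem:unique-soln}.

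The main obstacle I expect is the $(1)\Rightarrow(2)$ and $(1)\Rightarrow(3)$ directions: writing down an explicit solution requires correctly handling the two rank-one constraints together (they are not independent — their "overlap" is controlled by condition $(1)$), and being careful about the degenerate cases where $\bu_1'$ or $\bv_1'$ vanishes. The cleanest route is probably to reduce everything to the $\bA$-picture via the dictionary above and solve the $\bA$-system once, since a linear system $\bA\bv_1' = \bt_1'$, $\bA^\top\bu_1' = -\bs_1'$ in the full matrix variable $\bA$ is visibly underdetermined and solvable iff the single scalar obstruction $(1)$ vanishes; uniqueness of $(\hat\bxi_g,\hat\bxi_h)$ under the normalization then follows from a dimension count on the (small, explicit) solution space, combined with the non-degeneracy guaranteed by the regression structure of $\hat\btheta_1$.
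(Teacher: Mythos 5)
Your proposal is correct and follows essentially the same route as the paper: both arguments are elementary rank/Fredholm-alternative computations on the same linear systems, with the scalar identity $(1)$ emerging as orthogonality to the one-dimensional (left) null space spanned by $(\bv_1',\bu_1')$ (your contractions against $\bu_1'$ and $\bv_1'$ are exactly this), and uniqueness following because the solution set is an affine line whose direction has $\hat\bxi_h$-component along $\bu_1$, so the normalization $\<\hat\bxi_h,\bu_1\>/n_1=\alpha$ pins down the single free parameter. The one small imprecision is your suggestion that the non-degeneracy $\<\hat\bxi_h^0,\bu_1\>\neq 0$ requires invoking the regression structure of Eq.~\eqref{eq:uvst}: the kernel's $\hat\bxi_h$-component is a positive multiple of $\bu_1$ itself, so this inner product equals $\|\bv_1\|_2\|\bu_1\|_2$ and is automatically nonzero whenever $\bu_1,\bv_1\neq 0$, which is already needed for the system \eqref{eq:hat-xi} to be well-defined.
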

\noindent Lemma \ref{lem:KKT-equiv} is proved at the end of this section. 
We are ready to state the conditional Gordon inequality, of which Lemma \ref{lem:conditional-gordon} is corollary.
\begin{lemma}[Conditional Gordon]\label{lem:gen-conditional-gordon}
    Fix $ \bu_1',\bt_1' \in \reals^N$ and $\bv_1', \bs_1' \in \reals^p$ 
    such that $\<\bv_1',\bs_1'\> + \< \bu_1',\bt_1'\> = 0$.
    Fix any $\hat \bxi_g,\hat \bxi_h$ satisfying Eq.~\eqref{eq:hat-xi} (which exist by Lemma \ref{lem:KKT-equiv}).
    
    Let $\bA \in \reals^{N\times p}$ have entries $A_{ij} \stackrel{\mathrm{iid}}\sim \normal(0,1)$,
    $\bxi_g \stackrel{\mathrm{iid}}\sim \normal(0,\id_p)$, and $\bxi_h \sim \normal(0,\id_N)$, all independent,
    and define
    \begin{equation}
        \bg_{\mathrm{cg}}(\bu)
            :=
            \frac{\<  \bu_1',\bu \>}{\| \bu_1'\|_2}\hat \bxi_g +  \| \sfP_{ \bu_1'}^\perp \bu\|_2  \sfP_{ \bv_1'}^\perp  \bxi_g,
        \qquad 
        \bh_{\mathrm{cg}}(\bv)
            :=
            \frac{\<  \bv_1' , \bv \> }{\|  \bv_1' \|_2} \hat \bxi_h + \| \sfP_{ \bv_1'}^\perp \bv\|_2  \sfP_{ \bu_1'}^\perp \bxi_h,
    \end{equation}
    (where the subscript stands for ``conditional Gordon'').
    Define $\cE_{2|1} = \cE_{2|1}(\bu_1',\bv_1',\bs_1',\bt_1')$ as in Eq.~\eqref{eq:A-lin-constraint}.
    Then:
    \begin{enumerate}[(i)]
        
        \item 
        If $E_u \in \reals^N$, $E_v \in \reals^p$ are compact sets,
        then for any $t\in \reals$
        \begin{equation}
        \begin{aligned}
            \PP\left( \min_{\bv \in E_v} \max_{\bu \in E_u} \bu^\top \bA \bv + \psi(\bu,\bv) \leq t \bigm| \cE_{2|1} \right) 
            \leq 
                2\PP\left( 
                    \min_{\bv \in E_v} \max_{\bu \in E_u} 
                    -\<\bg_{\mathrm{cg}}(\bu),\bv\> 
                    + 
                    \<\bh_{\mathrm{cg}}(\bv),\bu\>
                    + 
                    \psi(\bu,\bv) \leq t\right).
        \end{aligned}
        \end{equation}  

        \item 
        If $E_u \in \reals^N$, $E_v \in \reals^p$ are compact, convex sets,
        then for any $t\in \reals$
        \begin{equation}
        \begin{aligned}
            \PP\left( \min_{\bv \in E_v} \max_{\bu \in E_u} \bu^\top \bA \bv + \psi(\bu,\bv) \geq t \bigm| \cE_{2|1} \right) 
            \leq 
                2\PP\left( 
                    \min_{\bv \in E_v} \max_{\bu \in E_u} 
                    -\<\bg_{\mathrm{cg}}(\bu),\bv\> 
                    + 
                    \<\bh_{\mathrm{cg}}(\bv),\bu\>
                    + 
                    \psi(\bu,\bv) \geq t\right).
        \end{aligned}
        \end{equation}  

    \end{enumerate}  
\end{lemma}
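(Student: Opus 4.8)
The plan is to compute the conditional law of $\bA$ given $\cE_{2|1}$ explicitly — it is affine Gaussian — and then to recognize the conditional min--max problem as an instance handled by the ordinary Gordon comparison (Lemma~\ref{lem:gordon-marginal}) applied on a pair of lower-dimensional subspaces, with a modified deterministic cost.

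\textbf{Step 1 (the conditional law of $\bA$).} I would write $\bar\bu_1 := \bu_1'/\|\bu_1'\|_2$, $\bar\bv_1 := \bv_1'/\|\bv_1'\|_2$ and use the orthogonal projections $\sfP_{\bu_1'},\sfP_{\bu_1'}^\perp$ on $\reals^N$ and $\sfP_{\bv_1'},\sfP_{\bv_1'}^\perp$ on $\reals^p$ to split $\bA$ into the four mutually independent Gaussian blocks $\sfP_{\bu_1'}\bA\sfP_{\bv_1'}$, $\sfP_{\bu_1'}\bA\sfP_{\bv_1'}^\perp$, $\sfP_{\bu_1'}^\perp\bA\sfP_{\bv_1'}$, $\sfP_{\bu_1'}^\perp\bA\sfP_{\bv_1'}^\perp$. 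The constraint data $\bA\bv_1'=\bt_1'$, $\bA^\top\bu_1'=-\bs_1'$ only involves $\bA\bar\bv_1$ and $\bar\bu_1^\top\bA$, hence only the first three blocks (together with the scalar $\bar\bu_1^\top\bA\bar\bv_1$ shared by the two constraints, whose consistency is the identity $\<\bu_1',\bt_1'\>+\<\bv_1',\bs_1'\>=0$, which holds by Lemma~\ref{lem:KKT-equiv} since $\hat\bxi_g,\hat\bxi_h$ solving \eqref{eq:hat-xi-rep} are assumed to exist, so that $\cE_{2|1}$ is nonempty). Consequently conditioning on $\cE_{2|1}$ makes those three blocks deterministic and leaves $\sfP_{\bu_1'}^\perp\bA\sfP_{\bv_1'}^\perp$ untouched and independent, and I claim the matrix $\bA_{\mathrm{fixed}} := \hat\bxi_h\bar\bv_1^\top - \bar\bu_1\hat\bxi_g^\top$ is a valid particular solution: a direct computation from the two equations in \eqref{eq:hat-xi-rep} gives $\bA_{\mathrm{fixed}}\bv_1'=\bt_1'$ and $\bA_{\mathrm{fixed}}^\top\bu_1'=-\bs_1'$. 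Thus $\bA \mid \cE_{2|1} \stackrel{d}{=} \bA_{\mathrm{fixed}} + \sfP_{\bu_1'}^\perp\bZ\sfP_{\bv_1'}^\perp$ with $\bZ$ an independent iid standard Gaussian matrix (the degenerate cases $\bu_1'=0$ or $\bv_1'=0$ reduce to Lemma~\ref{lem:gordon-marginal}).

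\textbf{Step 2 (rewriting and term-matching).} Substituting, conditionally on $\cE_{2|1}$ the objective becomes $(\sfP_{\bu_1'}^\perp\bu)^\top\bZ(\sfP_{\bv_1'}^\perp\bv) + \tilde\psi(\bu,\bv)$ with $\tilde\psi(\bu,\bv) := \psi(\bu,\bv) + \bu^\top\bA_{\mathrm{fixed}}\bv$; since $\bu^\top\bA_{\mathrm{fixed}}\bv$ is bilinear it is linear — hence both concave and convex — in each argument separately, so $\tilde\psi$ retains the concave-in-$\bu$, convex-in-$\bv$ structure of $\psi$. Next I would expand $-\<\bg_{\mathrm{cg}}(\bu),\bv\>+\<\bh_{\mathrm{cg}}(\bv),\bu\>$ and, using $\bu^\top\bA_{\mathrm{fixed}}\bv = \tfrac{\<\bv_1',\bv\>}{\|\bv_1'\|_2}\<\hat\bxi_h,\bu\> - \tfrac{\<\bu_1',\bu\>}{\|\bu_1'\|_2}\<\hat\bxi_g,\bv\>$, verify the identity
\[
    -\<\bg_{\mathrm{cg}}(\bu),\bv\> + \<\bh_{\mathrm{cg}}(\bv),\bu\>
    = \bu^\top\bA_{\mathrm{fixed}}\bv
        - \|\sfP_{\bu_1'}^\perp\bu\|_2\<\sfP_{\bv_1'}^\perp\bxi_g,\bv\>
        + \|\sfP_{\bv_1'}^\perp\bv\|_2\<\sfP_{\bu_1'}^\perp\bxi_h,\bu\>,
\]
so that the auxiliary process in the statement is exactly $\tilde\psi$ plus the marginal-Gordon auxiliary process attached to the Gaussian matrix $\sfP_{\bu_1'}^\perp\bZ\sfP_{\bv_1'}^\perp$ (note $\sfP_{\bv_1'}^\perp\bxi_g$ and $\sfP_{\bu_1'}^\perp\bxi_h$ are standard Gaussian on the respective complement subspaces).

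\textbf{Step 3 (Gordon on the subspaces) and the main obstacle.} It then remains to apply the Gordon comparison of Lemma~\ref{lem:gordon-marginal} — with $\bu,\bv$ replaced by $\sfP_{\bu_1'}^\perp\bu,\sfP_{\bv_1'}^\perp\bv$ (identify the complement subspaces with $\reals^{N-1},\reals^{p-1}$ via isometries) and with common deterministic cost $\tilde\psi$ — and then average over the conditioning value (a regular conditional distribution is used, since $\cE_{2|1}$ has probability zero); part (i) needs only continuity of $\tilde\psi$, and part (ii) uses the concave-convex structure of $\tilde\psi$ from Step~2 together with convexity of $E_u,E_v$. Lemma~\ref{lem:conditional-gordon} then follows by specializing to $\bu_1'=\bu_1$, $\bv_1'=\bv_1$, $\bs_1'=\bs_1$, $\bt_1'=\bt_1$ and integrating over $\mathsf{Cond}_1$. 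I expect the main obstacle to be Step~1: justifying rigorously that conditioning on the measure-zero event $\cE_{2|1}$ removes exactly the three blocks and keeps $\sfP_{\bu_1'}^\perp\bA\sfP_{\bv_1'}^\perp$ intact and independent, and checking that $\bA_{\mathrm{fixed}}$ — which must be built out of $\hat\bxi_g,\hat\bxi_h$ precisely so that it reproduces their contribution to $\bg_{\mathrm{cg}},\bh_{\mathrm{cg}}$ — indeed solves the constraints; this is where the identities \eqref{eq:hat-xi-rep} and the consistency condition of Lemma~\ref{lem:KKT-equiv} do the work. The remaining term-matching and the invocation of Gordon in reduced dimension are routine.
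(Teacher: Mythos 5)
Your proposal is correct and follows essentially the same route as the paper: condition on the linear constraints so that only the block $\sfP_{\bu_1'}^\perp\bA\sfP_{\bv_1'}^\perp$ stays random (an independent Gaussian copy), apply the marginal Gordon inequality (Lemma~\ref{lem:gordon-marginal}) to that block, and use Eq.~\eqref{eq:hat-xi} to rewrite the deterministic remainder in terms of $\hat\bxi_g,\hat\bxi_h$, then integrate over the conditioning. Your explicit particular solution $\bA_{\mathrm{fixed}}=\hat\bxi_h\bv_1'^\top/\|\bv_1'\|_2-\bu_1'\hat\bxi_g^\top/\|\bu_1'\|_2$ is a compact repackaging of the paper's computation of $\phi(\bu,\bv)$, and your remark that adding this bilinear term preserves the concave--convex structure needed for part (ii) is exactly the point the paper uses implicitly.
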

\noindent We prove Lemma \ref{lem:gen-conditional-gordon} at the end of this section.
First, we prove Lemma \ref{lem:conditional-gordon} as a straightforward consequence.

\begin{proof}[Proof of Lemma \ref{lem:conditional-gordon}]
    
Recall from Eq.~\eqref{eq:uvst} that
$\bt_1 = \bA \bv_1$
and $\bs_1 = -\bA^\top \bu_1$.
The KKT conditions for the min-max problem \eqref{eq:regr-min-max} state that $(\bv_1,\bu_1)$ is a saddle point for this min-max problem if and only if
\begin{equation}
    \frac1{n_1} \bt_{1,\cI_1} + \frac1{n_1} \be_{1,\cI_1} - \frac1{n_1} \bu_{1,\cI_1} = 0,
    \quad 
    -\frac1{n_1}\bs_1 \in \partial \bar \Omega_1(\bv_1),
    \quad 
    \bu_{1,\cI_1^c} = 0.
\end{equation}
Now consider any vectors $\bu_1',\bt_1',\be_1',\be_2' \in \reals^N$, $\bv_1',\bs_1' \in \reals^p$ satisfying $\bu_{1,\cI_1^c}' = \bzero$, 
$\bt_{1,\cI_1}' = \bu_{1,\cI_1}' - \be_{1,\cI_1}'$,
and $\bs_1' / n_1 \in \partial \bar \Omega_1(\bv_1)$.
(The prime indicates that these will be treated as dummy variables, not as those variables defined via Eq.~\eqref{eq:uvst}).
The KKT conditions above imply that the event (over the randomness is $\bA,\be_1,\be_2$)
\begin{equation}
    \cE_{2|1}(\bu_1' , \bv_1', \bs_1', \bt_1') \cap
    \Big\{
        \be_1 = \be_1',
        \; 
        \be_2 = \be_2'
    \Big\},
\end{equation}
is equivalent to the event
\begin{equation}
    \Big\{
        \hat \be_1 = \bu_1',
        \;
        \bSigma^{1/2}(\hat \btheta_1 - \btheta_1) = \bv_1',
        \;
        \bX(\hat \btheta_1 - \btheta_1) = -\bt_1',
        \;
        \bSigma^{-1/2}\bX^\top \hat \be_1 = \bs_1',
        \;
        \be_1 = \be_1',
        \; 
        \be_2 = \be_2'
    \Big\}.
\end{equation}
Thus,
the event that $\mathsf{Cond}_1 = (\hat \btheta_1, \hat \be_1, \bX (\hat \btheta_1-\btheta_1), \bX^\top \hat \be_1,\be_1,\be_2)$ obtains a certain value is equivalent to the event $\cE_{2|1}(\bu_1' , \bv_1', \bs_1', \bt_1') \cap
    \Big\{
        \be_1 = \be_1',
        \; 
        \be_2 = \be_2'
    \Big\}$ for $\bu_1' = \bu_1 , \bv_1' = \bv_1 , \bs_1' = \bs_1, \bt_1' = \bt_1$, and vice versa.
We can thus apply Lemma \ref{lem:gen-conditional-gordon} conditionally on $\mathsf{Cond}_1$.

Take $\bg_{\mathrm{cg}}(\bu)$ and $\bh_{\mathrm{cg}}(\bv)$ as defined in Lemma \ref{lem:conditional-gordon}.
Observe that $E_u = E_u(\mathsf{Cond}_1)$ is a deterministic set conditional on $\mathsf{Cond}_1$.
Thus, applying Lemma \ref{lem:gen-conditional-gordon}(i) conditionally,
we have for any $t \in \reals$ that with probability 1,
\begin{equation}
    \PP\left( \min_{\bv \in E_v} \max_{\bu \in E_u} L_2(\bu,\bv) \leq t \Bigm| \cE_{2|1}(\mathsf{Cond}_1)\right) 
    \leq 
        2\PP\left( 
            \min_{\bv \in E_v} \max_{\bu \in E_u} 
            \ell_{2|1}(\bu,\bv) \leq t \Bigm| 
            \bu_1,\bv_1,\hat \bxi_g,\hat \bxi_h
            \right).
\end{equation}
Taking the expectation of both sides of this inequality gives Lemma \ref{lem:conditional-gordon}(i).
Lemma \ref{lem:conditional-gordon}(ii) follows by the same argument using instead Lemma \ref{lem:gen-conditional-gordon}(ii).
\end{proof}

We now prove Lemma \ref{lem:gen-conditional-gordon}.
\begin{proof}[Proof of Lemma \ref{lem:gen-conditional-gordon}]
    Conditional on $\cE_{2|1}(\bu_1',\bv_1',\bs_1',\bt_1')$,
    \begin{align*}
        \bu^\top \bA \bv 
            &= 
            \bu^\top \sfP_{ \bu_1'}^\perp \bA \sfP_{ \bv_1'}^\perp \bv
            +
            \bu^\top \bA \sfP_{ \bv_1'}\bv
            +
            \bu^\top \sfP_{ \bu_1'} \bA \bv 
            -
            \bu^\top \sfP_{ \bu_1'} \bA \sfP_{ \bv_1'} \bv 
        \\
            &=
            \bu^\top \sfP_{ \bu_1'}^\perp \bA \sfP_{ \bv_1'}^\perp \bv
            -
            \frac{\< \bu ,  \bt_1' \> \<  \bv_1' , \bv \> }{ \|  \bv_1' \|_2^2 }
            - 
            \frac{ \< \bu ,  \bu_1' \> \<  \bs_1' , \bv \>  }{\|  \bu_1'\|_2^2 }
            +
            \frac{ \< \bu ,  \bu_1' \> \<  \bs_1' ,  \bv_1'\> \<  \bv_1' , \bv \>  }{\|  \bv_1' \|_2^2 \|  \bu_1'\|_2^2 }
        \\
            &=: 
            \bu^\top \sfP_{ \bu_1'}^\perp \bA \sfP_{ \bv_1'}^\perp \bv + \phi(\bu,\bv)
            \stackrel{\mathrm{d}}=
            \bu^\top \sfP_{ \bu_1'}^\perp \tilde \bA \sfP_{\bv_1'}^\perp \bv         
            +
            \phi(\bu,\bv),
    \end{align*}
    where $\tilde \bA$ is independent of and identically distributed to $\bA$. 
    We emphasize that the distributional equivalence holds conditional on the event \eqref{eq:A-lin-constraint}.
    Thus,
    \begin{equation}
    \begin{aligned}
        &\PP\left( \min_{\bv \in E_v} \max_{\bu \in E_u} \bu^\top \bA \bv + \psi(\bu,\bv) \leq t \bigm| \cE_{2|1} \right) 
        =
        \PP\left( 
            \min_{\bv \in E_v} \max_{\bu \in E_u} 
            \bu^\top \sfP_{\bu_1'}^\perp \tilde \bA \sfP_{\bv_1'}^\perp \bv
            +
            \phi(\bu,\bv)
            + 
            \psi(\bu,\bv) \leq t\right).
    \end{aligned}
    \end{equation}
    Let $\bxi_g \sim \normal(0,\id_p)$, $\bxi_h \sim \normal(0,\id_N)$.
    By the marginal Gordon inequality (Lemma \ref{lem:gordon-marginal}),
    \begin{equation}\label{eq:cond-gordon-unsimplified} 
    \begin{aligned} 
        &\PP\left( 
            \min_{\bv \in E_v} 
            \max_{\bu \in E_u} 
                \bu^\top \sfP_{\bu_1'}^\perp \tilde \bA \sfP_{\bv_1'}^\perp \bv 
                + 
                \phi(\bu,\bv) 
                + 
                \psi(\bu,\bv) \leq t\right) 
        \\ 
        &\qquad\qquad\leq 
            2\PP\left( 
                \min_{\bv \in E_v} \max_{\bu \in E_u} 
                -\| \sfP_{\bu_1'}^\perp \bu\|_2 \bxi_g^\top \sfP_{\bv_1'}^\perp \bv  
                + 
                \| \sfP_{\bv_1'}^\perp \bv\|_2 \bxi_h^\top \sfP_{\bu_1'}^\perp \bu 
                + 
                \phi(\bu,\bv) 
                + 
                \psi(\bu,\bv) \leq t\right). 
    \end{aligned}
    \end{equation}
    We now simplify the above expression.
    Using Eq.~\eqref{eq:hat-xi},
    \begin{equation}
    \begin{aligned}
        \phi(\bu,\bv)
            &=
            \frac{\big\< \bu , - \bu_1' \< \hat \bxi_g,\bv_1'\> /\| \bu_1' \|_2 + \| \bv_1' \|_2 \hat \bxi_h \big\> \< \bv_1' , \bv \> }{ \| \bv_1' \|_2^2 }
            + 
            \frac{ \< \bu , \bu_1' \> \big\< - \| \bu_1'\|_2 \hat \bxi_g + \bv_1' \< \hat \bxi_h,\bu_1'\> /\|\bv_1'\|_2 , \bv \big\>  }{\| \bu_1'\|_2^2 }
        \\
            &\qquad\qquad- \frac{ \< \bu , \bu_1' \> \< -\| \bu_1'\|_2 \hat \bxi_g + \bv_1' \< \hat \bxi_h,\bu_1'\> /\|\bv_1'\|_2  ,\bv_1' \> \< \bv_1', \bv \> }{\| \bv_1'\|_2^2 \| \bu_1' \|_2^2}
        \\
            &=
            - \frac{\< \bu_1' , \bu \> }{\| \bu_1' \|_2} \< \hat \bxi_g , \bv \> + \frac{\< \bv_1' , \bv \> }{\| \bv_1' \|_2} \< \hat \bxi_h , \bu \> .
    \end{aligned}
    \end{equation}
    Substituting into Eq.~\eqref{eq:cond-gordon-unsimplified},
    we conclude
    \begin{equation}
    \begin{aligned}
        &\PP\left( \min_{\bv \in E_v} \max_{\bu \in E_u}  \bu^\top \bA \bv + \psi(\bu,\bv) \leq t \bigm| \cE_{2|1} \right) 
        \\
        &\quad\leq 
            2\PP\left( 
                \min_{\bv \in E_v} \max_{\bu \in E_u} 
                -\big(\| \sfP_{\bu_1'}^\perp \bu\|_2  \sfP_{\bv_1'}^\perp \bxi_g + \frac{\< \bu_1' , \bu \> }{\| \bu_1' \|_2} \hat \bxi_g \big)^\top \bv 
                + 
                \big(\| \sfP_{\bv_1'}^\perp \bv\|_2  \sfP_{\bu_1'}^\perp \bxi_h + \frac{\< \bv_1' , \bv \> }{\| \bv_1' \|_2} \hat \bxi_h\big)^\top \bu
                + 
                \psi(\bu,\bv) \leq t\right).
    \end{aligned}
    \end{equation}
    The argument when $E_v,E_u$ are also convex and $\psi$ is convex-concave is equivalent but applies the convex version of the marginal Gordon inequality (Lemma \ref{lem:gordon-marginal}(ii)).
    The proof is complete. 
\end{proof}

\begin{proof}[Proof of Lemmas \ref{lem:KKT-equiv} and \ref{lem:unique-soln}]
    We first prove Lemma \ref{lem:KKT-equiv}.
    We write the system of equations \eqref{eq:hat-xi} in matrix-form
    \begin{equation}
        \begin{pmatrix}
             \bs_1 \\  \bt_1 
        \end{pmatrix}
        =
        \begin{pmatrix}
            \|  \bu_1' \|_2 \id_p  &  -  \bv_1'  \bu_1'^\top / \|  \bv_1' \|_2 \\
            -  \bu_1'  \bv_1'^\top / \|  \bu_1' \|_2 & \|  \bv_1' \|_2 \id_N 
        \end{pmatrix}
        \begin{pmatrix}
            \bg \\ \bh
        \end{pmatrix}
        =: 
        \bM \begin{pmatrix}
            \bg \\ \bh
        \end{pmatrix}. 
    \end{equation}
    The matrix $\bM$ in the preceding display is rank $p+N-1$,
    so that its column space has codimension 1.
    Checking that $\begin{pmatrix}  \bv_1'^\top &  \bu_1'^\top \end{pmatrix} \bM = 0$,
    we conclude Lemma \ref{lem:KKT-equiv}.

    Lemma \ref{lem:unique-soln} follows from Lemma \ref{lem:KKT-equiv} taking $\alpha = \zeta_1 \tau_{h_1}$ and observing that by the KKT conditions, Eq.~\eqref{eq:A-lin-constraint} is satisfied for $\bu_1' = \bu_1$, $\bs_1' = \bs_1$, $\bv_1' = \bv_1$, $\bt_1' = \bt_1$, and $\bA = - \bX\bSigma^{-1/2}$.
\end{proof}

\begin{remark}
    The conditional Gordon inequality makes an interesting---though to us still mysterious---connection between the primal and auxilliary KKT conditions.
    In particular,
    Eqs.~\eqref{eq:A-lin-constraint} are the KKT conditions for the regression optimization \eqref{eq:param-est}, and Eqs.~\eqref{eq:hat-xi-rep} are the KKT conditions for the marginal auxilliary objective \eqref{eq:lk}.
    Indeed, the gradient of $- \<\bg_{\mathrm{mg}}(\bu), \bv\> + \<\bh_{\mathrm{mg}}(\bv), \bu\>$ with respect to $\bu$ is $- (\bu / \| \bu \|_2)\< \bxi_g,\bv\> + \| \bv \|_2 \bxi_h$,
    and its gradient with respect to $\bv$ is $- \| \bu \|_2 \bxi_g + (\bv/\|\bv\|_2)\< \bxi_h,\bu\>$.
    Thus, for a quadruplet $(\bu_1',\bv_1',\bs_1',\bt_1')$ 
    for which $\bu_{1,\cI_1^c}' = \bzero$, 
    $\bt_{1,\cI_1}' = \bu_{1,\cI_1}' - \be_{1,\cI_1}'$,
    and $\bs_1' / n_1 \in \partial \bar \Omega_1(\bv_1)$,
    Eq.~\eqref{eq:A-lin-constraint} identifies those realizations of the random matrix $\bA$ for which the primal KKT conditions are satisfied at $(\bu_1',\bv_1',\bs_1',\bt_1')$,
    and Eq.~\eqref{eq:hat-xi-rep} identifies those realizations of the random noise in Gordon's objective for which the auxilliary KKT conditions are satisfied at $(\bu_1',\bv_1',\bs_1',\bt_1')$.
\end{remark}

\section{Technical lemmas}

\subsection{Properties of fixed-point solutions: proofs of Lemmas \ref{lem:fixed-pt-soln}, \ref{lem:bound-on-fixed-pt}, \ref{lem:marg-alpha-approx}, and \ref{lem:cond-alpha-approx}}
\label{sec:existence-fix-pt}

\begin{proof}[Proof of Lemma \ref{lem:fixed-pt-soln}: existence of fixed-point parameters]
    We first show that, for fixed $k$, the equations \eqref{eq:marg-fix-pt-eq}
    have a unique solution $\tau_{g_k},\zeta_k$ (note that these equations do not depend on $\rho_g$ or $\tau_{g_l},\zeta_l$ for $l \neq k$). 
    We consider least squares, ridge regression, and the Lasso separately.

    In the case of least squares, we may solve these two equations explicitly.
    Indeed, $\sR_k(\tau^2,\zeta) = p \tau^2$ and $\df_k(\tau^2,\zeta) = p$,
    whence by Eq.~\eqref{eq:marg-fix-pt-eq}
    \begin{equation}\label{eq:ols-fixed-pt}
        \text{for least squares,}
        \;\;
        \tau_{g_k}^2 = \frac{\tau_{e_k}^2/n_k}{1 - p / n_k}
        \;\;
        \text{and}
        \;\;
        \zeta_k = 1 - p/n_k
        \;\;
        \text{uniquely}.
    \end{equation}
    Because $\tau_{e_k}^2 > 0$ and $p < n_k$,
    we have $\tau_{g_k}^2 > 0$

    In the case of ridge regression with $\lambda  > 0$,
    we have the explicit form
    \begin{equation}\label{eq:ridge-R-df}
    \begin{gathered}
        \sR_k(\tau^2,\zeta)
            =
            \frac{p\lambda^2}{n_k \zeta^2} \btheta_k \bSigma^{1/2}\Big(\bSigma + \sqrt{\frac{p}{n_k}} \, \frac{\lambda}{\zeta} \id_p\Big)^{-2}\bSigma^{1/2} \btheta_k
            + 
            \tau^2 \Tr\Big(\bSigma^2\Big(\bSigma + \sqrt{\frac{p}{n_k}}\,\frac{\lambda}{\zeta}\id_p\Big)^{-2}\Big),
        \\
        \df_k(\tau^2,\zeta)
            =
            \Tr\Big(\bSigma\Big(\bSigma + \sqrt{\frac{p}{n_k}}\,\frac{\lambda}{\zeta}\id_p\Big)^{-1}\Big).
    \end{gathered}
    \end{equation}
    The degrees-of-freedom $\df_k(\tau^2,\zeta)$ does not depend on $\tau^2$, is continuous and strictly increasing in $\zeta$,
    and is 0 when $\zeta = 0$.
    Thus, the second fixed-point equation (i.e., $\zeta_k =  1 - \df_k(\tau_{g_k}^2,\zeta_k)/n_k$) has a unique solution $\zeta_k > 0$.
    Because $\bSigma( \bSigma + \sqrt{p/n_k}\,(\lambda/\zeta_k) \id_p)^{-1} \preceq \id_p$ and is diagonalizable,
    $\Tr\big(\bSigma^2( \bSigma + \sqrt{p/n_k}\,(\lambda/\zeta_k) \id_p)^{-2} \big) \leq \df_k(\tau_{g_2}^2,\zeta_k) < n_k$.
    Using the previous display and Eq.~\eqref{eq:marg-fix-pt-eq},
    we may solve 
    \begin{equation}\label{eq:ridge-tauO*}
        \tau_{g_k}^2 
            =
            \frac{\tau_{e_k}^2 + p\lambda^2/(n\zeta_k^2) \btheta_k^\top \bSigma^{1/2} (\bSigma + \sqrt{p/n_k}\, (\lambda / \zeta_k) \id_p)^{-2}\bSigma^{1/2} \btheta_k }{n_k - \Tr\big(\bSigma^2( \bSigma + \sqrt{p/n_k}\,(\lambda/\zeta_k) \id_p)^{-2} \big) } > 0.
    \end{equation}

    In the case of the Lasso and the $\alpha$-smoothed Lasso, 
    we apply results from the paper \cite{celentano2020lasso}, which considers the same setting with a different covariate normalization.
    In Section \ref{app:COV-lasso},
    we provide the change of variables to translate our normalization into theirs. 
    Under this change of variables,
    the fixed point equations \eqref{eq:marg-fix-pt-eq} are exactly equations (8a) and (8b) of \cite{celentano2020lasso}.
    Theorem 1 and Lemma A.2 of \cite{celentano2020lasso} guarantee these equations have a unique solution when $\bSigma$ 
    is invertible and $\tau_{e_k}^2 > 0$,
    and the solution $\tau_{g_k}^2 > 0$.

    Finally, we show that given a solution $\tau_{g_1},\zeta_1,\tau_{g_2},\zeta_2$ to Eqs.~\eqref{eq:marg-fix-pt-eq},
    there is a solution $\rho_g$ to the equation Eq.~\eqref{eq:simul-fix-pt}.
    By Cauchy-Schwartz, $\sR_{12}(\bS_g,\{\zeta_k\}) \leq \sqrt{\sR_1(\tau_{g_1}^2,\zeta_1)\sR_2(\tau_{g_2}^2,\zeta_2)}$. Because also $n_{12}/\sqrt{n_1 n_2} \leq 1$, 
    the right-hand side of the fixed point equation~\eqref{eq:simul-fix-pt} is between 
    \begin{equation}\label{eq:Q-bound}
        \pm 
        \frac{1}{\sqrt{n_1}}\sqrt{\tau_{e_1}^2\rho_e + \sR_1(\tau_{g_1}^2,\zeta_1)}  
        \frac{1}{ \sqrt{n_2} } \sqrt{\tau_{e_2}^2\rho_e + \sR_2(\tau_{g_2}^2,\zeta_2)}
        =
        \pm\tau_{g_1}\tau_{g_2}\sqrt{\big(1 - \tau_{e_1}^2\rho_e^{\perp2}/(n_1\tau_{g_1}^2)\big)\big(1 - \tau_{e_2}^2\rho_e^{\perp2}/(n_2\tau_{g_2}^2)\big)},
    \end{equation} 
    where we have used the fixed point equations \eqref{eq:marg-fix-pt-eq} to get the final equality.
    Because $\tau_{e_1},\tau_{e_2},\rho_e^\perp > 0$ (see Lemma \ref{lem:bound-on-fixed-pt}), 
    we conclude the right-hand side of the equation \eqref{eq:simul-fix-pt} is always in the open interval $(-\tau_{g_1}\tau_{g_2},\tau_{g_1}\tau_{g_2})$.
    Moroever, it is continuous in $\rho_g$.
    Indeed, 
    in the definition of $\sR_{12}$,
    we may represent $\bg_2^f = (\tau_{g_2}/\tau_{g_1})\rho_g \bg_1^f + \tau_{g_2}\sqrt{1-\rho_g^2}\bxi_g$ where $\bxi_g \stackrel{\mathrm{iid}}\sim \normal(0,\id_p)$ independent of everything else.
    Then continuity in $\rho_g$ follows by dominated convergence using that
    $\hat \btheta_2^{f,\de}$ and $\hat \btheta_2$ are $\tau_{g_2}$-Lipshitz in $\bg_2^f$ (because proximal operators are $1$-Lipschitz \cite{parikh2014}).
    At $\rho_g = 1$, the left-hand side of the fixed point equation \eqref{eq:marg-fix-pt-eq} is $\tau_{g_1}\tau_{g_2}$, and at $\rho_g = -1$, the left-hand side is $-\tau_{g_1}\tau_{g_2}$.
    The existence of a solution $\rho_g \in (-1,1)$ follows by the intermediate value theorem.

   We now show uniqueness of the solution $\rho_g$. 
   We denote by $\partial_{\rho_g} \sR_{12}(\bS_g,\{\zeta_k\}) $ the partial derivative taken with $\tau_{g_1},\tau_{g_2},\zeta_1,\zeta_2$ held fixed;
   by $\nabla_{\bg_k^f}$ the Jacobian with respect to $\bg_k^f$; 
   and for matrices $\bA,\bB \in \reals^{p\times p} $ by $\< \bA ,\bB \> = \Tr(\bA^\top \bB)$. 
   Then using the Dirichlet form for the Ornstein-Uhlenbeck process (see, for example, Sections 2.6 and 2.7 of \cite{bakry2013analysis}),
   we compute
    \begin{equation}\label{eq:cor-deriv}
    \begin{aligned}
        \big|\partial_{\rho_g} &\sR_{12}(\bS_g,\{\zeta_k\})\big|
            =
            \tau_1\tau_2 \big|
            \E\big[ \big\< \nabla_{\bg_1^f} \bSigma^{1/2} \eta_1(\bSigma^{1/2} \btheta_1 + \bg_1^f;\zeta_1) , \nabla_{\bg_2^f} \bSigma^{1/2} \eta_2(\bSigma^{1/2} \btheta_2 + \bg_2^f;\zeta_2) \big\>  \big]\big|
        \\
            &\leq 
            \tau_{g_1}\tau_{g_2}
            \E\big[\Tr\big((\nabla_{\bg_1^f} \bSigma^{1/2} \eta_1(\bSigma^{1/2} \btheta_1 + \bg_1^f;\zeta_1))^2\big)\big]^{1/2}
            \E\big[\Tr\big((\nabla_{\bg_2^f} \bSigma^{1/2} \eta_2(\bSigma^{1/2} \btheta_2 + \bg_2^f;\zeta_2))^2\big)\big]^{1/2}
        \\
        &\leq 
            \tau_{g_1}\tau_{g_2}
            \sqrt{\df_1(\tau_{g_1}^2,\zeta_1)\df_2(\tau_{g_2}^2,\zeta_2)}
            < \tau_{g_1} \tau_{g_2} \sqrt{n_1n_2},
    \end{aligned}
    \end{equation}
    where in the second-to-last inequality we have used that
    $\nabla_{\bg_1^f} \bSigma^{1/2} \eta_1(\bSigma^{1/2} \btheta_1 + \bg_1^f;\zeta_1)$ is symmetric (because $\bSigma^{1/2} \eta_1(\bSigma^{1/2} \btheta_1 + \bg_1^f;\zeta_1)$ is the solution to an optimization problem), and because $\bSigma \eta_k$ is $1$-Lipschitz, its eigenvalues are bounded by 1.
    Thus, the magnitude of the partial derivative with respect to $\rho_g$ of the right-hand side of Eq.~\eqref{eq:simul-fix-pt} is strictly less than $n_{12}/(n_1n_2)\tau_1\tau_2 \sqrt{n_1n_2} \leq \tau_1\tau_2$, where we have used that $n_{12} \leq n_1 \wedge n_2 \leq  \sqrt{n_1n_2}$.
    Because the partial derivative of the left-hand side of Eq.~\eqref{eq:simul-fix-pt} with respect to $\rho_g$ is $\tau_1\tau_2$,
    we conclude the solution $\rho_g$ is unique.
\end{proof}

To provide bounds on the fixed-point parameters in the proof of Lemma \ref{lem:bound-on-fixed-pt},
we will need the following lemma.

\begin{lemma}\label{lem:prox-err-bound}
    Taking any $\tau_{g_k},\zeta_k$ (not necessarily solutions to the fixed-point equations \eqref{eq:fixed-pt-eqns}),
    there exists constant $C$ depending only on $\cPmodel,\cPregr$ such that when the $\alpha$-smoothed lasso is used in the fixed design model
    \begin{equation}
        \| \hat \btheta_k^f - \btheta_k \|_{\bSigma} \leq  
            \| \bg_k^f\|_2
            +
            \frac{C}{\zeta_k} \sqrt{\frac{p}{n_k}}.
    \end{equation}
    Moreover,
    \begin{equation}
        \E[\| \hat \btheta_k^f - \btheta_k \|_{\bSigma}^2]
            \geq 
            \frac{c}{\zeta_k^2} \frac{p}{n_k} \Phi\Big(- \frac{C}{\sqrt{n_k}\,\tau_k \zeta_k}\Big),
    \end{equation}
    where $\Phi(z) = \P(Z \leq z)$ for $Z \sim \normal(0,1)$ is the Gaussian cdf.
\end{lemma}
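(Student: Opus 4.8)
The plan is to prove the two estimates separately, since they are of quite different character.

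For the upper bound, observe that by Eqs.~\eqref{eq:fixed-design-est}--\eqref{eq:vf-prox} the vector $\bv_k^f := \bSigma^{1/2}(\hat \btheta_k^f - \btheta_k)$ is the proximal point $\argmin_{\bv}\{\tfrac12\|\bg_k^f - \bv\|_2^2 + \tfrac1{\zeta_k}\bar\Omega_k(\bv)\}$, where $\bar\Omega_k(\bv) = \Omega_k(\btheta_k + \bSigma^{-1/2}\bv)$. Its optimality condition is $\bg_k^f - \bv_k^f \in \tfrac1{\zeta_k}\bSigma^{-1/2}\,\partial\Omega_k(\hat\btheta_k^f)$, and since every element of $\partial\Omega_k = \tfrac{\lambda_k}{\sqrt{n_k}}\,\partial\sM_{\alpha_k/\sqrt{n_k}}$ has $\ell_\infty$ norm at most $\lambda_k/\sqrt{n_k}$, hence $\ell_2$ norm at most $\lambda_k\sqrt p/\sqrt{n_k}$, using $\|\bSigma^{-1/2}\|_{\mathrm{op}} = \mu_{\min}^{-1/2}$ and $\lambda_k \le \lambda_{\max}$ gives $\|\bg_k^f - \bv_k^f\|_2 \le \tfrac{C}{\zeta_k}\sqrt{p/n_k}$; the triangle inequality then yields the claim.

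For the lower bound I would first reduce to a per-coordinate statement via $\|\hat\btheta_k^f - \btheta_k\|_{\bSigma}^2 \ge \mu_{\min}\sum_{j\le p}(\hat\theta^f_{k,j} - \theta_{k,j})^2$, so that it suffices to prove $\E[(\hat\theta^f_{k,j} - \theta_{k,j})^2] \ge \tfrac{c}{\zeta_k^2 n_k}\Phi\bigl(-\tfrac{C}{\sqrt{n_k}\tau_k\zeta_k}\bigr)$ for each $j$. Fixing $j$, let $\bq_j$ be the unit vector spanning the orthocomplement of the span of the columns $\bSigma^{1/2}\be_l$, $l\ne j$, and decompose $\bg_k^f = \bP_{-j}\bg_k^f + r\bq_j$ with $r := \<\bg_k^f,\bq_j\>\sim\normal(0,\tau_k^2)$ independent of $\bP_{-j}\bg_k^f$. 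A short computation shows that replacing $\bg_k^f$ by $\bP_{-j}\bg_k^f + r\bq_j$ changes the objective defining $\hat\btheta_k^f$ only by adding the term $-r\beta_j\pi_j$ (up to $\bpi$-independent terms), where $\beta_j := \<\bSigma^{1/2}\be_j,\bq_j\>$ satisfies $\beta_j^2 = 1/(\bSigma^{-1})_{jj}\in[\mu_{\min},\mu_{\max}]$; thus, conditionally on $\bP_{-j}\bg_k^f$, one has $\hat\theta^f_{k,j} = \hat\pi_j(r)$ with $\hat\bpi(r) := \argmin_\bpi\{f_0(\bpi) - r\beta_j\pi_j\}$ for a fixed $\mu_{\min}$-strongly convex $f_0$, and $\hat\pi_j$ is monotone and $C$-Lipschitz in the scalar $r$. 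The value function $V(r) := \min_\bpi\{f_0(\bpi) - r\beta_j\pi_j\}$ is concave with $V'(r) = -\beta_j\hat\pi_j(r)$; bounding it above by evaluating the objective at $\hat\bpi(0)+\delta\be_j$ (a perturbation in the single coordinate $j$ — this is the step that keeps the scale at $1/\sqrt{n_k}$ rather than $\sqrt p/\sqrt{n_k}$) and below by discarding the nonnegative penalty yields $V(r) \le V(0) + V'(0)r - c_1 r^2 + C_1 r/(\zeta_k\sqrt{n_k})$ together with a matching lower bound on $V(\vartheta r)$ for a $\cPmodel$-dependent $\vartheta\in(0,1)$. Plugging these into $V(r) - V(\vartheta r) \ge (1-\vartheta)r\,V'(r)$ forces $\hat\pi_j(r)$ to grow at least linearly once $|r| \ge R := C_3/(\zeta_k\sqrt{n_k})$, in particular past the smoothing threshold $\alpha_k/\sqrt{n_k}$, so $|\hat\pi_j(r)| \ge c_4/(\zeta_k\sqrt{n_k})$ there; a sign argument (use $\{r\ge R\}$ when $\theta_{k,j}\le0$ and $\{r\le -R\}$ otherwise) upgrades this to $|\hat\pi_j(r) - \theta_{k,j}| \ge c_4/(\zeta_k\sqrt{n_k})$ on an event of conditional probability $\Phi(-R/\tau_k)$. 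Taking expectations and summing over $j$ finishes the proof.

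The main obstacle is obtaining the correct scale in the lower bound: the naive routes — using $\bSigma\succeq\mu_{\min}\id$ globally, perturbing $\hat\bpi(0)$ in a generic direction, or projecting $\bv_k^f$ onto a single fixed direction — each lose a factor $\sqrt p$ and only give the much weaker estimate with $\Phi\bigl(-C\sqrt p/(\sqrt{n_k}\tau_k\zeta_k)\bigr)$, which is insufficient to establish the lower bound $\tau_{h_k}^2 \ge c\,p/n_k$ needed downstream in Lemma~\ref{lem:bound-on-fixed-pt}. Avoiding this requires working one coordinate at a time through the leave-one-out decomposition, exploiting the separability of $\Omega_k$ together with the identity $\beta_j^2 = 1/(\bSigma^{-1})_{jj}$, and choosing $\vartheta$ via a Kantorovich-type inequality so that all constants depend only on $\mu_{\max}/\mu_{\min}$. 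The remaining points — that the leave-one-out conditioning is measure-theoretically sound, that the envelope formula for $V'$ applies, and the sign bookkeeping for arbitrary $\theta_{k,j}$ — are routine.
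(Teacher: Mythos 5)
Your upper bound is correct and is exactly the paper's argument (coordinate-wise boundedness of the subgradient, hence $\ell_2$ bound $\lambda_k\sqrt{p/n_k}$, pushed through $\bSigma^{-1/2}$ and $1/\zeta_k$). The problem is your lower bound, and it starts with a mis-diagnosis: the ``naive'' coordinate-wise route does \emph{not} lose a factor $\sqrt p$. The paper simply writes the stationarity condition of \eqref{eq:fixed-design-est} in the form $\bSigma(\hat\btheta_k^f-\btheta_k) \in \bSigma^{1/2}\bg_k^f - \zeta_k^{-1}\partial\Omega_k(\hat\btheta_k^f)$ and reads it one coordinate at a time: since $\big|[\partial\Omega_k(\hat\btheta_k^f)]_j\big|\le \lambda_k/\sqrt{n_k}$, on the event $\big|[\bSigma^{1/2}\bg_k^f]_j\big|>2\lambda_k/(\sqrt{n_k}\zeta_k)$ one gets $\big|[\bSigma(\hat\btheta_k^f-\btheta_k)]_j\big|\ge \lambda_k/(\sqrt{n_k}\zeta_k)$. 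The quantity $[\bSigma^{1/2}\bg_k^f]_j$ is a \emph{single} Gaussian of standard deviation $\asymp\tau_{g_k}$, so this event has probability $\ge c\,\Phi\big(-C/(\sqrt{n_k}\,\tau_{g_k}\zeta_k)\big)$ — the correct scale, with no $\sqrt p$ — and summing over $j$ and using $\|\cdot\|_{\bSigma}^2\ge c\|\bSigma\,\cdot\|_2^2$ finishes in three lines. The $\sqrt p$ you feared only appears if one insists on an $\ell_2$ bound on the full subgradient; the coordinate-wise ($\ell_\infty$) bound is what the paper exploits, targeting $[\bSigma(\hat\btheta-\btheta)]_j$ rather than $\hat\theta_j-\theta_j$.

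Beyond being unnecessarily heavy, your leave-one-out argument has a genuine gap at its central step. To run $V(r)-V(\vartheta r)\ge(1-\vartheta)rV'(r)$ you need a lower bound on $V(\vartheta r)$ that matches your upper bound on $V(r)$ to within terms of order $r^2\asymp 1/(\zeta_k^2 n_k)$ and $r/(\zeta_k\sqrt{n_k})$. But ``discarding the nonnegative penalty'' costs an additive error equal to the penalty value at the optimum, which is of order $\lambda_k\|\hat\bpi\|_1/(\zeta_k\sqrt{n_k})\asymp\sqrt{p/n_k}$ — vastly larger than the quantities the comparison is supposed to resolve — while the alternative (chord/tangent bounds from concavity) reintroduces $V'(0)=-\beta_j\hat\pi_j(0)$, whose sign and magnitude are uncontrolled (e.g.\ $\hat\pi_j(0)\approx\theta_{k,j}$ can be arbitrarily negative within the approximate-sparsity class). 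So the conclusion ``$|\hat\pi_j(r)|\ge c_4/(\zeta_k\sqrt{n_k})$ once $|r|\ge R$'' does not follow from what you wrote; note also that your sign bookkeeping needs positivity of $\hat\pi_j(r)$ itself, not merely linear growth, so the gap is not cosmetic. The per-coordinate inequality $\E[(\hat\theta^f_{k,j}-\theta_{k,j})^2]\ge c\,\zeta_k^{-2}n_k^{-1}\Phi(-C/(\sqrt{n_k}\tau_{g_k}\zeta_k))$ you are aiming for is plausibly true, but establishing it is considerably harder than the lemma requires; I recommend replacing the whole lower-bound section with the paper's coordinate-wise KKT argument.
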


\begin{proof}[Proof of Lemma \ref{lem:prox-err-bound}]
    The KKT conditions for the optimization \eqref{eq:fixed-design-est} are
    \begin{equation}
        \bSigma^{1/2}(\hat \btheta_k^f - \btheta_k )
            \in
            \bg_k^f - \frac1{\zeta_k} \bSigma^{-1/2} \partial \Omega_k(\hat \btheta_k),
    \end{equation}
    where for the Lasso and the $\alpha$-smoothed Lasso, we have $\| \bd \|_2 \leq C\sqrt{p/n_k}$ for any $\bd \in \partial \Omega_k(\hat \btheta_k)$ and any $\hat \btheta_k$.
    The first result follows.

    To derive the second result,
    we write
    \begin{equation}\label{eq:lasso-coord-bound}
        \bSigma(\hat \btheta_k^f - \btheta_k )
            \in
            \bSigma^{1/2}\bg_k^f - \frac1{\zeta_k} \partial \Omega_k(\hat \btheta_k^f).
    \end{equation}
    Note $[\partial \Omega_k(\hat \btheta_k^f)]_j \in [-\lambda/\sqrt{n_k},\lambda /\sqrt{n_k}]$,
    whence $\big| [\bSigma(\hat \btheta_k^f - \btheta_k )]_j \big| \geq \big| [\bSigma^{1/2}\bg_k^f]_j \big| - \lambda / (\sqrt{n_k}\,\zeta_k)$.
    Thus, 
    $\E\big[[\bSigma(\hat \btheta_k^f - \btheta_k )]_j^2\big] \geq \P\big( \big| [\bSigma^{1/2}\bg_k^f]_j \big| > 2 \lambda / (\sqrt{n_k}\zeta_k) \big) \lambda^2/(n_k\zeta_k^2) \geq c\Phi\big( - C/(\sqrt{n_k}\,\tau_{g_k} \zeta_k) \big) / (n_k \zeta_k^2)$.
    Because $\| \hat \btheta_k^f - \btheta_k \|_{\bSigma}^2 \geq c \| \bSigma(\hat \btheta_k^f - \btheta_k ) \|_2^2 = c \sum_{j=1}^p [\bSigma(\hat \btheta_k^f - \btheta_k )]_j^2$,
    the result follows.
\end{proof}

\begin{proof}[Proof of Lemma \ref{lem:bound-on-fixed-pt}: bounds on fixed-point parameters]
    By the first equation of \eqref{eq:marg-fix-pt-eq},    
    we have $\tau_{g_k}^2 \geq \tau_{e_k}^2 / n_k \geq c/n_k$.
    We next determine an upper bound on $\tau_{g_k}$ and lower bound on $\zeta_k$.
    We do so for least-squares, ridge regression, and the $\alpha$-smoothed Lasso separately (where $\alpha= 0$ is the Lasso).

    For least squares,
    by Eq.~\eqref{eq:ols-fixed-pt} we have $\zeta_k = 1 - p/n_k > c > 0$.
    Also by Eq.~\eqref{eq:ols-fixed-pt},
    we have $\tau_{g_k}^2 \leq (\tau_{e_k}^2 /n_k)/\zeta_k \leq C/n_k$.
    
    For ridge regression, there are two cases.
    In the first case, $n_k/p \geq c > 1$. 
    Then we use $\df_k(\tau^2,\zeta) \leq p$ (see Eq.~\eqref{eq:ridge-R-df}) 
    to get $\zeta_k = 1 - \df_k(\tau_{g_k}^2,\zeta_k)/n_k \geq 1 - 1/c > 0$.
    Note $n_k - \Tr(\bSigma^2(\bSigma + \sqrt{p/n_k}(\lambda / \zeta_k)\id_p)^{-2}) \geq n_k - \Tr(\bSigma(\bSigma + \sqrt{p/n_k}(\lambda / \zeta_k)\id_p)^-1) = n_k \zeta_k > n_k c$.
    Thus, by Eq.~\eqref{eq:ridge-tauO*},
    \begin{equation*}
        n_k\tau_{g_k}^2 
            \leq 
            \frac{\tau_{e_k}^2 + (p/n_k)(\lambda / \zeta_k)^2 \| \btheta_k \|_2^2 \| \bSigma(\bSigma + \sqrt{p/n_k}(\lambda / \zeta_k)\id_p)^{-2}\|_{\mathrm{op}}}{\zeta_k} \leq C.
    \end{equation*}
    In the second case, $\lambda \geq c > 0$.
    Then we use the second line of Eq.~\eqref{eq:ridge-R-df} to bound 
    $\df_k(\tau^2,\zeta_k) \leq Cp\zeta_k$.
    The second line of Eq.~\eqref{eq:marg-fix-pt-eq} implies $\zeta_k \geq 1 - C(p/n_k)\zeta_k \geq 1 - C\zeta_k$,
    whence $\zeta_k \geq 1/(1+C)$.
    Using, as above that $n_k - \Tr(\bSigma^2(\bSigma + \sqrt{p/n_k}(\lambda / \zeta_k)\id_p)^{-2}) \geq n_k c$, we conclude that $n_k \tau_{g_k}^2 \leq C$.

    For the Lasso and the $\alpha$-smoothed Lasso, 
    an upper bound on $\tau_{g_k}$ and lower bound on $\zeta_k$ holds by Theorem 2 and Lemma A.3 of \cite{celentano2020lasso}. See Section \ref{app:COV-lasso} for the change of normalization and notation connecting the current paper to \cite{celentano2020lasso}.

    We now derive bounds on $\tau_{h_k}$.
    For least squares, $\tau_{h_k}^2 = \sR_k(\tau_{g_k}^2,\zeta_k) = p \tau_{g_k}^2$, whence $cp/n_k \leq \tau_{h_k}^2 \leq Cp/n_k$ by the bounds on $\tau_{g_k}$.
    For ridge regression, 
    we upper bound
    \begin{align*}
        \tau_{h_k}^2 
            &= 
            \frac{p\lambda^2}{n_k \zeta_k^2} \btheta_k \bSigma^{1/2}\Big(\bSigma + \sqrt{\frac{p}{n_k}} \, \frac{\lambda}{\zeta_k} \id_p\Big)^{-2}\bSigma^{1/2} \btheta_k
            + 
            \tau_{g_k}^2 \Tr\Big(\bSigma^2\Big(\bSigma + \sqrt{\frac{p}{n_k}}\,\frac{\lambda}{\zeta_k}\id_p\Big)^{-2}\Big)
        \\
            &\leq 
            C\frac{p}{n_k} \frac{\| \btheta_k\|_2^2}{\sigma_{\min}(\bSigma + \sqrt{p/n_k}(\lambda / \zeta_k) \id_p)} + \tau_{g_k}^2 p \leq C p /n_k.
    \end{align*}
    For the lower bound, we use that $\Tr(\bSigma^2(\bSigma + \sqrt{p/n_k} (\lambda / \zeta_k)\id_p)^{-2}) \geq p\sigma_{\min}(\bSigma)^2(\sigma_{\min}(\bSigma)^2 + \sqrt{p/n_k}(\lambda /\zeta_k)) \geq cp$,
    whence, using that $\tau_{g_k}^2 \geq c/n_k$, we have $\tau_{h_k}^2 \geq cp/n_k$.
    Eq.~\eqref{eq:ridge-R-df} gives 
    $\tau_{h_k}^2 \leq (p/n_k)(\lambda/\zeta_k)^2 \| \btheta_k \|_2^2 / \sigma_{\min}(\bSigma + \sqrt{p/n_k}(\lambda / \zeta_k)\id_p) + \tau_{g_k}^2 p \leq Cp/n_k$.
    For the $\alpha$-smoothed Lasso, 
    we use that by Lemma \ref{lem:prox-err-bound},
    we have $\tau_{h_k}^2 = \E[\| \hat \btheta_k^f - \btheta_k \|_{\bSigma}^2 ] \leq 2\E[\| \bg_k^f \|_2^2] + (C/\zeta_k)^2 p/n_k \leq Cp/n_k$ by the upper bound on $\tau_{g_k}$.
    For the lower bound, we apply the second display in Lemma \ref{lem:prox-err-bound}, and used that $\zeta_k \leq  1$ and $\sqrt{n_k}\,\tau_{g_k} \zeta_k \geq c$, which gives $\tau_{h_k}^2 = \E[\| \hat \btheta_k^f - \btheta_k\|_{\bSigma}^2] \geq cp/n_k$.

    Recalling that $\tau_{\hat e_k^\de}^2 = \tau_{e_k}^2 + \tau_{h_k}^2$ (see Eq.~\eqref{eq:tau-ed-ref}),
    the above bounds imply $c < \tau_{\hat e_k^\de}^2 < C$.

    Next we upper bound $|\rho_e|$. Using the change of variables given in Section \ref{sec:gen-thm}, 
    we have $\rho_e^2 = \kappa^2\beta^2/(\kappa^2 \beta^2 + \sigma^2) = 1 - \sigma^2/(\kappa^2 \beta^2 + \sigma^2)  \leq 1 - c$ using that $C > \sigma^2 > c$ and $\kappa^2< C$, $\beta^2 < C$.
    We now upper bound $|\rho_g|$ and $|\rho_{\hat e^\de}|$.
    By Eq.~\eqref{eq:simul-fix-pt} and recalling the bounds on its right-hand side given by Eq.~\eqref{eq:Q-bound},
    we have $|\rho_g| \leq 1 - c$.
    Further, by Eq.~\eqref{eq:tau-ed-ref} and using $|\rho_h| \leq 1 $, we have $|\rho_{\hat e^{\de}}| \leq (\tau_{e_1}\tau_{e_2} + \tau_{h_1}\tau_{h_2})/(\tau_{\hat e_1^\de}\tau_{\hat e_2^\de}) - \tau_{e_1}\tau_{e_2}(1-\rho_e)/(\tau_{\hat e_1^\de}\tau_{\hat e_2^\de})$. 
    By Cauchy-Schwartz we have $(\tau_{e_1}\tau_{e_2} + \tau_{h_1}\tau_{h_2})/(\tau_{\hat e_1^\de}\tau_{\hat e_2^\de}) \leq 1$.
    Further, because $\tau_{h_k}^2 \leq Cp/n_k \leq C$, we have by Eq.~\eqref{eq:tau-ed-ref} that $\tau_{\hat e_k^{\de}}^2 \leq C$, whence $\tau_{e_1}\tau_{e_2}(1-\rho_e)/(\tau_{\hat e_1^\de}\tau_{\hat e_2^\de}) \geq c$, where we have used Lemma \ref{lem:bound-on-fixed-pt}. 
    We conclude that $|\rho_{\hat e^\de}| \leq 1 - c$.

    We now upper bound $|\rho_h|$.
    We may write
    $\bg_2^f = (\tau_{g_2} \rho_g / \tau_{g_1}) \bg_1^f + \tau_{g_2}\rho_g^\perp \bxi_g$ for $\bxi_g \sim \normal(0,\id_p)$ independent of everything else.
    Define $\tilde \bg_2^f = (\tau_{g_2} \rho_g / \tau_{g_1}) \bg_1^f + \tau_{g_2}\rho_g^\perp \tilde \bxi_g$, where $\tilde \bxi_g \sim \normal(0,\id_p)$ independent of everything else.
    Define $\tilde {\hat \btheta}_2^f$ as we defined $\hat \btheta_2^f$ except with $\tilde \bg_2^f$.
    We first claim that 
    \begin{equation}\label{eq:corr-diff}
        \E[\| \hat \btheta_2^f - \tilde {\hat \btheta}_2^f\|_{\bSigma}^2]
            \geq 
            c p / n_k.
    \end{equation}
    For ridge-regression (possibly with $\lambda = 0$ for least-squares), 
    observe that $\|\hat \btheta_2^f - \tilde {\hat \btheta}_2^f\|_{\bSigma} = \|\bSigma(\bSigma + \sqrt{p/n_k}\,(\lambda / \zeta_k)\id_p)^{-1}(\bg_k^f - \tilde \bg_k^f)\|_2 \geq c \| \bg_k^f - \tilde \bg_k^f\|_2$,
    where we have used that the singular values of $\bSigma$ are bounded below and $\sqrt{p/n_k}(\lambda / \zeta_k) \leq C$.   
    Then $\E[\| \hat \btheta_2^f - \tilde {\hat \btheta}_2^f\|_{\bSigma}^2] \geq c \E[\|\bg_2^f - \tilde \bg_2^f\|_2^2] = \E[\|\tau_{g_2}\rho_g^\perp(\bxi_g - \tilde \bxi_g)\|_2^2] = c \tau_{g_2}^2(1-\rho_g^2) \E[\| \bxi_g - \tilde \bxi_g\|_2^2] \geq cp/n_k$, where we have used that $|\rho_g| \leq 1-c$.
    
    For the $\alpha$-smoothed Lasso, 
    observe that for any $j \in [p]$,
    \begin{equation}
        \P\big([\bSigma^{1/2}\bg_2^f]_j > 2 \lambda / (\sqrt{n_2}\,\zeta_2) \; \text{and} \; [\bSigma^{1/2}\tilde \bg_2^f]_j < - 2 \lambda / (\sqrt{n_2}\,\zeta_2)\big) > c.
    \end{equation} 
    Indeed, this event is implied by the simultaneous occurrence of $[\bSigma^{1/2}\bg_2^f]_j \in [2 \lambda / (\sqrt{n_2}\,\zeta_2),3 \lambda / (\sqrt{n_2}\,\zeta_2)]$ and $\tau_{g_2}\rho_g^\perp [\bSigma^{1/2}\tilde \bxi_g]_j < - 5 \lambda / (\sqrt{n_2}\,\zeta_2)$.
    Because $\bSigma$ has bounded singular values and $\rho_g^\perp > c$, 
    the variance of both $[\bSigma^{1/2}\bg_2^f]_j$ and $\tau_{g_2}\rho_g^\perp [\bSigma^{1/2}\tilde \bxi_g]_j$ are bounded below and above by $c/n_k$ and $C/n_k$.
    Because further $\lambda$ and $\zeta_2$ are bounded above and below by a constant,
    we conclude that each of these events has probability lower bounded by $c > 0$. 
    Because $[\bSigma^{1/2}\bg_2^f]_j $ and $\tau_{g_2}\rho_g^\perp [\bSigma^{1/2}\tilde \bxi_g]_j$ are independent, we conclude the previous display. 
    Using Eq.~\eqref{eq:lasso-coord-bound} and the fact that $[\partial \Omega_k(\hat \btheta_k^f)]_j \in [-\lambda/\sqrt{n_k},\lambda /\sqrt{n_k}]$ for the $\alpha$-smoothed Lasso,
    we conclude that on the event in the previous display, $\big| [\bSigma(\hat \btheta_2^f - \btheta_2)]_j - [\bSigma(\tilde{\hat \btheta}_2^f - \btheta_2)]_j\big| > 2\lambda/(\sqrt{n_2}\zeta_2)$. 
    Summing over $j$ and using the the singular values of $\bSigma$ are bounded below,
    we conclude that
    \begin{equation}
        \E[\| \hat \btheta_2^f - \tilde {\hat \btheta}_2^f\|_{\bSigma}^2] \geq c\E[\| \bSigma(\hat \btheta_2 - \tilde{\hat \btheta}_2^f) \|_2^2] \geq cp\frac{4\lambda^2}{n_2\zeta_2^2} \geq cp/n_2.
    \end{equation}
    Thus, we have established Eq.~\eqref{eq:corr-diff} in all cases (i.e., least-squares, ridge regression, and the $\alpha$-smoothed Lasso).

    Now we use the triangle inequality to get $\| \hat \btheta_2^f - \tilde {\hat \btheta}_2^f\|_{\bSigma}^2 \leq 2\| (\hat \btheta_2^f - \btheta_2) - (\tau_{h_2}\rho_h/\tau_{h_1}) (\hat \btheta_1^f - \btheta_1) \|_{\bSigma}^2 + 2\| (\tilde{\hat \btheta}_2^f - \btheta_2) - (\tau_{h_2}\rho_h/\tau_{h_1}) (\hat \btheta_1^f - \btheta_1) \|_{\bSigma}^2$, and that $\E[\| (\hat \btheta_2^f - \btheta_2) - (\tau_{h_2}\rho_h/\tau_{h_1}) (\hat \btheta_1^f - \btheta_1) \|_{\bSigma}^2] = \E[\| (\tilde{\hat \btheta}_2^f - \btheta_2) - (\tau_{h_2}\rho_h/\tau_{h_1}) (\hat \btheta_1^f - \btheta_1) \|_{\bSigma}^2]$, to get
    \begin{equation}
    \begin{aligned}
        \frac{cp}{n_2} \leq \E[\| \hat \btheta_2^f - \tilde {\hat \btheta}_2^f\|_{\bSigma}^2]
            &\leq 
            4\E\big[\big\| 
                    (\hat \btheta_2^f - \btheta_2) - (\tau_{h_2}\rho_h/\tau_{h_1}) (\hat \btheta_1^f - \btheta_1)  
                \big\|_{\bSigma}^2\big]
        = \tau_{h_2}^2(1 - \rho_h^2) \leq \frac{Cp}{n_2}(1 - \rho_h^2).
    \end{aligned}
    \end{equation}
    We conclude $|\rho_h| \leq 1-c$, as claimed.

    Finally, by Eq.~\eqref{eq:fixed-design-est} and because proximal operators are 1-Lipschitz, 
    $\bSigma^{1/2} \eta_k(\by_k^f;\zeta_k)$ is $1$-Lipschitz in $\by_k^f$.
    Thus, $\div \bSigma^{1/2} \eta_k(\by_k^f;\zeta_k) \leq p$ almost surely, so by Eq.~\eqref{eq:R-df}, $\df_k \leq p$.
\end{proof}

\begin{proof}[Proof of Lemmas \ref{lem:marg-alpha-approx} and \ref{lem:cond-alpha-approx}]
    We prove Lemmas \ref{lem:marg-alpha-approx} and \ref{lem:cond-alpha-approx} simultaneously, because both proofs involve coupling the fixed design models corresponding to two different regression estimators.

    We compare two settings: one in which the second regression uses the Lasso with regularization parameter $\lambda$, and one in which the second regression uses the $\alpha$-smoothed Lasso with regularization parameter $\lambda$. 
    Across the the two settings, all other model parameters and the estimators in the first regression are the same.
    Each setting gives rise to a set of solutions to the fixed-point equations \eqref{eq:fixed-pt-eqns} and corresponding fixed design model \eqref{eq:fixed-des}.
    We will couple the fixed design models from the two settings so that the estimates from the second regression are close.

    We introduce some notation.
    The function $\sR$ appearing in the fixed point equations \eqref{eq:fixed-pt-eqns} will be denoted $\sR^{(0)}$ and $\sR^{(\alpha)}$ in the model in which the Lasso and $\alpha$-smoothed Lasso is used in the second regression, respectively.
    Likewise we define $\df_k^{(0)}$ and $\df_k^{(\alpha)}$.
    The solutions to the fixed point equations, in the notation of Section \ref{sec:non-matrix-fix-pt}, 
    will be denoted $\tau_{g_1,0},\tau_{g_2,0},\rho_{g,0},\zeta_{1,0},\zeta_{2,0}$ for the Lasso and  $\tau_{g_1,\alpha},\tau_{g_2,\alpha},\rho_{g,\alpha},\zeta_{1,\alpha},\zeta_{2,\alpha}$ for the $\alpha$-smoothed Lasso.
    We will denote by $\tau_{g_1},\tau_{g_2},\rho_{g},\zeta_{1},\zeta_{2}$ (i.e., with the second subscript omitted) generic arguments to the functions $\sR^{(0)},\df_k^{(0)},\sR^{(\alpha)},\df_k^{(\alpha)}$ which do not necessarily satisfy the fixed point equations \eqref{eq:fixed-pt-eqns} (or equivalently, \eqref{eq:marg-fix-pt-eq} and \eqref{eq:simul-fix-pt}).
    Note that because $\tau_{g_1,0},\zeta_{1,0}$ and $\tau_{g_1,\alpha},\zeta_{1,\alpha}$ are the solutions to the same equations \eqref{eq:marg-fix-pt-eq} for $k = 1$, we have $\tau_{g_1,0} = \tau_{g_1,\alpha}$ and $\zeta_{1,0} = \zeta_{1,\alpha}$.
    Denote the noise and estimates from the fixed design model in which the Lasso is used in the second regression by $\bg_{1,0}^f,\bg_{2,0}^f,\hat \btheta_{1,0}^f,\hat \btheta_{2,0}^f$,
    and denote by $\bg_{1,\alpha}^f,\bg_{2,\alpha}^f,\hat \btheta_{1,\alpha}^f,\hat \btheta_{2,\alpha}^f$ the corresponding objects for the fixed design model in which the $\alpha$-smoothed Lasso is used.

    We now construct a coupling between the two fixed design models.
    First, define $\bg_{2,*}^f = (\tau_{g_2,\alpha}/\tau_{g_2,0})\bg_{2,0}^f$ and
    \begin{equation}
        \hat \btheta_{2,*}^f 
            := 
            \argmin_{\bpi \in \reals^p}
            \Big\{
                \frac12\| \bg_{2,*}^f - \bSigma^{1/2} (\bpi - \btheta_2)\|_2^2 + \frac1{\zeta_{2,\alpha}} \frac{\lambda}{\sqrt{n_2}} \sM_{\alpha/\sqrt{n_2}}(\bpi)
            \Big\}.
    \end{equation}
    Note that $(\bg_{2,*}^f, \hat \btheta_{2,*}^f) \stackrel{\mathrm{d}}= (\bg_{2,\alpha}^f,\hat \btheta_{2,\alpha}^f)$.
    The quantity $\E[\| \hat \btheta_{2,*}^f - \hat \btheta_{2,0}^f \|_{\bSigma}^2]$ corresponds to the quantity $\| \bv_\alpha^* - \bv_0^* \|_{L_2}^2/n$ in the proof Lemma A.5 of \cite{celentano2020lasso}.
    There it is shown that this quantity is bounded by $C \alpha$ for $\alpha < c'$.
    Moreover, Lemma A.5 of \cite{celentano2020lasso} states that $|\sqrt{n_2}\,\tau_{g_2,\alpha} - \sqrt{n_2}\,\tau_{g_2,0}| < C \sqrt{\alpha}$ for $\alpha < c'$,
    whence, using Lemma \ref{lem:bound-on-fixed-pt}, we conclude that $|\tau_{g_2,\alpha}/\tau_{g_2,0} - 1| < C \sqrt{\alpha}$.
    We conclude that $\E[\| \bg_{2,*}^f - \bg_{2,0}^f \|_2^2] < C \alpha p / n_2 < C \alpha$.
    In particular,
    for any $1$-Lipschitz function $\phi_\theta:(\reals^p)^2 \rightarrow \reals$,
    we have
    \begin{equation}
        \big| \E[\phi_\theta(\hat \btheta_{2,\alpha}^f,\hat \btheta_{2,\alpha}^{f,\de})] - \E[\phi_\theta(\hat \btheta_{2,0}^f,\hat \btheta_{2,0}^{f,\de})] \big| < C\sqrt{\alpha}.
    \end{equation}
    Lemma \ref{lem:marg-alpha-approx} follows by applying this bound to $\E[\phi_\theta(\hat \btheta_{2,\alpha'}^f,\hat \btheta_{2,\alpha'}^{f,\de})]$ for $\alpha' \leq \alpha$ and using the triangle inequality and $C \sqrt{\alpha'} + C \sqrt{\alpha} \leq C \sqrt{\alpha}$.

    Next observe that 
    \begin{equation}
    \begin{aligned}
        &\big|
            \sR_{12}^{(\alpha)}( \tau_{g_1,0},\tau_{g_2,\alpha},\rho_{g,0},\zeta_{1,0},\zeta_{2,\alpha} ) 
            -
            \sR_{12}^{(0)}( \tau_{g_1,0},\tau_{g_2,0},\rho_{g,0},\zeta_{1,0},\zeta_{2,0} ) 
        \big|
    \\
            &\qquad\qquad\qquad= 
            \Big|
                \E[\< \hat \btheta_{1,0}^f - \btheta_1 , \hat \btheta_{2,*}^f - \btheta_2 \>_{\bSigma} ]
                - 
                \E[\< \hat \btheta_{1,0}^f - \btheta_1 , \hat \btheta_{2,0}^f - \btheta_2 \>_{\bSigma} ]
            \Big|
                \leq C\tau_{h_1}\sqrt{\alpha} \leq C \sqrt{\frac{\alpha p}{n_1}}.
    \end{aligned}
    \end{equation}
    Recall that $\tau_{g_1,0} = \tau_{g_1,\alpha}$ and $\zeta_{2,0} = \zeta_{2,\alpha}$.
    Using the fixed point equation \eqref{eq:simul-fix-pt},
    we conclude that 
    \begin{equation}
        \Big|
            \tau_{g_1,\alpha}\tau_{g_2,0}\rho_{g,0}
            -
            \frac{n_{12}}{n_1n_2}\big( \tau_{e_1}\tau_{e_2} \rho_e + \sR_{12}^{(\alpha)}(\tau_{g_1,\alpha},\tau_{g_2,\alpha},\rho_{g,0},\zeta_{1,\alpha},\zeta_{2,\alpha})\big) 
        \Big|
            \leq C\frac{n_{12}}{n_1n_2} \sqrt{\frac{ p}{n_1}}\,\sqrt{\alpha}
            \leq C \sqrt{\frac{\alpha}{n_1n_2}},
    \end{equation}
    where in the last inequality we use that $p/n_1 \leq C$ and $n_{12} \leq \sqrt{n_1n_2}$.
    Because $\big|\sqrt{n_2}\,\tau_{g_2,\alpha} - \sqrt{n_2}\,\tau_{g_2,0}\big|<C\sqrt{\alpha}$ and $\tau_{g_1,\alpha} \leq C / \sqrt{n_1}$,
    we conclude that 
    \begin{equation}\label{eq:approx-fix-pt}
        \Big|
            \tau_{g_1,\alpha}\tau_{g_2,\alpha}\rho_{g,0}
            -
            \frac{n_{12}}{n_1n_2}\big( \tau_{e_1}\tau_{e_2} \rho_e + \sR_{12}^{(\alpha)}(\tau_{g_1,\alpha},\tau_{g_2,\alpha},\rho_{g,0},\zeta_{1,\alpha},\zeta_{2,\alpha})\big)    
        \Big|
            \leq C \sqrt{\frac{\alpha}{n_1n_2}}.
    \end{equation}
    Finally, as argued in the proof of Lemma \ref{lem:fixed-pt-soln} (see Eq.~\eqref{eq:cor-deriv}),
    we have 
    \begin{equation}
    \begin{aligned}
        \frac{n_{12}}{n_1n_2}
            \big|
                \partial_{\rho_g}\sR_{12}^{(\alpha)}(\tau_{g_1,\alpha},\tau_{g_2,\alpha},\rho_g,\zeta_{1,\alpha},\zeta_{2,\alpha})\big)
            \big|
            &\leq \tau_{g_1,\alpha}\tau_{g_2,\alpha} \frac{n_{12}}{n_1n_2} \sqrt{\df_1^{(\alpha)}(\tau_{g_1,\alpha}^2,\zeta_{1,\alpha})\df_2^{(\alpha)}(\tau_{g_2,\alpha}^2,\zeta_{2,\alpha})}
        \\
            &\leq \tau_{g_1,\alpha}\tau_{g_2,\alpha} (1-c),
    \end{aligned}
    \end{equation}
    where we use that 
    $\df_k^{(\alpha)}(\tau_{g_k,\alpha}^2,\zeta_{k,\alpha})/n_k \leq 1-c$ by Lemma \ref{lem:bound-on-fixed-pt}.
    We see that the derivative of the term in the absolute value in Eq.~\eqref{eq:approx-fix-pt} has partial derivative with respect to $\rho_{g,0}$ no smaller than $c\tau_{g_1,\alpha}\tau_{g_2,\alpha}$.
    Because $\tau_{g_1,\alpha} \geq c/\sqrt{n_1}$ and $\tau_{g_2,\alpha} \geq c/\sqrt{n_2}$ by Lemma \ref{lem:bound-on-fixed-pt},
    Eq.~\eqref{eq:approx-fix-pt} and the derivative bound imply $\big|\rho_{g,\alpha} - \rho_{g,0}\big| < C \sqrt{\alpha}$.
    Because $\rho_{g,\alpha}^\perp = \sqrt{1-\rho_{g,\alpha}^2}$ and $\rho_{g,0}^\perp = \sqrt{1 - \rho_{g,0}^2}$, 
    and $|\rho_{g,\alpha}|,|\rho_{g,0}| < 1 -c$ by Lemma \ref{lem:bound-on-fixed-pt},
    we conclude also that $|\rho_{g,\alpha}^\perp - \rho_{g,0}^\perp| < C\sqrt{\alpha}$.

    We now construct the coupling between $(\bg_{1,0}^f,\bg_{2,0}^f)$ and $(\bg_{1,\alpha}^f,\bg_{2,\alpha}^f)$.
    Note that we may represent $\bg_{2,0}^f = (\tau_{g_2,0}\rho_{g,0}/\tau_{g_1,0}) \bg_{1,0}^f + \tau_{g_2,0}\rho_{g,0}^\perp \bxi_g$, where $\bxi_g\sim\normal(0,\id_p)$ independent of everything else.
    Using this representation, we construct the coupling $\bg_{1,\alpha}^f = \bg_{1,0}^f$ and $\bg_{2,\alpha}^f = (\tau_{g_2,\alpha}\rho_{g,\alpha}/\tau_{g_1,\alpha}) \bg_{1,0}^f + \tau_{g_2,\alpha}\rho_{g,\alpha}^\perp \bxi_g$.
    Note that 
    \begin{equation}
        \| \bg_{2,\alpha}^f - \bg_{2,*}^f \|_2
            \leq 
            \| (\tau_{g_2,\alpha}/\tau_{g_1,\alpha}) \bg_{1,0}^f \|_2 |\rho_{g,\alpha} - \rho_{g,0}|
            + 
            \| \tau_{g_2,\alpha} \bxi_g \|_2 |\rho_{g,\alpha}^\perp - \rho_{g,0}^\perp |.
    \end{equation}
    Using the bounds on $\big|\rho_{g,\alpha} - \rho_{g,0}\big|$ and $|\rho_{g,\alpha}^\perp - \rho_{g,0}^\perp|$ above and the bounds on $\tau_{g_2,\alpha}$ and $\tau_{g_2,0}$ from Lemma \ref{lem:bound-on-fixed-pt},
    we conclude that $\E[\| \bg_{2,\alpha}^f - \bg_{2,*}^f \|_2^2] \leq C \alpha$ for $\alpha < c'$.
    Because proximal operators are $1$-Lipschitz, 
    we then also conclude that $\| \hat \btheta_{2,\alpha}^f - \hat \btheta_{2,*}^f \|_{\bSigma}^2] \leq C\alpha$.
    We have already shown that $\E[\| \hat \btheta_{2,*}^f - \hat \btheta_{2,0}^f \|_{\bSigma}^2] < C\alpha$ and $\E[\| \bg_{2,*}^f - \bg_{2,0}^f \|_2^2] < C \alpha$.
    Thus,
    we have that $\E[\| \hat \btheta_{2,\alpha}^f - \hat \btheta_{2,0}^f \|_{\bSigma}^2] < C\alpha$ and $\E[\| \bg_{2,\alpha}^f - \bg_{2,0}^f \|_2^2] < C \alpha$ under this coupling.

    Using this coupling, we can derive the conclusion of the lemma.
    As argued in the proof of Lemma \ref{lem:phi-|1-conc} (see Section \ref{sec:phi|1-conc-proof}),
    $\phi_{\theta|1}^{(\alpha)}$ and $\phi_{\theta|1}^{(0)}$ can be viewed as a function of $\hat \bg_1$.
    Writing it as a function of this argument and computing expectations under this coupling, we get
    \begin{equation}
    \begin{aligned}
            \E\big[
                \big|
                \phi_{\theta|1}^{(\alpha)}(\bg_{1,0}^f)
                -
                \phi_{\theta|1}^{(0)}(\bg_{1,0}^f)
                \big|
            \big]
        &\leq 
        \E\big[
            \big|
                \E\big[\phi_\theta(\{ \hat \btheta_{k,\alpha}^f\},\{\hat \btheta_{k,\alpha}^{f,\de}\})\bigm| \bg_{1,0}^f\big]
                -
                \E\big[\phi_\theta(\{ \hat \btheta_{k,0}^f\},\{\hat \btheta_{k,0}^{f,\de}\})\bigm| \bg_{1,0}^f\big]
            \big|
        \big]
    \\
        &\leq 
        \E\big[
            \big|
                \phi_\theta(\{ \hat \btheta_{k,\alpha}^f\},\{\hat \btheta_{k,\alpha}^{f,\de}\})
                -
                \phi_\theta(\{ \hat \btheta_{k,0}^f\},\{\hat \btheta_{k,0}^{f,\de}\})
            \big|
        \big]
        \leq C M_2\sqrt{\alpha}.
    \end{aligned}
    \end{equation}
    As argued in the proof of Lemma \ref{lem:phi-|1-conc} (see Section \ref{sec:phi|1-conc-proof}),
    $\phi_{\theta|1}^{(\alpha)}$ and $\phi_{\theta|1}^{(0)}$ are $C(M_1 + M_2 \sqrt{n_1/n_2})$-Lipschitz in $\hat \bg_1$,
    whence $\big|
                \phi_{\theta|1}^{(\alpha)}(\cdot)
                -
                \phi_{\theta|1}^{(0)}(\cdot)
                \big|$ is $C(M_1 + M_2 \sqrt{n_1/n_2})$-Lipschitz.
    By Corollary \ref{cor:ext-to-hat-gh},
    we have for $\epsilon < c'$ with probability at least $1 - \sC(\epsilon) e^{-\sc(\epsilon)p}$
    that
    \begin{equation}
        \Big|
            \big|
                \phi_{\theta|1}^{(\alpha)}(\hat \bg_1)
                -
                \phi_{\theta|1}^{(0)}(\hat \bg_1)
            \big|
            -
            \E\big[
                \big|
                \phi_{\theta|1}^{(\alpha)}(\hat \bg_1)
                -
                \phi_{\theta|1}^{(0)}(\hat \bg_1)
                \big|
            \big]
        \Big| 
        < C (M_1 \sqrt{p/n_1} + M_2 \sqrt{p/n_2})\,\epsilon \leq C(M_1 \sqrt{p/n_1} + M_2)\epsilon.
    \end{equation}
    Taking $\epsilon = \sqrt{\alpha}$ gives Lemma \ref{lem:cond-alpha-approx} for $\alpha' = 0$.
    Lemma \ref{lem:cond-alpha-approx} for $\alpha' \leq \alpha$ follows by applying the previous display with $\alpha'$ in place of $\alpha$ and using the triangle inequality and $\sqrt{\alpha'} +\sqrt{\alpha} \leq 2 \sqrt{\alpha}$.
\end{proof}

\subsection{Gram-Schmidt approximation bound}

\begin{lemma}\label{lem:gs-approx}
    Consider $\bA \in \reals^{n \times k}$ such that $\big\|\bA^\top \bA - \sT\big\|_{\sF} < \epsilon$.
    Denoting the columns of $\bA$ by $\ba_1,\ldots,\ba_k$, assume also that  $\sT$ has singular values bounded below and above by $c$ and $C$ respectively.
    Consider applying the Gram-Schmidt procedure the $\ba_1,\ldots,\ba_k$ in that order, to produce $\ba_1^{\mathrm{gs}},\ldots,\ba_k^{\mathrm{gs}}$ which satisfy $\bA^{\mathrm{gs}\top}\bA^{\mathrm{gs}} = \sT$.
    Then there exists constants $C',c' >0 $ depending only on $c,C,k$ such that for $\epsilon < c'$
    \begin{equation}
        \max_{j \in [k]}\Big\{  \|\ba_j^{\mathrm{gs}} - \ba_j\|_2 \Big\} < C'\epsilon.
    \end{equation}
\end{lemma}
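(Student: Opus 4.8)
The plan is to prove Lemma~\ref{lem:gs-approx} by induction on $j$, tracking at each step how the perturbation propagates through the Gram--Schmidt orthogonalization. The key structural fact is that the Gram--Schmidt procedure on $\ba_1,\dots,\ba_k$ (in that fixed order) produces $\ba_j^{\mathrm{gs}}$ as a deterministic function of $\ba_1,\dots,\ba_j$ and the target Gram matrix $\sT$; in exact arithmetic, if $\bA^\top \bA$ were exactly $\sT$, the procedure would return $\ba_j^{\mathrm{gs}} = \ba_j$. So the whole task is a stability/conditioning statement: small error in $\bA^\top \bA$ implies small error in the $\ba_j^{\mathrm{gs}}$.

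First I would set up the induction hypothesis: after processing the first $j-1$ vectors, $\|\ba_i^{\mathrm{gs}} - \ba_i\|_2 < C_{j-1}\epsilon$ for $i < j$, where $C_{j-1}$ depends only on $c,C,k$. For the inductive step, recall that $\ba_j^{\mathrm{gs}}$ is obtained by (a) forming the residual $\br_j := \ba_j - \sum_{i<j} \mu_{ji}\ba_i^{\mathrm{gs}}$ where the coefficients $\mu_{ji}$ are chosen so that $\< \br_j, \ba_i^{\mathrm{gs}}\> = T_{ji} - \sum_{i'<i}$(already-accounted terms), i.e.\ chosen from the relevant entries of $\sT$ and the (exact) inner products of the $\ba_i^{\mathrm{gs}}$; and then (b) rescaling $\br_j$ so that $\|\ba_j^{\mathrm{gs}}\|_2^2 = T_{jj} - \sum_{i<j}\mu_{ji}^2$ (or the appropriate bookkeeping). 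I would compare each of these quantities to the corresponding ``would-be exact'' quantity (with $\ba_i$ in place of $\ba_i^{\mathrm{gs}}$ and the true Gram matrix in place of $\sT$). The coefficient comparison uses: the inner products $\<\ba_i,\ba_{i'}\>$ differ from $T_{i i'}$ by at most $\epsilon$ (hypothesis on $\bA^\top\bA$); the inner products $\<\ba_i^{\mathrm{gs}},\ba_{i'}^{\mathrm{gs}}\>$ equal $T_{ii'}$ exactly; and $\|\ba_i^{\mathrm{gs}} - \ba_i\|_2 < C_{j-1}\epsilon$ plus $\|\ba_i\|_2 \le \sqrt{C} + C_{j-1}\epsilon \le C'$ control cross terms by Cauchy--Schwarz. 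This shows the coefficients $\mu_{ji}$ are within $O(\epsilon)$ of their exact counterparts and the residual norms within $O(\epsilon)$, provided the relevant denominators (squared residual norms) stay bounded below.

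The main obstacle, and the reason the hypotheses $c \le \sigma_{\min}(\sT)$ and $\epsilon < c'$ are needed, is precisely that lower bound on the residual norms: the rescaling in step (b) divides by $\|\br_j\|_2$, so I must show $\|\br_j\|_2^2 \ge c'' > 0$ uniformly. Here I would use that the exact residual norm squared is the Schur complement $T_{jj} - \sT_{j,<j}\sT_{<j,<j}^{-1}\sT_{<j,j}$, which equals a ratio of principal minors of $\sT$ and is therefore bounded below by a constant depending only on $c,C,k$ since $\sT \succeq c\,\id$; then the perturbed residual norm squared differs from this by $O(\epsilon)$ by the coefficient/inner-product bounds just established, so for $\epsilon < c'$ small enough it is still bounded below by $c''/2$. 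With the denominator under control, the rescaling is Lipschitz and I get $\|\ba_j^{\mathrm{gs}} - \ba_j\|_2 < C_j \epsilon$, closing the induction. Finally I would take $C' = \max_{j\le k} C_j$ and note all constants depend only on $c,C,k$ as claimed. I would also remark that essentially this argument appears in standard backward-error analyses of Gram--Schmidt and is reused (in a less general form) inside the proof of Corollary~\ref{cor:ext-to-I2}, where the case $\alpha = 0$ (degenerate residual) had to be handled by a separate, more delicate argument precisely because the lower bound on the residual norm fails there.
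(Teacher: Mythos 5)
Your proposal is correct and follows essentially the same route as the paper: induction over the columns, comparing the projection coefficients onto $\mathrm{span}(\ba_1^{\mathrm{gs}},\dots,\ba_{j-1}^{\mathrm{gs}})$ with their $\sT$-based counterparts via the $\epsilon$-closeness of the Gram matrices and the inductive hypothesis, and using the Schur-complement lower bound (from $\sigma_{\min}(\sT)\ge c$) to keep the residual norm bounded away from zero so the rescaling is stable. The only inaccuracy is a side remark: the separate treatment of the possibly degenerate residual (the $\alpha_\perp$ argument) occurs in the Gram--Schmidt step within the proof of the conditional control of the primary objective (Section \ref{app:conditional-characterization-est-err}), not in the proof of Corollary \ref{cor:ext-to-I2}.
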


\begin{proof}[Proof of Lemma \ref{lem:gs-approx}]
    We denote the matrix in $\reals^{n \times j}$ formed by the first $j$ columns of $\bA$ by $\bA(j)$, and the submatrix of $\sT$ formed by the first $j$ rows and columns by $\sT(j)$.
    We prove the result inductively.
    Throughout the proof, $C',c'$ may change at each appearance, but only depend on $c,C,k$.

    For the base case, observe $\ba_1^{\mathrm{gs}} = \sqrt{\sT_{11}} \ba_1 / \| \ba_1 \|_2$. Because $\sT_{11} > c$, for $\epsilon < c'$ we have $\big| \sqrt{\sT_{11}}/\|\ba_1\|_2 - 1\big| < C'\epsilon$ and $\| \ba_1 \|_2 \leq C'$.
    Thus, $\| \ba_1^{\mathrm{gs}} - \ba_1 \|_2 < C'\epsilon$.

    Now assume the inductive hypothesis, that $\max_{j \leq i} \| \ba_j^{\mathrm{gs}} - \ba_j\|_2 < C'\epsilon$.
    We have
    \begin{equation}
        \ba^{\mathrm{gs}}_{i+1}
            =
            \bA^{\mathrm{gs}}(i) \sT(i)^{-1} \sT_{1:i,i+1}
            +
            \sqrt{T_{i+1,i+1}}
            \frac{\proj_{\bA^{\mathrm{gs}}(i)}^\perp \ba_{i+1}}{\|\proj_{\bA^{\mathrm{gs}}(i)}^\perp \ba_{i+1}\|_2},
    \end{equation}
    and 
    \begin{equation}
        \ba_{i+1}
            =
            \proj_{\bA^{\mathrm{gs}}(i)} \ba_{i+1}
            +
            \proj_{\bA^{\mathrm{gs}}(i)}^\perp \ba_{i+1}
            =
            \bA^{\mathrm{gs}}(i) \sT(i)^{-1} \bA^{\mathrm{gs}}(i)^\top \ba_{i+1}
            +
            \proj_{\bA^{\mathrm{gs}}(i)}^\perp \ba_{i+1}.
    \end{equation}
    Note
    \begin{equation}
        \big\| \bA^{\mathrm{gs}}(i)^\top \ba_{i+1} - \sT_{1:i,i+1}\big\|_2
            \leq 
            \big\| \bA^{\mathrm{gs}}(i)^\top \ba_{i+1} - \bA(i)^\top \ba_{i+1}\big\|_2
            +
            \big\| \bA(i)^\top \ba_{i+1} - \sT_{1:i,i+1}\big\|_2
            < C'\epsilon,
    \end{equation}
    whence because $\|\bA^{\mathrm{gs}}(i)\sT(i)^{-1}\|_{\mathsf{op}} < C'$,
    we conclude 
    \begin{equation}
        \big\| \bA^{\mathrm{gs}}(i) \sT(i)^{-1} \sT_{1:i,i+1} - \proj_{\bA^{\mathrm{gs}}(i)} \ba_{i+1}\big\|_2
            < C'\epsilon.
    \end{equation}
    This implies that
    \begin{equation}\label{eq:gs-par-bound}
        \Big|
            \| \proj_{\bA^{\mathrm{gs}}(i)} \ba_{i+1}^{\mathrm{gs}} \big\|_2 - \| \proj_{\bA^{\mathrm{gs}}(i)} \ba_{i+1}\big\|_2
        \Big|
        =
        \Big|
            \sT_{i+1,1:i} \sT(i)^{-1} \sT_{1:i,i+1} - \| \proj_{\bA^{\mathrm{gs}}(i)} \ba_{i+1}\big\|_2
        \Big| < C' \epsilon.
    \end{equation}
    Using the previous display and that $\big| \|\ba_{i+1}\|_2^2 - T_{i+1,i+1} \big| < \epsilon$,
    we have
    $\|\proj_{\bA^{\mathrm{gs}}(i)}^\perp \ba_{i+1}\|_2
    = \sqrt{\|\ba_{i+1}\|_2^2 - \| \proj_{\bA^{\mathrm{gs}}(i)} \ba_{i+1}^2 \| }$
    differs from $\sqrt{\sT_{i+1,i+1} - \sT_{i+1,1:i} \sT(i)^{-1} \sT_{1:i,i+1}}$ by at most $C'\epsilon$, 
    where we use that $\sT_{i+1,i+1} - \sT_{i+1,1:i} \sT(i)^{-1} \sT_{1:i,i+1} > c$ because $\sT$ has singular values bounded below by $c$.
    Thus,
    \begin{equation}
        \Big|
            \| \proj_{\bA^{\mathrm{gs}}(i)}^\perp \ba_{i+1}^\mathrm{gs} \|_2 
            -
            \| \proj_{\bA^{\mathrm{gs}}(i)}^\perp \ba_{i+1} \|_2            
        \Big|
        = 
        \Big|
            \sqrt{\sT_{i+1,i+1} - \sT_{i+1,1:i} \sT(i)^{-1} \sT_{1:i,i+1}}
            -
            \| \proj_{\bA^{\mathrm{gs}}(i)}^\perp \ba_{i+1} \|_2
        \Big| < C'\epsilon. 
    \end{equation}
    Combined with Eq.~\eqref{eq:gs-par-bound}, we conclude $\| \ba_{i+1}^{\mathrm{gs}} - \ba_{i+1} \|_2 < C'\epsilon$, and the inductive step is complete.
\end{proof}

\subsection{Comparison of cosines}

We prove the following lemma.
\begin{lemma}\label{lem:cos-bound}
    For a positive definite matrix $\bK$, define $\cos_{\bK}(\ba,\bb) = \ba^\top \bK \bb / (\| \ba \|_{\bK} \|\bb\|_{\bK})$,
    and define $\sin_{\bK}^2(\ba,\bb) = 1 - \cos_{\bK}^2(\ba,\bb)$.
    Let $\sigma_{\max}(\bK),\sigma_{\min}(\bK)$, and $\kappa_{\mathrm{cond}}(\bK)$ be the maximal and minimal singular values and the condition number of $\bK$ respectively.
    Then for any $\ba,\bb$
    \begin{equation}
        1 - \cos_{\bK}^2(\ba,\bb)
            \geq 
            \frac{1 - \cos_{\id}^2(\ba,\bb)}{4\kappa_{\mathrm{cond}}(\bK)}.
    \end{equation}
\end{lemma}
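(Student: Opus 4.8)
The plan is to reduce the statement about $\cos_{\bK}$ to a purely two-dimensional computation in the plane spanned by $\ba$ and $\bb$, since $\cos_{\bK}(\ba,\bb)$ depends only on the Gram structure of $\{\ba,\bb\}$ under $\bK$. First I would normalize: write $1-\cos_{\bK}^2(\ba,\bb) = \sin_{\bK}^2(\ba,\bb)$, and similarly for the identity, so the claim becomes $\sin_{\bK}^2(\ba,\bb) \ge \sin_{\id}^2(\ba,\bb)/(4\kappa_{\mathrm{cond}}(\bK))$. The quantity $\sin_{\bK}^2(\ba,\bb)$ has a clean geometric meaning: it equals $\| \proj_{\bb}^{\bK,\perp} \ba \|_{\bK}^2 / \| \ba \|_{\bK}^2$, where $\proj_{\bb}^{\bK,\perp}$ is the orthogonal projection (in the $\bK$-inner product) off the span of $\bb$. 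Equivalently, $\sin_{\bK}^2(\ba,\bb) = \det(\bG_{\bK})/(\| \ba \|_{\bK}^2 \| \bb \|_{\bK}^2)$ where $\bG_{\bK}$ is the $2\times 2$ Gram matrix of $\{\ba,\bb\}$ in the $\bK$-inner product, by the standard identity relating the Gram determinant to the squared area of the parallelogram.

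Using the Gram-determinant formula, the ratio I want to lower-bound is
\begin{equation}
    \frac{\sin_{\bK}^2(\ba,\bb)}{\sin_{\id}^2(\ba,\bb)}
    =
    \frac{\det(\bG_{\bK})}{\det(\bG_{\id})}
    \cdot
    \frac{\| \ba \|_2^2 \| \bb \|_2^2}{\| \ba \|_{\bK}^2 \| \bb \|_{\bK}^2}.
\end{equation}
For the second factor, the Rayleigh-quotient bounds give $\| \ba \|_{\bK}^2 \le \sigma_{\max}(\bK) \| \ba \|_2^2$ and likewise for $\bb$, so this factor is at least $1/\sigma_{\max}(\bK)^2$. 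For the first factor, I would observe that $\bG_{\bK} = \bB^\top \bK \bB$ where $\bB = [\ba\ \bb]$ has rank $2$ (the degenerate case $\ba,\bb$ parallel makes both sides zero and the inequality trivial), and $\bG_{\id} = \bB^\top \bB$. Since $\sigma_{\min}(\bK)\,\id \preceq \bK$, we get $\bG_{\bK} \succeq \sigma_{\min}(\bK)\,\bG_{\id}$, hence $\det(\bG_{\bK}) \ge \sigma_{\min}(\bK)^2 \det(\bG_{\id})$ (determinants are monotone on the positive-definite order for matrices of the same size, and $2\times 2$ here). Combining, the ratio is at least $\sigma_{\min}(\bK)^2/\sigma_{\max}(\bK)^2 = 1/\kappa_{\mathrm{cond}}(\bK)^2$, which is actually stronger than the claimed $1/(4\kappa_{\mathrm{cond}}(\bK))$ whenever $\kappa_{\mathrm{cond}}(\bK) \le 4$; for large condition number one still has $1/\kappa_{\mathrm{cond}}^2 \le 1/(4\kappa_{\mathrm{cond}})$ only if $\kappa_{\mathrm{cond}} \le 4$, so this crude bound is not quite enough on its own.

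To get the bound with the correct $\kappa_{\mathrm{cond}}$-dependence (linear, not quadratic, in the denominator — and with the factor $4$), I would instead argue more carefully by passing to the $\bK^{1/2}$-transformed vectors. Set $\ba' = \bK^{1/2}\ba$, $\bb' = \bK^{1/2}\bb$; then $\cos_{\bK}(\ba,\bb) = \cos_{\id}(\ba',\bb')$. The map $\bK^{1/2}$ distorts the angle between $\ba$ and $\bb$, and the worst-case distortion of $\sin^2$ of an angle under a linear map with singular values in $[\sigma_{\min}^{1/2},\sigma_{\max}^{1/2}]$ is controlled by the condition number. Concretely, I would write $\sin_{\id}^2(\ba',\bb') = \det(\bB'^\top\bB')/(\|\ba'\|_2^2\|\bb'\|_2^2)$ with $\bB' = [\ba'\ \bb'] = \bK^{1/2}\bB$, so $\det(\bB'^\top \bB') = \det(\bK)\det(\bG_{\id})$ is off by the full $\det(\bK)$ while each norm $\|\ba'\|_2^2 = \ba^\top \bK \ba$ is off by at most $\sigma_{\max}(\bK)$; this gives $\sin_{\bK}^2 \ge (\det(\bK)/\sigma_{\max}(\bK)^2)\sin_{\id}^2 \ge \sigma_{\min}(\bK)^{r-2}/\sigma_{\max}(\bK)^{2}\cdot(\cdots)$, which for $r > 2$ is not what we want either. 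The main obstacle, then, is obtaining exactly the linear-in-$\kappa_{\mathrm{cond}}$ bound with constant $4$; I expect the right route is to restrict everything to the two-dimensional subspace $V = \mathrm{span}(\ba,\bb)$, note that $\bK$ restricted-and-compressed to $V$ (i.e., $\bP_V \bK \bP_V$ as an operator on $V$) has condition number at most $\kappa_{\mathrm{cond}}(\bK)$, and then prove the sharp two-dimensional fact: for a $2\times 2$ positive-definite $\bM$ with condition number $\kappa$, and any unit vectors $u,v$, one has $\sin_{\bM}^2(u,v) \ge \sin_{\id}^2(u,v)/(4\kappa)$. That last inequality is an elementary optimization over the angle of $u,v$ relative to the eigenbasis of $\bM$ — a one-variable calculus problem — and the factor $4$ should emerge as $(\sqrt\kappa + 1/\sqrt\kappa)^2/\kappa$-type slack, bounded by $4$ since $\sqrt\kappa + 1/\sqrt\kappa \le 2\sqrt\kappa$. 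I would carry this two-dimensional lemma out explicitly and then assemble the three reductions (pass to $V$, condition number of the compression, the $2\times2$ estimate) to conclude.
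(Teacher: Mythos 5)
Your final route --- restrict to the two-dimensional span $V$ of $\ba,\bb$, note that the compression of $\bK$ to $V$ has condition number at most $\kappa_{\mathrm{cond}}(\bK)$, and finish with a two-dimensional estimate --- is correct and genuinely different from the paper's proof, and the step you leave as ``a one-variable calculus problem'' is in fact immediate from the Gram-determinant identity you already use, with no optimization and no factor $4$ needed. Writing $\bM = \bQ^\top \bK \bQ$ for an orthonormal basis $\bQ$ of $V$, with eigenvalues $\sigma_{\min}(\bK) \leq \mu_2 \leq \mu_1 \leq \sigma_{\max}(\bK)$, and assuming $\ba,\bb$ linearly independent (otherwise both sides vanish and the claim is trivial), one has
\begin{equation}
    \frac{1-\cos_{\bK}^2(\ba,\bb)}{1-\cos_{\id}^2(\ba,\bb)}
        =
        \det(\bM)\,\frac{\|\ba\|_2^2\,\|\bb\|_2^2}{\|\ba\|_{\bK}^2\,\|\bb\|_{\bK}^2}
        \geq
        \frac{\mu_1\mu_2}{\mu_1^2}
        =
        \frac{\mu_2}{\mu_1}
        \geq
        \frac{1}{\kappa_{\mathrm{cond}}(\bK)},
\end{equation}
because $\|\ba\|_{\bK}^2 \leq \mu_1 \|\ba\|_2^2$ for $\ba \in V$ (and likewise for $\bb$); so your approach actually yields the lemma with constant $1$ in place of $1/4$, and your guess that the $4$ should emerge from $(\sqrt{\kappa}+1/\sqrt{\kappa})^2/\kappa$-type slack is both unnecessary and not where the paper's $4$ comes from. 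Two corrections to your detours: the first bound, taking $\det(\bG_{\bK}) \geq \sigma_{\min}(\bK)^2 \det(\bG_{\id})$ and the two norm bounds separately, indeed only gives $1/\kappa_{\mathrm{cond}}(\bK)^2$, as you say; and the identity $\det(\bB'^\top \bB') = \det(\bK)\det(\bG_{\id})$ in your second attempt is false whenever the ambient dimension exceeds $2$ --- $\bB'^\top\bB' = \bB^\top \bK \bB$ sees only the compression $\bM$, which is precisely the repair displayed above. For comparison, the paper stays in the ambient space: after normalizing $\|\ba\|_2=\|\bb\|_2=1$ it bounds $\cos_{\id}^2$ of the normalized $\bK^{1/2}$-images via the chordal distance $1-\tfrac14\|\cdot-\cdot\|_2^2$ (this is where its $4$ enters), extracts $\sigma_{\min}(\bK)$, replaces $\bb$ by $\proj_{\bb}\ba$ to recover $1-\cos_{\id}^2(\ba,\bb)$, and finally bounds $\|\bK^{1/2}\ba\|_2^2 \leq \sigma_{\max}(\bK)$; your reduction trades that projection and chordal-distance manipulation for the subspace-compression observation and obtains a slightly sharper constant.
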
 

\begin{proof}[Proof of Lemma \ref{lem:cos-bound}]
    Because the cosine is invariant to scaling $\ba$ and $\bb$, we may without loss of generality assume $\| \ba \|_2 = \| \bb \|_2 = 1$.
    Then
    \begin{equation}
    \begin{aligned}
        \cos_{\bK}^2(\ba,\bb)
            &= 
            \cos_{\id}^2\Big(
                \frac{\bK^{1/2}\ba}{\|\bK^{1/2}\ba\|_2},
                \frac{\bK^{1/2}\bb}{\|\bK^{1/2}\bb\|_2}
            \Big)
            = 1 - \frac14\Big\| \frac{\bK^{1/2}\ba}{\|\bK^{1/2}\ba\|_2} - \frac{\bK^{1/2}\bb}{\|\bK^{1/2}\bb\|_2}\Big\|_2^2
        \\
        &\leq 
            1 - \frac{\sigma_{\min}(\bK)}4\Big\| \frac{\ba}{\|\bK^{1/2}\ba\|_2} - \frac{\bb}{\|\bK^{1/2}\bb\|_2}\Big\|_2^2
            \leq 
            1 - \frac{\sigma_{\min}(\bK)}4\Big\| \frac{\ba}{\|\bK^{1/2}\ba\|_2} - \frac{\proj_{\bb}\ba }{\|\bK^{1/2}\bb\|_2}\Big\|_2^2
        \\
        &= 1 - \frac{\sigma_{\min}(\bK) \sin_{\id}(\ba,\bb)^2 }{4\| \bK^{1/2}\ba\|_2^2}
            \leq 1 - \frac{\sigma_{\min}(\bK) \sin_{\id}(\ba,\bb)^2 }{4\sigma_{\max}(\bK)}
            = 1 - \frac{1 - \cos_{\id}^2(\ba,\bb)}{4\kappa_{\mathrm{cond}}(\bK)}.
    \end{aligned}
    \end{equation}
    The proof is complete.
\end{proof}

\subsection{Change of normalization from \cite{celentano2020lasso}}\label{app:COV-lasso}

Our results rely on the lasso characterization provided by \cite{celentano2020lasso}, which uses a different normalization: in our case the covariates are $\bx_i \stackrel{\mathrm{iid}}\sim \normal(0,\bSigma)$, whereas in \cite{celentano2020lasso} the covariates are $\bx_i \stackrel{\mathrm{iid}}\sim \normal(0,\bSigma/n_{\cdot})$.
We here provide the change of variables to tranfer results between the two papers.

We will denote quantities which occur in their paper with a prime.
They consider a design matrix $\bX'$, observation $\by'$, true parameter $\btheta'$, noise standard deviation $\sigma'$, Lasso regularization $\lambda'$, and fixed-point parameter $\tau',\zeta'$.
These are related to the quantites in the present paper by 
\begin{equation}\label{eq:COV-lasso}
\begin{aligned}
    \bX' &= \frac1{\sqrt{n_{\sx}}} \bX_{\cI_k},\qquad& \by' &= \by_k,\qquad& \btheta' &= \sqrt{n_k}\, \btheta_k,\qquad& \bSigma' &= \bSigma, \qquad & \alpha' &= \alpha_k,\\
    \sigma'^2 &= \tau_{e_k}^2,\qquad& \lambda'&= \lambda_k,\qquad &\tau' &= \sqrt{n_k}\,\tau_{g_k},\qquad & \zeta'& = \zeta_k.
\end{aligned}
\end{equation}
With some algebra, one can check that the fixed point system equations (8) from \cite{celentano2020lasso} is equivalent to the outcome regression equations and precision regression equations in Eq.~\eqref{eq:fixed-pt-eqns} in the present paper.

\subsection{Reference on second moment identities}
\label{sec:identity-ref}

We write explicitly the expectations appearing in Eq.~\eqref{eq:cEgam}. They are
\begin{equation}\label{eq:T-marg-ref}
\begin{gathered}
    \sT_{\sN,k}
    =
    \E\Big[
        \bT\Big(\frac{\be_{k,\cI_k}}{\sqrt{n_k}},\frac{\bxi_{h,\cI_k}}{\sqrt{n_k}}\Big) 
        \Bigm| 
        \be_k
        \Big]   
        =
        \begin{pmatrix}
            \tau_{e_k}^{*2} & 0 \\[2pt]
            0 & 1
        \end{pmatrix},
    \\
    \sT_{\sP,k}
    =
    \E\Big[\bT\Big(\sqrt{\frac{n_k}{p}}\,\bv_k^f,\frac1{\sqrt{p}}\bxi_g\Big)\Big]
        =
        \begin{pmatrix}
            (n_k/p)\tau_{h_k}^2 & \sqrt{n_k}\,\tau_{g_k}\df_k/p \\[2pt]
            \sqrt{n_k}\,\tau_{g_k}\df_k/p & 1
        \end{pmatrix},
    \\
    \text{and $\sT_{\sP,k}$ has entries bounded above by $C$,}
\end{gathered}
\end{equation}
which we now check.
Because $\bxi_{h,\cI_k} \sim \normal(0,\id_{n_k})$ is independent of $\be_{k,\cI_k}$ and $\tau_{e_k}^* = \| \be_{k,\cI_k} \|_2/\sqrt{n_k}$ by definition,
the expression of $\sT_{\sN,k}$ is easily checked. 
By the definition of $\bv_k^f$ (see Eq.~\eqref{eq:vkf}) and the identity Eq.~\eqref{eq:tau-h-ref}, we have $\E[\|\bv_k^f\|_2^2] = \tau_{h_k}^2$. Because $\bxi_g \sim \normal(0,\id_p)$, we have $\E[\| \bxi_g \|_2^2] / p = 1$.
Thus, the diagonal entries of $\sT_{\sP,k}$ are as stated.
By the definition of $\bv_k^f$ (see Eqs.~\eqref{eq:fixed-design-est} and \eqref{eq:vkf}) and 
because proximal operators are 1-Lipschitz,
we see that $\bv_k^f$ is $1$-Lipschitz in $\bg_k^f$.
Thus, we may apply Gaussian integration by parts,
which gives $\E[\bg_k^{f\top} \bv_k^f] = \tau_{g_k}^2 \df_k $ by the definition of $\df_k$ (see Eq.~\eqref{eq:R-df}) and because $\bg_k^f \sim \normal(0,\tau_{g_k}^2 \id_p)$.
Thus, the off-diagonal entries of $\sT_{\sP,k}$ are as stated.
By Lemma \ref{lem:bound-on-fixed-pt}, $(n_k/p)\tau_{h_k}^2 \leq C$, so Eq.~\eqref{eq:T-marg-ref} follows.

We also rewrite explicitly the expectations appearing in Eq.~\eqref{eq:cEgam}.
They are
\begin{equation}\label{eq:T-cond-ref}
\begin{gathered}
\begin{aligned}
    \sT_{\sN,2|1}
    &=
    \E\Big[
        \bT\Big(
            \frac{\be_{2,\cI_2}}{\sqrt{n_2}},
            \frac{\bu_{1,\cI_2}^f}{\sqrt{n_1}},
            \frac{\bh_{1,\cI_2}^f}{\sqrt{n_2}\,\tau_{h_1}},
            \frac{\bh_{2,\cI_2}^f - (\tau_{h_2}\rho_h/\tau_{h_1})\bh_{1,\cI_2}^f}{\sqrt{n_2}\,\tau_{h_2}\rho_h^\perp}
        \Big) 
    \Big] 
    \\
        &=
        \begin{pmatrix}
            \tau_{e_2}^2 
                & \frac{n_{12}}{\sqrt{n_1n_2}}\tau_{e_1}\tau_{e_2} \rho_e \zeta_1
                & 0
                & 0  \\[4pt]
            \frac{n_{12}}{\sqrt{n_1n_2}}\tau_{e_1}\tau_{e_2} \rho_e \zeta_1
                & n_{12} \tau_{g_1}^2 \zeta_1^2
                & \frac{n_{12}}{\sqrt{n_1n_2}}\, \tau_{h_1} \zeta_1
                & 0 \\[4pt]
            0
                & \frac{n_{12}}{\sqrt{n_1n_2}}\, \tau_{h_1} \zeta_1
                & 1
                & 0 \\[4pt]
            0 
                & 0
                & 0
                & 1
        \end{pmatrix},
    \\[4pt]
    \sT_{\sP,2|1} 
        &= 
        \E\Big[
            \bT\Big(
                \sqrt{\frac{n_1}{p}}\,\bv_1^f,
                \sqrt{\frac{n_2}{p}}\,\bv_2^f,
                \frac{\bg_1^f}{\sqrt{p}\,\tau_{g_1}},
                \frac{\bg_2^f - (\tau_{g_2}\rho_g/\tau_{g_1})\bg_1^f}{\sqrt{p}\,\tau_{g_2}\rho_g^\perp}
            \Big) 
        \Big]
    \\
        &= 
        \begin{pmatrix}
            \frac{n_1}{p} \tau_{h_1}^2 
                & \frac{\sqrt{n_1n_2}}{p} \tau_{h_1}\tau_{h_2} \rho_h
                & \sqrt{n_1}\, \tau_{g_1}\frac{\df_1}{p}
                & 0  \\[4pt]
            \frac{\sqrt{n_1n_2}}{p} \tau_{h_1}\tau_{h_2} \rho_h
                & \frac{n_2}{p} \tau_{h_2}^2
                & \sqrt{n_2}\, \tau_{g_2}\rho_g \frac{\df_2}{p}
                & \sqrt{n_2}\, \tau_{g_2}\rho_g^\perp \frac{\df_2}{p} \\[4pt]
            \sqrt{n_1}\, \tau_{g_1}\frac{\df_1}{p}
                & \sqrt{n_2}\, \tau_{g_2}\rho_g \frac{\df_2}{p}
                & 1
                & 0 \\[4pt]
            0 
                & \sqrt{n_2}\, \tau_{g_2}\rho_g^\perp \frac{\df_2}{p}
                & 0
                & 1
        \end{pmatrix},
\end{aligned}\\[4pt]
\text{and $\sT_{\sN,2|1}$ and $\sT_{\sP,2|1}$ have entries bounded above by $C$,}
\end{gathered}
\end{equation}
which we now check.
By Eqs.~\eqref{eq:vkf} and \eqref{eq:fixed-design-err-est}, 
we have 
\begin{equation}
    \bu_{1,\cI_2}^f 
    = 
    \begin{pmatrix}
        \zeta_1(\be_{1,\cI_1\cap\cI_2} + \bh_{1,\cI_1 \cap \cI_2}^f)\\[4pt]
        \bzero_{n_2 - n_{12}}
    \end{pmatrix},
\end{equation}
where with some abuse of notation, we have written those indices in $\cI_1 \cap \cI_2$ first, and those in $\cI_2 \setminus \cI_1$ second.
This gives us that $\E[\| \be_{2,\cI_2}\|_2^2]/n_2 = \tau_{e_2}^2$, $\E[\< \be_{2,\cI_2} , \bu_{1,\cI_2}^f \> ] / \sqrt{n_1n_2} = \zeta_1\E[\< \be_{2,\cI_1 \cap \cI_2} , \be_{1,\cI_1\cap\cI_2} + \bh_{1,\cI_1 \cap \cI_2}^f \> ] / \sqrt{n_1n_2} = (n_{12}/\sqrt{n_1n_2})\tau_{e_1}\tau_{e_2}\rho_e \zeta_1$, and $\E[\| \bu_{1,\cI_2}^f \|_2^2]/n_1 = \zeta_1^2\E[\| \be_{1,\cI_1\cap\cI_2} + \bh_{1,\cI_1 \cap \cI_2}^f \|_2^2]/n_1 = \zeta_1^2 n_{12} (\tau_{e_1}^2 + \tau_{h_1}^2)/n_1 = n_{12}\tau_{g_1}^2\zeta_1^2 $, where we have applied the independence of $\be_2$ and $\bh_1$ and Eq.~\eqref{eq:res-ref}.
Note that $\tau_{e_2}^2 \leq C$ by \textsf{A1}4 and $n_{12}\tau_{g_1}^2 \zeta_1^2 \leq n_1 \tau_{g_1}^2 \leq C$ by Lemma \ref{lem:bound-on-fixed-pt}.
Thus, we have computed the top left $2\times2$ block of $\sT_{\sN,2|1}$.

By the definition of $\tau_{h_1},\tau_{h_2},\rho_h$ (see Section \ref{sec:non-matrix-fix-pt}),
we have $\bh_{1,\cI_2}^f/\tau_{h_1}$ and $(\bh_{2,\cI_2}^f - (\tau_{h_2}\rho_h/\tau_{h_1})\bh_{1,\cI_2}^f)/(\tau_{h_2}\rho_h^\perp$ are standard Gaussian and independent, both are independent of $\be_{2,\cI_2}$, and the latter is independent of $\bu_{1,\cI_2}^f$.
This gives the computation for the bottom right $2\times2$ block of $\sT_{\sN,2|1}$.
For the top right $2\times2$ block of $\sT_{\sN,2|1}$, the only non-zero entry comes from $\E[\<\bu_{1,\cI_2}^f,\bh_{1,\cI_2}^f\>]/(\sqrt{n_1n_2}\,\tau_{h_1})$, because the remaining entries are 0 by independence. 
We compute $\E[\<\bu_{1,\cI_2}^f,\bh_{1,\cI_2}^f\>]/(\sqrt{n_1n_2}\,\tau_{h_1}) = \zeta_1\E[\<\be_{1,\cI_1 \cap \cI_2} + \bh_{1,\cI_1 \cap \cI_2},\bh_{1,\cI_1 \cap \cI_2}^f\>]/(\sqrt{n_1n_2}\,\tau_{h_1}) = (n_{12}/\sqrt{n_1n_2})\tau_{h_1}\zeta_1$ as claimed.

The top left $2\times 2$ block of $\cT_{\sP,2|1}$ is computed using Eq.~\eqref{eq:tau-h-ref} and recalling that $\bv_k = \bSigma^{1/2}(\hat \btheta_k - \btheta_k)$ (see Eq.~\eqref{eq:vkf}).
By Lemma \ref{lem:bound-on-fixed-pt}, $(n_k/p)\tau_{h_k}^2 \leq C$.
By the definition of $\tau_{g_1},\tau_{g_2},\rho_g$ (see Section \ref{sec:non-matrix-fix-pt}),
we have $\bg_1^f/\tau_{g_1}$ and $(\bg_2^f - (\tau_{g_2}\rho_g/\tau_{g_1})\bg_1^f)/(\tau_{g_2}\rho_g^\perp)$ are standard Gaussian and independent, which gives the bottom right block. 
The top right block is computed using Gaussian integration by parts, recalling that because $\bv_k = \bSigma^{1/2}(\eta_k(\btheta_k + \bg_k^f;\zeta_k) - \btheta_k)$ (see Eqs.~\eqref{eq:fixed-design-est} and \eqref{eq:vkf}), the definition of $\df_k$ in Eq.~\eqref{eq:R-df} gives $\E[\< \bg_l^f , \bv_k^f \>] = \Cov(\bg_k^f,\bg_l^f) \df_k$.

\end{document}